\newtheorem{propo}{Proposition}[section]
\newtheorem{lemma}[propo]{Lemma}
\newtheorem{definition}[propo]{Definition}
\newtheorem{coro}[propo]{Corollary}
\newtheorem{thm}{Theorem}
\newtheorem{asmp}{Assumption}
\newtheorem{theorem}[propo]{Theorem}
\newcommand{\ip}[2]{\left\langle #1, #2 \right \rangle}
\def\reals{{\mathbb R}}
\def\prob{{\mathbb P}}
\def\cN{{\cal N}}
\def\eps{\varepsilon}
\def\E{\mathbb E}
\def\unsafe{{\sf UNSAFE}}
\def\safe{{\sf SAFE}}
\def\sens{\Delta}
\def\thresh{\tau}
\def\backoff{(k^*+3)\sens}
\def\popdist{D_{\phi}}
\def\robdist{D}
\def\HPTR{{\rm HPTR}}
\newcommand{\cB}{\mathcal B}
\newcommand{\cD}{\mathcal D}
\newcommand{\cM}{\mathcal M}
\newcommand{\cP}{\mathcal P}
\newcommand{\cT}{\mathcal T}
\newcommand{\cV}{\mathcal V}
\newcommand{\cX}{\mathcal X}
\newcommand{\cY}{\mathcal Y}
\DeclareMathOperator{\Tr}{Tr}
\title{Differential privacy and robust statistics in high dimensions}
\author{%
  Xiyang Liu\footnotemark[0] \thanks{Paul G. Allen School of Computer Science \& Engineering, 
  University of Washington, 
  \texttt{xiyangl@cs.washington.edu}} 
 \and Weihao Kong\footnotemark[2] \thanks{Google Research, \texttt{kweihao@gmail.com}} 
    \and
  Sewoong Oh\footnotemark[0] \thanks{Paul G. Allen School of Computer Science \& Engineering, 
  University of Washington, 
  \texttt{sewoong@cs.washington.edu}} 
}
\date{}
\begin{document}

\maketitle

\begin{abstract}
We introduce a universal framework for characterizing the statistical efficiency of a 
statistical estimation problem with differential privacy guarantees. Our framework, which we call High-dimensional Propose-Test-Release (HPTR), builds upon three crucial components: the exponential mechanism from \cite{mcsherry2007mechanism}, robust statistics, and the  Propose-Test-Release mechanism from \cite{dwork2009differential}. 
Gluing all these together is the concept of resilience, which is central to robust statistical estimation. Resilience guides  the design of the algorithm, the sensitivity analysis, and the success probability analysis of the test step in Propose-Test-Release. 
The key insight is that if we design an exponential mechanism that accesses the data only via one-dimensional robust statistics, then the resulting local sensitivity can be dramatically reduced. Using resilience, we can  provide tight local sensitivity bounds. 
These tight bounds readily translate into near-optimal utility guarantees in several cases.
We give a general recipe for applying HPTR to a given instance of a statistical estimation problem and demonstrate it on canonical problems of mean estimation, linear regression, covariance estimation, and principal component analysis. 
We introduce a general utility analysis technique that proves that
 HPTR nearly achieves the optimal sample complexity under several scenarios studied in the literature.

\end{abstract} 
\newpage
\tableofcontents

\newpage
\section{Introduction} 
\label{sec:intro}

Estimating a parameter of a  distribution from i.i.d.~samples is a canonical problem in statistics. For such problems, characterizing the computational and statistical cost of ensuring differential privacy (DP) has gained significant interest with the rise of  DP as the de facto measure of privacy. This is spearheaded by exciting and foundational algorithmic advances, e.g.,  \cite{barber2014privacy,KV17,KLSU19,kamath2020private,cai2019cost}. However, the computational efficiency of these algorithms  often comes at the cost of  requiring superfluous assumptions that are not necessary for statistical efficiency, such as known bounds on the parameters or knowledge of higher-order moments. Without such assumptions,  the optimal sample  complexity remains unknown even for 
 canonical statistical estimation  problems under  differential privacy. 
Further,  each algorithm needs to be  customized to those assumptions or to the problem instances.

We take an alternative route of focusing only on the  statistical cost of differential privacy without concerning computational efficiency. Our goal is to introduce a general unifying framework that $(1)$ can be readily applied to each problem instance, $(2)$ provides  a tight characterization of the statistical complexity involved, and $(3)$ requires  minimal assumptions. 
Achieving this goal critically relies on 
three key ingredients: the exponential mechanism introduced in  \cite{mcsherry2007mechanism}, robust statistics, and  the Propose-Test-Release mechanism introduced in \cite{dwork2009differential}.  
We first explain 
 these three components of our approach,  and then demonstrate the utility of  our proposed framework, called High-dimensional Propose-Test-Release  (HPTR), in canonical example problems of mean estimation, linear regression, covariance estimation, and principal component analysis. 

\medskip\noindent
{\bf  Exponential mechanism and sensitivity.} 
Differential privacy (DP) is  an  agreed upon measure of privacy that provides plausible deniability to the individual entries.
Given a dataset $S$ of size $n$ and its empirical distribution  $\hat p_S=(1/n)\sum_{x_i\in S} \delta_{x_i}$, its {\em neighborhood} is defined as ${\cal N}_{S} = \{ S' :  |S'| =|S|, d_{\rm 
TV}(\hat p_S,\hat p_{S'} )\leq 1/n \}$, which is a set of datasets at Hamming distance\footnote{There are two notions of a neighborhood in DP, which are equally popular. We use the one based on exchanging an entry, but all the analyses can seamlessly be applied to the one that allows for insertion and deletion of an entry.} at most one from $S$, and $d_{\rm TV}(\cdot)$ is the total variation. 
Plausible deniability is achieved by introducing the right amount of randomness. A randomized estimator $\hat\theta(S)$ is said to be  $(\varepsilon,\delta)$-differentially private for some target $\varepsilon\geq 0$ and $\delta\in[0,1]$ if  
 $\prob(\hat \theta(S)\in A) \leq e^\varepsilon \prob(\hat\theta(S')\in A) +\delta $ for all neighboring datasets $S,S'$ and all measurable subset $A\subseteq\reals^p$ \cite{dwork2006calibrating}. 
 Consider a binary hypothesis testing on two hypotheses, $H_0$, where the estimate came from a dataset $S$,  and $H_1$, where the the estimate came from a dataset $S'$ that is a neighbor of $S$.  The DP condition with a sufficiently small $(\varepsilon,\delta)$  ensures that an adversary cannot  succeed in this test with high confidence \cite{composition}, which provides   plausible deniability.


The {\em sensitivity} plays a  crucial role in designing DP estimators.
Consider an example of mean estimation, where the error is measured in the Mahalanobis distance defined as  $D_p(\hat\mu) = \|\Sigma_p^{-1/2}(\hat\mu - \mu_p)\|$, where $\mu_p$ and $\Sigma_p$ are the mean and covariance of the sample-generating distribution $p$. 
This is a preferred error metric since it has unit variance in all directions and is invariant to a linear transformation of the samples. 
A corresponding  empirical loss is  $D_{\hat p_S}(\hat\mu) = \|\Sigma_{\hat p_S}^{-1/2}(\hat\mu - \mu_{\hat p_S})\|$.  
The exponential mechanism from \cite{mcsherry2007mechanism} produces an $(\varepsilon,0)$-DP estimate $\hat \mu$ by sampling from a distribution that approximately and stochastically minimizes this empirical loss: 
\begin{eqnarray*}
    \hat\mu\;\sim\; \frac{1}{Z(S)}e^{-\frac{\varepsilon}{2\sens} D_{\hat p_S} (\hat\mu) }\;,
\end{eqnarray*}
where $Z(S)=\int \exp\{-(\varepsilon/2\sens)D_{\hat p _S}(\hat \mu) d\hat\mu$.
The  sensitivity   is defined as  $\sens :=\max_{\hat\mu,S,S'\in{\cal N}_{S}}  | D_{\hat p_S}(\hat \mu) - D_{\hat p_{S'}} (\hat \mu) |$, which is  the influence of one data point on the loss.
From this definition, the $(\varepsilon,0)$-DP guarantee follows immediately (e.g., Lemma~\ref{lem:exp_mech}).

 Using the exponential mechanism is crucial in HPTR for two reasons: adaptivity and flexibility. First, it naturally adapts to the geometry of the problem, which is encoded in the loss. 
For example,  a more traditional Gaussian mechanism \cite{dwork2014algorithmic} needs  to  estimate $\Sigma_{p}$ privately in order to add a Gaussian noise tailored to that estimated $\Sigma_p$, 
which increases sample complexity significantly. 
On the other hand, the exponential mechanism 
seamlessly adapts to $\Sigma_p$  without explicitly and privately estimating it. 
Further, the exponential mechanism allows us significant flexibility to design different  loss functions, some of which can dramatically reduce the sensitivity.  Discovering such a loss function is the main focus of this paper. 


One major challenge  is that the sensitivity is unbounded when the support of the  distribution is unbounded. 
A common  solution is to privately  estimate a bounded domain that the samples lie in  and use it to bound the sensitivity (e.g., \cite{KV17,KLSU19,liu2021robust}). 
We propose a fundamentally different approach  using robust statistics.

\medskip\noindent
{\bf Robust statistics and  resilience.} The {\em resilience} proposed in~\cite{steinhardt2018resilience} plays a critical role in robust statistics. 
For the mean, for example, a dataset $S$ is said to be   $(\alpha,\rho)$-resilient for some $\alpha\in[0,1]$ and $\rho>0$ if for all $v\in\reals^d$ with $\|v\|=1$ and all subset $T\subseteq S$ of size at least $|T|\geq \alpha n$, 
\begin{eqnarray}  
    \big|\, \langle v,\mu_{\hat p_T }\rangle  - \langle v,\mu_{\hat p_S} \rangle  \, \big|\; \leq \; \frac{ \rho}{\alpha } \;. 
\end{eqnarray}
A  more precise statement  is in Definition~\ref{def:resilience}.
This measures how resilient the empirical mean is to subsampling or deletion of a fraction of the samples. 
This resilience is a central concept in robust  statistical estimation when a fraction of the dataset is arbitrarily corrupted by an adversary \cite{steinhardt2018resilience,zhu2019generalized}.  
We show and exploit the fact that resilience is fundamentally related to the sensitivity of robust statistics. 

For each direction $v\in\reals^d$ with $\|v\|=1$, we construct a  robust mean of a one-dimensional projected dataset, also known as trimmed mean, $S_v=\{\langle v, x_i\rangle \in \reals \}_{x_i\in S}$, as follows. For some  $\alpha\in[0,1/2)$, remove $\alpha n$  data points corresponding to the largest entries in $S_v$ and also remove the $\alpha n$ smallest entries. The mean of the remaining $(1-2\alpha)n$ points is the robust one-dimensional mean, which we denote by $\langle v, \mu_{\hat p_v}^{(robust)} \rangle \in \reals$. 
From the resilience above, we know that the mean of the removed top part is upper bounded by $ \langle v, \mu_{\hat p_S}\rangle + \rho/\alpha $. The mean of the removed bottom part is lower bounded by $ \langle v, \mu_{\hat p_S}\rangle - \rho/\alpha$. Hence, the effective support of this robust one-dimensional mean estimator is upper and lower bounded by the same. This can be readily translated into a bound in sensitivity of the estimate, $\langle v, \mu_{\hat p_v}^{(robust)} \rangle $ (e.g., Lemma~\ref{lem:local_asmp}). 
A similar sensitivity bound holds  for the robust one-dimensional variance estimator, $v^\top \Sigma^{(robust)}_{\hat p_v}\,v,$ defined similarly.

We  propose an approach that  critically relies on this observation that {\em one-dimensional robust statistics have low sensitivity on resilient datasets,  i.e., datasets satisfying the resilience property with small $\rho$.} 

This suggests that if we can design a score function  that only depends on one-dimensional robust statistics of the data, it might inherit the low sensitivity of those robust statistics. 
 To this end, we first transform the target error metric  into an equivalent expression that  only depends on one-dimensional (population)  mean, $ \langle v, \mu_p \rangle $, and variance, $v^\top \Sigma_p v$, i.e.,   $$\|\Sigma_p^{-1/2}(\hat\mu-\mu_p)\| \; =\; \max_{v\in\reals^d,\|v\|=1} \frac{\langle v, \hat\mu \rangle - \langle v, \mu_{p} \rangle }{\sqrt{v^\top \Sigma_{p}\, v} } \;,$$ 
which follows from   Lemma~\ref{lem:equidist}.  
Next, we  replace the population statistics with robust empirical ones to define a new empirical loss,  $D_{\hat p_S} (\hat\mu) = \max_{v\in\reals^d,\|v\|=1} (\langle v, \hat\mu \rangle - \langle v, \mu_{\hat p_v}^{(robust)} \rangle )/ \sqrt{v^\top \Sigma_{\hat p_v}^{\rm (robust)}\, v}$. 
Precise definitions of these robust statistics can be found in Eq.~\eqref{eq:defproj}.
For resilient datasets, 
such a score function has a  dramatically smaller sensitivity  compared to  those that rely on high-dimensional robust statistics. 
For mean estimation under a  sub-Gaussian distribution, the sensitivity of the proposed loss is $\tilde{O}(1/n)$, whereas a loss using a  high-dimensional robust statistics has 
$\Omega(\sqrt{d}/n)$ sensitivity.

Such an improved sensitivity  immediately leads to a better  utility guarantee of the exponential mechanism.  
We explicitly prescribe such loss functions for the canonical problems of mean estimation, linear regression, covariance estimation,  and principal component analysis. 
This leads to   near-optimal utility in most cases and improves upon the state-of-the-art in others, as we demonstrate in Section~\ref{sec:intro_main}. 
Further, this approach can potentially be more generally applied to a much broader class of problems. 
One remaining challenge is that the tight sensitivity bound we provide holds only for a resilient   dataset.  To reject bad datasets, we adopt the Propose-Test-Release (PTR) framework pioneered in the seminal work of  \cite{dwork2009differential}.  

\medskip\noindent 
{\bf Propose-Test-Release and local sensitivity.} 
The tight sensitivity bound we provide on the proposed exponential mechanism is {\em local} in the sense that it only holds for resilient datasets. 
 However, differential privacy must be guaranteed for any input, whether it is resilient (with desired level of $\alpha$ and $\rho$)  or not. 
We adopt Propose-Test-Release introduced in \cite{dwork2009differential} to  handle such locality of  sensitivity.  In the  first step, one proposes an upper bound on the sensitivity of the loss $D_S(\hat \theta)$, determined by the resilience of the dataset, which in turn is determined solely by the distribution family of interest and the target error rate. In the second step, one tests if the combination of the given dataset $S$, sensitivity bound $\sens$, and the exponential mechanism with loss $D_S(\hat\theta)$ satisfy the DP conditions. A part of the privacy budget is used to test this in a differential private manner, such that the subsequent exponential mechanism can depend on the result of this test, i.e., we only proceed to the third step if $S$ passes the test.  Otherwise, the process stops and outputs a predefined symbol, $\perp$. In the third step, one releases the DP estimate via the exponential mechanism. This ensures DP for any input $S$.
We are adopting the Propose-Test-Release mechanism pioneered in \cite{dwork2009differential}, which we explain in detail in Section~\ref{sec:dp}. 
The resulting framework, which we call High-dimensional Propose-Test-Release (HPTR) is provided in Section~\ref{sec:intro_algo}.

\medskip\noindent
{\bf Contributions.} We introduce a novel (computationally inefficient) algorithm for differentially private statistical estimation, with the goal of characterizing the achievable sample complexity for various problems with minimal assumptions. 
The proposed framework, which we call High-dimensional Propose-Test-Release (HPTR), makes a fundamental connection between differential privacy and robust statistics, thus achieving a sample complexity that significantly improves upon other state-of-the-art approaches. 
HPTR is a generic framework that can be  seamlessly  applied to various statistical estimation problems, 
as we demonstrate for mean estimation, linear regression, covariance estimation, and principal component analysis. 
Further, our analysis technique, which requires minimal assumptions, also seamlessly generalizes to all problem instances of interest. 

HPTR uses three crucial components:  
the exponential mechanism, 
robust statistics, 
and the Propose-Test-Release mechanism from \cite{dwork2009differential}. 
Building upon the inherent adaptivity and flexibility of the exponential mechanism, we propose using a novel loss function (also called a score function in a typical design of exponential mechanisms) that accesses the data only via one-dimensional robust statistics. 
The use of 1-D robust statistics is critical, because it dramatically reduces the sensitivity. We prove this sensitivity bound using the fundamental concept of resilience, which is central in robust statistics. 
This novel robust exponential mechanism is incorporated within the PTR framework to ensure that differential privacy is guaranteed on all input datasets, including those that are not necessarily compliant with the statistical assumptions. 
One byproduct of using robust statistics is that robustness comes for free. 
HPTR is inherently robust to adversarial corruption of the data and  achieves the optimal robust error rate under standard data corruption models. 

We present informal version of our main theoretical results in Section~\ref{sec:intro_main}.   
We present HPTR algorithm in detail in 
Section~\ref{sec:intro_algo}.  
We provide a sketch of the proof and the main technical contributions in 
Section~\ref{sec:intro_sketch}. 
Detailed explanations of the setting, main results, and the proofs for each instance of the problems are presented in Sections~\ref{sec:mean}--\ref{sec:pca} for mean estimation, linear regression, covariance estimation, and principal component analysis, respectively.

\medskip\noindent{\bf Notations.} 
Let  $[n]=\{1,2,\ldots,n\}$.
For $x\in{\mathbb R}^d$, we use $\|x\|=(\sum_{i\in[d]} (x_i)^2)^{1/2}$ to denote the Euclidean norm. 
For $X\in {\mathbb R}^{d_1\times d_2}$, we use $\|X\| = \max_{\|v\|_2=1} \| X v \|_2$ to denote the spectral norm. The $d\times d$ identity matrix is ${\mathbf I}_{d\times d}$. 
The Kronecker product is denoted by $x \otimes y$ for $x\in\reals^{d_1}$ and $y\in\reals^{d_2}$, such that $(x\otimes y)_{(i-1)d+j} =x_iy_j$ for $i\in[d_1]$ and $j\in[d_2]$. 
Whenever it is clear from context, we use $S$ to denote both a set of data points and also the set of indices of those data points. We use $S\sim S'$ to denote that two datasets $S,S'$ of  size $n$ are neighbors, such that $d_{\rm TV}(\hat p_S,\hat p_{S'})\leq 1/n$ where $d_{\rm TV}(\cdot)$ is the total variation and $\hat p_S$ is the empirical distribution of the data points in $S$.
We use $\mu(S)$ and $\Sigma(S)$ to denote mean and covariance of the data points in a dataset $S$, respectively. We use $\mu_p$ and $\Sigma_p$ to denote mean and covariance of the distribution $p$. 


\subsection{Main results and related work} 
\label{sec:intro_main}
For each canonical problem of interest in statistical estimation,  HPTR can readily be applied to, in most cases,  
significantly improve upon known achievable sample complexity. Most of the lower bounds we reference are copied in Appendix~\ref{sec:lb} for completeness. 

\subsubsection{DP mean estimation}

We apply our proposed HPTR framework to the standard DP mean estimation problem, where i.i.d.~samples $S=\{x_i\in\reals^d\}_{i=1}^n$ are drawn from a distribution $P_{\mu,\Sigma}$ with an unknown mean $\mu$ (which corresponds to $\theta$ in the general notation) and an unknown covariance $\Sigma \succ 0$, and we want to produce a DP estimate $\hat\mu$ of the mean. The resulting error is measured in Mahalanobis distance, $D_{P_{\mu,\Sigma}}(\hat\mu)=\|\Sigma^{-1/2}(\hat\mu-\mu)\|$, which is scale-invariant and naturally captured the uncertainty in all directions. 

This problem is especially challenging since we aim for a tight  guarantee that adapts to the unknown $\Sigma$ as measured in the Mahalanobis distance 
without sufficient samples to directly estimate $\Sigma$, as we explain below.
Despite being a canonical problem in DP statistics, the optimal sample complexity is not known even for standard distributions: sub-Gaussian and heavy-tailed distributions. We characterize the optimal sample complexity of the two problems by providing the guarantee of HPTR and the matching sample complexity lower bounds. A precise definition of sub-Gaussian distributions is provided in Eq.~\eqref{eq:def_subgauss}.

\begin{thm}[DP sub-Gaussian mean estimation algorithm, Corollary~\ref{coro:mean_subgaussian} informal]
    Consider a dataset $S =\{x_i\in\reals^d\}_{i=1}^n$ of $n$ i.i.d.~samples from a sub-Gaussian distribution with mean $\mu$ and covariance $\Sigma$. There exists an $(\eps,\delta)$-differentially private algorithm $\hat\mu(S)$ that given
    $$n \;=\; \tilde{O}_{\xi, \zeta}\Big(\, \frac{d}{\xi^2} + \frac{d}{ \varepsilon\xi} \,\Big)\;,$$
    achieves Mahalanobis error $\|\Sigma^{-1/2} (\hat\mu(S)-\mu) \| \le \xi$ with probability $1-\zeta$, where $\tilde{O}_{\xi, \zeta}$ hides the logarithmic dependency on $\xi, \zeta$ and we assume $\delta=e^{-O(d)}$.
\end{thm}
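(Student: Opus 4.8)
\emph{Proof sketch.} The plan is to obtain this as a specialization of the general HPTR mean-estimation guarantee (the master theorem of Section~\ref{sec:mean}): the algorithm --- Propose--Test--Release wrapped around the robust exponential mechanism whose loss $D_{\hat p_S}(\hat\mu)=\max_{\|v\|=1}(\langle v,\hat\mu\rangle-\langle v,\mu_{\hat p_v}^{(robust)}\rangle)/\sqrt{v^\top\Sigma_{\hat p_v}^{(robust)}v}$ touches the data only through one-dimensional trimmed moments --- is fixed, and $(\eps,\delta)$-privacy on \emph{every} input dataset is automatic from the PTR wrapper (Section~\ref{sec:dp}, cf.\ Lemma~\ref{lem:exp_mech}). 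So all the content is in a utility and parameter accounting on sub-Gaussian inputs: identify a high-probability regularity event, read off the local sensitivity on that event, feed it to PTR, and bound the exponential-mechanism error.

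\emph{Step 1: the regularity event.} Fix the trimming fraction $\alpha=\tilde O(\xi)$, chosen so that the sub-Gaussian tail makes the one-dimensional trimming bias small: for every unit $v$, $|\langle v,\mu_{\hat p_v}^{(robust)}\rangle-\langle v,\mu(S)\rangle|\le\tilde O(\xi)\sqrt{v^\top\Sigma v}$, and the trimmed variance $v^\top\Sigma_{\hat p_v}^{(robust)}v$ sits within a factor close to $1$ of $v^\top\Sigma v$. Then standard sub-Gaussian concentration --- matrix Bernstein for $\Sigma(S)$, a covering-net bound for directional empirical means, and the known resilience of sub-Gaussian laws --- shows that with $n=\tilde O_{\zeta}(d/\xi^2)$ samples the following hold simultaneously with probability $1-\zeta$: $\tfrac12\Sigma\preceq\Sigma(S)\preceq 2\Sigma$; $\|\Sigma^{-1/2}(\mu(S)-\mu)\|\le\xi/2$; and $S$ is $(\alpha,\rho)$-resilient in the sense of Definition~\ref{def:resilience}, in \emph{both} the first and the second moment, with $\rho=\tilde O(\xi)$. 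This is the only step that uses sub-Gaussianity.

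\emph{Step 2: local sensitivity and PTR.} On the regularity event, resilience pins the effective support of each one-dimensional trimmed first/second moment inside a window of width $\tilde O(\xi)\sqrt{v^\top\Sigma v}$ around its empirical value, so swapping a single sample moves each trimmed moment by $\tilde O(\xi/n)\sqrt{v^\top\Sigma v}$; dividing numerator and denominator of $D_{\hat p_S}$ through by $\sqrt{v^\top\Sigma_{\hat p_v}^{(robust)}v}=\Theta(\sqrt{v^\top\Sigma v})$ gives local sensitivity $\sens=\tilde O(1/n)$, uniformly in $\hat\mu$ (this is Lemma~\ref{lem:local_asmp}). Now run PTR: propose $\sens=\tilde O(1/n)$; spend part of the $(\eps,\delta)$-budget on a differentially private distance-to-instability test of ``the local sensitivity at $S$ is $\le\sens$'' --- on the regularity event this distance is $\Omega(n)$, comfortably above the test threshold $O((1/\eps)\log(1/\delta))=O(d/\eps)$, which is exactly where $\delta=e^{-O(d)}$ is used --- so $S$ passes with overwhelming probability; and upon passing, release $\hat\mu\sim Z(S)^{-1}\exp\{-(\eps/2\sens)D_{\hat p_S}(\hat\mu)\}$.

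\emph{Step 3: utility, and the main obstacle.} The exponential-mechanism tail bound gives, with probability $1-\zeta$, $D_{\hat p_S}(\hat\mu)\le\inf_{\hat\mu'}D_{\hat p_S}(\hat\mu')+O((\sens/\eps)\log(V_r/V_*))$, where $V_r/V_*$ is the ratio of the volume of a level set $\{D_{\hat p_S}\le r\}$ to that near a minimizer; resilience forces $\{D_{\hat p_S}\le t\}$ to be sandwiched between Mahalanobis balls of radius $\Theta(t)$, so this log-volume ratio is $O(d\log(\cdot))$ and the excess loss is $\tilde O(\sens\,d/\eps)=\tilde O(d/(\eps n))$. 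Also $\inf_{\hat\mu'}D_{\hat p_S}(\hat\mu')\le D_{\hat p_S}(\mu(S))=\tilde O(\xi)$ by the trimming-bias bound of Step~1. Transferring from the robust empirical loss to population error via the variational identity of Lemma~\ref{lem:equidist} (splitting $\langle v,\hat\mu-\mu\rangle$ into $\hat\mu$-vs-trimmed-mean, trimmed-mean-vs-empirical-mean, and empirical-mean-vs-$\mu$ pieces, each bounded above) yields $\|\Sigma^{-1/2}(\hat\mu-\mu)\|\le\tilde O(\xi+d/(\eps n))$; requiring the right side to be $\le\xi$ forces $n=\tilde O_{\xi,\zeta}(d/\xi^2+d/(\eps\xi))$. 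I expect the main obstacle to be the joint resilience/sensitivity bookkeeping for the \emph{ratio} statistic: one must control the trimmed variance in the denominator tightly enough --- both its sensitivity and its two-sided closeness to $v^\top\Sigma v$, uniformly over all directions $v$ --- so that no spurious $\sqrt d$ factor enters $\sens$ (precisely the improvement over naive high-dimensional robust estimators), and one must calibrate the PTR test so its soundness-failure probability is genuinely $e^{-\Omega(d)}$, which is what lets the $\log(1/\delta)$ term be absorbed into the $d$-dependence already present.
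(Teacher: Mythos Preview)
Your overall outline matches the paper's route---establish resilience for sub-Gaussian samples (Lemma~\ref{lem:mean_subgaussian}), derive a sensitivity bound from resilience (Lemma~\ref{lem:local_asmp}), and plug into the HPTR utility theorem (Theorem~\ref{thm:mean})---but two of your quantitative claims are wrong in ways that break the argument.

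First, the sensitivity of $D_S(\hat\mu)$ is \emph{not} uniform in $\hat\mu$. When you write ``dividing numerator and denominator \ldots\ gives local sensitivity $\sens=\tilde O(1/n)$, uniformly in $\hat\mu$,'' you are dropping the cross term: writing $D_S=\max_v a_v/b_v$, a one-sample swap perturbs both $a_v$ and $b_v$, and $|a_v/b_v-a_v'/b_v'|\le |a_v-a_v'|/b_v + (a_v'/b_v')\,|b_v-b_v'|/b_v$. The second summand scales with $a_v'/b_v'\asymp\|\Sigma^{-1/2}(\hat\mu-\mu)\|$, which is unbounded; this is exactly Eq.~\eqref{eq:intuition_sensitivity}. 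Consequently Lemma~\ref{lem:local_asmp} asserts the bound only for $\hat\mu\in B_{\tau+(k^*+3)\sens,S}$, not everywhere. This is why the paper restricts the exponential mechanism to the data-dependent support $B_{\tau,S}$, and why the safety test must certify not merely a sensitivity bound but that the supports $B_{\tau,S'}$ and $B_{\tau,S''}$ of neighbors overlap well enough (Lemmas~\ref{lem:safe_ratio}--\ref{lem:safe_vol}); the mass on the symmetric difference of supports is what produces the $\delta$ in the release step. Your sketch omits this two-fold locality and the attendant support-comparison argument, which is the central technical device of HPTR over plain PTR. Your closing paragraph locates the obstacle in uniformity over \emph{directions} $v$; the real obstacle is uniformity over \emph{candidates} $\hat\mu$.

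Second, your safety-margin claim ``this distance is $\Omega(n)$'' is too optimistic and would yield the wrong sample complexity. Resilience at level $\alpha$ only controls datasets within Hamming distance $O(\alpha n)$ of $S$: once more than the trimming fraction of points is altered, the adversary can dictate the trimmed statistics, so the local-sensitivity bound of Lemma~\ref{lem:local_asmp} (which needs $k^*/n\le (1/11)\alpha$) no longer applies. Hence the margin is $O(\alpha n)=\tilde O(\xi n)$, and requiring it to exceed the PTR threshold $k^*=(2/\varepsilon)\log(4/(\delta\zeta))=O(d/\varepsilon)$ under $\delta=e^{-\Theta(d)}$ forces $\xi n\gtrsim d/\varepsilon$, i.e.\ $n\gtrsim d/(\varepsilon\xi)$. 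This is precisely the origin of the second term in the stated sample complexity; with an $\Omega(n)$ margin you would erroneously conclude $n\gtrsim d/\varepsilon$.
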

 
HPTR is the first algorithm for sub-Gaussian mean estimation with unknown covariance that matches the best known sample complexity lower bound of $n=\widetilde \Omega(d/\xi^2 + d/(\xi\varepsilon))$ from \cite{KV17,KLSU19} up to logarithmic factors in error $\xi$ and failure probability $\zeta$. Existing algorithms are suboptimal as they  require either a larger sample size or strictly Gaussian assumptions. 

Advances in DP mean estimation started with computationally efficient approaches of \cite{KV17,KLSU19,barber2014privacy}. We discuss the results as follows, and omit the polynomial factors in $\log(1/\delta)$.  When the covariance $\Sigma$ is known,  Mahalanobis  error $\xi$ can be
achieved with $n=\tilde O(d/\xi^2 + d/(\xi\varepsilon))$ samples. 
Under a relaxed assumption that ${\bf I}_{d\times d} \preceq \Sigma \preceq \kappa  {\bf I}_{d\times d}$ with a known $\kappa$,  $n=\tilde O(d/\xi^2 + d/(\xi\varepsilon) + d^{1.5}/\varepsilon)$ samples are required using  a specific preconditioning approach tailored for the assumption and the knowledge of $\kappa$.  
For general unknown $\Sigma$,   $O(d^2/\xi^2 + d^2/(\xi\varepsilon))$ samples are required using an  explicit DP estimation of the covariance. Empirically, further gains can be achieved with CoinPress \cite{biswas2020coinpress}. 

Computationally inefficient approaches followed with a goal of identifying the fundamental optimal sample complexity with minimal assumptions~\cite{bun2019private,aden2020sample}. For the unknown covariance setting, the best known result under Mahalanobis error is achieved by \cite{brown2021covariance}. 
Through analyzing the differentially private Tukey median estimator introduced in \cite{liu2021robust}, \cite{brown2021covariance} shows that $n=\tilde O(d/\xi^2 + d/(\xi\varepsilon))$ is sufficient even when the covariance is unknown. However, the approach heavily relies on the assumption that the distribution is strictly Gaussian. For sub-Gaussian distributions, \cite{brown2021covariance} proposes a different approach achieving sample complexity of 
$n=\tilde O ( d/\xi^2  + d/(\xi\varepsilon^2) )$ samples with a sub-optimal  $(1/\varepsilon^2)$ dependence.  

Beyond the sub-Gaussian setting, it is natural to consider the DP mean estimation for distributions with heavier tails. We apply HPTR framework to the more general mean estimation problems for hypercontractive distributions. 
    A distribution $P_{\mu,\Sigma}$ with mean $\mu$ and covariance $\Sigma$ is 
    $(\kappa,k)$-hypercontractive if for all $v\in\reals^d$, 
    $\E_{x\sim P_X}[ |\langle v , (x-\mu)\rangle |^k] \leq \kappa^k (v^\top\Sigma v)^{k/2}$. 
The assumption of hypercontractivity is similar to the bounded $k$-th moment assumptions, except requiring an additional lower bound on the covariance. This additional assumption is necessary for our setting to make sure the Mahalanobis error guarantee is achievable. We state our main result for hypercontractive mean estimation as follows. For simplicity of the statement, we assume $k, \kappa$ are constants.
\begin{thm}[DP hypercontractive mean estimation algorithm, Corollary~\ref{coro:mean_kmoment} informal]
Consider a dataset $S =\{x_i\in\reals^d\}_{i=1}^n$ of $n$ i.i.d.~samples from a $(\kappa, k)$-hypercontractive distribution with mean $\mu$ and covariance $\Sigma$. There exists an  $(\eps,\delta)$-differentially private algorithm $\hat\mu(S)$ that given
 $$n\;=\; \tilde{O}_d \Big(\frac{d}{\xi^{2}} + \frac{d}{\varepsilon\xi^{1+1/(k-1)} }   
    \Big) \;,$$ 
      achieves Mahalanobis error $\|\Sigma^{-1/2} (\hat\mu(S)-\mu) \| \le \xi$ with  probability at least $0.99$, where $\tilde{O}_{d}$ hides a logarithmic factor on $d$, and we assumes $\delta=e^{-O(d)}$.
\end{thm}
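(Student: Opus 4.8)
\medskip\noindent\textbf{Proof proposal.}
The plan is to instantiate the general \HPTR{} recipe with the robust score function already displayed in the introduction, namely $D_{\hat p_S}(\hat\mu)=\max_{\|v\|=1}\big(\ip{v}{\hat\mu}-\ip{v}{\mu_{\hat p_v}^{(\mathrm{robust})}}\big)\big/\sqrt{v^\top\Sigma^{(\mathrm{robust})}_{\hat p_v}v}$ from Eq.~\eqref{eq:defproj}, and to feed the distribution-specific resilience parameters of a $(\kappa,k)$-hypercontractive law into the general utility theorem. The first step is the resilience bound: I would show that with $n=\tilde{O}_d(d/\xi^2)$ i.i.d.\ samples, with probability at least $0.995$ the dataset is $(\alpha,\rho)$-resilient in the sense of Definition~\ref{def:resilience} — simultaneously for the first and second moments in every direction $v$ — with $\rho=O\big(\kappa\,\alpha^{1-1/k}+\sqrt{d/n}\big)$ over a suitable range of $\alpha$. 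The $\alpha^{1-1/k}$ scaling is the crux and comes from a one-line H\"older estimate: the mean of the worst $\alpha$-fraction of $|\ip{v}{x-\mu}|$ is at most $\alpha^{-1}\,\E[|\ip{v}{x-\mu}|^k]^{1/k}\alpha^{1-1/k}\le\kappa\,\alpha^{-1/k}\sqrt{v^\top\Sigma v}$ by hypercontractivity, and a uniform-convergence argument (a covering over the sphere of directions $v$) transfers this from the population to the sample up to the additive $\sqrt{d/n}$ term; the same estimate applied to $\ip{v}{x-\mu}^2$ controls the one-dimensional second moment.

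Second, I would invoke the local sensitivity bound of Lemma~\ref{lem:local_asmp}: because $D_{\hat p_S}$ accesses the data only through the one-dimensional trimmed mean and trimmed variance, on an $(\alpha,\rho)$-resilient dataset the effective support of each projected coordinate has width $O(\rho/\alpha)=O(\kappa\,\alpha^{-1/k})$ in $v^\top\Sigma v=1$ units, so trimming $\alpha n$ points from each tail makes both statistics — and hence $D_{\hat p_S}(\hat\mu)$ — have local sensitivity $\sens=\tilde{O}\big(\kappa\,\alpha^{-1/k}/n\big)$. Third, I would apply the exponential-mechanism utility bound (Lemma~\ref{lem:exp_mech} together with a standard volume/packing argument over $\reals^d$, which is also where truncation and the assumption $\delta=e^{-O(d)}$ enter) to conclude that the released $\hat\mu$ satisfies $D_{\hat p_S}(\hat\mu)\le\min_{\hat\mu'}D_{\hat p_S}(\hat\mu')+\tilde{O}\big(\sens\,d/\varepsilon\big)$ with constant probability, all wrapped inside Propose--Test--Release: the test certifies that $S$ is far from any dataset violating the proposed $\sens$, which holds with high probability precisely because resilience is itself a robust property, and this is what makes the whole procedure $(\eps,\delta)$-DP on every input, resilient or not.

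The final step is to translate the empirical guarantee on $D_{\hat p_S}(\hat\mu)$ back to the population Mahalanobis error. Using the variational identity of Lemma~\ref{lem:equidist} and the resilience of the sample, $\|\Sigma^{-1/2}(\hat\mu-\mu)\|$ differs from $D_{\hat p_S}(\hat\mu)$ by at most the bias of the one-dimensional robust mean and variance plus sampling error, i.e.\ by $O\big(\kappa\,\alpha^{1-1/k}+\sqrt{d/n}\big)$; likewise $\min_{\hat\mu'}D_{\hat p_S}(\hat\mu')\le D_{\hat p_S}(\mu)=O\big(\kappa\,\alpha^{1-1/k}+\sqrt{d/n}\big)$. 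Collecting the three error sources gives $\|\Sigma^{-1/2}(\hat\mu-\mu)\|=\tilde{O}\big(\sqrt{d/n}+\kappa\,\alpha^{1-1/k}+\kappa\,\alpha^{-1/k}d/(\varepsilon n)\big)$, and optimizing by setting $\alpha\asymp(\xi/\kappa)^{k/(k-1)}$ drives the trimming-bias term below $\xi$ while the privacy term becomes $\kappa^{1+1/(k-1)}\xi^{-1/(k-1)}d/(\varepsilon n)$; requiring each of the three terms to be at most $\xi$ yields $n=\tilde{O}_d\big(d/\xi^2+d/(\varepsilon\,\xi^{1+1/(k-1)})\big)$ for constant $\kappa,k$, as claimed. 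I expect the main obstacle to be the resilience step combined with the two-sided empirical-to-population translation: one must control, simultaneously over all directions $v$, the bias of \emph{both} the trimmed mean and the trimmed variance (the latter sitting in the denominator), and check that the looser $\alpha^{1-1/k}$ resilience of heavy-tailed laws — rather than the $\alpha\sqrt{\log(1/\alpha)}$ available in the sub-Gaussian case — still threads cleanly through the normalization; everything else is a mechanical instantiation of the general \HPTR{} machinery.
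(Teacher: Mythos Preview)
Your proposal is essentially the paper's own argument: instantiate the general HPTR utility theorem (Theorem~\ref{thm:mean}/\ref{thm:robust_mean}) with the hypercontractive resilience bound of Lemma~\ref{lem:mean_kmoment}, obtain $\rho_1=O(\kappa\,\alpha^{1-1/k})$, $\sens=O(\rho_1/(\alpha n))=O(\kappa\,\alpha^{-1/k}/n)$, and optimize $\alpha\asymp(\xi/\kappa)^{k/(k-1)}$ to land on the stated sample complexity.

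One technical point you should be careful with: you assert that the raw sample set $S$ is itself $(\alpha,\rho)$-resilient, but the paper explicitly notes (immediately before Lemma~\ref{lem:mean_kmoment}) that i.i.d.\ hypercontractive samples are \emph{not} resilient in the sense of Definition~\ref{def:resilience}. A single heavy sample can violate the uniform-over-$T$ condition. What Lemma~\ref{lem:mean_kmoment} actually gives is that $S$ is $(c_3\alpha,\alpha,\rho_1,\rho_2)$-\emph{corrupt good}: there is a resilient subset of size $(1-c_3\alpha)n$, and the remaining points are treated as adversarial corruption. The paper then invokes the robust version, Theorem~\ref{thm:robust_mean}, rather than Theorem~\ref{thm:mean}; HPTR's trimmed statistics absorb the heavy outliers automatically. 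Your H\"older-plus-covering sketch for the population is exactly right, but the ``transfer to the sample'' step must be phrased this way rather than as direct resilience of $S$. Also note that the second-moment resilience you need is $\rho_2=O(\kappa^2\alpha^{1-2/k})$ (the H\"older exponent halves because you are controlling the $(k/2)$-th moment of $\ip{v}{x-\mu}^2$), which is why the paper restricts to $k\ge 3$ so that the side conditions $\rho_2<c$ and $\rho_1^2\le c\alpha$ of Theorem~\ref{thm:robust_mean} are both implied by $\alpha^{1-2/k}\le c$. Once you make these two adjustments, everything in your outline goes through verbatim.
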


We prove an $n=\Omega( d/ \varepsilon\xi^{1+1/(k-1)} )$ sample complexity lower bound for hypercontractive DP mean estimation in Proposition~\ref{thm:lowerbound_mean_hypercontractive} to show the optimality of our upper bound result. 
Notice that the first term $\tilde{O}_d(d/\xi^2)$ in the upper bound cannot be improved up to logarithmic factors even if we do not require privacy, thus HPTR is the first algorithm that achieves optimal sample complexity for hypercontractive mean estimation under Mahalanobis distance up to logarithmic factors in $d$. 
When the covariance is known, 
an existing  DP mean estimator of \cite{kamath2020private} achieves a stronger $(\varepsilon,0)$-DP  with a similar sample size of $n=\widetilde O(  d / \xi^2 +  d / (\varepsilon\xi^{1+1/(k-1)} ) )$, and no prior result is known for the unknown covariance case.

\subsubsection{DP linear regression}
\label{sec:ex_lr}

We next apply {\HPTR} framework to DP linear regression. Given i.i.d. samples $S=\{(x_i,y_i)\}_{i\in [n]}$ from a distribution $P_{\beta, \Sigma, \gamma^2}$ of a linear model: $y_i = x_i^\top \beta+\eta_i$, where the input $x_i\in\reals^d$ has zero mean and covariance $\Sigma$ and  
the noise $\eta_i\in\reals$ has variance $\gamma^2$ satisfying $\E[x_i\eta_i]=0$, the goal of DP linear regression is to output a DP estimate $\hat\beta$ of the unknown model parameter $\beta$, without knowledge about the covariance $\Sigma$. 
The resulting error is measured in $D_{P_{\beta,\Sigma,\gamma^2}}(\hat\beta)= (1/\gamma)\|\Sigma^{1/2}(\hat\beta -\beta )\|$ which is equivalent to the standard \textit{root excess risk} of the estimated predictor $\hat\beta$. 
Similar to Mahalanobis distance for mean estimation, this is challenging since we aim for a tight guarantee that adapts to the unknown $\Sigma$ without having enough samples to directly estimate $\Sigma$.

\begin{thm}[DP sub-Gaussian linear regression, Corollary~\ref{coro:linear_regression_subgaussian} informal]
    Consider a dataset $S =\{(x_i, y_i)\}_{i=1}^n$ generated from a linear model $y_i=x_i^\top \beta+\eta_i$ for some  $\beta\in \reals^d$, where  $\{x_i\}_{i\in [n]}$ are i.i.d. sampled from zero-mean $d$-dimensional sub-Gaussian distribution with unknown covariance $\Sigma$, and $\{\eta_i\}_{i\in [n]}$ are i.i.d. sampled from zero mean one-dimensional sub-Gaussian with  variance $\gamma^2$. We further assume the data $x_i$ and the noise $\eta_i$ are independent. There exists a $(\eps,\delta)$-differentially private algorithm $\hat\beta(S)$ that given
    $$n \;=\; \tilde{O}_{\xi, \zeta}\Big(\, \frac{d}{\xi^2} + \frac{d}{ \varepsilon\xi} \,\Big)\;,$$
    achieves error $(1/\gamma)\|\Sigma^{1/2} (\hat\beta(S)-\beta) \| \le \xi $ with probability $1-\zeta$, where $\tilde{O}_{\xi, \zeta}$ hides the logarithmic dependency on $\xi, \zeta$ and we assume $\delta=e^{-O(d)}$.
\end{thm}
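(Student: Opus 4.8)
I would derive this as an instance of the general HPTR recipe of Section~\ref{sec:intro_algo}--\ref{sec:intro_sketch}, applied not to the raw samples but to the \emph{augmented} dataset $\tilde S = \{\tilde x_i := (x_i,y_i)\in\reals^{d+1}\}_{i=1}^n$. The three things to supply are: (i) a reformulation of the root excess risk in terms of one-dimensional statistics of $\tilde x_i$, (ii) a local sensitivity bound for the resulting score on resilient datasets, and (iii) resilience of the i.i.d.\ sample. For (i), write $\tilde v(\hat\beta) := (-\hat\beta,1)\in\reals^{d+1}$, so that $y-x^\top\hat\beta = \langle \tilde v(\hat\beta),\tilde x\rangle$ and, using $\E[x\eta]=0$, $\E[\eta]=0$, $\mathrm{Var}(\eta)=\gamma^2$,
$$\E\big[(y-x^\top\hat\beta)^2\big] \;=\; \tilde v(\hat\beta)^\top\tilde\Sigma\,\tilde v(\hat\beta) \;=\; \gamma^2 + (\hat\beta-\beta)^\top\Sigma(\hat\beta-\beta)\;,\qquad \tilde\Sigma := \E[\tilde x\tilde x^\top]\;,$$
which is minimized over $\hat\beta$ at $\beta$ with value $\gamma^2$. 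Hence the target error admits the exact identity
$$D_{P_{\beta,\Sigma,\gamma^2}}(\hat\beta)^2 \;=\; \frac{1}{\gamma^2}\big\|\Sigma^{1/2}(\hat\beta-\beta)\big\|^2 \;=\; \frac{\tilde v(\hat\beta)^\top\tilde\Sigma\,\tilde v(\hat\beta)}{\min_{\beta'\in\reals^d}\tilde v(\beta')^\top\tilde\Sigma\,\tilde v(\beta')} \;-\; 1\;,$$
the linear-regression analogue of Lemma~\ref{lem:equidist}; it depends on the data only through one-dimensional second moments $\tilde v^\top\tilde\Sigma\,\tilde v$ of the augmented variable.

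\textbf{Algorithm, privacy, and sensitivity.} Replacing each population second moment in this identity by its trimmed one-dimensional counterpart $\tilde v^\top\tilde\Sigma^{\rm(robust)}_{\hat p_{\tilde v}}\tilde v$ from Eq.~\eqref{eq:defproj} yields an empirical score $D_{\hat p_S}(\hat\beta)$ (e.g.\ $D_{\hat p_S}(\hat\beta)=(R_{\hat p_S}(\hat\beta)/\min_{\beta'}R_{\hat p_S}(\beta')-1)^{1/2}$ with $R_{\hat p_S}(\beta') = \tilde v(\beta')^\top\tilde\Sigma^{\rm(robust)}_{\hat p_{\tilde v(\beta')}}\tilde v(\beta')$), and the exponential mechanism of Lemma~\ref{lem:exp_mech} samples $\hat\beta\propto\exp\{-(\varepsilon/2\Delta)D_{\hat p_S}(\hat\beta)\}$. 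Wrapping this inside Propose--Test--Release (Section~\ref{sec:dp}) with the sensitivity bound $\Delta$ dictated by the prescribed resilience level gives $(\varepsilon,\delta)$-DP on every input, with $\delta=e^{-O(d)}$ as required by the test step. For the sensitivity bound, note that $(x_i,y_i)$ is itself a centered sub-Gaussian vector in $\reals^{d+1}$: for $\tilde v=(a,b)$, $\langle\tilde v,(x,y)\rangle=(a+b\beta)^\top x + b\eta$ is a sum of independent sub-Gaussians with variance proxy $O(\tilde v^\top\tilde\Sigma\,\tilde v)$, so $\tilde x_i$ has covariance $\tilde\Sigma$ and is sub-Gaussian in the sense of Eq.~\eqref{eq:def_subgauss}. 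Invoking the sub-Gaussian resilience lemma used for covariance/PCA estimation, with $n=\tilde\Omega(d+\log(1/\zeta))$ the augmented dataset is $(\alpha,\rho)$-resilient in second moments with $\rho=\tilde O(\alpha)$ w.p.\ $1-\zeta$ (Definition~\ref{def:resilience}), and on such datasets the Lemma~\ref{lem:local_asmp}-type argument bounds the local sensitivity of each trimmed one-dimensional second moment, hence of $D_{\hat p_S}$, by $\Delta=\tilde O(1/n)$; in particular the PTR test passes w.h.p.

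\textbf{Utility and sample complexity.} On the resilient event, the trimmed second moments concentrate uniformly over the $d$-parameter direction family $\{\tilde v(\beta')\}$, giving $|D_{\hat p_S}(\hat\beta)-D_{P_{\beta,\Sigma,\gamma^2}}(\hat\beta)| = \tilde O(\sqrt{d/n})$ simultaneously for all $\hat\beta$ (this is the $d/\xi^2$ term and is why the dependence is on $d$, not $d^2$). Combining this with the standard exponential-mechanism utility guarantee $D_{\hat p_S}(\hat\beta)\le D_{\hat p_S}(\beta)+\tilde O(\Delta(d+\log(1/\zeta))/\varepsilon)$ and $\Delta=\tilde O(1/n)$ yields $D_{P_{\beta,\Sigma,\gamma^2}}(\hat\beta) = \tilde O\big(\sqrt{d/n} + d/(n\varepsilon)\big)$ w.p.\ $1-\zeta$, which is at most $\xi$ once $n=\tilde O_{\xi,\zeta}(d/\xi^2 + d/(\varepsilon\xi))$, as claimed.

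\textbf{Main obstacle.} I expect the crux to be the denominator $\min_{\beta'}R_{\hat p_S}(\beta')$, which robustly estimates the unknown noise scale $\gamma^2$: I must show it stays bounded away from $0$ on resilient datasets so that the \emph{ratio} (and the sensitivity of a ratio) remains controlled, and that its robust argmin stays close enough to $\beta$ that the scale-invariance of the metric still delivers a guarantee adapted to both unknown $\Sigma$ and unknown $\gamma$. Closely tied to this, every concentration and sensitivity statement about the trimmed one-dimensional second moments must be made \emph{uniform} over the $d$-parameter family $\{\tilde v(\beta')\}$ rather than for a fixed projection direction — keeping the robustness/resilience budget from degrading with that union is the main technical work, and is what pins the sample complexity at $d$ and matches the lower bound of \cite{KV17,KLSU19}.
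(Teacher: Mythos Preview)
Your high-level plan (augmented sample, PTR+exponential mechanism, resilience of the sub-Gaussian $(x_i,y_i)$) is sound, and indeed the paper's resilience argument for the sub-Gaussian case (Lemma~\ref{lem:res_subgaussian_lr}) proceeds exactly by passing to the augmented vector in $\reals^{d+1}$. However, there is a genuine gap in your choice of score function, and it is precisely where the paper's proof deviates from yours.

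\medskip
\textbf{The problem with the residual-square score.} Your score accesses the data only through the trimmed one-dimensional second moments $R_{\hat p_S}(\beta')\approx\tilde v(\beta')^\top\tilde\Sigma\,\tilde v(\beta')=\gamma^2+\|\Sigma^{1/2}(\beta'-\beta)\|^2$. This quantity is \emph{quadratic} in the target distance, so you face an unavoidable dilemma: if you use the squared score $R_{\hat p_S}(\hat\beta)/\min_{\beta'}R_{\hat p_S}(\beta')-1$, its sensitivity on resilient data is indeed $\tilde O(1/n)$, but the exponential-mechanism utility then gives $D_P(\hat\beta)^2=\tilde O(d/(n\varepsilon))$, i.e.\ $n=\tilde O(d/(\xi^2\varepsilon))$, missing the claimed $d/(\xi\varepsilon)$ term. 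If instead you take the square root (as you propose) to recover a score linear in the distance, the sensitivity bound $\Delta=\tilde O(1/n)$ fails near the minimizer: for neighboring $S,S'$ with $R_S(\hat\beta)/R_S^{\min}-1=0$ and $R_{S'}(\hat\beta)/R_{S'}^{\min}-1=\Theta(1/n)$ you get $|D_S-D_{S'}|=\Theta(1/\sqrt{n})$, and this blown-up sensitivity again leads to a suboptimal rate. No smoothing of the square root at $0$ repairs this without destroying the distinguishing power at small distances. The ``standard exponential-mechanism utility'' line you wrote therefore does not hold with $\Delta=\tilde O(1/n)$ for your score.

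\medskip
\textbf{How the paper gets a linear score.} The paper does not use the residual second moment. Instead it proves (Lemma~\ref{lem:true_dist_lr}) the identity
\[
\|\Sigma^{1/2}(\hat\beta-\beta)\|\;=\;\max_{\|v\|\le 1}\;\frac{\E[\langle v, x_i(y_i-x_i^\top\hat\beta)\rangle]}{\sigma_v}\;,
\]
and builds the score in Eq.~\eqref{def:lr_dist} as a robustified version of this: a maximum over $v\in\reals^d$ of trimmed one-dimensional means of $\langle v,x_i(y_i-x_i^\top\hat\beta)\rangle$, normalized by a robust $\sigma_v({\cal M}_{v,\alpha})$ and a separately estimated robust $\hat\gamma$ (Eq.~\eqref{def:robust_gamma}). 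Because the inner statistic is \emph{linear} in $\hat\beta-\beta$ in expectation and the max over $v$ recovers the norm, the score is directly comparable to $(1/\gamma)\|\Sigma^{1/2}(\hat\beta-\beta)\|$ with sensitivity $\sens=O(\rho_1/(\alpha n))=\tilde O(1/n)$ uniformly over $B_{\thresh,S}$ (Lemma~\ref{lem:local_asmp_lr}), with no square-root singularity. This is what yields the $d/(\xi\varepsilon)$ term via Theorem~\ref{thm:linear_regression}. Your ``main obstacle'' paragraph correctly anticipated the robust $\hat\gamma$ issue, but the deeper obstacle---getting a one-dimensional statistic that is linear rather than quadratic in the distance---is the one your proposal does not address.
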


  HPTR is the first algorithm for sub-Gaussian distributions with an unknown covariance $\Sigma$ that up to logarithmic factors matches the lower bound of  $n=\tilde\Omega(d/\xi^2+d/(\xi\varepsilon))$ assuming $\varepsilon < 1$ and  $\delta<n^{-1-\omega}$ for some $\omega>0$ from  \cite[Theorem 4.1]{cai2019cost}.
An existing algorithm for DP linear regression from \cite{cai2019cost} is suboptimal as it require $\Sigma$ to be close to the identity matrix, which is equivalent to assuming that we know $\Sigma$.  
\cite{dwork2009differential} proposes to use PTR and B-robust regression algorithm from \cite{hampel2011robust} for differentially private linear regression under i.i.d. data assumptions (also exponential time), but only asymptotic consistency is proven as  $n\rightarrow \infty$.
Under an alternative setting where the data is deterministically given without any probabilistic assumptions, 
significant advances in DP linear regression has been made 
\cite{vu2009differential,kifer2012private,mir2013differential,dimitrakakis2014robust,bassily2014private,wang2015privacy,foulds2016theory, minami2016differential, wang2018revisiting, sheffet2019old}. 
The state-of-the-art guarantee is achieved in \cite{wang2018revisiting, sheffet2019old} which under our setting translates into a sample complexity of $n= O(d^{1.5}/(\xi\varepsilon))$. The extra $d^{1/2}$ factor is due to the fact that no statistical assumption is made, and cannot be improved under the deterministic setting (not necessarily i.i.d.) that those algorithms are designed for. 


Similar to mean estimation, we also consider the DP linear regression for distributions with heavier tails, and apply HPTR framework to the linear regression problem under $(k,\kappa)$-hypercontractive distributions (see Definition~\ref{def:hyper}). 
HPTR can handle both independent and dependent noise, and we state the independent noise case here the dependent noise case in Section~\ref{sec:lr:dependent}.
For simplicity of the statement, we assume $k, \kappa$ are constants.

\begin{thm}[DP hypercontractive linear regression with independent noise, Corollary~\ref{coro:linear_regression_hypercontractive} informal]
    Consider a dataset $S =\{(x_i, y_i)\}_{i=1}^n$ generated from a linear model $y_i=x_i^\top \beta+\eta_i$ for some  $\beta\in \reals^d$, where  $\{x_i\}_{i\in[n]}$ are i.i.d. sampled from zero-mean $d$-dimensional $(\kappa, k)$-hypercontractive distribution with unknown covariance $\Sigma$  and $\eta_i$ are i.i.d. sampled from zero mean one-dimensional $( \kappa, k)$-hypercontractive distribution with  variance $\gamma^2$. We further assume the data $x_i$ and the noise $\{\eta_i\}_{i\in [n]}$ are independent. There exists an $(\eps,\delta)$-differentially private algorithm $\hat\beta(S)$ that given
    $$n \;=\; \tilde{O}_{d}\Big(\, \frac{d}{\xi^2} + \frac{d}{ \varepsilon \xi^{1+1/(k-1)}} \,\Big)\;,$$
    achieves error $(1/\gamma) \|\Sigma^{1/2} (\hat\beta(S)-\beta) \| \le \xi$ with probability $0.99$, where $\tilde{O}_d$ hides a logarithmic factor on $d$, and we assume $\delta=e^{-O(d)}$.
\end{thm}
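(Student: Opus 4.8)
The plan is to specialize the generic \HPTR{} pipeline (Sections~\ref{sec:intro_algo}--\ref{sec:intro_sketch}) to linear regression, following the sub-Gaussian template of Corollary~\ref{coro:linear_regression_subgaussian} but instantiated with the resilience profile of a $(\kappa,k)$-hypercontractive family. Step one is to recast the target error as a supremum of one-dimensional functionals so that the exponential-mechanism score touches the data only through one-dimensional robust statistics. Since the population ``gradient'' at a candidate $\hat\beta$ is $\E_{(x,y)\sim P}[x(\langle x,\hat\beta\rangle-y)]=\Sigma(\hat\beta-\beta)$ (using $\E[x\eta]=0$), the variational identity behind Lemma~\ref{lem:equidist} gives $(1/\gamma)\|\Sigma^{1/2}(\hat\beta-\beta)\|=\max_{\|u\|=1}\frac{\E[\langle u,x\rangle(\langle x,\hat\beta\rangle-y)]}{\gamma\sqrt{u^\top\Sigma u}}$, and $\gamma^2$ is identifiable from residual second moments (robustly, from one-dimensional projections of $(x,y)$). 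Replacing each population functional by a trimmed-mean robust estimator on the projected datasets $\{\langle u,x_i\rangle(\langle x_i,\hat\beta\rangle-y_i)\}_i$, $\{\langle u,x_i\rangle^2\}_i$ (and the residual-variance datasets), in the spirit of Eq.~\eqref{eq:defproj}, produces the empirical score $D_{\hat p_S}(\hat\beta)$ of Section~\ref{sec:lr}; the release step samples $\hat\beta\propto e^{-(\varepsilon/2\Delta)D_{\hat p_S}(\hat\beta)}$ inside Propose-Test-Release, the score being designed so that this is a proper (normalizable, minimized near $\beta$) exponential-mechanism score.

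Step two is resilience and its consequences. Taking $\alpha=\Theta\big((\xi/\mathrm{poly}(\kappa))^{k/(k-1)}\big)$, I would show that $n=\tilde\Omega_d(d/\xi^2)$ i.i.d.\ samples make $S$, with probability $\ge 0.995$, $(\alpha,\rho)$-resilient with $\rho=O(\mathrm{poly}(\kappa)\,\alpha^{1-1/k})$, simultaneously for every one-dimensional family entering the score (projections of $x$, the bilinear products $\langle u,x\rangle\eta$, and the residual squares). Independence of $x$ and $\eta$ plus $(\kappa,k)$-hypercontractivity give $\big(\E|\langle u,x\rangle\eta|^k\big)^{1/k}\le\kappa^2\gamma\sqrt{u^\top\Sigma u}$ by H\"older, which is precisely why the trimming bias in the numerator is of order $\gamma\sqrt{u^\top\Sigma u}\cdot\alpha^{1-1/k}$; and $n\gtrsim d/\xi^2$ is exactly the threshold at which empirical $k$-th moments concentrate uniformly over directions, because $d/\alpha^{2-2/k}=d/\xi^2$. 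Resilience then yields the two facts the utility bound needs: (a) $D_{\hat p_S}(\beta)=O(\mathrm{poly}(\kappa)\,\alpha^{1-1/k})$, at most a controllable constant times $\xi$; and (b) a two-sided sandwich $|D_{\hat p_S}(\hat\beta)-D_P(\hat\beta)|\le O(\mathrm{poly}(\kappa)\,\alpha^{1-1/k})$ (up to harmless constants in the normalization) valid for \emph{every} $\hat\beta$ in the domain, with uniformity over $\hat\beta$ coming from resilience being a worst-case-over-subsets statement rather than a per-$\hat\beta$ concentration event.

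Step three is the local sensitivity of $D_{\hat p_S}$ on resilient inputs. Each trimmed mean in the score has one-point influence $O(\rho/(\alpha n))$ in its natural scale by the resilience-to-sensitivity bridge (Lemma~\ref{lem:local_asmp}); since the numerator and denominators are normalized consistently and, on resilient $S$, the robust denominators are bounded below by $\Omega(u^\top\Sigma u)$ and $\Omega(\gamma^2)$, the ratio $D_{\hat p_S}(\hat\beta)$ has sensitivity $\Delta=\tilde O(\rho/(\alpha n))=\tilde O(\alpha^{-1/k}/n)=\tilde O(\xi^{-1/(k-1)}/n)$ --- uniformly over $u$ (a pointwise maximum of functions of sensitivity $\le\Delta$ has sensitivity $\le\Delta$) and over $\hat\beta$ (by scale invariance of the normalization). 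We propose this $\Delta$; the PTR test (the ``distance to the nearest unsafe dataset'' test of Section~\ref{sec:dp}) passes with probability $\ge 1-\delta$ whenever $S$ is resilient, so with high probability we reach the release step, and $(\varepsilon,\delta)$-DP on every input follows from Lemma~\ref{lem:exp_mech} plus the PTR wrapper.

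Finally, the utility accounting. The exponential-mechanism guarantee gives $D_{\hat p_S}(\hat\beta)\le\min_{\hat\beta'}D_{\hat p_S}(\hat\beta')+O\big((\Delta/\varepsilon)\log(1/q)\big)$, where $q$ is the prior mass of the near-minimizer set; since $D_{\hat p_S}(\cdot)$ grows away from its minimizer and the near-minimizer set contains a $D_P$-ball of radius $\Theta(\xi)$ about $\beta$ while the effective domain is a $D_P$-ball of radius $O(1)$, one gets $\log(1/q)=\tilde O(d)$ (logarithmic in $d$ and $1/\xi$, hence the $\tilde O_d$). Chaining this with (a) $\min_{\hat\beta'}D_{\hat p_S}(\hat\beta')\le D_{\hat p_S}(\beta)=O(\xi)$ and the sandwich (b), the output satisfies $D_P(\hat\beta)=O\big(\xi+(\Delta/\varepsilon)\,d\cdot\mathrm{polylog}\big)=O\big(\xi+\tfrac{d\cdot\mathrm{polylog}}{\varepsilon\,n\,\xi^{1/(k-1)}}\big)$; forcing the second term below $\xi$ needs $n=\tilde O_d\big(d/(\varepsilon\xi^{1+1/(k-1)})\big)$, which together with the $n=\tilde\Omega_d(d/\xi^2)$ required for resilience gives the claimed sample complexity, while shrinking the constant inside $\alpha$ drives the error below $\xi$ and the failure probability below $0.01$. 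I expect the main obstacles to be: (i) the uniform-in-$\hat\beta$ sandwich (b) --- showing $D_{\hat p_S}$ two-sidedly tracks $D_P$ to within $O(\alpha^{1-1/k})$ for \emph{all} candidates with only $d/\xi^2$ samples, where the bilinear structure $\langle u,x_i\rangle(\langle x_i,\hat\beta\rangle-y_i)$ and hypercontractivity must be combined and resilience must replace a naive union bound over $\hat\beta$; (ii) checking that the composite ratio-of-trimmed-means score genuinely inherits the $\tilde O(\xi^{-1/(k-1)}/n)$ sensitivity despite its data-dependent normalization, which forces uniform lower bounds on the robust denominators over all resilient inputs; and (iii) the bookkeeping needed to guarantee the score is a proper normalizable exponential-mechanism score on the (possibly unbounded) parameter domain.
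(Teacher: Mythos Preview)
Your plan mirrors the paper's approach closely: the same one-dimensional robust score (Eq.~\eqref{def:lr_dist}), the same resilience route (Lemma~\ref{lem:hyper_contractive_res}), and the same HPTR/PTR wrapper (Theorem~\ref{thm:robust_linear_regression}). Two points deserve correction.

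First, and most importantly, your claim that the sensitivity of $D_{\hat p_S}(\hat\beta)$ is ``uniform over $\hat\beta$ by scale invariance of the normalization'' is incorrect and is precisely where the paper does real work. The trimmed numerator is built from $\langle u,x_i\rangle(y_i-x_i^\top\hat\beta)=\langle u,x_i\rangle\eta_i+\langle u,x_ix_i^\top(\beta-\hat\beta)\rangle$, so the width of the surviving middle block---and hence the one-point influence---scales with $\|\Sigma^{1/2}(\hat\beta-\beta)\|$ (see Eqs.~\eqref{eq:lr_bounded_support}--\eqref{eq:lr_sens_bound}). The denominator $\sigma_v(\cM_{v,\alpha})\hat\gamma$ does \emph{not} depend on $\hat\beta$, so nothing cancels. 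The paper's resolution is to restrict the exponential mechanism to the data-dependent ball $B_{\tau,S}=\{\hat\beta:D_S(\hat\beta)\le\tau\}$ with $\tau=\Theta(\rho_1)$; only inside this ball is the sensitivity $O(\rho_1/(\alpha n))$. This restriction is what forces the more elaborate safety test (the set $\unsafe_{(\varepsilon,\delta,\tau)}$ in Eq.~\eqref{def:unsafe} rather than a plain local-sensitivity test), and it is why the release step is only $(\varepsilon,\delta)$-DP rather than $(\varepsilon,0)$-DP. Your obstacle~(ii) flags the denominator, but the live issue is the numerator; and your obstacle~(iii) about normalizability is resolved by exactly this $B_{\tau,S}$ restriction, not by any scale-invariance argument.

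Second, a small but telling slip: you write that $(\E|\langle u,x\rangle\eta|^k)^{1/k}\le\kappa^2\gamma\sqrt{u^\top\Sigma u}$ follows ``by H\"older''. With \emph{independent} noise it follows by direct factorization $\E[|\langle u,x\rangle|^k]\,\E[|\eta|^k]$, which is how the paper gets $\rho_1=O(\alpha^{1-1/k})$ (Lemma~\ref{lem:hyper_contractive_res}). H\"older is what one uses in the \emph{dependent} case, and it only yields a $k/2$-th moment bound and hence $\rho_1=O(\alpha^{1-2/k})$ (Lemma~\ref{lem:hyper_contractive_res_dependent})---a strictly worse rate. Keeping these straight is exactly what separates the two corollaries in Section~\ref{sec:lr3}.
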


The first term in the sample complexity cannot be improved as it matches the classical lower bound of linear regression even without privacy constraint. For the second term, the sub-Gaussian lower bound of $n=\tilde\Omega(d/(\varepsilon\xi))$ from \cite[Theorem 4.1]{cai2019cost} continues to hold in the hypercontractive setting. We do not have a matching lower bound for the second term. 
To the best of our knowledge, HPTR is the first algorithm for linear regression that guarantees $(\varepsilon,\delta)$-DP under hypercontractive distributions with independent noise. 

When applied to the setting where noise $\eta_i$ is dependent on the input vector $x_i$, HPTR is the first algorithm for linear regression that guarantees $(\varepsilon,\delta)$-DP. We refer the readers to Section~\ref{sec:lr:dependent} for more detailed description of our result.

\subsubsection{DP covariance estimation}
We present HPTR applied to  covariance estimation from i.i.d. samples under a Gaussian distribution ${\cal N}(0,\Sigma)$.  The main reason for this choice is that the Mahalanobis error $\|\Sigma^{-1/2} \hat{\Sigma}\Sigma^{-1/2} -\mathbf{I}_{d \times d}\|_F$ of the Kronecker product $x_i\otimes x_i$ is proportional to the natural error metric of total variation for Gaussian distributions. The strength of HPTR framework is that it can  be seamlessly applied to general distributions, for example sub-Gaussian or heavytailed, but the resulting Mahalanobis error becomes harder to interpret as it involves respective fourth moment tensors. 

 \begin{thm} [DP Gaussian covariance estimation, Corollary~\ref{coro:cov_gaussian} informal]
 Consider a dataset $S=\{x_i\}_{i=1}^n$ of $n$ i.i.d. samples from $\cN(0, \Sigma)$. There exists a $(\varepsilon, \delta)$-differentially private estimator $\hat{\Sigma}$ that given
 $$
 n \;=\; \tilde{O}_{\xi, \zeta}\Big(\,\frac{d^2}{\xi^2} + \frac{d^2 }{\xi\varepsilon} \,\Big)\;,
 $$
 achieves error $\|\Sigma^{-1/2} \hat{\Sigma}\Sigma^{-1/2} -\mathbf{I}_{d \times d}\|_F \le \xi$ with probability $1-\zeta$, where $\tilde{O}_{\xi, \zeta}$ hides the logarithmic dependency on $\xi, \zeta$ and we assume $\delta=e^{-O(d)}$.
\end{thm}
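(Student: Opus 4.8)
The plan is to instantiate the \HPTR recipe with the data points replaced by their Kronecker squares and with the target metric recognized as a Mahalanobis distance. Set $z_i = x_i\otimes x_i \in\reals^{d^2}$, so that for $x_i\sim\cN(0,\Sigma)$ we have $\E[z_i]=\mathrm{vec}(\Sigma)$ and, by Isserlis' formula, the covariance $\Sigma_z:=\mathrm{Cov}(z_i)$ equals $(\mathbf{I}+K)(\Sigma\otimes\Sigma)$ where $K$ is the symmetrization (commutation) operator. The point of choosing the Gaussian model and the Kronecker product is that, up to a universal constant, $\|\Sigma^{-1/2}\hat\Sigma\Sigma^{-1/2}-\mathbf{I}_{d\times d}\|_F$ equals the Mahalanobis distance $\|\Sigma_z^{-1/2}(\mathrm{vec}(\hat\Sigma)-\mathrm{vec}(\Sigma))\|$ of the candidate $\hat\Sigma$ viewed as a vector in $\reals^{d^2}$. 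This reduces the problem to the mean-estimation template from the introduction: by Lemma~\ref{lem:equidist} the target loss equals $\max_{V}(\langle V,\hat\Sigma\rangle-\langle V,\Sigma\rangle)/\sqrt{\mathrm{Var}(\langle V,z\rangle)}$, where $V$ ranges over symmetric $d\times d$ matrices with $\|V\|_F=1$, i.e.\ over directions in $\reals^{d^2}$. Replacing the population one-dimensional mean $\langle V,\Sigma\rangle=\E\langle V,z\rangle$ and variance $\mathrm{Var}(\langle V,z\rangle)$ by their trimmed (robust) counterparts as in Eq.~\eqref{eq:defproj} gives the empirical score $D_{\hat p_S}(\hat\Sigma)$, and the exponential mechanism $\hat\Sigma\sim\exp\{-(\eps/2\sens)D_{\hat p_S}(\hat\Sigma)\}$ is the release step of PTR.

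Next I would carry out the sensitivity analysis through resilience. Since $D_{\hat p_S}$ accesses the data only through one-dimensional trimmed statistics of the projections $\{\langle V,z_i\rangle\}$, the argument sketched after Definition~\ref{def:resilience} and made precise in Lemma~\ref{lem:local_asmp} bounds the local sensitivity $\sens$ in terms of the resilience constant $\rho$ of the point cloud $\{z_i\}$ with respect to these quadratic-form projections, together with a uniform lower bound on the trimmed variances. So the technical heart of the proof is a resilience statement for Kronecker squares of Gaussians: with $n=\tilde\Omega(d^2)$ samples, with probability $1-\zeta$ the dataset $\{z_i\}$ is $(\alpha,\rho)$-resilient with $\alpha$ a small constant and $\rho=\tilde O(\sqrt{d^2/n})$, and simultaneously the trimmed fourth-moment form stays within a constant factor of $\Sigma_z$ in every direction $V$. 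Because $\langle V,z_i\rangle=x_i^\top V x_i$ is a degree-two Gaussian polynomial, its tails are only sub-exponential, and controlling the trimmed variance requires concentration of degree-four polynomials; establishing resilience \emph{uniformly} over the $d^2$-dimensional sphere of symmetric matrices therefore calls for a covering/chaining argument combined with Hanson--Wright-type bounds, which is where the logarithmic factors hidden in $\tilde O$ enter. This step --- pushing the resilience constant down to $\tilde O(\sqrt{d^2/n})$ uniformly, hence $\sens=\tilde O(\rho/n)$ --- is the main obstacle; the rest is bookkeeping.

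With the local sensitivity in hand, the remaining steps are the standard \HPTR wrap-up. For privacy, the Propose--Test--Release test (a private comparison of the distance-to-\unsafe{} against a threshold, as in Section~\ref{sec:dp}) guarantees $(\eps,\delta)$-DP on every input, while the resilience statement above is precisely the event making a genuinely Gaussian dataset pass the test with probability $1-\zeta$ (completeness). For utility, conditioned on passing, the exponential mechanism obeys the usual tail bound of Lemma~\ref{lem:exp_mech}, so $D_{\hat p_S}(\hat\Sigma)\lesssim\min_{\hat\Sigma}D_{\hat p_S}(\hat\Sigma)+(\sens/\eps)\cdot(\text{packing term})$; the packing term over the $d^2$-dimensional candidate space produces the $\tilde O(d^2/(\xi\eps))$ samples, the term $\min_{\hat\Sigma}D_{\hat p_S}(\hat\Sigma)$ together with the robust-versus-population approximation gives the statistical $\tilde O(d^2/\xi^2)$ term, and translating a small empirical loss $D_{\hat p_S}(\hat\Sigma)\le\xi$ back to $\|\Sigma^{-1/2}\hat\Sigma\Sigma^{-1/2}-\mathbf{I}_{d\times d}\|_F\le\xi$ uses the resilience and variance-sandwich bounds one last time. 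Choosing $\alpha$, $\rho$, and the test threshold as functions of the Gaussian tail bounds and of $(\xi,\zeta)$, and combining the two regimes, yields $n=\tilde O_{\xi,\zeta}(d^2/\xi^2+d^2/(\xi\eps))$, which is the content of Corollary~\ref{coro:cov_gaussian}.
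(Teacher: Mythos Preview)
Your high-level reduction is exactly the paper's: view covariance estimation as mean estimation for $z_i=x_i\otimes x_i$, recognize the Frobenius error as a Mahalanobis distance via Isserlis (Lemma~\ref{lem:Isserlis}), build $\robdist_S(\hat\Sigma)$ from one-dimensional trimmed statistics over symmetric directions $V$, and invoke the mean-estimation template (Corollaries~\ref{coro:cov}--\ref{coro:robust_cov}). The paper also does not prove the required resilience from scratch but imports it (Lemma~\ref{lem:gaussian_cov_res}) from the robust-statistics literature.

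There is, however, a genuine gap in how you couple $\alpha$, $\rho$, and $n$. You assert that with $\alpha$ a \emph{small constant} the Kronecker dataset is $(\alpha,\rho)$-resilient with $\rho=\tilde O(\sqrt{d^2/n})$. This is false: for a fixed trimming level $\alpha$, the resilience of the projections $\langle V,z_i\rangle=x_i^\top V x_i$ is $\rho_1=\Theta(\alpha\log(1/\alpha))$, a constant independent of $n$ once $n$ is large enough for the empirical resilience to match the population one (Lemma~\ref{lem:gaussian_cov_res}). The $\alpha$-tail of a degree-two Gaussian polynomial sits $\Theta(\log(1/\alpha))$ standard deviations out, so removing it shifts the mean by $\Theta(\alpha\log(1/\alpha))$; more samples do not shrink this. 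With constant $\alpha$ the achievable error $O(\rho_1)$ is therefore stuck at a constant and cannot reach an arbitrary target $\xi$. The paper's resolution is that $\alpha$ is the \emph{free} parameter: one chooses $\alpha$ so that $\rho_1=O(\alpha\log(1/\alpha))\asymp\xi$, and the sample-size condition $n\gtrsim (d^2+\log(1/\zeta))/(\alpha^2\log(1/\alpha))$ required for empirical resilience is what produces the $\tilde O(d^2/\xi^2)$ term. Relatedly, the sensitivity is $\sens=O(\rho_1/(\alpha n))$, not $\tilde O(\rho/n)$; the extra $1/\alpha$ matters once $\alpha$ scales with $\xi$, and it is precisely $n\gtrsim (d^2+\log(1/(\delta\zeta)))/(\alpha\varepsilon)$ from Corollary~\ref{coro:cov} that yields the $\tilde O(d^2/(\xi\varepsilon))$ term. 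Your final paragraph hints at the right coupling (``choosing $\alpha,\rho,\ldots$ as functions of $(\xi,\zeta)$''), but the explicit resilience target you set in the middle paragraph is the wrong one and would not go through.
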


This Mahalanobis distance  guarantee (for the  Kronecker product, $\{x_i\otimes x_i\}$, of the samples) implies that the estimated Gaussian distribution is close to the underlying one in total variation distance
 (see for example \cite[Lemma 2.9]{KLSU19}): $d_{\rm TV}({\cal N}(0,\hat\Sigma),{\cal N}(0,\Sigma)) = O(\| \Sigma^{-1/2}\hat\Sigma\Sigma^{-1/2} - {\bf I}_{d\times d}\|_F) = O(\xi)$. 
The sample complexity is near-optimal, matching a lower  bound of 
$n=\Omega(d^2/\xi^2 + \min\{d^2,\log(1/\delta)\}/(\varepsilon\xi) )$ 
up to a logarithmic factor when $\delta=e^{-\Theta(d)}$. 
 The first term follows from  the classical estimation of the covariance without DP. The second term follows from extending the lower bound in \cite{KLSU19} constructed for pure differential privacy with $\delta=0$  and matches the second term in our upper bound  when $\delta=e^{-\Theta(d^2)}$. 
We note that a similar  upper bound is achieved  by the state-of-the-art (computationally inefficient) algorithm in \cite{aden2020sample}, which improves over HPTR in the lower order terms not explicitly shown in this informal version of our theorem.  Both HPTR and \cite{aden2020sample,amin2019differentially} improve upon computationally efficient approaches of \cite{KV17,KLSU19} which require additional assumption that ${\bf I}_{d\times d} \preceq \Sigma \preceq \kappa {\bf I}_{d\times d}$ with a known $\kappa$. Recently, \cite{kamath2021private}   introduced a novel preconditioning approach that is polynomial time  and removes the upper and lower bounds  on $\Sigma$ completely, but requires sample complexity of $n=\tilde O(d^2/\xi^2 + d^2{\rm polylog}(1/\delta)/(\xi\varepsilon) + d^{5/2}{\rm polylog}(1/\delta)/\varepsilon)$.

\subsubsection{DP principal component analysis}

We next apply HPTR to the task of estimating the top PCA direction from i.i.d.~sampless

\begin{thm}[DP sub-Gaussian principle component analysis, Corollary~\ref{coro:pca}]
Consider a dataset $S =\{x_i\in\reals^d\}_{i=1}^n$ of $n$ i.i.d.~samples from a zero-mean sub-Gaussian distribution with unknown covariance $\Sigma$. There exists an  $(\eps,\delta)$-differentially private estimator $\hat{u}$ that given
    $$n \;=\; \tilde{O}_{\xi, \zeta}\Big(\, \frac{d}{\xi^2} + \frac{d}{ \varepsilon\xi} \,\Big)\;,$$
    achieves error $1-\frac{\hat{u}^\top\Sigma\hat{u}}{\|\Sigma\|} \le \xi$ with probability $1-\zeta$, where $\tilde{O}_{\xi, \zeta}$ hides the logarithmic dependency on $\xi, \zeta$ and we assume $\delta=e^{-O(d)}$.
\end{thm}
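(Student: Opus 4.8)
\medskip\noindent\textbf{Proof proposal.}~ The plan is to instantiate the general HPTR recipe with a score function that touches the data only through one-dimensional robust (trimmed) variances. First I rewrite the target error as a ratio of one-dimensional population variances: since $\|\Sigma\|=\max_{\|v\|=1}v^\top\Sigma v$ and $\hat u^\top\Sigma\hat u$ is the variance of $\langle\hat u,x\rangle$, the population loss
\[
  D_{P}(\hat u)\;=\;1-\frac{\hat u^\top\Sigma\hat u}{\max_{\|v\|=1}v^\top\Sigma v}
\]
is exactly the quantity we must control, is scale invariant, lies in $[0,1]$, and vanishes at the top eigenvector. Replacing each one-dimensional population variance $w^\top\Sigma w$ by the robust one-dimensional variance $w^\top\Sigma^{\rm (robust)}_{\hat p_w}w$ of the projected dataset $S_w=\{\langle w,x_i\rangle\}_{i\in[n]}$ as in Eq.~\eqref{eq:defproj} yields the empirical score $D_{\hat p_S}(\hat u)=1-\big(\hat u^\top\Sigma^{\rm (robust)}_{\hat p_{\hat u}}\hat u\big)\big/\big(\max_{\|v\|=1}v^\top\Sigma^{\rm (robust)}_{\hat p_v}v\big)$, so that minimizing $D_{\hat p_S}$ over $\hat u$ recovers the direction of maximal robust empirical variance, a proxy for the top eigenvector of $\Sigma$, while reading the data only through scalar robust statistics. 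The release step is the exponential mechanism $\hat u\sim(1/Z(S))\exp\{-(\varepsilon/2\sens)D_{\hat p_S}(\hat u)\}$ over the unit sphere, and $(\varepsilon,0)$-privacy of this step follows from Lemma~\ref{lem:exp_mech}.

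\medskip\noindent Next I bound the local sensitivity of $D_{\hat p_S}$ on resilient datasets. By the resilience-based sensitivity analysis of one-dimensional robust statistics (Lemma~\ref{lem:local_asmp}), on an $(\alpha,\rho)$-resilient dataset each trimmed second moment $w^\top\Sigma^{\rm (robust)}_{\hat p_w}w$ moves by at most $\tilde{O}(\rho\,(w^\top\Sigma w)/\alpha)$ under a single-entry swap, while the denominator stays in $[(1-\tilde{O}(\rho/\alpha))\|\Sigma\|,(1+\tilde{O}(\rho/\alpha))\|\Sigma\|]$; for the sub-Gaussian family the trimming level can be chosen so that $\rho/\alpha=\tilde{O}(1)$, and since the ratio is scale invariant this gives $\sens=\tilde{O}(1/n)$, exponentially smaller than the $\Omega(\sqrt{d}/n)$ one would incur using a $d$-dimensional robust covariance. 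I then wrap this in Propose-Test-Release: \emph{propose} $\sens=\tilde\Theta(1/n)$ dictated by the resilience constants of the sub-Gaussian family at accuracy $\xi$; \emph{test}, by perturbing the distance-to-unsafety statistic with Laplace noise and comparing it to a threshold $\thresh=\Theta(\log(1/\delta)/\varepsilon)$, whether $S$ is far from every dataset that violates this sensitivity bound; and \emph{release} $\hat u$ only on success, outputting $\perp$ otherwise. The Propose-Test-Release analysis of Section~\ref{sec:dp}, following \cite{dwork2009differential}, makes this $(\varepsilon,\delta)$-DP on every input.

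\medskip\noindent For utility I argue in two parts. \emph{(i) The test passes with probability $1-\zeta$:} a set of $n$ i.i.d.\ sub-Gaussian samples is $(\alpha,\rho)$-resilient with the target parameters, with slack, once $n=\tilde\Omega(d+\log(1/\zeta))$, and the distance from such a dataset to the nearest non-resilient configuration then exceeds $\thresh$ once additionally $n=\tilde\Omega(\log(1/\delta)/\varepsilon)=\tilde\Omega(d/\varepsilon)$ (using $\delta=e^{-O(d)}$); both are dominated by the claimed sample size. \emph{(ii) Conditioned on passing, the exponential mechanism is accurate:} combine (a) a uniform deviation bound $\sup_{\|w\|=1}|w^\top\Sigma^{\rm (robust)}_{\hat p_w}w-w^\top\Sigma w|\le(\xi/c)\|\Sigma\|$, from concentration of one-dimensional trimmed variances uniformly over directions via a covering argument, which needs $n=\tilde\Omega(d/\xi^2)$ and yields $\sup_{\|\hat u\|=1}|D_{\hat p_S}(\hat u)-D_P(\hat u)|\le\xi/2$; (b) the standard exponential-mechanism tail bound, which, since a net of the sphere at the relevant scale has $e^{\tilde{O}(d)}$ points so that the sublevel set $\{D_{\hat p_S}\le\xi/4\}$ has relative volume $e^{-\tilde{O}(d)}$, gives $D_{\hat p_S}(\hat u)\le\min_{\|\hat u'\|=1}D_{\hat p_S}(\hat u')+O\big((\sens/\varepsilon)(d\log(1/\xi)+\log(1/\zeta))\big)=\tilde{O}(d/(n\varepsilon))\le\xi/4$ once $n=\tilde\Omega(d/(\varepsilon\xi))$; and (c) $\min_{\|\hat u'\|=1}D_{\hat p_S}(\hat u')=0$ by construction of the denominator. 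Chaining (a)--(c), $D_P(\hat u)\le\xi$, i.e.\ $1-\hat u^\top\Sigma\hat u/\|\Sigma\|\le\xi$, with the two terms of $n$ coming respectively from (a) and (b).

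\medskip\noindent The main obstacle is the denominator. In contrast to mean estimation, the score here is a \emph{ratio} whose denominator $\max_{\|v\|=1}v^\top\Sigma^{\rm (robust)}_{\hat p_v}v$ is itself an optimized robust statistic, so I must show it stays bounded away from $0$ and concentrates around $\|\Sigma\|$ \emph{uniformly over all directions} --- both to keep the sensitivity of the ratio at $\tilde{O}(1/n)$ and to ensure that near-minimizers of $D_{\hat p_S}$ are genuine near-maximizers of $v\mapsto v^\top\Sigma v$, with no eigengap assumed. This uniform-over-$v$ control of one-dimensional trimmed variances --- their resilience and their concentration --- is the crux, and is precisely where the $\tilde\Omega(d/\xi^2)$ term and the logarithmic slack hidden in $\sens$ enter; the remainder is the mean-estimation template applied essentially verbatim.
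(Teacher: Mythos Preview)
Your proposal is correct and follows essentially the same route as the paper (Section~\ref{sec:pca}): the same score function $1-\hat u^\top\Sigma^{\rm (robust)}_{\hat p_{\hat u}}\hat u/\max_v v^\top\Sigma^{\rm (robust)}_{\hat p_v}v$ on the fixed spherical support, the same PTR wrapper, and the same utility argument combining resilience (Lemma~\ref{lem:pca_robustness}) with the spherical-cap volume bound. Two minor slips that do not change the outcome: the per-swap change in a trimmed variance should carry a $1/n$ factor (you wrote $\tilde O(\rho\,w^\top\Sigma w/\alpha)$, but your conclusion $\sens=\tilde O(1/n)$ is right), and the resilience you need in step~(i) for the test to pass already requires $n=\tilde\Omega(d/\xi^2)$, not merely $\tilde\Omega(d)$---this coincides with your step~(ii)(a), so the mis-attribution is harmless.
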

 
 HPTR is the first estimator for sub-Gaussian distributions to nearly match the information-theoretic lower bound of $n = \Omega(d/\xi^2 + \min\{d,\log(1/\delta)\}/(\varepsilon\xi))$ as we showed in Proposition~\ref{thm:lowerbound_pca_subgaussian}. 
 The first term $\Omega(d/\xi^2)$  is unavoidable even without DP (Proposition~\ref{prop:lowerbound_pca}). 
 The second term in the lower bound follows from Proposition~\ref{thm:lowerbound_pca_subgaussian}, which matches the second term  in the upper bound  when $\delta=e^{-\Theta(d)}$.  
 Existing DP PCA approaches from \cite{blum2005practical,PPCA, kapralov2013differentially,dwork2014analyze,hardt2012beating, hardt2013beyond, hardt2013robust} are designed for arbitrary samples not  necessarily drawn i.i.d.~and hence require a larger samples size of $n=\tilde O(d/\xi^2 + d^{1.5}/(\xi\varepsilon))$.
  This is also unavoidable for more general deterministic data, as it matches an information theoretic lower bound~\cite{dwork2014analyze} under weaker assumptions on the data than i.i.d.~Gaussian. 

\begin{thm}[DP hypercontractive principle component analysis, Corollary~\ref{coro:pca_hyper}]
Consider a dataset $S =\{x_i\in\reals^d\}_{i=1}^n$ of $n$ i.i.d.~samples from a zero-mean $(\kappa, k)$-hypercontractive distribution with unknown covariance $\Sigma$. There exists an $(\eps,\delta)$-differentially private estimator $\hat{u}$ that given
    $$n \;=\; \tilde{O}_{\xi, d}\Big(\, \frac{d}{\xi^{(2k-2)/(k-2)}} + \frac{d}{ \varepsilon\xi^{1+2/(k-2)}} \,\Big)\;,$$
    achieves error $1-\frac{\hat{u}^\top\Sigma\hat{u}}{\|\Sigma\|} \le \xi$ with probability $0.99$, where $\tilde{O}_{\xi, d}$ hides the logarithmic dependency on $\xi, d$ and we assume $\delta=e^{-O(d)}$.
\end{thm}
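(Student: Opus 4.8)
The plan is to instantiate the general \HPTR{} recipe for estimating the top principal component, following the same template as the sub-Gaussian case of Corollary~\ref{coro:pca}, but with a loss that accesses the sample only through one-dimensional \emph{trimmed} variances. First I would rewrite the target error metric in a one-dimensional form: for a unit vector $u$, \[ 1-\frac{u^\top\Sigma u}{\|\Sigma\|} \;=\; 1-\frac{u^\top\Sigma u}{\max_{\|v\|=1}\,v^\top\Sigma v}\;, \] so that it depends on the distribution only through the one-dimensional variances $v^\top\Sigma v=\E[\langle v,x\rangle^2]$ (the PCA analogue of Lemma~\ref{lem:equidist}). Replacing the population variances by the robust empirical ones of Eq.~\eqref{eq:defproj} gives the empirical loss $D_{\hat p_S}(u)=1-(u^\top\Sigma^{(\mathrm{robust})}_{\hat p_u}u)/\max_{\|v\|=1}(v^\top\Sigma^{(\mathrm{robust})}_{\hat p_v}v)$, where $v^\top\Sigma^{(\mathrm{robust})}_{\hat p_v}v$ is the $\beta$-trimmed second moment of $\{\langle v,x_i\rangle\}_{i\in[n]}$ for a trimming level $\beta$ to be fixed below; the exponential mechanism then samples $\hat u$ from $S^{d-1}$ with density proportional to $e^{-(\eps/2\sens)D_{\hat p_S}(\hat u)}$.

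The technical heart is a local sensitivity bound for $D_{\hat p_S}$. A H\"older/Markov computation for $(\kappa,k)$-hypercontractive distributions (Definition~\ref{def:hyper}) shows that, with constant probability over the draw of the $n$ samples and simultaneously in every direction $v$, the projected dataset is $(\beta,\,O(\kappa^2\beta^{1-2/k}))$-resilient for the second moment (Definition~\ref{def:resilience}) as soon as $n\beta=\tilde\Omega(d)$; concretely, after removing the $\beta n$ projections of largest magnitude the retained squared projections lie in $[0,\,O(\kappa^2(v^\top\Sigma v)\beta^{-2/k})]$ and the trimmed second moment equals $(1\pm O(\kappa^2\beta^{1-2/k}))\,v^\top\Sigma v$. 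Feeding this into Lemma~\ref{lem:local_asmp} gives local sensitivity $O(\kappa^2(v^\top\Sigma v)\beta^{-2/k}/n)$ for each one-dimensional trimmed variance; since the denominator of $D_{\hat p_S}$ is $\Omega(\|\Sigma\|)$ and the numerator never exceeds the denominator, the normalized ratio has scale-invariant local sensitivity $\sens=\tilde O(\beta^{-2/k}/n)$. Choosing $\beta\asymp\xi^{k/(k-2)}$ so that the $O(\kappa^2\beta^{1-2/k})$ approximation error is $O(\xi)$ yields $\sens=\tilde O(\xi^{-2/(k-2)}/n)$.

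It remains to track utility. On a dataset on which the trimmed empirical variances approximate their population values to within a multiplicative $1\pm O(\xi)$ \emph{uniformly over $S^{d-1}$}, one has $D_{\hat p_S}(u_1)=O(\xi)$ for the true top eigenvector $u_1$, and moreover, whenever $\hat u$ is an approximate minimizer of $D_{\hat p_S}$, $\hat u^\top\Sigma\hat u\ge \hat u^\top\Sigma^{(\mathrm{robust})}_{\hat p_{\hat u}}\hat u-O(\xi)\|\Sigma\|\ge u_1^\top\Sigma^{(\mathrm{robust})}_{\hat p_{u_1}}u_1-O(\xi)\|\Sigma\|\ge(1-O(\xi))\|\Sigma\|$, so no eigengap assumption is needed. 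This uniform approximation is a covering-number argument over the sphere for the truncated quadratics $\langle v,x_i\rangle^2\wedge t$ with $t\asymp(v^\top\Sigma v)\beta^{-2/k}$, whose variance is $O((v^\top\Sigma v)^2\beta^{-2/k})$; it requires $n=\tilde\Omega(d\,\beta^{-2/k}/\xi^2)=\tilde\Omega(d/\xi^{(2k-2)/(k-2)})$, which is the first term. For the second term, the sublevel sets $\{u\in S^{d-1}:D_{\hat p_S}(u)\le t\}$ are spherical caps of angular radius $\Theta(\sqrt t)$, so their volume grows by a factor $e^{O(d)}$ from level $t$ to level $2t$; Lemma~\ref{lem:exp_mech} then bounds the output loss by $\min_u D_{\hat p_S}(u)+\tilde O(\sens\,d/\eps)=O(\xi)+\tilde O(\xi^{-2/(k-2)}d/(\eps n))$, which is $O(\xi)$ once $n=\tilde\Omega(d/(\eps\,\xi^{1+2/(k-2)}))$. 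Finally I would wrap this in Propose-Test-Release exactly as before: propose the sensitivity bound $\sens$ determined by $\kappa,k,\xi$; privately test that $S$ is far from any dataset violating that bound, which succeeds with probability $\ge 0.99$ for $n$ in the stated range and $\delta=e^{-O(d)}$ because the resilience above holds with a constant-factor margin; and release $\hat u$ by the exponential mechanism only on success, outputting $\perp$ otherwise. Privacy on every input follows from the PTR guarantee.

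\textbf{Main obstacle.} I expect the crux to be the resilience-to-sensitivity step for a \emph{degree-two} statistic: proving the $(\beta,O(\kappa^2\beta^{1-2/k}))$ second-moment resilience of hypercontractive data uniformly over all projection directions from only $n$ finite samples, and converting it via Lemma~\ref{lem:local_asmp} into the scale-invariant sensitivity $\tilde O(\xi^{-2/(k-2)}/n)$ of the normalized ratio loss while certifying that the denominator $\max_v v^\top\Sigma^{(\mathrm{robust})}_{\hat p_v}v$ stays bounded away from zero under neighbor perturbations. The secondary delicate point is the bookkeeping that makes a single trimming level $\beta\asymp\xi^{k/(k-2)}$ simultaneously control the approximation bias, the local sensitivity, and the uniform-over-the-sphere concentration, so that the two claimed terms come out with exponents $(2k-2)/(k-2)$ and $1+2/(k-2)$.
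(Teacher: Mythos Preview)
Your proposal is correct and follows essentially the same route as the paper: the paper's Corollary~\ref{coro:pca_hyper} is obtained by plugging the hypercontractive resilience bound $\rho_2=O(\kappa^2\alpha^{1-2/k})$ from Lemma~\ref{lem:mean_kmoment} into the general PCA utility Theorem~\ref{thm:robust_pca_utility} (whose score function in Eq.~\eqref{score:pca} and sensitivity analysis in Lemma~\ref{lem:local_asmp_pca} are exactly your trimmed-variance ratio and the resilience-to-sensitivity conversion you describe), and then setting the trimming level $\alpha\asymp\xi^{k/(k-2)}$, which yields precisely the two exponents $(2k-2)/(k-2)$ and $1+2/(k-2)$ you computed. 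Two small cleanups: the relevant sensitivity lemma here is Lemma~\ref{lem:local_asmp_pca} (not Lemma~\ref{lem:local_asmp}, which is for mean estimation), and your spherical-cap volume argument should be stated as a $e^{O(d\log(1/\xi))}$ ratio between the good cap and the full sphere---the paper absorbs the extra $\log(1/\xi)$ factor into the $\tilde O_{\xi,d}$, as you also do.
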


 HPTR is the first estimator for hypercontractive distributions to 
guarantee differential privacy for PCA with sample complexity scaling as $O(d)$.  
 As a complement of our algorithm, we proved a $n=\Omega(d/\xi^2  +  \min\{d,\log(1/\delta)\}/(\xi^{1+2/(k-2)} \varepsilon) )$ information-theoretic lower bound in Proposition~\ref{thm:lowerbound_pca_hyper}.  
 The first term $\Omega(d/\xi^2)$ in the lower bound is needed even without DP, and  
 there is a gap of factor $O(\xi^{-2/(k-2)})$ compared to the first term in our upper bound.  
The second term in the lower bound matches the last term  in the upper bound when $\delta= e^{-\Theta(d)}$. 
 
  
\subsection{Algorithm} 
\label{sec:intro_algo} 

The proposed 
High-dimensional Propose-Test-Release (HPTR) is not computationally efficient, as the {\sc Test} step requires enumerating over a certain neighborhood of the input dataset and the {\sc Release} step requires enumerating over all directions in high dimension. 
The strengths of HPTR is that $(i)$ the same framework can be seamlessly applies to many problems as we demonstrate in Sections~\ref{sec:mean}--\ref{sec:pca}; $(ii)$  a unifying recipe can be applied for all those instances to give tight utility guarantees as we explicitly prescribe in Section~\ref{sec:recipe}; and $(iii)$ the algorithm  is simple and the analysis is clear such that it is transparent how the distribution family of interest translates into the utility guarantee (via resilience).

As a byproduct of the simplicity of the algorithm and clarity of the analysis, we achieve  the state-of-the-art  sample complexity for all those problem instances with minimal assumptions, oftentimes nearly matching the information  theoretic lower bounds. 
As a byproduct of the use of robust statistics,  we guarantee robustness against adversarial corruption for free (e.g., Theorems~\ref{thm:robust_mean}, \ref{thm:robust_linear_regression}, \ref{thm:robust_pca_utility}).

We describe the framework for general statistical estimation problem where data is drawn i.i.d.~from a distribution represented by two unknown parameters $\theta\in\reals^p$ and $\phi$ and is coming from a known family of distributions. 
An example of a problem instance of this type would be mean estimation from heavy-tailed distributions that are $(\kappa,k)$-hypercontractive with unknown mean $\mu$ (which in the general notation is $\theta$) and unknown covariance $\Sigma$ (which corresponds to $\phi$). 

The main component is an exponential mechanism in {\sc Release} step below that uses a loss function $D_S(\hat\theta)$ and a support $B_{\tau,S}$ defined as 
\begin{eqnarray*}
B_{\thresh,S}=\{\hat\theta\in\reals^p: \robdist_S(\hat\theta) \leq \thresh \}\;. 
\end{eqnarray*} 
Bounding the support of the exponential mechanism is important since the sensitivity  also depends on $\hat\theta$ in many problems of interest. 
We discuss this in detail in the example of mean estimation in Section~\ref{sec:mean_proof_strategy}. 
The specific choices of the threshold $\tau$ only depend on the tail of the distribution family of interest and not the parameters $\theta$ or $\phi$ or the data. In particular, we use the resilience property of the distribution family to prescribe the choice of $\thresh$ for each problem instance that gives us the tight utility guarantees. 
As explained in Section~\ref{sec:intro}, we use one-dimensional robust statistics to design the loss functions, which we elaborate for each problem instances in Sections~\ref{sec:mean}--\ref{sec:pca}, where we also explain how to choose the sensitivity for each case based on the resilience of the distribution family only.

After we {\sc Propose} the choice of the sensitivity $\sens$ and threshold $\thresh$ for the problem instance in hand, we {\sc Test} to make sure that the given dataset $S$ is consistent with the assumptions made when selecting $D_S(\hat\theta)$, $\sens$, and $\thresh$. This is done by testing the safety of the exponential mechanism, by privately checking the margin to safety, i.e., how many data points need to me changed from $S$ for the exponential mechanism to violate differential privacy conditions.
If the margin is large enough, HPTR proceeds to {\sc Release}. Otherwise, it halts and outputs $\perp$. 
A set of such unsafe datasets is defined as 
\begin{align}
    \label{def:unsafe}
    & \unsafe_{(\varepsilon,\delta,\thresh)} \; =\;
    \Big\{S'\subseteq \reals^{d\times n}\,|\, \text{$\exists S''\sim S'$ and $\exists E\subseteq \reals^{p}$ such that } \nonumber \\
    &\; \prob_{\hat\theta\sim r_{(\varepsilon,\sens,\thresh,S'')}}(\hat\theta\in E) > e^\varepsilon \prob_{\hat\theta\sim r_{(\varepsilon,\sens,\thresh,S')}} (\hat\theta\in E) + \delta
     \text{ or } \prob_{\hat\theta\sim r_{(\varepsilon,\sens,\thresh,S')}}(\hat\theta\in E) > e^\varepsilon \prob_{\hat\theta\sim r_{(\varepsilon,\sens,\thresh,S'')}} (\hat\theta\in E) + \delta \Big\}\;,
\end{align}
where $r_{(\varepsilon,\sens,\tau,S)} $ denotes the pdf of the exponential mechanism in Eq.~\eqref{def:exp}. 
Given a loss (or a distance) function, $\robdist_S(\hat\theta)$, which is a surrogate for the target measure of error, $\popdist(\hat\theta,\theta)$, 
High-dimensional Propose-Test-Release (HPTR) proceeds as follows: 
\begin{itemize} 
    \item[1.] {\sc \bf Propose}: 
        Propose a target bound $\sens $ on local sensitivity and a target threshold $\thresh$ for  safety.
    \item[2.] {\sc \bf Test}: 
    \begin{itemize}
    \item[2.1.]     Compute the safety margin $m_\thresh = \min_{S'} d_H(S,S')$ such that $S'\in \unsafe_{(\varepsilon/2,\delta/2,\thresh)}$.
    \item[2.2.]      If $\hat{m}_\thresh = m_\thresh  + {\rm Lap}(2/\varepsilon)<(2/\varepsilon)\log(2/\delta)$, then output $\perp$, and otherwise continue.
    \end{itemize} 
    \item[3.] {\sc \bf Release}: Output $\hat\theta$ sampled from a distribution with a pdf:
    \begin{eqnarray}
    r_{(\varepsilon,\sens,\thresh,S)}(\hat \theta)\;\;=\;\;\left\{\begin{array}{rl} \frac{1}{Z}\,\exp\Big\{-\frac{\varepsilon}{4 \sens}\robdist_S(\hat\theta) \Big\} 
    & \text{ if } \hat\theta \in B_{\thresh,S}\;, \\ 
    0 & \text{ otherwise\;,}
    \end{array}\right. 
    \label{def:exp}
    \end{eqnarray}
where $Z=\int_{B_{\thresh,S}} \exp\{-(\varepsilon \robdist_S(\hat\theta))/(4\sens)\} d\hat\theta$.
\end{itemize} 

 It is straightforward to show that $(\varepsilon,\delta)$-differential privacy is satisfied for all input $S$. 

\begin{thm} 
    \label{thm:privacy} 
    For any dataset $S\subset{\cal  X}^{n}$, distance function  $D_S:\reals^p  \to \reals $ on that dataset, and parameters $\varepsilon,\delta,\sens$ and $\thresh$, HPTR is $(\varepsilon,\delta)$-differentially private.
\end{thm}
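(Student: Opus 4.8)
The plan is to verify the $(\varepsilon,\delta)$-DP guarantee by treating HPTR as the composition of two mechanisms: the {\sc Test} step (a noisy threshold query on the safety margin $m_\thresh$), and the {\sc Release} step (the truncated exponential mechanism with pdf $r_{(\varepsilon,\sens,\thresh,S)}$), where {\sc Release} is only executed conditioned on the outcome of {\sc Test}. Since each sub-step uses budget $(\varepsilon/2,\delta/2)$ and we invoke basic composition, the total is $(\varepsilon,\delta)$. The technical content is entirely in showing that the conditional {\sc Release} step satisfies $(\varepsilon/2,\delta/2)$-DP \emph{despite} the exponential mechanism's sensitivity bound $\sens$ only being a \emph{proposed} value rather than a verified one --- this is precisely the role of the $\unsafe$ set and the propose-test-release paradigm of \cite{dwork2009differential}.

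I would carry out the steps as follows. First, I argue the {\sc Test} step (2.1--2.2) is $(\varepsilon/2,0)$-DP: the safety margin $m_\thresh = \min_{S'} d_H(S,S')$ over $S' \in \unsafe_{(\varepsilon/2,\delta/2,\thresh)}$ is a function of $S$ with global sensitivity $1$ (it is a minimum Hamming distance to a fixed set, so changing one entry of $S$ moves it by at most $1$), hence releasing $\hat m_\thresh = m_\thresh + {\rm Lap}(2/\varepsilon)$ and thresholding it is $(\varepsilon/2,0)$-DP by the Laplace mechanism and post-processing. Second, I analyze the combined behavior of {\sc Test}$+${\sc Release}. Fix neighbors $S \sim S'$ and a measurable event $A$ in the output space (which includes the symbol $\perp$). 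Split into cases on whether $S$ (equivalently $S'$, since they are neighbors) is ``deep inside the safe region.'' Concretely: if $m_\thresh(S) \ge 1$, then $S \notin \unsafe_{(\varepsilon/2,\delta/2,\thresh)}$ and moreover $S' \notin \unsafe_{(\varepsilon/2,\delta/2,\thresh)}$ as well when $m_\thresh(S)$ is large enough, so $r_{(\varepsilon,\sens,\thresh,\cdot)}$ itself satisfies the $(\varepsilon/2,\delta/2)$ two-sided inequality between $S$ and $S'$ directly from the definition of $\unsafe$; combined with the fact that the test outcome (continue vs.\ $\perp$) has close probabilities on $S$ and $S'$, the whole mechanism is $(\varepsilon/2+\varepsilon/2,\ldots)$-indistinguishable. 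If instead $m_\thresh(S)$ is small, the key point is that the \emph{probability of reaching {\sc Release} at all} is at most $\delta/2$: by the Laplace tail bound, $\prob(m_\thresh + {\rm Lap}(2/\varepsilon) \ge (2/\varepsilon)\log(2/\delta)) \le (\delta/2) e^{-\varepsilon m_\thresh/2} \le \delta/2$, so whatever {\sc Release} does in this regime is absorbed into the additive $\delta$ slack. The standard PTR argument of \cite{dwork2009differential} stitches these cases together.

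The main obstacle --- and the step requiring the most care --- is the boundary bookkeeping in the second case split: one must track that on neighboring inputs $S \sim S'$, the safety margins differ by at most $1$, so the ``deep inside'' threshold has to be chosen with a slack of one Hamming step, and the event $\{S'' \sim S' \text{ witnessing unsafety}\}$ in the definition of $\unsafe$ must be reconciled with the exponential mechanism being run on $S$ versus $S'$. This is exactly why the algorithm uses $\varepsilon/2$ inside $\unsafe$ and runs the exponential mechanism with $\varepsilon/(4\sens)$ in the exponent (giving it $\varepsilon/2$-DP when $\sens$ is a valid sensitivity bound) rather than $\varepsilon/(2\sens)$ --- the halved budgets give the room needed for the composition and the one-step neighborhood slack to go through. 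I expect the rest (Laplace tail computation, post-processing, basic composition) to be routine, and I would cite Lemma~\ref{lem:exp_mech} for the exponential-mechanism privacy and invoke the analysis of \cite{dwork2009differential} for the PTR skeleton rather than reproving it from scratch.
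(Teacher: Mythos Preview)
Your proposal is correct and follows essentially the same PTR decomposition as the paper: the Laplace-noised margin $\hat m_\thresh$ contributes $(\varepsilon/2,0)$-DP via sensitivity-$1$ reasoning, an unsafe dataset passes the test with probability at most $\delta/2$, and on the complementary event the {\sc Release} step is $(\varepsilon/2,\delta/2)$-DP directly from the definition of $\unsafe_{(\varepsilon/2,\delta/2,\thresh)}$. The paper's own proof is a terse three-sentence version of exactly this; your only superfluous remark is the closing reference to Lemma~\ref{lem:exp_mech}, since the exponential mechanism's privacy here does not come from a verified global sensitivity bound but (as you yourself state earlier) from the definition of $\unsafe$.
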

\begin{proof}
    The differentially private margin $\hat m_\tau $  is $(\varepsilon/2,0)$-differentially private, because the sensitivity of the margin is one, and we are adding a Laplace noise with parameter $2/\varepsilon$. 
    The {\sc Test} step (together with the exponential mechanism) is $(0,\delta/2)$-differentially private since there is a probability $\delta/2$ event that a unsafe dataset $S$ with a small margin $m_\tau$ is classified as a safe dataset and passes the test. 
    On the complimentary event, namely, that the dataset that passed the {\sc Test} is indeed safe, the {\sc Release} step is $(\varepsilon/2,\delta/2)$-differentially private since we use \unsafe$_{(\varepsilon/2,\delta/2,\tau)}$ in the {\sc Test} step.   
\end{proof} 

\subsubsection{Utility analysis of HPTR for statistical estimation}
\label{sec:recipe}

We prescribe the following three-step recipe as a guideline for applying HPTR to each specific statistical estimation problem and obtaining a utility guarantee. Consider a  problem of estimating an unknown $\theta$ from samples from a  generative model $P_{\theta,\phi}$, where the error is measured in   $\popdist(\hat\theta,\theta)$.
\begin{itemize} 
    \item Step 1: Design 
a surrogate $\robdist_S(\hat\theta)$ for the target error metric  $\popdist(\hat\theta,\theta)$ using only one-dimensional robust statistics on $S$. 
    \item Step 2: Assuming  {\em resilience} of the dataset, propose an appropriate sensitivity bound $\sens$ and threshold $\thresh$ and analyze the utility of HPTR. 
    \item Step 3: For each specific family of generative models $P_{\theta,\phi}$ with a known tail bound, characterize the resulting resilience and substitute it in the utility analysis from the previous step, which gives the final guarantee. 
\end{itemize}
We demonstrate how to apply this recipe and carry out the utility analysis for mean estimation (Section \ref{sec:mean}), linear regression (Section \ref{sec:lin}),  covariance estimation (Section \ref{sec:cov}), and PCA (Section~\ref{sec:pca}). 
We explain and justify the use of one-dimensional robust statistics in Step 1 and the assumption on the  resilience of the dataset in Step 2  in the next section using the mean estimation problem as a canonical example. 
It is critical to construct $\robdist_S(\hat\theta)$ using only one-dimensional and robust statistics; this ensures that $\robdist_S(\hat\theta)$ has a small sensitivity as demonstrated in Section~\ref{sec:mean1}. 
We prove error bounds only assuming resilience of the dataset; this relies on a fundamental connection between sensitivity and resilience as explained in Section~\ref{sec:mean2}.

\subsection{Technical contributions and proof sketch}
\label{sec:intro_sketch}


We use the canonical example of mean estimation to explain our proof sketch.  
For i.i.d.~samples from  a sub-Gaussian distribution $P_{\mu,\Sigma}$ with mean $\mu$ and covariance $\Sigma$, we show in Theorem~\ref{thm:mean} that HPTR achieves a near optimal sample complexity of $n=\tilde O(d/\alpha^2 + d/(\alpha\varepsilon))$ to get Mahalanobis error $\|\Sigma^{-1/2}(\hat \mu -\mu)\|=\tilde O(\alpha)$ for some target accuracy $\alpha\in[0,1]$.

Our proof strategy is to first show  that the robust one-dimensional statistics  have small sensitivity if the  dataset is resilient. Consequently,  the loss function $\robdist_S(\hat\mu)$ has a small {\em local} sensitivity, i.e.~the sensitivity is small if restricted to $\hat\mu$ close to $\mu$ and if the dataset is resilient. 
To ensure DP, we run {\sc Release} only when those two locality conditions are satisfied; we first {\sc Propose} the sensitivity $\sens$ and a threshold $\thresh$, and then we {\sc Test} that DP guarantees are met on the given dataset with those choices. 
We prove that resilient datasets   pass this safety test  with a high probability and   achieve the desired accuracy. 
 We give a sketch of the main steps below.  
 

\medskip\noindent{\bf One-dimensional robust statistics have small  sensitivity on resilient datasets.} 
A set $S=\{x_i\in\reals^d\}_{i\in[n]}$ of i.i.d.~samples from a sub-Gaussian distribution has the following resilience property with probability $1-\zeta$ if $n=\tilde \Omega(d/\alpha^2)$,  where $\tilde\Omega$ hides polylogarithmic factors   $\alpha$ and the failure probability $\zeta$:
    \begin{eqnarray*}
    \Big|\frac{1}{|T|}\sum_{x_i\in T} \langle v, x_i-\mu\rangle\Big| \;\leq\; {2\sigma_v\sqrt{\log(1/\alpha)}} \;, \text{ and }  
    \Big|\frac{1}{|T|}\sum_{x_i \in T} \big(\, \langle v, x_i-\mu\rangle^2-\sigma_v^2 \,\big) \Big| \;\leq\; 2  \sigma_v^2 \log(1/\alpha) \;, 
    \end{eqnarray*} 
    for any subset $T\subset S$ of size at least $\alpha n$ and for any unit norm $v\in\reals^d$
    where $\sigma_v^2 = v^\top \Sigma v$ (Lemma~\ref{lem:mean_subgaussian}). This means that the $\alpha$-tail of the distribution (when projected down to one dimension) cannot be too far from the true one in mean and variance.
For mean estimation, we use the loss function of $D_S(\hat\mu) = \max_{v\in\reals^d,\|v\|=1} \langle v, \hat\mu-\mu({\cal M}_{v,\alpha}) \rangle/\sqrt{v^\top \Sigma({\cal M}_{v,\alpha}) v}$, where $\mu(T)$ and $\Sigma(T)$ are mean and covariance of a dataset $T$ and ${\cal M}_{v,\alpha}\subset S$ is defined as follows. 
For each direction $v$, we partition $S$ into three sets ${\cal T}_{v,\alpha},{\cal M}_{v,\alpha},$ and ${\cal B}_{v,\alpha}$. ${\cal T}_{v,\alpha}\subset S$ is the subset of datapoints corresponding to the largest $\alpha n$ datapoints in $\{\langle v, x_i \rangle\}_{x_i\in S} $, the   projected data points in  the direction $v$. ${\cal B}_{v,\alpha}\subset S$ corresponds to the smallest $\alpha n$ values, and ${\cal M}_{v,\alpha}\subset S$ is   the remaining $(1-2\alpha)n $ data points. 

We show that the robust projected mean, 
 $\langle v, \mu({\cal M}_{v,\alpha}) \rangle$ has sensitivity $O(\sigma_v \sqrt{\log(1/\alpha)}/n)$.
Under the resilience above, 
the top $\alpha$-tail, ${\cal T}_{v,\alpha}$, has the  empirical mean that is 
no more than $O(\sigma_v\sqrt{\log(1/\alpha)}\ )$ away from the true projected mean $\langle v, \mu \rangle$, and the same is true for ${\cal B}_{v,\alpha}$. 
It follows that there exists at least one data point in ${\cal T}_{v,\alpha}$ and one data point in ${\cal B}_{v,\alpha}$ that are no more than $O(\sigma_v\sqrt{\log(1/\alpha)})$ away from $\mu_v$. 
This implies that the range of the  middle subset ${\cal M}_{v,\alpha}$ is provably bounded by $O(\sigma_v \sqrt{\log(1/\alpha)})$, and the sensitivity of the robust mean $\langle v , \mu({\cal M}_{v,\alpha}) \rangle$ is guaranteed to be $O(\sigma_v \sqrt{\log(1/\alpha)}/ n )$. We can similarly show that $v^\top \Sigma({\cal M}_{v,\alpha}) v $ has sensitivity $O(\sigma_v^2 \log(1/\alpha)/  n)$.

Under the above sensitivity bounds for the one dimensional statistics,  
 it follows (for example, in  Eq.~\eqref{eq:sensbound_D}) that the sensitivity of the loss function $D_S(\hat\mu)$ is bounded by $ O(\sqrt{\log(1/\alpha)}/n)$
as long as $D_S(\hat\mu) \leq \thresh := C\alpha\sqrt{\log(1/\alpha)}$ and the dataset is resilient. It is worth noting here that since the sensitivity is only small when $D_S(\hat\mu)\le \tau$, our exponential mechanism only samples from the set $B_{\tau,S}$,  which contains only the hypotheses with small scores.
We handle this locality with {\sc Test} step that checks that the 
DP conditions are satisfied for the given dataset and the choice of $\sens:=C'\sqrt{\log(1/\alpha)}/n$ and $\thresh:=C\alpha\sqrt{\log(1/\alpha)}$. 
It is critical for ensuring DP that these choices only depend on the resilience (which is the property of the distribution family of interest, which in this case is sub-Gaussian) and the target accuracy, and not on the dataset $S$.   


\medskip \noindent
{\bf Sample complexity analysis.} 
Assuming the sensitivity is bounded by $\sens=C'\sqrt{\log(1/\alpha)}/ n$, which we ensure with the safety test, we analyze the  utility of the exponential mechanism. For a target accuracy of  $\|\Sigma^{-1/2}(\hat\mu-\mu)\|=  O(\alpha\sqrt{\log(1/\alpha)})$, we consider two sets,  $B_{\rm out}=\{\hat\mu : \|\Sigma^{-1/2}(\hat\mu-\mu)\|\leq c_0 \alpha\sqrt{\log(1/\alpha)} \}$ and 
$B_{\rm in}=\{\hat\mu : \|\Sigma^{-1/2}(\hat\mu-\mu)\|\leq c_1 \alpha\sqrt{\log(1/\alpha)} \}$, for some $c_0>c_1$. 
The exponential mechanism achieves accuracy $c_0\alpha\sqrt{\log(1/\alpha)} $ with  probability $1-\zeta$ if 
\begin{eqnarray*}
    {\mathbb P}(\hat\mu\notin B_{\rm out}) \;\leq\; 
    \frac{{\mathbb P}(\hat\mu\notin B_{\rm out})}{{\mathbb P}(\hat\mu\in B_{\rm in})} \;\lesssim\; \frac{{\rm Vol}(B_{\tau,S})}{{\rm Vol}(B_{\rm in})}    \frac{e^{-\frac{\varepsilon}{4\sens}c_0\alpha\sqrt{\log(1/\alpha)} }}{e^{-\frac{\varepsilon}{4\sens}c_1\alpha\sqrt{\log(1/\alpha)}}} \leq e^{O(d)}e^{-\frac{\varepsilon}{4\sens}(c_0-c_1)\alpha\sqrt{\log(1/\alpha)}} \leq \zeta\;,
\end{eqnarray*}
where the second inequality  requires 
$\robdist_S(\hat\mu)\simeq \|\Sigma^{-1/2}(\hat\mu-\mu)\|$,  which we show in Lemma~\ref{lem:outerset}. 
Since  $\sens=O(\sqrt{\log(1/\alpha)}/n)$, it is sufficient to have a large enough $c_0$ and $n=\tilde O((d+\log(1/\zeta))/(\alpha\varepsilon) )$ with a large enough constant. Together with the sample size required to ensure resilience, this gives the desired sample complexity of $n=\tilde O(d/\alpha^2 + (d+\log(1/\zeta))/(\alpha\varepsilon))$ where $\tilde O$ hides polylogarithmic factors in $1/\alpha$ and $1/\delta$.

\medskip\noindent{\bf Safety test.}  
We are left to show that for a resilient dataset, the failure probability of the safety test, 
${\mathbb P}(m_\tau + {\rm Lap}(2/\varepsilon) < (2/\varepsilon)\log(2/\delta) )$, is less than $\zeta$. 
This requires the safety margin to be large enough, i.e.~$m_\tau\geq k^*=(2/\varepsilon)\log(4/(\delta\zeta))$. 
Recall that the safety margin is defined as  the Hamming distance to the closest dataset to $S$ where the $(\varepsilon/2,\delta/2)$-DP condition of the exponential mechanism is violated. 
We therefore need to show that the DP condition is satisfied for not only $S$ but any dataset $S'$ at Hamming distance at most $k^*$ from $S$. 
We treat $S'$ as a {\em corrupted} version of a resilient $S$ by a fraction $k^*/n$-corruption. Since we are using robust statistics that are designed to be robust against data corruption, we can show that the corrupted resilient set still has a low sensitivity for $D_{S'}(\hat\mu)$. Building upon the proof techniques developed in \cite{brown2021covariance} for a safety test for a Tukey median based exponential mechanism, we  use the fact that $S'$ is a corrupted version of a resilient dataset $S$ to show that the safety test passes with high probability.

\section{Preliminary on differential privacy and Propose-Test-Release}
\label{sec:dp}

We give the backgrounds on differential privacy and the Propose-Test-Release mechanism. We say two datasets $S$ and $S'$ of the same size are neighboring if the Hamming distance between them is at most one. There is another equally popular  definition where injecting or deleting one data point to $S$ is considered as a neighboring dataset. All our analysis generalizes to that definition also, but notations get slightly heavier.    

\begin{definition}[\cite{dwork2006calibrating}]
    \label{def:dp}
    We say a randomized algorithm $M:\cX^n\rightarrow\cY$ is $(\varepsilon, \delta)$-differentially private  if for all neighboring databases  $S\sim S'\in \cX^n$, and all $Y\subseteq \cY$, we have $\prob(M(S)\in Y)\leq e^{\varepsilon}\prob(M(S')\in Y)+\delta$.
\end{definition}

HPTR relies on the exponential mechanism for its adaptivity and flexibility. 

\begin{definition}[Exponential mechanism \cite{mcsherry2007mechanism}]
    The exponential mechanism $M_{\rm exp}:\cX^n\rightarrow \Theta$ takes database $S\in \cX^n$, candidate space $\Theta$, score function $D_S(\hat{\theta})$ and sensitivity $\Delta$ as input, and select output with probability proportional to $\exp\{- \varepsilon D_S(\hat{\theta})/2\Delta\}$. 
\end{definition}

The exponential mechanism is $(\varepsilon,0)$-DP if the sensitivity of $D_S(\hat\theta)$ is bounded by $\sens$. 
\begin{lemma}[\cite{mcsherry2007mechanism}]
    \label{lem:exp_mech}
    If  $\max_{\hat{\theta}\in \Theta}\max_{S\sim S'} | D_S(\hat{\theta})-D_{S'}(\hat{\theta}) | \leq \sens$, then the exponential mechanism is $(\varepsilon,0)$-DP.
\end{lemma}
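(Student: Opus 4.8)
The plan is to verify the definition of $(\varepsilon,0)$-differential privacy directly, by showing that for every pair of neighboring datasets $S\sim S'$ and every measurable $Y\subseteq\Theta$ the likelihood ratio $\prob(M_{\rm exp}(S)\in Y)/\prob(M_{\rm exp}(S')\in Y)$ is at most $e^\varepsilon$ (no additive term is needed since $\delta=0$). Writing $Z(S)=\int_\Theta \exp\{-\varepsilon D_S(\hat\theta)/(2\sens)\}\,d\hat\theta$ for the normalizer of the exponential mechanism, this ratio factors as
\[
\frac{\prob(M_{\rm exp}(S)\in Y)}{\prob(M_{\rm exp}(S')\in Y)}
\;=\;
\underbrace{\frac{\int_Y \exp\{-\varepsilon D_S(\hat\theta)/(2\sens)\}\,d\hat\theta}{\int_Y \exp\{-\varepsilon D_{S'}(\hat\theta)/(2\sens)\}\,d\hat\theta}}_{(\mathrm{I})}
\;\cdot\;
\underbrace{\frac{Z(S')}{Z(S)}}_{(\mathrm{II})}\;,
\]
and it suffices to bound each of $(\mathrm{I})$ and $(\mathrm{II})$ by $e^{\varepsilon/2}$.

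For $(\mathrm{I})$ I would apply the sensitivity hypothesis pointwise: for every $\hat\theta\in\Theta$ we have $-D_S(\hat\theta)\le -D_{S'}(\hat\theta)+|D_S(\hat\theta)-D_{S'}(\hat\theta)|\le -D_{S'}(\hat\theta)+\sens$, hence $\exp\{-\varepsilon D_S(\hat\theta)/(2\sens)\}\le e^{\varepsilon/2}\,\exp\{-\varepsilon D_{S'}(\hat\theta)/(2\sens)\}$; integrating this inequality over $Y$ gives $(\mathrm{I})\le e^{\varepsilon/2}$. For $(\mathrm{II})$ the symmetric inequality $-D_{S'}(\hat\theta)\le -D_S(\hat\theta)+\sens$, integrated over all of $\Theta$, gives $Z(S')\le e^{\varepsilon/2}Z(S)$, i.e. $(\mathrm{II})\le e^{\varepsilon/2}$. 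Multiplying the two bounds yields the claimed $e^\varepsilon$ bound on the likelihood ratio, so $M_{\rm exp}$ is $(\varepsilon,0)$-DP. The discrete candidate-space case is identical with sums in place of integrals.

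There is no genuine obstacle here: this is the classical argument of \cite{mcsherry2007mechanism}, and the only points deserving a word of care are (a) that the bound is symmetric in $S$ and $S'$, which holds because the hypothesis takes the maximum over ordered neighbor pairs and $|\cdot|$ is symmetric, and (b) that $Z(S)$ is finite so the density is well-defined — in every application in this paper the candidate set is the bounded region $B_{\thresh,S}$, which guarantees finiteness, while for the lemma as stated one simply assumes the mechanism is well-defined. The factor of $2$ in the exponent of the mechanism's density is exactly what lets the privacy budget $\varepsilon$ be split evenly between the numerator shift $(\mathrm{I})$ and the normalization shift $(\mathrm{II})$.
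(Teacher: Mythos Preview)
Your argument is correct and is precisely the classical proof from \cite{mcsherry2007mechanism}; the paper does not supply its own proof of this lemma but simply cites that reference.
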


Starting from the seminal paper \cite{dwork2009differential}, there are increasing efforts to apply differential privacy to statistical problems, where the dataset consists of i.i.d.~samples from a distribution. There are two main challenges. First, the support is typically not bounded, and hence, the sensitivity is unbounded. \cite{dwork2009differential} proposed to resolve this by using robust statistics, such as median,  to estimate the mean. The second challenge is that while median is quite insensitive on i.i.d.~data, this low sensitivity is only local and holds only for i.i.d.~data from a certain class of distributions. This led to the original definition of local sensitivity in the following. 

\begin{definition}[Local Sensitivity]
    We define local sensitivity of dataset $S\in \cX^n$ and function $f:\cX^n\rightarrow \reals$ as $\Delta_f(S):=\max_{S'\sim S}|f(S)-f(S')|$.
\end{definition}

\cite{dwork2009differential} introduced the Propose-Test-Release mechanism to resolve both issues. First, a certain  robust statistic $f(S)$, such as median, mode, Inter-Quantile Range (IQR), or  B-robust regression model  \cite{hampel2011robust} is chosen as a query. It can be used to approximate a target statistic of interest, such as mean, range, or linear regression model, or the robust statistic itself could be the target. 
Then, the PTR mechanism proceeds in three steps. In the propose step, a local sensitivity $\sens$ is proposed such that $\sens_f(S)\leq \sens$ for all $S$ that belongs to a certain family. In the test step, a safety margin $m$, which is how many data points have to be changed to violate the local sensitivity, is computed and a private version of the safety margin, $\hat m$, is compared with a threshold. If the safety margin is large enough, then the algorithm outputs $f(S)$ via a Laplace mechanism with parameter $2\sens/\varepsilon$. Otherwise, the algorithm halts and outputs $\perp$.

\begin{definition}[Propose-Test-Release (PTR) \cite{dwork2009differential, vadhan2017complexity}]
For a query function $f: \cX^n\rightarrow \reals$, the ${\rm PTR}$ mechanism $M_{\rm PTR}:\cX^n\rightarrow \reals$ proceeds as follows:
\begin{itemize} 
    \item[1.] {\sc \bf Propose}: 
        Propose a target bound $\sens\geq 0 $ on local sensitivity.
    \item[2.] {\sc \bf Test}: 
    \begin{itemize}
    \item[2.1.]     Compute  $m = \min_{S'} d_H(S,S')$ such that local sensitivity of $S'$ satisfies $\Delta_f(S')\geq \sens$.
    \item[2.2.]      If $\hat{m} = m  + {\rm Lap}(2/\varepsilon)<(2/\varepsilon)\log(1/\delta)$ then output $\perp$, and otherwise continue.
    \end{itemize} 
    \item[3.] {\sc \bf Release}: Output $f(S)+{\rm Lap}(2\sens/\varepsilon)$.
\end{itemize} 

\end{definition}
It immediately follows that PTR is $(\varepsilon,\delta)$-differentially private for any input dataset. 

\begin{lemma}[\cite{dwork2009differential, vadhan2017complexity}]
$M_{\rm PTR}$ is $(\varepsilon, \delta)$-DP
\end{lemma}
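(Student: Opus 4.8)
The plan is to decompose the privacy loss across the \textsc{Test} and \textsc{Release} steps, the main wrinkle being that the proposed bound $\sens$ is only a valid local-sensitivity bound on datasets whose safety margin is positive, so the Laplace-mechanism reasoning cannot simply be invoked globally.

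First I would check that the safety margin $m=m(S)$ has global sensitivity one: for neighbors $S\sim S'$, if $S^\ast$ is a closest dataset to $S$ with $\Delta_f(S^\ast)\ge\sens$, then $d_H(S',S^\ast)\le d_H(S',S)+d_H(S,S^\ast)\le 1+m(S)$, so $m(S')\le m(S)+1$, and symmetrically $|m(S)-m(S')|\le 1$. Hence $\hat m=m(S)+{\rm Lap}(2/\varepsilon)$ is $(\varepsilon/2,0)$-DP by the Laplace mechanism, and since the decision to output $\perp$ versus to proceed is a deterministic function of $\hat m$, that decision is $(\varepsilon/2,0)$-DP as well.

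Next comes the key structural fact: if $m(S)\ge 1$ then $S$ itself (at Hamming distance $0$) is not among the datasets with local sensitivity at least $\sens$, so $\Delta_f(S)<\sens$, and therefore $|f(S)-f(S')|\le\Delta_f(S)<\sens$ for every neighbor $S'$. On the event that \textsc{Release} is reached, the output is $f(S)+{\rm Lap}(2\sens/\varepsilon)$, a location shift of magnitude $<\sens$ against noise of scale $2\sens/\varepsilon$, contributing another $(\varepsilon/2,0)$-DP factor. Because the test noise and the release noise are independent Laplace draws, for any neighboring pair with $m(S)\ge 1$ I can multiply the two $e^{\varepsilon/2}$ factors to get $\prob(M_{\rm PTR}(S)\in Y)\le e^{\varepsilon}\prob(M_{\rm PTR}(S')\in Y)$ for every $Y\subseteq\reals$; the $\perp$ outcome is covered by the $(\varepsilon/2,0)$-DP of the test decision. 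The complementary case $m(S)=0$ means $\Delta_f(S)\ge\sens$ and the proposed bound genuinely fails; but then the probability of passing the test is $\prob({\rm Lap}(2/\varepsilon)\ge(2/\varepsilon)\log(1/\delta))=\tfrac12 e^{-\log(1/\delta)}=\delta/2\le\delta$, so the entire non-$\perp$ probability mass of $M_{\rm PTR}(S)$ is at most $\delta$.

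Finally I would assemble the cases: for an arbitrary measurable $Y$, split it into $Y_0=Y\cap\reals$ and $Y\cap\{\perp\}$; bound $\prob(M_{\rm PTR}(S)\in Y_0)$ by $e^\varepsilon\prob(M_{\rm PTR}(S')\in Y_0)$ when $m(S)\ge 1$ and by $\delta$ when $m(S)=0$, bound $\prob(M_{\rm PTR}(S)=\perp)$ by $e^\varepsilon\prob(M_{\rm PTR}(S')=\perp)$ via the test-decision guarantee, and add, concluding $\prob(M_{\rm PTR}(S)\in Y)\le e^\varepsilon\prob(M_{\rm PTR}(S')\in Y)+\delta$ in both cases. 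The one genuine obstacle is exactly the dichotomy just described — that $\sens$ need not dominate $\Delta_f$ on every input — and its resolution is that $m(S)\ge 1$ certifies $\Delta_f(S)<\sens$ while $m(S)=0$ makes \textsc{Test} reject except with probability at most $\delta$; the only bookkeeping care needed is to keep the independent test and release randomness separate when multiplying the two $e^{\varepsilon/2}$ factors.
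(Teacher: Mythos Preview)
Your proof is correct and follows the standard PTR privacy argument. The paper itself does not prove this lemma---it is cited from \cite{dwork2009differential,vadhan2017complexity}---but the paper's own proof of the analogous Theorem~\ref{thm:privacy} for HPTR uses essentially the same decomposition you give: the safety margin has sensitivity one so the noisy test is $(\varepsilon/2,0)$-DP, a ``bad'' dataset (your $m(S)=0$ case) passes the test with probability at most $\delta/2$, and conditioned on passing with a good dataset the release step contributes another $(\varepsilon/2)$ factor.
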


Given a robust statistic of interest,  the art is in identifying the family of datasets with small local sensitivity and showing that the sensitivity is small enough to provide good  utility. 
For example, for privately releasing the mode, for the family of distributions whose occurrences of the mode  is at least $(4/\varepsilon) \log(1/\delta)$ larger than the occurrences of the second most frequent value, the local sensitivity is zero and PTR outputs the true mode with probability at least $1-\delta$ 
\cite{vadhan2017complexity}. 
Such a specialized PTR mechanism for zero local sensitivity is also called the stability based method.

In general, a naive method of computing $m$ in the TEST step requires enumerating  over all possible databases $S\in \cX^n$. For typical one-dimensional data/statistics, for example median estimation, this step can be computed efficiently.
This led to a fruitful line of research in DP statistics on one-dimensional data. \cite{dwork2009differential, brunel2020propose} propose PTR mechanisms for the range and the median of of a 1-D smooth distribution and  
\cite{avella2019differentially, avella2020role, brunel2020propose} propose PTR mechanisms that can  estimating median and mean of a 1-D sub-Gaussian distribution.  
The stability-based method introduced in \cite{vadhan2017complexity} can be used to release private  histograms, among other things, which can be subsequently used as a black box to solve several important problems including range estimation of a 1-D sub-Gaussian distribution \cite{KV17,KLSU19,liu2021robust} or a 1-D heavy-tailed distribution \cite{kamath2020private,liu2021robust}, and general counting queries.  
PTR and stability-based mechanisms are powerful tools  when estimating robust statistics of a distribution from i.i.d.~samples. 

Even if computational complexity is not concerned, however, directly  applying PTR to high dimensional  distributions can increase the  statistical cost significantly, which has limited the application of PTR. One exception is the recent work of   \cite{brown2021covariance}. For the mean estimation problem with Mahalanobis error metric of $\|\Sigma^{-1/2}(\hat\mu-\mu)\|$,  the private Tukey median mechanism introduced in \cite{liu2021robust} is studied. One major limitation of the utility analysis  is that private Tukey median requires the support to be bounded. In \cite{liu2021robust}, this is circumvented by assuming the covariance $\Sigma$ is known, in which case one can find a support with, for example, the private histogram of \cite{vadhan2017complexity}. Instead,  \cite{brown2021covariance} proposed using private Tukey median inside the PTR mechanism and designed an advanced safety test for high-dimensional problems. This naturally bounds the support that adapts to the geometry of the problem without explicitly and privately estimating $\Sigma$. 
One notable byproduct of this approach is that the resulting exponential mechanism is no longer pure DP, but rather $(\varepsilon,\delta)$-DP. This is because the resulting exponential mechanism has a support that depends on the dataset $S$, and hence two exponential mechanisms on two  neighboring datasets have different supports. 
The limitations of the private Tukey median are that $(i)$ it requires  symmetric distributions, like Gaussian distributions, and do not generalize to even sub-Gaussian distributions, and  $(ii)$ it only works for mean estimation. To handle the first limitation, \cite{brown2021covariance} propose another PTR mechanism using Gaussian noise, which works for more general sub-Gaussian distributions but achieves sub-optimal sample complexity. 

HPTR builds upon this advanced PTR with the high-dimensional safety test from \cite{brown2021covariance}. However, there are major challenges in applying this safety test to HPTR, which we overcome with the resilience property of the dataset and the robustness of the loss function.  
For private Tukey median, the sensitivity is always one for any $\hat\mu$ and any $S$, and the only purpose of the safety test is to ensure that the support is not too different between two neighboring datasets. For HPTR, the sensitivity is local in two ways: it requires  $S$ to be resilient and the estimate  $\hat\mu$ to be sufficiently close to $\mu$. To ensure a large enough margin when running the safety test, HPTR requires this local sensitivity to hold not just for the given $S$ but for all $S'$ within some Hamming distance from $S$. We use the fact that this larger neighborhood is included in an even larger set of databases that are  adversarial corruption of the $\alpha$-fraction of the original resilient dataset $S$ with a certain choice of $\alpha$.  The robustness of our loss function implies that the bounded sensitivity is preserved under such corruption of a resilient dataset. This is critical in proving that a resilient dataset passes the safety test  with high probability.

We take a first-principles approach to design a universal framework for DP statistical estimation that blends  exponential mechanism, robust statistics, and PTR. The exponential mechanism in HPTR adapts to the geometry of the problem without explicitly estimating any other parameters and also gives us the flexibility to apply to a wide range of problems.  
The choice of the loss functions that only depend on one-dimensional statistics is critical in achieving the low sensitivity, which directly translates into near optimal utility guarantees for several canonical problems. Ensuring differential privacy is achieved by building upon the advanced PTR framework of \cite{brown2021covariance}, with a few  critical differences. Notably, the safety analysis uses the resilience of robust statistics in a fundamental way.

On the other hand, 
there is a different way of handling local sensitivity, which is known as smooth sensitivity. 
Introduced in \cite{nissim2007smooth},  smooth sensitivity is a smoothed version of local sensitivity on the neighborhood of the dataset, defined as \begin{eqnarray*}
\Delta^{\rm smooth}_{f}(S) \; = \; \max_{S'\in \cX^n}\{\Delta_f(S')e^{-\varepsilon d_{\rm H}(S, S')}\}
\end{eqnarray*} 
Note that, in general, computing smooth sensitivity is also computationally inefficient with an exception of \cite{avella2021privacy}. 
Using smooth sensitivity, \cite{lei2011differentially,smith2011privacy,chaudhuri2012convergence,avella2021privacy} leverage robust M-estimators for differentially private estimation and inference. The intuition is based on the fact that the influence function of the M-estimators can be used to bound the smooth sensitivity. The applications include: linear regression, location estimation, generalized linear models, private testing. However, these approaches require restrictive assumptions on the dataset that need to be checked (for example via PTR) and 
fine-grained analyses on the statistical complexity is challenging;
there is no sample complexity analysis comparable to ours. 
One exception is \cite{bun2019average}, which proposes a smooth  sensitivity based approach and gives an upper bound on the sub-Gaussian mean estimation error for a finite $n$, but only for one-dimensional data. 

\section{Mean estimation} 
\label{sec:mean}

In a standard mean estimation, we are given i.i.d.~samples $S=\{x_i\in\reals^d\}_{i=1}^n$ drawn from a distribution $P_{\mu,\Sigma}$ with an unknown mean $\mu$ (which corresponds to $\theta$ in the general notation) and an unknown covariance $\Sigma \succ 0$ (which corresponds to $\phi$ in the general notation), and we want to produce a 
DP estimate $\hat\mu$ of the mean. The resulting error is best measured in Mahalanobis distance,  $D_\Sigma(\hat\mu,\mu)=\|\Sigma^{-1/2}(\hat\mu-\mu)\|$, because this is a scale-invariant distance; every direction has unit variance after whitening by $\Sigma$. 

This problem is especially challenging as  we aim for a tight  guarantee that adapts to the unknown $\Sigma$ as measured in the Mahalanobis distance 
without enough samples to directly estimate $\Sigma$ (see Section~\ref{sec:intro_main} for a survey).
Despite being a canonical problem in DP statistics, the optimal sample complexity is not known even for standard distributions: sub-Gaussian and heavy-tailed distributions. 
We characterize the optimal sample complexity by showing that 
HPTR matches the known lower bounds 
in Section~\ref{sec:mean3}. 
This follows directly from the general three-step strategy outlined in Section~\ref{sec:recipe}. 

\subsection{Step 1: Designing the surrogate \texorpdfstring{$\robdist_S(\hat\mu)$}{}  for the Mahalanobis distance}
\label{sec:mean1}

We want to privately release $\hat\mu$ with small Mahalanobis distance $\|\Sigma^{-1/2}(\hat\mu-\mu)\|$. 
In the exponential mechanism in {\sc Release} step, we propose using the surrogate distance,  
\begin{eqnarray}
    \label{def:mean_distance}
    \robdist_S(\hat\mu) \;\; = \;\; \max_{v:\|v\|\leq 1}  \frac{\langle  v ,\hat\mu \rangle - \mu_v({\cal M}_{v,\alpha})  }{\sigma_v({\cal M}_{v,\alpha}) }  \;,
\end{eqnarray}
where 
the robust one-dimensional mean $\mu_v({\cal M}_{v,\alpha})$ and variance $\sigma_v^2({\cal M}_{v,\alpha})$ are defined as follows. 
We partition $S=\{x_i\}_{i=1}^n$  into three sets ${\cal B}_{v,\alpha}$, ${\cal M}_{v,\alpha}$, and ${\cal T}_{v,\alpha}$, by considering 
 a set of projected data points $S_v=\{\langle v , x_i\rangle \}_{x_i\in S}$ and  
letting ${\cal B}_{v,\alpha}$ be the data points corresponding to the subset of  bottom $(2/5.5)\alpha n$ data points with smallest values in $S_v$, ${\cal T}_{v,\alpha}$ be the subset of top  $(2/5.5)\alpha n$ data points with largest  values, and ${\cal M}_{v,\alpha}$ be the subset of remaining  $(1-(4/5.5)\alpha) n$ data points. 
For a fixed direction $v$, define 
\begin{eqnarray}
\mu_v({\cal M}_{v,\alpha})\;=\;\frac{1}{|{\cal M}_{v,\alpha}|}
\sum_{x_i\in{\cal M}_{v,\alpha}}\langle v, x_i \rangle \; , \text { and }\;
\sigma^2_v({\cal M}_{v,\alpha})=\frac{1}{|{\cal M}_{v,\alpha}|}\sum_{x_i\in {\cal M}_{v,\alpha}}( \langle v , x_i \rangle - \mu_v({\cal M}_{v,\alpha}))^2 \;, \label{eq:defproj}
\end{eqnarray} 
which are robust estimates of the population projected mean $\mu_v = \langle v , \mu \rangle$ and the population projected variance $\sigma_v^2=v^\top\Sigma v$.  

\medskip
\noindent 
{\bf General guiding principles for designing $\robdist_S(\hat\mu)$.} We propose 
the following three design principles that apply more generally to all problem instances of interest. 
The first guideline is that   it should recover the target error metric $D_\Sigma(\hat\mu,\mu)= \|\Sigma^{-1/2}(\hat\mu-\mu)\|$ when we  substitute the population statistics, e.g.~$\mu_v$ and $\sigma_v$ for mean estimation, for their robust counterparts:  
$\mu_v({\cal M}_{v,\alpha})$ and $\sigma_v({\cal M}_{v,\alpha})$.  
This  ensures that minimizing $D_S(\hat\mu)$ is approximately equivalent to minimizing the target metric $D_\Sigma(\hat\mu,\mu)=\|\Sigma^{-1/2}(\hat\mu-\mu)\|$ (Lemma~\ref{lem:outerset}). For mean estimation, this equivalence is shown  in the following lemma. 
\begin{lemma}
    \label{lem:equidist}
    For any $\mu\in\reals^d$ and $0\prec \Sigma \in\reals^{d\times d}$, let $\mu_v=\langle v , \mu \rangle$ and  $\sigma_v^2 = v^\top\Sigma v$. Then,  we have 
    \begin{eqnarray*}
        \|\Sigma^{-1/2}(\hat\mu-\mu)    \| \;\;=\;\; \max_{v:\|v\|\leq 1} \frac{\langle v, \hat\mu \rangle - \mu_v}{\sigma_v} \;.
    \end{eqnarray*} 
\end{lemma}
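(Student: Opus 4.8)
The plan is to prove the identity $\|\Sigma^{-1/2}(\hat\mu-\mu)\| = \max_{\|v\|\le 1} (\langle v,\hat\mu\rangle - \mu_v)/\sigma_v$ by a change of variables that reduces it to the elementary fact that the Euclidean norm of a vector equals its maximal inner product against unit vectors. Write $w = \hat\mu - \mu$, so the left-hand side is $\|\Sigma^{-1/2}w\|$. The key observation is that with $\mu_v = \langle v,\mu\rangle$ and $\sigma_v^2 = v^\top \Sigma v = \|\Sigma^{1/2}v\|^2$, the ratio in the maximand is
\begin{eqnarray*}
\frac{\langle v,\hat\mu\rangle - \mu_v}{\sigma_v} \;=\; \frac{\langle v, w\rangle}{\|\Sigma^{1/2}v\|}\;.
\end{eqnarray*}

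First I would substitute $u = \Sigma^{1/2}v$, which is a bijection of $\reals^d$ since $\Sigma \succ 0$; as $v$ ranges over $\reals^d$, so does $u$. Then $\langle v, w\rangle = \langle \Sigma^{-1/2}u, w\rangle = \langle u, \Sigma^{-1/2}w\rangle$, and $\|\Sigma^{1/2}v\| = \|u\|$, so
\begin{eqnarray*}
\max_{v:\|v\|\le 1}\frac{\langle v, w\rangle}{\|\Sigma^{1/2}v\|} \;=\; \max_{u\neq 0}\frac{\langle u, \Sigma^{-1/2}w\rangle}{\|u\|}\;.
\end{eqnarray*}
Here I would note that the constraint $\|v\|\le 1$ is irrelevant: the objective $\langle v,w\rangle/\|\Sigma^{1/2}v\|$ is invariant under positive scaling of $v$, so the supremum over the unit ball equals the supremum over all nonzero $v$ (one may take $v=0$ out of consideration, or interpret the ratio as $-\infty$ there). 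Finally, $\max_{u\neq 0}\langle u, z\rangle/\|u\| = \|z\|$ for any fixed $z$, attained at $u = z$ (or $u=0$ harmlessly if $z=0$, giving $0=\|z\|$); applying this with $z = \Sigma^{-1/2}w = \Sigma^{-1/2}(\hat\mu-\mu)$ yields the claim.

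There is no real obstacle here — the only points requiring a word of care are (i) justifying that the scale-invariance lets one drop the $\|v\|\le 1$ constraint (equivalently, that the max over the closed unit ball coincides with the sup over nonzero vectors, which holds since one can always rescale any $v$ with $\|v\| \le 1$, $v \neq 0$, to the boundary without changing the ratio), and (ii) handling the degenerate direction $v=0$, which contributes nothing. Both are routine. The substitution $u=\Sigma^{1/2}v$ being a bijection relies only on $\Sigma\succ 0$, which is assumed.
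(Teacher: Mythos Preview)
Your proof is correct. The paper's proof takes a slightly different route: it expands both sides in the eigenbasis of $\Sigma$, applies Cauchy--Schwarz in the form $\langle a,b\rangle^2 \le (\sum_\ell b_\ell^2\sigma_\ell)(\sum_\ell a_\ell^2\sigma_\ell^{-1})$ to get the inequality, and then explicitly constructs the maximizing $v$ via $b_\ell \propto a_\ell/\sigma_\ell$. Your substitution $u=\Sigma^{1/2}v$ accomplishes the same thing more directly, reducing to the dual-norm identity $\max_{u\neq 0}\langle u,z\rangle/\|u\|=\|z\|$ without ever introducing the eigendecomposition; the two arguments are equivalent at heart (both are Cauchy--Schwarz), but yours is a bit more streamlined and avoids the explicit coordinate computation.
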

\begin{proof}
    Let $\hat\mu-\mu=\sum_{\ell=1}^d a_\ell u_\ell $ with $a_\ell=\langle u_\ell,\hat\mu-\mu\rangle $, $\|a\|=\|\hat\mu-\mu\|$ and $u_\ell$'s are the singular vectors of $\Sigma$.
    Similarly, let $v=\sum_{\ell=1}^d b_\ell u_\ell$ with $\|b\|=1$. Then we have 
    $$\|\Sigma^{-1/2}(\hat\mu-\mu)    \|^2=\sum (a_\ell^2/\sigma_\ell) \text{ and }   \frac{\langle v, (\hat\mu-\mu)\rangle}{ \sigma_v} = \frac{\langle a,b\rangle }{ \sqrt{\sum b_\ell^2\sigma_\ell }}\;.$$ 
    From Cauchy-Schwarz, we have $ \langle a,b\rangle^2 \leq (\sum b_\ell^2\sigma_\ell)(\sum a_\ell^2\sigma_\ell^{-1})$, which proves that $$\|\Sigma^{-1/2}(\hat\mu-\mu)    \| \geq  \max_{v:\|v\|=1} (1/\sigma_v) \langle v, (\hat\mu-\mu)\rangle\;.$$  
    
    To show equality,  we find $v$ that makes Cauchy-Schwarz inequality tight. Let $v=\sum_{\ell=1}^d b_\ell u_\ell$ with a choice of $b_\ell = (1/Z)a_\ell \sigma_\ell^{-1}$ and $Z= \sqrt{\sum_\ell a_\ell^2\sigma_\ell^{-2}}$. This implies $\|b\|=1$ and 
    \begin{eqnarray*}
        \langle a,b\rangle \;=\; \frac1Z \sum_{\ell=1}^d (1/\sigma_\ell) a_\ell^2, \text{ and } \;\;
        \sqrt{\sum b_\ell^2 \sigma_\ell} \; =\; \frac{1}{Z}\sqrt{\sum_{\ell=1}^d  
        (1/\sigma_{u_\ell}) a_\ell^2} \;,
    \end{eqnarray*}
    which implies that there exists a $v$ such that 
    $\|\Sigma^{-1/2}(\hat\mu-\mu)\|=(1/\sigma_v)\langle v,\hat\mu-\mu\rangle $ and $\|\Sigma^{-1/2}(\hat\mu-\mu)\| \leq \max_{v:\|v\|=1}(1/\sigma_v)\langle v,\hat\mu-\mu\rangle $. 
\end{proof}

 The second guideline is that $\robdist_S(\hat\mu)$ should  depend only on the  {\em one-dimensional} statistics of the data.  This is critical as 
the sensitivity of high-dimensional statistics  increases with the ambient dimension $d$.
For example, consider using the robust mean estimate $\hat\mu_{\rm robust}(S)\in\reals^d$   from \cite{dong2019quantum} and using  the Euclidean distance $\robdist_S(\hat\mu) = \|\hat\mu-\hat\mu_{\rm robust}(S) \| $ in the exponential mechanism, where we are assuming $\Sigma={\bf I}$ for simplicity.
It can be shown that, even for Gaussian distributions, this requires $n=\widetilde\Omega(d^{3/2}/(\varepsilon\alpha) + d/\alpha^2)$ samples to achieve an accuracy $\|\hat{\mu}-\mu\|=\widetilde{O}(\alpha)$. This is significantly sub-optimal compared to what HPTR achieves in  Corollary~\ref{coro:mean_subgaussian}, which leverages the fact that sensitivity of  one-dimensional statistic is  dimension-independent. 
 
The last guideline  is to  use robust statistics.  Robust statistics have small sensitivity on {\em resilient} datasets, which is critical in achieving the near-optimal guarantees. 
We elaborate on it in Section~\ref{sec:mean_proof_strategy}.

\subsection{Step 2: Utility analysis under resilience}
\label{sec:mean2} 

For utility, we prefer smaller $\sens$ and $\thresh$ to ensure that the exponential mechanism samples $\hat\mu$ closer to the minimum of $D_S(\hat\mu)\approx \|\Sigma^{-1/2}(\hat\mu-\mu)\|$. 
However, aggressive choices can violate  the DP condition and hence fail the safety test. Near-optimal utility can be achieved by selecting $\sens$ and $\thresh$ based on the {\em resilience}  of the dataset   defined as follows.  

\begin{definition}[Resilience for mean estimation \cite{steinhardt2018resilience,zhu2019generalized}]
    \label{def:resilience}
    For some $\alpha\in(0,1)$, $\rho_1\in\reals_+$, and $\rho_2\in\reals_+$, we say a set of $n$ data points $S_{\rm good}$ is $(\alpha,\rho_1,\rho_2)$-resilient with respect to $(\mu,\Sigma)$ if for any $T\subset S_{\rm good}$ of size $|T| \geq (1-\alpha)n$, the following holds 
    for all $v\in\reals^d$ with $\|v\|=1$:     
    \begin{eqnarray}
         \Big| \frac{1}{|T|}\sum_{x_i\in T} \langle v, x_i \rangle -\mu_v   \Big|  & \leq &  \rho_1 \, \sigma_v    \;,\text{ and } \label{def:rho1}\\
      \Big| \frac{1}{|T|}\sum_{x_i\in T}  \big(\langle v, x_i\rangle-\mu_v \big)^2  - \sigma_v^2  \Big| & \leq &  \rho_2 \, \sigma_v^2  \;, \label{def:rho2}
    \end{eqnarray}
    where $\mu_v=\langle v , \mu\rangle$ and $\sigma_v^2 = v^\top \Sigma v$.
\end{definition}

Originally, resilience is introduced in the context of robust statistics.
Resilience measures how sensitive the sample statistics are to removing an $\alpha$-fraction of the data points. 
 A dataset from a distribution with a lighter tail has smaller resilience  $(\rho_1,\rho_2)$. For example, sub-Gaussian distributions have $\rho_1=O(\alpha\sqrt{\log(1/\alpha)})$ and $\rho_2 =O(\alpha\log(1/\alpha))$ (Lemma~\ref{lem:mean_subgaussian}),  which is smaller than the resilience of heavy-tailed distributions with bounded $k$-th moment, i.e.~$\rho_1 = O(\alpha^{1-1/k})$ and $\rho_2 = O(\alpha^{1-2/k})$ (Lemma~\ref{lem:mean_kmoment}).
 Resilience plays a crucial role in robust statistics, where the resilience of a dataset determines the minimax sample complexity of estimating population statistics from adversarially corrupted samples \cite{steinhardt2018resilience,zhu2019generalized}. 

In the context of differential privacy, 
our design of HPTR is guided by our analysis showing that the sensitivity of one-dimensional robust statistics is fundamentally governed by resilience. 
Leveraging this three-way connection  between the use of robust statistics in the algorithm, the resilience of the data, and the sensitivity of the distance $\robdist_S(\hat\mu)$ is crucial in achieving the near-optimal utility.

Concretely, we consider $\alpha$ as a free parameter that we can choose depending on the target accuracy. 
 For example, let $\|\Sigma^{-1/2}(\hat\mu-\mu)\|=32\rho_1$ be our target accuracy. 
 Note that we did not optimize the constants in our analysis and they can be further tightened. 
In the case of sub-Gaussian distributions, we have $\rho_1 = C'\alpha\sqrt{\log(1/\alpha)}$ w.h.p.~when the sample size is large enough. This determines the value of  $\alpha$ that  achieves a target accuracy and also the choice of $\sens$ and $\tau$ as follows. 

The robust statistics of a resilient dataset (i.e., one with small resilience) cannot  change too much when  a small  fraction of the dataset is changed. 
This is made precise in Lemma~\ref{lem:local_asmp} which shows, for example, that the robust mean $\mu_v({\cal M}_{v,\alpha})$ can only change by 
$O(\rho_1 /(\alpha n))$ when one data point is arbitrarily changed. 
This implies the sensitivity of $\robdist_S(\hat\mu)$ is also small: $\sens=O(\rho_1 /(\alpha n))$.
Choosing  $\thresh=42\rho_1$ to be larger by a constant factor from the target accuracy, we show that a sample size of  $n = O(d/(\varepsilon\alpha))$ is sufficient  to achieve the desired utility. 

\begin{thm}[Utility guarantee for mean estimation]
    \label{thm:mean}  
There exist positive constants $c$ and $C$ such that for any  $(\alpha,\rho_1,\rho_2)$-resilient set $S$ with respect to some  $(\mu\in\reals^d ,\Sigma\succ 0)$  satisfying $\alpha\in(0,c)$, $\rho_1<c$, $\rho_2<c$, and $\rho_1^2\leq c \alpha$, $\HPTR$ with the choices of the distance function in Eq.~\eqref{def:mean_distance},  $\sens=110 \rho_1/(\alpha n)$, and $\thresh=42\rho_1$  achieves 
    $\|\Sigma^{-1/2}(\hat\mu-\mu) \|\leq 32 \rho_1$ with probability $1-\zeta$, if \begin{eqnarray*}
         n \;\geq \; C \frac{d+\log(1/(\delta\zeta))}{\varepsilon \alpha}  
         \; . 
    \end{eqnarray*} 
\end{thm}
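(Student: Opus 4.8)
The plan is to execute Step~2 of the recipe of Section~\ref{sec:recipe}: privacy is already delivered by Theorem~\ref{thm:privacy}, so the entire task is a utility bound, which I would split into three pieces --- a local sensitivity bound for $\robdist_S$ implied by resilience, the event that a resilient $S$ clears the {\sc Test} step, and the accuracy of the exponential mechanism in {\sc Release} conditioned on clearing it --- and union-bound the two failure probabilities at $\zeta/2$ each. The smallness hypotheses $\alpha<c$, $\rho_1<c$, $\rho_2<c$, $\rho_1^2\le c\alpha$ are exactly what make this run: $\rho_1,\rho_2<c$ keep the denominators $\sigma_v(\mathcal M_{v,\alpha})$ within a constant factor of $\sigma_v$, while $\rho_1^2\le c\alpha$ ensures the fluctuation of those denominators contributes only lower-order terms to the sensitivity.

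\textbf{Local sensitivity from resilience.} Since $|\mathcal M_{v,\alpha}|\ge(1-\alpha)n$, Definition~\ref{def:resilience} applies with $T=\mathcal M_{v,\alpha}$ and gives $|\mu_v(\mathcal M_{v,\alpha})-\mu_v|\le\rho_1\sigma_v$ and, after correcting for the empirical versus the true centering, $|\sigma_v^2(\mathcal M_{v,\alpha})-\sigma_v^2|\le(\rho_2+\rho_1^2)\sigma_v^2$. Applying resilience to $S\setminus\mathcal T_{v,\alpha}$ and $S\setminus\mathcal B_{v,\alpha}$ (both of size $\ge(1-\alpha)n$) bounds the means of the removed tails within $O(\rho_1/\alpha)\sigma_v$ of $\mu_v$, hence pins the \emph{range} of $\{\langle v,x_i\rangle:x_i\in\mathcal M_{v,\alpha}\}$ to $O((\rho_1/\alpha)\sigma_v)$; this is the content of Lemma~\ref{lem:local_asmp}. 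Changing one sample therefore moves $\mu_v(\mathcal M_{v,\alpha})$ by $O((\rho_1/\alpha)\sigma_v/n)$ and $\sigma_v^2(\mathcal M_{v,\alpha})$ by $O((\rho_1/\alpha)^2\sigma_v^2/n)$. Plugging these into the quotient $\robdist_S(\hat\mu)=\max_v(\langle v,\hat\mu\rangle-\mu_v(\mathcal M_{v,\alpha}))/\sigma_v(\mathcal M_{v,\alpha})$ and using $\robdist_S(\hat\mu)\le\thresh=42\rho_1$ to control $|\langle v,\hat\mu\rangle-\mu_v(\mathcal M_{v,\alpha})|=O(\rho_1\sigma_v)$, the numerator term contributes $O(\rho_1/(\alpha n))$ and the denominator term $O(\rho_1^3/(\alpha^2 n))\le O(\rho_1/(\alpha n))$ by $\rho_1^2\le c\alpha$; tracking constants yields local sensitivity $\le 110\rho_1/(\alpha n)=\sens$ on $B_{\thresh,S}$ (Eq.~\eqref{eq:sensbound_D}). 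From the same estimates I would also record the two-sided comparison $\robdist_S(\hat\mu)=(1+O(\sqrt c))\|\Sigma^{-1/2}(\hat\mu-\mu)\|+O(\rho_1)$ (with a matching lower bound), i.e.\ Lemma~\ref{lem:outerset} for this problem.

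\textbf{Accuracy of {\sc Release}.} Conditioned on clearing {\sc Test}, $\hat\mu$ has density $\propto e^{-(\varepsilon/4\sens)\robdist_S(\hat\mu)}$ on $B_{\thresh,S}$. Let $B_{\rm out}=\{\|\Sigma^{-1/2}(\hat\mu-\mu)\|\le 32\rho_1\}$ and $B_{\rm in}=\{\|\Sigma^{-1/2}(\hat\mu-\mu)\|\le c_1\rho_1\}$ for a small constant $c_1$. By the comparison above: $B_{\rm in}\subseteq B_{\thresh,S}$ (there $\robdist_S=O(\rho_1)\le 42\rho_1$), $B_{\thresh,S}$ sits inside a $\Sigma$-ball of radius $O(\rho_1)$, and on $B_{\thresh,S}\setminus B_{\rm out}$ one has $\robdist_S(\hat\mu)\ge 32\rho_1-O(\rho_1)$, a constant multiple of $\rho_1$ strictly larger than the $O(\rho_1)$ upper bound of $\robdist_S$ on $B_{\rm in}$. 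Hence
\begin{eqnarray*}
\prob(\hat\mu\notin B_{\rm out})\;\le\;\frac{{\rm Vol}(B_{\thresh,S})\,e^{-\frac{\varepsilon}{4\sens}\,\Theta(\rho_1)}}{{\rm Vol}(B_{\rm in})}\;\le\;\Big(\tfrac{O(1)}{c_1}\Big)^{d}e^{-\Theta(\varepsilon\alpha n)}\;,
\end{eqnarray*}
using $\sens=110\rho_1/(\alpha n)$, which is $\le\zeta/2$ once $n\ge C(d+\log(1/\zeta))/(\varepsilon\alpha)$.

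\textbf{Clearing {\sc Test} (the main obstacle).} The crux is to show a resilient $S$ has safety margin $m_\thresh\ge k^*:=(2/\varepsilon)\log(4/(\delta\zeta))$, since then the Laplace tail gives $\prob(\hat m_\thresh<(2/\varepsilon)\log(2/\delta))\le\zeta/2$. Equivalently, every $S'$ with $d_H(S,S')\le k^*$ must avoid $\unsafe_{(\varepsilon/2,\delta/2,\thresh)}$. The sample-size hypothesis makes $k^*/n=O(\alpha/C)$, so $S'$ is an $\eta$-corruption of the resilient $S$ with $\eta\le\alpha/10$; because the loss is built from robust one-dimensional statistics, the middle set $\mathcal M'_{v,\alpha}$ of $S'$ still overlaps a large resilient subset of $S$, so all the bounds above --- on $\mu_v(\mathcal M'_{v,\alpha})$, on $\sigma_v(\mathcal M'_{v,\alpha})$, on the range of $\mathcal M'_{v,\alpha}$, hence on the local sensitivity of $\robdist_{S'}$ on $B_{\thresh,S'}$ --- survive with the \emph{same} $O(\cdot)$, which is precisely why the stated constants ($110$ against the raw constant, $\thresh=42\rho_1$ against the target $32\rho_1$) leave slack. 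It then remains to verify, for such $S'$ and any neighbor $S''$, the $(\varepsilon/2,\delta/2)$-DP inequality for the mechanism~\eqref{def:exp} despite its data-dependent support: on $B_{\thresh,S'}\cap B_{\thresh,S''}$ the two scores differ by at most $\sens$, and the mass each mechanism assigns to the symmetric difference $B_{\thresh,S'}\triangle B_{\thresh,S''}$ is bounded by the same volume-times-exponential-tail estimate as above (near the boundary $\robdist\approx\thresh=42\rho_1$ while near the center $\robdist=O(\rho_1)$), giving $(O(1)/c_1)^d e^{-\Theta(\varepsilon\alpha n)}\le\delta/2$. Propagating these through the normalizing constants $Z_{S'},Z_{S''}$ --- following the high-dimensional safety-test analysis of \cite{brown2021covariance} --- closes the argument; adapting that analysis to a score with \emph{locally} (rather than globally) bounded sensitivity, and to a non-Tukey loss, is where the real work lies.
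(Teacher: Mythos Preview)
Your three-part decomposition (local sensitivity via resilience, accuracy of {\sc Release}, clearing {\sc Test}) is exactly the paper's strategy, and the key mechanisms you identify --- bounding the range of $\mathcal M_{v,\alpha}$ via resilience on the trimmed tails, stability of $\sigma_v(\mathcal M_{v,\alpha})$, and treating $S'$ within Hamming distance $k^*$ as a small corruption of the resilient $S$ --- are the right ones. The paper routes the argument slightly differently: it first proves the robust version (Theorem~\ref{thm:robust_mean}) for $((2/11)\alpha,\alpha,\rho_1,\rho_2)$-corrupt good sets, from which Theorem~\ref{thm:mean} is the $\alpha_{\rm corrupt}=0$ special case, and it funnels everything through a general utility theorem (Theorem~\ref{thm:utility}) with four abstract assumptions (bounded volume, local sensitivity on $B_{\thresh+(k^*+3)\sens,S}$ for all $S'$ with $d_H(S,S')\le k^*$, bounded sensitivity, robustness), each verified by a dedicated lemma. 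This buys reusability across the other applications (linear regression, covariance, PCA) but is the same argument underneath.

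One small imprecision to watch: you state the local sensitivity bound on $B_{\thresh,S}$, but the safety analysis needs it on the slightly larger set $B_{\thresh+(k^*+3)\sens,S}$ and for every $S'$ within $k^*$ of $S$ (this is why the paper's Lemma~\ref{lem:local_asmp} works on $((1/5.5)\alpha+\tilde\alpha)$-corrupt sets with $\tilde\alpha\le(1/11)\alpha$). You do handle the latter, and the former is absorbed by the slack in the constants since $(k^*+3)\sens=O(\rho_1)$ under the sample-size hypothesis; just be explicit that the domain enlarges. Your last paragraph correctly flags that the symmetric-difference / normalizer analysis for the safety test is the delicate part; the paper's version (Lemmas~\ref{lem:safe_ratio}--\ref{lem:safetymargin}) shows the condition $w_{S'}(B_{\thresh-\sens,S'})\ge(1-\delta')w_{S'}(B_{\thresh+\sens,S'})$ implies safety, then verifies that weight inequality via the volume ratio and the $e^{-\Theta(\varepsilon\alpha n)}$ gap --- essentially what you sketch.
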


This theorem shows how a resilient dataset (which is a deterministic condition) implies small error for HPTR. 
We make formal connections to standard assumptions on the sample generating distributions and their respective resiliences in  Section~\ref{sec:mean3}, where we also discuss the  optimality of this utility guarantee.    
For example, sub-Gaussian distributions have $\rho_1 = O(\alpha\sqrt{\log(1/\alpha)})$ when $n\geq C' d/(\alpha \log(1/\alpha))^2$ for any $\alpha$ smaller than a universal constant.  This implies that 
HPTR achieves a target accuracy of $\|\Sigma^{-1/2}(\hat\mu-\mu)\| \leq \tilde\alpha$ with sample size $\tilde{O}(\frac{d}{\tilde\alpha^2}+\frac{d}{\tilde\alpha \varepsilon})$ where $\tilde{O}$ hides logarithmic factors in $1/\alpha$, $\delta$, and $\zeta$.   
We explain the intuition behind our analysis   and provide a complete proof in Sections~\ref{sec:mean_proof_strategy}--\ref{sec:mean_proof2}. One by-product of using robust statistics is that we get robustness for free, which we show next.

\subsubsection{Robustness of HPTR}

One by-product of using robust statistics  is that HPTR is also robust to adversarial corruption. We therefore provide a more general guarantee that simultaneously achieves DP and robustness. Suppose we are given a dataset $S$ that is a corrupted version of a resilient dataset $S_{\rm good}$. 

\begin{asmp}[$\alpha_{\rm corrupt}$-corruption]  \label{asmp:mean} 
Given a set $S_{\rm good}=\{\tilde{x}_i\in\reals^d\}_{i=1}^n$ of $n$ data points, an adversary inspects all data points, selects $\alpha_{\rm corrupt} n$ of the data points, and replaces them with arbitrary dataset $S_{\rm bad}$ of size $\alpha_{\rm corrupt} n$. 
The resulting corrupted dataset is called $S=\{x_i\in\reals^d\}_{i=1}^n$. 
\end{asmp}  

 This adaptive adversary is strong, as the corruption can adapt to the entire dataset (for example it covers  the Huber contamination model  \cite{huber1964robust} and the non-adaptive adversarial model \cite{lecue2020robust}). This threat model is now standard in robust statistics literature \cite{steinhardt2018resilience}.  
If the original $S_{\rm good}$ is resilient, we show that the same guarantee as Theorem~\ref{thm:mean} holds under corruption up to an $\alpha_{\rm corrupt}$ fraction of $S_{\rm good}$ for sufficiently small $\alpha_{\rm corrupt}\leq (1/5.5)\alpha$. The factor $1/5.5$ is due to the fact that the algorithm 
treats some of the good data points as outliers (which is at most $4\alpha_{\rm corrupt}$ due to the top and bottom tails cut in the definition of ${\cal M}_{v,(2/5.5)\alpha}$) and we need to handle neighboring datasets up to $(0.5/5.5)\alpha n$ Hamming distance. Hence, we need to ensure resilience for $\alpha$ at least $5.5$ times larger than the corruption $\alpha_{\rm corrupt}$. 

\begin{definition}[Corrupt good set]
    \label{def:corruptgoodset}
    We say a dataset $S$ is $(\alpha_{\rm corrupt},\alpha,\rho_1,\rho_2 )$-corrupt good with respect to $(\mu,\Sigma)$ if it is an $\alpha_{\rm corrupt}$-corruption of an $(\alpha,\rho_1,\rho_2)$-resilient dataset $S_{\rm good}$.
\end{definition}

We get the following theorem showing that HPTR can tolerate up to $(1/5.5)\alpha$ fraction of the data being arbitrarily corrupted. 
\begin{thm}[Robustness]
    \label{thm:robust_mean}  There exist positive constants $c$ and $C$ such that for any  $((2/11)\alpha,\alpha,\rho_1,\rho_2)$-corrupt good set $S$   with respect to $(\mu\in\reals^d,\Sigma\succ 0)$ satisfying $\alpha<c$,   $\rho_1<c$, $\rho_2<c$, and $\rho_1^2\leq c \alpha$, $\HPTR$ with the distance function in Eq.~\eqref{def:mean_distance},  $\sens=110 \rho_1/(\alpha n)$, and $\thresh=42\rho_1$  achieves 
    $\|\Sigma^{-1/2}(\hat\mu-\mu) \|\leq 32 \rho_1$ with probability $1-\zeta$, if \begin{eqnarray*}
         n \;\geq \; C  \frac{d+\log(1/(\delta\zeta))}{\varepsilon \alpha}  
          \; .
    \end{eqnarray*}
\end{thm}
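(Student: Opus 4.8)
\textbf{Overall strategy.} The plan is to reduce the robust statement to the already-established utility statement (Theorem~\ref{thm:mean}) by the standard robust-statistics trick: a corrupted version of a resilient dataset is itself resilient, with slightly degraded parameters. Concretely, if $S_{\rm good}$ is $(\alpha,\rho_1,\rho_2)$-resilient with respect to $(\mu,\Sigma)$ and $S$ is an $\alpha_{\rm corrupt}$-corruption of it with $\alpha_{\rm corrupt}\le (2/11)\alpha$, then I would show $S$ is $(\alpha',\rho_1',\rho_2')$-resilient with respect to the \emph{same} $(\mu,\Sigma)$ for $\alpha' = \alpha - \alpha_{\rm corrupt} \ge (9/11)\alpha$ (or some comparable constant fraction of $\alpha$) and $\rho_1' = O(\rho_1)$, $\rho_2' = O(\rho_2)$ — the constants absorbed so that the hypotheses $\rho_1'<c$, $(\rho_1')^2\le c\alpha'$ of Theorem~\ref{thm:mean} still hold after shrinking $c$. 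Then HPTR with the \emph{same} proposed $\sens = 110\rho_1/(\alpha n)$ and $\thresh = 42\rho_1$ applied to $S$ inherits the conclusion $\|\Sigma^{-1/2}(\hat\mu-\mu)\|\le 32\rho_1$ with probability $1-\zeta$ whenever $n \ge C(d+\log(1/(\delta\zeta)))/(\varepsilon\alpha)$, since $\alpha'$ and $\alpha$ agree up to a constant and that constant can be folded into $C$.

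\textbf{Key steps in order.} First, I would prove the corruption-to-resilience lemma. Take any $T\subseteq S$ with $|T|\ge(1-\alpha')n$. Let $T_{\rm good} = T\cap S_{\rm good}$; since at most $\alpha_{\rm corrupt}n$ points of $S$ are bad, $|T_{\rm good}| \ge |T| - \alpha_{\rm corrupt}n \ge (1-\alpha'-\alpha_{\rm corrupt})n = (1-\alpha)n$, so resilience of $S_{\rm good}$ applies to $T_{\rm good}$: for all unit $v$, $|\tfrac{1}{|T_{\rm good}|}\sum_{x_i\in T_{\rm good}}\langle v,x_i\rangle - \mu_v| \le \rho_1\sigma_v$, and similarly for the second moment with $\rho_2$. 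Now I must control $\tfrac{1}{|T|}\sum_{x_i\in T}\langle v,x_i\rangle$. The subtlety is the \emph{bad} points in $T$: the definition of resilience provides no a priori bound on $\langle v, x_i\rangle$ for $x_i\in S_{\rm bad}$, so a naive triangle inequality fails. The fix is that this theorem only needs resilience to survive HPTR, and HPTR's loss $\robdist_S(\hat\mu)$ uses the \emph{trimmed} middle set ${\cal M}_{v,(2/5.5)\alpha}$, not the raw empirical mean — so what I actually need is not that the raw mean of every large subset of $S$ is near $\mu_v$, but the weaker statement that HPTR's robust one-dimensional statistics on $S$ still have small sensitivity and still approximate $(\mu_v,\sigma_v^2)$. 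I would therefore mirror the proof-sketch argument from Section~\ref{sec:intro_sketch}: because the trimming removes $(2/5.5)\alpha n \ge \alpha_{\rm corrupt}n$ points from each tail, all $\alpha_{\rm corrupt}n$ bad points plus the "displaced" good points they push into the tails are absorbed, and the middle set ${\cal M}_{v,(2/5.5)\alpha}$ consists of good points whose projected values lie between two good points that (by resilience of $S_{\rm good}$) are within $O(\rho_1/\alpha \cdot \sigma_v)$ of $\mu_v$; hence the range of ${\cal M}_{v,(2/5.5)\alpha}$ is $O((\rho_1/\alpha)\sigma_v)$, giving sensitivity $O(\rho_1/(\alpha n))$ for $\mu_v({\cal M}_{v,\alpha})$ and $O(\rho_2/(\alpha n)\cdot\sigma_v^2)$ (up to $\rho_1^2$ terms) for $\sigma_v^2({\cal M}_{v,\alpha})$ — exactly the bounds Theorem~\ref{thm:mean} relies on. This is the content of Lemma~\ref{lem:local_asmp} applied to the corrupt-good set, and I would check its hypotheses carry over.

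\textbf{Main obstacle.} The hard part is the bookkeeping of the fractions: showing that the factor $2/11 = (2/5.5)/2$ is exactly what is needed so that (i) the trimming level $(2/5.5)\alpha$ exceeds the total "effective corruption" — the $\alpha_{\rm corrupt}n$ genuinely bad points together with up to $4\alpha_{\rm corrupt}n$ good points that HPTR's top/bottom cuts discard (the "treats good points as outliers" phenomenon the paragraph before the theorem describes), and (ii) there is still slack of $(0.5/5.5)\alpha n$ left to handle neighboring datasets when running the safety {\sc Test}, since the margin argument must certify safety not just for $S$ but for all $S'$ within Hamming distance $k^* = (2/\varepsilon)\log(4/(\delta\zeta))$. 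So I would verify $\alpha_{\rm corrupt} + 4\alpha_{\rm corrupt} + (0.5/5.5)\alpha \le \alpha$, i.e. $5\alpha_{\rm corrupt} \le (5/5.5)\alpha$, i.e. $\alpha_{\rm corrupt}\le (1/5.5)\alpha$, which is implied by $\alpha_{\rm corrupt}\le(2/11)\alpha = (1/5.5)\alpha$; and separately confirm $k^* \le (0.5/5.5)\alpha n$ under the stated sample-size lower bound, absorbing constants into $C$. Once these inequalities are in place, the {\sc Test} and {\sc Release} analyses of Theorem~\ref{thm:mean} (via the volume-ratio utility bound of Lemma~\ref{lem:outerset} and the safety-test argument) transfer verbatim to $S$, and the conclusion follows. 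I expect no genuinely new idea is required beyond Theorem~\ref{thm:mean} — this is a parameter-tracking corollary — but the constant-chasing to justify "$(2/11)\alpha$" precisely is the one place an error could creep in.
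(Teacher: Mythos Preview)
Your opening strategy---show that the corrupted set $S$ is itself $(\alpha',\rho_1',\rho_2')$-resilient and then invoke Theorem~\ref{thm:mean}---cannot work, and you correctly diagnose why: resilience in Definition~\ref{def:resilience} must hold for \emph{every} large subset of $S$, and any subset containing even one adversarial point can have arbitrarily bad projected mean or variance. So there is no ``corrupted is resilient'' lemma to prove, and the reduction to Theorem~\ref{thm:mean} as stated is a dead end. What you pivot to---arguing that the trimmed one-dimensional statistics $\mu_v({\cal M}_{v,\alpha})$ and $\sigma_v^2({\cal M}_{v,\alpha})$ still approximate $(\mu_v,\sigma_v^2)$ and still have sensitivity $O(\rho_1/(\alpha n))$ on a corrupt good set, because the $(2/5.5)\alpha$ trimming on each tail absorbs all $\le(1/5.5)\alpha n$ bad points and leaves enough good points in each tail for Lemma~\ref{lem:deviate} to bound the range of ${\cal M}_{v,\alpha}$---is exactly right and is the paper's argument.

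But this pivot is no longer a reduction to Theorem~\ref{thm:mean}; it is a direct verification of the hypotheses of the general utility result, Theorem~\ref{thm:utility}, for the corrupt good set. The paper's logical order is in fact the reverse of yours: every intermediate lemma (Lemmas~\ref{lem:deviation}, \ref{lem:outerset}, \ref{lem:vol}, \ref{lem:local_asmp}, \ref{lem:deviation2}, \ref{lem:distanceperturbed}) is stated and proved from the outset for $((1/5.5)\alpha,\alpha,\rho_1,\rho_2)$-corrupt good sets; Theorem~\ref{thm:robust_mean} is then proved first in Section~\ref{sec:mean_proof2} by checking that these lemmas supply assumptions~\ref{asmp_vol}--\ref{asmp_resilience} of Theorem~\ref{thm:utility} with $p=d$, $\rho=\rho_1$, $c_0=31.8$, $c_1=10.2$, $\thresh=42\rho_1$, $\sens=110\rho_1/(\alpha n)$; and Theorem~\ref{thm:mean} is the zero-corruption corollary. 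After your pivot your outline is substantively the paper's proof, so drop the framing as a reduction to Theorem~\ref{thm:mean} and instead verify Theorem~\ref{thm:utility} directly on the corrupt good set.
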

In  Sections~\ref{sec:mean_proof_strategy}--\ref{sec:mean_proof2}, we prove this more general result. When there is no adversarial corruption, 
 Theorem~\ref{thm:mean} immediately follows as a special case by selecting $\alpha$ as a free parameter depending on the target accuracy. The constants in all the theorems can be improve if we track them more carefully, and we did not attempt to optimize them in this paper.

\subsubsection{Proof strategy for Theorem~\ref{thm:robust_mean}} 
\label{sec:mean_proof_strategy}


We  show in Section~\ref{sec:mean3_sens} that the robust one-dimensional statistics, $\mu_v({\cal M}_{v,\alpha})$ and $\sigma^2_v({\cal M}_{v,\alpha})$, have small sensitivity if the  dataset is resilient. Consequently,  $\robdist_S(\hat\mu)$ has a small {\em local} sensitivity, i.e.~the sensitivity is small  if restricted to $\hat\mu$ close to $\mu$ and if the dataset is resilient. To ensure DP, we run {\sc Release} only when those two locality conditions are satisfied; we first {\sc Propose} the sensitivity $\sens$ and a threshold $\thresh$, and then we {\sc Test} that DP guarantees are met on the given dataset with those choices. 
Resilient datasets $(i)$ pass this safety test  with a high probability and $(ii)$ achieve the desired accuracy, both of which rely on our general analysis of HPTR with a general distance function (Theorem~\ref{thm:utility}). 
 We give sketches of the main steps below.  
 

\medskip\noindent{\bf One-dimensional robust statistics have small  sensitivity on resilient datasets.} 
Consider the robust projected mean $\mu_v({\cal M}_{v,\alpha})$ for some small enough $\alpha>0$. 
If $S$ is $(\alpha,\rho_1, \rho_2)$-resilient, then the following technical lemma shows that the top and bottom $(2/5.5)\alpha$-tails cannot deviate too much from the mean. 
\begin{lemma}[Lemma 10 from \cite{steinhardt2018resilience}] 
    \label{lem:deviate}
    For a $(\alpha,\rho_1,\rho_2)$-resilient dataset $S$ with respect to $(\mu,\Sigma)$ and any $0\leq\tilde\alpha\leq\alpha$, the following holds for any subset $T\subset S$ of size at least $\tilde\alpha n$ and for any unit norm $v\in\reals^d$: 
    \begin{eqnarray}
    \Big|\frac{1}{|T|}\sum_{x_i\in T} \langle v, x_i-\mu\rangle\Big| &\leq& \frac{2 - \tilde\alpha}{\tilde\alpha}\,\rho_1 \,\sigma_v\;, \text{ and } \label{eq:resilienceontail1} \\ 
    \Big|\frac{1}{|T|}\sum_{x_i \in T} \big(\, \langle v, x_i-\mu\rangle^2-\sigma_v^2 \,\big) \Big| &\leq& \frac{2-\tilde\alpha}{\tilde\alpha}\,\rho_2 \, \sigma_v^2\;. \label{eq:resilienceontail2}
    \end{eqnarray} 
\end{lemma}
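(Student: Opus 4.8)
The plan is to deduce the stated bound for an arbitrary subset $T$ from the corresponding bound for a single \emph{extremal tail} of the smallest admissible size $\tilde\alpha n$, and then to control that extremal tail by writing $S$ as the disjoint union of the tail and its (necessarily large) complement and invoking the resilience hypothesis on both pieces. Note first that Definition~\ref{def:resilience}, applied with $T=S$ itself (since $|S|=n\ge(1-\alpha)n$), already gives $|\mu_v(S)-\mu_v|\le\rho_1\sigma_v$ and $\big|\tfrac1n\sum_{x_i\in S}(\langle v,x_i-\mu\rangle^2-\sigma_v^2)\big|\le\rho_2\sigma_v^2$ for every unit $v$, and more generally the same two bounds for every subset of $S$ of size at least $(1-\alpha)n$.

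First I would fix a unit vector $v$ and, replacing $v$ by $-v$ if needed, reduce to upper-bounding $g(T):=\tfrac1{|T|}\sum_{x_i\in T}\langle v,x_i-\mu\rangle$. Among all subsets of a fixed cardinality $m$, $g$ is maximized by the set of the $m$ largest values of $\langle v,x_i-\mu\rangle$; moreover this ``top-$m$'' average is non-increasing in $m$ (passing from $m$ to $m+1$ appends a value no larger than the current average). Hence $\sup\{g(T):T\subset S,\ |T|\ge\tilde\alpha n\}$ is attained at the set $T^\ast$ of the $\lceil\tilde\alpha n\rceil$ largest values of $\langle v,x_i-\mu\rangle$, and symmetrically the infimum is attained at the corresponding bottom tail, so it suffices to bound $|g(T^\ast)|$.

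Now, since $|T^\ast|\le\alpha n$, the complement $S\setminus T^\ast$ has size at least $(1-\alpha)n$, so resilience applies to it. Writing $m=|T^\ast|$ and using the linear identity $n\,\mu_v(S)=m\,\mu_v(T^\ast)+(n-m)\,\mu_v(S\setminus T^\ast)$, we get $\mu_v(T^\ast)-\mu_v=\tfrac nm\big(\mu_v(S)-\mu_v\big)-\tfrac{n-m}{m}\big(\mu_v(S\setminus T^\ast)-\mu_v\big)$, and therefore $|g(T^\ast)|=|\mu_v(T^\ast)-\mu_v|\le\tfrac nm\rho_1\sigma_v+\tfrac{n-m}{m}\rho_1\sigma_v=\tfrac{2n-m}{m}\rho_1\sigma_v\le\tfrac{2-\tilde\alpha}{\tilde\alpha}\rho_1\sigma_v$, the last step using $m\ge\tilde\alpha n$ (so $2n/m-1\le 2/\tilde\alpha-1$). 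This is \eqref{eq:resilienceontail1}. The second-moment bound \eqref{eq:resilienceontail2} follows by running the identical argument with the centered scalars $z_i:=\langle v,x_i-\mu\rangle^2-\sigma_v^2$ in place of $\langle v,x_i-\mu\rangle$ and with $\rho_2\sigma_v^2$ in place of $\rho_1\sigma_v$; this is legitimate precisely because Eq.~\eqref{def:rho2} is already stated in centered form $\tfrac1{|T'|}\sum(\langle v,x_i-\mu\rangle^2-\sigma_v^2)$, so the same averaging identity over $S=T^\ast\cup(S\setminus T^\ast)$ goes through verbatim.

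The step I expect to need the most care is conceptual rather than computational: resilience is a statement only about \emph{large} subsets, so one cannot apply it to the thin tail $T^\ast$ directly, and the whole argument hinges on the two reductions — that the worst subset of a given size is an extremal tail, that the worst admissible size is the smallest one $\tilde\alpha n$, and that such a tail has size $\le\alpha n$ so its complement is large enough for resilience to bite. Once these are in place the estimate is a one-line manipulation. The rounding point $\lceil\tilde\alpha n\rceil$ versus $\tilde\alpha n$ is cosmetic and I would suppress it as in \cite{steinhardt2018resilience}; and for the complementary regime $|T|\ge(1-\alpha)n$ the claim is immediate from resilience itself, since $(2-\tilde\alpha)/\tilde\alpha\ge1$ whenever $\tilde\alpha\le1$, so no separate treatment is needed.
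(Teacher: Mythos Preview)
Your argument is correct and is precisely the standard complement decomposition that underlies the cited result: reduce to the worst (smallest, extremal) tail, observe that its complement has size at least $(1-\alpha)n$ since $\tilde\alpha\le\alpha$, and combine the resilience bounds on $S$ and on $S\setminus T^\ast$ via the averaging identity. The paper does not give its own proof here (it defers to \cite{steinhardt2018resilience}), so there is nothing further to compare; your write-up is exactly the intended one-paragraph proof.
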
 
Under the definitions in Eq.~\eqref{def:mean_distance}, 
the top $(2/5.5)\alpha$-tail denoted by ${\cal T}_{v,\alpha}$ and bottom $(2/5.5)\alpha$-tail denoted by ${\cal B}_{v,\alpha}$ have the  empirical means that are  
no more than $O(\sigma_v\rho_1 /\alpha)$ away from the true projected mean $\mu_v$, respectively.
It follows that there exists at least one data point in ${\cal T}_{v,\alpha}$ and one data point in ${\cal B}_{v,\alpha}$ that are no more than $O(\sigma_v\rho_1 /\alpha)$ away from $\mu_v$. 
This implies that the range of the  middle subset ${\cal M}_{v,\alpha}$ is provably bounded by $O(\sigma_v \rho_1 /\alpha)$, and the sensitivity of the robust mean $\mu_v({\cal M}_{v,\alpha})$ is guaranteed to be $O(\sigma_v \rho_1 /(\alpha n ))$. We can similarly show that $\sigma_v^2({\cal M}_{v,\alpha})$ has sensitivity $O(\sigma_v^2 \rho_1^2/ (\alpha^2 n))$ as shown in Eq.~\eqref{eq:varsensitivitybound}.  
Note that these sensitivity bounds are {\em local} in the sense that it requires the data to be $(\alpha,\rho_1,\rho_2)$-resilient.

\medskip 
\noindent 
{\bf Small  local sensitivity of $\robdist_S(\hat\mu)$.}  
Under the above sensitivity bounds for 
$\mu_v({\cal M}_{v,\alpha})$ and $\sigma_v^2({\cal M}_{v,\alpha})$, 
 it follows after some calculations as shown in Eq.~\eqref{eq:sensbound_D} that the sensitivity for a resilient dataset $S$ is bounded by 
\begin{eqnarray}
 |\,\robdist_S(\hat\mu) -\robdist_{S'}(\hat\mu)\, | \; \leq \; C' \frac{\rho_1}{\alpha n}\Big(1 + \frac{\rho_1 \|\Sigma^{-1/2}(\hat\mu-\mu)\|}{\alpha}\Big)\;, 
\label{eq:intuition_sensitivity}
\end{eqnarray}
for some constant $C'$ and all neighboring datasets $S'$, assuming $\rho_2$ is sufficiently small.
Note that this sensitivity bound is {\em local} for two reasons; for this sensitivity to be small (i.e.~$O(\rho_1/(\alpha n))$), we require $S$  to be resilient  and $\hat\mu$ to be close to $\mu$. Thus the meaning of {\em local} here is two folded while traditionally local sensitivity in the privacy literature only concerns the sensitivity of a particular dataset $S$.
We handle these two locality with {\sc Test} step that, among other things, checks that the 
DP conditions are satisfied for the given dataset and the choice of $\sens$ and $\thresh$, which  
bounds the support of the exponential mechanism to be within 
$B_{\thresh,S}=\{\hat\mu:\robdist_S(\hat\mu)\leq\tau\}$ with a choice of $\tau=O(\rho_1 )$. Consequently, we require $\rho_1^2/\alpha \ll 1$ for the second term in Eq.~\eqref{eq:intuition_sensitivity} to be dominated by the first. 
Fortunately, this is indeed true for all scenarios we are interested in. For sub-Gaussian distributions, $\rho_1^2 = \alpha^2\log(1/\alpha) \ll \alpha$. For $k$-th moment bounded distributions with $k>3$, $\rho_1^2=\alpha^{2-2/k}\ll \alpha$. For covariance bounded distributions, we do not hope to get a  Mahalanobis distance guarantee. Instead, we aim for a Euclidean distance guarantee whose sensitivity does not depend  on $\hat\mu$ and  we do  not require $\rho_1^2/\alpha \ll 1$  (Section~\ref{sec:mean-2moment}).

\medskip \noindent
{\bf Sample complexity analysis.} 
Assuming the sensitivity of $\robdist_S(\hat\mu)$ is bounded by $\sens=O(\rho_1/(\alpha n))$, which we ensure with the safety test, we analyze the  utility of the exponential mechanism. For a target accuracy of  $\|\Sigma^{-1/2}(\hat\mu-\mu)\|=O(\rho_1)$, we consider two sets $B_{\rm out}=\{\hat\mu : \|\Sigma^{-1/2}(\hat\mu-\mu)\|\leq c_0 \rho_1\}$ and 
$B_{\rm in}=\{\hat\mu : \|\Sigma^{-1/2}(\hat\mu-\mu)\|\leq c_1\rho_1 \}$ for some $c_0>c_1$. 
The exponential mechanism achieves accuracy $c_0\rho_1 $ with  probability $1-\zeta$ if 
\begin{eqnarray*}
    {\mathbb P}(\hat\mu\notin B_{\rm out}) \;\leq\; 
    \frac{{\mathbb P}(\hat\mu\notin B_{\rm out})}{{\mathbb P}(\hat\mu\in B_{\rm in})} \;\lesssim\; \frac{{\rm Vol}(B_{\tau,S})}{{\rm Vol}(B_{\rm in})}    \frac{e^{-\frac{\varepsilon}{4\sens}c_0\rho_1  }}{e^{-\frac{\varepsilon}{4\sens}c_1\rho_1}} \leq e^{O(d)}e^{-\frac{\varepsilon}{4\sens}(c_0-c_1)\rho_1} \leq \zeta\;,
\end{eqnarray*}
where the second inequality  requires 
$\robdist_S(\hat\mu)\simeq \|\Sigma^{-1/2}(\hat\mu-\mu)\|$,  which we show in Lemma~\ref{lem:outerset}. 
Since the volume ratio is $ {\rm Vol}(B_{\tau,S})/{\rm Vol}(B_{\rm out}) = e^{O(d)}$, $\tau=O(\rho_1)$, and $\sens=O(\rho_1 /(\alpha n))$, it is sufficient to have a large enough $c_0$ and $n=O((d+\log(1/\zeta))/(\alpha\varepsilon) )$ with a large enough constant. 

\medskip\noindent{\bf Safety test.}  
We are left to show that for a resilient dataset, the failure probability of the safety test, 
${\mathbb P}(m_\tau + {\rm Lap}(2/\varepsilon) < (2/\varepsilon)\log(2/\delta) )$, is less than $\zeta$. 
This requires the safety margin to be large enough, i.e.~$m_\tau\geq k^*=(2/\varepsilon)\log(4/(\delta\zeta))$. 
Recall that the safety margin is defined as  the Hamming distance to the closest dataset to $S$ where the $(\varepsilon/2,\delta/2)$-DP condition of the exponential mechanism is violated. 
We therefore need to show that the DP condition is satisfied for not only $S$ but any dataset $S'$ at Hamming distance at most $k^*$ from $S$. 

Consider two exponential mechanisms $r_{(\varepsilon,\sens,\thresh,S')}$ and 
$r_{(\varepsilon,\sens,\thresh,S'')}$ on  neighboring datasets $S'$ and $S''$. Since  $B_{\tau,S'}\neq B_{\tau,S''}$, we separately analyze  the intersection $B_{\tau,S'}\cap B_{\tau,S''}$ and the differences  $B_{\tau,S'}\setminus B_{\tau,S''}$ and $B_{\tau,S''}\setminus B_{\tau,S'}$. 
In the intersection, we show that the two probability distributions are within a multiplicative factor $e^{\varepsilon/2}$ of each other: 
\begin{eqnarray*}
    {\mathbb P}_{r_{(\varepsilon,\sens,\thresh,S')}}(\hat\mu\in A) \;\leq \; e^{\varepsilon/2} {\mathbb P}_{r_{(\varepsilon,\sens,\thresh,S'')}}(\hat\mu\in A)\;, 
\end{eqnarray*}
for all $A \subseteq B_{\tau,S'}\cap B_{\tau,S''}$, $S'$ within Hamming distance $k^*$ from a resilient dataset $S$, and $S''\sim S'$. 
The main challenge is that $S'$ is no longer a resilient dataset but a $k^*$-neighbor of a resilient dataset. Since such $S'$ is $(k^*/n,\alpha,\rho_1,\rho_2)$-corrupt good (Definition~\ref{def:corruptgoodset}), we  show that corrupt good sets also inherit the bounded local sensitivity  of a resilient dataset seamlessly as shown in Lemma~\ref{lem:local_asmp}. 

In the set difference, we show that the total probability mass ${\mathbb P}_{r_{(\varepsilon,\sens,\thresh,S)}} (\hat\mu\in B_{\tau,S}\setminus B_{\tau,S'})$ and 
${\mathbb P}_{r_{(\varepsilon,\sens,\thresh,S')}} (\hat\mu\in B_{\tau,S'}\setminus B_{\tau,S})$  are bounded by  $\delta$, respectively, as long as the overlap of the two supports are large enough. This requires $\tau\gg \sens k^*$, as we show in  Appendix~\ref{sec:proof_safetymargin}, 
which is satisfied for $n \geq (\log(1/(\delta\zeta))/(\alpha \varepsilon))$.


\medskip\noindent
{\bf Outline.} 
The analyses for the accuracy and the safety test build upon a universal analysis of HPTR in Theorem~\ref{thm:utility}, which holds more generally for any distance function $\robdist_\phi(\hat\theta)$ in the estimation problems of interest. 
For mean estimation, we show in 
Sections~\ref{sec:mean_proof1}-\ref{sec:mean3_sens} that the sufficient conditions of Theorem~\ref{thm:utility} are met for the choices of constants and parameters: 
$\rho=\rho_1$, $c_0=31.8$, $c_1=10.2$, $k^*=(2/\varepsilon)\log(4/(\delta\zeta))$, $\tau=42\rho_1$, and $\sens=110\rho_1 /(\alpha n)$. 
We can set $c_2$ to be a large constant and will only change the constant factor in the sample complexity which we do not track.
A proof of Theorem~\ref{thm:robust_mean} is provided in Section~\ref{sec:mean_proof2}, from which Theorem~\ref{thm:mean} follows immediately. 
All the lemmas assume $((1/5.5)\alpha,\alpha,\rho_1 ,\rho_2 )$-corrupt good set $S$, $\alpha\leq  0.015$, 
$\rho_1\leq 0.013$, and $\rho_2 \leq 0.0005$. 
We omit this assumption in stating the lemmas  for brevity. 

\subsubsection{Resilience implies robustness} 
\label{sec:mean_proof1}

For the assumption~\ref{asmp_resilience} in Theorem~\ref{thm:utility}, we show that $\robdist_S(\hat\mu)$ is a good approximation of the true distance $\|\Sigma^{-1/2}(\hat\mu-\mu)\|$ in Lemma~\ref{lem:outerset}.
We first show that the one-dimensional mean and the variance of the filtered out ${\cal M}_{v,\alpha}$ are robust. 

\begin{lemma} 
For any unit norm $v\in\reals^d$,  
    $ |\langle v, \mu  - \mu({\cal M}_{v,\alpha})\rangle| \leq 6 \rho_1 \,\sigma_v$ and $0.9 \sigma_v \leq \sigma_v({\cal M}_{v,\alpha}) \leq 1.1 \sigma_v $.  
    \label{lem:deviation}
\end{lemma}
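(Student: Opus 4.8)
The plan is to exploit the corrupt-good structure of $S$. Write $S = S_{\rm good}\cup S_{\rm bad}$ where $S_{\rm good}$ is $(\alpha,\rho_1,\rho_2)$-resilient with respect to $(\mu,\Sigma)$ and $|S_{\rm bad}|\leq (1/5.5)\alpha n$, fix a unit vector $v$, and split the middle set ${\cal M}_{v,\alpha}$ into its good part $T := {\cal M}_{v,\alpha}\cap S_{\rm good}$ and its bad part $T_{\rm bad} := {\cal M}_{v,\alpha}\cap S_{\rm bad}$. Since ${\cal M}_{v,\alpha}$ retains $(1-(4/5.5)\alpha)n$ points and at most $(1/5.5)\alpha n$ of them can be bad, we get $|T|\geq (1-(5/5.5)\alpha)n \geq (1-\alpha)n$, so Definition~\ref{def:resilience} (Eq.~\eqref{def:rho1}--\eqref{def:rho2}) applies verbatim to $T$: $|\mu_v(T)-\mu_v|\leq\rho_1\sigma_v$ and $\big|\frac{1}{|T|}\sum_{x_i\in T}(\langle v,x_i\rangle-\mu_v)^2-\sigma_v^2\big|\leq\rho_2\sigma_v^2$, while $|T_{\rm bad}|/|{\cal M}_{v,\alpha}|\leq 0.2\alpha$ by the same counting and $\alpha\leq 0.015$.

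Next I would establish a crude pointwise range bound for ${\cal M}_{v,\alpha}$. Because the trimmed top tail ${\cal T}_{v,\alpha}$ (resp.\ bottom tail ${\cal B}_{v,\alpha}$) has $(2/5.5)\alpha n$ points of which at most $(1/5.5)\alpha n$ are bad, it contains at least $(1/5.5)\alpha n$ points of $S_{\rm good}$; applying Lemma~\ref{lem:deviate} with $\tilde\alpha = (1/5.5)\alpha$ to that good subset bounds its projected average within $\frac{2}{(1/5.5)\alpha}\rho_1\sigma_v = \frac{11\rho_1\sigma_v}{\alpha}$ of $\mu_v$. Since $\min_{x_j\in{\cal T}_{v,\alpha}}\langle v,x_j\rangle$ is at most any such average and dominates $\max_{x_i\in{\cal M}_{v,\alpha}}\langle v,x_i\rangle$, and symmetrically for ${\cal B}_{v,\alpha}$, every $x_i\in{\cal M}_{v,\alpha}$ satisfies $|\langle v,x_i\rangle-\mu_v|\leq R := 11\rho_1\sigma_v/\alpha$.

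Now I would combine the two ingredients. For the mean, $\mu_v({\cal M}_{v,\alpha}) - \mu_v$ is a convex combination of $\mu_v(T)-\mu_v$ (magnitude $\leq \rho_1\sigma_v$) and $\mu_v(T_{\rm bad})-\mu_v$ (magnitude $\leq R$ by the range bound, weight $\leq 0.2\alpha$), giving $|\langle v,\mu-\mu({\cal M}_{v,\alpha})\rangle|\leq \rho_1\sigma_v + 0.2\alpha\cdot R \leq 3.2\rho_1\sigma_v \leq 6\rho_1\sigma_v$. For the variance, write $\sigma_v^2({\cal M}_{v,\alpha}) = \frac{1}{|{\cal M}_{v,\alpha}|}\sum_{x_i\in{\cal M}_{v,\alpha}}(\langle v,x_i\rangle-\mu_v)^2 - (\mu_v({\cal M}_{v,\alpha})-\mu_v)^2$: the first term is a convex combination of the good second moment (in $[(1-\rho_2)\sigma_v^2,(1+\rho_2)\sigma_v^2]$, weight $\geq 1-0.2\alpha$) and the bad second moment ($\leq R^2$, weight $\leq 0.2\alpha$), hence lies in $[(1-0.2\alpha)(1-\rho_2)\sigma_v^2,\ (1+\rho_2)\sigma_v^2 + 0.2\alpha R^2]$, and the subtracted term is at most $(3.2\rho_1\sigma_v)^2$. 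Using $\alpha,\rho_1,\rho_2$ small, and crucially $\rho_1^2\leq c\alpha$ so that $0.2\alpha R^2/\sigma_v^2 = O(\rho_1^2/\alpha) = O(c)$ is negligible, this yields $0.81\sigma_v^2\leq\sigma_v^2({\cal M}_{v,\alpha})\leq 1.21\sigma_v^2$, i.e.\ $0.9\sigma_v\leq\sigma_v({\cal M}_{v,\alpha})\leq 1.1\sigma_v$.

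The main obstacle is the interplay between the two scales in the last step: the bad portion of ${\cal M}_{v,\alpha}$ can only be controlled by the coarse range bound $R = \Theta(\rho_1/\alpha)$, which is far larger than the $\Theta(\rho_1)$ target, and what rescues the argument is that this portion has mass $O(\alpha)$ — so the mean error from it is $O(\rho_1)$ and the variance error is $O(\rho_1^2/\alpha)$, absorbed by the standing hypothesis $\rho_1^2\leq c\alpha$. The remaining care is bookkeeping: choosing the trimming fractions $(2/5.5)\alpha$ and $(4/5.5)\alpha$ against the corruption level $(1/5.5)\alpha$ so that $T$ is genuinely a $(1-\alpha)$-subset of $S_{\rm good}$ while each trimmed tail still retains at least one good point.
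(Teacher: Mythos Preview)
Your proposal is correct and uses the same decomposition as the paper: split ${\cal M}_{v,\alpha}$ into its good and bad parts, apply resilience directly to the good part, and control the bad part via the trimmed-tail structure together with Lemma~\ref{lem:deviate}. The mean bound is handled identically.

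The one substantive difference is how you control the second moment of bad middle points. You square the first-moment range bound $R=11\rho_1\sigma_v/\alpha$, so the bad contribution to the variance is $O(\alpha)\cdot R^2 = O(\rho_1^2/\alpha)\sigma_v^2$, which you then absorb via the standing hypothesis $\rho_1^2\leq c\alpha$. The paper instead invokes the second-moment tail estimate of Lemma~\ref{lem:deviate} (Eq.~\eqref{eq:resilienceontail2}) to get $|\langle v,x_i-\mu\rangle^2-\sigma_v^2|\leq \tfrac{2\rho_2}{(1/5.5)\alpha}\sigma_v^2$ for each bad $x_i\in{\cal M}_{v,\alpha}$, so the bad contribution is $O(\rho_2)\sigma_v^2$ and the final bound $|\sigma_v^2({\cal M}_{v,\alpha})-\sigma_v^2|\leq (6\rho_2+36\rho_1^2)\sigma_v^2$ holds using only smallness of $\rho_1,\rho_2$, without ever needing $\rho_1^2\leq c\alpha$ for this lemma. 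Your route is slightly more elementary (it uses only the $\rho_1$ tail estimate, not the $\rho_2$ one), at the cost of importing one extra hypothesis; the paper's route keeps the lemma self-contained in $(\rho_1,\rho_2)$.
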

\begin{proof}
    For the mean bound, 
    \begin{align} 
      &|\langle v, \mu  - \mu({\cal M}_{v,\alpha})\rangle| \nonumber \\
      &\;\;\leq\;    \frac{|{\cal M}_{v,\alpha}\cap S_{\rm bad}|}{|{\cal M}_{v,\alpha}|} |\langle v,\mu(S_{\rm bad} \cap {\cal M}_{v,\alpha})-\mu \rangle| + \frac{|{\cal M}_{v,\alpha}\cap S_{\rm good}|}{|{\cal M}_{v,\alpha}|}  |\langle v, \mu(S_{\rm good} \cap {\cal M}_{v,\alpha})-\mu \rangle | \nonumber \\
      &\;\;\leq \; \frac{(1/5.5)\alpha}{1-(4/5.5)\alpha}
      \frac{2\rho_1 \sigma_v}{(1/5.5)\alpha}  + \frac{1-(1/5.5)\alpha}{1-(4/5.5)\alpha}\rho_1 \sigma_v \nonumber \\
      &\;\; \leq \;  (2\rho_1 + \rho_1 )\sigma_v/(1-(4/5.5)\alpha) \;, \label{eq:meanubound}
    \end{align}
    The second inequality follows from 
    the following.  
    First, 
    $|\langle v, \mu(S_{\rm good} \cap {\cal M}_{v,\alpha})- \mu\rangle |\leq \sigma_v \rho_1 $ by the definition of resilience and that fact that $| S_{\rm good} \cap {\cal M}_{v,\alpha} | \geq (1-(5/5.5)\alpha)n$.
    Next, since $|\langle v , \mu(S_{\rm bad} \cap {\cal M}_{v,\alpha}) - \mu \rangle |$ is less than $|\langle v, \mu(S_{\rm good} \cap {\cal T}_{v,\alpha}) - \mu \rangle |$ or $|\langle v, \mu(S_{\rm good} \cap {\cal B}_{v,\alpha}) - \mu \rangle |$, 
    both of which are at most $2\rho_1  \sigma_v/(1/5.5)\alpha $, from applying Lemma~\ref{lem:deviate} with a set size at least $(1/5.5)\alpha n$,
    we have 
    \begin{eqnarray*}
    | \langle v , \mu(S_{\rm bad} \cap {\cal M}_{v,\alpha}) - \mu \rangle | \;\leq\; 
    \frac{2}{(1/5.5)\alpha}\rho_1  \sigma_v\;.
    \end{eqnarray*} 
    The mean bound follows from \eqref{eq:meanubound} and $\alpha\leq 0.1$. 
    For the variance upper bound, 
    \begin{eqnarray*}
        \sigma_v({\cal M}_{v,\alpha})^2 \;=\; \frac{1}{(1-(4/5.5)\alpha)n}\sum_{x_i\in {\cal M}_{v,\alpha}}  \langle v , x_i-\mu({\cal M}_{v,\alpha}) \rangle^2      
        \;\leq\;  \frac{1}{(1-(4/5.5)\alpha)n}\sum_{x_i\in{\cal M}_{v,\alpha}}\langle v , x_i-\mu \rangle^2 \;,
    \end{eqnarray*}
    where the first inequality follows from the fact that subtracting the empirical mean $\mu({\cal M}_{v,\alpha})$ minimizes the second moment. We can decompose the empirical deviation and show an upper bound first: 
    \begin{eqnarray} 
         &&\frac{ \sum_{x_i \in{\cal M}_{v,\alpha}}(\langle v , x_i-\mu \rangle^2 -\sigma_v^2)  }{(1-(4/5.5)\alpha)n} \nonumber\\
        & = & 
        \frac{ \sum_{x_i \in {\cal M}_{v,\alpha}\cap S_{\rm bad}} (\langle v, x_i-\mu \rangle^2-\sigma_v^2 ) }{(1-(4/5.5)\alpha)n}
        + \frac{  \sum_{x_i\in {\cal M}_{v,\alpha}\cap S_{\rm good}} (\langle v, x_i-\mu \rangle^2-\sigma_v^2)}{(1-(4/5.5)\alpha)n }  \nonumber\\ 
        &\leq & \frac{ (1/5.5)\alpha(2\rho_2/(1/5.5)\alpha)\sigma_v^2 + (1-(4/5.5)\alpha)\rho_2 \sigma_v^2}{1-(4/5.5)\alpha} \;\leq\; 6 \rho_2 \sigma_v^2\;, \label{eq:varianceubound}
    \end{eqnarray}
    where in the second inequality we used 
    resilience on ${\cal M}_{v,\alpha}\cap S_{\rm good}$ of size at least $1-(5/5.5)\alpha$. For $x_i\in S_{\rm bad}\cap {\cal M}_{v,\alpha}$, we  use
    the fact that 
    \begin{eqnarray*}
         \big|\,\langle v,x_i-\mu\rangle^2 -\sigma_v^2\,\big|  & \leq & \max \Big\{ \frac{ \sum_{j\in S_{\rm good}\cap {\cal T}_{v,\alpha}} (\langle v, x_j-\mu\rangle^2-\sigma_v^2) }{|S_{\rm good}\cap {\cal T}_{v,\alpha}|}, \frac{ \sum_{j\in S_{\rm good}\cap {\cal B}_{v,\alpha}} (\langle v,x_j-\mu\rangle^2-\sigma_v^2)  }{|S_{\rm good}\cap {\cal B}_{v,\alpha}|}   \Big\}\\
        & \leq & 
        \frac{2 \rho_2\sigma_v^2}{(1/5.5)\alpha}
        \;,
    \end{eqnarray*}
    where we used Eq.~\eqref{eq:resilienceontail2} in 
    Lemma~\ref{lem:deviate} for sets with size at least $(1/5.5)\alpha n$. 
    For the variance deviation lower bound, 
    \begin{align}
         &\frac{\sum_{x_i\in{\cal M}_{v,\alpha}}
         (\langle v,  x_i  -\mu({\cal M}_{v,\alpha}) \rangle^2-\sigma_v^2)}{(1-(4/5.5)\alpha)n} 
         =  \frac{ \sum_{x_i\in {\cal M}_{v,\alpha} } \big(\,\langle v, x_i-\mu\rangle^2 - \sigma_v^2 - \langle v , \mu-\mu({\cal M}_{v,\alpha})\rangle^2 \, \big)}{(1-(4/5.5)\alpha)n} \nonumber\\
        & \;\;\;\;\geq\; \frac{ \sum_{x_i\in {\cal M}_{v,\alpha}\cap S_{\rm bad}} (\langle v, x_i-\mu \rangle^2-\sigma_v^2 ) }{(1-(4/5.5)\alpha)n}
        + \frac{  \sum_{x_i\in {\cal M}_{v,\alpha}\cap S_{\rm good}} (\langle v, x_i-\mu \rangle^2-\sigma_v^2)}{(1-(4/5.5)\alpha)n } -36\rho_1^2\sigma_v^2\;, \nonumber \\
        & \;\;\;\;\geq\; 
        -\frac{2\rho_2 \sigma_v^2}{1-(4/5.5)\alpha}-\frac{1-(4/5.5)\alpha}{1-(4/5.5) \alpha}\rho_2 \sigma_v^2 -36\rho_1^2\sigma_v^2 
        \;\geq\; -(3.2\rho_2+36\rho_1^2)\sigma_v^2\;,
        \label{eq:variancelbound} 
    \end{align} 
    where we used $\alpha\leq 0.1$, the first term only uses the fact that $|S_{\rm bad}|\leq (1/5.5)\alpha n$, the second term uses resilience, and the last term uses the mean bound we proved earlier. 
    In \eqref{eq:varianceubound} and \eqref{eq:variancelbound}, assuming $\rho_1  \leq 0.04$, and $\rho_2  \leq 0.035$, we have 
    $\sqrt{1+6\rho_2}\leq 1.1 $ and 
    $\sqrt{1-3.2\rho_2-36\rho_1^2}\geq 0.9$.
\end{proof}

We show  that resilience implies our estimate of the distance is robust. 
\begin{lemma} 
     If $\hat\mu\in B_{\thresh,S}$ and $\thresh=42\rho_1$ then $\big|\, \|\Sigma^{-1/2}(\hat\mu-\mu) \|-\robdist_S(\hat\mu) \,\big| \leq  6 \rho_1 + 0.1\thresh\leq 10.2 \rho_1 $.  
    \label{lem:outerset}
\end{lemma}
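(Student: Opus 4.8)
The plan is to reduce the claim to a direction-by-direction comparison and then prove the two one-sided bounds $D_S(\hat\mu)-\|\Sigma^{-1/2}(\hat\mu-\mu)\|\le 6\rho_1+0.1\thresh$ and $\|\Sigma^{-1/2}(\hat\mu-\mu)\|-D_S(\hat\mu)\le 6\rho_1+0.1\thresh$ separately, each time substituting the maximizing direction of one quantity into the other. Write $R:=\|\Sigma^{-1/2}(\hat\mu-\mu)\|$. By Lemma~\ref{lem:equidist}, $R=\max_{\|v\|\le 1}(\langle v,\hat\mu\rangle-\mu_v)/\sigma_v$, and by Eq.~\eqref{def:mean_distance}, $D_S(\hat\mu)=\max_{\|v\|\le 1}(\langle v,\hat\mu\rangle-\mu_v({\cal M}_{v,\alpha}))/\sigma_v({\cal M}_{v,\alpha})$; both ratios are invariant under positive rescaling of $v$, so each supremum may be taken over $\|v\|=1$. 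Throughout I will use Lemma~\ref{lem:deviation}, which gives for every unit $v$ that $|\mu_v-\mu_v({\cal M}_{v,\alpha})|\le 6\rho_1\sigma_v$ and $0.9\,\sigma_v\le\sigma_v({\cal M}_{v,\alpha})\le 1.1\,\sigma_v$. First I record that $D_S(\hat\mu)\ge 0$: replacing $v$ by $-v$ leaves the set ${\cal M}_{v,\alpha}$ unchanged, negates $\mu_v({\cal M}_{v,\alpha})$ and $\langle v,\hat\mu\rangle$, and leaves $\sigma_v({\cal M}_{v,\alpha})$ unchanged, so at the optimal direction the summand is at least its own negative.

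\textbf{Bounding $D_S(\hat\mu)-R$.} Let $v$ attain the maximum defining $D_S(\hat\mu)$. Then $\langle v,\hat\mu\rangle-\mu_v({\cal M}_{v,\alpha})=D_S(\hat\mu)\,\sigma_v({\cal M}_{v,\alpha})\ge 0$, so dividing by $\sigma_v$ and using $\sigma_v({\cal M}_{v,\alpha})\ge 0.9\,\sigma_v$ with $0\le D_S(\hat\mu)\le\thresh$ gives $(\langle v,\hat\mu\rangle-\mu_v({\cal M}_{v,\alpha}))/\sigma_v\ge 0.9\,D_S(\hat\mu)\ge D_S(\hat\mu)-0.1\thresh$. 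Adding $(\mu_v({\cal M}_{v,\alpha})-\mu_v)/\sigma_v$, whose absolute value is at most $6\rho_1$ by Lemma~\ref{lem:deviation}, yields $(\langle v,\hat\mu\rangle-\mu_v)/\sigma_v\ge D_S(\hat\mu)-0.1\thresh-6\rho_1$, and the left-hand side is at most $R$ by Lemma~\ref{lem:equidist}. Hence $D_S(\hat\mu)-R\le 6\rho_1+0.1\thresh$.

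\textbf{Bounding $R-D_S(\hat\mu)$.} Let $v$ attain the maximum defining $R$, so $\langle v,\hat\mu\rangle-\mu_v=R\,\sigma_v$. Then $\langle v,\hat\mu\rangle-\mu_v({\cal M}_{v,\alpha})=R\,\sigma_v+(\mu_v-\mu_v({\cal M}_{v,\alpha}))\ge (R-6\rho_1)\sigma_v$. If $R\ge 6\rho_1$, dividing by $\sigma_v({\cal M}_{v,\alpha})\le 1.1\,\sigma_v$ and using the definition of $D_S$ gives $D_S(\hat\mu)\ge (R-6\rho_1)/1.1$; rearranging this inequality shows first $R\le 1.1\,D_S(\hat\mu)+6\rho_1\le 1.1\thresh+6\rho_1$, and then $R-D_S(\hat\mu)\le (0.1R+6\rho_1)/1.1\le (0.1(1.1\thresh+6\rho_1)+6\rho_1)/1.1=0.1\thresh+6\rho_1$. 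If instead $R<6\rho_1$, then $R-D_S(\hat\mu)\le R<6\rho_1$ because $D_S(\hat\mu)\ge 0$. In both cases $R-D_S(\hat\mu)\le 6\rho_1+0.1\thresh$.

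Combining the two one-sided bounds gives $|R-D_S(\hat\mu)|\le 6\rho_1+0.1\thresh$, and substituting $\thresh=42\rho_1$ gives $10.2\rho_1$. I expect the main (mild) obstacle to be the bootstrap in the last step: the direct estimate $(0.1R+6\rho_1)/1.1$ still contains $R$, so one must first feed the same inequality back to get $R\le 1.1\thresh+6\rho_1$ before substituting; the other place care is needed is establishing $D_S(\hat\mu)\ge 0$, which is what makes both the $R<6\rho_1$ case and the step $0.9\,D_S(\hat\mu)\ge D_S(\hat\mu)-0.1\thresh$ valid.
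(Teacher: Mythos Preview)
Your proof is correct and follows essentially the same route as the paper: both combine Lemma~\ref{lem:deviation} with Lemma~\ref{lem:equidist} by substituting the maximizing direction of one quantity into the other, arriving at the sandwich $0.9\,D_S(\hat\mu)-6\rho_1\le \|\Sigma^{-1/2}(\hat\mu-\mu)\|\le 1.1\,D_S(\hat\mu)+6\rho_1$. The only difference is cosmetic: your bootstrap via $R\le 1.1\thresh+6\rho_1$ is unnecessary, since from $R\le 1.1\,D_S(\hat\mu)+6\rho_1$ one gets directly $R-D_S(\hat\mu)\le 0.1\,D_S(\hat\mu)+6\rho_1\le 0.1\thresh+6\rho_1$ using $D_S(\hat\mu)\le\thresh$; on the other hand, your explicit handling of the sign issues ($D_S(\hat\mu)\ge 0$ and the case $R<6\rho_1$) is more careful than the paper's writeup.
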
 
\begin{proof}
    From Lemma~\ref{lem:deviation}, we know that for all $\hat\mu\in B_{t,S}$,  
    \begin{eqnarray}
        \robdist_S(\hat\mu) \;=\; \max_{\|v\|=1} \frac{\langle v , \hat\mu-\mu({\cal M}_{v,\alpha}) \rangle }{\sigma_v({\cal M}_{v,\alpha})} \;\geq\; \max_{\|v\|=1} \frac{\langle v , \hat\mu - \mu \rangle - 6\rho_1  \sigma_v }{1.1\sigma_v} \;. 
        \label{eq:distance_lb}
    \end{eqnarray}
    and 
       \begin{eqnarray}
        \robdist_S(\hat\mu) \;=\; \max_{\|v\|=1} \frac{\langle v , \hat\mu-\mu({\cal M}_{v,\alpha}) \rangle }{\sigma_v({\cal M}_{v,\alpha})} \;\leq\; \max_{\|v\|=1} \frac{\langle v , \hat\mu - \mu \rangle + 6\rho_1  \sigma_v }{0.9 \sigma_v} \;. 
        \label{eq:distance_ub}
    \end{eqnarray}
    Applying Lemma~\ref{lem:equidist}, we get 
     $0.9 \robdist_S(\hat\mu) - 6\rho_1 \leq \|\Sigma^{-1/2}(\hat\mu-\mu) \|\leq 1.1 \robdist_S(\hat\mu) + 6 \rho_1 $. 
     Since $\robdist_S(\hat\mu)\leq\thresh$, we 
     get the desired bound.

\end{proof}



\subsubsection{Bounded volume} 
\label{sec:proof_vol}

We show that the assumption~\ref{asmp_vol} in Theorem~\ref{thm:utility} is satisfied for robust estimate $\robdist_S(\hat\mu)$.
\begin{lemma}
For $\rho=\rho_1$, 
$c_1=10.2$, $\thresh=42\rho_1$, 
$\sens =110\rho_1/(\alpha n)$, and $c_2\geq\log(67/12)+\log((c_0+2c_1)/c_1)$, 
we have 
 $(7/8)\thresh-(k^*+1)\sens>0$,  
        \begin{eqnarray*}
        \frac{{\rm Vol}(B_{ \thresh+(k^*+1)\sens+c_1\rho,S}) }
        {{\rm Vol}(B_{ (7/8)\thresh-(k^*+1)\sens-c_1\rho,S})}
        &\leq& e^{c_2 d}\;, \text{ and }\\
        \frac{{\rm Vol}(\{\hat\mu:\|\Sigma^{-1/2}(\hat\mu-\mu )\| \leq (c_0+2c_1)\rho\})}
        {{\rm Vol}(\{\hat\mu: \|\Sigma^{-1/2} (\hat \mu-\mu)\| \leq c_1\rho\})}
        &\leq &e^{c_2 d}\;.
        \end{eqnarray*}
    \label{lem:vol} 
\end{lemma}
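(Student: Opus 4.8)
The plan is to trap every sublevel set $B_{r,S}=\{\hat\mu:\robdist_S(\hat\mu)\le r\}$ of the surrogate distance between two Mahalanobis balls centred at $\mu$, and then use that the volume ratio of two Mahalanobis balls equals the $d$-th power of the ratio of their radii (such a ball is the image of a Euclidean ball under the fixed linear map $\Sigma^{1/2}$, so $\det\Sigma^{1/2}$ and the volume of the unit ball cancel). The one analytic input is the two-sided comparison between $\robdist_S(\hat\mu)$ and $\|\Sigma^{-1/2}(\hat\mu-\mu)\|$ that already appears inside the proof of Lemma~\ref{lem:outerset}; combining Lemma~\ref{lem:deviation}, the estimates \eqref{eq:distance_lb}--\eqref{eq:distance_ub}, and Lemma~\ref{lem:equidist} (and using that a $v$-independent additive constant passes through $\max_v$) yields
\[
0.9\,\robdist_S(\hat\mu)-6\rho_1\;\le\;\|\Sigma^{-1/2}(\hat\mu-\mu)\|\;\le\;1.1\,\robdist_S(\hat\mu)+6\rho_1
\qquad\text{for every }\hat\mu\in\reals^d ,
\]
since the bounds on $\mu_v(\cM_{v,\alpha})$ and $\sigma_v(\cM_{v,\alpha})$ from Lemma~\ref{lem:deviation} hold for all unit $v$ with no restriction on $\hat\mu$ (the restriction to $B_{\thresh,S}$ in Lemma~\ref{lem:outerset} is used only in its final line). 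This gives $B_{r,S}\subseteq\{\hat\mu:\|\Sigma^{-1/2}(\hat\mu-\mu)\|\le 1.1r+6\rho_1\}$ and $\{\hat\mu:\|\Sigma^{-1/2}(\hat\mu-\mu)\|\le 0.9r-6\rho_1\}\subseteq B_{r,S}$.

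Next I would record the auxiliary fact $(k^*+1)\sens\le\rho_1=\rho$. Since $\sens=110\rho_1/(\alpha n)$ and $k^*=(2/\varepsilon)\log(4/(\delta\zeta))$, the sample-size hypothesis $n\ge C(d+\log(1/(\delta\zeta)))/(\alpha\varepsilon)$ with $C$ large enough forces $\sens(k^*+1)\le\rho_1$; this is exactly the regime $\thresh\gg\sens k^*$ on which the safety-test analysis relies. With $\thresh=42\rho_1$ the positivity assertion is then immediate: $(7/8)\thresh-(k^*+1)\sens\ge 36.75\rho_1-\rho_1=35.75\rho_1>0$.

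For the first volume bound, set $r_1=\thresh+(k^*+1)\sens+c_1\rho\le(42+1+10.2)\rho_1=53.2\rho_1$ and $r_2=(7/8)\thresh-(k^*+1)\sens-c_1\rho\ge(36.75-1-10.2)\rho_1=25.55\rho_1$. By the two-sided comparison above, $B_{r_1,S}$ lies inside the Mahalanobis ball of radius $1.1r_1+6\rho_1\le 64.52\rho_1\le 67\rho_1$, while $B_{r_2,S}$ contains the Mahalanobis ball of radius $0.9r_2-6\rho_1\ge 16.99\rho_1\ge 12\rho_1$, so the ratio is at most $(67/12)^d=e^{d\log(67/12)}$. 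For the second bound the numerator and denominator are already Mahalanobis balls, of radii $(c_0+2c_1)\rho$ and $c_1\rho$, so the ratio equals $\bigl((c_0+2c_1)/c_1\bigr)^d=e^{d\log((c_0+2c_1)/c_1)}$. Both are at most $e^{c_2 d}$ because $c_2\ge\log(67/12)+\log((c_0+2c_1)/c_1)$ and both logarithms are nonnegative.

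The step I expect to be the real (if mild) obstacle is establishing the two-sided comparison \emph{for all} $\hat\mu$: the containments are needed at radii strictly larger than $\thresh$ (notably $r_1>\thresh$), so one cannot invoke the stated conclusion of Lemma~\ref{lem:outerset} and must instead use the un-restricted form; and one must be sure $(k^*+1)\sens$ is genuinely lower order than $\rho_1$, which is precisely what the sample-complexity assumption supplies and is the same fact that makes the safety test succeed. Everything else is arithmetic with explicit constants, and the slack in the chosen radii ($64.52<67$, $16.99>12$) shows the numbers are comfortable.
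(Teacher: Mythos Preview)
Your proposal is correct and follows essentially the same route as the paper: sandwich each $B_{r,S}$ between two Mahalanobis balls and take the $d$-th power of the ratio of radii. The only technical difference is that the paper packages the comparison as Corollary~\ref{coro:outerset}, an \emph{additive} bound $\bigl|\|\Sigma^{-1/2}(\hat\mu-\mu)\|-\robdist_S(\hat\mu)\bigr|\le 14.2\rho_1$ valid on $B_{2\thresh,S}$, and then checks that the relevant radii lie below $2\thresh$; you instead extract from the same ingredients the multiplicative two-sided bound $0.9\robdist_S(\hat\mu)-6\rho_1\le\|\Sigma^{-1/2}(\hat\mu-\mu)\|\le1.1\robdist_S(\hat\mu)+6\rho_1$ valid for all $\hat\mu$, which lets you skip the corollary entirely. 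Your version is slightly cleaner; the paper's version makes the additive slack explicit. Both use $(k^*+1)\sens\le c\rho_1$ (you take $c=1$, the paper $c=0.3$), which is not written into the lemma statement but is part of the standing hypotheses inherited from the sample-size assumption in Theorem~\ref{thm:robust_mean}, and both land on the same numerical ratio $67/12$.
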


\begin{proof}
The second part of assumption~\ref{asmp_vol} follows from the fact that 
\begin{eqnarray*}
    {\rm Vol}(\{\hat\mu:\|\Sigma^{-1/2}(\hat\mu-\mu)\|\leq r\}) = c_d |\Sigma| r^d\;,
\end{eqnarray*}
where $|\Sigma|=\prod_{j=1}^d \sigma_j(\Sigma)$ is the determinant of $\Sigma$ and $\sigma_j(\Sigma)$ is the $j$-th singular value, 
for some constant $c_d$ that only depends on the dimension
and selecting $c_2\geq \log((c_0+2c_1)/c_1)$. 

The first part is tricky as we do not yet have handle on the set $B_{t,S}$ for $t>\thresh$. In particular, we do not know how $\robdist_S(\hat\mu)$ relates to $\|\Sigma^{-1/2}(\hat\mu-\mu)\|$ for such a $\hat\mu$ outside of $B_{\thresh,S}$. To this end, we use the following corollary. 

\begin{coro}[{Corollary of Lemma~\ref{lem:outerset}}]
     If $\hat\mu\in B_{2\thresh,S}$ and $\thresh=42\rho_1 $ then $\big|\, \|\Sigma^{-1/2}(\hat\mu-\mu) \|-\robdist_S(\hat\mu) \,\big| \leq   14.2 \rho_1 $.  
    \label{coro:outerset}
\end{coro} 

We will show that
$(7/8)\thresh-(k^*+1)\sens>0$. 
As this implies that $\thresh + (k^*+1)\sens  \leq 2 \thresh$, we can use the above corollary to show that 
\begin{eqnarray*}
    \frac{{\rm Vol}(B_{ \thresh+(k^*+1)\sens+c_1\rho,S}) }
        {{\rm Vol}(B_{ (7/8)\thresh-(k^*+1)\sens-c_1\rho,S})}
        &\leq& \frac{{\rm Vol}\big(\,\{\hat\mu:\|\Sigma^{-1/2}(\hat\mu-\mu)\| \leq \thresh+(k^*+1)\sens+c_1\rho + 14.2\rho_1 \}\,\big)}{{\rm Vol}\big(\,\{\hat\mu:\|\Sigma^{-1/2}(\hat\mu-\mu)\| \leq (7/8)\thresh-(k^*+1)\sens-c_1\rho - 14.2\rho_1\}\,\big)} \\
        &=& \Big( \frac{\thresh+(k^*+1)\sens+c_1\rho + 14.2\rho_1 }
        {(7/8)\thresh-(k^*+1)\sens-c_1\rho - 14.2\rho_1 } \Big)^d\\
        &\leq& (67/12)^d \; \leq \; e^{c_2 d}\;,
\end{eqnarray*}
for the choices of $\rho=\rho_1 $, 
$c_1=10.2$, $\thresh=42\rho_1 $, 
$\sens =110\rho_1 /(\alpha n)$, and 
$c_2\geq \log(67/12)$ where we used the fact that for $n\geq C\log(1/(\delta\zeta))/(\alpha \varepsilon)$ with a large enough constant $C$, we have $(k^*+1)\sens\leq 0.3 \rho_1$. 
It follows that the condition $(7/8)\thresh-(k^*+1)\sens>0$ is also satisfied.  
\end{proof}

\subsubsection{Resilience implies  bounded local sensitivity}
\label{sec:mean3_sens}

We show that resilience implies the assumption \ref{asmp_local} in Theorem~\ref{thm:utility} (Lemma~\ref{lem:local_asmp}). 
However, since local sensitivity needs to be established first for not just the given set $S$ but also Hamming distance $k^*+1$ neighborhood of $S$, we need robustness results for this broader regime. 
Assuming $(k^*+1)/n\leq \alpha/11$, we can extend robustness results analogously as follows. We consider a set $S'$ with $k$ data points arbitrarily changed from $S$. This implies that $S'$ is a $((1/5.5)\alpha+(k/n),\alpha,\rho_1,\rho_2)$-corrupt good set with respect to $(\mu,\Sigma)$. 
We first prove the analogous bounds to Lemma~\ref{lem:deviation} for this $S'$.
\begin{lemma}
    For an $((1/5.5)\alpha+\tilde\alpha,\alpha,\rho_1,\rho_2)$-corrupt good set $S'$ with respect to $(\mu,\Sigma)$,  $\tilde\alpha\leq(1/11)\alpha$, and any unit norm $v\in\reals^d$,  
    $ |\langle v, \mu  - \mu({\cal M}_{v,\alpha})\rangle| \leq 14 \rho_1 \,\sigma_v$ and $0.9 \sigma_v \leq \sigma_v({\cal M}_{v,\alpha}) \leq 1.1 \sigma_v $. 
    \label{lem:deviation2}
\end{lemma}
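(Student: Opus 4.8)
The plan is to rerun the proof of Lemma~\ref{lem:deviation} essentially verbatim, the only change being that the corruption budget has grown from $(1/5.5)\alpha=(2/11)\alpha$ to $(1/5.5)\alpha+\tilde\alpha\le(3/11)\alpha$. Write $S'=S_{\rm good}\cup S_{\rm bad}$ where $S_{\rm good}$ is $(\alpha,\rho_1,\rho_2)$-resilient with respect to $(\mu,\Sigma)$ (Definitions~\ref{def:resilience} and~\ref{def:corruptgoodset}) and $|S_{\rm bad}|\le(3/11)\alpha n$, fix a unit vector $v$, and let ${\cal B}_{v,\alpha},{\cal M}_{v,\alpha},{\cal T}_{v,\alpha}$ be the partition of $S'$ from Eq.~\eqref{def:mean_distance}, so $|{\cal T}_{v,\alpha}|=|{\cal B}_{v,\alpha}|=(4/11)\alpha n$ and $|{\cal M}_{v,\alpha}|=(1-(8/11)\alpha)n$.

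The first step is to record the three counting facts that the hypothesis $\tilde\alpha\le(1/11)\alpha$ guarantees, all immediate from $|S_{\rm bad}|\le(3/11)\alpha n$: (a) $|S_{\rm good}\cap{\cal M}_{v,\alpha}|\ge(1-(8/11)\alpha-(3/11)\alpha)n=(1-\alpha)n$, so the resilience inequalities~\eqref{def:rho1}--\eqref{def:rho2} apply directly to the good middle points; (b) $|S_{\rm good}\cap{\cal T}_{v,\alpha}|\ge(4/11-3/11)\alpha n=(1/11)\alpha n$ and likewise for ${\cal B}_{v,\alpha}$, so Lemma~\ref{lem:deviate} with parameter $(1/11)\alpha$ bounds the good tails' mean deviation by $(22/\alpha)\rho_1\sigma_v$ through~\eqref{eq:resilienceontail1} and their second-moment deviation by $(22/\alpha)\rho_2\sigma_v^2$ through~\eqref{eq:resilienceontail2}; (c) every $x_i\in S_{\rm bad}\cap{\cal M}_{v,\alpha}$ has $\langle v,x_i\rangle$ sandwiched between $\max_{{\cal B}_{v,\alpha}}\langle v,\cdot\rangle$ and $\min_{{\cal T}_{v,\alpha}}\langle v,\cdot\rangle$, and since each of these boundary values is in turn squeezed between the corresponding good-tail averages, the convexity of $t\mapsto(t-\mu_v)^2$ yields $|\langle v,x_i-\mu\rangle|\le(22/\alpha)\rho_1\sigma_v$ and $\langle v,x_i-\mu\rangle^2-\sigma_v^2\le(22/\alpha)\rho_2\sigma_v^2$ (while trivially $\langle v,x_i-\mu\rangle^2-\sigma_v^2\ge-\sigma_v^2$).

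With these in place, the mean bound copies Lemma~\ref{lem:deviation}: splitting $\mu({\cal M}_{v,\alpha})$ into its $S_{\rm good}$ contribution ($\le\rho_1\sigma_v$ by (a)) and its $S_{\rm bad}$ contribution ($\le(|S_{\rm bad}|/|{\cal M}_{v,\alpha}|)(22/\alpha)\rho_1\sigma_v\le 6\rho_1\sigma_v/(1-(8/11)\alpha)$ by (c)), then summing with $\alpha\le0.015$, gives $|\langle v,\mu-\mu({\cal M}_{v,\alpha})\rangle|\le 14\rho_1\sigma_v$ with slack to spare. For the variance upper bound, use that the empirical mean minimizes the second moment, so $\sigma_v({\cal M}_{v,\alpha})^2\le|{\cal M}_{v,\alpha}|^{-1}\sum_{x_i\in{\cal M}_{v,\alpha}}\langle v,x_i-\mu\rangle^2$, whose gap from $\sigma_v^2$ is at most $(1+6)\rho_2\sigma_v^2/(1-(8/11)\alpha)$ by the same good/bad split, so $\sigma_v({\cal M}_{v,\alpha})\le1.1\sigma_v$ once $\rho_2\le0.0005$. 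For the lower bound, write $\sigma_v({\cal M}_{v,\alpha})^2=|{\cal M}_{v,\alpha}|^{-1}\sum\langle v,x_i-\mu\rangle^2-\langle v,\mu-\mu({\cal M}_{v,\alpha})\rangle^2$, bound the sum below by the good/bad split (using $\langle v,x_i-\mu\rangle^2-\sigma_v^2\ge-\sigma_v^2$ on the at most $(3/11)\alpha n$ bad points, a negligible $O(\alpha)\sigma_v^2$ loss), and absorb the subtracted term via the just-proved $\langle v,\mu-\mu({\cal M}_{v,\alpha})\rangle^2\le(14\rho_1)^2\sigma_v^2$; with $\rho_1\le0.013$ and $\rho_2\le0.0005$ this gives $\sigma_v({\cal M}_{v,\alpha})\ge0.9\sigma_v$.

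I do not expect a genuine obstacle here; the argument is bookkeeping, and every constant in Lemma~\ref{lem:deviation} was left with slack precisely so that doubling the effective corruption changes only the mean prefactor ($6\rho_1\to14\rho_1$) while keeping the variance window $[0.9\sigma_v,1.1\sigma_v]$ intact. The one point that genuinely needs care is that $\tilde\alpha\le(1/11)\alpha$ is exactly the threshold at which facts (a) and (b) remain valid --- enough good mass survives in ${\cal M}_{v,\alpha}$ to invoke resilience and enough good points survive in each trimmed tail to invoke Lemma~\ref{lem:deviate} --- and that the resulting loss in constants ($22/\alpha$ in place of $11/\alpha$ in the tail bounds, and a tripled bad-mass weight) is still absorbed by the standing smallness assumptions $\alpha\le0.015$, $\rho_1\le0.013$, $\rho_2\le0.0005$, $\rho_1^2\le c\alpha$.
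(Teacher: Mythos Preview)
Your proposal is correct and follows essentially the same approach as the paper: decompose ${\cal M}_{v,\alpha}$ into its $S_{\rm good}$ and $S_{\rm bad}$ parts, apply resilience (Definition~\ref{def:resilience}) to the former and the tail bound from Lemma~\ref{lem:deviate} at level $(1/11)\alpha$ to control the latter, then collect constants. The only minor deviation is that for the variance lower bound you use the trivial estimate $\langle v,x_i-\mu\rangle^2-\sigma_v^2\ge-\sigma_v^2$ on bad points (costing $O(\alpha)\sigma_v^2$) whereas the paper reuses the $\rho_2$-based tail bound there as well; both are absorbed by the standing smallness assumptions, and your ``convexity'' justification for the second-moment bound on bad middle points is really the monotonicity of $t\mapsto(t-\mu_v)^2$ on each side of $\mu_v$, but the conclusion is the same.
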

\begin{proof}
    Analogous to \eqref{eq:meanubound}, we have 
    \begin{eqnarray*} 
      |\langle v, \mu  - \mu({\cal M}_{v,\alpha})\rangle| 
      &\leq & \frac{(1/5.5)\alpha+\tilde\alpha}{1-(4/5.5)\alpha}
      \frac{2\rho_1 \sigma_v}{(1/5.5)\alpha-\tilde\alpha}  + \frac{1-(1/5.5)\alpha-\tilde\alpha}{1-(4/5.5)\alpha}\rho_1\sigma_v \nonumber \\
      &\leq & 14\rho_1 \sigma_v\;,
    \end{eqnarray*}
    where we used the fact that $(5/5.5)\alpha+\tilde\alpha \leq \alpha$.
    Analogous to \eqref{eq:varianceubound}, we have 
        \begin{eqnarray*}
        \frac{ \sum_{x_i\in{\cal M}_{v,\alpha}}(\langle v , x_i-\mu({\cal M}_{v,\alpha}) \rangle^2 -\sigma_v^2)  }{(1-(4/5.5)\alpha)n}  
        &\leq & \frac{ ((1/5.5)\alpha+\tilde\alpha)(\frac{2\rho_2}{(1/5.5)\alpha-\tilde\alpha})\sigma_v^2 + (1-(1/5.5)\alpha-\tilde\alpha)\rho_2 \sigma_v^2}{1-(4/5.5)\alpha}\\ 
        &\leq& 14 \rho_2  \sigma_v^2\;.
    \end{eqnarray*}
    Analogous to \eqref{eq:variancelbound}, we have  
    \begin{align*}
         \frac{\sum_{x_i\in{\cal M}_{v,\alpha}}
         (\langle v, x_i  -\mu({\cal M}_{v,\alpha}) \rangle^2-\sigma_v^2)}{(1-(4/5.5)\alpha)n} 
         &\;\geq\; 
        -\frac{((1/5.5)\alpha+\tilde\alpha)2\rho_2\sigma_v^2}{(1-(4/5.5)\alpha)((1/5.5)\alpha - \tilde\alpha)}- \rho_2 \sigma_v^2 -14^2\rho_1^2\sigma_v^2 
        \\ 
        &\;\geq\; - (7.3\rho_2 + 196\rho_1^2 ) \sigma_v^2\;.  
    \end{align*}
    For $\alpha\leq0.045$, $\rho_1 \leq 0.013$, and $\rho_2 \leq 0.0005$, we have the desired bounds.     
\end{proof}

\begin{lemma}
    \label{lem:distanceperturbed} 
    For an $((1/5.5)\alpha+\tilde\alpha,\alpha, \rho_1,\rho_2)$-corrupt good set $S'$ with respect to $(\mu,\Sigma)$ and  $\tilde\alpha\leq(1/11)\alpha$, if $\hat\mu \in B_{t,S'}$ for some $t > 0$  
    then we have  $  \|\Sigma^{-1/2}(\hat\mu-\mu)\| \leq 14\rho_1  + 1.1t$ and $\big|D(\hat\mu,S') - \|\Sigma^{-1/2}(\hat\mu-\mu) \| \big|\leq 14\rho_1  + 0.1t$. 
\end{lemma}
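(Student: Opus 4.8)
The plan is to follow the proof of Lemma~\ref{lem:outerset} essentially verbatim, with the resilient-set deviation bounds of Lemma~\ref{lem:deviation} replaced by the corrupt-good-set deviation bounds of Lemma~\ref{lem:deviation2}, and with the fixed threshold $\thresh=42\rho_1$ replaced by the free parameter $t$. Recall that Lemma~\ref{lem:deviation2} gives, for every unit-norm $v\in\reals^d$ and every $((1/5.5)\alpha+\tilde\alpha,\alpha,\rho_1,\rho_2)$-corrupt good set $S'$ with $\tilde\alpha\le(1/11)\alpha$, the unconditional bounds $|\langle v,\mu-\mu({\cal M}_{v,\alpha})\rangle|\le 14\rho_1\sigma_v$ and $0.9\sigma_v\le\sigma_v({\cal M}_{v,\alpha})\le 1.1\sigma_v$; note these do not depend on $\hat\mu$, so the estimates below hold for all $\hat\mu\in\reals^d$, not merely those in a ball.

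Plugging these into the definition $\robdist_{S'}(\hat\mu)=\max_{\|v\|\le1}(\langle v,\hat\mu\rangle-\mu_v({\cal M}_{v,\alpha}))/\sigma_v({\cal M}_{v,\alpha})$, and writing $\langle v,\hat\mu\rangle-\mu_v({\cal M}_{v,\alpha})=\langle v,\hat\mu-\mu\rangle+\langle v,\mu-\mu({\cal M}_{v,\alpha})\rangle$, I would sandwich
\begin{eqnarray*}
\max_{\|v\|=1}\frac{\langle v,\hat\mu-\mu\rangle-14\rho_1\sigma_v}{1.1\,\sigma_v}\;\le\;\robdist_{S'}(\hat\mu)\;\le\;\max_{\|v\|=1}\frac{\langle v,\hat\mu-\mu\rangle+14\rho_1\sigma_v}{0.9\,\sigma_v}\;,
\end{eqnarray*}
exactly as in \eqref{eq:distance_lb}--\eqref{eq:distance_ub}. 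Since the $14\rho_1$ terms are constant in $v$, they pull out of the maxima, and Lemma~\ref{lem:equidist} identifies $\max_{\|v\|=1}\langle v,\hat\mu-\mu\rangle/\sigma_v=\|\Sigma^{-1/2}(\hat\mu-\mu)\|$. This yields the two-sided estimate
\begin{eqnarray*}
0.9\,\robdist_{S'}(\hat\mu)-14\rho_1\;\le\;\|\Sigma^{-1/2}(\hat\mu-\mu)\|\;\le\;1.1\,\robdist_{S'}(\hat\mu)+14\rho_1\;,
\end{eqnarray*}
valid for every $\hat\mu$.

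Finally I would specialize to $\hat\mu\in B_{t,S'}$, i.e.\ $\robdist_{S'}(\hat\mu)\le t$: the right-hand inequality gives $\|\Sigma^{-1/2}(\hat\mu-\mu)\|\le 14\rho_1+1.1t$, the first claim. For the second, rearranging both inequalities gives $\robdist_{S'}(\hat\mu)-\|\Sigma^{-1/2}(\hat\mu-\mu)\|\le 0.1\,\robdist_{S'}(\hat\mu)+14\rho_1$ and $\|\Sigma^{-1/2}(\hat\mu-\mu)\|-\robdist_{S'}(\hat\mu)\le 0.1\,\robdist_{S'}(\hat\mu)+14\rho_1$, hence $|\robdist_{S'}(\hat\mu)-\|\Sigma^{-1/2}(\hat\mu-\mu)\||\le 0.1\,\robdist_{S'}(\hat\mu)+14\rho_1\le 14\rho_1+0.1t$. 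There is no real obstacle in this lemma itself: all the work sits in Lemma~\ref{lem:deviation2}, which extends the resilient deviation bounds to the Hamming-$(k^*+1)$ neighborhood of $S$ by treating it as an $((1/5.5)\alpha+\tilde\alpha)$-corruption of a resilient set; once that is in hand, the only point requiring a little care is that the sandwich bound is uniform over all $\hat\mu$, which is precisely what lets us conclude bounds over $B_{t,S'}$ for an arbitrary $t>0$ (mirroring how Corollary~\ref{coro:outerset} strengthened Lemma~\ref{lem:outerset}).
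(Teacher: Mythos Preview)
Your proposal is correct and takes essentially the same approach as the paper: the paper's proof simply says ``Analogously to the proof of Lemma~\ref{lem:outerset}'' and writes down the two inequalities $1.1\,D(\hat\mu,S')\ge -14\rho_1+\|\Sigma^{-1/2}(\hat\mu-\mu)\|$ and $0.9\,D(\hat\mu,S')\le 14\rho_1+\|\Sigma^{-1/2}(\hat\mu-\mu)\|$, which are exactly your sandwich bounds rearranged, and concludes. Your write-up is in fact more explicit than the paper's about how the two claims follow from these inequalities.
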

\begin{proof}
    Analogously to the proof of Lemma~\ref{lem:outerset}, we have 
    \begin{eqnarray*}
        1.1D(\hat\mu,S') &\geq& -14\rho_1  +  \|\Sigma^{-1/2}(\hat\mu-\mu)\|\;, \text{ and } \\
        0.9D(\hat\mu,S') &\leq& 14\rho_1 + \|\Sigma^{-1/2}(\hat\mu-\mu)\| \;.
    \end{eqnarray*}
    This gives the desired bound. 
\end{proof}



The sensitivity of $\robdist_S(\hat\mu)$ is {\em local} in two ways. 
First, we get the desired sensitivity bound for a dataset $S$ that behaves nicely, which is captured by the notion of $((1/5.5)\alpha,\alpha,\rho_1,\rho_2)$-corrupt good set $S$. 
Secondly, the sensitivity bound requires the estimate  parameter $\hat\mu$ to be close to $\mu$ in $\|\Sigma^{-1/2}(\hat\mu-\mu)\|$. 
Both {\em locality in  dataset} and {\em locality in estimate} are ensured by the safety test (Test step in HPTR).
To show that corrupt good datasets pass the safety test, the following lemma establishes that those datasets have small local sensitivity. 

\begin{lemma}
    \label{lem:local_asmp}
    For $\sens = 110\rho_1 /(\alpha n)$, $\thresh = 42\rho_1 $, 
    and an $((1/5.5\alpha),\alpha,\rho_1,\rho_2)$-corrupt good  $S$, if 
    \begin{eqnarray}
        n\;=\;\Omega\Big(
        \frac{\log(1/(\delta\zeta))}{\alpha\varepsilon} 
        \Big)\;,
    \end{eqnarray} 
    then the local sensitivity in assumption~\ref{asmp_local} is satisfied. 
\end{lemma}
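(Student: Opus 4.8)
Assumption~\ref{asmp_local} asks us to verify $|\robdist_{S'}(\hat\mu) - \robdist_{S''}(\hat\mu)| \le \sens$ for every $\hat\mu \in B_{\thresh,S'}\cap B_{\thresh,S''}$, every $S'$ within Hamming distance $k^*+1$ of $S$, and every neighbor $S''\sim S'$. The first step is to note that with $k^* = (2/\varepsilon)\log(4/(\delta\zeta))$ the sample-size hypothesis $n = \Omega(\log(1/(\delta\zeta))/(\alpha\varepsilon))$ (large enough constant) yields $(k^*+1)/n \le (1/11)\alpha$, so both $S'$ and $S''$ are $((1/5.5)\alpha + \tilde\alpha,\alpha,\rho_1,\rho_2)$-corrupt good with $\tilde\alpha \le (1/11)\alpha$. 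Hence Lemmas~\ref{lem:deviation2} and~\ref{lem:distanceperturbed} apply to both: on either dataset and for every unit $v$, $\sigma_v(\mathcal{M}_{v,\alpha})\in[0.9\sigma_v,1.1\sigma_v]$, and for $\hat\mu\in B_{\thresh,S'}$ we have $\|\Sigma^{-1/2}(\hat\mu-\mu)\|\le 14\rho_1+1.1\thresh = O(\rho_1)$ since $\thresh=42\rho_1$.

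\textbf{Sensitivity of the one-dimensional robust statistics.} Fix a unit vector $v$. I would first establish that the projected middle set has \emph{bounded range}. At least $(1/11)\alpha n$ of the $(2/5.5)\alpha n$ points in $\mathcal{T}_{v,\alpha}$ are good, so Lemma~\ref{lem:deviate} forces their empirical mean — and therefore the smallest value in $\mathcal{T}_{v,\alpha}$, which is the upper cut defining $\mathcal{M}_{v,\alpha}$ — to lie within $O(\rho_1\sigma_v/\alpha)$ of $\mu_v$; symmetrically for the lower cut, and likewise for $S''$. Consequently every point of $\mathcal{M}_{v,\alpha}$ (on $S'$ or $S''$) has projected value in a window of width $O(\rho_1\sigma_v/\alpha)$ about $\mu_v$, since an adversarial point placed outside this window falls into $\mathcal{T}_{v,\alpha}$ or $\mathcal{B}_{v,\alpha}$, not $\mathcal{M}_{v,\alpha}$. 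Because $S'$ and $S''$ differ in one coordinate, the sorted projections differ by one deletion and one insertion, so $\mathcal{M}_{v,\alpha}(S')$ and $\mathcal{M}_{v,\alpha}(S'')$ differ in at most a constant number of points (the changed point plus one shifting across each of the two quantile cuts), all inside the above window; with $|\mathcal{M}_{v,\alpha}|=\Theta(n)$ and the two symmetric-difference pieces of equal size, pairing them gives $|\mu_v(\mathcal{M}_{v,\alpha}(S'))-\mu_v(\mathcal{M}_{v,\alpha}(S''))| = O(\rho_1\sigma_v/(\alpha n))$. The identical argument on squared deviations (whose spread over $\mathcal{M}_{v,\alpha}$ is $O(\rho_1^2\sigma_v^2/\alpha^2)$), combined with the just-bounded shift of the centering mean, yields $|\sigma_v^2(\mathcal{M}_{v,\alpha}(S'))-\sigma_v^2(\mathcal{M}_{v,\alpha}(S''))| = O(\rho_1^2\sigma_v^2/(\alpha^2 n))$, hence $|\sigma_v(\mathcal{M}_{v,\alpha}(S'))-\sigma_v(\mathcal{M}_{v,\alpha}(S''))| = O(\rho_1^2\sigma_v/(\alpha^2 n))$ since $\sigma_v(\mathcal{M}_{v,\alpha})\gtrsim\sigma_v$.

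\textbf{From the 1-D bounds to $\robdist$.} For a fixed $v$ set $a' = \langle v,\hat\mu\rangle-\mu_v(\mathcal{M}_{v,\alpha}(S'))$, $b' = \sigma_v(\mathcal{M}_{v,\alpha}(S'))$, and $a'',b''$ analogously, and write $a'/b'-a''/b'' = a'(b''-b')/(b'b'') + (a'-a'')/b''$. Since $\hat\mu\in B_{\thresh,S'}$ and the maximum defining $\robdist_{S'}$ ranges over the symmetric set $\{\|v\|\le1\}$, we get $|a'|\le\thresh b'\le 1.1\thresh\sigma_v$ for every $v$; combining with $b',b''\ge 0.9\sigma_v$ and the two bounds above gives, for each $v$,
\begin{eqnarray*}
\Big|\frac{a'}{b'}-\frac{a''}{b''}\Big| \;\le\; O\!\Big(\frac{\thresh\rho_1^2}{\alpha^2 n}\Big)+O\!\Big(\frac{\rho_1}{\alpha n}\Big) \;=\; O\!\Big(\frac{\rho_1}{\alpha n}\Big)\Big(1+\frac{\rho_1^2}{\alpha}\Big)\;,
\end{eqnarray*}
using $\thresh=42\rho_1$; this is the corrupt-good analog of Eq.~\eqref{eq:intuition_sensitivity}. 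Taking $\max_v$ on both sides of $\robdist_{S'}(\hat\mu)=\max_v a'/b'$ and $\robdist_{S''}(\hat\mu)=\max_v a''/b''$ (difference of maxima is at most the maximum of differences) gives $|\robdist_{S'}(\hat\mu)-\robdist_{S''}(\hat\mu)| = O((\rho_1/(\alpha n))(1+\rho_1^2/\alpha))$. Finally $\rho_1^2\le c\alpha$ with $c$ small makes the bracket a constant near $1$, and bookkeeping of the (unoptimized) constants brings the bound down to $110\rho_1/(\alpha n)=\sens$, which is Assumption~\ref{asmp_local}.

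\textbf{Main obstacle.} The delicate point is controlling the change in the \emph{robust} one-dimensional mean and variance under a single-coordinate change, because the quantile-based partition $(\mathcal{B}_{v,\alpha},\mathcal{M}_{v,\alpha},\mathcal{T}_{v,\alpha})$ itself moves, so this is not merely ``one point out of $|\mathcal{M}_{v,\alpha}|$''. The fix is the combination of (i) only $O(1)$ points entering or leaving $\mathcal{M}_{v,\alpha}$ and (ii) the bounded-range property of $\mathcal{M}_{v,\alpha}$ on any corrupt good set — a consequence of resilience through Lemma~\ref{lem:deviate} — which confines every moving point to an $O(\rho_1\sigma_v/\alpha)$ window. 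One must also check that the sample-size hypothesis keeps the effective corruption of $S'$ and $S''$ below $(1/11)\alpha$, so that Lemmas~\ref{lem:deviation2} and~\ref{lem:distanceperturbed} are applicable; that is exactly where $n=\Omega(\log(1/(\delta\zeta))/(\alpha\varepsilon))$ is used.
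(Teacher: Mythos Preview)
Your overall approach matches the paper's: the resilience/range argument for $\mathcal{M}_{v,\alpha}$ via Lemma~\ref{lem:deviate}, the $O(\rho_1\sigma_v/(\alpha n))$ and $O(\rho_1^2\sigma_v^2/(\alpha^2 n))$ sensitivity bounds for the robust one-dimensional mean and variance, and the final combination using $\rho_1^2\le c\alpha$ are all the paper's steps (your algebraic split $a'/b'-a''/b''=a'(b''-b')/(b'b'')+(a'-a'')/b''$ is equivalent to the paper's decomposition). One minor overcount: the two middle sets differ by at most \emph{one} swap (one point out, one point in), not ``the changed point plus one across each cut''; this does not affect the $O(1/n)$ order.

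There is, however, one real gap. Assumption~\ref{asmp_local} requires the bound for every $\hat\mu\in B_{\thresh+(k^*+3)\sens,\,S}$ --- the ball with respect to the \emph{original} dataset $S$ --- not for $\hat\mu\in B_{\thresh,S'}\cap B_{\thresh,S''}$ as you state. Your key step ``$|a'|\le\thresh b'$ since $\hat\mu\in B_{\thresh,S'}$'' therefore rests on a hypothesis you do not have: what you are given is $\robdist_S(\hat\mu)\le\thresh+(k^*+3)\sens$, which says nothing directly about $\robdist_{S'}(\hat\mu)$ before the sensitivity has been established. The paper closes this by routing through the population Mahalanobis distance: Lemma~\ref{lem:distanceperturbed} applied to $S$ gives $\|\Sigma^{-1/2}(\hat\mu-\mu)\|\le 14\rho_1+1.1\big(\thresh+(k^*+3)\sens\big)=O(\rho_1)$ (here $(k^*+3)\sens=O(\rho_1)$ is exactly where $n=\Omega(\log(1/(\delta\zeta))/(\alpha\varepsilon))$ enters), and together with $\|\Sigma^{-1/2}(\mu-\mu'')\|\le 14\rho_1$ from Lemma~\ref{lem:deviation2} on $S''$ this yields $|\langle v,\hat\mu-\mu''\rangle|/\sigma_v\le\|\Sigma^{-1/2}(\hat\mu-\mu'')\|=O(\rho_1)$ uniformly in $v$. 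Plugging this in place of your $|a'|\le\thresh b'$ recovers the desired bound. The fix is small but not optional: the downstream Lemmas~\ref{lem:safe_ratio}--\ref{lem:safetymargin} use assumption~\ref{asmp_local} precisely on $B_{\thresh+(k^*+3)\sens,S}$ (for instance to deduce $B_{\thresh,S''}\subseteq B_{\thresh+\sens,S'}$), and restricting to the intersection $B_{\thresh,S'}\cap B_{\thresh,S''}$ would not suffice there.
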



\medskip\noindent{\bf Remark.}
Note that to keep $\sens=O(\rho_1/(\alpha n))$ that we want (and is critical in getting the final utility guarantee), we need the extra corruption to be $k^*/n = O(\alpha)$. This implies $n=\Omega(k^*/\alpha)=\Omega(\log(1/(\delta\zeta))/(\varepsilon\alpha))$. Further, $k^*=\Omega(\log(1/(\delta\zeta))/\varepsilon)$ cannot be improved, as it is critical in achieving small failure probability in the testing step. 
    Hence, the sample complexity of $\Omega(\log(1/(\delta\zeta))/(\varepsilon\alpha))$ cannot be improved under current proof strategy. 

\begin{proof} 
    Since $S$ is $((1/5.5)\alpha,\alpha,\rho_1,\rho_2)$-corrupt good and $d_H(S,S') \leq k^*$, it follows that $S'$ is  $((1/5.5)\alpha+\tilde\alpha,\alpha ,\rho_1,\rho_2)$-corrupt good with $\tilde\alpha = (k^*/n)$. 
    We further assume that $\tilde\alpha\leq (1/11)\alpha$, which follows from $k^*=(2/\varepsilon)\log(4/(\delta\zeta))$ and $n=\Omega(\log(1/\delta\zeta)/(\varepsilon\alpha))$ with a large enough constant.
    We show that 
    this resilience implies that $S'$ is dense around the boundary of ${\cal M}_{v,\alpha}$, which in turn implies low sensitivity. 
    
    Recall that ${\cal T}_{v,\alpha}\subset S$
    is the set of data points corresponding to the largest $(2/5.5)\alpha n$  data points in the projected set $S'_{(v)}=\{\langle v,x_i\rangle\}_{x_i\in S'} $ and ${\cal B}_{v,\alpha}\subset S$ is the bottom set. Let $S_{\rm good}$ denote the original uncorrupted resilient dataset.
    Applying Lemma~\ref{lem:deviate} to $S_{\rm good}\cap {\cal T}_{v,\alpha}$ (and $S_{\rm good}\cap {\cal B}_{v,\alpha}$) of size at least $(1/11)\alpha$ (since corruption fraction is at most $(1/5.5)\alpha+\tilde\alpha\leq(1.5/5.5)\alpha$), 
    \begin{eqnarray*}
        \big|\,\langle v, \mu(S_{\rm good}\cap {\cal T}_{v,\alpha}) - \mu \rangle \,\big| \;\leq\; \frac{2\rho_1  \sigma_v}{(1/11)\alpha}\;, \text{ and } 
        \big|\,\langle v, \mu(S_{\rm good}\cap {\cal B}_{v,\alpha}) - \mu \rangle \,\big| \;\leq\; \frac{2\rho_1 \sigma_v}{(1/11)\alpha}\;.
    \end{eqnarray*}
    This implies that there is at least one good data point that is closer to the center than the means of the upper tail and the bottom tail: 
    \begin{eqnarray*}
        \min_{x_i\in S_{\rm good}\cap {\cal T}_{v,\alpha} }\big|\,\langle v, x_i  - \mu \rangle \,\big| \;\leq\; \frac{2\rho_1 \sigma_v}{(1/11)\alpha}\;, \text{ and } 
        \min_{x_i\in S_{\rm good}\cap {\cal B}_{v,\alpha}} \big|\,\langle v, x_i - \mu \rangle \,\big| \;\leq\; \frac{2\rho_1 \sigma_v}{(1/11)\alpha}\;.
    \end{eqnarray*}
    It follows that  the distance between two closest points in ${\cal T}_{v,\alpha}$ and ${\cal B}_{v,\alpha}$ is bounded by 
    \begin{eqnarray}
        \min_{x_i\in S_{\rm good}\cap {\cal T}_{v,\alpha}} \langle v,x_i \rangle - \max_{x_i\in S_{\rm good}\cap {\cal B}_{v,\alpha}} \langle v, x_i \rangle\;\leq\; (44/\alpha)\rho_1 \sigma_v\;, 
        \label{eq:dense}
    \end{eqnarray}
    when $\mu\in{\cal M}_{v,\alpha}$. When $\mu\in{\cal T}_{v,\alpha}$ or $\mu\in{\cal B}_{v,\alpha}$, it is straightforward that the above inequality holds.
    This implies low sensitivity as follows.  
    
    Recall that ${\cal M}_{v,\alpha}(S')$ denote the middle part after filtering out the top and bottom $(2/5.5)\alpha$ quantiles from $\{\langle v , x_i\rangle\}_{x_i\in S'} $. 
    For a neighboring dataset  $S''$ and the corresponding $S''_{(v)}$, consider a scenario where one point $x_i$ in ${\cal M}_{v,\alpha}(S')$ is replaced by another point $\tilde{x}_i$.
If 
$\langle v, \tilde{x}_i\rangle \in [\, \max_{x_i\in S_{\rm good} \cap {\cal B}_{v,\alpha}} \langle v, x_i\rangle \,,\, \min_{x_i\in S_{\rm good} \cap {\cal T}_{v,\alpha}} \langle v, x_i\rangle \,]$, then 
Eq.~\eqref{eq:dense} implies that $|\langle v,x_i -\tilde x_i\rangle |\leq (44/\alpha)\rho_1 \sigma_v$. 
Otherwise, ${\cal M}_{v,\alpha}(S'')$ will have $x_i$ replaced by either $\arg\min_{j\in S_{\rm good} \cap {\cal T}_{v,\alpha}} \langle v,  x_j\rangle $ or $\arg\max_{j\in S_{\rm good} \cap {\cal B}_{v,\alpha}} \langle v, x_j \rangle $. In either case, Eq.~\eqref{eq:dense} implies that $|\langle v,x_i -\tilde x_i\rangle |\leq (44/\alpha)\rho_1 \sigma_v$. 
The other case of when the replaced sample $x_i\in S$ is not in ${\cal M}_{v,\alpha}$ follows similarly. 

From this, we get the following bounds on the sensitivity of the robust mean and robust variance. Note that using robust statistics is critical in getting such small sensitivity bounds. 
Let $\mu'=\mu({\cal M}_{v,\alpha}(S'))$ and  
$\mu''=\mu({\cal M}_{v,\alpha}(S''))$ where we write the dataset $S'$  in ${\cal M}_{v,\alpha}(S')$ explicitly,
\begin{eqnarray}
    \big|\, \langle v, \mu'-\mu'' \rangle   \,\big| &\leq& \frac{44\rho_1 \sigma_v}{\alpha(1-(4/5.5)\alpha)n}\;. \label{eq:meansensitivitybound}
\end{eqnarray}

For the variance bound,  let 
$\sigma_v'^2=\sigma_v^2({\cal M}_{v,\alpha}(S')) = (1/|{\cal M}_{v,\alpha}(S')|)\sum_{x_i'\in{\cal M}_{v,\alpha}(S')}\langle v, x_i'-\mu' \rangle^2$   and $\sigma_v''^2=\sigma_v^2({\cal M}_{v,\alpha}(S''))$.
Since 
$(1-(4/5.5)\alpha)n\sigma_v'^2 = 
\sum_{ x_i'\in {\cal M}_{v,\alpha}(S')}\langle v, x_i'-\mu' \rangle^2 = \sum_{x_i'\in {\cal M}_{v,\alpha}(S')}(\langle v, x_i'-\mu'' \rangle^2 - \langle v , \mu''-\mu' \rangle^2)$, we have 
$(1-(4/5.5)\alpha)n(\sigma_v'^2-\sigma_v''^2)= \sum_{x_i'\in {\cal M}_{v,\alpha}(S')}\langle v,x_i'-\mu''\rangle^2-\sum_{x_i''\in {\cal M}_{v,\alpha}(S'')}\langle v,x_i''-\mu''\rangle^2 - (1-(4/5.5)\alpha) n \langle v, \mu''-\mu' \rangle^2$. We bound each term separately. 
Note that ${\cal M}_{v, \alpha}(S')$ and ${\cal M}_{v, \alpha}(S'')$ only differ in at most one data point. We denote those by 
 $x'$ and $x''$ respectively.  Then, 
    \begin{align*}
         &\Big|\, \sum_{ x_i' \in {\cal M}_{v,\alpha}(S')}\langle v, x_i'-\mu''  \rangle^2-\sum_{x_i'' \in {\cal M}_{v,\alpha}(S'')}\langle v, x_i''-\mu''  \rangle^2 \,\Big| = \big|\, \langle v , x'-\mu''\rangle^2- \langle v,x''-\mu''\rangle^2 \,\big|  \\
         &\;\;\;\;=\;  \big|\,\langle v, x' + x'' - 2\mu'' \rangle \langle v, x' -  x'' \rangle \,\big| \\
         &\;\;\; = \;  
         \big|\,\langle v, x' - \mu' \rangle +
         \langle v, \mu' - \mu'' \rangle+
         \langle v, x'' - \mu'' \rangle\big| \, 
         \big| \langle v, x'- x'' \rangle \,\big| \\ 
         &\;\;\;\leq \; 3\Big( \frac{44\rho_1\sigma_v}{\alpha} \Big)^2\;,
    \end{align*}
    and 
    \begin{eqnarray*} 
      (1-(4/5.5)\alpha) n \langle v,\mu'-\mu''\rangle^2 &\leq &
     (1-(4/5.5)\alpha)n \frac{(44\rho_1 \sigma_v)^2}{(\alpha(1-(4/5.5)\alpha)n)^2}
    \;.    
    \end{eqnarray*}
    This implies that 
    \begin{eqnarray}
     |\sigma_v'^2-\sigma_v''^2| \; \leq \;
    \frac{(44\rho_1(\alpha/2)\sigma_v)^2}{ (1-(4/5.5)\alpha)n \alpha^2}\Big(3+\frac{1}{(1-(4/5.5)\alpha)n}\Big)\;\leq\; 
    \frac{4(44\rho_1 \sigma_v)^2}{(1-(4/5.5)\alpha)n \alpha^2}\;.
    \label{eq:varsensitivitybound}
    \end{eqnarray}
    Together, we get the following bound on the sensitivity of $D(\hat\mu,S')$. Since $\max_v a_v - \max_v b_v\leq \max_v |a_v-b_v|$, we have   
    \begin{eqnarray*}
        \big|\,\robdist _{S'}(\hat\mu)- \robdist_{S''}(\hat\mu)\,\big| &\leq& \max_{v:\|v\|=1} \Big| \frac{\langle v , \hat\mu-\mu'\rangle }{\sigma_v'}-\frac{\langle v , \hat\mu-\mu''\rangle }{\sigma_v''}\Big| \\
        &\leq& \max_{v:\|v\|=1} \frac{|\langle v, \mu'-\mu''\rangle |}{\sigma_v'}+ \frac{|\langle v , \hat\mu-\mu''\rangle |}{\sigma_v}\,\Big|\frac{\sigma_v}{\sigma_v'}-\frac{\sigma_v}{\sigma_v''}\Big|\\ 
        &\leq& \frac{44\rho_1 }{0.9\alpha(1-(4/5.5)\alpha)n} + \|\Sigma^{-1/2}(\hat\mu-\mu'')\| \max_v \frac{\sigma_v}{\sigma_v'\sigma_v''(\sigma_v'+\sigma_v'')} |\sigma_v'^2-\sigma_v''^2 |\\
        &\leq&  \frac{44 \rho_1 }{0.9\alpha(1 - (4/5.5) \alpha)n} + \frac{5312\rho_1^2}{\alpha^2(1-(4/5.5)\alpha)n}\|\Sigma^{-1/2}(\hat\mu-\mu'')\|
        \;, 
    \end{eqnarray*}
    where we used triangular inequality in the second inequality and the third inequality follows from 
    $\sigma_v'\geq 0.9\sigma_v$  (Lemma~\ref{lem:deviation2}),   Eqs.~\eqref{eq:meansensitivitybound}, and 
    Lemma~\ref{lem:equidist}, and 
    the last inequality follows from 
    and $\sigma_v''\geq 0.9\sigma_v$ and 
    \eqref{eq:varsensitivitybound}. 
    
    From Lemma~\ref{lem:distanceperturbed}, $\hat\mu\in B_{\thresh+\backoff,S}$ implies  
    $\|\Sigma^{-1/2}(\hat\mu-\mu) \|\leq  14\rho_1 + 1.1(\thresh+\backoff)$.
    From Lemma~\ref{lem:deviation2}, 
    $ \|\Sigma^{-1/2}(\mu-\mu'')\|\leq 14\rho_1 $. We apply triangular inequality and show  
    that $\|\Sigma^{-1/2}(\hat\mu-\mu'') \|\leq c \alpha  /\rho_1 $ for the choices of $\sens$, $k^*$, $\thresh$ and $n$, with an arbitrarily small constant $c$:
    \begin{eqnarray*}
        \|\Sigma^{-1/2}(\hat\mu-\mu'')\| &\leq & 28\rho_1 + 1.1(\thresh+\backoff)  \\
        &\leq &C \rho_1 +C\frac{\rho_1 \log(1/(\delta\zeta))}{\varepsilon\alpha n} \\
        &\leq& 2C\rho_1  
        \;,
    \end{eqnarray*}
    for some constant $C>0$
    where $\sens=110\rho_1 /(\alpha n)$, $\thresh= 42\rho_1  $, 
    $k^*=(2/\varepsilon)\log(4/(\delta\zeta))$, and $n\geq C' \log(1/(\delta \zeta))/(\varepsilon\,\alpha )$.
    Under the assumption that 
    $ \rho_1^2\leq c \alpha  $ and $\alpha\leq c$ for some small enough $c$, 
    this implies
    \begin{eqnarray} 
    |\robdist_{S'}(\hat\mu)-\robdist_{S''}(\hat\mu)|&\leq& \frac{44\rho_1 }{0.9(1-(4/5.5)\alpha)\alpha n} \Big(1+ \frac{121\rho_1 }{\alpha}
     2C\rho_1  \Big)\nonumber\\
    &\leq& \frac{( 44/0.9) \rho_1}{\alpha n}
    \frac{1+44c}{1-(4/5.5)c}\;\leq\; \sens \;=\; \frac{110\rho_1}{\alpha n}\;. \label{eq:sensbound_D}
    \end{eqnarray}

\end{proof}


\subsubsection{Proof of Theorem~\ref{thm:robust_mean}}
\label{sec:mean_proof2}
    
We show that the sufficient conditions of Theorem~\ref{thm:utility} are met for the choices of constants and parameters: 
$p=d$, $\rho=\rho_1 $, $c_0=31.8$, $c_1=10.2$, $\tau=42\rho_1 $, and $\sens=110\rho_1 /(\alpha n)$. 
We can set $c_2$ to be a large constant and will only change the constant factor in the sample complexity.

The assumptions~\ref{asmp_vol}, \ref{asmp_local}, and \ref{asmp_resilience} follow from Lemmas~\ref{lem:vol}, \ref{lem:local_asmp}, and \ref{lem:outerset}, respectively. 
The assumption~\ref{asmp_sens} follows from \begin{eqnarray*}
\sens = \frac{110 \rho_1 }{\alpha n} \leq  \frac{1.2\rho_1 \varepsilon}{32(c_2 d + (\varepsilon/2) + \log(16/(\delta\zeta)))}
= 
\frac{(c_0-3c_1)\rho \varepsilon }{32(c_2 d + (\varepsilon/2) + \log(16/(\delta\zeta)))}\;,
\end{eqnarray*}
for large enough $n \geq C' (d + \log(1/(\delta\zeta)))/(\alpha\varepsilon)$. 
This finishes the proof of Theorem~\ref{thm:robust_mean} from which  Theorem~\ref{thm:mean} immediately follows.

\subsection{Step 3: Near-optimal guarantees}
\label{sec:mean3}

We provide utility guarantees for popular families of distributions in private or robust mean estimation literature: 
sub-Gaussian \cite{barber2014privacy,lai2016agnostic,steinhardt2018resilience,zhu2019generalized,KV17,KLSU19,cai2019cost,bun2019private,biswas2020coinpress,aden2020sample,brown2021covariance,diakonikolas2019robust,diakonikolas2017beingrobust,dong2019quantum,hopkins2020mean,diakonikolas2018robustly,huang2021instance}, $k$-th moment bounded \cite{barber2014privacy,lai2016agnostic,steinhardt2018resilience,zhu2019generalized,kamath2020private}, and covariance bounded \cite{barber2014privacy,lai2016agnostic,steinhardt2018resilience,zhu2019generalized,kamath2020private,dong2019quantum,hopkins2020robust,depersin2019robust,depersin2021robustness}. 
We apply known resilience bounds of each family of distributions and substitute them in Theorems~\ref{thm:mean} and \ref{thm:robust_mean}. In all cases, the resulting sample complexity is near-optimal, which follows from matching information-theoretic lower bounds. 

Since we aim for Mahalanobis distance error bounds, corresponding mean resilience we need in  Definition \ref{def:resilience} scales linearly in the projected standard deviation. For sub-Gaussian distributions, this requires the projected variance  $v^\top \Sigma v$ to be lower bounded by how fast the tail is decreasing, capture by the sub-Gaussian proxy $\Omega(v^\top \Gamma v)$ in  Eq.~\eqref{eq:def_subgauss} (Section~\ref{sec:mean-subgaussian}). 
For $k$-th moment bounded distributions with $k>3$, this requires the projected variance to be lower bounded by $\Omega({\mathbb E}[|\langle v,x-\mu \rangle|^k]^{2/k})$, a condition known as hypercontractivity  (Section~\ref{sec:mean-kmoment}). 
When we do not have such lower bounds on the covariance, HPTR can only hope to achieve Euclidean distance error bounds. Under our design principle, this translates into the choice of $\robdist_S(\hat\mu)=\max_{\|v\|\leq 1} \langle v , \hat\mu \rangle -\mu_v({\cal M}_{v,\alpha})$. We give an example of this scenario with  covariance bounded distributions (Section~\ref{sec:mean-2moment}).

\subsubsection{Sub-Gaussian distributions}
\label{sec:mean-subgaussian}

We say a distribution $P$ is sub-Gaussian with proxy $\Gamma$ if  for all $\|v\|=1$ and $t\in\reals$, 
\begin{eqnarray}
    \E_{x\sim P} \big[ \,\exp(t\,\langle v , x\rangle)\,\big] \;\leq\; \exp\Big( \frac{t^2\,v^\top\Gamma v}{2}\Big)\;. 
    \label{eq:def_subgauss}
\end{eqnarray}
Under this standard sub-Gaussianity, we are only guaranteed mean resilience of Eq.~\eqref{def:rho1}, for example, with R.H.S scaling as $\rho_1 \sqrt{ v^\top\Gamma v}$ instead of $\rho_1 \sqrt{  v^\top\Sigma v}$. This implies that  
the Mahalanobis distance of any robust estimate can be made arbitrarily large by shrinking the covariance in one direction such that $v^\top\Sigma v \ll v^\top\Gamma v$. To avoid such degeneracy, we add an additional assumption that $\Sigma \succeq c \Gamma$, which is also common in robust statistics literature, e.g., \cite{jambulapati2020robust}. 
With this definition,  it is known that  sub-Gaussian samples are $(\alpha,O(\alpha\sqrt{\log(1/\alpha)}),O(\alpha\log(1/\alpha)))$-resilient. 

\begin{lemma} [{Resilience of sub-Gaussian samples \cite{zhu2019generalized} and \cite[Corollary 4]{jambulapati2020robust}}] 
    \label{lem:mean_subgaussian} 
    For any fixed $\alpha\in(0,1/2)$, consider a dataset $S =\{x_i\in\reals^d\}_{i=1}^n$ of $n$ i.i.d.~samples from a  sub-Gaussian distribution with  mean $\mu$, covariance $\Sigma$, and a sub-Gaussian proxy  $0 \prec \Gamma\preceq c_1 \Sigma$ for a constant $c_1$.
     There exist constants $c_2$ and $c_3>0$ such that if $n\geq c_2((d+\log(1/\zeta)) / (\alpha\log(1/\alpha))^2)$ then 
     $S$ 
    is $(\alpha,c_3\alpha\sqrt{\log(1/\alpha)},c_3\alpha\log(1/\alpha))$-resilient with respect to $(\mu,\Sigma)$ with  probability $1-\zeta$. 
\end{lemma}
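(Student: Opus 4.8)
This lemma is quoted from \cite{zhu2019generalized} and \cite[Corollary 4]{jambulapati2020robust}, so the plan is to explain how those statements specialize to the form above rather than to reprove the underlying concentration estimates from scratch.

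The first step is to \emph{whiten} the problem. Set $y_i=\Sigma^{-1/2}(x_i-\mu)$; then each $y_i$ has zero mean, identity covariance, and --- since $0\prec\Gamma\preceq c_1\Sigma$ forces $\Sigma^{-1/2}\Gamma\Sigma^{-1/2}\preceq c_1{\bf I}_{d\times d}$ --- sub-Gaussian proxy at most $c_1{\bf I}_{d\times d}$. Under the bijection $v\mapsto\hat w:=\Sigma^{1/2}v/\|\Sigma^{1/2}v\|$ of the unit sphere onto itself one has $(\langle v,x_i\rangle-\mu_v)/\sigma_v=\langle \hat w,y_i\rangle$, so dividing the two conditions of Definition~\ref{def:resilience} by $\sigma_v$ and $\sigma_v^2$ respectively turns them into: for every unit vector $\hat w$ and every $T$ with $|T|\ge(1-\alpha)n$,
\[
\Big|\frac1{|T|}\sum_{i\in T}\langle \hat w,y_i\rangle\Big|\le\rho_1
\qquad\text{and}\qquad
\Big|\frac1{|T|}\sum_{i\in T}\big(\langle \hat w,y_i\rangle^2-1\big)\Big|\le\rho_2 .
\]
It therefore suffices to establish these two inequalities for the whitened, $O(1)$-sub-Gaussian sample $\{y_i\}$, which is exactly the hypothesis format of the cited bounds and removes all dependence on the unknown $(\mu,\Sigma)$. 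The auxiliary assumption $\Gamma\preceq c_1\Sigma$ is essential precisely here: resilience is demanded relative to $\sigma_v=\sqrt{v^\top\Sigma v}$, whereas the tail decay of $\langle v,x_i\rangle$ is governed by $\sqrt{v^\top\Gamma v}$, and without $\Gamma\preceq c_1\Sigma$ these two scales can be arbitrarily far apart.

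For a fixed $\hat w$ the worst admissible $T$ deletes an $\alpha n$-sized block of the most extreme data points (the largest or smallest values of $\langle \hat w,y_i\rangle$ for the first inequality, of $|\langle \hat w,y_i\rangle|$ for the second), so it is enough to bound, uniformly over $\hat w$, the empirical mean $\langle \hat w,\bar y\rangle$ and empirical second moment $\hat w^\top(\tfrac1n\sum_i y_iy_i^\top)\hat w-1$ together with the averages of the $\alpha n$ most extreme order statistics of $\{\langle \hat w,y_i\rangle\}$ and of $\{\langle \hat w,y_i\rangle^2\}$. At the population level $O(1)$-sub-Gaussianity gives a top-$\alpha$ conditional mean of order $\sqrt{\log(1/\alpha)}$ for $\langle \hat w,Y\rangle$ and of order $\log(1/\alpha)$ for the sub-exponential variable $\langle \hat w,Y\rangle^2$, which are the sources of the $\alpha\sqrt{\log(1/\alpha)}$ and $\alpha\log(1/\alpha)$ rates; passing to the empirical quantities uniformly over $\hat w$ and $T$ is handled by an $\exp(O(d))$-net of the sphere, sub-Gaussian/Bernstein tail bounds and a union bound, and the condition $n\ge c_2(d+\log(1/\zeta))/(\alpha\log(1/\alpha))^2$ is what forces the leftover sampling fluctuations --- chiefly $\|\bar y\|$ and $\|\tfrac1n\sum_i y_iy_i^\top-{\bf I}_{d\times d}\|$ --- below the target levels $\rho_1,\rho_2$.

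The main obstacle is this last, empirical-process step, since one must control all directions $\hat w$ \emph{and} all subsets $T$ at once, i.e.\ the function class $\{\,y\mapsto\langle \hat w,y\rangle\,{\mathbf 1}[\langle \hat w,y\rangle\ge t]\,\}$ and $\{\,y\mapsto\langle \hat w,y\rangle^2\,{\mathbf 1}[|\langle \hat w,y\rangle|\ge t]\,\}$ indexed by $(\hat w,t)$, which is where the factor $d$ in the sample complexity enters. The cleanest way to finish is to invoke the generalized-resilience concentration of \cite{zhu2019generalized}, which captures exactly this through the $\psi_1$ and $\psi_2$ Orlicz norms and delivers the stated rates with explicit constants, or the argument of \cite[Corollary 4]{jambulapati2020robust}; the remaining work is then only to check that the whitened sample meets their hypotheses --- zero mean, identity covariance and $O(1)$-sub-Gaussian proxy --- all of which we secured above from $\Gamma\preceq c_1\Sigma$.
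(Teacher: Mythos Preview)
The paper does not supply its own proof of this lemma; it is stated as a direct citation of \cite{zhu2019generalized} and \cite[Corollary~4]{jambulapati2020robust}. Your proposal --- whiten to reduce to the isotropic $O(1)$-sub-Gaussian case via $y_i=\Sigma^{-1/2}(x_i-\mu)$, observe that the assumption $\Gamma\preceq c_1\Sigma$ is exactly what makes the whitened proxy bounded, and then invoke the cited black-box resilience bounds --- is correct and is the natural way to specialize those references to the form needed here.
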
 

This lemma and  Theorem~\ref{thm:mean} imply the following utility guarantee.  Further, from Theorem~\ref{thm:robust_mean} the guarantee also holds under $\alpha$-corruption of the i.i.d. samples from a sub-Gaussian distribution. 

\begin{coro} 
    \label{coro:mean_subgaussian} 
    Under the hypothesis of Lemma~\ref{lem:mean_subgaussian} there exists a constant $c>0$ such that  for any $\alpha\in(0,c)$, a dataset of size 
     $$n \;=\; O\Big(\, \frac{d+\log(1/\zeta)}{(\alpha\log(1/\alpha))^2} + \frac{d+\log(1/(\delta\zeta))}{\alpha\varepsilon} \,\Big)\;,$$
      sensitivity of $\sens=O((1/n)\sqrt{\log(1/\alpha)})$, and threshold of $\tau=O(\alpha\sqrt{\log(1/\alpha)})$, 
     with large enough constants are sufficient for  \HPTR$(S)$ with  the distance function in Eq.~\eqref{def:mean_distance} to achieve \begin{eqnarray}
         \|\Sigma^{-1/2} (\hat\mu-\mu) \|\; =\; O(\alpha\sqrt{\log(1/\alpha)})\;,\label{eq:mean_subgauss}
     \end{eqnarray} with  probability $1-\zeta$. 
     Further, the same guarantee holds even if $ \alpha$-fraction of the samples are arbitrarily corrupted as in Assumption~\ref{asmp:mean}. 
\end{coro}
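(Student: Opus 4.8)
The plan is to obtain Corollary~\ref{coro:mean_subgaussian} by combining the distribution-specific resilience bound of Lemma~\ref{lem:mean_subgaussian} with the deterministic, resilience-conditional utility guarantee of Theorem~\ref{thm:mean} (and Theorem~\ref{thm:robust_mean} for the corruption addendum). Since both ingredients are already in hand, the work reduces to checking that the hypotheses of Theorem~\ref{thm:mean} are met for the sub-Gaussian resilience parameters, and then substituting the explicit values of $\rho_1$ and $\rho_2$ into its conclusion and into the prescribed choices of $\sens$ and $\thresh$.

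First, fix a target parameter $\alpha$ below a universal constant (to be pinned down), and apply Lemma~\ref{lem:mean_subgaussian}: under the stated hypotheses on the distribution (in particular $\Gamma \preceq c_1\Sigma$), if $n \ge c_2(d+\log(1/\zeta))/(\alpha\log(1/\alpha))^2$ then with probability $1-\zeta$ the dataset $S$ is $(\alpha,\rho_1,\rho_2)$-resilient with respect to $(\mu,\Sigma)$, where $\rho_1 = c_3\,\alpha\sqrt{\log(1/\alpha)}$ and $\rho_2 = c_3\,\alpha\log(1/\alpha)$. We condition on this event and then verify the hypotheses of Theorem~\ref{thm:mean}. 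As $\alpha \to 0$ both $\rho_1$ and $\rho_2$ tend to $0$, so $\rho_1<c$ and $\rho_2<c$ for $\alpha$ small; moreover $\rho_1^2 = c_3^2\,\alpha\log(1/\alpha)\cdot\alpha \le c\alpha$ as soon as $\alpha\log(1/\alpha)\le c/c_3^2$. Hence all requirements $\alpha\in(0,c)$, $\rho_1<c$, $\rho_2<c$, $\rho_1^2\le c\alpha$ hold once $\alpha$ is below a universal constant, which is the constant $c$ asserted in the corollary. With $\sens = 110\rho_1/(\alpha n) = 110c_3\sqrt{\log(1/\alpha)}/n = O(\sqrt{\log(1/\alpha)}/n)$ and $\thresh = 42\rho_1 = O(\alpha\sqrt{\log(1/\alpha)})$, Theorem~\ref{thm:mean} gives $\|\Sigma^{-1/2}(\hat\mu-\mu)\| \le 32\rho_1 = O(\alpha\sqrt{\log(1/\alpha)})$ with probability $1-\zeta$, provided $n \ge C(d+\log(1/(\delta\zeta)))/(\varepsilon\alpha)$. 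Taking the maximum of the two sample-size requirements yields the claimed $n = O((d+\log(1/\zeta))/(\alpha\log(1/\alpha))^2 + (d+\log(1/(\delta\zeta)))/(\alpha\varepsilon))$; privacy is unconditional by Theorem~\ref{thm:privacy}, so nothing further is needed there.

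For the robustness statement, we rerun the same argument through Theorem~\ref{thm:robust_mean}. To tolerate an $\alpha$-fraction of arbitrary corruption as in Assumption~\ref{asmp:mean}, apply Lemma~\ref{lem:mean_subgaussian} at the inflated level $\alpha' = (11/2)\alpha$ to the uncorrupted i.i.d.\ set $S_{\rm good}$, so that the observed $S$ is $((2/11)\alpha',\alpha',\rho_1',\rho_2')$-corrupt good with $\rho_1' = c_3\alpha'\sqrt{\log(1/\alpha')} = O(\alpha\sqrt{\log(1/\alpha)})$ and $\rho_2' = O(\alpha\log(1/\alpha))$. The hypotheses of Theorem~\ref{thm:robust_mean} are checked exactly as above, and it delivers the same $O(\alpha\sqrt{\log(1/\alpha)})$ error at the same order of sample complexity (the factor $11/2$ only affects constants).

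The only genuinely non-routine point in the entire proof is the bookkeeping inequality $\rho_1^2 \le c\alpha$ — equivalently, that $\rho_1 = \Theta(\alpha\sqrt{\log(1/\alpha)})$ is small enough relative to $\alpha$ that the estimate-dependent term in the sensitivity bound \eqref{eq:intuition_sensitivity} is dominated by the leading term — together with choosing the resilience level $\Theta(\alpha')$ in the robust case so that the admissible corruption fraction $(2/11)\alpha'$ matches the target $\alpha$. No new probabilistic or geometric estimate beyond Lemma~\ref{lem:mean_subgaussian}, Theorem~\ref{thm:mean}, and Theorem~\ref{thm:robust_mean} is required.
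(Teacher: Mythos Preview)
Your proposal is correct and follows the same approach as the paper: the corollary is obtained directly by plugging the sub-Gaussian resilience bounds of Lemma~\ref{lem:mean_subgaussian} into Theorem~\ref{thm:mean} (and Theorem~\ref{thm:robust_mean} for the corruption claim), after checking the side conditions $\rho_1,\rho_2<c$ and $\rho_1^2\le c\alpha$. Your explicit bookkeeping for the robust case, namely inflating to resilience level $\alpha'=(11/2)\alpha$ so that $(2/11)\alpha'=\alpha$, is actually more careful than the paper, which only remarks that the robust version follows from Theorem~\ref{thm:robust_mean}.
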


This sample complexity is near-optimal up to logarithmic factors  in $1/\alpha$ and $1/\zeta$ for $\delta=e^{-O(d)}$. 
Even for DP mean estimation without corrupted samples,  
HPTR is the first algorithm for  sub-Gaussian distributions with unknown covariance that nearly matches the lower bound of $n=\widetilde \Omega(d/\alpha^2 + d/(\alpha\varepsilon) + \log(1/\delta)/\varepsilon)$ from \cite{KV17,KLSU19}, where $\widetilde\Omega$ hides  polylogarithmic terms in $1/\zeta,1/\alpha,d,1/\varepsilon$ and $\log(1/\delta)$. The third term has a gap of $1/\alpha$ factor to our upper bound, but this term is dominated by other terms under the  assumption that $\delta=e^{-O(d)}$.
For completeness, we state the lower bound in Appendix~\ref{sec:lb}.
Existing algorithms are suboptimal as they  require either $n=\widetilde O ((d/\alpha^2) + (d(\log(1/\delta)^3)/(\alpha\varepsilon^2)))$ samples with $(1/\varepsilon^2)$ dependence to achieve the error rate of Eq.~\eqref{eq:mean_subgauss}  \cite{brown2021covariance}  or  extra conditions such as  strictly Gaussian distributions \cite{brown2021covariance,bun2019private} or known covariance matrices  \cite{KLSU19,aden2020sample,barber2014privacy}. 

The error bound is near-optimal in its dependence in $\alpha$ under $\alpha$-corruption. 
HPTR is the first estimator that is both $(\varepsilon,\delta)$-DP and also achieves the  robust error rate of 
$\|\Sigma^{-1/2}(\hat\mu-\mu)\|=O(\alpha\sqrt{\log(1/\alpha)})$, nearly matching the known information-theoretic lower bound of 
$\|\Sigma^{-1/2}(\hat\mu-\mu)\|=\Omega(\alpha)$ \cite{chen2018robust}. This lower bound holds for any estimator that is not necessarily private and regardless of how many samples are available. 
In comparison, the  existing robust and DP estimator from  \cite{liu2021robust}, which  runs in polynomial time, 
requires 
the knowledge of the covariance matrix $\Sigma$ and a larger sample complexity of $n=\widetilde\Omega((d/\alpha^2) + (d^{3/2}\log(1/\delta))/(\alpha\varepsilon))$. If privacy is not required (i.e.,~$\varepsilon=\infty$), a robust mean estimator from \cite{zhu2019generalized} achieves the same error bound and sample complexity as ours. 


\subsubsection{Hypercontractive  distributions} 
\label{sec:mean-kmoment}
For an integer $k\geq 3$, a distribution $P_{\mu,\Sigma}$ is $k$-th moment bounded with a mean $\mu$ and covariance  $\Sigma$ if for all $\|v\|=1$, we have
$\E_{x\sim P_X}[ |\langle v , (x-\mu)\rangle |^k] \leq \kappa^k$ for some $\kappa>0$.
However, similar to sub-Gaussian case, Mahalanobis distance guarantees require an additional lower bound on the covariance. To this end, we assume  hypercontractivity, which is common in robust statistics literature, e.g., \cite{klivans2018efficient}.

\begin{definition}
    \label{def:hyper}
    A distribution $P_{\mu,\Sigma}$ is 
    $(\kappa,k)$-hypercontractive if for all $v\in\reals^d$, 
    $\E_{x\sim P_X}[ |\langle v , (x-\mu)\rangle |^k] \leq \kappa^k (v^\top\Sigma v)^{k/2}$. 
\end{definition}

Although samples from such heavy-tailed distributions are known to be not resilient, it is known  that it is $O(\alpha)$-close in total variation distance to an $(\alpha,O(\alpha^{1-1/k}),O(\alpha^{1-2/k}))$-resilient dataset. This means that the resulting dataset is $((1/11)\alpha,\alpha,O(\alpha^{1-1/k}),O(\alpha^{1-2/k}))$-corrupt good, for example.  
Note that 
hypercontractivity is invariant under affine transformations and $\kappa$ does not depend on the condition number of the covariance.  

\begin{lemma} [{Resilience of $k$-th moment bounded samples \cite[Lemma G.10]{zhu2019generalized}}] 
    \label{lem:mean_kmoment} 
    For any fixed $\alpha\in(0,1/2)$, consider a dataset $S=\{x_i\in\reals^d\}_{i=1}^n$ of $n$ i.i.d.~samples from a $(\kappa,k)$-hypercontractive  distribution with mean $\mu$ and  covariance $\Sigma\succ 0$ for some $k\geq3$. 
    For any $c_3>0$, there exist constants $c_1$ and $c_2 >0$ that only depend on $c_3$ such that if 
    $$n\;\geq \;c_1\Big(\frac{d}{\zeta^{2(1-1/k)}\alpha^{2(1-1/k)}} + \frac{k^2\alpha^{2-2/k} d\log d}{\zeta^{2-4/k}\kappa^2} + \frac{\kappa^2 d\log d}{\alpha^{2/k}}  \Big)\;,$$ 
    then  $S$ 
    is $( c_3\alpha,\alpha,c_2k\kappa \alpha^{1-1/k}\zeta^{-1/k},c_2k^2\kappa^2\alpha^{1-2/k} \zeta^{-2/k} )$-corrupt good  with respect to $(\mu,\Sigma)$  with probability $1-\zeta$. 
\end{lemma}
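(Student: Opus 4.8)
This is the finite‑sample, heavy‑tailed counterpart of Lemma~\ref{lem:mean_subgaussian}, and I would prove it by the standard two‑part scheme: first show the \emph{population} distribution is resilient in the relevant one‑dimensional sense, then show an $n$‑sample dataset, after deleting a $c_3\alpha$‑fraction of ``heavy'' points, reproduces this population behaviour uniformly over all directions. For Step~1, fix a unit $v$, write $z=\langle v,x-\mu\rangle$, so $\E[z]=0$, $\E[z^2]=\sigma_v^2$, and $\E[|z|^k]\le\kappa^k\sigma_v^k$ by hypercontractivity. For any event $E$ with $\prob(E)\ge 1-\alpha$, H\"older's inequality (with exponents $k$ and $k/(k-1)$, resp.\ $k/2$ and $k/(k-2)$, the latter valid since $k\ge 3$) gives $|\E[z\,\mathbbm{1}_{E^c}]|\le(\E|z|^k)^{1/k}\prob(E^c)^{1-1/k}\le\kappa\sigma_v\alpha^{1-1/k}$ and $\E[z^2\mathbbm{1}_{E^c}]\le(\E|z|^k)^{2/k}\prob(E^c)^{1-2/k}\le\kappa^2\sigma_v^2\alpha^{1-2/k}$. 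Thus conditioning the population on any $(1-\alpha)$‑mass event moves the projected mean by $O(\kappa\alpha^{1-1/k})\sigma_v$ and the projected second moment by $O(\kappa^2\alpha^{1-2/k})\sigma_v^2$ — exactly the target $\rho_1,\rho_2$ up to the $k$ and $\zeta$ factors.

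For Step~2, heavy‑tailed samples are not themselves resilient because a few draws can be enormous, so I would follow \cite{zhu2019generalized} and introduce a clipping level at scale $\kappa\sigma_v/\alpha^{1/k}$, letting $S_{\rm bad}\subseteq S$ be the indices $i$ for which $|\langle v,x_i-\mu\rangle|$ exceeds this level for some $v$ (equivalently, points outside a suitable ellipsoid). A Markov–union‑bound computation shows $\E|S_{\rm bad}|=O(\alpha)n$, and the third sample‑complexity term $\kappa^2 d\log d/\alpha^{2/k}$ is precisely what makes the empirical fraction $|S_{\rm bad}|/n$ concentrate below $c_3\alpha$ with probability $1-\zeta/3$. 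I then take $S_{\rm good}$ to be $S$ with the points of $S_{\rm bad}$ replaced by copies of a fixed central point, so that $S$ is a $c_3\alpha$‑corruption of $S_{\rm good}$ by construction (Definition~\ref{def:corruptgoodset}).

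For Step~3 it remains to certify that $S_{\rm good}$ is $(\alpha,\rho_1,\rho_2)$‑resilient, i.e.\ that \eqref{def:rho1}–\eqref{def:rho2} hold for every unit $v$ and every $T\subseteq S_{\rm good}$ with $|T|\ge(1-\alpha)n$. Writing such a $T$ as $S_{\rm good}$ minus an $\alpha n$‑subset, this reduces to controlling $\sup_{v,\,|U|\le\alpha n}\bigl|\tfrac1n\sum_{i\in U}g_v(x_i)\bigr|$ for the clipped linear function $g_v(x)=\langle v,x-\mu\rangle$ and the clipped quadratic $g_v(x)=\langle v,x-\mu\rangle^2-\sigma_v^2$, uniformly over a $\tfrac12$‑net of the sphere of size $e^{O(d)}$. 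Since the summands are clipped at the Step~2 levels, truncated Bernstein‑type bounds apply; optimizing the net union bound over the failure probability, the linear part yields the term $d/(\zeta\alpha)^{2(1-1/k)}$ and the quadratic part yields $k^2\alpha^{2-2/k}d\log d/(\zeta^{2-4/k}\kappa^2)$. Adding the population bound of Step~1 and absorbing the clipped tail (again $O(\kappa\alpha^{1-1/k})\sigma_v$, resp.\ $O(\kappa^2\alpha^{1-2/k})\sigma_v^2$, by H\"older) gives the stated $\rho_1,\rho_2$ with their $k,\zeta$ factors; this is the content of \cite[Lemma~G.10]{zhu2019generalized}.

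The main obstacle is the uniformity in Step~3: with only a bounded $k$‑th moment the tails are merely polynomially light, so off‑the‑shelf sub‑Gaussian or sub‑exponential concentration is unavailable, and the spherical net argument must be combined with clipping in a way that keeps the three distinct polynomial exponents $1-1/k$, $1-2/k$, $1/k$ — together with the fractional powers of $\zeta$ coming from Markov at the $k$‑th‑moment level — in exact correspondence between the resilience parameters and the sample‑complexity terms. The H\"older bookkeeping of Steps~1–2 is routine by comparison.
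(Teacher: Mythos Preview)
The paper does not supply its own proof of this lemma; it simply cites \cite[Lemma~G.10]{zhu2019generalized} and moves on. Your three-step plan (population resilience via H\"older, clipping to extract $S_{\rm good}$, then uniform concentration over a net for the clipped summands) is exactly the standard argument behind that cited result, and your accounting of which sample-complexity term controls which piece is consistent with the statement. So there is nothing to compare against here: your proposal is a faithful sketch of the referenced proof, and you yourself note at the end that it is ``the content of \cite[Lemma~G.10]{zhu2019generalized}.''
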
 

This lemma and  Theorem~\ref{thm:mean} imply the following utility guarantee.  Further, from Theorem~\ref{thm:robust_mean} the guarantee also holds under $(1/5.5-c_3)\alpha$-corruption of the i.i.d. samples from a 
$(\kappa,k)$-hypercontractive distribution. Choosing appropriate constants,  we get the following result.

\begin{coro} 
    \label{coro:mean_kmoment} 
    Under the hypothesis of Lemma~\ref{lem:mean_kmoment} there exists a constant $c_{\kappa,k,\zeta}$ that only depends on $k$, $\kappa$, and $\zeta$ such that for any  $\alpha\in(0,c_{\kappa,k,\zeta})$, a dataset of size 
    $$n\;=\; O \Big(\frac{d+\log(1/(\delta\zeta))}{\varepsilon\alpha } +
    \frac{d}{\zeta^{2(1-1/k)}\alpha^{2(1-1/k)}} + \frac{k^2\alpha^{2-2/k} d\log d}{\zeta^{2-4/k}\kappa^2} + \frac{\kappa^2 d\log d}{\alpha^{2/k}}
    \Big) \;,$$ 
      sensitivity of $\sens=O(1/(n\alpha^{1/k}))$, and threshold of $\tau=O(\alpha^{1-1/k})$, 
     with large enough constants are sufficient for  \HPTR$(S)$ with  the distance function in Eq.~\eqref{def:mean_distance} to achieve  $\|\Sigma^{-1/2}(\hat\mu-\mu)\|=O(k \kappa \zeta^{-1/k}\alpha^{1-1/k})$ with  probability $1-\zeta$. Further, the same guarantee holds even if $ \alpha$-fraction of the samples are arbitrarily corrupted as in Assumption~\ref{asmp:mean}.
\end{coro}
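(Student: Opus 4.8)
The plan is to derive the corollary by plugging the resilience bound from Lemma~\ref{lem:mean_kmoment} into the deterministic utility guarantees of Theorem~\ref{thm:mean} and Theorem~\ref{thm:robust_mean}. First I would invoke Lemma~\ref{lem:mean_kmoment} with $c_3 = 1/11$, which says that as long as $n$ is at least the sum of the last three terms of the claimed sample-complexity bound, the i.i.d.\ dataset $S$ is $((1/11)\alpha,\alpha,\rho_1,\rho_2)$-corrupt good with respect to $(\mu,\Sigma)$ with probability $1-\zeta$, where $\rho_1 = c_2 k\kappa\alpha^{1-1/k}\zeta^{-1/k}$ and $\rho_2 = c_2 k^2\kappa^2\alpha^{1-2/k}\zeta^{-2/k}$. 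In the corruption model of Assumption~\ref{asmp:mean}, one then notes that corrupting a further fraction of the samples composes with this inherent heavy-tail gap; choosing the resilience parameter to be a fixed constant multiple of the target $\alpha$ ensures the total corruption stays below $(2/11)$ times that parameter, so $S$ meets the hypothesis of Theorem~\ref{thm:robust_mean} (and in the uncorrupted case $(1/11)\alpha \le (2/11)\alpha$, so Theorem~\ref{thm:mean} or Theorem~\ref{thm:robust_mean} applies directly).

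Next I would verify the remaining hypotheses of Theorem~\ref{thm:robust_mean}: $\alpha<c$, $\rho_1<c$, $\rho_2<c$, and $\rho_1^2\le c\alpha$. Since $\rho_1 = O(\alpha^{1-1/k})$ and $\rho_2 = O(\alpha^{1-2/k})$ with constants depending only on $k,\kappa,\zeta$, and $1-1/k>0$, $1-2/k>0$ for $k\ge 3$, the first three conditions hold whenever $\alpha$ is below a threshold $c_{\kappa,k,\zeta}$. The condition $\rho_1^2\le c\alpha$ reduces to $c_2^2 k^2\kappa^2\zeta^{-2/k}\,\alpha^{1-2/k}\le c$, which again holds for $\alpha$ small enough precisely because $1-2/k>0$. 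This verification is the one genuinely delicate point of the argument: it is where the hypercontractivity order $k\ge 3$ is essential, and it is what determines how the admissible range of $\alpha$ (and the constant $c_{\kappa,k,\zeta}$) depends on $k$, $\kappa$, and $\zeta$.

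Finally, with all hypotheses in place, Theorem~\ref{thm:robust_mean} with the prescribed parameters $\sens = 110\rho_1/(\alpha n)$ and $\thresh = 42\rho_1$ yields $\|\Sigma^{-1/2}(\hat\mu-\mu)\|\le 32\rho_1$ with probability $1-\zeta$, provided $n\ge C(d+\log(1/(\delta\zeta)))/(\varepsilon\alpha)$; substituting the value of $\rho_1$ gives the claimed error $O(k\kappa\zeta^{-1/k}\alpha^{1-1/k})$, the claimed $\sens = O(1/(n\alpha^{1/k}))$, and $\thresh = O(\alpha^{1-1/k})$. Combining this sample-size requirement with the one coming from Lemma~\ref{lem:mean_kmoment} gives the stated four-term bound on $n$, and a union bound over the resilience-failure event and the utility/safety-test-failure event (each controllable at level $\zeta$, up to rescaling constants) yields the overall $1-\zeta$ success probability. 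The bulk of the work is already contained in Theorem~\ref{thm:robust_mean} and Lemma~\ref{lem:mean_kmoment}, so the remaining effort is just the corruption-budget bookkeeping and the $\rho_1^2\le c\alpha$ check noted above, which is the main obstacle.
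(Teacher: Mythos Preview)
Your proposal is correct and follows essentially the same approach as the paper: the paper derives the corollary by combining Lemma~\ref{lem:mean_kmoment} with Theorem~\ref{thm:mean} (and Theorem~\ref{thm:robust_mean} for the corrupted case), noting only that ``choosing appropriate constants'' yields the result. You spell out in more detail than the paper does the verification of the hypotheses $\rho_1,\rho_2<c$ and especially $\rho_1^2\le c\alpha$, correctly identifying that the latter reduces to $\alpha^{1-2/k}$ being small and hence requires $k\ge 3$; this is exactly the bookkeeping the paper leaves implicit.
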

This sample complexity is near-optimal in its dependence in $d$, $1/\varepsilon$, and $1/\alpha$  when $\delta=e^{-\Theta(d)}$. 
Suppose $\zeta$, $k$, and $\kappa$ are $\Theta(1)$. 
Even for DP mean estimation without robustness,  
HPTR is the first algorithm that  achieves 
$\|\Sigma^{-1/2}(\hat\mu-\mu)\|=O(\alpha^{1-1/k})$ with $n=\widetilde O(\frac{d}{\alpha^{2(1-1/k)}} + \frac{d+\log(1/\delta)}{\varepsilon\alpha})$ samples, which nearly matches the known lower bounds. 
The first term $O(d/\alpha^{2(1-1/k)})$ cannot be improved  even if we do not require privacy.
The second term $O((d+\log(1/\delta))/\varepsilon\alpha)$ nearly matches the lower bound of 
$n=\Omega(  {\min\{d,\log((1-e^{-\varepsilon})/\delta)\}}/
{(\varepsilon\alpha)})$ 
for DP mean estimation  
that we show in  Proposition~\ref{thm:lowerbound_mean_hypercontractive}. 
In typical DP scenarios, we have $0<\varepsilon\leq 1$ and $\delta = e^{-\Theta(d)}$  \cite{barber2014privacy}, in which case the upper and lower bounds match. 
An existing  DP mean estimator (without robustness) of \cite{kamath2020private} achieves a stronger $(\varepsilon,0)$-DP and a similar  accuracy but in Euclidean distance with a similar sample size of 
$n=\widetilde O(  \frac{d}{\alpha^{2(1-1/k)}} + \frac{d}{\varepsilon\alpha} )$. However, it  requires  a known or identity covariance matrix and a known bound on the unknown mean of the form $\mu\in[-R,R]^d$. Such a bounded search  space is critical in achieving 
 a stronger {\em pure} privacy guarantee with $\delta=0$.


The error bound is optimal in its dependence in $\alpha$ under $\alpha$-corruption. 
The error bound $\|\Sigma^{-1/2}(\hat\mu-\mu)\|=O(\alpha^{1-1/k}) $ matches the following information-theoretic lower bound in Proposition~\ref{pro:mean_kmoment_lb}; no algorithm can distinguish two distributions whose means are at least $O(\alpha^{1-1/k})$ apart from $\alpha$-fraction of samples corrupted, even with infinite samples. 
HPTR is the first algorithm that guarantees  both differential privacy and robustness (i.e., the error only depends on $\alpha$ and not in $d$) for $k$-th moment bounded distributions. 
If privacy is not required (i.e.,~$\varepsilon=\infty$), a robust mean estimator from \cite{zhu2019generalized} achieves a similar  error bound and sample complexity as ours.


\begin{propo}[Lower bound for robust mean estimation]
    \label{pro:mean_kmoment_lb}

For any $\alpha\in (0,1/2)$, there exist two distributions ${\cal D}_1$ and ${\cal D}_2$ satisfying the hypotheses of Lemma~\ref{lem:mean_kmoment} such that $d_{\rm TV}({\cal D}_1, {\cal D}_2)=\alpha$, and 
    \begin{eqnarray*}
        \|\Sigma^{-1/2}(\mu_1-\mu_2)\| =\Omega( \alpha^{1-1/k})\;.
    \end{eqnarray*}
\end{propo}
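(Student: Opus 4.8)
The plan is the classical two-point construction from robust statistics. Fix $k\ge 3$ (and $\kappa$ at least a suitable absolute constant, which is the only nontrivial regime). Take a fixed ``anchor'' distribution $Q$ on $\reals^d$ with mean $0$, covariance $I_d$, that is itself $(\kappa_0,k)$-hypercontractive for a dimension-free constant $\kappa_0$ (e.g.\ $Q=\mathcal N(0,I_d)$, for which $\kappa_0^k=\mathbb E[|\mathcal N(0,1)|^k]$, or a product of scaled uniforms), and displace an $\alpha$-fraction of its mass along a fixed unit vector $e_1$ by $\pm t$, where $t:=c\,\alpha^{-1/k}$ for a small constant $c=c(\kappa,k)\le 1$ to be fixed:
\[
\mathcal D_1:=(1-\alpha)\,Q+\alpha\,\delta_{t e_1},\qquad \mathcal D_2:=(1-\alpha)\,Q+\alpha\,\delta_{-t e_1}.
\]
The scale $t\asymp\alpha^{-1/k}$ is the break-even point: $\alpha\cdot t^k=c^k=O(1)$, so the displaced mass adds only $O(1)$ to the $k$-th central moment, whereas $\alpha\cdot t^2=c^2\alpha^{1-2/k}=o(1)$ (this is where $k\ge 3$ is used), so it barely perturbs the covariance, while the mean moves by $\Theta(\alpha t)=\Theta(\alpha^{1-1/k})$.

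I would then check the three assertions. \emph{Total variation:} $Q$ is non-atomic and $\delta_{te_1},\delta_{-te_1}$ are mutually singular, so $d_{\rm TV}(\mathcal D_1,\mathcal D_2)=\tfrac12\|\alpha\delta_{te_1}-\alpha\delta_{-te_1}\|_1=\alpha$. \emph{Means and covariance:} $\mu_1=\alpha t\,e_1$, $\mu_2=-\alpha t\,e_1$, and a direct computation gives $\mathrm{cov}(\mathcal D_1)=\mathrm{cov}(\mathcal D_2)=(1-\alpha)I_d+\alpha(1-\alpha)t^2\,e_1e_1^\top=:\Sigma$, which depends on $t$ only through $t^2$, so the two distributions share the same covariance; its eigenvalues lie in $[1-\alpha,(1-\alpha)(1+\alpha t^2)]\subseteq[\tfrac12,2]$ for $\alpha<\tfrac12$, so $\tfrac12 I_d\preceq\Sigma\preceq 2I_d\succ 0$, and hence $\|\Sigma^{-1/2}(\mu_1-\mu_2)\|=\|\Sigma^{-1/2}(2\alpha t\,e_1)\|=\Theta(\alpha t)=\Theta(\alpha^{1-1/k})$. \emph{Hypercontractivity:} for any unit vector $v$, splitting over the two mixture components of $\mathcal D_1$,
\[
\mathbb E_{\mathcal D_1}\!\big[|\langle v,X-\mu_1\rangle|^k\big]=(1-\alpha)\,\mathbb E_{X\sim Q}\!\big[|\langle v,X\rangle-\alpha t v_1|^k\big]+\alpha(1-\alpha)^k t^k|v_1|^k,
\]
and using $\alpha t=c\alpha^{1-1/k}\to 0$, the hypercontractivity of $Q$ (so $\mathbb E_Q[|\langle v,X\rangle|^k]\le\kappa_0^k\|v\|^k$), the elementary bound $(a+b)^k\le a^k+kb(a+b)^{k-1}$, $\alpha t^k=c^k$, and $\|v\|^2\le 2\,v^\top\Sigma v$ together with $v_1^2\le 2\,v^\top\Sigma v$, the right-hand side is at most $2^{k/2}\big(\kappa_0^k+c^k+o(1)\big)(v^\top\Sigma v)^{k/2}$; taking $c$ small and $\kappa$ above the resulting absolute constant makes this $\le\kappa^k(v^\top\Sigma v)^{k/2}$. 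The same bound holds for $\mathcal D_2$ by symmetry, so both distributions satisfy the hypotheses of Lemma~\ref{lem:mean_kmoment}.

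The only real obstacle is this last verification: one must pin the displacement at exactly $t\asymp\alpha^{-1/k}$ — moving the mass further breaks the $k$-th moment bound, moving it less shrinks the mean gap below $\alpha^{1-1/k}$ — and confirm that the induced change in the covariance is genuinely lower order, which is precisely the step that uses $k\ge 3$; everything else is elementary bookkeeping. As a closing remark I would note (this is standard) that $d_{\rm TV}(\mathcal D_1,\mathcal D_2)=\alpha$ lets an $\alpha$-corruption adversary make i.i.d.\ draws from $\mathcal D_1$ distributed exactly as i.i.d.\ draws from $\mathcal D_2$, so no estimator — with or without privacy, and with arbitrarily many samples — can guarantee error below $\tfrac12\|\Sigma^{-1/2}(\mu_1-\mu_2)\|=\Omega(\alpha^{1-1/k})$ on both, which is exactly how this proposition feeds the robustness lower bound discussed above.
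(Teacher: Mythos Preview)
Your proof is correct and follows essentially the same two-point construction as the paper: place $\alpha$ mass at $\pm\Theta(\alpha^{-1/k})$ so that the $k$-th moment stays $O(1)$ while the mean shifts by $\Theta(\alpha^{1-1/k})$, with the covariance perturbed only at lower order. The paper's version is slightly simpler --- it works in one dimension with the base distribution uniform on $\{-1,1\}$ rather than a $d$-dimensional Gaussian --- but the mechanism and all the key calculations (mean gap $\alpha\cdot\alpha^{-1/k}$, $k$-th moment contribution $\alpha\cdot\alpha^{-1}=O(1)$, variance $\Theta(1)$) are identical to yours.
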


\begin{proof}
	We construct two scalar distributions  ${\cal D}_1$ and ${\cal D}_2$ with $d_{\rm TV}({\cal D}_1, {\cal D}_2)=\alpha$ as follows:
	\begin{eqnarray*}
		{\cal D}_1(x)=\begin{cases}
			(1-\alpha)/2, \;\;\; &\text{if }x\in \{-1,1\}\\
			\alpha\;\;\; &\text{if }x=-\alpha^{1/k}
		\end{cases}\;, \;\;\text{ and } \;\;\;
		{\cal D}_2(x)=\begin{cases}
			(1-\alpha)/2, \;\;\; &\text{if }x\in \{-1,1\}\\
			\alpha\;\;\; &\text{if }x=\alpha^{1/k}
		\end{cases}
	\end{eqnarray*}
	The variance is $\Omega(1)$ for both distributions and $|\E_{x\sim {\cal D}_1}[x] - \E_{x\sim {\cal D}_2}[x]| = 2\alpha^{1-1/k} $. 
	Then it suffices to  show that ${\cal D}_1$ and ${\cal D}_2$ are both $(O(1),k)$-hypercontractive.  In fact, we know $\E_{x\sim {\cal D}_1}[x]=-\alpha^{1-1/k}$, $\E_{x\sim {\cal D}_1}[x^2]=\E_{x\sim {\cal D}_2}[x^2]=1-\alpha+\alpha^{1-2/k}$ and $\E_{{\cal D}_1}[|x|^k]=2-\alpha$. Since $\alpha\in (0,1/2)$, there exists a constant $c$ such that $\E_{x\sim {\cal D}_1}[|x-\mu_1|^k]\leq c$, which concludes the proof. 

\end{proof}
\begin{propo}[Lower bound for DP mean estimation]
\label{thm:lowerbound_mean_hypercontractive}
	Let $\cP_{\mu, \Sigma, k}$ be the set of $(1,k)$-hypercontractive distributions with mean $\mu\in \reals^d$ and covariance $\Sigma\in \reals^{d\times d}$.  Let  $\cM_{\varepsilon,\delta}$ be a class of $(\varepsilon, \delta)$-DP estimators using $n$ i.i.d.~samples from $P\in{\cal P}_{\mu,\Sigma,k}$.
	Then, for $\varepsilon\in(0,10)$, there exists a constant $c$ such that 
	\begin{eqnarray*}
	\inf_{\hat{\mu}\in \cM_{\varepsilon,\delta}}
	\sup_{\mu\in\reals^d,\Sigma\succ 0 ,P\in \cP_{\mu, \Sigma, k}}\E_{S\sim  P^n}[\|\Sigma^{-1/2}(\hat\mu(S)-\mu)\|^2]
	 \ge c 
	\min\left\{\left(\frac{d\wedge \log((1-e^{-\varepsilon})/\delta)}{n\varepsilon}\right)^{2-2/k}, 1\right\}.
	\end{eqnarray*}
	
\end{propo}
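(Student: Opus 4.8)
The plan is to prove the bound by a packing (Fano‑type) argument tailored to $(\varepsilon,\delta)$‑differential privacy, combining the heavy‑tailed construction behind Proposition~\ref{pro:mean_kmoment_lb} with the standard group‑privacy‑plus‑coupling technique. Write $b:=\min\{d,\,\log((1-e^{-\varepsilon})/\delta)\}$; we may assume $b$ is at least a suitable universal constant and that $b/(n\varepsilon)$ is at most a small universal constant $c_0$, the remaining regimes being handled by the same argument with a constant‑size packing (or a two‑point instance) and a constant‑factor adjustment of the parameters, which only produce stronger or $\Omega(1)$ bounds and hence the truncation at $1$. Set the atom mass $\alpha:=c_1\,b/(n\varepsilon)$ and the atom radius $r:=\alpha^{-1/k}$ for a small constant $c_1$ to be fixed, so that $\alpha r^k=1$ and $\alpha r=\alpha^{1-1/k}$. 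Fix a binary code $\mathcal{C}\subseteq\{0,1\}^m$ with $m:=\lceil b\rceil\le d$, $\log|\mathcal{C}|=\Omega(m)$, and relative minimum distance bounded below by a constant (Gilbert--Varshamov), and for $\mathbf{v}\in\mathcal{C}$ let $\mathbf{w}_{\mathbf v}:=(2\mathbf{v}-\mathbf{1})/\sqrt{m}\in\reals^d$ (supported on the first $m$ coordinates), a unit vector with $\|\mathbf{w}_{\mathbf v}-\mathbf{w}_{\mathbf v'}\|\geq c$ for $\mathbf v\neq\mathbf v'$. Define $P_{\mathbf v}$ on $\reals^d$ by: with probability $1-\alpha$ output a fixed light‑tailed base sample (independent Rademachers on the first $m$ coordinates, zero elsewhere), and with probability $\alpha$ output $r\,\mathbf{w}_{\mathbf v}$.

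\textbf{Properties of the family.} A direct computation then gives, for $\mathbf v\neq\mathbf v'$: (i) $\mu_{\mathbf v}=\alpha r\,\mathbf{w}_{\mathbf v}$ and $\Sigma_{\mathbf v}={\mathbf I}_{m\times m}+O(\alpha^{1-2/k})\,\mathbf{w}_{\mathbf v}\mathbf{w}_{\mathbf v}^{\top}$ on the active block, so every $\Sigma_{\mathbf v}$ is within a constant factor of the identity and Mahalanobis distances are comparable to Euclidean ones; (ii) each $P_{\mathbf v}$ is $(\kappa,k)$‑hypercontractive for a constant $\kappa=\kappa(k)$ (the lone atom is calibrated exactly as in the proof of Proposition~\ref{pro:mean_kmoment_lb} so that $\alpha r^k=1$; here $\cP_{\mu,\Sigma,k}$ is read with this constant, absorbed into the final $c$); (iii) $d_{\rm TV}(P_{\mathbf v},P_{\mathbf v'})\leq\alpha$, since the two distributions can be coupled to agree whenever neither draws its atom; (iv) $\|\Sigma_{\mathbf v}^{-1/2}(\mu_{\mathbf v}-\mu_{\mathbf v'})\|\geq c_3\,\alpha^{1-1/k}=:\Delta$, and $\Delta\asymp(b/(n\varepsilon))^{1-1/k}$.

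\textbf{The privacy argument.} Suppose for contradiction that some $\hat\mu\in\cM_{\varepsilon,\delta}$ achieves worst‑case mean squared error below $\Delta^2/27$. By Markov's inequality $\prob_{S\sim P_{\mathbf v}^{n}}(\hat\mu(S)\in B_{\mathbf v})\geq 2/3$ for every $\mathbf v$, where $B_{\mathbf v}$ is the Mahalanobis ball of radius $\Delta/3$ around $\mu_{\mathbf v}$, and by (i) and (iv) these balls are pairwise disjoint (after shrinking $c_3$). Fix $\mathbf v'\in\mathcal{C}$. For each $\mathbf v\neq\mathbf v'$, couple $P_{\mathbf v}^{n}$ and $P_{\mathbf v'}^{n}$ coordinatewise using the total‑variation‑optimal coupling of $P_{\mathbf v},P_{\mathbf v'}$; by (iii) the number of disagreeing samples is stochastically dominated by $\mathrm{Bin}(n,\alpha)$, so it exceeds $t:=\lceil 6\alpha n\rceil$ with probability at most $1/6$. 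Conditioning on the complementary event and applying $(\varepsilon,\delta)$ group privacy at Hamming distance $t$ yields
\begin{eqnarray*}
\prob_{S\sim P_{\mathbf v'}^{n}}(\hat\mu(S)\in B_{\mathbf v})\;\geq\;e^{-t\varepsilon}\Big(\tfrac12-\tfrac{e^{t\varepsilon}-1}{e^{\varepsilon}-1}\,\delta\Big)\;.
\end{eqnarray*}
Summing over the $|\mathcal{C}|-1$ choices of $\mathbf v\neq\mathbf v'$ and using disjointness of the $B_{\mathbf v}$ gives $1\geq(|\mathcal{C}|-1)\,e^{-t\varepsilon}\big(\tfrac12-\tfrac{e^{t\varepsilon}-1}{e^{\varepsilon}-1}\delta\big)$, impossible once (a) $\tfrac{e^{t\varepsilon}-1}{e^{\varepsilon}-1}\delta\leq\tfrac14$, i.e.\ $t\varepsilon\leq\log\!\big(1+\tfrac{e^{\varepsilon}-1}{4\delta}\big)$, and (b) $\log|\mathcal{C}|\geq t\varepsilon+\log 5$. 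Since $t\varepsilon=\Theta(\alpha n\varepsilon)=\Theta(c_1 b)$, $\log|\mathcal{C}|=\Omega(m)=\Omega(b)$, $b\leq\log((1-e^{-\varepsilon})/\delta)$, and $e^{\varepsilon}-1=\Theta(1\wedge\varepsilon)$ for $\varepsilon<10$, choosing $c_1$ small enough (and $b$ above a suitable constant) makes (a) and (b) hold simultaneously, while $m\leq d$ because $b\leq d$. Hence no such $\hat\mu$ exists, so the worst‑case risk is at least $\Delta^2/27=c\,(b/(n\varepsilon))^{2-2/k}$, which together with the $b=O(1)$ case gives the claimed $c\min\{(b/(n\varepsilon))^{2-2/k},1\}$.

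\textbf{Main obstacle.} The moment and covariance computations for $\{P_{\mathbf v}\}$ and the Markov step are routine; the delicate point is the joint calibration in the privacy step. The code size is capped at $e^{O(d)}$, while the Hamming radius $t$ of the coupling is capped at $O(\log((1-e^{-\varepsilon})/\delta)/\varepsilon)$ by the accumulating‑$\delta$ constraint~(a); it is the \emph{minimum} of these two budgets that we may ``spend'' on the atom mass $\alpha=\Theta(t/n)$, which is precisely why $b=\min\{d,\log((1-e^{-\varepsilon})/\delta)\}$ enters the bound at the $1-1/k$ power. Getting the coupling‑plus‑group‑privacy bookkeeping (the conditioning on the good coupling event, and the exact $(\varepsilon,\delta)$ accounting) correct is standard but the step most prone to slips, and is where I would spend the most care.
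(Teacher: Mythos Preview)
Your approach is essentially the same as the paper's: both build the packing family as a mixture of a light-tailed base with a single atom at distance $\alpha^{-1/k}$ along a packing direction, and both invoke the group-privacy/coupling packing bound for $(\varepsilon,\delta)$-DP. The paper simply cites this bound as a black box (Barber--Duchi, stated here as Lemma~\ref{thm:packing}) with a Gaussian base and a $2^{\Omega(d)}$ packing on the full sphere, whereas you re-derive it by hand with a Rademacher base and a Gilbert--Varshamov code on the first $m=\lceil b\rceil$ coordinates; these are cosmetic differences and the final choice $\alpha\asymp b/(n\varepsilon)$ is identical.

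One small technical slip to fix: as written, your distributions $P_{\mathbf v}$ are supported on the first $m$ coordinates only, so $\Sigma_{\mathbf v}$ is singular whenever $b<d$ and $P_{\mathbf v}\notin\cP_{\mu,\Sigma,k}$ (which requires $\Sigma\succ 0$, and the Mahalanobis error is undefined). The fix is immediate---either take $m=d$ (the code size $\log|\mathcal C|=\Omega(d)\ge\Omega(b)$ is only larger, and nothing else changes), or pad the remaining $d-m$ coordinates with independent Rademachers so that $\Sigma_{\mathbf v}\approx{\bf I}_{d\times d}$ everywhere. You correctly flag that the construction yields $(\kappa,k)$-hypercontractivity for some constant $\kappa=\kappa(k)$ rather than $\kappa=1$; the paper's own construction has the same feature, and in both cases a rescaling of the base variance (at the cost of a $k$-dependent constant absorbed into $c$) repairs this.
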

\begin{proof}
We extend the proof of {\cite[Proposition 4]{barber2014privacy}} to hypercontractive distributions. Before we prove the lower bound, we first establish the private version of standard statistical estimation problem. Specifically, let $\cP$ denote a family of distributions of interest and $\theta:\cP\rightarrow \Theta$ denote the population parameter. The goal is to estimate $\theta$ from i.i.d. samples $x_1, x_2, \ldots, x_n\sim \cP$. Let $\hat{\theta}$ be an $(\varepsilon, \delta)$-differentially private estimator. Furthermore, let $\rho:\Theta\times\Theta\rightarrow \reals^+$ be a (semi)metric on parameter space $\Theta$ and $\ell:\reals^+\rightarrow \reals^+$ be a non-decreasing loss function with $\ell(0)=0$.

To measure the performance of our $(\varepsilon, \delta)$-DP estimator $\hat{\theta}$,  we define the \emph{minimax risk} as follows:
\begin{eqnarray}
\inf _{\hat{\theta}} \sup _{P \in \mathcal{P}} \mathbb{E}_{x_1, x_2, \ldots, x_n\sim P}\left[\ell\left(\rho\left(\hat{\theta}\left(x_{1}, \ldots, x_{n}\right), \theta(P)\right)\right)\right]\;.
\end{eqnarray}

To prove the lower bound of the minimax risk, we construct a well-separated family of distributions and convert the estimation problem into a testing problem. Specifically, let $\cV$ be an index set of finite cardinality. Define $\cP_{\cV}=\{P_v, v\in \cV\}\subset \cP$ be an indexed family of distributions. If for all $v\neq v' \in \cV$  we have $\rho(P_v, P_{v'})\geq 2t$, we say $\cP_\cV$ is \emph{$2t$-packing} of $\Theta$.

The proof of {\cite[Proposition 4]{barber2014privacy}} is based on following lemma.
\begin{lemma}[{\cite[Theorem 3]{barber2014privacy}}]
\label{thm:packing}
	  Fix $p\in [0,1]$, and let $\cP_\cV$ be a $2t$-packing of $\Theta$ such that $d_{\rm TV}(P_v, P_{v'})= p$. Let $\hat{\theta}$ be $(\varepsilon, \delta)$ differentially private estimator. Then 
	\begin{eqnarray}
		\frac{1}{|\mathcal{V}|} \sum_{\nu \in \mathcal{V}} P_{v}\left(\rho\left(\widehat{\theta}, \theta(P_v)\right) \geq t\right) \geq \frac{(|\mathcal{V}|-1) \cdot\left(\frac{1}{2} e^{-\varepsilon\lceil n p\rceil}-\delta \frac{1-e^{-\varepsilon[n p\rceil}}{1-e^{-\varepsilon}}\right)}{1+(|\mathcal{V}|-1) \cdot e^{-\varepsilon\lceil n p\rceil}}\;.
	\end{eqnarray}
\end{lemma}

In our problem, we set $\cP$ to be $\cP=\cP_{\mu, \Sigma, k}$. It suffices to construct such index set $\cV$ and indexed family of distributions $\cP_{\cV}$. We construct a similar packing set defined in the proof of {\cite[Proposition 4]{barber2014privacy}}. By {\cite[Lemma 6]{acharya2021differentially}}, there exists a finite set $\cV\subset \reals^d$ with cardinality $|\cV|=2^{\Omega(d)}$, $\|v\|=1$ for all $v\in \cV$, and $\|v-v'\|\geq 1/2$ for all $v\neq v'\in \cV$. Define $Q_0$ as $Q_0=\cN(0,\mathbf{I}_{d \times d})$, and $Q_{v}$ as a point mass on ${x=\alpha^{-1/k}cv}$, where $v\in \cV$. We construct $P_v$ as $P_v = \alpha Q_v+(1-\alpha)Q_0$. 

We first verify that $\cP_\cV\subset \cP$. It is easy to see $\mu(P_v)=\E_{x\sim P_v}[x]=\alpha^{1-1/k}v$ and $\Sigma(P_v)=\E_{x\sim P_v}[(x-\mu(P_v))(x-\mu(P_v))^\top]=(1-\alpha)\mathbf{I}_{d \times d}+\alpha(1-\alpha)\alpha^{-2/k}vv^\top$. This implies $\frac{1}{2}\mathbf{I}_{d \times d}\preceq \Sigma(P_v)\preceq \mathbf{I}_{d \times d}$. Since $\E\left[(X-\E[X])^{k}\right] \leq \E\left[X^{k}\right]$ for any $X\geq 0$, it suffices to show $\E_{x\sim P_v}[|\ip{u}{x} |^k]\leq C^k$ for some constant $C>0$ and any $\|u\|=1$. In fact, let $c_k$ denote $k$-th moment of standard Gaussian, we have  
\begin{eqnarray*}
	\E_{x\sim P_v}[|\ip{u}{x} |^k] = (1-\alpha)c_k+\alpha \left|\ip{u}{\alpha^{-1/k}v} \right|^k=O(1)\;.
\end{eqnarray*}

It is also easy to see that $d_{\rm TV}(P_v, P_{v'})=\alpha$. Let $\rho(\theta_1, \theta_2)=\|\theta_1-\theta_2\|$. We also have 
\begin{eqnarray*}
	t =\min_{v\neq v'\in \cV} \alpha^{1-1/k}\|v-v'\|
	\geq  \frac{1}{2}\alpha^{1-1/k}\;.
\end{eqnarray*}

Next, we apply the reduction of estimation to testing with this packing $\cV$. For $(\varepsilon, \delta)$-DP estimator $\hat\mu$, using Lemma~\ref{thm:packing}, we have
\begin{eqnarray*}
	\sup_{P\in \cP}\E_{S \sim P^n}[\|\Sigma(P)^{-1/2}(\hat\mu(S)-\mu(P))\|^2]
		&\geq & \frac{1}{|\cV|}\sum_{v\in \cV}\E_{S \sim P_v^n}[\|\Sigma(P_v)^{-1/2}(\hat\mu(S)-\mu(P_v))\|^2]\\
		&= &t^2\frac{1}{|\cV|}\sum_{v\in \cV}P_{v}\left(\|\Sigma(P_v)^{-1/2}(\hat\mu(S)-\theta(P_v))\|\geq t\right)\\
			&\asymp &t^2\frac{1}{|\cV|}\sum_{v\in \cV}P_{v}\left(\|\hat\mu(S)-\theta(P_v)\|\geq t\right)\\
		&\gtrsim &t^2 \frac{e^{d/2} \cdot\left(\frac{1}{2} e^{-\varepsilon\lceil n \alpha\rceil}-\frac{\delta}{1-e^{-\varepsilon}}\right)}{1+e^{d/2} e^{-\varepsilon\lceil n \alpha\rceil}}\;,
\end{eqnarray*}
where the last inequality follows from the fact that $d\geq 2$.

The rest of the proof follows from  {\cite[Proposition 4]{barber2014privacy}}. We choose 
\begin{eqnarray*}
	\alpha =\frac{1}{n \varepsilon} \min \left\{\frac{d}{2}-\varepsilon, \log \left(\frac{1-e^{-\varepsilon}}{4 \delta e^{\varepsilon}}\right)\right\}
\end{eqnarray*}
 so that 
 \begin{eqnarray*}
 	\sup_{P\in \cP}\E_{S\sim P^n}[\|\Sigma(P)^{-1/2}(\hat\mu(S)-\mu(P))\|^2] \gtrsim  \alpha^{2-2/k} \;.
 \end{eqnarray*}
 This means, for $\varepsilon\in(0,1)$, 
 \begin{eqnarray*}
 	\inf_{\hat{\mu}\in \cM_{\varepsilon, \delta}}\sup_{P\in \cP}\E_{S \sim P^n}[\|\Sigma(P)^{-1/2}(\hat\mu(S)-\mu(P))\|^2]\gtrsim \min\left\{\left(\frac{d\wedge \log((1-e^{-\varepsilon})/\delta)}{n\varepsilon}\right)^{2-2/k}, 1\right\}\;,
 \end{eqnarray*}
which completes the proof.
\end{proof}

\subsubsection{Covariance bounded distributions}
\label{sec:mean-2moment} 

A distribution $P_{\mu,\Sigma}$ is covariance bounded with mean $\mu$ and covariance $\Sigma$ if $\|\Sigma\|\leq 1$. Contrary to the previous cases, the sample variance is not resilient as  $\{\langle v, x_i-\mu \rangle^2\}$ do not concentrate. To get around this issue,  we use the Euclidean distance:  $\popdist(\hat\mu,\mu)=\|\hat\mu-\mu\|$. 
This leads to the surrogate Euclidean distance  of
\begin{eqnarray}
    \robdist_S(\hat\mu)=\max_{\|v\|\leq 1} \langle v , \hat\mu\rangle - \mu_v({\cal M}_{v,\alpha}) \;.\label{eq:dist_mean_2moment} 
\end{eqnarray}
As this does not depend on the robust variance, $\sigma^2_v({\cal M}_{v,\alpha}) $, we only require the following first order resilience.

\begin{lemma} [{Resilience of covariance bounded samples \cite[Lemma G.3]{zhu2019generalized}}] 
    \label{lem:mean_2moment} 
    For any fixed $\alpha\in(0,1/2)$, consider a dataset  $S=\{x_i\in\reals^d\}_{i=1}^n$ of $n$ i.i.d.~samples from a covariance bounded distribution with mean $\mu$ and covariance $\Sigma\succ 0$. 
    If $ n=\Omega ({d\log(d/\zeta)}/( 
    \alpha ) )$  
    then with probability $1-3\zeta$, for any subset $T\subset S$ of size $|T| \geq (1-\alpha)n$, there exists  
    a constant $C>0$ such that the following holds for all $\alpha\in(0,1/2)$ and for all $v\in\reals^d$ with $\|v\|=1$:     
    \begin{eqnarray*}
         \Big| \frac{1}{|T|}\sum_{x_i\in T} \langle v, x_i \rangle -\mu_v   \Big|  & \leq &  C\alpha^{1/2} \;,
    \end{eqnarray*}
    where $\mu_v=\langle v , \mu\rangle$. 
\end{lemma}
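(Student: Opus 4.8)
The plan is to prove the bound first for the sampling distribution $P$ itself --- a one-line Cauchy--Schwarz estimate --- and then transfer it to the empirical distribution of $S$; the transfer is the only step that consumes samples and accounts for the requirement $n=\Omega(d\log(d/\zeta)/\alpha)$. For the population bound, fix a unit vector $v$ and any event $E$ with $\prob_P(E)\ge 1-\alpha$. Since $\E_P[\langle v,x-\mu\rangle]=0$, Cauchy--Schwarz gives $\big|\E_P[\langle v,x-\mu\rangle\mid E]\big|=\big|\E_P[\langle v,x-\mu\rangle\,\mathbf 1_{E^c}]\big|/\prob_P(E)\le\sqrt{v^\top\Sigma v}\,\sqrt{\prob_P(E^c)}/(1-\alpha)\le 2\sqrt\alpha$, using $\|\Sigma\|\le 1$ and $\alpha<1/2$. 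So $P$ is $(\alpha,O(\sqrt\alpha))$-resilient in the first-order sense, and the goal is to show the empirical distribution inherits this.

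For the empirical version I would fix a clipping scale $\tau=\Theta(1/\sqrt\alpha)$ and reduce the claim to three high-probability events (the source of the $1-3\zeta$): \emph{(a)} $\|\mu(S)-\mu\|\le O(\sqrt\alpha)$; \emph{(b)} $\sup_{\|v\|=1}\tfrac1n\sum_{i=1}^n\min\!\big(\langle v,x_i-\mu\rangle^2,\tau^2\big)\le O(1)$; and \emph{(c)} $\sup_{\|v\|=1}\tfrac1n\sum_{i=1}^n\big(\langle v,x_i-\mu\rangle^2-\tau^2\big)_+\le O(1)$. Each summand in (b) is bounded by $\tau^2$ with mean at most $v^\top\Sigma v\le 1$, so a Bernstein inequality combined with a net over the sphere (the Lipschitz constant of $v\mapsto\min(\langle v,z\rangle^2,\tau^2)$ being at most $2\tau\|z\|$) yields (b) once $n=\Omega(\tau^2 d\log(d/\zeta))=\Omega(d\log(d/\zeta)/\alpha)$; this is exactly where the stated sample complexity enters. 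Item (c) --- equivalently a spectral-type bound on the ``clipped-out'' part of the empirical second-moment matrix --- is the content of Lemma~G.3 of \cite{zhu2019generalized} and is proved similarly via matrix concentration after an additional truncation; item (a) follows from $\E\|\mu(S)-\mu\|^2=\Tr(\Sigma)/n\le d/n$, sharpened from a Chebyshev tail to a $\log(d/\zeta)$ tail by splitting each summand at scale $\tau$ and applying a vector Bernstein bound to the clipped part (the unclipped mass contributes only $O(1/\tau)=O(\sqrt\alpha)$).

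Given (a)--(c) the remaining argument is deterministic. Fix a unit $v$, write $y_i=\langle v,x_i-\mu\rangle$; by the symmetry $v\mapsto -v$ it suffices to upper-bound $\frac1{|T|}\sum_{x_i\in T}y_i$ over all $T$ with $|T|\ge(1-\alpha)n$, and the maximizing $T$ deletes the $\alpha n$ smallest $y_i$, so $\frac1{|T|}\sum_{x_i\in T}y_i\le\frac{n}{|T|}\big(|\langle v,\mu(S)-\mu\rangle|+\frac1n\,\mathrm{topsum}_{\alpha n}(|y_i|)\big)$, where $\mathrm{topsum}_k$ is the sum of the $k$ largest values. By Cauchy--Schwarz $\mathrm{topsum}_{\alpha n}(|y_i|)\le\sqrt{\alpha n}\cdot\sqrt{\mathrm{topsum}_{\alpha n}(y_i^2)}$, and the variational bound $\mathrm{topsum}_{\alpha n}(y_i^2)\le\alpha n\tau^2+\sum_i(y_i^2-\tau^2)_+$ together with (c) gives $\mathrm{topsum}_{\alpha n}(y_i^2)\le\alpha n\tau^2+O(n)$; hence $\frac1n\,\mathrm{topsum}_{\alpha n}(|y_i|)\le\sqrt\alpha\,\sqrt{\alpha\tau^2+O(1)}=O(\sqrt\alpha)$ for $\tau=\Theta(1/\sqrt\alpha)$. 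Combining with (a) and $n/|T|\le 2$ yields $\big|\frac1{|T|}\sum_{x_i\in T}\langle v,x_i\rangle-\mu_v\big|\le O(\sqrt\alpha)$ uniformly in $v$ and $T$, which is the claim.

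The main obstacle is items (b) and (c): a distribution with only a bounded covariance can be genuinely heavy-tailed, so the raw empirical second-moment matrix $\frac1n\sum_i(x_i-\mu)(x_i-\mu)^\top$ need not concentrate in spectral norm at all, and naive bounds that replace $\langle v,x_i-\mu\rangle^2$ by $\|x_i-\mu\|^2$ lose a factor of $d$. The real work is calibrating the clipping scale $\tau\asymp 1/\sqrt\alpha$ so that the clipped quantities simultaneously admit a matrix-Bernstein/net argument (which forces $n\gtrsim d\log(d/\zeta)/\alpha$) and still capture enough of the second-moment mass for the deletion argument to close; the population estimate and the deterministic step above are comparatively routine.
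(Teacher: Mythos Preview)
The paper does not supply its own proof of this lemma; it is cited from \cite[Lemma~G.3]{zhu2019generalized}, so there is no in-paper argument to compare against. Your overall architecture---population resilience by Cauchy--Schwarz, a clipping/Bernstein transfer to the empirical distribution, and the deterministic topsum--Cauchy--Schwarz closing---is the standard one, and both the population step and the deterministic step are carried out correctly.

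The gap is in the unclipped mass entering (a) and (c). You write that ``the unclipped mass contributes only $O(1/\tau)=O(\sqrt\alpha)$'', but this is an expectation bound and does not hold with probability $1-\zeta$ under a bare second-moment assumption. In one dimension, take $\Pr(x-\mu=\pm M)=1/(2M^2)$ with $M\asymp n\sqrt\alpha$; then $\Pr(\exists i:|x_i-\mu|=M)\asymp n/M^2\asymp 1/(n\alpha)\asymp 1/\log(1/\zeta)\gg\zeta$, and on that event the single outlier already forces $|\mu(S)-\mu|\asymp M/n\asymp\sqrt\alpha$, so by scaling $M$ one defeats any fixed constant $C$ in (a) and hence in the resilience bound at $T=S$. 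The same outlier breaks (c), and your invocation of ``Lemma~G.3'' for (c) is circular since that is precisely the result being transcribed here. The standard repair---used in the paper for the bounded-$k$th-moment analogue, Lemma~\ref{lem:mean_kmoment}---is to prove the \emph{corrupt-good} version instead: discard the $O(\alpha n)$ samples of largest norm and run your Bernstein/net argument on the bounded remainder. The downstream Corollary~\ref{coro:mean_2moment} only invokes Theorem~\ref{thm:robust_mean} and so needs nothing more than corrupt-good, and the Remark following that corollary (flagging a $1/\zeta$ versus $\log(1/\zeta)$ dependence) is a symptom of exactly this tension in the lemma as stated.
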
 

This lemma  and  Theorem~\ref{thm:robust_mean}, adapted for the new $\robdist_S(\hat\mu)=\max_{\|v\|\leq 1} \langle v , \hat\mu\rangle - \mu_v({\cal M}_{v,\alpha})$, imply the following utility guarantee. 

\begin{coro} 
    \label{coro:mean_2moment} 
    Under the hypothesis of Lemma~\ref{lem:mean_2moment} there exists a constant $c_{\zeta}$ that only depends on   $\zeta$ such that for $\alpha\in(0,c_{\zeta})$, a dataset of size 
    $$n\;=\; O \Big(\frac{d+\log(1/(\delta\zeta))}{\varepsilon\alpha } +
    \frac{d\log(d/\zeta)}{\alpha} 
    \Big) \;,$$ 
      sensitivity of $\sens=O(1/(n\sqrt{\alpha}))$, and threshold of $\tau=O(\sqrt{\alpha})$, 
     with large enough constants are sufficient for  \HPTR$(S)$ with  the distance function in Eq.~\eqref{eq:dist_mean_2moment} to achieve  $\| \hat\mu-\mu \|=O( \alpha^{1/2})$ with  probability $1-3\zeta$. Further, the same guarantee holds even if $ \alpha$-fraction of the samples are arbitrarily corrupted as in Assumption~\ref{asmp:mean}.
\end{coro}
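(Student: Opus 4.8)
The plan is to re-run the three-step recipe of Section~\ref{sec:recipe}, but with the target error metric taken to be the Euclidean distance $\|\hat\mu-\mu\|$ and with the surrogate $\robdist_S(\hat\mu)$ of Eq.~\eqref{eq:dist_mean_2moment}, which is obtained from the Mahalanobis construction of Section~\ref{sec:mean1} simply by deleting the denominator $\sigma_v(\cM_{v,\alpha})$. \emph{Step~1 (surrogate).} The analog of Lemma~\ref{lem:equidist} is the trivial identity $\|\hat\mu-\mu\|=\max_{\|v\|\le 1}\langle v,\hat\mu-\mu\rangle$, and the analog of Lemmas~\ref{lem:deviation} and \ref{lem:outerset} is that, using only the first-order resilience of Lemma~\ref{lem:mean_2moment}, the ``good tails are dense near the center'' argument gives $|\langle v,\mu-\mu(\cM_{v,\alpha})\rangle|\le C'\rho_1$ for every unit $v$, where $\rho_1=\Theta(\alpha^{1/2})$. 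Hence $\big|\robdist_S(\hat\mu)-\|\hat\mu-\mu\|\big|\le C'\rho_1$ uniformly over $\hat\mu$, which pins $B_{\thresh,S}$ between the Euclidean balls of radii $\thresh\mp C'\rho_1$ about $\mu$, so the volume ratios needed in assumption~\ref{asmp_vol} of Theorem~\ref{thm:utility} are $e^{O(d)}$ for $\thresh=\Theta(\rho_1)$.

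\emph{Step~2 (sensitivity and utility).} The key simplification relative to Section~\ref{sec:mean3_sens} is that, because $\robdist_S(\hat\mu)$ no longer involves the robust variance, its local sensitivity has \emph{no} $\hat\mu$-dependent term: repeating the boundary argument around Eq.~\eqref{eq:meansensitivitybound}, changing one point of $S'$ moves the endpoints of $\cM_{v,\alpha}$ by at most $O(\rho_1/\alpha)$ (first-order resilience of the underlying good set on tails of size $\ge (1/11)\alpha n$), so $|\robdist_{S'}(\hat\mu)-\robdist_{S''}(\hat\mu)|=\max_v|\langle v,\mu'-\mu''\rangle|\le O(\rho_1/(\alpha n))=O(1/(\sqrt{\alpha}\,n))$ for every $\hat\mu$ and every corrupt good $S'$ within Hamming distance $(1/11)\alpha n$ of the good set. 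This is exactly why the hypothesis $\rho_1^2\le c\alpha$ of Theorem~\ref{thm:robust_mean} is not needed here. With this in hand, assumptions~\ref{asmp_vol}--\ref{asmp_resilience} of Theorem~\ref{thm:utility} go through verbatim as in Sections~\ref{sec:mean_proof1}--\ref{sec:mean_proof2} after replacing all variance terms by their $\sigma_v\equiv 1$ analogs, with $\rho=\rho_1$, $\sens=\Theta(\rho_1/(\alpha n))$, $\thresh=\Theta(\rho_1)$, and $k^*=(2/\varepsilon)\log(4/(\delta\zeta))$; the safety-test margin requirement $(k^*+3)\sens\ll\thresh$ holds once $n\gtrsim\log(1/(\delta\zeta))/(\alpha\varepsilon)$, and the exponential-mechanism calculation of Section~\ref{sec:mean_proof_strategy} needs $\sens\lesssim\rho_1\varepsilon/(d+\log(1/(\delta\zeta)))$, i.e.\ $n\gtrsim(d+\log(1/(\delta\zeta)))/(\alpha\varepsilon)$. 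This yields $\|\hat\mu-\mu\|=O(\rho_1)=O(\alpha^{1/2})$ with probability $1-\zeta$.

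\emph{Step~3 (resilience and assembly).} By Lemma~\ref{lem:mean_2moment}, for $n=\Omega(d\log(d/\zeta)/\alpha)$ the sample is $(\alpha,C\alpha^{1/2})$-resilient (first order only) with probability $1-3\zeta$; for the robustness statement, applying the same bound to $S_{\rm good}$ at a scale a constant factor larger shows that under Assumption~\ref{asmp:mean} the input $S$ is a corrupt good set of the form required by the adapted Theorem~\ref{thm:robust_mean}. Taking $\rho_1=\Theta(\alpha^{1/2})$, $\sens=\Theta(1/(\sqrt{\alpha}\,n))$, $\thresh=\Theta(\sqrt{\alpha})$, the two sample-size constraints $n\gtrsim(d+\log(1/(\delta\zeta)))/(\varepsilon\alpha)$ (Step~2) and $n\gtrsim d\log(d/\zeta)/\alpha$ (resilience) add to the stated bound, and a union bound over the resilience event and the HPTR success event gives failure probability $O(\zeta)$, which matches $3\zeta$ after rescaling constants.

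\emph{Main obstacle.} The one genuinely non-routine part is Step~2: re-verifying that the full PTR machinery of Theorem~\ref{thm:utility} — especially the bounded-volume condition (Section~\ref{sec:proof_vol}) and the high-probability safety test (Section~\ref{sec:mean3_sens}), both stated in terms of Mahalanobis balls — survives the switch to Euclidean balls. I expect this to be easier rather than harder, since dropping the variance term removes both the $\hat\mu$-dependence of the sensitivity and the need for $\rho_1^2\ll\alpha$; the subtle point to watch is that the resilience bound of Lemma~\ref{lem:mean_2moment} is \emph{absolute} (it does not scale with $\sigma_v$, precisely because $\|\Sigma\|\le 1$), so one must confirm that the one-dimensional robust mean $\mu_v(\cM_{v,\alpha})$ has sensitivity $O(\alpha^{1/2}/(\alpha n))$ uniformly in $v$ regardless of how small $v^\top\Sigma v$ is.
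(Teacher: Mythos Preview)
Your proposal is correct and follows the same approach the paper gestures at: the paper's own ``proof'' of this corollary is the single sentence ``This lemma and Theorem~\ref{thm:robust_mean}, adapted for the new $\robdist_S(\hat\mu)=\max_{\|v\|\leq 1} \langle v , \hat\mu\rangle - \mu_v({\cal M}_{v,\alpha})$, imply the following utility guarantee,'' and your write-up is precisely that adaptation carried out in detail. In particular, you correctly identify the two points the paper only hints at in the surrounding text: (i) dropping the $\sigma_v(\cM_{v,\alpha})$ denominator kills the $\hat\mu$-dependent term in the sensitivity bound, so the restriction $\rho_1^2\le c\alpha$ of Theorem~\ref{thm:robust_mean} is not needed here, and (ii) the resilience of Lemma~\ref{lem:mean_2moment} is stated as an \emph{absolute} bound (no $\sigma_v$ factor), which is what makes the tail-argument yield a uniform sensitivity of $O(\rho_1/(\alpha n))=O(1/(\sqrt{\alpha}\,n))$ over all directions $v$.
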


This sample complexity is near-optimal in its dependence in $d$, $1/\varepsilon$, and $1/\alpha$ for $\delta=e^{-O(d)}$. 
It matches the information-theoretic lower bound of $n=\Omega(d/\varepsilon\alpha)$ from \cite{kamath2020private}.  
For completeness, we write the lower bound in Appendix~\ref{sec:lb}.
This problem is easier then the sub-Gaussian or $k$-th moment bounded settings,  
since the error is measured in Euclidean distance and hence one does not need to adapt to the unknown covariance. 
Therefore there exist other algorithms achieving  near-optimality  and even runs in polynomial time \cite{kamath2020private}.

The error rate is near-optimal under $\alpha$-corruption, matching the information-theoretic lower bound of $\|\hat\mu-\mu\|=\Omega(\alpha^{1/2})$ \cite{dong2019quantum}. Note that there exists an  DP and robust algorithm from \cite{liu2021robust} that achieves near-optimality in both error rate and sample complexity but requires an additional assumption that the spectral norm of the covariance is known and  the unknown mean is in a bounded set, $[-R,R]^d$, with a known $R$. 

\medskip
\noindent 
{\bf Remark.}
    \label{rem:mean_2mooment} 
    Corollary~\ref{coro:mean_2moment} is suboptimal as $(i)$  the error metric is Euclidean $\|\hat\mu-\mu\|$ instead of Mahalanobis $\|\Sigma^{-1/2}(\hat\mu-\mu)\|$, and  $(ii)$ sample complexity scales as $1/\zeta$ instead of $\log(1/\zeta)$. It remains an open problem if these gaps can be closed. 
    For the former, one could use the Stahel-Donoho outlyingness \cite{stahel1981robuste,donoho1982breakdown}, $$\robdist_S(\hat\mu)\;=\;\sup_{v\in\reals^d,\|v\|=1} \frac{|\langle v, \hat\mu\rangle-{\rm Med}(\langle v, S\rangle )|}{{\rm Med}(|\langle v,S\rangle -{\rm Med}(\langle v,S\rangle)|)}\;, $$ 
    in the exponential mechanism, which  replaces second moment based normalization by a first moment based one that is resilient. Here, ${\rm Med}(\langle v, S \rangle)$ is the median of $\{\langle v,x_i \rangle \}_{x_i\in S}$. 
    Further, replacing the median by the  median of means can improve the dependence on $\zeta$.
    Such directions have  been fruitful for robust but non-private mean estimation 
    \cite{depersin2021robustness}.  

\section{Linear regression}
\label{sec:lin}

In a standard linear regression, we have i.i.d.~samples $S=\{(x_i\in\reals^d,y_i\in \reals)\}_{i=1}^n$ from a distribution $P_{\beta,\Sigma,\gamma^2}$ of a linear model: 
\begin{eqnarray*}
	y_i\;=\;x_i^\top\beta+\eta_i\;,
\end{eqnarray*}
where the input $x_i\in\reals^d$ has zero mean and covariance $\Sigma$ and  
the noise $\eta_i\in\reals$ has  variance $\gamma^2$. 
We further assume  $\E[x_i\eta_i]=0$, which is equivalent to assuming that the true parameter  $\beta=\Sigma^{-1}\E[y_ix_i]$.  
In DP linear regression, we want to output a DP estimate $\hat\beta$ of the unknown model parameter $\beta$ (which corresponds to $\theta=\mu$ in the general notation),  
 assuming  that both covariance $\Sigma\succ 0$  and  the noise variance $\gamma^2$ (corresponding to $\phi=(\Sigma,\gamma)$ in the general notation) are unknown. 
The resulting error is measured in $D_{\Sigma,\gamma}(\hat\beta,\beta)=(1/\gamma)\|\Sigma^{1/2}(\hat\beta -\beta )\|$ which is equivalent to the (re-scaled) root excess prediction risk of the estimated predictor $\hat\beta$.
Similar to Mahalanobis distance for mean estimation, this is challenging as we aim for a tight guarantee that adapts to the unknown $\Sigma$  without having enough samples to directly estimate $\Sigma$. We follow the three-step strategy of Section~\ref{sec:recipe} and provide utility guarantees.



\subsection{Step 1:  Designing the surrogate  \texorpdfstring{$\robdist_S(\hat\beta)$}{} for the error metric  \texorpdfstring{$(1/\gamma)\|\Sigma^{1/2}(\hat\beta-\beta)\|$}{}}
\label{sec:lr1}

In the {\sc Release} step of  HPTR, we propose  the following surrogate error metric for the exponential mechanism:
\begin{eqnarray}
     \robdist_S(\hat{\beta}) \;=\;  \max_{v:\|v\|\leq 1}   \frac{ \frac{1}{|{\cal N}_{v,\hat{\beta},\alpha}|} \sum_{x_i\in {\cal N}_{v,\hat{\beta},\alpha}}\langle v , x_i(y_i-x_i^\top\hat{\beta}) \rangle}{\sigma_v({\cal M}_{v,\alpha})\hat{\gamma}} \label{def:lr_dist}\;, 
\end{eqnarray}
where $\hat{\gamma}^2$ is defined as
\begin{eqnarray}
	\hat{\gamma}^2 = \min_{\bar{\beta}}\frac{1}{|\cB_{\bar{\beta}, \alpha}|}\sum_{i\in \cB_{\bar{\beta}, \alpha}}(y_i-x_i^\top\bar{\beta})^2\;.\label{def:robust_gamma}
\end{eqnarray}

We define ${\cal N}_{v,\hat{\beta},\alpha}$, ${\cal M}_{v,\alpha}$ and $\cB_{\bar{\beta},\alpha}$ as follows. For a fixed $v$,  ${\cal M}_{v,\alpha}$ is defined in Section~\ref{sec:mean1} as a subset of $S$ with size $(1-(4/5.5)\alpha)n$ that remains after removing $(4/5.5)\alpha n$ data points corresponding to the 
 top $(2/5.5)\alpha n$ and the bottom $(2/5.5)\alpha n$ samples when projected down to  $S_v = \{\ip{v}{x_i}\}_{i\in [n]}$. 
We denote a robust estimate of the variance 
 in direction $v$ as $\sigma_v({\cal M}_{v,\alpha})^2 = (1/|{\cal M}_{v,\alpha}|)\sum_{x_i\in {\cal M}_{v,\alpha}}\langle v , x_i \rangle^2 $, since  $x_i$'s are zero mean. 
Similarly, for fixed $\hat{\beta}$ and $v$, we consider a set of projected data points $S_{v, \hat{\beta}}=\{\langle v , x_i(y_i-x_i^\top\hat{\beta}) \rangle\}_{i\in [n]}$ and partition $S$ into three disjoint sets ${\cal B}_{v,\hat{\beta},\alpha}$, ${\cal N}_{v,\hat{\beta},\alpha}$, and ${\cal T}_{v,\hat{\beta},\alpha}$, where ${\cal B}_{v,\hat{\beta},\alpha}$ is the subset of $S$ corresponding to the bottom $(2/5.5)\alpha n$ data points with smallest values in $S_{v, \hat{\beta}}$, ${\cal T}_{v,\hat{\beta},\alpha}$ corresponds to the top $(2/5.5)\alpha n$ data points, and ${\cal N}_{v,\hat{\beta},\alpha}$ corresponds to  the remaining $(1-(4/5.5)\alpha) n$ middle data points. We use ${\cal T}_{v,\hat{\beta},\alpha},{\cal N}_{v,\hat{\beta},\alpha}$, and ${\cal B}_{v,\hat{\beta},\alpha}$ to denote both the set of paired examples $\{(x_i,y_i)\}$ and the set of indices of those examples, and it should be clear form the context which one we mean. 

For a fixed $\bar{\beta}$,  $\cB_{\bar{\beta}, \alpha}$ is defined as a subset of $S$ with size $(1-(3.5/5.5)\alpha )n$ that remains after removing the largest $(2/5.5)\alpha n$ data points in set $S_{\bar{\beta}}=\{(y_i-x_i^\top\bar{\beta})^2\}_{i\in [n]}$.



This choice is justified by  Lemma~\ref{lem:true_dist_lr}, which  shows that if we replace the robust one-dimensional statistics by the true ones, we recover the target error metric. 
Hence, the exponential mechanism with distance $\robdist_S(\hat\beta)$ is approximately and stochastically minimizing  $\|\Sigma^{1/2}(\hat\beta-\beta)\|$.
For a more elaborate justification of using $\robdist_S(\hat\beta)$, we refer to a similar choice for mean estimation in Section~\ref{sec:mean1}.

\begin{lemma}
\label{lem:true_dist_lr}
For any $\beta \in\reals^d$, $0\prec \Sigma\in\reals^{d\times d}$, $\gamma>0$, let  $\sigma_v^2=v^\top \Sigma v$. If $\E[\eta_ix_i]=0 $, $y_i=x_i^\top \beta + \eta_i$, and $(x_i,y_i)\sim P_{\beta,\Sigma,\gamma^2}$ then we have 
	\begin{eqnarray*}
 \|\Sigma^{1/2}(\hat{\beta}-\beta) \|
 &=& 		\max_{v:\|v\|\leq 1}   \frac{ \E_{P_{\beta,\Sigma, \gamma^2}}[ \langle v , x_i(y_i-x_i^\top\hat{\beta}) \rangle ] }{\sigma_v} \;,\text{ and }\\
 \gamma^2 &=& \min_{\bar\beta\in\reals^d} \E[(y_i-x_i^\top \bar\beta)^2]\;.
	\end{eqnarray*}
\end{lemma}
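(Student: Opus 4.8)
The plan is to prove the two identities separately, since the second is essentially immediate and the first is a disguised application of Lemma~\ref{lem:equidist}.

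\textbf{Second identity.} For any $\bar\beta\in\reals^d$ we compute $\E[(y_i - x_i^\top\bar\beta)^2] = \E[(\eta_i + x_i^\top(\beta-\bar\beta))^2] = \E[\eta_i^2] + 2\E[\eta_i x_i^\top(\beta-\bar\beta)] + \E[(x_i^\top(\beta-\bar\beta))^2]$. The cross term vanishes because $\E[\eta_i x_i]=0$, and the last term equals $(\beta-\bar\beta)^\top\Sigma(\beta-\bar\beta)\geq 0$ since $\Sigma\succ 0$, with equality iff $\bar\beta=\beta$. Hence the minimum over $\bar\beta$ is attained at $\bar\beta=\beta$ and equals $\E[\eta_i^2]=\gamma^2$.

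\textbf{First identity.} First I would simplify the numerator: since $y_i - x_i^\top\hat\beta = \eta_i + x_i^\top(\beta-\hat\beta)$ and $\E[\eta_i x_i]=0$, we get $\E[\langle v, x_i(y_i - x_i^\top\hat\beta)\rangle] = \langle v, \E[x_i x_i^\top](\beta-\hat\beta)\rangle = \langle v, \Sigma(\beta-\hat\beta)\rangle = v^\top\Sigma(\beta-\hat\beta)$. So the right-hand side becomes $\max_{\|v\|\leq 1} v^\top\Sigma(\beta-\hat\beta)/\sqrt{v^\top\Sigma v}$. Now I apply a change of variables: write $v = \Sigma^{-1/2}w$, which is a bijection on $\reals^d$, so that $v^\top\Sigma v = \|w\|^2$ and $v^\top\Sigma(\beta-\hat\beta) = w^\top\Sigma^{1/2}(\beta-\hat\beta)$; ranging over all $v\neq 0$ is the same as ranging over all $w\neq 0$, and the ratio is scale-invariant in $w$, so $\max_{\|v\|\leq 1} v^\top\Sigma(\beta-\hat\beta)/\sqrt{v^\top\Sigma v} = \max_{\|w\|=1} w^\top\Sigma^{1/2}(\beta-\hat\beta) = \|\Sigma^{1/2}(\beta-\hat\beta)\| = \|\Sigma^{1/2}(\hat\beta-\beta)\|$, the last step using symmetry of the sign under the max over $\|w\|=1$. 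Alternatively, one can invoke Lemma~\ref{lem:equidist} directly with the substitution $\hat\mu-\mu \mapsto \Sigma^{1/2}(\hat\beta-\beta)$ and $\Sigma\mapsto\Sigma$ appropriately, but the direct Cauchy--Schwarz computation is cleanest.

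I do not anticipate a genuine obstacle here; the only thing to be careful about is the handling of the constraint $\|v\|\leq 1$ versus $\|v\|=1$ (the maximum of a scale-invariant ratio is unaffected, and one should note $v=0$ is not a concern since the ratio is understood as a supremum over $v\neq 0$ or the optimizing $v$ has unit norm) and the orientation of the difference $\hat\beta-\beta$ versus $\beta-\hat\beta$ inside the norm, which is immaterial. The expectation manipulations all rely only on the stated moment assumptions $\E[x_i]=0$, $\mathrm{Cov}(x_i)=\Sigma$, $\E[\eta_i]=0$ (implicit), $\E[\eta_i^2]=\gamma^2$, and the orthogonality $\E[\eta_i x_i]=0$.
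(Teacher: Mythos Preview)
Your proposal is correct and follows essentially the same approach as the paper: both simplify the numerator to $\langle v,\Sigma(\beta-\hat\beta)\rangle$ using $\E[\eta_i x_i]=0$ and then reduce the resulting ratio to $\|\Sigma^{1/2}(\hat\beta-\beta)\|$ (the paper invokes Lemma~\ref{lem:vector_norm}, whose content is exactly your change-of-variables $v=\Sigma^{-1/2}w$ / Cauchy--Schwarz argument), and the treatment of the second identity is identical.
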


\begin{proof}
		We have
	\begin{eqnarray*}
		\max_{v:\|v\|\leq 1}   \frac{ \E_{P_{\beta,\Sigma, \gamma^2}}[ \langle v , x_i(y_i-x_i^\top\hat{\beta}) \rangle ] }{\sigma_v}&=& \max_{v:\|v\| \leq 1}   \frac{ \E_{P_{\beta,\Sigma, \gamma^2}}[ \langle v , x_i(x_i^\top(\beta-\hat{\beta})+\eta_i) \rangle ] }{\sigma_v}\\
	&=& \max_{v:\|v\| \leq 1}   \frac{  \langle v , \Sigma(\beta-\hat{\beta}) \rangle  }{\sigma_v} \;=\; \|\Sigma^{1/2}(\beta-\hat\beta)\|\;,
	\end{eqnarray*}
	where the second equality uses the fact that $\eta_i$ has zero mean and $x_i$ has covariance $\Sigma$. The last equality follows from Lemma~\ref{lem:vector_norm}. 
	For the noise, we have 
	$ \E[(y_i-x_i^\top \bar\beta)^2] = \E[(x_i^\top\beta+\eta_i-x_i^\top \bar \beta)^2]=\E[\eta_i^2] + \E[(\beta-\bar\beta)x_ix_i^\top(\beta-\bar\beta)] $, which follows from $\E[\eta_ix_i]=0$. This is minimized when $\bar\beta=\beta$, and the minimum is $\gamma^2$.
	\end{proof}

%
\subsection{Step 2: Utility analysis under resilience}
\label{sec:lr2}

The following resilience is a fundamental property of the dataset that determines the sensitivity of $\robdist_S(\hat\beta)$.  
We refer to Section~\ref{sec:mean2} for a detailed explanation of how resilience relates to sensitivity. 

\begin{definition}[Resilience for linear regression]
    \label{def:resilience_lr}
    For some $\alpha\in (0,1)$, $\rho_1\in \reals_+$,  $\rho_2\in  \reals_+$, and $\rho_3\in  \reals_+$, we say a set of $n$ labelled data points $S_{\rm good}=\{(x_i\in\reals^d,y_i\in\reals)\}_{i=1}^n$ is $(\alpha,\rho_1,\rho_2, \rho_3, \rho_4)$-resilient with respect to $(\beta,\Sigma,\gamma) $ for some $\beta\in\reals^d$, positive definite $\Sigma\in\reals^{d\times d}$, and $\gamma>0$ if for any $T\subset S_{\rm good}$ of size $|T|\geq (1-\alpha)n$, 
    the following holds 
    for all $v\in \reals^d$ with $\|v\|=1$:     
    \begin{eqnarray}
 \Big|\frac{1}{|T|}\sum_{(x_i,y_i)\in T} \langle v,x_i\rangle (y_i-x_i^\top \beta)\Big| &\leq & \rho_1 \,\sigma_v \, \gamma\; \text{ , } \label{def:res1}\\
	\Big| \frac{1}{|T|}\sum_{x_i\in T} \langle v, x_i\rangle^2  - \sigma_v^2 \Big| & \leq &  \rho_2 \, \sigma_v^2 \label{def:res2} \;\text{ ,  }\\ 
	\Big| \frac{1}{|T|}\sum_{x_i\in T} \langle v, x_i \rangle   \Big|  & \leq &  \rho_3 \, \sigma_v    \;\text{ , and }\label{def:res3}\\
		\Big| \frac{1}{|T|}\sum_{(x_i, y_i)\in T} (y_i-x_i^\top\beta)^2-\gamma^2  \Big|  & \leq &  \rho_4 \, \gamma^2    \;\text{ , }\label{def:res4}
	\end{eqnarray}
    where  $\sigma_v^2 = v^\top \Sigma v$.
\end{definition}

For example, $n$ i.i.d.~samples from   sub-Gaussian $x_i$'s and sub-Gaussian $\eta_i$'s (independent of $x_i$'s) is $\big(\alpha,O(\alpha\log(1/\alpha)),O(\alpha\log(1/\alpha)),O(\alpha\sqrt{\log(1/\alpha)}), O(\alpha\log(1/\alpha))\big)$-resilient. 
 Resilient dataset implies a sensitivity of $\sens=O( \rho_1/(\alpha n)) = O( \log(1/\alpha)/n)$, where $\alpha$ is a free parameter determined by the target accuracy $(1/\gamma)\|\Sigma^{1/2}(\hat\beta-\beta)\| = O( \alpha \log(1/\alpha) )$. We show that a sample size of $O((d+\log(1/\delta))/(\varepsilon \alpha))$ is sufficient to achieve the target accuracy for any resilient dataset. 
In Section~\ref{sec:lr3}, we apply this theorem to resilient datasets from several sampling distributions of interest and characterize the trade-offs. 

\begin{thm}[Utility guarantee for linear regression]
    \label{thm:linear_regression}  There exist positive constants $c$ and $C$ such that for any  $(\alpha, \rho_1,\rho_2, \rho_3, \rho_4)$-resilient set  $S$ with respect to $(\beta,\Sigma\succ 0,\gamma>0)$ satisfying 
     $\alpha\in(0,c)$,$\rho_1<c$, $\rho_2 <c$, $\rho_3^2\leq c\alpha$ and $\rho_4<c$, $\HPTR$ with the distance function in Eq.~\eqref{def:lr_dist},  $\sens=110 \rho_1 /(\alpha n)$, and $\thresh=42\rho_1 $  achieves 
    $(1/\gamma)\|\Sigma^{1/2}(\hat\beta-\beta) \|\leq 32 \rho_1 $ with probability $1-\zeta$, if \begin{eqnarray}
         n \;\geq \; C\, \frac{d+\log(1/(\delta\zeta))}{\varepsilon \alpha}  \; .
    \end{eqnarray}
\end{thm}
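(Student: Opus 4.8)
The plan is to mirror the mean-estimation argument of Sections~\ref{sec:mean_proof1}--\ref{sec:mean_proof2}: verify the four hypotheses \ref{asmp_resilience}, \ref{asmp_vol}, \ref{asmp_local}, \ref{asmp_sens} of the general utility theorem, Theorem~\ref{thm:utility}, with $p=d$, $\rho=\rho_1$, $\thresh=42\rho_1$, $\sens=110\rho_1/(\alpha n)$, and constants $c_0>c_1$ (the choice $c_0=32$ producing the stated error), then invoke Theorem~\ref{thm:utility}. The first ingredient is robustness of the three one-dimensional statistics appearing in $\robdist_S(\hat\beta)$ of Eq.~\eqref{def:lr_dist}. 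Using \eqref{def:res2}--\eqref{def:res3} exactly as in Lemma~\ref{lem:deviation}, the (uncentered, since $\mu_v=0$) trimmed second moment satisfies $0.9\,\sigma_v\le\sigma_v(\mathcal{M}_{v,\alpha})\le 1.1\,\sigma_v$. Using \eqref{def:res1} and \eqref{def:res4}, the robust noise estimate $\hat\gamma^2$ of Eq.~\eqref{def:robust_gamma} satisfies $0.9\,\gamma\le\hat\gamma\le1.1\,\gamma$: writing $(y_i-x_i^\top\bar\beta)^2=(\eta_i+x_i^\top(\beta-\bar\beta))^2$, the inner $\min$ over $\bar\beta$ is pinned near $\beta$ because the nonnegative quadratic term is controlled by \eqref{def:res2} from above and the cross term by \eqref{def:res1} from below, while the leading term is $\gamma^2(1\pm\rho_4)$. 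Finally, for any $\hat\beta\in B_{\thresh,S}$ I will show the robust numerator $\tfrac1{|\mathcal{N}_{v,\hat\beta,\alpha}|}\sum_{\mathcal{N}_{v,\hat\beta,\alpha}}\langle v,x_i(y_i-x_i^\top\hat\beta)\rangle$ lies within $O(\rho_1\sigma_v\gamma)$ of $\langle v,\Sigma(\beta-\hat\beta)\rangle$, by splitting $\langle v,x_i(y_i-x_i^\top\hat\beta)\rangle=\langle v,x_i\rangle\eta_i+\langle v,x_i\rangle x_i^\top(\beta-\hat\beta)$, applying \eqref{def:res1} to the first piece and \eqref{def:res2} together with $\|\Sigma^{1/2}(\hat\beta-\beta)\|\le C\rho_1\gamma$ (from $\hat\beta\in B_{\thresh,S}$ and $\thresh=42\rho_1$) to the quadratic piece, and using the Lemma~\ref{lem:deviate}-type tail bounds to argue the trimmed top and bottom cannot pull the trimmed mean far.

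Given these estimates, assumption \ref{asmp_resilience} follows as in Lemma~\ref{lem:outerset} by combining them with the exact identity of Lemma~\ref{lem:true_dist_lr} (which plays the role Lemma~\ref{lem:equidist} plays for the mean): on $B_{\thresh,S}$ one gets $\big|\robdist_S(\hat\beta)-(1/\gamma)\|\Sigma^{1/2}(\hat\beta-\beta)\|\big|\le C\rho_1+0.1\thresh\le c_1\rho_1$, plus the analogous corollary on $B_{2\thresh,S}$. Assumption \ref{asmp_vol} then follows exactly as in Lemma~\ref{lem:vol}: the sublevel set $\{\hat\beta:(1/\gamma)\|\Sigma^{1/2}(\hat\beta-\beta)\|\le r\}$ is an ellipsoid of volume proportional to $r^d$, so both required volume ratios are $(\mathrm{const})^d\le e^{c_2 d}$, using that $(k^*+1)\sens=O(\rho_1)$ once $n\gtrsim\log(1/(\delta\zeta))/(\alpha\varepsilon)$.

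The heart of the proof is assumption \ref{asmp_local}: bounding $|\robdist_{S'}(\hat\beta)-\robdist_{S''}(\hat\beta)|$ for every $\hat\beta\in B_{\thresh+\backoff,S}$ and every pair of neighbors $S'\sim S''$ with $S'$ within Hamming distance $k^*+1$ of $S$; such $S'$ is itself corrupt good with corruption fraction $(1/5.5)\alpha+\tilde\alpha$, $\tilde\alpha=(k^*+1)/n\le(1/11)\alpha$ since $n=\Omega(\log(1/(\delta\zeta))/(\alpha\varepsilon))$. First I will show, via Lemma~\ref{lem:deviate} applied to the good portions of the trimmed top and bottom tails in the projected coordinates $\{\langle v,x_i(y_i-x_i^\top\hat\beta)\rangle\}$, $\{\langle v,x_i\rangle\}$, and $\{(y_i-x_i^\top\bar\beta)^2\}$, that each of $\mathcal{N}_{v,\hat\beta,\alpha}$, $\mathcal{M}_{v,\alpha}$, $\mathcal{B}_{\bar\beta,\alpha}$ is dense near its boundary --- the gap between the closest surviving points on the two sides being $O(\rho_1\sigma_v\gamma/\alpha)$, $O(\rho_3\sigma_v/\alpha)$, and $O(\rho_4\gamma^2/\alpha)$ respectively --- so that replacing one data point moves each robust numerator or denominator by $O(\,\cdot\,/(\alpha n))$. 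Propagating these through the quotient in Eq.~\eqref{def:lr_dist} exactly as in Eq.~\eqref{eq:sensbound_D} (the $\hat\beta$-dependence of $\mathcal{N}_{v,\hat\beta,\alpha}$ causes no extra difficulty since the bound is already local in $\hat\beta$), the robust mean contributes $O(\rho_1\sigma_v\gamma/(\alpha n))$, the factor $\sigma_v^2(\mathcal{M}_{v,\alpha})$ contributes $O(\rho_3^2\sigma_v^2/(\alpha^2 n))$, and $\hat\gamma^2$ contributes $O(\rho_4\gamma^2/(\alpha n))$; using $\hat\beta\in B_{\thresh+\backoff,S}\Rightarrow\|\Sigma^{1/2}(\hat\beta-\beta)\|\le C\rho_1\gamma$ and the hypotheses $\rho_1,\rho_2,\rho_4<c$ and $\rho_3^2\le c\alpha$, all cross terms are dominated by the first and $|\robdist_{S'}(\hat\beta)-\robdist_{S''}(\hat\beta)|\le 110\rho_1/(\alpha n)=\sens$.

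It remains to check assumption \ref{asmp_sens}, the inequality $\sens=110\rho_1/(\alpha n)\le (c_0-3c_1)\rho_1\varepsilon/\big(32(c_2 d+(\varepsilon/2)+\log(16/(\delta\zeta)))\big)$, which holds once $n\ge C(d+\log(1/(\delta\zeta)))/(\alpha\varepsilon)$ with $C$ large; Theorem~\ref{thm:utility} then yields $(1/\gamma)\|\Sigma^{1/2}(\hat\beta-\beta)\|\le c_0\rho_1=32\rho_1$ with probability $1-\zeta$. The step I expect to be the main obstacle is the local-sensitivity bound: unlike mean estimation, both the trimming set $\mathcal{N}_{v,\hat\beta,\alpha}$ and the trimmed quantity $\langle v,x_i(y_i-x_i^\top\hat\beta)\rangle$ depend on $\hat\beta$, and the cross term $\langle v,x_i\rangle x_i^\top(\beta-\hat\beta)$ is a fourth-moment-type object, so controlling its range --- hence the boundary density of $\mathcal{N}_{v,\hat\beta,\alpha}$ --- forces the covariance resilience \eqref{def:res2} and the support bound $\hat\beta\in B_{\thresh,S}$ to work in tandem, all while tracking the extra $\tilde\alpha=(k^*+1)/n$ corruption so the bound survives over the whole $k^*$-neighborhood of $S$.
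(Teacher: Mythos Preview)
Your proposal is correct and follows the same route as the paper: reduce to Theorem~\ref{thm:utility} with $p=d$, $\rho=\rho_1$, $\thresh=42\rho_1$, $\sens=110\rho_1/(\alpha n)$, verifying \ref{asmp_resilience} via the robustness of $\sigma_v(\mathcal M_{v,\alpha})$, $\hat\gamma$, and the trimmed numerator (paper: Lemmas~\ref{lem:sigmav_res1}, \ref{lem:gamma_approx}, \ref{lem:approximation_lr}), \ref{asmp_vol} via the ellipsoid volume ratio (Lemma~\ref{lem:vol_lr}), \ref{asmp_local} via boundary-density of the trimmed sets on corrupt-good $S'$ (Lemma~\ref{lem:local_asmp_lr}), and \ref{asmp_sens} by the sample-size hypothesis. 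Two small bookkeeping differences: the paper tracks the boundary density of $\mathcal N_{v,\hat\beta,\alpha}$ as $(44\rho_1/\alpha)\gamma+(44\rho_2/\alpha)\|\Sigma^{1/2}(\beta-\hat\beta)\|$ (your $O(\rho_1\sigma_v\gamma/\alpha)$ is the same once $\|\Sigma^{1/2}(\beta-\hat\beta)\|\le C\rho_1\gamma$ on the support), and for $\hat\gamma$ the relevant residual-square resilience is governed by $\rho^*=\max\{\rho_1,\rho_2,\rho_4\}$ (Lemma~\ref{lem:res_residual}) rather than $\rho_4$ alone, which does not affect the final $\sens$-bound since all $\rho_j<c$.
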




\subsubsection{Robustness of HPTR}

One by-product of using  robust statistics in $\robdist_S(\hat\beta)$ is that robustness for HPTR comes for free under a standard data corruption model.

\begin{asmp}[$\alpha_{\rm corrupt}$-corruption]  \label{asmp:lr} 
Given a set $S_{\rm good}=\{(\tilde{x}_i\in\reals^d, \tilde{y}_i\in \reals)\}_{i=1}^n$ of $n$ data points, an adversary inspects all data points, selects $\alpha_{\rm corrupt} n$ of the data points, and replaces them with arbitrary dataset $S_{\rm bad}$ of size $\alpha_{\rm corrupt} n$. 
The resulting corrupted dataset is called $S=\{(x_i\in\reals^d, y_i\in \reals)\}_{i=1}^n$. 
\end{asmp}  

The same guarantee as Theorem~\ref{thm:linear_regression} holds under corruption up to a corruption of  $\alpha_{\rm corrupt}<(1/5.5)\alpha$ fraction of a $(\alpha,\rho_1,\rho_2,\rho_3, \rho_4)$-resilient dataset $S_{\rm good}$. The factor $(1/5.5)$ is due to the fact that the algorithm can remove $(4/5.5)\alpha$ fraction of the good points and a slack of $(0.5/5.5)\alpha$ fraction is needed to resilience of neighboring datasets. 

\begin{definition}[Corrupt good set]
    \label{def:corruptgoodset_lr}
    We say a dataset $S$ is $(\alpha_{\rm corrupt},\alpha,\rho_1,\rho_2, \rho_3, \rho_4 )$-corrupt good with respect to $(\beta,\Sigma, \gamma)$ if it is an $\alpha_{\rm corrupt}$-corruption of an $(\alpha,\rho_1,\rho_2, \rho_3, \rho_4)$-resilient dataset $S_{\rm good}$.
\end{definition}

\begin{thm}[Robustness]
    \label{thm:robust_linear_regression} 
    There exist positive constants $c$ and $C$ such that for any  $((2/11)\alpha,\alpha,\rho_1,\rho_2, \rho_3, \rho_4)$-corrupt good set $S$ with respect to $(\beta,\Sigma\succ 0,\gamma>0)$ satisfying $\alpha< c$, $\rho_1<c$, $\rho_2<c$, $\rho_3^2\leq c\alpha$ and $\rho_4<c$, $\HPTR$  with the distance function in Eq.~\eqref{def:lr_dist},  $\sens=110 \rho_1 /(\alpha n)$, and $\thresh=42\rho_1 $  achieves 
    $(1/\gamma) \|\Sigma^{1/2}(\hat\beta-\beta) \|\leq 32 \rho_1  $ with probability $1-\zeta$, if \begin{eqnarray}
         n \;\geq \; C\, \frac{d+\log(1/(\delta\zeta))}{\varepsilon \alpha}  \; .
    \end{eqnarray}
\end{thm}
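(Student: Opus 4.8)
The plan is to reduce to the universal utility theorem (Theorem~\ref{thm:utility}), following verbatim the blueprint of Section~\ref{sec:mean_proof2}: we check that the four sufficient conditions — the approximation condition (assumption~\ref{asmp_resilience}), the bounded-volume condition (assumption~\ref{asmp_vol}), the bounded local-sensitivity condition (assumption~\ref{asmp_local}), and the small-sensitivity condition (assumption~\ref{asmp_sens}) — all hold for the linear-regression distance $\robdist_S(\hat\beta)$ of Eq.~\eqref{def:lr_dist}, with the choices $\rho=\rho_1$, $\thresh=42\rho_1$, $\sens=110\rho_1/(\alpha n)$, $k^*=(2/\varepsilon)\log(4/(\delta\zeta))$, and the same constants $c_0,c_1$ (and a large enough $c_2$, absorbed into $C$) as in the mean case. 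Theorem~\ref{thm:utility} then yields $(1/\gamma)\|\Sigma^{1/2}(\hat\beta-\beta)\|\le 32\rho_1$ with probability $1-\zeta$ as soon as $n\ge C(d+\log(1/(\delta\zeta)))/(\varepsilon\alpha)$; Theorem~\ref{thm:linear_regression} is then the special case $\alpha_{\rm corrupt}=0$ obtained by letting $\alpha$ be a free parameter.

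First I would prove the regression analogues of Lemmas~\ref{lem:deviation} and \ref{lem:outerset}: that the three one-dimensional robust statistics in $\robdist_S(\hat\beta)$ are accurate on a corrupt-good dataset, and hence that $\robdist_S$ approximates the target error metric. Writing the underlying resilient set as $S_{\rm good}$ and applying the tail-deviation bound (Lemma~\ref{lem:deviate}) in turn to the four scalar quantities $\langle v,x_i\rangle$, $\langle v,x_i\rangle^2$, $\langle v,x_i\rangle(y_i-x_i^\top\beta)$, and $(y_i-x_i^\top\beta)^2$ governed by the resilience bounds \eqref{def:res1}--\eqref{def:res4}, one gets $0.9\sigma_v\le\sigma_v({\cal M}_{v,\alpha})\le1.1\sigma_v$, $0.9\gamma\le\hat\gamma\le1.1\gamma$, and, after substituting $y_i-x_i^\top\hat\beta=\eta_i+x_i^\top(\beta-\hat\beta)$ and treating the cross term $\langle v,x_i\rangle\langle\beta-\hat\beta,x_i\rangle$ by polarization of \eqref{def:res2}--\eqref{def:res3}, a bound on the numerator of the form $O(\rho_1)\sigma_v\gamma+O(\rho_2+\rho_3)\sigma_v\,\|\Sigma^{1/2}(\hat\beta-\beta)\|$. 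Feeding these into Lemma~\ref{lem:true_dist_lr} gives $\bigl|\robdist_S(\hat\beta)-(1/\gamma)\|\Sigma^{1/2}(\hat\beta-\beta)\|\bigr|=O(\rho_1)+0.1\thresh$ on $B_{\thresh,S}$, which is assumption~\ref{asmp_resilience}; assumption~\ref{asmp_vol} then follows exactly as in Lemma~\ref{lem:vol}, since the sublevel sets $\{\hat\beta:(1/\gamma)\|\Sigma^{1/2}(\hat\beta-\beta)\|\le r\}$ are ellipsoids whose volumes have ratio $(r_1/r_2)^d$.

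The crux is assumption~\ref{asmp_local}: the analogue of Lemma~\ref{lem:local_asmp}, bounding the local sensitivity of $\robdist_S(\hat\beta)$ not only at $S$ but at every $S'$ within Hamming distance $k^*+3$ of the corrupt-good set $S$, uniformly over $\hat\beta\in B_{\thresh+\backoff,S'}$. Provided $n=\Omega(\log(1/(\delta\zeta))/(\varepsilon\alpha))$, such an $S'$ is still corrupt good with corruption level at most $(2/11)\alpha+(k^*+3)/n\le(3/11)\alpha$, so that the algorithm's own $(8/11)\alpha$ trimming plus the corruption leaves the resilience bounds applicable. As in Lemma~\ref{lem:local_asmp}, resilience forces $S'$ to be dense just inside the cut points of each of the three partitions — $\{\langle v,x_i\rangle\}$ for ${\cal M}_{v,\alpha}$, $\{\langle v,x_i(y_i-x_i^\top\hat\beta)\rangle\}$ for ${\cal N}_{v,\hat\beta,\alpha}$, and the residuals $\{(y_i-x_i^\top\bar\beta)^2\}$ for ${\cal B}_{\bar\beta,\alpha}$ — so that changing a single data point moves each of $\sigma_v^2({\cal M}_{v,\alpha})$, the numerator average over ${\cal N}_{v,\hat\beta,\alpha}$, and $\hat\gamma^2$ by $O({\rm poly}(\rho)/(\alpha n))$. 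Assembling these through the quotient rule as in Eq.~\eqref{eq:sensbound_D}, using $\sigma_v({\cal M}_{v,\alpha})\ge0.9\sigma_v$ and $\hat\gamma\ge0.9\gamma$ to avoid near-zero denominators and $\hat\beta\in B_{\thresh+\backoff,S'}$ (with the approximation bound above) to conclude $\|\Sigma^{1/2}(\hat\beta-\beta)\|=O(\rho_1\gamma)$, gives a sensitivity bound of the form $(C\rho_1/(\alpha n))(1+O({\rm poly}(\rho)/\alpha))$; the hypothesis $\rho_3^2\le c\alpha$ makes the bracket $O(1)$, exactly as $\rho_1^2\le c\alpha$ does for mean estimation, so the bound is at most $\sens$. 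Assumption~\ref{asmp_sens} then holds because $\sens=110\rho_1/(\alpha n)\le(c_0-3c_1)\rho_1\varepsilon/(32(c_2d+\varepsilon/2+\log(16/(\delta\zeta))))$ for $n\ge C'(d+\log(1/(\delta\zeta)))/(\alpha\varepsilon)$, identically to the mean case, which completes the verification.

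I expect the main obstacle to be this local-sensitivity step, and within it two features specific to regression. First, the middle partition ${\cal N}_{v,\hat\beta,\alpha}$ depends on $\hat\beta$, so the ``dense just inside the cut'' argument must be carried out for the $\hat\beta$-dependent projection $\{\langle v,x_i(y_i-x_i^\top\hat\beta)\rangle\}$; since $y_i-x_i^\top\hat\beta=\eta_i+x_i^\top(\beta-\hat\beta)$ with $\hat\beta$ confined to $B_{\thresh+\backoff,S'}$, this reduces to resilience of $\langle v,x_i\rangle\eta_i$ and of $\langle v,x_i\rangle\langle w,x_i\rangle$ via \eqref{def:res1}--\eqref{def:res3}, but the accounting of how much good mass the corruption can push out of ${\cal N}_{v,\hat\beta,\alpha}$ needs care and is the origin of the $\rho_3^2\le c\alpha$ requirement. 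Second, $\hat\gamma^2$ is itself a minimum over $\bar\beta$ of a trimmed sum of squared residuals (Eq.~\eqref{def:robust_gamma}), so its sensitivity must be handled by the standard swap that evaluates on $S'$ the $\bar\beta$ optimal for the neighbor $S''$, together with the fact — from \eqref{def:res4} and Lemma~\ref{lem:deviate} — that the top $\alpha$-tail of the residuals is $O(\gamma^2)$, which both bounds the range of the trimmed set and keeps $\hat\gamma$ bounded below by $\Omega(\gamma)$. Both phenomena are morally the same as for the mean, but the underlying quantities are now quadratic in $x_i$, so I would allocate most of the effort to re-running the resilience accounting in that regime.
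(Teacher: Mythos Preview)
Your proposal is correct and follows essentially the same route as the paper: verify the four hypotheses of Theorem~\ref{thm:utility} with $\rho=\rho_1$, $c_0=31.8$, $c_1=10.2$, $\thresh=42\rho_1$, $\sens=110\rho_1/(\alpha n)$, via regression analogues of the mean-estimation lemmas (the paper's Lemmas~\ref{lem:approximation_lr}, \ref{lem:vol_lr}, \ref{lem:local_asmp_lr}). The only cosmetic difference is that where you propose handling the cross term $\langle v,x_i\rangle\,x_i^\top(\beta-\hat\beta)$ by scalar polarization of \eqref{def:res2}, the paper instead passes to the matrix inequality $-\rho_2\Sigma\preceq\hat\Sigma_T-\Sigma\preceq\rho_2\Sigma$ and invokes Lemma~\ref{lem:bounded_norm} to get $\|\Sigma^{-1/2}(\hat\Sigma_T-\Sigma)(\beta-\hat\beta)\|\le\rho_2\|\Sigma^{1/2}(\beta-\hat\beta)\|$ directly; this yields the cleaner coefficient $\rho_2$ (no $\rho_3$ enters the approximation bound) and is worth adopting.
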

We provide a proof in Sections~\ref{sec:lr_proof_strategy}-\ref{sec:lr_proof2}. When there is no adversarial corruption, Theorem~\ref{thm:linear_regression} immediately follows by selecting $\alpha$ as a free parameter. 

\subsubsection{Proof strategy for Theorem~\ref{thm:robust_linear_regression}}
\label{sec:lr_proof_strategy}

The overall proof strategy follows that of Section~\ref{sec:mean_proof_strategy} for mean estimation. We highlight the differences here. 

\begin{lemma}[Lemma~10 from \cite{steinhardt2018resilience}] 
    \label{lem:deviate_lr}
    For a  $(\alpha,\rho_1,\rho_2, \rho_3,\rho_4)$-resilient set $S$ with respect to $(\beta,\Sigma,\gamma)$ and any $0\leq \tilde\alpha \leq  \alpha$,  the following holds  for any subset $T\subset S$ of size at least $\tilde\alpha n$ and for any unit vector  $v\in \reals^d$: 
    \begin{eqnarray}
     \Big|\frac{1}{|T|}\sum_{(x_i,y_i)\in T} \langle v,x_i\rangle (y_i-x_i^\top \beta)\Big| &\leq & \frac{2-\tilde\alpha}{\tilde\alpha}\rho_1 \,\sigma_v\,\gamma \;, 
    \label{eq:res_tail1}\\ 
    \left|\frac{1}{|T|}\sum_{x_i\in T}\, \langle v, x_i\rangle^2-\sigma_v^2 \,\right| &\leq& \frac{2-\tilde\alpha}{\tilde\alpha}\,\rho_2 \sigma_v^2\;, \label{eq:res_tail2}\\
    \left|\frac{1}{|T|}\sum_{x_i\in T}\, \langle v, x_i\rangle \,\right| &\leq& \frac{2-\tilde\alpha}{\tilde\alpha}\,\rho_3  \sigma_v\;,\text{ and }\label{eq:res_tail3} \\
    \Big| \frac{1}{|T|}\sum_{(x_i, y_i)\in T} (y_i-x_i^\top\beta)^2-\gamma^2  \Big|  & \leq &  \frac{2-\tilde{\alpha}}{\tilde{\alpha}}\rho_4 \, \gamma^2    \;\label{eq:res_tail4}\;.
    \end{eqnarray}
\end{lemma}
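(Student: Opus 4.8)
The plan is to deduce all four inequalities \eqref{eq:res_tail1}--\eqref{eq:res_tail4} from a single abstract fact about bounded empirical means, since each of them has exactly the same shape: there is a per-sample scalar $g_i$ --- namely $\langle v,x_i\rangle(y_i-x_i^\top\beta)$ for \eqref{eq:res_tail1}, $\langle v,x_i\rangle^2-\sigma_v^2$ for \eqref{eq:res_tail2}, $\langle v,x_i\rangle$ for \eqref{eq:res_tail3}, and $(y_i-x_i^\top\beta)^2-\gamma^2$ for \eqref{eq:res_tail4} --- and a scale $c$ --- namely $\rho_1\sigma_v\gamma$, $\rho_2\sigma_v^2$, $\rho_3\sigma_v$, $\rho_4\gamma^2$ --- such that ``$S$ is $(\alpha,\rho_1,\rho_2,\rho_3,\rho_4)$-resilient'' is, in each line of Definition~\ref{def:resilience_lr}, precisely the statement that $\big|\tfrac{1}{|U|}\sum_{i\in U}g_i\big|\le c$ for every $U\subseteq S$ with $|U|\ge(1-\alpha)n$. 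So it suffices to prove the abstract claim: if $\big|\tfrac{1}{|U|}\sum_{i\in U}g_i\big|\le c$ for all such $U$, then $\big|\tfrac{1}{|T|}\sum_{i\in T}g_i\big|\le\frac{2-\tilde\alpha}{\tilde\alpha}c$ for every $T\subseteq S$ with $|T|\ge\tilde\alpha n$ and $0\le\tilde\alpha\le\alpha$. This is exactly \cite[Lemma~10]{steinhardt2018resilience} and the linear-regression analogue of Lemma~\ref{lem:deviate}; I would reprove it in two regimes.

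In the regime $\tilde\alpha n\le|T|\le\alpha n$, the complement $S\setminus T$ has size at least $(1-\alpha)n$, so the hypothesis applies to both $S$ and $S\setminus T$; writing $n\sum_S g/|S| = |T|\sum_T g/|T| + |S\setminus T|\sum_{S\setminus T} g/|S\setminus T|$ and solving for the average over $T$ gives $\big|\tfrac1{|T|}\sum_T g\big|\le\frac{2n-|T|}{|T|}c$, which since $m\mapsto\frac{2n-m}{m}$ is decreasing is at most $\frac{2-\tilde\alpha}{\tilde\alpha}c$. In the regime $|T|>\alpha n$ I would bound $\tfrac1{|T|}\sum_T g$ from above (the lower bound follows by replacing $g$ with $-g$): let $V\subseteq T$ be the $|T|-\alpha n$ indices with the smallest $g_i$, and set $W=(S\setminus T)\cup V$, which has size exactly $(1-\alpha)n$, so the hypothesis applies to $W$ and to $S$. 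Expanding $\sum_{S}g=\sum_{T}g+\sum_{W}g-\sum_{V}g$ and using that $V$ holds the smallest values of $T$ (hence $\tfrac1{|V|}\sum_V g\le\tfrac1{|T|}\sum_T g$) yields the self-referential estimate $A\le(2-\alpha)nc+\tfrac{|T|-\alpha n}{|T|}A$ with $A=\sum_T g$; solving gives $\tfrac1{|T|}\sum_T g\le\frac{2-\alpha}{\alpha}c\le\frac{2-\tilde\alpha}{\tilde\alpha}c$, again using that $x\mapsto\frac{2-x}{x}$ is decreasing and $\tilde\alpha\le\alpha$. Instantiating this abstract bound with the four choices of $(g_i,c)$ above gives \eqref{eq:res_tail1}--\eqref{eq:res_tail4} at once; minor integrality points (replacing $\alpha n,\tilde\alpha n$ by ceilings) are harmless.

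The only step that is not purely mechanical is the second regime --- choosing the ``padding'' set $W$ so that resilience becomes applicable to a set of the exact right size, and then closing the loop on the self-referential inequality --- but this is entirely standard, being the content of \cite[Lemma~10]{steinhardt2018resilience}. Since the paper already invokes the identical statement for mean estimation as Lemma~\ref{lem:deviate}, the cleanest write-up is simply to note that \eqref{eq:res_tail1}--\eqref{eq:res_tail4} are four instances of that same lemma, applied to the four one-dimensional statistics appearing in Definition~\ref{def:resilience_lr}.
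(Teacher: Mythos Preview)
Your proposal is correct and matches the paper's own treatment: the paper does not prove Lemma~\ref{lem:deviate_lr} at all but simply cites it as Lemma~10 of \cite{steinhardt2018resilience} (exactly as it does for the mean-estimation version, Lemma~\ref{lem:deviate}), and you correctly identify that the four inequalities are just four instantiations of that single abstract fact. Your self-contained two-regime argument for the abstract claim is a nice bonus that goes beyond what the paper provides.
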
 
This technical lemma is critical in showing that the sensitivity of one-dimensional statistics is bounded by the resilience of the dataset, such that the sensitivity of $\robdist_S(\hat\beta)$ for a resilient $S$ is bounded by 
\begin{eqnarray*}
    |\robdist_S(\hat\beta)-\robdist_{S'}(\hat\beta) | \; \leq \; C'\Big(1+\frac{\rho_3^2}{\alpha}\Big) 
    \frac{\rho_1\, + \, (1/\gamma) \|\Sigma^{1/2}(\hat\beta-\beta)\|}{\alpha n}   \;,
\end{eqnarray*}
for some constant $C'$ and for any neighboring dataset $S'$ as shown in Eq~\eqref{eq:lr_sens_bound}. The desired sensitivity bound is local in two ways: it requires $S$ to be resilient and $(1/\gamma) \|\Sigma^{1/2}(\hat\beta-\beta)\| = O( \rho_1 )$. 
Under the assumption that $\rho_3^2/\alpha=O(1)$ with a small enough constant, this achieves the desired bound $\sens=O(\rho_1 /(\alpha n))$ with $\hat\beta \in B_{\thresh,S}$ and $\thresh=O(\rho_1 )$.
The standard utility analysis of exponential mechanisms shows that the error of $(1/\gamma) \|\Sigma^{1/2}(\hat\beta-\beta)\| =O( \rho_1 )$ can be achieved when  $e^{O(d)-c\frac{\varepsilon}{\sens}\rho_1 } \leq \zeta$, which happens if $n=\Omega((d+\log(1/\zeta))/(\varepsilon \alpha))$ with a large enough constant. The {\sc Test} step checks the two localities by ensuring that DP conditions are met for the given dataset. 

\medskip\noindent
{\bf Outline.} Analogous to the mean estimation proof, the analyses of utility and safety test build upon the universal analysis of HPTR in Theorem~\ref{thm:utility}. For linear regression, 
 we show in Sections~\ref{sec:lr_proof_1}-\ref{sec:lr_proof_3} that the assumptions of Theorem~\ref{thm:utility} are met for a resilient dataset and the choices of constants and parameters: 
$\rho=\rho_1  $, $c_0=31.8$,
$c_1=10.2$, $\thresh=42\rho_1 $, 
$\sens = 110\rho_1 /(\alpha n)$, $\thresh=42\rho_1$, $k^*=(2/\varepsilon)\log(4/(\delta\zeta))$, and a large enough constant $c_2$, and assume that $\alpha<c$ and $\rho_1 <c$ for small enough constant $c$.  
A proof of Theorem~\ref{thm:robust_linear_regression} is provided in Section~\ref{sec:lr_proof2}, and Theorem~\ref{thm:linear_regression} immediately follows by selecting $\alpha$ as a free parameter. 

The above resilience properties also imply the following useful resilience on the $S_{\bar{\beta}}=\{(y_i-\bar{\beta}^\top x_i)^2\}_{i=[n]}$ for any vector $\bar{\beta}$.
\begin{lemma}[Resilience of residual square]
	Let $S_{\rm good}=\{(x_i\in \reals^d, y_i\in \reals)\}_{i=[n]}$ be $(\alpha, \rho_1, \rho_2, \rho_3, \rho_4)$-resilient with respect to $(\beta, \Sigma, \gamma)$. Let $\rho^* = \max\{\rho_1, \rho_2, \rho_4\}$. Then we have 
	\begin{enumerate}
		\item for any $T\in S_{\rm good}$ of size $|T|\geq (1-\alpha)n$ and any vector $\bar{\beta}\in \reals^d$,
	\begin{eqnarray}
		\left|\frac{1}{|T|}\sum_{(x_i,y_i)\in T}(y_i-\bar{\beta}^\top x_i)^2-(\gamma+\|\Sigma^{1/2}(\beta-\bar{\beta})\|)^2\right|\leq \rho^*(\gamma+\|\Sigma^{1/2}(\beta-\bar{\beta})\|)^2\;,
	\end{eqnarray}
	\item and for any $0\leq \tilde{\alpha}\leq \alpha$ and $T\in S_{\rm good}$ of size $|T|\geq \tilde{\alpha} n$, we have
	\begin{eqnarray}
		\left|\frac{1}{|T|}\sum_{(x_i,y_i)\in T}(y_i-\bar{\beta}^\top x_i)^2-(\gamma+\|\Sigma^{1/2}(\beta-\bar{\beta})\|)^2\right|\leq \frac{2-\tilde{\alpha}}{\tilde{\alpha}}\rho^*(\gamma+\|\Sigma^{1/2}(\beta-\bar{\beta})\|)^2\;.
	\end{eqnarray}
	\end{enumerate}
	\label{lem:res_residual}
\end{lemma}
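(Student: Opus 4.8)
\medskip\noindent
The plan is to reduce the residual second moment to the three one-dimensional averages that resilience already controls. Fix $\bar\beta$ and write $w := \beta - \bar\beta$ and $\eta_i := y_i - x_i^\top\beta$, so that $y_i - \bar\beta^\top x_i = x_i^\top w + \eta_i$ and, for every $T$,
\begin{equation*}
\frac{1}{|T|}\sum_{(x_i,y_i)\in T}(y_i - \bar\beta^\top x_i)^2
= \frac{1}{|T|}\sum_{i\in T}(x_i^\top w)^2
+ \frac{2}{|T|}\sum_{i\in T}(x_i^\top w)\,\eta_i
+ \frac{1}{|T|}\sum_{i\in T}\eta_i^2 \;.
\end{equation*}
If $w=0$ the claim is exactly clause \eqref{def:res4} of the resilience definition (resp.\ its tail version \eqref{eq:res_tail4}), so I would assume $w\neq 0$ and set $v := w/\|w\|$; this is a unit vector, $x_i^\top w = \|w\|\,\langle v,x_i\rangle$, and with $\sigma_v^2 = v^\top\Sigma v$ one has the normalization $\|w\|\,\sigma_v = \|\Sigma^{1/2}w\| =: A$. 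Thus each of the three terms is a scalar multiple of an average in the variables $\langle v, x_i\rangle$ and $\eta_i$, all expressed in the common units $A$ and $\gamma$.

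Next I would apply the clauses of Definition~\ref{def:resilience_lr} term by term. For part~1, where $|T|\ge(1-\alpha)n$: \eqref{def:res2} bounds $\big|\tfrac1{|T|}\sum_{i\in T}(x_i^\top w)^2 - A^2\big|\le \rho_2 A^2$; clause \eqref{def:res1} bounds the cross average $\big|\tfrac1{|T|}\sum_{i\in T}(x_i^\top w)\eta_i\big|\le \rho_1 A\gamma$; and \eqref{def:res4} bounds $\big|\tfrac1{|T|}\sum_{i\in T}\eta_i^2 - \gamma^2\big|\le \rho_4\gamma^2$. A triangle-inequality combination then places the residual average within $\rho_2 A^2 + 2\rho_1 A\gamma + \rho_4\gamma^2 \le \rho^*(A+\gamma)^2$ of $A^2+\gamma^2$, with $\rho^* = \max\{\rho_1,\rho_2,\rho_4\}$; since the exact second moment $\gamma^2+\|\Sigma^{1/2}(\beta-\bar\beta)\|^2$ is dominated by the reference $(\gamma+\|\Sigma^{1/2}(\beta-\bar\beta)\|)^2$, this gives the stated estimate. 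Part~2 is the identical computation with the tail bounds of Lemma~\ref{lem:deviate_lr} — \eqref{eq:res_tail1}, \eqref{eq:res_tail2}, \eqref{eq:res_tail4} in place of \eqref{def:res1}, \eqref{def:res2}, \eqref{def:res4} — which all carry the same factor $\tfrac{2-\tilde\alpha}{\tilde\alpha}$; this factor pulls out of the triangle-inequality bound and produces the claimed $\tfrac{2-\tilde\alpha}{\tilde\alpha}\rho^*(\gamma+\|\Sigma^{1/2}(\beta-\bar\beta)\|)^2$.

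The one step that needs genuine care rather than bookkeeping is the cross term $\tfrac1{|T|}\sum_{i\in T}(x_i^\top w)\eta_i$. The naive Cauchy--Schwarz bound $\big(\tfrac1{|T|}\sum(x_i^\top w)^2\big)^{1/2}\big(\tfrac1{|T|}\sum\eta_i^2\big)^{1/2}\approx A\gamma$ is far too lossy to fold into a $\rho^*$-sized slack; one must instead invoke clause \eqref{def:res1}, i.e.\ the fact that $x^\top w$ and $\eta$ are nearly orthogonal on $T$ — this is where the hypothesis $\E[x_i\eta_i]=0$, encoded in the resilience definition, is used — to get the $O(\rho_1 A\gamma)$ bound, which is precisely what keeps $\rho_2 A^2 + 2\rho_1 A\gamma + \rho_4\gamma^2$ (and not a larger expression) as the controlling error and lets it be absorbed into $\rho^*(A+\gamma)^2$. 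Beyond that, the only things to watch are the degenerate case $w=0$, dispatched above by direct appeal to \eqref{def:res4}/\eqref{eq:res_tail4}, and carrying the normalization $\|w\|\sigma_v = \|\Sigma^{1/2}w\|$ through consistently so that the three resilience bounds really do combine in matching units.
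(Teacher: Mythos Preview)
Your approach is the paper's own: the proof there is a single sentence invoking clauses \eqref{def:res1}, \eqref{def:res2}, \eqref{def:res4}, and you have spelled out that reduction in full. The decomposition $(y_i-\bar\beta^\top x_i)^2 = (x_i^\top w)^2 + 2(x_i^\top w)\eta_i + \eta_i^2$, termwise application of resilience, and the emphasis on using \eqref{def:res1} rather than Cauchy--Schwarz for the cross term are all correct and match the intended argument.

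The one step that does not go through is the very last. Your computation establishes
\[
\Big|\tfrac{1}{|T|}\sum_{i\in T}(y_i-\bar\beta^\top x_i)^2 - (A^2+\gamma^2)\Big| \;\le\; \rho^*(A+\gamma)^2,
\qquad A:=\|\Sigma^{1/2}(\beta-\bar\beta)\|,
\]
and you then say that since $A^2+\gamma^2 \le (A+\gamma)^2$ this ``gives the stated estimate''. That works for the upper tail, $\tfrac{1}{|T|}\sum \le (1+\rho^*)(A+\gamma)^2$, but not for the lower tail: replacing the center $A^2+\gamma^2$ by the larger $(A+\gamma)^2$ can only weaken an upper bound, not strengthen a lower one. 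In fact the lower inequality $\tfrac{1}{|T|}\sum \ge (1-\rho^*)(A+\gamma)^2$ does not follow from the resilience hypotheses in general --- the population residual second moment is $A^2+\gamma^2$, which can be as small as $\tfrac12(A+\gamma)^2$ (take $A=\gamma$), so for small $\rho^*$ the asserted lower bound is too strong. This is a slight imprecision in the lemma's statement rather than in your method; the paper's one-line proof does not confront it either, and the only downstream use (Lemma~\ref{lem:gamma_approx}) needs just the upper bound at $\bar\beta=\beta$ together with a crude lower bound of the form $\tfrac{1}{|T|}\sum\ge(1-O(\rho^*))\gamma^2$ for general $\bar\beta$, both of which your argument already delivers.
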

\begin{proof}
	The proof follows directly from resilience properties of Eq.~\eqref{def:res1},  \eqref{def:res2} and \eqref{def:res4}.
\end{proof}
\subsubsection{Resilience implies robustness}
\label{sec:lr_proof_1}

To show that the assumption~\ref{asmp_resilience} in Theorem~\ref{thm:utility} is satisfied, we use the robustness of one-dimensional variance $\sigma_v({\cal M}_{v,\alpha})$ (Lemma~\ref{lem:sigmav_res1}) and show that $\robdist_S(\hat\beta)$ is a good approximation of $(1/\gamma) \|\Sigma^{1/2}(\hat\beta-\beta)\|$ (Lemma~\ref{lem:approximation_lr}).

\begin{lemma}
\label{lem:sigmav_res1}
    For an $((2/11)\alpha,\alpha,\rho_1,\rho_2, \rho_3, \rho_4)$-corrupt good set $S$ with respect to $(\beta,\Sigma,\gamma)$, and any unit norm vector $v\in\reals^d$,  we have  $0.9 \sigma_v \leq \sigma_v({\cal M}_{v,\alpha}) \leq 1.1 \sigma_v $. 
\end{lemma}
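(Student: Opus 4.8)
The plan is to mimic the corresponding step for mean estimation (Lemma~\ref{lem:deviation}, and its generalization Lemma~\ref{lem:deviation2}) but working with the second moment $\sigma_v^2({\cal M}_{v,\alpha}) = (1/|{\cal M}_{v,\alpha}|)\sum_{x_i\in{\cal M}_{v,\alpha}}\langle v,x_i\rangle^2$ directly, since $x_i$ has zero mean. Write $S = S_{\rm good}\cup S_{\rm bad}$ with $S_{\rm good}$ being $(\alpha,\rho_1,\rho_2,\rho_3,\rho_4)$-resilient with respect to $(\beta,\Sigma,\gamma)$ and $|S_{\rm bad}|\le (2/11)\alpha n$. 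The starting point is that subtracting the empirical mean only decreases the second moment, so $\sigma_v^2({\cal M}_{v,\alpha}) \le (1/|{\cal M}_{v,\alpha}|)\sum_{x_i\in{\cal M}_{v,\alpha}}\langle v,x_i\rangle^2$; this handles the upper bound cleanly and for the lower bound we subtract off $\langle v,\mu({\cal M}_{v,\alpha})\rangle^2$, which by the first-order resilience bound \eqref{def:res3} and Lemma~\ref{lem:deviate_lr} (Eq.~\eqref{eq:res_tail3}) is $O(\rho_3^2/\alpha^2)\,\sigma_v^2$, hence negligible under the hypothesis $\rho_3^2 \le c\alpha$.

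Next I would split $\sum_{x_i\in{\cal M}_{v,\alpha}}\langle v,x_i\rangle^2$ into the contribution from ${\cal M}_{v,\alpha}\cap S_{\rm good}$ and from ${\cal M}_{v,\alpha}\cap S_{\rm bad}$. For the good part, $|{\cal M}_{v,\alpha}\cap S_{\rm good}| \ge (1 - (5/5.5)\alpha - (2/11)\alpha)n = (1-\Theta(\alpha))n$, so resilience \eqref{def:res2} gives that its average of $\langle v,x_i\rangle^2$ is within $\rho_2\sigma_v^2$ of $\sigma_v^2$. For the bad part, which has at most $(2/11)\alpha n$ points, I bound each $\langle v,x_i\rangle^2$ for $x_i\in{\cal M}_{v,\alpha}\cap S_{\rm bad}$ by the fact that such a point lies, in the projected order statistics, between the top tail ${\cal T}_{v,\alpha}$ and the bottom tail ${\cal B}_{v,\alpha}$; hence $\langle v,x_i\rangle^2$ is no larger than the maximum of the second moments of $S_{\rm good}\cap {\cal T}_{v,\alpha}$ and $S_{\rm good}\cap {\cal B}_{v,\alpha}$, each of which is a good set of size at least $\Theta(\alpha)n$, so Eq.~\eqref{eq:res_tail2} of Lemma~\ref{lem:deviate_lr} bounds it by $O(\rho_2/\alpha)\sigma_v^2$. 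Multiplying by the bad fraction $O(\alpha)$ gives an $O(\rho_2)\sigma_v^2$ contribution. Combining, $\sigma_v^2({\cal M}_{v,\alpha}) = (1\pm O(\rho_2 + \rho_3^2/\alpha))\sigma_v^2$, and plugging in the numerical assumptions $\rho_2 \le c$, $\rho_3^2\le c\alpha$ with $c$ small enough yields $\sqrt{1 - O(c)} \ge 0.9$ and $\sqrt{1 + O(c)}\le 1.1$, as in the end of the proof of Lemma~\ref{lem:deviation}.

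The main obstacle, and the one requiring the most care, is bookkeeping the set sizes through the $5.5$-split combined with the $(2/11)$-corruption: one must verify that after removing the top and bottom $(2/5.5)\alpha n$ quantiles and accounting for up to $(2/11)\alpha n$ corrupted points, every ``good'' subset invoked still has size at least $\Theta(\alpha)n$ (and in particular at least $(1-\alpha)n$ when full resilience is needed), so that Definition~\ref{def:resilience_lr} and Lemma~\ref{lem:deviate_lr} apply with the stated constants. This is the same arithmetic that appears in Lemma~\ref{lem:deviation2}, where the corruption level is $((1/5.5)\alpha+\tilde\alpha)$ with $\tilde\alpha\le(1/11)\alpha$; here $(2/11)\alpha = (1/5.5)\alpha + (1/11)\alpha$ so the bounds are identical, and I would simply track the constants to confirm the $0.9/1.1$ window survives. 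No genuinely new idea beyond the mean-estimation argument is needed; it is a routine-but-delicate adaptation.
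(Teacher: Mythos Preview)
Your proposal is correct and matches the paper's approach: its entire proof is the one line ``This follows from Lemma~\ref{lem:deviation},'' since $(2/11)\alpha=(1/5.5)\alpha$ and the linear-regression resilience conditions \eqref{def:res2}--\eqref{def:res3} (with zero population mean) are exactly the mean-estimation resilience needed there. Two harmless slips: your identity $(2/11)=(1/5.5)+(1/11)$ is wrong---in fact $(2/11)=(1/5.5)$, so Lemma~\ref{lem:deviation} applies directly rather than via Lemma~\ref{lem:deviation2}---and the $\rho_3^2/\alpha$ mean-correction term you track is not actually needed, because in this section $\sigma_v^2(\mathcal M_{v,\alpha})$ is defined as the raw (uncentered) second moment.
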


\begin{proof}
This follows from Lemma~\ref{lem:deviation}.
\end{proof}

\begin{lemma}
\label{lem:gamma_approx}
    For an $((2/11)\alpha,\alpha,\rho_1,\rho_2, \rho_3, \rho_4)$-corrupt good set $S$ with respect to $(\beta,\Sigma,\gamma)$, and any unit norm vector $v\in\reals^d$,  we have  $0.99 \gamma \leq \hat{\gamma} \leq 1.01 \gamma $. 
\end{lemma}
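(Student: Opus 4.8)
The plan is to establish the two bounds $\hat\gamma\le 1.01\gamma$ and $\hat\gamma\ge 0.99\gamma$ separately, in each case reducing to the resilience of the residual square (Lemma~\ref{lem:res_residual}) applied to a large subset of the underlying resilient dataset. Recall from Eq.~\eqref{def:robust_gamma} that $\hat\gamma^2=\min_{\bar\beta}\frac{1}{|\cB_{\bar\beta,\alpha}|}\sum_{i\in\cB_{\bar\beta,\alpha}}(y_i-x_i^\top\bar\beta)^2$, where $\cB_{\bar\beta,\alpha}\subset S$ keeps the smallest residual squares $(y_i-x_i^\top\bar\beta)^2$ and has size $|\cB_{\bar\beta,\alpha}|=(1-(3.5/5.5)\alpha)n$. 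Write $S_{\rm good}$ for the underlying $(\alpha,\rho_1,\rho_2,\rho_3,\rho_4)$-resilient dataset and $S_{\rm bad}=S\setminus S_{\rm good}$, so that $|S_{\rm bad}|\le(1/5.5)\alpha n$ by Definition~\ref{def:corruptgoodset_lr}, and set $\rho^*=\max\{\rho_1,\rho_2,\rho_4\}$ as in Lemma~\ref{lem:res_residual}.

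For the upper bound, I would plug $\bar\beta=\beta$ into the minimization, so $\hat\gamma^2\le\frac{1}{|\cB_{\beta,\alpha}|}\sum_{i\in\cB_{\beta,\alpha}}(y_i-x_i^\top\beta)^2$. Since the corruption fraction $(1/5.5)\alpha$ is below the trimming fraction $(3.5/5.5)\alpha$, there is a subset $T\subseteq S\cap S_{\rm good}$ with $|T|=|\cB_{\beta,\alpha}|$; because $\cB_{\beta,\alpha}$ minimizes the sum of residual squares among size-$|\cB_{\beta,\alpha}|$ subsets of $S$, the trimmed mean over $\cB_{\beta,\alpha}$ is at most the mean over $T$. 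As $T\subseteq S_{\rm good}$ has size $\ge(1-\alpha)n$, Lemma~\ref{lem:res_residual} (at $\bar\beta=\beta$, where $\|\Sigma^{1/2}(\beta-\bar\beta)\|=0$) bounds this mean by $(1+\rho^*)\gamma^2$, whence $\hat\gamma^2\le(1+\rho^*)\gamma^2$; choosing the ambient constant $c$ small enough forces the right-hand side to be at most $(1.01)^2\gamma^2$.

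For the lower bound, I would fix an arbitrary $\bar\beta\in\reals^d$ and consider the overlap $G=\cB_{\bar\beta,\alpha}\cap S_{\rm good}$, which has $|G|\ge|\cB_{\bar\beta,\alpha}|-|S_{\rm bad}|\ge(1-(4.5/5.5)\alpha)n\ge(1-\alpha)n$. By nonnegativity of the residual squares and Lemma~\ref{lem:res_residual} applied to $G\subseteq S_{\rm good}$,
\begin{eqnarray*}
\frac{1}{|\cB_{\bar\beta,\alpha}|}\sum_{i\in\cB_{\bar\beta,\alpha}}(y_i-x_i^\top\bar\beta)^2 \;\ge\; \frac{|G|}{|\cB_{\bar\beta,\alpha}|}\,(1-\rho^*)\big(\gamma+\|\Sigma^{1/2}(\beta-\bar\beta)\|\big)^2 \;\ge\; \frac{1-(4.5/5.5)\alpha}{1-(3.5/5.5)\alpha}\,(1-\rho^*)\gamma^2\;.
\end{eqnarray*}
The right-hand side does not depend on $\bar\beta$, so minimizing over $\bar\beta$ gives $\hat\gamma^2\ge\frac{1-(4.5/5.5)\alpha}{1-(3.5/5.5)\alpha}(1-\rho^*)\gamma^2$, which is at least $(0.99)^2\gamma^2$ once $\alpha$ and $c$ are sufficiently small. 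Combining the two bounds finishes the proof.

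I expect the only delicate point to be the uniformity over $\bar\beta$ in the lower bound: this is handled automatically because Lemma~\ref{lem:res_residual} is itself uniform over $\bar\beta$ and over all subsets of $S_{\rm good}$ of size at least $(1-\alpha)n$, so letting the overlap set $G=G(\bar\beta)$ depend on $\bar\beta$ is harmless. Everything else is routine bookkeeping with the trimming constants $1/5.5,\,3.5/5.5,\,4.5/5.5$, the observation $(4.5/5.5)\alpha\le\alpha$, and the constraints $\rho_1,\rho_2,\rho_4<c$ assumed in Theorems~\ref{thm:linear_regression}--\ref{thm:robust_linear_regression}.
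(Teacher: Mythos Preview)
Your proof is correct and follows the same overarching strategy as the paper: reduce both bounds to the residual-square resilience (Lemma~\ref{lem:res_residual}), plug $\bar\beta=\beta$ for the upper bound, and argue uniformly over $\bar\beta$ for the lower bound. The execution differs in how the corrupted points are handled. The paper establishes a single two-sided estimate (Eq.~\eqref{eq:approx_gamma_square}) by splitting the trimmed sum over $\cB_{\bar\beta,\alpha}$ into $S_{\rm good}$ and $S_{\rm bad}$ parts, using part~1 of Lemma~\ref{lem:res_residual} on the former and the tail bound (part~2) on the latter, yielding $|F(\bar\beta)-(\gamma+\|\Sigma^{1/2}(\beta-\bar\beta)\|)^2|\le 4\rho^*(\gamma+\|\Sigma^{1/2}(\beta-\bar\beta)\|)^2$. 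You instead avoid the good/bad decomposition entirely: for the upper bound you use the extremal property of $\cB_{\beta,\alpha}$ (it minimizes the sum among same-size subsets) to compare directly with a fully-good subset, and for the lower bound you drop the bad points by nonnegativity. Your route is a bit more elementary---it only needs part~1 of Lemma~\ref{lem:res_residual}---and even gives a slightly sharper constant $(1+\rho^*)$ in the upper bound versus the paper's $(1+4\rho^*)$, at the cost of an extra $|G|/|\cB_{\bar\beta,\alpha}|$ factor in the lower bound which is harmless for small $\alpha$.

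One cosmetic point: the paper's definition of $\cB_{\bar\beta,\alpha}$ has an internal inconsistency---the text states size $(1-(3.5/5.5)\alpha)n$ but removing the largest $(2/5.5)\alpha n$ points actually leaves $(1-(2/5.5)\alpha)n$, which is what the paper's own proof uses. You used the $(3.5/5.5)$ figure, but your argument is insensitive to this: all you need is that the trimming fraction exceeds the corruption fraction $(1/5.5)\alpha$ and that $|G|\ge(1-\alpha)n$, both of which hold with either constant.
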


\begin{proof}
	Analogous to the proof of Lemma~\ref{lem:deviate_lr}, for any fixed $\bar{\beta}$, we have
	\begin{eqnarray}
		&&\left|\frac{1}{|\cB_{\bar{\beta}, \alpha}|}\sum_{i\in \cB_{\bar{\beta}, \alpha}}(y_i-x_i^\top\bar{\beta})^2-(\gamma+\|\Sigma^{1/2}(\beta-\bar{\beta})\|)^2\right| \nonumber \\ 
		&\leq & \frac{|\sum_{\cB_{\bar{\beta}, \alpha}\cap S_{\rm good}}(y_i-x_i^\top\bar{\beta})^2-(\gamma+\|\Sigma^{1/2}(\beta-\bar{\beta})\|)^2|}{(1-(2/5.5)\alpha)n}\nonumber \\
		&&+\frac{|\sum_{\cB_{\bar{\beta}, \alpha}\cap S_{\rm bad}}(y_i-x_i^\top\bar{\beta})^2-(\gamma+\|\Sigma^{1/2}(\beta-\bar{\beta})\|)^2|}{(1-(2/5.5)\alpha)n}\nonumber\\
		&\stackrel{(a)}{\leq} & \frac{(1-(2/5.5)\alpha)n\rho^*(\gamma+\|\Sigma^{1/2}(\beta-\bar{\beta})\|)^2}{(1-(2/5.5)\alpha)n}+\frac{(2/11)\alpha n \cdot 2\rho^*(\gamma+\|\Sigma^{1/2}(\beta-\bar{\beta})\|)^2/((2/11)\alpha)}{(1-(2/5.5)\alpha)n}\nonumber\\
		&\stackrel{(b)}{\leq} & 4\rho^*(\gamma+\|\Sigma^{1/2}(\beta-\bar{\beta})\|)^2\;, \label{eq:approx_gamma_square}
	\end{eqnarray}
	where $(a)$ follows from Lemma~\ref{lem:res_residual}, and $(b)$ follows from our assumption that $\alpha\leq c$ for some small enough constant $c$.
	
	Let $F(\bar{\beta}) = \frac{1}{|\cB_{\bar{\beta}, \alpha}|}\sum_{i\in \cB_{\bar{\beta}, \alpha}}(y_i-x_i^\top\bar{\beta})^2$. We know $\hat{\gamma}^2 = \min_{\bar\beta}F(\bar{\beta})\leq F(\beta)$, which, together with Eq.~\eqref{eq:approx_gamma_square} implies
	\begin{eqnarray*}
		\hat{\gamma}^2\leq (1+4\rho^*)\gamma^2\leq 1.0201\gamma^2\;,
	\end{eqnarray*}
	when $\rho^*\leq c$ for some $c$ small enough.
	
	Also we have 
	\begin{eqnarray*}
		\hat{\gamma}^2\geq (1-4\rho^*)(\gamma+\|\Sigma^{1/2}(\beta-\bar{\beta})\|)^2\geq (1-4\rho^*)\gamma^2\geq 0.9801 \gamma^2.
	\end{eqnarray*}
	when $\rho^*\leq c$ for some $c$ small enough.
\end{proof}

\begin{lemma} 
     For an $((2/11)\alpha,\alpha,\rho_1,\rho_2, \rho_3,\rho_4 )$-corrupt good set $S$ with respect to $(\beta, \Sigma, \gamma)$, if $\hat\beta\in B_{\thresh,S}$ and $\thresh=42\rho_1 $ then $\big|\, \|\Sigma^{1/2}(\hat\beta-\beta) \|/\gamma-\robdist_S(\hat\beta) \,\big| \leq 0.15 \tau+1.1\rho_1 \leq 10.2\rho_1  $.  
    \label{lem:approximation_lr}
\end{lemma}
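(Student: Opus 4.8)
The plan is to mirror the structure of the proof of Lemma~\ref{lem:outerset} for mean estimation, replacing the role of the robust mean $\mu_v(\mathcal M_{v,\alpha})$ by the robust gradient statistic $\frac{1}{|\mathcal N_{v,\hat\beta,\alpha}|}\sum_{x_i\in\mathcal N_{v,\hat\beta,\alpha}}\langle v, x_i(y_i-x_i^\top\hat\beta)\rangle$, and using the two calibration lemmas already in hand: Lemma~\ref{lem:sigmav_res1}, which pins $\sigma_v(\mathcal M_{v,\alpha})$ between $0.9\sigma_v$ and $1.1\sigma_v$, and Lemma~\ref{lem:gamma_approx}, which pins $\hat\gamma$ between $0.99\gamma$ and $1.01\gamma$. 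First I would show that, for each unit $v$, the numerator of $\robdist_S(\hat\beta)$ is within $O(\rho_1\sigma_v\gamma + \rho_3 \sigma_v \|\Sigma^{1/2}(\hat\beta-\beta)\|)$ of the population quantity $\E_{P_{\beta,\Sigma,\gamma^2}}[\langle v, x_i(y_i-x_i^\top\hat\beta)\rangle] = \langle v,\Sigma(\beta-\hat\beta)\rangle$. This is the linear-regression analogue of Lemma~\ref{lem:deviation}: split $\mathcal N_{v,\hat\beta,\alpha}$ into its intersection with $S_{\rm good}$ and $S_{\rm bad}$, bound the good part by resilience (Eq.~\eqref{def:res1} applied with $\beta$ replaced through the decomposition $y_i - x_i^\top\hat\beta = (y_i-x_i^\top\beta) + x_i^\top(\beta-\hat\beta)$, which brings in the first-moment resilience Eq.~\eqref{def:res3} and the second-moment resilience Eq.~\eqref{def:res2} scaled by $\|\Sigma^{1/2}(\hat\beta-\beta)\|$), and bound the bad part by the tail version Lemma~\ref{lem:deviate_lr} together with the fact that the $\alpha$-tails $\mathcal T_{v,\hat\beta,\alpha}$, $\mathcal B_{v,\hat\beta,\alpha}$ cannot be too extreme — exactly the argument used in Eq.~\eqref{eq:meanubound}.

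Having controlled the numerator and having the denominator $\sigma_v(\mathcal M_{v,\alpha})\hat\gamma$ sandwiched between $0.891\sigma_v\gamma$ and $1.111\sigma_v\gamma$, I would divide through: for each $v$,
\begin{eqnarray*}
\frac{\langle v,\Sigma(\beta-\hat\beta)\rangle - (\text{error}_v)}{1.111\,\sigma_v\gamma} \;\le\; \frac{\frac{1}{|\mathcal N_{v,\hat\beta,\alpha}|}\sum_{\mathcal N_{v,\hat\beta,\alpha}}\langle v, x_i(y_i-x_i^\top\hat\beta)\rangle}{\sigma_v(\mathcal M_{v,\alpha})\hat\gamma} \;\le\; \frac{\langle v,\Sigma(\beta-\hat\beta)\rangle + (\text{error}_v)}{0.891\,\sigma_v\gamma}\;,
\end{eqnarray*}
where $\text{error}_v = O(\rho_1\sigma_v\gamma + \rho_3\sigma_v\|\Sigma^{1/2}(\hat\beta-\beta)\|)$. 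Taking $\max$ over $\|v\|\le 1$ and invoking Lemma~\ref{lem:vector_norm} / Lemma~\ref{lem:equidist} to identify $\max_v \langle v,\Sigma(\beta-\hat\beta)\rangle/\sigma_v$ with $\|\Sigma^{1/2}(\hat\beta-\beta)\|$, I get two-sided bounds of the form $0.9\,\robdist_S(\hat\beta) - c\rho_1 \le \|\Sigma^{1/2}(\hat\beta-\beta)\|/\gamma \le 1.1\,\robdist_S(\hat\beta) + c\rho_1$ for a small constant $c$, after absorbing the $\rho_3\|\Sigma^{1/2}(\hat\beta-\beta)\|/\alpha$ term using the hypothesis $\rho_3^2\le c\alpha$ and the fact that on $B_{\thresh,S}$ the quantity $\|\Sigma^{1/2}(\hat\beta-\beta)\|/\gamma$ is itself $O(\thresh)=O(\rho_1)$ (a bootstrapping step analogous to the one in Lemma~\ref{lem:local_asmp}). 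Finally, using $\robdist_S(\hat\beta)\le\thresh=42\rho_1$, the difference $|\,\|\Sigma^{1/2}(\hat\beta-\beta)\|/\gamma - \robdist_S(\hat\beta)\,|$ is at most $0.1\thresh + c\rho_1 \le 0.15\thresh + 1.1\rho_1 \le 10.2\rho_1$ after plugging in the constants.

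The main obstacle I expect is the cross term produced by $x_i^\top(\beta-\hat\beta)$ inside the residual: unlike mean estimation, the score here couples $v$, the data, and the unknown $\hat\beta$ quadratically through $\langle v,x_i\rangle(x_i^\top(\beta-\hat\beta))$, so the error is not uniformly $O(\rho_1)$ but carries a $\|\Sigma^{1/2}(\hat\beta-\beta)\|$-dependent piece controlled only by the weaker resilience constants $\rho_2,\rho_3$. Handling this requires the two-sided restriction $\hat\beta\in B_{\thresh,S}$ to be used in a mildly circular way — one first derives a crude bound $\|\Sigma^{1/2}(\hat\beta-\beta)\|/\gamma = O(1)$ from $\robdist_S(\hat\beta)\le\thresh$ and the denominator bound alone, then feeds it back to show the cross term is genuinely lower order under $\rho_3^2\le c\alpha$, and only then obtains the sharp $10.2\rho_1$ constant. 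Keeping the constants consistent with the $0.9/1.1$ bounds of Lemmas~\ref{lem:sigmav_res1} and~\ref{lem:gamma_approx} and with the earlier mean-estimation constants is the bookkeeping-heavy part, but it is routine once the decomposition above is set up.
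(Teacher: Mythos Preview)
Your overall structure --- decompose the residual as $y_i - x_i^\top\hat\beta = \eta_i + x_i^\top(\beta-\hat\beta)$, bound the numerator direction by direction, sandwich with the calibration Lemmas~\ref{lem:sigmav_res1} and~\ref{lem:gamma_approx}, and finish via Lemma~\ref{lem:vector_norm} --- matches the paper. The substantive difference is in how the cross term $\langle v,x_i\rangle\, x_i^\top(\beta-\hat\beta)$ is controlled and how the resulting $\|\Sigma^{1/2}(\hat\beta-\beta)\|$-dependent error is absorbed.

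The paper does \emph{not} use $\rho_3$ for this term and does \emph{not} bootstrap. Instead it notes that the second-moment resilience Eq.~\eqref{def:res2} controls the operator norm $\bigl\|\Sigma^{-1/2}\bigl(\tfrac{1}{|T|}\sum_T x_ix_i^\top-\Sigma\bigr)\Sigma^{-1/2}\bigr\|\le\rho_2$, and then applies Lemma~\ref{lem:bounded_norm} to obtain
\[
\Bigl\|\Sigma^{-1/2}\Bigl(\tfrac{1}{|T|}\sum_{T} x_ix_i^\top-\Sigma\Bigr)(\beta-\hat\beta)\Bigr\|\;\le\;\rho_2\,\|\Sigma^{1/2}(\hat\beta-\beta)\|\,.
\]
Combined with Eq.~\eqref{def:res1} for the $x_i\eta_i$ piece, one gets
\[
\Bigl|\max_v \tfrac{1}{\sigma_v}\cdot\tfrac{1}{|\mathcal N_{v,\hat\beta,\alpha}|}\sum \langle v,x_i(y_i-x_i^\top\hat\beta)\rangle \;-\; \|\Sigma^{1/2}(\hat\beta-\beta)\|\Bigr|\;\le\;\rho_2\,\|\Sigma^{1/2}(\hat\beta-\beta)\|+\rho_1\gamma\,.
\]
The crucial observation is that the $\|\Sigma^{1/2}(\hat\beta-\beta)\|$-dependent piece enters with coefficient $\rho_2$, so it can be absorbed \emph{multiplicatively}: rearrange to put $(1\pm\rho_2)\|\Sigma^{1/2}(\hat\beta-\beta)\|$ on one side, divide through, and combine with the $0.9/1.1$ bounds on $\sigma_v(\mathcal M_{v,\alpha})$ and the $0.99/1.01$ bounds on $\hat\gamma$. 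This yields directly $0.86\,\robdist_S(\hat\beta)-1.1\rho_1\le\|\Sigma^{1/2}(\hat\beta-\beta)\|/\gamma\le 1.15\,\robdist_S(\hat\beta)+1.1\rho_1$, and the claim follows from $\robdist_S(\hat\beta)\le\tau$. No circular step and no appeal to $\rho_3^2\le c\alpha$ is needed in this lemma.

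Your invocation of $\rho_3$ for the cross term is misplaced: $\rho_3$ bounds first moments $\sum\langle v,x_i\rangle$, not the bilinear form $v^\top(\sum x_ix_i^\top-\Sigma)(\beta-\hat\beta)$. The tool you are missing is Lemma~\ref{lem:bounded_norm}, which converts the scalar second-moment resilience into exactly the vector bound you need, with coefficient $\rho_2$. Once you use it, the bootstrap you anticipate as ``the main obstacle'' disappears entirely.
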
 
\begin{proof}

By Lemma~\ref{lem:true_dist_lr}, Lemma~\ref{lem:bounded_norm} and resilience Eq.~\eqref{def:res1} and Eq.~\eqref{def:res2}, we have    

\begin{eqnarray*}
&&\left|\max_{v:\|v\|\leq 1}   \frac{ \frac{1}{|{\cal N}_{v,\hat{\beta},\alpha}|} \sum_{i\in {\cal N}_{v,\hat{\beta},\alpha}}\langle v , x_i(y_i-x_i^\top\hat{\beta}) \rangle}{\sigma_v}-\left\|\Sigma^{1/2}(\beta-\hat{\beta})\right\|\right|\\
& = &\left|\max_{v:\|v\|\leq 1}   \frac{ \frac{1}{|{\cal N}_{v,\hat{\beta},\alpha}|} \sum_{i\in {\cal N}_{v,\hat{\beta},\alpha}}\left(v^\top x_ix_i^\top(\beta-\hat{\beta})+v^\top x_i\eta_i\right)}{\sigma_v}-\max_{v:\|v\|\leq 1}\frac{v^\top\Sigma(\beta-\hat{\beta})}{\sigma_v}\right|\\
&\leq & \max_{v:\|v\|\leq 1}\left|\frac{v^\top\left(\frac{1}{|{\cal N}_{v,\hat{\beta},\alpha}|}\sum_{i\in {\cal N}_{v,\hat{\beta},\alpha}}x_ix_i^\top-\Sigma\right)(\beta-\hat\beta)}{\sigma_v}+\frac{v^\top\frac{1}{|{\cal N}_{v,\hat{\beta},\alpha}|}\sum_{i\in {\cal N}_{v,\hat{\beta},\alpha}}x_i\eta_i}{\sigma_v}\right|\\
&\leq  &\left\|\Sigma^{-1/2}\left(\frac{1}{|{\cal N}_{v,\hat{\beta},\alpha}|}\sum_{i\in {\cal N}_{v,\hat{\beta},\alpha}}x_ix_i^\top-\Sigma\right)(\beta-\hat{\beta})\right\|+\left\|\Sigma^{-1/2}\frac{1}{|{\cal N}_{v,\hat{\beta},\alpha}|}\sum_{i\in {\cal N}_{v,\hat{\beta},\alpha}}x_i\eta_i\right\|\\
&\leq & \rho_2  \|\Sigma^{1/2}(\beta-\hat\beta)\|+\rho_1 \gamma\;.\\
\end{eqnarray*}     

Together with Lemma~\ref{lem:sigmav_res1}, this implies 
\begin{eqnarray*}
	\frac{0.9\robdist_S(\hat\beta)\hat{\gamma}-\rho_1 \gamma}{1+\rho_2 }\;\;\leq\;\;\left\|\Sigma^{1/2}(\beta-\hat{\beta})\right\|\;\;\leq\;\; \frac{1.1\robdist_S(\hat\beta)\hat{\gamma}+\rho_1 \gamma}{1-\rho_2 }\;.
\end{eqnarray*}

Assuming $\rho_2 \leq 0.013$, we have $0.86\robdist_S(\hat\beta)-1.1\rho_1 \leq\left\|\Sigma^{1/2}(\beta-\hat{\beta})\right\|/\gamma\leq 1.15\robdist_S(\hat\beta)+1.1\rho_1 $. Since $\robdist_S(\hat\beta)\leq \tau$, we get the desired bound.
\end{proof}
\subsubsection{Bounded Volume}
We show that the assumption~\ref{asmp_vol} in Theorem~\ref{thm:utility} is satisfied for robust estimate $\robdist_S(\hat\beta)$.

\begin{lemma}
For $\rho=\rho_1  $, $c_0=31.8$,
$c_1=10.2$, $\thresh=42\rho_1 $, 
$\sens =110\rho_1 /(\alpha n)$, and $c_2\geq\log(67/12)+\log((c_0+2c_1)/c_1)$, 
we have 
 $(7/8)\thresh-(k^*+1)\sens>0$,  
        \begin{eqnarray*}
        \frac{{\rm Vol}(B_{ \thresh+(k^*+1)\sens+c_1\rho,S}) }
        {{\rm Vol}(B_{ (7/8)\thresh-(k^*+1)\sens-c_1\rho,S})}
        &\leq& e^{c_2 d}\;, \text{ and }\\
        \frac{{\rm Vol}(\{\hat\theta:\|\Sigma^{1/2}(\hat \beta -\beta)\|/\gamma \leq (c_0+2c_1)\rho\})}
        {{\rm Vol}(\{\hat\theta: \|\Sigma^{1/2} (\hat \beta -\beta)\|/\gamma \leq c_1\rho\})}
        &\leq &e^{c_2 d}\;.
        \end{eqnarray*}
    \label{lem:vol_lr} 
\end{lemma}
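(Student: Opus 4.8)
\textbf{Proof plan for Lemma~\ref{lem:vol_lr}.}

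The plan is to mirror exactly the proof of Lemma~\ref{lem:vol} from the mean estimation section, since the structure of the two statements is identical: both ask for a volume ratio bound on the super-level sets $B_{t,S}$ of the surrogate distance and a volume ratio bound on the exact ellipsoidal balls in the target metric. The only substantive difference is that the target metric is now $(1/\gamma)\|\Sigma^{1/2}(\hat\beta-\beta)\|$ rather than $\|\Sigma^{-1/2}(\hat\mu-\mu)\|$, but this is again a Mahalanobis-type (ellipsoidal) norm, so the key geometric fact still holds.

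First I would dispatch the second displayed inequality. The set $\{\hat\beta : (1/\gamma)\|\Sigma^{1/2}(\hat\beta-\beta)\| \le r\}$ is an ellipsoid centered at $\beta$, and its volume equals $c_d\, \gamma^d\, |\Sigma|^{-1/2}\, r^d$ for a dimension-only constant $c_d$ (the linear map $\hat\beta \mapsto (1/\gamma)\Sigma^{1/2}(\hat\beta-\beta)$ sends it to the unit ball of radius $r$, with Jacobian $\gamma^d|\Sigma|^{-1/2}$). Hence the ratio of the two ellipsoid volumes is exactly $((c_0+2c_1)/c_1)^d$, which is at most $e^{c_2 d}$ by the hypothesis $c_2 \ge \log((c_0+2c_1)/c_1)$.

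For the first inequality, the obstacle — just as in Lemma~\ref{lem:vol} — is that I do not a priori control $B_{t,S}$ for $t$ exceeding $\thresh$, since the approximation Lemma~\ref{lem:approximation_lr} is only stated for $\hat\beta\in B_{\thresh,S}$. The fix is to invoke (or state, as was done with Corollary~\ref{coro:outerset}) a corollary of Lemma~\ref{lem:approximation_lr} valid on the doubled set $B_{2\thresh,S}$, giving $\big|\,(1/\gamma)\|\Sigma^{1/2}(\hat\beta-\beta)\| - \robdist_S(\hat\beta)\,\big| \le c'\rho_1$ for a slightly larger constant $c'$ (analogous to the $14.2\rho_1$ bound there). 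Then, using that for $n \ge C\log(1/(\delta\zeta))/(\alpha\varepsilon)$ with $C$ large one has $(k^*+1)\sens \le 0.3\rho_1$, I check that $(7/8)\thresh-(k^*+1)\sens > 0$ and that $\thresh+(k^*+1)\sens + c_1\rho \le 2\thresh$, so both radii appearing in the first ratio lie in the regime where the doubled corollary applies. Sandwiching $B_{\,\thresh+(k^*+1)\sens+c_1\rho,S}$ inside an ellipsoid of radius $\thresh+(k^*+1)\sens+c_1\rho+c'\rho_1$ and $B_{\,(7/8)\thresh-(k^*+1)\sens-c_1\rho,S}$ outside an ellipsoid of radius $(7/8)\thresh-(k^*+1)\sens-c_1\rho-c'\rho_1$, the volume ratio becomes a ratio of $d$-th powers of radii; plugging in $\rho=\rho_1$, $c_1=10.2$, $\thresh=42\rho_1$, $\sens=110\rho_1/(\alpha n)$ and $(k^*+1)\sens\le 0.3\rho_1$ bounds this by $(67/12)^d \le e^{c_2 d}$ for $c_2\ge\log(67/12)$, and in particular confirms $(7/8)\thresh-(k^*+1)\sens>0$.

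The only place where genuine care is needed beyond bookkeeping is verifying that the constant in the doubled-set version of Lemma~\ref{lem:approximation_lr} is small enough — specifically that $(7/8)\thresh$ still dominates $(k^*+1)\sens + c_1\rho + c'\rho_1$ — which reduces to a numerical inequality among the fixed constants $c_0,c_1$ and the coefficient $42$ in $\thresh$, together with the sample-size condition that makes $(k^*+1)\sens$ negligible. I expect this to be the main (though routine) obstacle; everything else is a direct transcription of the mean-estimation argument with $\Sigma^{-1/2}$ replaced by $(1/\gamma)\Sigma^{1/2}$.
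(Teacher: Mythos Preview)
Your proposal is correct and follows essentially the same approach as the paper: the paper's proof says explicitly that it mirrors Lemma~\ref{lem:vol}, handles the second inequality via the ellipsoid volume formula, and handles the first by stating Corollary~\ref{coro:approximation_lr} (the doubled-set version of Lemma~\ref{lem:approximation_lr} with constant $14.2\rho_1$) and then plugging in the same numerical checks you outline. You have correctly anticipated every step, including the need for and form of the $B_{2\thresh,S}$ corollary.
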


\begin{proof}

The proof is similar to the proof of Lemma~\ref{lem:vol}. The second part of assumption~\ref{asmp_vol} follows from the fact that 
\begin{eqnarray*}
    {\rm Vol}(\{\hat\mu:\|\Sigma^{1/2}(\hat\beta-\beta)\|\leq r\}) = c_d |\Sigma| r^d\;,
\end{eqnarray*}
for some constant $c_d$ that only depends on the dimension
and selecting $c_2\geq \log((c_0+2c_1)/c_1)$. The first part follows from our choices of $c_0$, $c_1$, $\thresh$, $\sens$ and the following corollary.

\begin{coro}[{Corollary of Lemma~\ref{lem:approximation_lr}}]
     If $\hat\beta\in B_{2\thresh,S}$ and $\thresh=42\rho_1 $ then $\big|\, \|\Sigma^{1/2}(\hat\beta-\beta) \|/\gamma-\robdist_S(\hat\beta) \,\big| \leq   14.2 \rho_1 $.  
    \label{coro:approximation_lr}
\end{coro} 
\end{proof}

\subsubsection{Resilience implies bounded local sensitivity}
\label{sec:lr_proof_3}

We show that resilience implies the assumption \ref{asmp_local} in Theorem \ref{thm:utility}  (Lemma~\ref{lem:local_asmp_lr}). 
Assuming $(k^*+1)/n\leq \alpha/2$, we show a set $S'$ with at most $k^*$ data points arbitrarily changed from $S$ has bounded local sensitivity. This implies that $S'$ is a $((1/5.5)\alpha+(k^*/n),\alpha,\rho_1,\rho_2, \rho_3, \rho_4 )$-corrupt good set with respect to  $(\beta,\Sigma, \gamma)$.
\begin{lemma}
    For an $((1/5.5)\alpha+\tilde\alpha,\alpha,\rho_1,\rho_2, \rho_3,\rho_4)$-corrupt good set $S'$ with respect to $(\beta, \Sigma, \gamma)$,  $\tilde\alpha\leq(1/11)\alpha$, and any unit norm $v\in\reals^d$,  we have  $0.9 \sigma_v \leq \sigma_v({\cal M}_{v,\alpha}) \leq 1.1 \sigma_v $. 
\end{lemma}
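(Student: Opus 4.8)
The plan is to mirror the variance estimate from the proof of Lemma~\ref{lem:deviation2}, specialized to the linear-regression definition $\sigma_v({\cal M}_{v,\alpha})^2=(1/|{\cal M}_{v,\alpha}|)\sum_{x_i\in{\cal M}_{v,\alpha}}\langle v,x_i\rangle^2$. Because the $x_i$'s are zero mean this estimator is \emph{uncentered}, which actually makes the argument easier than in Lemma~\ref{lem:deviation2}: the $O(\rho_1^2)$ correction term (which came from subtracting the empirical mean of the middle block) disappears, and the first-moment resilience $\rho_3$ is never needed. Write $S_{\rm good}$ for the underlying $(\alpha,\rho_1,\rho_2,\rho_3,\rho_4)$-resilient set and $S_{\rm bad}=S'\setminus S_{\rm good}$, so that by $\tilde\alpha\le(1/11)\alpha$ we have $|S_{\rm bad}|\le((1/5.5)\alpha+\tilde\alpha)n\le(3/11)\alpha n$. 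I would split
\[
\sigma_v({\cal M}_{v,\alpha})^2=\frac{1}{(1-(4/5.5)\alpha)n}\Big(\sum_{x_i\in{\cal M}_{v,\alpha}\cap S_{\rm good}}\langle v,x_i\rangle^2+\sum_{x_i\in{\cal M}_{v,\alpha}\cap S_{\rm bad}}\langle v,x_i\rangle^2\Big)
\]
and bound the two sums separately.

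For the good part, since ${\cal M}_{v,\alpha}$ discards only $(4/5.5)\alpha n$ points, the set $T:={\cal M}_{v,\alpha}\cap S_{\rm good}$ has size at least $(1-(5/5.5)\alpha-\tilde\alpha)n\ge(1-\alpha)n$, so resilience Eq.~\eqref{def:res2} gives $(1-\rho_2)\sigma_v^2\le(1/|T|)\sum_{x_i\in T}\langle v,x_i\rangle^2\le(1+\rho_2)\sigma_v^2$. The only step with any content is the bad part: for $x_i\in{\cal M}_{v,\alpha}\cap S_{\rm bad}$ the projection $\langle v,x_i\rangle$ lies between the largest value of ${\cal B}_{v,\alpha}$ and the smallest value of ${\cal T}_{v,\alpha}$, and since each of ${\cal T}_{v,\alpha}\cap S_{\rm good}$ and ${\cal B}_{v,\alpha}\cap S_{\rm good}$ has size at least $(2/5.5)\alpha n-|S_{\rm bad}|\ge(1/11)\alpha n$, a short case split on the sign of $\langle v,x_i\rangle$ shows that $\langle v,x_i\rangle^2$ is at most the average of $\langle v,x_j\rangle^2$ over the good points of whichever tail lies on the relevant side; the tail-resilience bound Eq.~\eqref{eq:res_tail2} of Lemma~\ref{lem:deviate_lr}, applied to a set of size at least $(1/11)\alpha n$, controls that average by $\big(1+\tfrac{2-(1/11)\alpha}{(1/11)\alpha}\rho_2\big)\sigma_v^2\le(1+(22/\alpha)\rho_2)\sigma_v^2$, while trivially $\langle v,x_i\rangle^2\ge 0$.

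Plugging these bounds back and using $|S_{\rm bad}|\le(3/11)\alpha n$ gives
\[
\frac{(1-\alpha)(1-\rho_2)}{1-(4/5.5)\alpha}\,\sigma_v^2\;\le\;\sigma_v({\cal M}_{v,\alpha})^2\;\le\;\frac{(1-(4/5.5)\alpha)(1+\rho_2)+(3/11)\alpha+6\rho_2}{1-(4/5.5)\alpha}\,\sigma_v^2,
\]
and for $\alpha$ and $\rho_2$ below the universal constants already imposed in the standing hypotheses, the left side is $\ge 0.81\,\sigma_v^2$ and the right side is $\le 1.21\,\sigma_v^2$; taking square roots yields $0.9\,\sigma_v\le\sigma_v({\cal M}_{v,\alpha})\le1.1\,\sigma_v$. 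I expect no genuine obstacle here — the single point requiring care is the geometric sandwiching of the adversarial projections inside ${\cal M}_{v,\alpha}$ by the good points that survive in the two tails, after which Lemma~\ref{lem:deviate_lr} finishes it exactly as in the mean-estimation case; everything else is bookkeeping on the fractions.
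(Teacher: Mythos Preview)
Your proposal is correct and mirrors the paper's approach: the paper's proof is literally the one-line ``This follows from Lemma~\ref{lem:deviation2},'' and what you have written is precisely that argument spelled out, adapted to the uncentered estimator $\sigma_v({\cal M}_{v,\alpha})^2=(1/|{\cal M}_{v,\alpha}|)\sum\langle v,x_i\rangle^2$. Your observation that the $\rho_3$-dependent correction term vanishes (because no empirical mean is subtracted) is a valid simplification over the mean-estimation version, but the overall decomposition into good/bad parts and the sandwiching of adversarial points via the good tail averages is exactly the mechanism of Lemma~\ref{lem:deviation2}.
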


\begin{proof} This follows from Lemma~\ref{lem:deviation2}. 
\end{proof}
\begin{lemma}
\label{lem:gamma_approx2}
    For an $((1/5.5)\alpha+\tilde{\alpha},\alpha,\rho_1,\rho_2, \rho_3, \rho_4)$-corrupt good set $S'$ with respect to $(\beta,\Sigma,\gamma)$, and any unit norm vector $v\in\reals^d$,  we have  $0.99 \gamma \leq \hat{\gamma} \leq 1.01 \gamma $. 
\end{lemma}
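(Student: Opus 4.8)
The plan is to reuse the argument from the proof of Lemma~\ref{lem:gamma_approx} almost verbatim. The only change is that the corruption level has increased from $(2/11)\alpha = (1/5.5)\alpha$ to $(1/5.5)\alpha + \tilde\alpha$; since $\tilde\alpha \leq (1/11)\alpha$ the total fraction of corrupted points is at most $(3/11)\alpha$, still a small constant multiple of $\alpha$, so every application of Lemma~\ref{lem:res_residual} (resilience of the residual square) goes through with only slightly worse absolute constants.

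First I would fix an arbitrary $\bar\beta \in \reals^d$, set $F(\bar\beta) = (1/|\cB_{\bar\beta,\alpha}|)\sum_{i\in\cB_{\bar\beta,\alpha}}(y_i - x_i^\top\bar\beta)^2$, and recall $\hat\gamma^2 = \min_{\bar\beta} F(\bar\beta)$. Writing $S_{\rm good}$ for the underlying resilient dataset and $S_{\rm bad}$ for the corrupted points, I split $\cB_{\bar\beta,\alpha}$ into $\cB_{\bar\beta,\alpha}\cap S_{\rm good}$ and $\cB_{\bar\beta,\alpha}\cap S_{\rm bad}$. On the good part, since $|\cB_{\bar\beta,\alpha}\cap S_{\rm good}| \geq |\cB_{\bar\beta,\alpha}| - (3/11)\alpha n \geq (1-\alpha)n$, part~1 of Lemma~\ref{lem:res_residual} controls its contribution to within a factor $\rho^*$ of $(\gamma + \|\Sigma^{1/2}(\beta-\bar\beta)\|)^2$, where $\rho^* = \max\{\rho_1,\rho_2,\rho_4\}$. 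On the bad part, $|\cB_{\bar\beta,\alpha}\cap S_{\rm bad}| \leq (3/11)\alpha n$, and every residual kept in $\cB_{\bar\beta,\alpha}$ is at most the smallest of the $(2/5.5)\alpha n$ largest residuals that were discarded; at least $(1/11)\alpha n$ of those discarded residuals lie in $S_{\rm good}$, so part~2 of Lemma~\ref{lem:res_residual} applied with set size $(1/11)\alpha n$ bounds each such residual by $O(1/\alpha)\,\rho^*(\gamma+\|\Sigma^{1/2}(\beta-\bar\beta)\|)^2$. Combining the two parts yields, in analogy with Eq.~\eqref{eq:approx_gamma_square}, a bound $|F(\bar\beta) - (\gamma + \|\Sigma^{1/2}(\beta-\bar\beta)\|)^2| \leq C\rho^*(\gamma + \|\Sigma^{1/2}(\beta-\bar\beta)\|)^2$ for an absolute constant $C$ (slightly larger than the $4$ obtained in Lemma~\ref{lem:gamma_approx}).

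From here the lemma follows exactly as before: $\hat\gamma^2 \leq F(\beta) \leq (1 + C\rho^*)\gamma^2$ gives the upper bound, and if $\bar\beta^\star$ attains the minimum then $\hat\gamma^2 = F(\bar\beta^\star) \geq (1 - C\rho^*)(\gamma + \|\Sigma^{1/2}(\beta - \bar\beta^\star)\|)^2 \geq (1 - C\rho^*)\gamma^2$ gives the lower bound. Choosing $\rho^* \leq c$ with $c$ small enough (guaranteed by the standing hypotheses $\rho_1,\rho_2,\rho_4 < c$) makes $\sqrt{1 + C\rho^*} \leq 1.01$ and $\sqrt{1 - C\rho^*} \geq 0.99$. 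The only genuinely new bookkeeping relative to Lemma~\ref{lem:gamma_approx} is checking that, after folding in the extra $\tilde\alpha \leq (1/11)\alpha$ of corruption, the relevant subsets still have size at least $(1-\alpha)n$ (for part~1) and at least $(1/11)\alpha n$ (for part~2 on the discarded tail); this is mechanical and is where I expect any mild friction, but there is no real obstacle — the lemma is simply the broader-neighborhood version of Lemma~\ref{lem:gamma_approx} needed so that the safety-test analysis (Lemma~\ref{lem:local_asmp_lr}) can use $\hat\gamma \asymp \gamma$ for all $S'$ within Hamming distance $k^*$ of $S$, not just for $S$ itself.
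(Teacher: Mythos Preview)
Your proposal is correct and is exactly the approach the paper takes: the paper's own proof is the single sentence ``This proof follows from the proof of Lemma~\ref{lem:gamma_approx},'' and you have correctly spelled out the only changes needed, namely that with corruption $(1/5.5)\alpha+\tilde\alpha\le (3/11)\alpha$ the good part of $\cB_{\bar\beta,\alpha}$ still has size at least $(1-\alpha)n$ and the discarded tail still contains at least $(1/11)\alpha n$ good points, so both invocations of Lemma~\ref{lem:res_residual} go through with slightly larger absolute constants.
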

\begin{proof}
	This proof follows from the proof of Lemma~\ref{lem:gamma_approx}.
\end{proof}

\begin{lemma}
    \label{lem:distance_perturbed_lr} 
    For an $((1/5.5)\alpha+\tilde\alpha,\alpha,\rho_1,\rho_2, \rho_3,\rho_4)$-corrupt good set $S'$ with respect to $(\beta, \Sigma, \gamma)$ and  $\tilde\alpha\leq(1/11)\alpha$, if $\hat\beta \in B_{t, S'}$
    then we have  $ \|\Sigma^{1/2}(\hat\beta-\beta)\|/\gamma \leq 1.1\rho_1  + 1.15t$ and $\big|\robdist_{S'} (\hat\beta) - \|\Sigma^{1/2}(\hat\beta-\beta) \| /\gamma\big|\leq 1.1\rho_1  + 0.15t$. 
\end{lemma}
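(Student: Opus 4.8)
The plan is to mirror the proof of Lemma~\ref{lem:approximation_lr} almost verbatim, replacing the $((2/11)\alpha,\alpha,\rho_1,\rho_2,\rho_3,\rho_4)$-corrupt good set $S$ there by the slightly more corrupted set $S'$ and the threshold $\thresh$ by the free parameter $t$. The two auxiliary robustness facts used in that argument are already available in the present corruption regime: the two-sided bound $0.9\sigma_v \le \sigma_v({\cal M}_{v,\alpha}) \le 1.1\sigma_v$ (the $\sigma_v$-robustness lemma established just above for corrupt good sets, itself a consequence of Lemma~\ref{lem:deviation2}) and $0.99\gamma \le \hat\gamma \le 1.01\gamma$ (Lemma~\ref{lem:gamma_approx2}). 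So the only thing that really needs rechecking is the uniform-in-$v$ approximation of the numerator of $\robdist_{S'}(\hat\beta)$ by the population quantity $\langle v,\Sigma(\beta-\hat\beta)\rangle$.

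First I would fix a unit vector $v$ and, for each sample, decompose $y_i - x_i^\top\hat\beta = \eta_i + x_i^\top(\beta-\hat\beta)$, so that the average over ${\cal N}_{v,\hat\beta,\alpha}$ splits into a ``noise'' term $(1/|{\cal N}_{v,\hat\beta,\alpha}|)\sum \langle v,x_i\rangle\eta_i$ and a ``signal'' term $(1/|{\cal N}_{v,\hat\beta,\alpha}|)\sum \langle v,x_i\rangle x_i^\top(\beta-\hat\beta)$. On the good part ${\cal N}_{v,\hat\beta,\alpha}\cap S_{\rm good}$, which has size at least $(1-\alpha)n$ since the total corruption $(1/5.5)\alpha + \tilde\alpha \le (3/11)\alpha$ together with the trimmed $(4/5.5)\alpha$ fraction is at most $\alpha$, resilience \eqref{def:res1} bounds the noise term by $\rho_1\sigma_v\gamma$, and resilience \eqref{def:res2} (equivalently $-\rho_2\Sigma \preceq \widehat\Sigma_T - \Sigma \preceq \rho_2\Sigma$ on that subset, combined with Lemma~\ref{lem:vector_norm} and Lemma~\ref{lem:true_dist_lr}) bounds the deviation of the signal term from $\langle v,\Sigma(\beta-\hat\beta)\rangle$ by $\rho_2\sigma_v\|\Sigma^{1/2}(\beta-\hat\beta)\|$. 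For the corrupted points inside ${\cal N}_{v,\hat\beta,\alpha}$, I would use that by construction each such point's value $\langle v, x_i(y_i - x_i^\top\hat\beta)\rangle$ is sandwiched between the extremes attained in ${\cal T}_{v,\hat\beta,\alpha}$ and ${\cal B}_{v,\hat\beta,\alpha}$, and control those extremes via Lemma~\ref{lem:deviate_lr} applied to the good points in ${\cal T}_{v,\hat\beta,\alpha}$ and ${\cal B}_{v,\hat\beta,\alpha}$, each of size at least $(1/11)\alpha n$ after removing the at most $(3/11)\alpha n$ corrupted points — exactly as in Lemma~\ref{lem:approximation_lr}, with the $(2-\tilde\alpha)/\tilde\alpha$ factors staying $O(1)$ once multiplied by the $O(\alpha)$ fraction of corrupted points, which is why the $\rho_1$-coefficient does not inflate the way it does in the mean case.

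Combining these bounds, dividing by $\sigma_v({\cal M}_{v,\alpha})\hat\gamma$, taking the maximum over $\|v\|\le1$, and invoking Lemma~\ref{lem:true_dist_lr} to identify $\max_v \langle v,\Sigma(\beta-\hat\beta)\rangle/\sigma_v = \|\Sigma^{1/2}(\beta-\hat\beta)\|$, I would obtain a two-sided estimate of the shape
\[
0.86\,\robdist_{S'}(\hat\beta) - 1.1\rho_1 \;\le\; \|\Sigma^{1/2}(\hat\beta-\beta)\|/\gamma \;\le\; 1.15\,\robdist_{S'}(\hat\beta) + 1.1\rho_1,
\]
using $\rho_1,\rho_2 < c$ for a small enough constant $c$ to absorb the $(1\pm\rho_2)$ and $0.9/1.1$ factors (and, via Lemma~\ref{lem:bounded_norm}, the passage from a per-$v$ bound to the norm). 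Plugging in $\robdist_{S'}(\hat\beta)\le t$ gives $\|\Sigma^{1/2}(\hat\beta-\beta)\|/\gamma \le 1.15 t + 1.1\rho_1$, and rearranging gives $|\robdist_{S'}(\hat\beta) - \|\Sigma^{1/2}(\hat\beta-\beta)\|/\gamma| \le 0.15 t + 1.1\rho_1$, as claimed. I expect the middle paragraph to be the main obstacle: making the numerator approximation hold simultaneously for every direction $v$ and every candidate $\hat\beta\in B_{t,S'}$, since the trimmed index set ${\cal N}_{v,\hat\beta,\alpha}$ depends on both — this is precisely where the robustness of the one-dimensional statistics (keeping corrupted points from escaping the sandwich) must be combined with the resilience tail bounds of Lemma~\ref{lem:deviate_lr}, and where the bookkeeping of the corruption fractions $(1/5.5)\alpha+\tilde\alpha$, $(4/5.5)\alpha$, and the residual $(1/11)\alpha$ has to be done carefully to land the constants where the statement claims them.
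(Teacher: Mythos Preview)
Your proposal is correct and is precisely the approach the paper intends: its entire proof of Lemma~\ref{lem:distance_perturbed_lr} is the one line ``It follows from the proof of Lemma~\ref{lem:approximation_lr},'' and you have correctly identified the two ingredients that need to be upgraded to the $(1/5.5)\alpha+\tilde\alpha$ corruption level, namely the $\sigma_v$-robustness (via Lemma~\ref{lem:deviation2}) and the $\hat\gamma$-robustness (Lemma~\ref{lem:gamma_approx2}). Your fraction bookkeeping ($|{\cal N}_{v,\hat\beta,\alpha}\cap S_{\rm good}|\ge(1-\alpha)n$ and $|{\cal T}_{v,\hat\beta,\alpha}\cap S_{\rm good}|\ge(1/11)\alpha n$) is right, and your explicit good/bad split for the numerator is in fact more careful than the paper's own proof of Lemma~\ref{lem:approximation_lr}, which applies resilience to ${\cal N}_{v,\hat\beta,\alpha}$ directly without spelling out how the bad points are controlled.
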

\begin{proof}
    It follows from the proof of Lemma~\ref{lem:approximation_lr}. 
\end{proof}

\begin{lemma}
    \label{lem:local_asmp_lr}
    For $\sens=110\rho_1 /(\alpha n)$, $\thresh = 42\rho_1 $, 
    and an $((1/5.5)\alpha,\alpha,\rho_1,\rho_2, \rho_3,\rho_4)$-corrupt good  $S$, if 
    \begin{eqnarray*}
        n\;=\;\Omega\Big(
        \frac{\log(1/(\delta\zeta))}{\alpha\varepsilon}
        \Big)\;,
    \end{eqnarray*} 
    with a large enough constant  
    then the local sensitivity in assumption~\ref{asmp_local} is satisfied.
\end{lemma}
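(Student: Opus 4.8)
The plan is to follow, step for step, the mean-estimation argument behind Lemma~\ref{lem:local_asmp}, the only change being that the surrogate Eq.~\eqref{def:lr_dist} is built from \emph{three} one-dimensional robust statistics instead of two: the trimmed variance $\sigma_v^2({\cal M}_{v,\alpha})$, the robust noise scale $\hat\gamma^2$ of Eq.~\eqref{def:robust_gamma}, and the trimmed mean $a_v:=\frac{1}{|{\cal N}_{v,\hat\beta,\alpha}|}\sum_{x_i\in{\cal N}_{v,\hat\beta,\alpha}}\langle v,x_i(y_i-x_i^\top\hat\beta)\rangle$ of the projected residual vectors. First, since $S$ is $((1/5.5)\alpha,\alpha,\rho_1,\rho_2,\rho_3,\rho_4)$-corrupt good and $d_H(S,S')\leq k^*$, the set $S'$ is $((1/5.5)\alpha+\tilde\alpha,\alpha,\rho_1,\rho_2,\rho_3,\rho_4)$-corrupt good with $\tilde\alpha=k^*/n$; for $n=\Omega(\log(1/(\delta\zeta))/(\alpha\varepsilon))$ with a large enough constant this gives $\tilde\alpha\leq(1/11)\alpha$, so every corrupt-good lemma of Section~\ref{sec:lr_proof_3} applies to $S'$, and in particular Lemma~\ref{lem:distance_perturbed_lr} shows every $\hat\beta\in B_{\thresh+\backoff,S'}$ satisfies $\|\Sigma^{1/2}(\hat\beta-\beta)\|/\gamma\leq 1.1\rho_1+1.15(\thresh+\backoff)=O(\rho_1)$, while Lemmas~\ref{lem:sigmav_res1} and \ref{lem:gamma_approx2} give $\sigma_v({\cal M}_{v,\alpha})\in[0.9\sigma_v,1.1\sigma_v]$ and $\hat\gamma\in[0.99\gamma,1.01\gamma]$ for $S'$ and for any neighbor $S''$.

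Next I would bound, under a single-point change $S'\to S''$, the sensitivity of each of the three statistics. For $\sigma_v^2({\cal M}_{v,\alpha})$ the computation is verbatim that of Eqs.~\eqref{eq:meansensitivitybound}--\eqref{eq:varsensitivitybound}, with the first-order resilience $\rho_1$ there replaced by $\rho_3$ (Eq.~\eqref{def:res3}): applying Lemma~\ref{lem:deviate_lr} to $S_{\rm good}\cap{\cal T}_{v,\alpha}$ and $S_{\rm good}\cap{\cal B}_{v,\alpha}$ (each of size $\geq(1/11)\alpha n$) forces a good point within $O(\rho_3\sigma_v/\alpha)$ of $\mu_v$ in each tail, so ${\cal M}_{v,\alpha}$ has range $O(\rho_3\sigma_v/\alpha)$ and $|\sigma_v^2({\cal M}_{v,\alpha}(S'))-\sigma_v^2({\cal M}_{v,\alpha}(S''))|=O(\rho_3^2\sigma_v^2/(\alpha^2 n))$. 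For the residual trimmed mean I would write $\langle v,x_i(y_i-x_i^\top\hat\beta)\rangle=\langle v,x_i\rangle\eta_i+\langle v,x_i\rangle\langle x_i,\beta-\hat\beta\rangle$ and run the same density argument on the ordering of $S_{v,\hat\beta}$: the good parts of ${\cal T}_{v,\hat\beta,\alpha},{\cal B}_{v,\hat\beta,\alpha}$ have size $\geq(1/11)\alpha n$, Lemma~\ref{lem:deviate_lr} on Eq.~\eqref{def:res1} controls the $\langle v,x_i\rangle\eta_i$ part by $O(\rho_1\sigma_v\gamma/\alpha)$, and a Cauchy--Schwarz step bounds the mean of $|\langle v,x_i\rangle\langle x_i,\beta-\hat\beta\rangle|$ over any large subset by the geometric mean of two second-moment resilience bounds (Eq.~\eqref{def:res2}), contributing $O(\rho_2\sigma_v\|\Sigma^{1/2}(\beta-\hat\beta)\|/\alpha)$; hence $|a_v(S')-a_v(S'')|=O\big(\sigma_v(\rho_1\gamma+\rho_2\|\Sigma^{1/2}(\beta-\hat\beta)\|)/(\alpha n)\big)$. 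For $\hat\gamma^2=\min_{\bar\beta}F_S(\bar\beta)$ I would use that $|\min F_{S'}-\min F_{S''}|$ is controlled by the sensitivity of $F$ at a near-optimal $\bar\beta$: by Lemma~\ref{lem:gamma_approx2} the minimizer $\bar\beta^*$ of $F_{S'}$ has $\|\Sigma^{1/2}(\beta-\bar\beta^*)\|=O(\gamma)$, and on such $\bar\beta^*$ the residual squares are resilient (Lemma~\ref{lem:res_residual}), so the top $(2/5.5)\alpha n$ quantile of $S_{\bar\beta^*}$ lies within $O(\gamma^2\rho^*/\alpha)$ of $\gamma^2$ and the trimmed average $F_{S'}(\bar\beta^*)$ moves by $O(\gamma^2\rho^*/(\alpha n))$ under one point change; doing this in both directions yields $|\hat\gamma(S')^2-\hat\gamma(S'')^2|=O(\gamma^2\rho^*/(\alpha n))$ and $|\hat\gamma(S')-\hat\gamma(S'')|=O(\gamma\rho^*/(\alpha n))$, with $\rho^*=\max\{\rho_1,\rho_2,\rho_4\}$.

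Finally I would assemble these exactly as in Eq.~\eqref{eq:sensbound_D}. Writing $a_v',\sigma_v',\hat\gamma'$ and $a_v'',\sigma_v'',\hat\gamma''$ for the three statistics on $S'$ and $S''$,
\begin{align*}
\big|\robdist_{S'}(\hat\beta)-\robdist_{S''}(\hat\beta)\big| &\leq\max_{\|v\|\leq1}\Big|\frac{a_v'}{\sigma_v'\hat\gamma'}-\frac{a_v''}{\sigma_v''\hat\gamma''}\Big| \\
&\leq\max_{\|v\|\leq1}\frac{|a_v'-a_v''|}{\sigma_v'\hat\gamma'}+\frac{|a_v''|}{\sigma_v''\hat\gamma''}\Big(\frac{|\sigma_v'-\sigma_v''|}{\sigma_v'}+\frac{|\hat\gamma'-\hat\gamma''|}{\hat\gamma'}\Big)\;,
\end{align*}
and then plug in $\sigma_v',\sigma_v''\in[0.9\sigma_v,1.1\sigma_v]$, $\hat\gamma',\hat\gamma''\in[0.99\gamma,1.01\gamma]$, $|a_v''|=O(\rho_1\gamma\sigma_v)$ (from $\hat\beta\in B_{\thresh+\backoff,S'}$ and the proof of Lemma~\ref{lem:approximation_lr}), together with the per-statistic bounds above and $\|\Sigma^{1/2}(\beta-\hat\beta)\|/\gamma=O(\rho_1)$. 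Every contribution beyond the leading $O(\rho_1/(\alpha n))$ carries an extra factor of either $\rho_3^2/\alpha$ (from the trimmed-variance perturbation, whose bound has $\alpha^2$ in the denominator) or $\max\{\rho_1,\rho_2,\rho_4\}$; under $\rho_3^2\leq c\alpha$ and $\rho_1,\rho_2,\rho_4<c$ with $c$ small enough these are all dominated, and a bookkeeping of constants identical in spirit to Eq.~\eqref{eq:sensbound_D} gives $|\robdist_{S'}(\hat\beta)-\robdist_{S''}(\hat\beta)|\leq 110\rho_1/(\alpha n)=\sens$ for every $\hat\beta\in B_{\thresh+\backoff,S'}$, which is precisely assumption~\ref{asmp_local}.

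I expect the treatment of $\hat\gamma^2$ to be the main obstacle: it is a \emph{minimum} over all $\bar\beta\in\reals^d$ of trimmed second moments of residuals, and the resilience that controls those residuals (Lemma~\ref{lem:res_residual}) scales with $(\gamma+\|\Sigma^{1/2}(\beta-\bar\beta)\|)^2$, so the crude ``sensitivity of a minimum is the supremum of sensitivities'' is vacuous. The remedy --- restricting attention to a near-optimal $\bar\beta$ whose distance to $\beta$ is $O(\gamma)$ by Lemma~\ref{lem:gamma_approx2}, and then playing the usual ``$\min F\leq F(\text{other side's minimizer})$'' game in both directions while keeping that radius under control throughout --- is the one genuinely new ingredient compared to Lemma~\ref{lem:local_asmp}; the Cauchy--Schwarz reduction of the residual-vector cross term to second-moment resilience is routine but is the other place where care with constants is needed.
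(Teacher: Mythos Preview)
Your plan tracks the paper's proof closely: pass from $S$ to any $S'$ with $d_H(S,S')\le k^*$ (which is $((1/5.5)\alpha+\tilde\alpha,\alpha,\rho_1,\rho_2,\rho_3,\rho_4)$-corrupt good with $\tilde\alpha\le(1/11)\alpha$), bound the one-neighbor change of each of the three statistics via tail resilience, and assemble. Two differences are worth flagging. First, for the numerator $a_v$ the paper does \emph{not} use Cauchy--Schwarz on the cross term; it applies Lemma~\ref{lem:bounded_norm} with $A=\frac{1}{|T|}\sum_{i\in T}x_ix_i^\top$ and the tail form of Eq.~\eqref{def:res2} to get $\|\Sigma^{-1/2}(A-\Sigma)(\beta-\hat\beta)\|\le(22\rho_2/\alpha)\|\Sigma^{1/2}(\beta-\hat\beta)\|$ and hence the range bound Eq.~\eqref{eq:lr_bounded_support}. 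Your Cauchy--Schwarz step in fact yields $(1+O(\rho_2/\alpha))\,\sigma_v\|\Sigma^{1/2}(\beta-\hat\beta)\|$, not $O(\rho_2/\alpha)\,\sigma_v\|\cdots\|$ as you wrote; the extra ``$1$'' survives but is harmless once you substitute $\|\Sigma^{1/2}(\beta-\hat\beta)\|/\gamma=O(\rho_1)$, so the final $O(\rho_1/(\alpha n))$ bound is unaffected---the paper's matrix-norm route is simply cleaner at the intermediate step. Second, your explicit handling of $|\hat\gamma'-\hat\gamma''|$ via the ``evaluate the other side's minimum at this side's minimizer'' trick is more transparent than what the paper does: in Eq.~\eqref{eq:lr_sens_bound} the paper records only the $|\sigma_v'-\sigma_v''|$ perturbation cleanly and cites Lemma~\ref{lem:gamma_approx2} for $\hat\gamma$, leaving the one-point change of $\hat\gamma$ essentially implicit. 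Your argument (localize both minimizers near $\beta$ via the proof of Lemma~\ref{lem:gamma_approx2}, then bound the retained residual squares by $O((1+\rho^*/\alpha)\gamma^2)$ from Lemma~\ref{lem:res_residual}) gives $|\hat\gamma'^2-\hat\gamma''^2|=O((1+\rho^*/\alpha)\gamma^2/n)$, and after multiplication by $|a_v''|/(\sigma_v''\hat\gamma'\hat\gamma'')=O(\rho_1/\gamma)$ this contribution stays within $\sens$ as required.
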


\begin{proof}
We follows the proof strategy of Lemma~\ref{lem:local_asmp} in Section~\ref{sec:mean3_sens}. 
Consider a dataset $S'$ which is 
at Hamming distance at most $(1/11)\alpha n$ from $S$ and  
corresponding partition $({\cal T'}_{v,\hat\beta,\alpha},{\cal N'}_{v,\hat\beta,\alpha},{\cal B'}_{v,\hat\beta,\alpha})$ of $S'$ for a specific direction $v$. 
By resilience property of the tails in  Eq.~\eqref{eq:res_tail1} and Eq.~\eqref{eq:res_tail2}, Lemma~\ref{lem:vector_norm}, and Lemma~\ref{lem:bounded_norm}, we have for any $v\in \reals^{d}$ with unit norm $\|v\|=1$ and any $\hat{\beta}\in \reals^{d}$, 

\begin{eqnarray}
	&&\frac{v^\top\frac{1}{|{\cal T}_{v, \hat{\beta}, \alpha}'\cap S_{\rm good}|}\sum_{i\in {\cal T}_{v, \hat{\beta}, \alpha}'\cap S_{\rm good}}\left(\left(x_ix_i^\top - {\Sigma}\right)(\beta-\hat\beta)+x_i\eta_i\right)}{\sigma_v} \nonumber \\
	&\leq & \left\|\Sigma^{-1/2}\left(\frac{1}{|{\cal T}_{v, \hat{\beta}, \alpha}'\cap S_{\rm good}|}\sum_{ i\in {\cal T}_{v, \hat{\beta}, \alpha}'\cap S_{\rm good}}\left(x_ix_i^\top -  {\Sigma}\right)(\beta-\hat\beta)\right)\right\|+
	\\&&\left\|\Sigma^{-1/2}\left(\frac{1}{|{\cal T}_{v, \hat{\beta}, \alpha}'\cap S_{\rm good}|}\sum_{i\in {\cal T}_{v, \hat{\beta}, \alpha}'\cap S_{\rm good}}x_i\eta_i\right)\right\| \nonumber \\
	&\leq & 
	\frac{2\rho_2}{(1/11)\alpha}\|\Sigma^{1/2}(\beta-\hat{\beta})\|+\frac{2\rho_1 }{(1/11)\alpha}\gamma\;, \label{eq:top_set_lr}
\end{eqnarray}
where $S_{\rm good}$ is the original uncorrupted resilient dataset. 
Similarly, we have
\begin{eqnarray*}
	\frac{v^\top\frac{1}{|{\cal B}_{v, \hat{\beta}, \alpha}'\cap S_{\rm good}|}\sum_{i\in {\cal B}_{v, \hat{\beta}, \alpha}'\cap S_{\rm good}}\left(\left(x_ix_i^\top - {\Sigma}\right)(\beta-\hat\beta)+x_i\eta_i\right)}{\sigma_v} 
	&\leq & \frac{2\rho_2 }{(1/11)\alpha}\|\Sigma^{1/2}(\beta-\hat{\beta})\|+\frac{2\rho_1 }{(1/11)\alpha}\gamma\;.
\end{eqnarray*}

This implies
\begin{eqnarray}
	\min_{i \in {\cal T}_{v, \hat{\beta}, \alpha}'\cap S_{\rm good}}\frac{v^\top \left(x_i x_i^\top(\beta-\hat{\beta})+x_i\eta_i\right)}{\sigma_v}-\max_{i \in {\cal B}_{v, \hat{\beta}, \alpha}'\cap S_{\rm good}}\frac{\tilde{v}^\top\left(x_i x_i^\top(\beta-\hat{\beta})+x_i\eta_i\right)}{\sigma_v}\nonumber\\
	\leq \;\;\;\;\frac{44\rho_1}{\alpha}\gamma+\frac{44\rho_2}{\alpha}\|\Sigma^{1/2}(\beta-\hat{\beta})\|\;. \label{eq:lr_bounded_support}
\end{eqnarray}
Analogous to Lemma~\ref{lem:local_asmp}
, for a neighboring databases $S'$ and $S''$, the corresponding middle sets ${\cal N'}_{v,\hat\beta, \alpha}$ and ${\cal N''}_{v,\hat\beta, \alpha}$ differ at most by one entry. Denote those entry by $x'_i$ and $\eta'_i=y'_i-\langle \beta, x'_i\rangle $ in ${\cal N'}_{v,\hat\beta, \alpha}$ and $x_j^{\prime \prime}$ and $\eta_j^{\prime \prime}$ in ${\cal N}''_{v,\hat\beta, \alpha}$. Then, from Eq.~\eqref{eq:lr_bounded_support},  we have
\begin{eqnarray*}
	\left|v^\top \left(\left(x_i'x_i'^\top-x_j^{\prime \prime}x_j^{\prime \prime\top}\right)(\beta-\hat\beta)+x'_i\eta'_i-x_j^{\prime \prime}\eta_j^{\prime \prime}\right)\right|\leq \left(\frac{44\rho_1 }{\alpha}\gamma+\frac{44\rho_2 }{\alpha}\|\Sigma^{1/2}(\beta-\hat{\beta})\|\right) \sigma_v\;,
\end{eqnarray*}
which implies that 
\begin{eqnarray}
	&&\left|v^\top\frac{1}{(1-(4/5.5)\alpha) n}\sum_{i\in {\cal N}_{v,\hat{\beta}, \alpha}'}\left(x_i x_i^\top(\beta-\hat{\beta})+x_i\eta_i\right)-v^\top\frac{1}{(1-(4/5.5) \alpha) n}\sum_{i\in {\cal N}_{v,\hat{\beta}, \alpha}''}\left(x_i x_i^\top(\beta-\hat{\beta})+x_i\eta_i\right)\right| \nonumber\\
&&\hspace*{6cm}	\leq \;\;\;\frac{\sigma_v}{(1-(4/5.5) \alpha)n} \left(\frac{44\rho_1 }{\alpha}\gamma+\frac{44\rho_2 }{\alpha}\|\Sigma^{1/2}(\beta-\hat{\beta})\|\right) \;. 
	\label{eq:lr_sens_condition1}
\end{eqnarray}
By resilience properties in  Eq.~\eqref{def:res1} and Eq.~\eqref{def:res2}, and  Lemma~\ref{lem:bounded_norm}, Lemma~\ref{lem:true_dist_lr}, and the fact that ${\cal N}_{v, \hat{\beta}, \alpha}''\cap S_{\rm good}$ is at least of size $(1-\alpha)n$,   we have 
for the data points in ${\cal N}_{v, \hat{\beta}, \alpha}''\cap S_{\rm good}$,
\begin{eqnarray*}
	\frac{v^\top\frac{1}{|{\cal N}_{v, \hat{\beta}, \alpha}''\cap S_{\rm good}|}\sum_{i\in {\cal N}_{v, \hat{\beta}, \alpha}''\cap S_{\rm good}}\left(x_i x_i^\top(\beta-\hat{\beta})+x_i\eta_i\right)}{\sigma_v}
	\leq (1+\rho_2 )\|\Sigma^{1/2}(\hat{\beta}-\beta)\|+\rho_1 \gamma\;.
\end{eqnarray*} 
By Eq.~\eqref{eq:top_set_lr}, for any $x''_i\in {\cal N}_{v, \hat{\beta}, \alpha}''\cap S_{\rm bad}$ (where $S_{\rm bad}=S''\setminus S_{\rm good}$) we have
\begin{eqnarray*}
	\frac{v^\top\left(x''_i x_i''^\top(\beta-\hat{\beta})+x''_i\eta''_i\right)}{\sigma_v} 
	&\leq & \frac{v^\top\frac{1}{|{\cal T}_{v, \hat{\beta}, \alpha}''\cap S_{\rm good}|}\sum_{i\in {\cal T}_{v, \hat{\beta}, \alpha}''\cap S_{\rm good}}\left(x_i x_i^\top(\beta-\hat{\beta})+x_i\eta_i\right)}{\sigma_v}\\
	&\leq &\Big(\frac{22\rho_2 }{\alpha}+1\Big)\|\Sigma^{1/2}(\hat{\beta}-\beta)\|+\frac{22\rho_1 }{\alpha}\gamma\;.
\end{eqnarray*}
Since $|S_{\rm bad}|\leq (1.5/5.5)\alpha n$ and $\alpha<c$ for some small enough constant $c$, we have
\begin{eqnarray}
	&&\frac{v^\top\frac{1}{(1-(4/5.5)\alpha)n}\sum_{i\in {\cal N}_{v, \hat{\beta}, \alpha}''}\left(x_i x_i^\top(\beta-\hat{\beta})+x_i\eta_i\right)}{\sigma_v} \nonumber \\
&=&\frac{v^\top\frac{1}{(1-(4/5.5)\alpha)n}\sum_{i\in {\cal N}_{v, \hat{\beta}, \alpha}''\cap S_{\rm bad}}\left(x_i x_i^\top(\beta-\hat{\beta})+x_i\eta_i\right)}{\sigma_v}+\nonumber\\
&&\frac{v^\top\frac{1}{(1-(4/5.5)\alpha)n}\sum_{i\in {\cal N}_{v, \hat{\beta}, \alpha}''\cap S_{\rm good}}\left(x_i x_i^\top(\beta-\hat{\beta})+x_i\eta_i\right)}{\sigma_v} \nonumber \\
&\leq &\frac{(6\rho_2+(1.5/5.5)\alpha)\|\Sigma^{1/2}(\hat{\beta}-\beta)\|+6\rho_1 \gamma}{1-(4/5.5)\alpha} + 
\Big((1+\rho_2 )\|\Sigma^{1/2}(\hat{\beta}-\beta)\|+\rho_1 \gamma\Big) \nonumber \\
&\leq  & 7\rho_1 \gamma+(1+\alpha+7\rho_2 )\|\Sigma^{1/2}(\hat{\beta}-\beta)\| \;.
\label{eq:sensitvity_bound_lr1}
\end{eqnarray}

Analogous to Eq.~\eqref{eq:varsensitivitybound}, by using resilience properties in Eqs.~\eqref{def:res2} and \eqref{def:res3}, we have
\begin{eqnarray}
	|\sigma_v'^2-\sigma_v''^2| &=&  \frac{1}{(1-(4/5.5)\alpha) n}\left|\sum_{x_i\in {\cal N}_{v,\hat{\beta},\alpha}'}\ip{v}{x_i}^2-\sum_{x_i\in {\cal N}_{v,\hat{\beta},\alpha}''}\ip{v}{x_i}^2\right|\nonumber\\
	&\leq &\frac{64 \cdot 11^2 \cdot \rho_3^2\sigma_v^2}{\alpha^2(1-(4/5.5)\alpha)n}\;.\label{eq:varsensitivitybound_lr}
\end{eqnarray}

By Eqs.~\eqref{eq:sensitvity_bound_lr1}, \eqref{eq:lr_sens_condition1}, and  \eqref{eq:varsensitivitybound_lr}, we have
\begin{eqnarray}
\hspace*{-4cm}&&		\left|\robdist_{S'}(\hat{\beta})-\robdist_{S''}(\hat{\beta})\right| \nonumber \\
\leq && \max_{v:\|v\|=1}\left|\frac{v^\top\frac{1}{|{\cal N}_{v,\hat{\beta},\alpha}'|}\sum_{i\in {\cal N}_{v,\hat{\beta},\alpha}'}\left(x_i x_i^\top(\beta-\hat{\beta})+x_i\eta_i\right)}{\sigma_v'\hat{\gamma}'}-\frac{v^\top\frac{1}{|{\cal N}_{v,\hat{\beta},\alpha}''|}\sum_{i\in {\cal N}_{v,\hat{\beta},\alpha}''}\left(x_i x_i^\top(\beta-\hat{\beta})+x_i\eta_i\right)}{\sigma_v''\hat{\gamma}''}\right|\nonumber\\
\leq && \max_{v:\|v\|=1}\left|\frac{v^\top\left(\frac{1}{(1-(4/5.5)\alpha)n}\sum_{i\in {\cal N}_{v,\hat{\beta},\alpha}'}\left(x_i x_i^\top(\beta-\hat{\beta})+x_i\eta_i\right)-\frac{1}{(1-(4/5.5)\alpha)n}\sum_{i\in {\cal N}_{v,\hat{\beta}, \alpha}''}\left(x_i x_i^\top(\beta-\hat{\beta})+x_i\eta_i\right)\right) }{\sigma_v'\hat{\gamma}'}\right| \nonumber\\
&&+\max_{v:\|v\|=1}\frac{v^\top\frac{1}{|{\cal N}_{v,\hat{\beta},\alpha}''|}\sum_{x_i \in {\cal N}_{v,\hat{\beta},\alpha}''}\left(x_i x_i^\top(\beta-\hat{\beta})+x_i\eta_i\right)}{\sigma_v}\left|\frac{\sigma_v}{\sigma_v'\hat{\gamma}'}-\frac{\sigma_v}{\sigma_v''\hat{\gamma}''}\right| \nonumber\\
\leq && \frac{44 \rho_1 }{0.9\cdot 0.99(1-(4/5.5)\alpha)n\alpha} + \frac {44\rho_2 }{0.9\cdot 0.99 (1-(4/5.5) \alpha)n \alpha}\frac{\|\Sigma^{1/2} (\beta-\hat{\beta})\|}{\gamma} \nonumber\\
&&+\frac{64\cdot 11^2\cdot  \rho_3^2 \cdot 0.02\gamma} {0.9^3\alpha^2(1-(4/5.5)\alpha)n \cdot 0.99^2\gamma^2}\left(7\rho_1 \gamma+(1+\alpha+7\rho_2 )\|\Sigma^{1/2}(\hat{\beta}-\beta)\|\right)  \label{eq:lr_sens_bound}\\
\leq &&\left( \frac{0.12}{\alpha n}+\frac{0.016 }{\alpha n}\right)\frac{\|\Sigma^{1/2}(\hat{\beta}-\beta)\|}{\gamma}+\left(\frac{9\rho_1 }{\alpha n}+\frac{0.07 \rho_1 }{\alpha n}\right) \nonumber\\
\leq && \frac{0.2 }{\alpha n}\frac{\|\Sigma^{1/2}(\hat{\beta}-\beta)\|}{\gamma}+\frac{50\rho_1 }{\alpha n} \nonumber
\end{eqnarray}
where the last three inequalities follow from our assumptions that $\alpha\leq c$ and $\rho_2 \leq c$, $\rho_3 ^2\leq c\alpha $, $\rho_4\leq c$  with a small enough constant $c$ and Lemma~\ref{lem:gamma_approx2}. 
From Lemma~\ref{lem:distance_perturbed_lr}, we know if $\hat{\beta}\in B_{\thresh+(k^*+3)\sens, S}$, we have $ \|\Sigma^{1/2}(\hat\beta-\beta)\|/\gamma \leq 1.1\rho_1   + 1.15(\thresh+(k^*+3)\sens)$. We show that $\|\Sigma^{1/2}(\hat\beta-\beta) \|\leq 50\rho_1 \gamma$ for the choices of $\sens$, $k^*$, $\thresh$ and $n$:
    \begin{eqnarray*}
        1.1\rho_1 + 1.15(\thresh+\backoff) &\leq & 49 \rho_1 +\frac{50\rho_1 \log(1/(\delta\zeta))}{\varepsilon\alpha n} \\
        &\leq & 50\rho_1  
        \;,
    \end{eqnarray*}
    where $\sens=110\rho_1 /(\alpha n)$, $\thresh= 42\rho_1  $, 
    $k^*=(2/\varepsilon)\log(4/(\delta\zeta))$, $\varepsilon\leq \log(4/\delta\zeta)$ and $n\geq C' \log(1/(\delta \zeta))/(\varepsilon\,\alpha )$ for some large enough universal constant $C'>0$. 
    This implies
    \begin{eqnarray*} 
    |\robdist_{S'}(\hat\beta)-\robdist_{S''}(\hat\beta)|&\leq & \frac{110\rho_1 }{\alpha n} = \sens\;.
    \end{eqnarray*}
\end{proof}

\subsubsection{Proof of Theorem~\ref{thm:robust_linear_regression}}
\label{sec:lr_proof2} 

We show that the sufficient conditions of Theorem~\ref{thm:utility} are met for the choices of constants and parameters: 
$p=d$, $\rho=\rho_1 $, $c_0=31.8$, $c_1=10.2$, $\thresh=42\rho_1 $, and $\sens=110\rho_1 /(\alpha n)$. 
We can set $c_2$ to be a large constant and will only change the constant factor in the sample complexity. 
The assumptions~\ref{asmp_vol}, \ref{asmp_local}, and \ref{asmp_resilience} follow from Lemmas~\ref{lem:vol_lr}, \ref{lem:local_asmp_lr}, and \ref{lem:approximation_lr}, respectively. 
The assumption~\ref{asmp_sens} follows from \begin{eqnarray*}
\sens = \frac{110 \rho_1 }{\alpha n} \leq  \frac{1.2\rho_1 \varepsilon}{32(c_2 d + (\varepsilon/2) + \log(16/(\delta\zeta)))}
= 
\frac{(c_0-3c_1)\rho \varepsilon }{32(c_2 p + (\varepsilon/2) + \log(16/(\delta\zeta)))}\;,
\end{eqnarray*}
for large enough $n \geq C' (d + \log(1/(\delta\zeta)))/(\alpha\varepsilon)$. 
This finishes the proof of Theorem~\ref{thm:robust_linear_regression} from which Theorem~\ref{thm:linear_regression} follows immediately.  

\subsection{Step 3: Achievability guarantees}
\label{sec:lr3} 

We provide utility guarantees for popular families of distributions studied in the private or robust linear regression literature: sub-Gaussian \cite{diakonikolas2019efficient,gao2020robust, zhu2019generalized,cai2019cost,wang2018revisiting} and hypercontractive \cite{zhu2019generalized,klivans2018efficient,cherapanamjeri2020optimal,jambulapati2021robust, bakshi2021robust, prasad2018robust}. Similar to mean estimation, the resilience we need scales with the variance. For sub-Gaussian distributions, this requires a lower bound on the variance of the form $\sigma\preceq c\Gamma$ for the sub-Gaussian proxy $\Gamma$. For the $k$-th moment bounded distributions, we require hypercontractivity.


\subsubsection{Sub-Gaussian distributions}

The most common scenario in linear regression is when both the input $x_i$ and the noise $\eta_i$ are sub-Gaussian as we defined in  
Eq.~\eqref{eq:def_subgauss} and independent of each other.  
The next lemma shows that the resulting dataset is $(O(\alpha\log(1/\alpha)),O(\alpha\log(1/\alpha)),O(\alpha\sqrt{\log(1/\alpha)}),O(\alpha\log(1/\alpha)))$-resilient, which follows from the covariance resilience of sub-Gaussian distributions.


\begin{lemma}[Resilience for sub-Gaussian samples]
\label{lem:res_subgaussian_lr}

Let ${\cal D}_1$ be a distribution of $x_i\in\reals^d$ which is zero mean sub-Gaussian with covariance $\Sigma$ and sub-Gaussian proxy $0\prec \Gamma\preceq c \Sigma$ for some constant $c$. Let ${\cal D}_2$ be a distribution of  $\eta_i\in\reals$ which is a zero mean one-dimensional sub-Gaussian with variance $\gamma^2$ and sub-Gaussian proxy $\gamma_0^2\leq c\gamma^2$ for some constant $c$.  A multiset of i.i.d.~labeled samples $S=\{(x_i, y_i )\}_{i=1}^n$ is generated from a linear model with noise  $\eta_i$ independent of $x_i$:
$y_i\;=\;x_i^\top\beta+\eta_i$\;, 
where the input $x_i$ and the independent noise $\eta_i$ are i.i.d. samples from ${\cal D}_1$ and ${\cal D}_2$. 
 There exist constants $c_1$ and $c_2>0$ such that, for any  $\alpha\in(0,1/2)$, if $n\geq c_1((d+\log(1/\zeta)) / (\alpha\log(1/\alpha))^2)$ then,  with  probability $1-\zeta$,  
     $S$ 
    is $(\alpha, c_2\alpha\log(1/\alpha),c_2\alpha\log(1/\alpha), c_2\alpha\sqrt{\log(1/\alpha)}, c_2\alpha\log(1/\alpha))$-resilient with respect to $(\beta,\Sigma,\gamma)$. 
\end{lemma}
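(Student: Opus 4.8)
The plan is to establish each of the four resilience inequalities in Definition~\ref{def:resilience_lr} by reducing them to a single primitive fact: a zero-mean sub-Gaussian random vector in $\reals^d$ with proxy $\Gamma$ yields a dataset whose \emph{second-moment} statistics are resilient in the sense of the covariance-resilience bounds already invoked for mean estimation (Lemma~\ref{lem:mean_subgaussian} and the hypercontractivity-style arguments behind it). Concretely, I would first observe that $x_i$ being sub-Gaussian with proxy $\Gamma \preceq c\Sigma$ means that for any unit $v$, the scalar random variables $\langle v, x_i\rangle$ and $\langle v, x_i\rangle^2 - \sigma_v^2$ are sub-Gaussian and sub-exponential respectively, with parameters comparable (up to the constant $c$) to $\sigma_v = \sqrt{v^\top\Sigma v}$. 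Hence the analogues of Eqs.~\eqref{def:res2} and \eqref{def:res3} — which are exactly the covariance and first-moment resilience conditions — follow verbatim from the sub-Gaussian resilience lemma, giving $\rho_2 = O(\alpha\log(1/\alpha))$ and $\rho_3 = O(\alpha\sqrt{\log(1/\alpha)})$ once $n \gtrsim (d + \log(1/\zeta))/(\alpha\log(1/\alpha))^2$ (the $d$ in the numerator coming from a union bound / covering argument over the sphere, as in the cited lemmas).

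The two genuinely new conditions are Eqs.~\eqref{def:res1} and \eqref{def:res4}, which involve the noise $\eta_i = y_i - x_i^\top\beta$. For Eq.~\eqref{def:res4}, note $\eta_i$ is one-dimensional zero-mean sub-Gaussian with variance $\gamma^2$ and proxy $\gamma_0^2 \le c\gamma^2$, so $\eta_i^2 - \gamma^2$ is sub-exponential with scale $O(\gamma^2)$; the scalar version of the resilience lemma (no dimension dependence, just a Bernstein/truncation bound over all subsets of size $\ge(1-\alpha)n$) gives $\rho_4 = O(\alpha\log(1/\alpha))$. For Eq.~\eqref{def:res1}, the key point is that $\langle v, x_i\rangle\eta_i$ is a product of two \emph{independent} mean-zero sub-Gaussians, hence sub-exponential with scale $O(\sigma_v\gamma)$ (product of sub-Gaussians is sub-exponential, and independence makes the mean zero). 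Applying the subset-uniform sub-exponential deviation bound — again with a union bound over a net on the sphere to handle all $v$ simultaneously — yields $|\frac{1}{|T|}\sum_{T}\langle v,x_i\rangle\eta_i| \le \rho_1\sigma_v\gamma$ with $\rho_1 = O(\alpha\log(1/\alpha))$ for $|T|\ge(1-\alpha)n$, under the stated sample-complexity condition. I would package the scalar deviation bound as a reusable lemma (resilience for one-dimensional sub-exponential samples), since it is applied three times.

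The main obstacle I anticipate is the uniformity over all subsets $T$ of size $\ge(1-\alpha)n$ \emph{combined with} uniformity over the unit sphere for the two vector-valued conditions Eqs.~\eqref{def:res1}--\eqref{def:res2}. The subset-uniformity is handled by the standard observation that the worst $\alpha n$-subset to delete is the one with the largest (in absolute value) contributions, so it reduces to controlling a trimmed sum, i.e.\ a tail-truncated empirical mean; this is precisely the mechanism behind Lemma~\ref{lem:deviate} and the cited resilience results, so it is not new, but the bookkeeping for the product $\langle v,x_i\rangle\eta_i$ requires care because the relevant order statistics depend jointly on $x_i$ and $\eta_i$. The sphere-uniformity costs the extra factor of $d$ in the sample complexity via a $1/2$-net of size $2^{O(d)}$ and a Lipschitz/perturbation argument; I would check that the sub-exponential parameters are uniformly bounded over the net, which they are because $\sigma_v^2 = v^\top\Sigma v$ and the proxies are all dominated by constant multiples of $\Sigma$. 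Once these uniform bounds are in hand, the four inequalities are read off directly, and the constants $c_1, c_2$ absorb the absolute constants from the Bernstein inequality, the net cardinality, and the ratio $c$ between the proxies and $\Sigma$; since the dominant term among the four $\rho$'s is $O(\alpha\log(1/\alpha))$, the claimed resilience parameters follow, completing the proof.
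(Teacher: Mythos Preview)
Your approach is correct but differs from the paper's. You treat each of the four resilience conditions separately, invoking the existing sub-Gaussian mean/covariance resilience for Eqs.~\eqref{def:res2}--\eqref{def:res3}, a scalar sub-exponential bound for Eq.~\eqref{def:res4}, and a direct sub-exponential argument for the product $\langle v,x_i\rangle\eta_i$ (product of independent sub-Gaussians) for Eq.~\eqref{def:res1}. The paper instead handles Eq.~\eqref{def:res1} by an embedding trick: it forms the $(d{+}1)$-dimensional vector $\tilde x_i = (\Sigma^{-1/2}x_i,\,\eta_i/\gamma)$, which is sub-Gaussian with identity covariance, applies the single black-box covariance-resilience result of \cite[Corollary~4]{jambulapati2020robust} to $\tilde x_i\tilde x_i^\top$, and then extracts the cross term $\Sigma^{-1/2}x_i\eta_i/\gamma$ from the off-diagonal block via a polarization-style argument (choosing $u=(u_1,u_2)$ and subtracting the already-established diagonal resiliences). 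The remaining three conditions are dispatched by citation as you do. The paper's route is more economical---everything reduces to one existing lemma with no new sub-exponential resilience statement needed---while yours is more transparent about why $\rho_1=O(\alpha\log(1/\alpha))$ and would work equally well if $x_i$ and $\eta_i$ were merely jointly sub-Gaussian rather than independent (since the product would still be sub-exponential), though you do not need that generality here.
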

\begin{proof}
This follows from 
 \cite[Corollary 4]{jambulapati2020robust}.  
Let $\tilde{x}_i :=\left[\begin{array}{c}
\Sigma^{-1/2}x_i  \\
\eta_i/\gamma
\end{array}\right] \in \reals^{d+1}$. By definition, we know $\tilde{x}_i$ can be seen as samples from a zero mean sub-Gaussian distribution with covariance $\mathbf{I}_{(d+1)\times (d+1)}$.
By  {\cite[Corollary~4]{jambulapati2020robust}} and union bound, we know if $n=\Omega(d+\log(1/\zeta))/(\alpha\log(1/\alpha))^2$ then there exists a constant $C_1$ such that with probability $1-\zeta$, for any $T\subset S$ and $|T|\geq (1-\alpha)n$ and any unit vector $u\in \reals^{d+1}$, $v\in \reals^d$, we have
\begin{eqnarray}
	&&\left|u^\top \left(\frac{1}{|T|}\sum_{x_i\in T}\tilde{x}_i\tilde{x}_i^\top-\mathbf{I}_{(d+1)\times (d+1)} \right)u\right|\leq C_1\alpha\log(1/\alpha) \;,\label{eq:res_subgaussian}\\
	&&\left|v^\top \left(\frac{1}{|T|}\sum_{x_i\in T}\Sigma^{-1/2} x_ix_i^\top\Sigma^{-1/2}-\mathbf{I}_{d\times d} \right)v\right|\leq C_1\alpha\log(1/\alpha) \;, \text{ and }\label{eq:subres1}\\
	 &&\left|\frac{1}{|T|}\sum_{\eta_i\in T}\frac{\eta_i^2}{\gamma^2}-1\right|\leq C_1\alpha\log(1/\alpha)\;.\label{eq:subres2}
\end{eqnarray}

Let $u:=\left[\begin{array}{c}
u_1  \\
u_2
\end{array}\right]$ where $u_1\in \reals^d$ and $u_2\in \reals$ and $\|u_1\|^2+u_2^2=1$. Then Eq.~\eqref{eq:res_subgaussian} is equivalent to 
\begin{eqnarray}
	&&\left|u_1^\top \left(\frac{1}{|T|}\sum_{i\in T}\Sigma^{-1/2}x_ix_i^\top\Sigma^{-1/2}-\mathbf{I}_{d \times d} \right)u_1+\frac{2u_2}{\gamma}u_1^\top \frac{1}{|T|}\sum_{i\in T}\Sigma^{-1/2}x_i\eta_i+\frac{u_2^2}{\gamma^2}\frac{1}{|T|}\sum_{i\in T}(\eta_i^2-\gamma^2)\right|\nonumber\\
	\leq &&C_1\alpha\log(1/\alpha)\;.\label{eq:res_subgaussian2}
\end{eqnarray}

By Eq.~\eqref{eq:subres1} and \eqref{eq:subres2}, we know
\begin{eqnarray*}
&&\left|u_1^\top(\frac{1}{|T|}\sum_{i\in T}\Sigma^{-1/2}x_ix_i^\top\Sigma^{-1/2}-\mathbf{I}_{d \times d})u_1\right|\leq C_1\alpha\log(1/\alpha)\|u_1\|^2\\
&&\left|\frac{u_2^2}{\gamma^2}\frac{1}{|T|}\sum_{i\in T}(\eta_i^2-\gamma^2)\right|\leq C_1\alpha\log(1/\alpha)u_2^2\;.
\end{eqnarray*}

This means
\begin{eqnarray}
-C_1\alpha\log(1/\alpha)(1+\|u_1\|^2+u_2^2)\leq	\frac{2u_2}{\gamma}u_1^\top \frac{1}{|T|}\sum_{i\in T}\Sigma^{-1/2}x_i\eta_i \leq C_1\alpha\log(1/\alpha)(1+\|u_1\|^2+u_2^2)\;.\label{eq:res_subgaussian3}
\end{eqnarray}

For any unit vector $w\in \reals^d$, let $u_1 = 0.5w$. Thus, we have $u_2^2=0.75$. Eq.~\eqref{eq:res_subgaussian3} implies 
\begin{eqnarray}
\left|\frac{1}{\gamma}	w^\top\frac{1}{|T|}\sum_{i\in T}\Sigma^{-1/2}x_i\eta_i\right| \leq C_2\alpha\log(1/\alpha)\;,
\end{eqnarray}
for some constant $C_2$. This proves the first resilience in Eq.~\eqref{def:res1}.
The second, third and fourth resilience properties in Eqs.~\eqref{def:res2}, \eqref{def:res3} and \eqref{def:res4} follow from {\cite[Lemma~4.1]{dong2019quantum}}, {\cite[Corollary~4]{jambulapati2020robust}} and a union bound.

\end{proof}
The above resilience lemma and  Theorem~\ref{thm:robust_linear_regression} imply the following optimal utility guarantee. 

\begin{coro} 
    \label{coro:linear_regression_subgaussian} 
    Under the hypothesis of Lemma~\ref{lem:res_subgaussian_lr},  
 there exists a constant $c>0$ such that  for any $\alpha\in(0,c)$, a sample size of 
     $$n = O\Big(\frac{d+\log(1/\zeta)}{(\alpha\log(1/\alpha))^2} + \frac{ d+\log(1/(\delta\zeta))}{\alpha\varepsilon} \Big)\;,$$ 
      a sensitivity of $\sens=O(\log(1/\alpha) / n)$, and a threshold of $\thresh=O( \alpha\log(1/\alpha))$ 
     with large enough constants are sufficient for  \HPTR$(S)$ 
     with the distance function in Eq.~\eqref{def:lr_dist}  
     to achieve 
     \begin{eqnarray} 
     \frac{1}{\gamma} \|\Sigma^{1/2} (\hat\beta-\beta) \| \; = \; O( \alpha\log(1/\alpha) )\;,
     \label{eq:lr_sub_bound}
     \end{eqnarray} 
     with  probability $1-\zeta$. 
     Further, the same guarantee holds even if $ \alpha$-fraction of the samples are arbitrarily corrupted as in Assumption~\ref{asmp:lr}. 
\end{coro}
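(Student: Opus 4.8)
The corollary is obtained by feeding the resilience parameters of Lemma~\ref{lem:res_subgaussian_lr} into the generic utility and robustness guarantees of Theorems~\ref{thm:linear_regression} and~\ref{thm:robust_linear_regression}, with $\alpha$ treated as a free parameter selected according to the target accuracy. First, fix $\alpha\in(0,1/2)$ and apply Lemma~\ref{lem:res_subgaussian_lr}: once $n\ge c_1(d+\log(1/\zeta))/(\alpha\log(1/\alpha))^2$, the dataset $S$ (in the uncorrupted case), or the clean subset $S_{\rm good}$ (in the corrupted case), is with probability $1-\zeta$ a $(\alpha,\,c_2\alpha\log(1/\alpha),\,c_2\alpha\log(1/\alpha),\,c_2\alpha\sqrt{\log(1/\alpha)},\,c_2\alpha\log(1/\alpha))$-resilient set with respect to $(\beta,\Sigma,\gamma)$. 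Thus in the notation of Theorem~\ref{thm:robust_linear_regression} we have $\rho_1=\rho_2=\rho_4=c_2\alpha\log(1/\alpha)$ and $\rho_3=c_2\alpha\sqrt{\log(1/\alpha)}$.

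Second, I would check that the structural hypotheses of Theorems~\ref{thm:linear_regression}--\ref{thm:robust_linear_regression} are met whenever $\alpha$ is below a universal constant. The requirements $\rho_1,\rho_2,\rho_4<c$ are immediate since each is $O(\alpha\log(1/\alpha))$; the only condition needing care is $\rho_3^2\le c\alpha$, which reads $c_2^2\alpha^2\log(1/\alpha)\le c\alpha$, i.e.\ $\alpha\log(1/\alpha)\le c/c_2^2$, and this holds once $\alpha<c'$ for a suitable constant $c'$. Hence there is a constant $c>0$ such that for all $\alpha\in(0,c)$ the hypotheses hold with $\sens=110\rho_1/(\alpha n)=O(\log(1/\alpha)/n)$ and $\thresh=42\rho_1=O(\alpha\log(1/\alpha))$.

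Third, apply Theorem~\ref{thm:linear_regression} in the uncorrupted case, or Theorem~\ref{thm:robust_linear_regression} in the corrupted case: if in addition $n\ge C(d+\log(1/(\delta\zeta)))/(\varepsilon\alpha)$, then $\HPTR(S)$ with the distance of Eq.~\eqref{def:lr_dist} returns $\hat\beta$ with $(1/\gamma)\|\Sigma^{1/2}(\hat\beta-\beta)\|\le 32\rho_1=O(\alpha\log(1/\alpha))$ with probability $1-\zeta$, which is Eq.~\eqref{eq:lr_sub_bound}. Taking the maximum of the two sample-size requirements gives the stated $n=O\big((d+\log(1/\zeta))/(\alpha\log(1/\alpha))^2+(d+\log(1/(\delta\zeta)))/(\alpha\varepsilon)\big)$; the two failure events (resilience of $S$ resp.\ $S_{\rm good}$, and the internal safety test plus exponential-mechanism accuracy) are combined by a union bound, costing only a constant factor in $\zeta$ that the $O(\cdot)$ absorbs. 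For the robustness claim, given an $\alpha$-corruption of i.i.d.\ sub-Gaussian data as in Assumption~\ref{asmp:lr}, set the resilience parameter in Lemma~\ref{lem:res_subgaussian_lr} to $\alpha':=(11/2)\alpha$, so that $S$ is $((2/11)\alpha',\alpha',\rho_1(\alpha'),\rho_2(\alpha'),\rho_3(\alpha'),\rho_4(\alpha'))$-corrupt good; since $\rho_1(\alpha')=\Theta(\alpha\log(1/\alpha))$, Theorem~\ref{thm:robust_linear_regression} yields the same error bound $O(\alpha\log(1/\alpha))$ and the same sample complexity up to absorbed constants.

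The argument is almost entirely bookkeeping, so there is no substantial obstacle; the two points deserving a moment's thought are (i) that the hypothesis $\rho_3^2\le c\alpha$ — which encodes that the first-moment resilience controlling the stability of the trimmed window ${\cal M}_{v,\alpha}$ used in $\sigma_v({\cal M}_{v,\alpha})$ must be small — forces $\alpha$ to lie below a constant, which is exactly the restriction stated in the corollary, and (ii) that rescaling $\alpha$ by the fixed factor $11/2$ to accommodate corruption changes none of the orders of $\rho_1,\dots,\rho_4$, $\sens$, $\thresh$, or $n$, affecting only constants and $\log(1/\alpha)$-type factors hidden by the $O(\cdot)$ notation.
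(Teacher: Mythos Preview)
Your proposal is correct and follows essentially the same approach as the paper, which simply states that the corollary follows from combining the sub-Gaussian resilience lemma (Lemma~\ref{lem:res_subgaussian_lr}) with Theorem~\ref{thm:robust_linear_regression}. You have faithfully filled in the bookkeeping details---verifying the constraint $\rho_3^2\le c\alpha$, matching the sample-size terms, and handling the robustness case by rescaling $\alpha$---that the paper leaves implicit.
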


The sample complexity is nearly optimal. Even for DP linear regression without robustness, HPTR is the first algorithm for sub-Gaussian distributions with an unknown covariance $\Sigma$ that up to log factors matches the lower bound of  $n=\tilde\Omega(d/\alpha^2+d/(\alpha\varepsilon))$ assuming $\varepsilon < 1$ and  $\delta<n^{-1-\omega}$ for some $\omega>0$ from  \cite[Theorem 4.1]{cai2019cost}.
For completeness, we provide the lower bound in Appendix~\ref{sec:lb}. 
An existing algorithm for DP linear regression from \cite{cai2019cost} is suboptimal as it require $\Sigma$ to be close to the identity matrix, which is equivalent to assuming that we know $\Sigma$.  

The error bound is nearly optimal under $\alpha$-corruption, namely HPTR is the first robust estimator that is both differentially private and also achieves the near-optimal error rate of $ (1/\gamma) \|\Sigma^{1/2}(\hat\beta-\beta)\|=O( \alpha\log(1/\alpha))$, matching the known information-theoretic lower bound of 
$(1/\gamma) \|\Sigma^{1/2}(\hat\beta-\beta)\|=\Omega( \alpha)$ \cite{gao2020robust} up to a log factor. This lower bound  holds for any robust estimator that is not necessarily  private and regardless of how many samples are available.  If privacy is not required (i.e., $\varepsilon=\infty$), a similar guarantee can be achieved by, for example, \cite{diakonikolas2019efficient}.



\subsubsection{Hypercontractive distributions with independent noise} 

We assume $x_i$ and $\eta_i$ are independent and 
$(\kappa,k)$-hypercontractive and $(\tilde\kappa,k)$-hypercontractive, respectively, as in Definition~\ref{def:hyper}. 
For the necessity of hypercontractive conditions for 
robust linear regression, we refer to \cite[Section F.5]{zhu2019generalized}. 
The next lemma shows that  the the resulting dataset has a subset of size at least $(1-\alpha)n$ that is $(O(\alpha),O(\alpha^{1-1/k}),O(\alpha^{1-2/k}),O(\alpha^{1-1/k}), O(\alpha^{1-2/k}))$-resilient.

\begin{lemma}[Resilience for hypercontractive samples]\label{lem:hyper_contractive_res} For some integer $ k \geq 4 $ and positive scalar parameters $\kappa$ and $\tilde\kappa$, let ${\cal D}_1$ be a $(\kappa, k)$-hypercontractive distribution on $x_i\in\reals^d$ with zero mean and covariance $\Sigma \succ 0$. Let ${\cal D}_2$ be a $(\tilde\kappa,k)$-hypercontractive distribution on $\eta_i\in\reals$ with zero mean and variance $\gamma^2$. 
A multiset of labeled samples $S=\{(x_i, y_i)\}_{i=1}^n$ is generated from a linear model:  $y_i\;=\;x_i^\top\beta+\eta_i$, 
where the input $x_i$ and the independent noise $\eta_i$ are i.i.d. samples from ${\cal D}_1$ and ${\cal D}_2$. 
For any $\alpha\in(0,1/2)$ and any constant $c_3>0$,  
there exist constants $c_1$ and $c_2>0$ that only depend on $c_3$ such that if 
\begin{eqnarray}
	n\;\geq\; c_1\Big(\frac{d}{\zeta^{2(1-1/k)}\alpha^{2(1-1/k)}} + \frac{k^2\alpha^{2-2/k} (1+1/\tilde{\kappa}^2)d\log d}{\zeta^{2-4/k}\kappa^2} + \frac{\kappa^2(1+\tilde{\kappa}^2) d\log d}{\alpha^{2/k}}  \Big)\;,
	\label{eq:lr_kmoment_sample}
\end{eqnarray} 
then  $S$ 
    is $(c_3\alpha,\alpha,c_2k\kappa\tilde{\kappa} \alpha^{1-1/k}\zeta^{-1/k},c_2k^2\kappa^2\alpha^{1-2/k}\zeta^{-2/k}, c_2k\kappa\alpha^{1-1/k}\zeta^{-1/k}, c_2k^2\tilde{\kappa}^2\alpha^{1-2/k}\zeta^{-2/k} )$-corrupt good  with respect to $(\beta,\Sigma,\gamma)$ with probability $1-\zeta$.  
  \end{lemma}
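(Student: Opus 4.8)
\textbf{Proof proposal for Lemma~\ref{lem:hyper_contractive_res}.}

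The plan is to reduce the four resilience bounds in Definition~\ref{def:resilience_lr} to a single statement about a lifted distribution, exactly mirroring the sub-Gaussian argument in Lemma~\ref{lem:res_subgaussian_lr} but using the known resilience facts for hypercontractive (rather than sub-Gaussian) distributions. First I would introduce the lifted samples $\tilde x_i := \big[\Sigma^{-1/2}x_i ;\, \eta_i/\gamma\big]\in\reals^{d+1}$. Because $x_i$ is $(\kappa,k)$-hypercontractive with covariance $\Sigma$ and $\eta_i$ is $(\tilde\kappa,k)$-hypercontractive with variance $\gamma^2$, and the two are independent, one can check that $\tilde x_i$ is $(\kappa',k)$-hypercontractive with covariance $\mathbf I_{(d+1)\times(d+1)}$ for $\kappa' = O(\kappa\tilde\kappa)$ (a constant-factor loss from combining the two moment bounds; a product-of-independent-coordinates computation of $\E[|\langle u,\tilde x_i\rangle|^k]$ using the binomial expansion and the two individual hypercontractivity bounds). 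The key citation is then the covariance/mean-resilience statement for hypercontractive distributions, which is the analog of Lemma~\ref{lem:mean_kmoment} applied to $\tilde x_i\tilde x_i^\top$ (i.e.\ $k/2$-th-moment control on the lifted second-moment tensor); this is precisely what \cite[Lemma G.10]{zhu2019generalized} / the arguments behind Lemma~\ref{lem:mean_kmoment} give, at the sample size stated in Eq.~\eqref{eq:lr_kmoment_sample}.

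Next I would run the same algebraic decomposition as in the proof of Lemma~\ref{lem:res_subgaussian_lr}: write a unit vector $u=[u_1;u_2]\in\reals^{d+1}$ with $\|u_1\|^2+u_2^2=1$, expand $u^\top(\frac1{|T|}\sum_{i\in T}\tilde x_i\tilde x_i^\top - \mathbf I)u$ into the three pieces --- the $x$-covariance block, the cross term $\frac{2u_2}{\gamma}u_1^\top\frac1{|T|}\sum \Sigma^{-1/2}x_i\eta_i$, and the noise-variance block --- and isolate the cross term by subtracting the diagonal blocks (each of which is controlled by the $x$-covariance resilience Eq.~\eqref{def:res2}/\eqref{def:res3} and the noise-variance resilience Eq.~\eqref{def:res4}, both of which follow from the lifted bound restricted to coordinate subspaces). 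Choosing $u_1=\tfrac12 w$ for an arbitrary unit $w\in\reals^d$ and $u_2^2=\tfrac34$ then yields Eq.~\eqref{def:res1} with the claimed rate $O(k\kappa\tilde\kappa\,\alpha^{1-1/k}\zeta^{-1/k})$. Bounds \eqref{def:res2} and \eqref{def:res4} come directly from the $x$-block and $\eta$-block of the lifted resilience; \eqref{def:res3} (the first-moment bound on $\langle v,x_i\rangle$) follows from the mean-resilience of $(\kappa,k)$-hypercontractive $x_i$ exactly as in Lemma~\ref{lem:mean_kmoment}. Finally, since hypercontractive samples are not themselves resilient but are $O(\alpha)$-close in total variation to a resilient set (this is the standard ``corrupt good'' reduction already invoked for Lemma~\ref{lem:mean_kmoment}), I would conclude that $S$ is $(c_3\alpha,\alpha,\dots)$-corrupt good with the stated parameters, after a union bound over the finitely many $O(1)$ resilience events and a rescaling of constants $c_1,c_2$ in terms of $c_3$.

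The main obstacle I expect is bookkeeping the dependence on the two separate hypercontractivity constants $\kappa$ and $\tilde\kappa$ through the lifting step and making sure the resulting $\rho$-parameters split as advertised --- $\rho_1 = O(k\kappa\tilde\kappa\alpha^{1-1/k}\zeta^{-1/k})$ has the product $\kappa\tilde\kappa$ (it mixes $x$ and $\eta$), whereas $\rho_2 = O(k^2\kappa^2\alpha^{1-2/k}\zeta^{-2/k})$ depends only on $\kappa$ and $\rho_4 = O(k^2\tilde\kappa^2\alpha^{1-2/k}\zeta^{-2/k})$ only on $\tilde\kappa$. Getting these clean requires applying the lifted resilience bound not to all of $\reals^{d+1}$ with a uniform constant but to the $x$-subspace, the $\eta$-coordinate, and the genuinely mixed directions separately, and tracking how the combined proxy $\kappa' \asymp \kappa\tilde\kappa$ degrades each one; the cross-term extraction (subtracting two bounds whose constants differ) is where a naive argument would lose the product structure and instead get $\kappa'^2$ everywhere. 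A secondary nuisance is verifying that the three summands in the sample-complexity bound Eq.~\eqref{eq:lr_kmoment_sample} are exactly what the lifted application of Lemma~\ref{lem:mean_kmoment}-style bounds demands, including the $(1+1/\tilde\kappa^2)$ and $(1+\tilde\kappa^2)$ factors, which come from normalizing the noise coordinate by $\gamma$ and by $\tilde\kappa\gamma$ respectively in the two relevant moment computations --- but this is routine substitution into the cited theorem rather than a real difficulty.
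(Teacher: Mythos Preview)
Your lifting argument has a genuine gap in the rate it delivers for $\rho_1$. When you expand $u^\top\bigl(\tfrac{1}{|T|}\sum_{i\in T}\tilde x_i\tilde x_i^\top-\mathbf I\bigr)u$ and isolate the cross term $\tfrac{2u_2}{\gamma}u_1^\top\tfrac{1}{|T|}\sum_i \Sigma^{-1/2}x_i\eta_i$, the object you are bounding is a \emph{second-moment} (covariance-type) quantity in the lifted variable $\tilde x_i$. For a $(\kappa',k)$-hypercontractive vector, the covariance-resilience rate from Lemma~\ref{lem:mean_kmoment} is $O\bigl((k\kappa')^2\alpha^{1-2/k}\zeta^{-2/k}\bigr)$, not $\alpha^{1-1/k}$. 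Subtracting the $x$-block and $\eta$-block resiliences (each also at rate $\alpha^{1-2/k}$) therefore yields at best $\rho_1=O(\alpha^{1-2/k})$, which is the \emph{dependent-noise} rate of Lemma~\ref{lem:hyper_contractive_res_dependent}, not the sharper $\rho_1=O(\alpha^{1-1/k})$ claimed here. In the sub-Gaussian proof (Lemma~\ref{lem:res_subgaussian_lr}) this discrepancy is invisible because there the cross-term target $\rho_1=O(\alpha\log(1/\alpha))$ already \emph{is} the covariance-resilience rate; for hypercontractive distributions the mean- and covariance-resilience rates differ, and the lifting approach can only access the latter.

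The paper avoids this by skipping the lifting entirely and working directly with the product vector $\gamma^{-1}\Sigma^{-1/2}x_i\eta_i\in\reals^d$. Independence of $x_i$ and $\eta_i$ is used in the one line
\[
\E\bigl[|\langle v,\gamma^{-1}\Sigma^{-1/2}x\eta\rangle|^{k}\bigr]
=\E\bigl[|\langle v,\Sigma^{-1/2}x\rangle|^{k}\bigr]\,\E\bigl[|\gamma^{-1}\eta|^{k}\bigr]\le \kappa^k\tilde\kappa^k,
\]
so this product vector is itself $(\kappa\tilde\kappa,k)$-hypercontractive with identity covariance. Applying the \emph{mean}-resilience part of Lemma~\ref{lem:mean_kmoment} to it directly gives $\rho_1=O(k\kappa\tilde\kappa\,\alpha^{1-1/k}\zeta^{-1/k})$; the remaining resiliences $\rho_2,\rho_3,\rho_4$ are obtained by separate applications of Lemma~\ref{lem:mean_kmoment} to $x_i$ and $\eta_i$ individually. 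This also explains cleanly why $\rho_1$ carries the product $\kappa\tilde\kappa$ while $\rho_2$ and $\rho_4$ carry only $\kappa^2$ and $\tilde\kappa^2$ respectively --- the concern you flagged as your ``main obstacle'' disappears once you treat the three vectors $x_i\eta_i$, $x_i$, $\eta_i$ separately rather than trying to disentangle them from a single lifted bound.
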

\begin{proof}

Since of $x_i$ and $\eta_i$ are independent, we know 
\begin{eqnarray*}
	\E\left[\left|\ip{v}{\gamma^{-1}\Sigma^{-1/2}x\eta}\right|^{k}\right]= \E\left[\left|\ip{v}{\Sigma^{-1/2}x}\right|^{k}\right]\E\left[|\gamma^{-1}\eta|^k\right]\leq \kappa^k\tilde{\kappa}^k\;.
\end{eqnarray*}
This implies $\gamma^{-1}\Sigma^{-1/2}x\eta$ is a $k$-th moment bounded distribution with covariance $\mathbf{I}_{d \times d} $. By Lemma~\ref{lem:mean_kmoment},  under the sample complexity of \eqref{eq:lr_kmoment_sample}, 
 with probability $1-8\zeta$, there exists a subset $S_{\rm good}\subset S$ such that $|S_{\rm good}|\geq (1-\alpha )n$ and there exists a constant $C$ such that for any subset $T\subset S_{\rm good}$ and $|T|\geq (1-10\alpha)|S_{\rm good}|$, we have
\begin{eqnarray}
	\left\|\frac{1}{|T|}\sum_{i\in T}\frac{1}{\gamma} \Sigma^{-1/2}x_i\eta_i\right\| &\leq & Ck\kappa \tilde{\kappa}\gamma\alpha^{1-1/k}\zeta^{-1/k}\;. 
\end{eqnarray}
This proves the first resilience in Eq.~\eqref{def:res1}. 
The second resilience in Eq.~\eqref{def:res2}, third resilience in Eq.~\eqref{def:res3} and fourth resilience in Eq.~\eqref{def:res4} follow directly from Lemma~\ref{lem:mean_kmoment}.
\end{proof}

The above resilience lemma and  Theorem~\ref{thm:robust_linear_regression} imply the following  utility guarantee. HPTR is naturally robust against $(1/5.5-c_3)\alpha$-corruption of the data. Choosing appropriate constants, we get the following result. 

\begin{coro} 
    \label{coro:linear_regression_hypercontractive} 
    Under the hypothesis of Lemma~\ref{lem:hyper_contractive_res}, 
 there exists a constant $c>0$ such that for any $\alpha\leq c$ and $k^2 \kappa^2 \alpha^{1-2/k} \leq c$, it is sufficient to have a dataset of size 
 \begin{eqnarray}
	n=O \Big(\frac{d}{\zeta^{2(1-1/k)}\alpha^{2(1-1/k)}} + \frac{k^2\alpha^{2-2/k} (1+1/\tilde{\kappa}^2)d\log d}{\zeta^{2-4/k}\kappa^2} + \frac{\kappa^2(1+\tilde{\kappa}^2) d\log d}{\alpha^{2/k}}  + 
	\frac{d+\log(1/\delta)}{\alpha\varepsilon} \Big)\;,
\end{eqnarray} 
    a sensitivity of $\sens=O( 1/ ( n\alpha^{1/k}))$, and a threshold of $\thresh=O( \alpha^{1-1/k})$ 
     with large enough constants  for  \HPTR$(S)$ 
     with the distance function in Eq.~\eqref{def:lr_dist} 
      to achieve $(1/\gamma) \|\Sigma^{1/2} (\hat\beta-\beta) \|=O(k\kappa\tilde\kappa  \alpha^{1-1/k}\zeta^{-1/k})$ with  probability $1-\zeta$. 
     Further, the same guarantee holds even if $ \alpha$-fraction of the samples are arbitrarily corrupted as in Assumption~\ref{asmp:lr}. 
\end{coro}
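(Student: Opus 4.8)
The plan is to simply combine the generic utility guarantee for linear regression (Theorem~\ref{thm:robust_linear_regression}) with the resilience bound for hypercontractive samples (Lemma~\ref{lem:hyper_contractive_res}), exactly mirroring how Corollary~\ref{coro:mean_kmoment} is derived from Theorem~\ref{thm:robust_mean} and Lemma~\ref{lem:mean_kmoment}. First I would invoke Lemma~\ref{lem:hyper_contractive_res}: under the stated sample size $n$, with probability $1-\zeta$ the dataset $S$ is $(c_3\alpha,\alpha,\rho_1,\rho_2,\rho_3,\rho_4)$-corrupt good with respect to $(\beta,\Sigma,\gamma)$, where $\rho_1 = c_2 k\kappa\tilde\kappa\alpha^{1-1/k}\zeta^{-1/k}$, $\rho_2 = c_2 k^2\kappa^2\alpha^{1-2/k}\zeta^{-2/k}$, $\rho_3 = c_2 k\kappa\alpha^{1-1/k}\zeta^{-1/k}$, and $\rho_4 = c_2 k^2\tilde\kappa^2\alpha^{1-2/k}\zeta^{-2/k}$. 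Choosing the constant $c_3$ appropriately (e.g.\ $c_3 = 2/11$), the hypothesis of the robustness theorem — that $S$ is $((2/11)\alpha,\alpha,\rho_1,\rho_2,\rho_3,\rho_4)$-corrupt good — is met.

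Next I would check that the smallness conditions required by Theorem~\ref{thm:robust_linear_regression}, namely $\alpha<c$, $\rho_1<c$, $\rho_2<c$, $\rho_3^2\le c\alpha$, and $\rho_4<c$, all follow from the corollary's hypotheses $\alpha\le c$ and $k^2\kappa^2\alpha^{1-2/k}\le c$ (treating $k,\kappa,\tilde\kappa,\zeta$ as bounded). The binding one is $\rho_3^2\le c\alpha$: since $\rho_3^2 = c_2^2 k^2\kappa^2\alpha^{2-2/k}\zeta^{-2/k}$, dividing by $\alpha$ gives $c_2^2 k^2\kappa^2\alpha^{1-2/k}\zeta^{-2/k}$, which is $O(k^2\kappa^2\alpha^{1-2/k})$ and hence $\le c\alpha$-style bound is controlled by exactly the assumption $k^2\kappa^2\alpha^{1-2/k}\le c$. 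The conditions $\rho_1<c$ and $\rho_2<c$ are weaker (since $\rho_2 = O(k^2\kappa^2\alpha^{1-2/k}\zeta^{-2/k})$ and $\rho_1 = O(k\kappa\tilde\kappa\alpha^{1-1/k}\zeta^{-1/k})$, both small when $\alpha$ is small), and $\rho_4<c$ is similar. Then Theorem~\ref{thm:robust_linear_regression}, with the prescribed $\sens = 110\rho_1/(\alpha n)$ and $\thresh = 42\rho_1$, yields $(1/\gamma)\|\Sigma^{1/2}(\hat\beta-\beta)\| \le 32\rho_1 = O(k\kappa\tilde\kappa\alpha^{1-1/k}\zeta^{-1/k})$ with probability $1-\zeta$, provided $n \ge C(d+\log(1/(\delta\zeta)))/(\varepsilon\alpha)$, which is absorbed into the stated sample complexity (the fourth term). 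Translating the parameter choices: $\sens = 110\rho_1/(\alpha n) = O(k\kappa\tilde\kappa\zeta^{-1/k}\alpha^{-1/k}/n) = O(1/(n\alpha^{1/k}))$ and $\thresh = 42\rho_1 = O(\alpha^{1-1/k})$, matching the statement. Finally, the robustness-against-$\alpha$-corruption claim follows from applying the same argument to the corrupt-good structure: Lemma~\ref{lem:hyper_contractive_res} already delivers a corrupt-good guarantee, and one further $(1/5.5 - c_3)\alpha$-corruption layer composes (as in Definition~\ref{def:corruptgoodset_lr}) to keep $S$ within the $(2/11)\alpha$-corrupt-good regime of Theorem~\ref{thm:robust_linear_regression}, as long as $c_3 + (1/5.5 - c_3) = 1/5.5 < 2/11$ — wait, here one must be slightly careful with the constants, and I would pick $c_3$ and the corruption fraction so their sum stays below $2/11$; this bookkeeping is the only genuinely fiddly point.

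The main obstacle, such as it is, is purely the constant-chasing: verifying that the single hypothesis $k^2\kappa^2\alpha^{1-2/k}\le c$ simultaneously controls all four resilience parameters at the scales demanded by Theorem~\ref{thm:robust_linear_regression}, and that the three resilience-driven terms in the sample complexity from Lemma~\ref{lem:hyper_contractive_res} together with the privacy term $O((d+\log(1/\delta))/(\alpha\varepsilon))$ reproduce exactly the four-term bound in the corollary. Everything else is a direct plug-in. Since the corollary treats $k,\kappa,\tilde\kappa$ as constants in the informal theorem statement but keeps them explicit here, I would keep the dependence on $k,\kappa,\tilde\kappa,\zeta$ visible throughout and only simplify at the very end, so that the final error rate $O(k\kappa\tilde\kappa\alpha^{1-1/k}\zeta^{-1/k})$ and the sample complexity are both stated with their full dependence, matching the corollary verbatim.
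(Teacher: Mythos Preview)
Your proposal is correct and follows exactly the route the paper takes: the paper's entire justification is the sentence ``The above resilience lemma and Theorem~\ref{thm:robust_linear_regression} imply the following utility guarantee. HPTR is naturally robust against $(1/5.5-c_3)\alpha$-corruption of the data. Choosing appropriate constants, we get the following result,'' and you have simply spelled out the plug-in and constant-checking that this sentence leaves implicit. One tiny arithmetic slip: in your robustness bookkeeping you wrote $1/5.5 < 2/11$, but in fact $1/5.5 = 2/11$, so the total corruption $c_3\alpha + (1/5.5 - c_3)\alpha$ lands exactly at $(2/11)\alpha$, which is precisely the hypothesis of Theorem~\ref{thm:robust_linear_regression}---no further slack is needed.
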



The error bound is optimal under $\alpha$-corruption: namely the error bound 
$(1/\gamma) \|\Sigma^{1/2}(\hat\beta-\beta)\| = O(  \alpha^{1-1/k})$ matches the lower bound $(1/\gamma) \|\Sigma^{1/2} (\hat\beta-\beta) \|=\Omega( \alpha^{1-1/k })$ by \cite{bakshi2021robust} where 
the noise $\eta_i$ is $(1,k)$-hypercontractive and independent of $x_i$, which is also $(1,k)$-hypercontractive. For completeness, we provide the lower bound in Appendix~\ref{sec:lb}.  
HPTR is the first algorithm that guarantees both differential privacy and optimal robust error bound of $O(\alpha^{1-1/k})$ for hypercontractive distributions. 
If only robust error bound under $\alpha$-corruption is concerned, 
 \cite{zhu2019generalized} also achieves the same optimal error bound, but does not provide differential privacy. Further, in this robust but not private case with $\varepsilon=\infty$, our sample complexity improves by a factor of $\alpha^{2/k}$ upon the state-of-the-art sample complexity of \cite[Theorem 3.3]{zhu2019generalized} which shows that $n=O(d/\alpha^2)$ is sufficient to achieve $(1/\gamma) \|\Sigma^{1/2}(\hat\beta-\beta)\|=O(\alpha^{1-1/k})$.

\medskip
\noindent{\bf Remark.} Suppose $k,\kappa,\tilde\kappa$, and $\zeta$ are $\Theta(1)$. 
HPTR  achieves $(1/\gamma)\|\Sigma^{1/2}(\hat\beta-\beta)\| =  O(\alpha^{1-1/k})$ with $n = \tilde O ( d/(\alpha^{2-2/k}) + (d+\log(1/\delta))/(\alpha \varepsilon)) $ samples, where $\tilde O$ hides logarithmic factors in $d$. 
The first term cannot be improved as it matches the first term of a lower bound of 
 $n=\tilde\Omega(d/\alpha^{2-2/k}+d/(\alpha^{1-1/k}\varepsilon))$  from  \cite[Theorem 4.1]{cai2019cost}, 
 which holds even for standard  non-robust sub-Gaussian (which is $(c_k,k)$-hypercontractive for any $k\in{\mathbb Z}_+$ and a constant $c_k$ that depends only on $k$) linear regression with independent noise (see Appendix~\ref{sec:lb} for a precise statement). 
However, we do not have a matching lower bound for the second term.
To the best of our knowledge, HPTR is the first algorithm for linear regression that guarantees $(\varepsilon,\delta)$-DP under hypercontractive distributions with independent noise. 

\subsubsection{Hypercontractive distributions with dependent noise}\label{sec:lr:dependent}
	
We assume $x_i$ and $\eta_i$ may be  dependent and marginally 
$(\kappa,k)$-hypercontractive and $(\tilde\kappa,k)$-hypercontractive, respectively, as defined in Definition~\ref{def:hyper}. 
In this case, the first resilience $\rho_1$ that determines the error rate increases from $O(\alpha^{1-1/k}) $ to $O(\alpha^{1-2/k}) $ as a result of the input and the noise being potentially correlated. 
The next lemma shows that  the the resulting dataset has a subset of size at least $(1-\alpha)n$ that is $(O(\alpha) ,O(\alpha^{1-2/k}),O(\alpha^{1-2/k}),O(\alpha^{1-1/k}), O(\alpha^{1-2/k}))$-resilient. 

\begin{lemma}[Resilience for hypercontractive samples with dependent noise]\label{lem:hyper_contractive_res_dependent} For some integer $k \geq 4  $ and positive scalar parameters $\kappa$ and $\tilde\kappa$, let ${\cal D}_1$ be a $(\kappa, k)$-hypercontractive distribution on $x_i\in\reals^d$ with zero mean and covariance $\Sigma \succ 0$. Let ${\cal D}_2$ be a $(\tilde\kappa,k)$-hypercontractive distribution on $\eta_i\in\reals$ with variance $\gamma^2$. 
A multiset of labeled samples $S=\{(x_i, y_i)\}_{i=1}^n$ is generated from a linear model:
$y_i = x_i^\top\beta+\eta_i$,
where  $\{(x_i,\eta_i)\}_{i\in[n]}$ are i.i.d. samples from some distribution ${\cal D}$ whose marginal distribution for $x_i$ is  ${\cal D}_1$, the marginal distribution for $\eta_i$ is ${\cal D}_2$, and $\E[x_i\eta_i]=0$. 
For any  $\alpha\in(0,1/2)$ and $c_3>0$, there exist constants $c_1$ and $c_2>0$ that only depend on $c_3$ such that if 
\begin{eqnarray}
	n \;\geq\; c_1\Big(
	\frac{d}{\zeta^{2(1-1/k)}\alpha^{2(1-1/k)}} + 
	\frac{k^2\alpha^{2-4/k}(1+1/\tilde{\kappa}^2) d\log d}{\zeta^{2-4/k}\kappa^2\tilde{\kappa}^2} + \frac{\kappa^2(\tilde{\kappa}^2+1) d\log d}{\alpha^{4/k}}
	\Big)\;,
	\label{eq:lr_kmoment_sample_dependent}
\end{eqnarray} 
then $S$ 
    is $(c_3\alpha, \alpha,c_2k\kappa\tilde{\kappa} \alpha^{1-2/k}\zeta^{-2/k},c_2 k^2\kappa^2\alpha^{1-2/k}\zeta^{-2/k}, c_2k\kappa\alpha^{1-1/k}\zeta^{-1/k}, c_2k^2\tilde{\kappa}^2\alpha^{1-2/k}\zeta^{-2/k} )$-corrupt good  with respect to $(\beta,\Sigma,\gamma)$ with probability $1-\zeta$. 
  \end{lemma}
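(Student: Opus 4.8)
The plan is to reduce the resilience claim for the dependent-noise model to the hypercontractive mean-estimation resilience of Lemma~\ref{lem:mean_kmoment}, applied to a suitable stacked random vector, exactly as in the independent-noise case (Lemma~\ref{lem:hyper_contractive_res}) but tracking where the loss of a $\zeta^{-1/k}$ versus $\zeta^{-2/k}$ factor comes from. Concretely, define the augmented vector $\tilde z_i = \gamma^{-1}\Sigma^{-1/2}x_i\eta_i\in\reals^d$, the whitened input $\tilde x_i=\Sigma^{-1/2}x_i$, and the normalized residual $\eta_i/\gamma$. The four resilience conditions \eqref{def:res1}--\eqref{def:res4} in Definition~\ref{def:resilience_lr} are, after whitening, precisely first- and second-moment resilience statements for $\tilde z_i$, $\tilde x_i$, and $\eta_i/\gamma$ respectively. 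So the strategy is: (i) verify the relevant hypercontractivity of each of these three objects; (ii) invoke Lemma~\ref{lem:mean_kmoment} to each, with a union bound, at failure probability $\zeta$; (iii) collect the resulting resilience parameters and the corruption fraction, and absorb the (at most $(1-\alpha)n$) bad points from the three invocations into a single $\alpha$-corrupt good set with the claimed parameters.

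The key step is step (i), and in particular bounding $\E[|\langle v,\tilde z_i\rangle|^k]$ when $x_i$ and $\eta_i$ are only \emph{jointly} distributed with $\E[x_i\eta_i]=0$ and merely marginally $(\kappa,k)$- and $(\tilde\kappa,k)$-hypercontractive. Unlike the independent case, the moment does not factor. The plan is to apply Cauchy--Schwarz (or H\"older) to split $\E[|\langle v,\Sigma^{-1/2}x_i\rangle|^k|\eta_i/\gamma|^k]\le \big(\E[|\langle v,\Sigma^{-1/2}x_i\rangle|^{2k}]\big)^{1/2}\big(\E[|\eta_i/\gamma|^{2k}]\big)^{1/2}$ — but this needs $2k$-th moment control, which we do not have. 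The correct route is instead to observe that $\tilde z_i$ need only be $k$-th-moment bounded with covariance $\preceq$ something like $\kappa^2\tilde\kappa^2\,\mathbf I$ and $k$-th central moment $O((k\kappa\tilde\kappa)^k)$; one bounds the raw $k$-th moment of $\langle v,\tilde z_i\rangle$ by $\kappa^k\tilde\kappa^k$ up to constants using the marginal hypercontractivity of the two factors together with H\"older with conjugate exponents chosen so only the marginal $k$-th moments appear (e.g.\ writing $|ab|^k$ and applying H\"older at exponents matched to split into $\E[|a|^{k p}]^{1/p}\E[|b|^{kq}]^{1/q}$ is not directly available, so instead one uses that $|\langle v,\tilde z_i\rangle| = |\langle v,\Sigma^{-1/2}x_i\rangle|\cdot|\eta_i/\gamma|$ and bounds the expectation by the product of the two $k$-th-moment-bounded quantities only after noting $\E[|AB|^k]\le \E[|A|^k]^{1/2}\E[|B|^k]^{1/2}\cdot(\text{something})$—this is exactly the subtlety). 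I expect this is where the exponent degrades: because we can only use marginal $k$-th moments of each factor, the clean bound one actually gets controls $\E[|\langle v,\tilde z_i\rangle|^k]$ only after first passing through $\E[|\langle v,\tilde z_i\rangle|^{k/2}]$-type quantities, effectively halving the usable moment order, which is what turns $\alpha^{1-1/k}$ into $\alpha^{1-2/k}$ in $\rho_1$ and $\rho_2$.

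Having established that $\tilde z_i$ is (a constant multiple of) a $k$-th-moment-bounded vector with the appropriate covariance lower/upper bounds, and that $\tilde x_i$ and $\eta_i/\gamma$ are $(\kappa,k)$- and $(\tilde\kappa,k)$-hypercontractive respectively (the latter two are immediate from the hypotheses, since whitening preserves hypercontractivity), I would apply Lemma~\ref{lem:mean_kmoment} three times. For $\tilde z_i$: this yields an $(c_3\alpha,\alpha,O(k\kappa\tilde\kappa\alpha^{1-2/k}\zeta^{-2/k}),\cdots)$-corrupt good guarantee, contributing the $\rho_1$ parameter $c_2 k\kappa\tilde\kappa\alpha^{1-2/k}\zeta^{-2/k}$ and also a $\rho_2$-type bound of the same order. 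For $\tilde x_i$: this gives the covariance resilience \eqref{def:res2} with parameter $O(k^2\kappa^2\alpha^{1-2/k}\zeta^{-2/k})$ and the mean resilience \eqref{def:res3} with $O(k\kappa\alpha^{1-1/k}\zeta^{-1/k})$. For $\eta_i/\gamma$ (a scalar, with $\E[\eta_i^2]=\gamma^2$): the second-moment resilience \eqref{def:res4} with parameter $O(k^2\tilde\kappa^2\alpha^{1-2/k}\zeta^{-2/k})$. The sample-size requirement \eqref{eq:lr_kmoment_sample_dependent} is then just the union of the three sample-complexity bounds from Lemma~\ref{lem:mean_kmoment}, with the $\kappa$ in that lemma replaced by the appropriate effective constant ($\kappa\tilde\kappa$ for $\tilde z_i$, $\kappa$ for $\tilde x_i$, $\tilde\kappa$ for $\eta_i$), and a factor $(1+\tilde\kappa^2)$ or $(1+1/\tilde\kappa^2)$ appearing from combining the $\tilde x$ and $\eta$ contributions — matching the stated bound. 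Finally, a union bound over the three events (each at probability $1-\zeta/3$, rescaling constants) and intersecting the three "good" subsets (each of size $\ge(1-O(\alpha))n$, so the intersection is $\ge(1-O(\alpha))n$) gives a single $S_{\rm good}$ relative to which $S$ is $(c_3\alpha,\alpha,\cdots)$-corrupt good with all four claimed resilience parameters, completing the proof. The main obstacle, as noted, is the moment computation for the product $\langle v,\Sigma^{-1/2}x_i\rangle\cdot(\eta_i/\gamma)$ under only joint-but-uncorrelated, marginally-hypercontractive assumptions — getting the right exponent of $\alpha$ out of it is the crux, and everything else is bookkeeping that parallels Lemma~\ref{lem:hyper_contractive_res}.
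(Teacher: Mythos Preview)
Your overall plan mirrors the paper's approach exactly: reduce \eqref{def:res1} to the mean-estimation resilience lemma (Lemma~\ref{lem:mean_kmoment}) applied to $\tilde z_i=\gamma^{-1}\Sigma^{-1/2}x_i\eta_i$, and read off \eqref{def:res2}--\eqref{def:res4} from the same lemma applied to $\Sigma^{-1/2}x_i$ and $\eta_i/\gamma$, just as in Lemma~\ref{lem:hyper_contractive_res}. The bookkeeping you describe (union bound, intersecting good subsets, collecting sample-complexity terms) is right.

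However, there is a concrete gap at the step you yourself flag as the crux. You keep trying to bound the $k$-th moment $\E[|\langle v,\tilde z_i\rangle|^k]$ and then somehow ``pass through $k/2$-type quantities.'' That route does not work: with only marginal $k$-th moments on each factor and no independence, you simply cannot control $\E[|AB|^k]$ in general. The paper's resolution is not to bound the $k$-th moment at all, but to bound the $(k/2)$-th moment directly via Cauchy--Schwarz:
\[
\E\big[|\langle v,\Sigma^{-1/2}x\rangle\cdot \gamma^{-1}\eta|^{k/2}\big]\;\le\;\sqrt{\E\big[|\langle v,\Sigma^{-1/2}x\rangle|^{k}\big]\;\E\big[|\gamma^{-1}\eta|^{k}\big]}\;\le\;(\kappa\tilde\kappa)^{k/2}\,.
\]
This shows $\tilde z_i$ is $(\kappa\tilde\kappa,\,k/2)$-hypercontractive (it has covariance $\mathbf I_{d\times d}$ since $\E[x_i\eta_i]=0$ gives the right first moment and the second moment is handled by the marginal hypercontractivity), and then Lemma~\ref{lem:mean_kmoment} is invoked \emph{at moment order $k/2$ rather than $k$}. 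That is precisely why $\rho_1$ becomes $O(k\kappa\tilde\kappa\,\alpha^{1-1/(k/2)}\zeta^{-1/(k/2)})=O(k\kappa\tilde\kappa\,\alpha^{1-2/k}\zeta^{-2/k})$. Your paragraph that begins ``Having established that $\tilde z_i$ is (a constant multiple of) a $k$-th-moment-bounded vector'' is therefore inconsistent with the claimed $\alpha^{1-2/k}$ exponent: if $\tilde z_i$ really were $k$-th-moment bounded, you would get $\alpha^{1-1/k}$, not $\alpha^{1-2/k}$. Once you make this single correction---bound the $(k/2)$-th moment and apply Lemma~\ref{lem:mean_kmoment} at order $k/2$---the rest of your proposal goes through verbatim and matches the paper.
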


\begin{proof}
	Since $\eta_i$ and $x_i$ are dependent, we can only bound $k/2$-th moment of $\gamma^{-1}\Sigma^{-1/2}x\eta$. By Holder inequality, we have
\begin{eqnarray*}
	\E\left[\left|\ip{v}{\Sigma^{-1/2}\gamma^{-1}x\eta}\right|^{k/2}\right]\leq \sqrt{ \E\left[\left|\ip{v}{\Sigma^{-1/2}x}\right|^{k}\right]\E\left[|\gamma^{-1}\eta|^k\right]}\leq \kappa^{k/2}\tilde{\kappa}^{k/2}\;.
\end{eqnarray*}
The rest of the proof follows similarly as the proof of Lemma~\ref{lem:hyper_contractive_res}. 

\end{proof}

The above resilience lemma and  Theorem~\ref{thm:robust_linear_regression} imply the following optimal utility guarantee achieving an error rate of $O(\alpha^{1-2/k})$. 

\begin{coro}  
    \label{coro:linear_regression_hypercontractive_dependent_noise} 
    Under the hypothesis of Lemma~\ref{lem:hyper_contractive_res_dependent},
 there exists a constant $c>0$ such that for any $\alpha\leq c$ and $k^2 \kappa^2 \alpha^{1-2/k} \leq c$, it is sufficient to have a dataset of size 
 \begin{eqnarray*}
	n = O \Big(
	\frac{d+\log(1/\delta)}{\alpha\varepsilon}+ \frac{d}{\zeta^{2(1-1/k)}\alpha^{2(1-1/k)}} + 
	\frac{k^2\alpha^{2-4/k}(1+1/\tilde{\kappa}^2) d\log d}{\zeta^{2-4/k}\kappa^2\tilde{\kappa}^2} + \frac{\kappa^2(\tilde{\kappa}^2+1) d\log d}{\alpha^{4/k}} \Big)\;,
\end{eqnarray*} 
a sensitivity $\sens=O(1/(n\alpha^{2/k}))$, and a threshold $\thresh=O(\alpha^{1-2/k})$, 
with  large enough constants  
     for \HPTR$(S)$ with the distance function in Eq.~\eqref{def:lr_dist} to achieve $(1/\gamma) \|\Sigma^{1/2} (\hat\beta-\beta) \|=O(k\kappa\tilde\kappa  \alpha^{1-2/k}\zeta^{-2/k})$ with  probability $1-\zeta$. 
     Further, the same guarantee holds even if $\alpha$-fraction of the samples are arbitrarily corrupted as in Assumption~\ref{asmp:lr}. 
\end{coro}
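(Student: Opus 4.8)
The plan is to follow exactly the same three-step recipe that was used for all the previous instances (mean estimation and the independent-noise linear regression cases), since Corollary~\ref{coro:linear_regression_hypercontractive_dependent_noise} is stated as a direct consequence of a resilience lemma (Lemma~\ref{lem:hyper_contractive_res_dependent}) combined with the general robustness guarantee (Theorem~\ref{thm:robust_linear_regression}). Concretely, I would argue as follows. First, invoke Lemma~\ref{lem:hyper_contractive_res_dependent} with a suitable constant $c_3$ (say $c_3 = 2/11$, matching the corruption fraction tolerated by Theorem~\ref{thm:robust_linear_regression}): under the sample-size condition \eqref{eq:lr_kmoment_sample_dependent}, with probability $1-\zeta$ the dataset $S$ is $(c_3\alpha,\alpha,\rho_1,\rho_2,\rho_3,\rho_4)$-corrupt good with respect to $(\beta,\Sigma,\gamma)$, where $\rho_1 = c_2 k\kappa\tilde\kappa\,\alpha^{1-2/k}\zeta^{-2/k}$, $\rho_2 = c_2 k^2\kappa^2\alpha^{1-2/k}\zeta^{-2/k}$, $\rho_3 = c_2 k\kappa\alpha^{1-1/k}\zeta^{-1/k}$, and $\rho_4 = c_2 k^2\tilde\kappa^2\alpha^{1-2/k}\zeta^{-2/k}$.

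Second, I would check that the parameters satisfy the hypotheses of Theorem~\ref{thm:robust_linear_regression}: we need $\alpha < c$, $\rho_1 < c$, $\rho_2 < c$, $\rho_3^2 \le c\alpha$, and $\rho_4 < c$ for the appropriate small constant $c$. The bounds on $\rho_1,\rho_2,\rho_4$ follow from the assumption $k^2\kappa^2\alpha^{1-2/k}\le c$ (and, for $\rho_4$, an analogous smallness of $k^2\tilde\kappa^2\alpha^{1-2/k}$, which I would absorb into the hypothesis or note follows by symmetry from controlling $\alpha$ small enough given $k,\tilde\kappa$); the condition $\rho_3^2 = c_2^2 k^2\kappa^2\alpha^{2-2/k}\zeta^{-2/k} \le c\alpha$ reduces to $k^2\kappa^2\alpha^{1-2/k}\zeta^{-2/k}\lesssim 1$, again covered by the smallness assumption. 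This is where a bit of care is needed to make the constants line up, but it is routine bookkeeping.

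Third, apply Theorem~\ref{thm:robust_linear_regression}: with $\sens = 110\rho_1/(\alpha n)$ and $\thresh = 42\rho_1$, HPTR achieves $(1/\gamma)\|\Sigma^{1/2}(\hat\beta - \beta)\| \le 32\rho_1 = O(k\kappa\tilde\kappa\,\alpha^{1-2/k}\zeta^{-2/k})$ with probability $1-\zeta$, provided $n \ge C(d + \log(1/(\delta\zeta)))/(\varepsilon\alpha)$. Substituting $\rho_1$ into $\sens$ and $\thresh$ gives $\sens = O(1/(n\alpha^{2/k}))$ and $\thresh = O(\alpha^{1-2/k})$ (treating $k,\kappa,\tilde\kappa,\zeta$ as absorbed into the $O(\cdot)$), as claimed. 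The total sample requirement is the maximum of \eqref{eq:lr_kmoment_sample_dependent} and $C(d+\log(1/\delta))/(\alpha\varepsilon)$ (swallowing $\log(1/\zeta)$ into the polynomial-in-$1/\zeta$ terms already present), which is exactly the stated $n$. The robustness-under-$\alpha$-corruption claim is immediate: Assumption~\ref{asmp:lr} with an $\alpha_{\rm corrupt}$-corruption of a resilient set is precisely the "corrupt good" setup, and since the adversary corruption plus the algorithmic slack stays below the $(2/11)\alpha$ budget of Theorem~\ref{thm:robust_linear_regression} when $\alpha_{\rm corrupt} \le (1/5.5-c_3')\alpha$ for an appropriate $c_3'$, the same conclusion holds.

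The main obstacle, such as it is, is not a deep one: it is verifying that the hypercontractive-with-dependent-noise resilience genuinely plugs into the \emph{same} Theorem~\ref{thm:robust_linear_regression} that was designed with the independent-noise resilience rates in mind — in particular that the condition $\rho_3^2 \le c\alpha$ (which controls the $\hat\beta$-dependent part of the sensitivity in \eqref{eq:lr_sens_bound}) still holds, since $\rho_3$ here has the \emph{same} rate $O(\alpha^{1-1/k})$ as in the independent case (only $\rho_1,\rho_2,\rho_4$ degrade to $\alpha^{1-2/k}$), so the locality-in-estimate argument goes through verbatim. Once that is observed, the corollary is a mechanical substitution, and I would present it in two or three short sentences mirroring the proof of Corollary~\ref{coro:linear_regression_hypercontractive}.
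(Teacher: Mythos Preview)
Your proposal is correct and follows exactly the paper's approach: the corollary is stated immediately after Lemma~\ref{lem:hyper_contractive_res_dependent} with the one-line justification that ``the above resilience lemma and Theorem~\ref{thm:robust_linear_regression} imply the following optimal utility guarantee,'' and your three-step argument (invoke the resilience lemma, verify the hypotheses of Theorem~\ref{thm:robust_linear_regression}, substitute $\rho_1$ into $\sens,\thresh$ and the error bound) is precisely that implication spelled out. Your observation that $\rho_3$ retains the $O(\alpha^{1-1/k})$ rate (so $\rho_3^2 \le c\alpha$ still holds) is the right sanity check, and the remaining constant-matching is indeed routine.
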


This error rate is optimal in its dependence in $\alpha$ under $\alpha$-corruption. 
When $\eta_i$ and $x_i$ are dependent, \cite{bakshi2021robust} gives a lower bound of error rate $(1/\gamma) \|\Sigma^{1/2} (\hat\beta-\beta) \|=\Omega(\tilde{\kappa}\alpha^{1-2/k})$ that   holds regardless of how many samples we have and without the privacy constraints. 
For completeness, we provide the lower bound in Appendix~\ref{sec:lb}. 
If only robust error bound under $\alpha$-corruption is concerned, 
 \cite{zhu2019generalized} also achieves the same optimal error bound, but does not provide differential privacy. Further, in this robust but not private case with $\varepsilon=\infty$, our sample complexity improves by a factor of $\alpha^{2/k}$ upon the state-of-the-art sample complexity of \cite[Theorem 3.3]{zhu2019generalized} which shows that $n=O(d/\alpha^2)$ is sufficient to achieve $(1/\gamma) \|\Sigma^{1/2}(\hat\beta-\beta)\|=O(\alpha^{1-2/k})$. 

\medskip\noindent{\bf Remark.} 
Suppose $\zeta,\kappa,\tilde\kappa$, and $k$ are $\Theta(1)$.  The sample complexity of HPTR is 
$n=\tilde O((d+\log(1/\delta))/\alpha^{2(1-1/k)}+d/(\alpha\varepsilon))$. 
The first term has a gap of $\alpha^{-2/k}$ factor compared to the first term of a lower bound  of
 $n=\tilde\Omega(d/\alpha^{2(1-2/k)}+d/(\alpha^{1-2/k}\varepsilon))$  from  \cite[Theorem 4.1]{cai2019cost}, 
 which holds even for standard  non-robust sub-Gaussian DP linear regression. It remains an open question whether this gap can be closed, either by a tighter analysis of the resilience for HPTR or a tighter analysis for a lower bound.  
 
 On the upper bound, the gap comes from the fact that we are ensuring stronger resilience than we need. From Theorem~\ref{thm:linear_regression}, we know that we require $\rho_1\leq c$ and $\rho_3^2\leq c\alpha$, and from the optimal error rate, we want  $\rho_1\leq c\alpha^{1-2/k}$. The resilience we ensure in Lemma~\ref{lem:hyper_contractive_res_dependent} is $(\alpha,\rho_1=\alpha^{1-2/k},\rho_2=\alpha^{1-2/k},\rho_3=\alpha^{1-1/k})$ which is guaranteeing unnecessarily small $\rho_2$ and $\rho_3$. A similar slack was also there in mean estimation, which did not affect the final sample complexity. In this case with linear regression and hypercontractive distributions, it causes  sample complexity to be larger. Tighter analysis of the resilience which guarantees larger  $\rho_2$ and $\rho_3$ can improve the  the first term in the sample complexity in its dependence on $\alpha$, but cannot close the $\alpha^{-2/k}$ gap. On the lower bound, we are using a construction of \cite[Theorem 4.1]{cai2019cost}, which uses Gaussian distributions and an independent noise. One could potentially tighten the lower bound with a construction that uses hypercontractive distributions and a dependent noise. 
 
For the second term,  we provide a nearly matching lower bound of $n=\Omega(\min\{d,\log(1/\delta)\}/\alpha \varepsilon)$ to achieve $(1/\gamma) \|\Sigma^{1/2}(\hat\beta-\beta)\|^2\leq O(\alpha^{2-4/k})$ in  Proposition~\ref{thm:lowerbound_regression_dependent} proving that it is tight when $\delta=\exp(-\Theta(d))$.  To the best of our knowledge, HPTR is the first algorithm for linear regression that guarantees $(\varepsilon,\delta)$-DP under hypercontractive distributions with dependent noise.


\begin{propo}[Lower bound of hypercontractive linear regression with dependent noise]
\label{thm:lowerbound_regression_dependent}
For any $k\geq 4$, 
	let $\cP_{\kappa,k,\Sigma,\gamma^2}$ be a distribution over $(x_i,\eta_i)\in\reals^{d}\times \reals$ where $x_i$ is $(\kappa, k)$-hypercontractive with zero mean and covariance $\Sigma$, and  $\eta_i$ is $(\kappa,k)$-hypercontractive with zero mean and variance $\gamma^2$. 
	We observe labelled examples a linear model 
	$y_i= x_i^\top \beta + \eta_i$ with  $\E[x_i\eta_i]=0$ such that 
	$\beta = \Sigma^{-1}\E[y_ix_i] 
	$\;. 
	Let $\cM_{\varepsilon,\delta}$ denote a class of $(\varepsilon, \delta)$-DP estimators that are  measurable functions over $n$ i.i.d.~samples $S= \{(x_i,y_i) \}_{i=1}^n$ from a distribution.   There exist  positive constants $c,\gamma,\kappa=O(1)$ such that, for $\varepsilon\in(0,10)$,
	 \begin{eqnarray*}
 	\inf_{\hat\beta\in \cM_{\varepsilon, \delta}}
 	\sup_{\Sigma\succ 0, P\in \cP_{\kappa,k,\Sigma,\gamma^2}} \frac{1}{\gamma} \E_{  P^{n}}[\|\Sigma^{1/2}(\hat\beta(S)-\beta )\|^2]
 	\;\geq\;
 	c \,\min \left\{\left(\frac{d\wedge \log((1-e^{-\varepsilon})/\delta)}{n\varepsilon}\right)^{2-4/k}, 1\right\}\;.
 \end{eqnarray*}
\end{propo}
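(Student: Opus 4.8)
The plan is to run the same private-minimax argument used for Proposition~\ref{thm:lowerbound_mean_hypercontractive}: reduce estimation to a multiple-hypothesis test over a well-separated packing and invoke the private packing inequality of \cite[Theorem 3]{barber2014privacy} (restated here as Lemma~\ref{thm:packing}). Everything downstream of the packing construction—the choice of the contamination level $\alpha$, balancing $|\cV|\,e^{-\varepsilon\lceil n\alpha\rceil}$ against the $\delta$-term, and converting the testing lower bound into a squared-error bound via Markov's inequality followed by the $\min\{\cdot,1\}$ truncation—is verbatim the computation in \cite[Proposition 4]{barber2014privacy}, already reproduced in the proof of Proposition~\ref{thm:lowerbound_mean_hypercontractive}. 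So the only genuinely new work is the design of the packing.

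First I would take a packing $\cV\subset\reals^d$ of the unit sphere with $|\cV|=2^{\Omega(d)}$ and $\|v-v'\|\ge 1/2$ for all $v\ne v'$ (\cite[Lemma 6]{acharya2021differentially}). For each $v\in\cV$ I would define $P_v$ on $(x,y)$ as an $\alpha$-contamination of a benign base law: with probability $1-\alpha$ draw $x$ from an isotropic $(\kappa_0,k)$-hypercontractive distribution (e.g.\ Gaussian) with ${\rm Cov}(x)\approx\mathbf{I}_{d\times d}$ and, independently, $\eta$ from a bounded $(\kappa_0,k)$-hypercontractive noise with variance $\gamma_0^2=\gamma^2-\Theta(\alpha^{1-2/k})$, and set $y=\eta$ (base regression parameter $0$); with probability $\alpha$ place a \emph{correlated spike} $x=\alpha^{-1/k}v$, $y=\alpha^{-1/k}$. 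Since the base part is common to all $v$, $d_{\rm TV}(P_v,P_{v'})=\alpha$. A direct moment computation gives $\Sigma_v:={\rm Cov}_{P_v}(x)\approx\mathbf{I}_{d\times d}$ and $\E_{P_v}[xy]=\alpha^{1-2/k}v$, so the induced parameter is $\beta_v=\Sigma_v^{-1}\E_{P_v}[xy]\approx\alpha^{1-2/k}v$ and $(1/\gamma)\|\Sigma_v^{1/2}(\beta_v-\beta_{v'})\|=\Theta(\alpha^{1-2/k})=:2t$. The crucial point is that the spike makes the \emph{product} $x\eta$ as heavy as $\Theta(\alpha^{-2/k})$ on an $\alpha$-event—feasible only because $x$ and $\eta$ are perfectly correlated there—while each marginal keeps $k$-th moments $\Theta(1)$; this is the same $(k/2)$-th-moment phenomenon behind Lemma~\ref{lem:hyper_contractive_res_dependent}, and it is precisely what degrades the rate from the $\alpha^{1-1/k}$ of the independent-noise case to $\alpha^{1-2/k}$.

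Next I would check the regularity constraints: the $x$-marginal is $(\kappa,k)$-hypercontractive w.r.t.\ $\Sigma_v$ (spike contributes $\le 1$ to the $k$-th moment and $\Theta(1)$ to the variance), the induced noise $\eta_v=y-x^\top\beta_v$ is $(\kappa,k)$-hypercontractive with variance exactly $\gamma^2$ (pinned by tuning $\gamma_0$, which stays bounded away from $0$ for small $\alpha$, and absorbing the lower-order $\alpha^{2-4/k}$ corrections by a bounded rescaling of the labels), and $\E_{P_v}[x\eta_v]=0$ holds automatically from $\beta_v=\Sigma_v^{-1}\E_{P_v}[xy]$; all of this needs only $\kappa=(\kappa_0^k+O(\gamma^{-k}))^{1/k}=O(1)$, with $\gamma=\Theta(1)$ a fixed constant. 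Then I would apply Lemma~\ref{thm:packing} with $p=\alpha$, choose $\alpha=\tfrac1{n\varepsilon}\min\{c_1 d,\ \log\frac{1-e^{-\varepsilon}}{4\delta e^{\varepsilon}}\}$ (capped at $1/2$) so that the right-hand side is a positive constant, and convert this into $\sup_{P}\E_{P^n}\big[\|\Sigma^{1/2}(\hat\beta(S)-\beta)\|^2/\gamma^2\big]\gtrsim t^2\asymp\alpha^{2-4/k}$, giving the stated $\min\{((d\wedge\log((1-e^{-\varepsilon})/\delta))/(n\varepsilon))^{2-4/k},1\}$ lower bound.

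The step I expect to be the main obstacle is the packing construction: engineering a single distribution that simultaneously (i) keeps the $x$- and $\eta$-marginals $(\kappa,k)$-hypercontractive with an $\alpha$-independent constant $\kappa$, (ii) pins the noise variance to the fixed $\gamma^2$, (iii) satisfies $\E[x\eta]=0$ so that $\beta$ really is the regression parameter, and (iv) still attains separation $\Theta(\alpha^{1-2/k})$ at TV-distance $\alpha$. Meeting all four at once is what forces the correlated-spike design and the careful accounting of lower-order corrections; the analogous robust lower-bound construction in \cite[Theorem 6.1]{bakshi2021robust} can serve as a template for the $(x,\eta)$ coupling.
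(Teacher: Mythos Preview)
Your proposal follows essentially the same route as the paper: a packing $\cV$ of the unit sphere, an $\alpha$-contaminated family $P_v$ with a correlated spike at magnitude $\alpha^{-1/k}$, the computation $\beta_v=\Sigma_v^{-1}\E_{P_v}[xy]\approx\alpha^{1-2/k}v$, verification of the $(\kappa,k)$-hypercontractivity of both marginals, and then Lemma~\ref{thm:packing} with the choice of $\alpha$ from \cite[Proposition~4]{barber2014privacy}.

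There is one technical slip. Your single spike at $x=\alpha^{-1/k}v$ gives $\E_{P_v}[x]=\alpha^{1-1/k}v\ne 0$, violating the zero-mean hypothesis on $x$ in the proposition. This matters because your claim ``$\E_{P_v}[x\eta_v]=0$ holds automatically from $\beta_v=\Sigma_v^{-1}\E_{P_v}[xy]$'' uses $\Sigma_v=\E[xx^\top]$, which equals the covariance only when $\E[x]=0$. The paper's construction avoids this by placing a \emph{symmetric} two-point spike at $\pm\alpha^{-1/k}v$ (each with probability $\alpha/2$) with $y=\pm\alpha^{-1/k}$ correspondingly; this keeps $\E[x]=0$, leaves $\E[xy]=\alpha^{1-2/k}v$ unchanged, and makes all your subsequent moment checks go through verbatim. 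With that fix, your argument coincides with the paper's. Also, you need not tune $\gamma_0$ to hit a prescribed $\gamma^2$: the statement only asserts existence of some constant $\gamma=\Theta(1)$, so it suffices to observe that the induced noise has $\Theta(1)$ variance.
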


\begin{proof}
We adopt the same framework as the proof of Proposition~\ref{thm:lowerbound_mean_hypercontractive}. We choose $\cP$ to be $\cP=\cP_{\Sigma, k}$. It suffices to construct index set $\cV$ and indexed family of distributions $\cP_{\cV}$ such that $d_{\rm TV}(P_v, P_{v'})= \alpha$ and $\rho(\beta_v, \beta_{v'})\geq t$ where $\beta_v$ is the least square solution of $P_v$. 
By {\cite[Lemma 6]{acharya2021differentially}}, there exists a finite set $\cV\subset \reals^d$ with cardinality $|\cV|=2^{\Omega(d)}$, $\|v\|=1$ for all $v\in \cV$, and $\|v-v'\|\geq 1/2$ for all $v\neq v'\in \cV$. Let $f_{\mu, \Sigma}(x)$ be density function of $\cN(\mu, \Sigma)$. 
We construct a marginal distribution over $\reals^d$ as follows,
\begin{eqnarray}
	D_x^v(x)=\left\{\begin{array}{ll}
	\alpha / 2, & \text { if } x =- \alpha^{-1/k}v \;,\\ 
	\alpha /2, & \text { if } x =\alpha^{-1/k}v \;,\\ 
	 (1-\alpha)f_{0, \mathbf{I}_{d \times d}}(x) & \text { otherwise\;, }\end{array}\right. \;.\label{eq:construct_hypercontractive_x}
\end{eqnarray}

It is easy to verify that $\E_{P_x^v}[x]=0$,  $\E_{P_x^v}[xx^\top]=(1-\alpha)\mathbf{I}_{d \times d}+\alpha^{1-2/k}vv^\top$ and thus $\frac12\mathbf{I}_{d \times d}\preceq\E_{P_x^v}[xx^\top]\preceq 2\mathbf{I}_{d \times d}$ for $\alpha\leq1/2$.  
Furthermore, we have
\begin{eqnarray*}
	\E_{x\sim P_x^v}[\,|\ip{u}{x}|^k\, ] &\leq & \ip{u}{v}^k +(1-\alpha) c_k^k =O(1)\;,
\end{eqnarray*}
where we use the fact that there exists a constant $c_k>0$ such that the $k$-th moment of Gaussian distribution is bounded by $c_k^k$. Since $\frac12\mathbf{I}_{d \times d}\preceq\E_{P_x^v}[xx^\top]\preceq 2\mathbf{I}_{d \times d}$, we know $x$ is $(O(1),k)$-hypercontractive. 
We construct conditional distribution $D^v(y|x)$ as follows
\begin{eqnarray*}
	y|x=\left\{\begin{array}{ll}
	-\alpha^{-1/k} & \text { if } x =- \alpha^{-1/k}v \\ 
	\alpha^{-1/k} & \text { if } x =  \alpha^{-1/k}v \\
\cN(0, 1) & \text { otherwise }\end{array}\right. \;.
\end{eqnarray*}

Then we have
\begin{eqnarray*}
	\beta_v &=& \E_{x\sim P_x^v}[xx^\top]^{-1}\E_{x, y\sim P_{x, y}^v}[xy]\\
	&=&\E_{x\sim P_x^v}[xx^\top]^{-1} \alpha^{1-2/k}v\;.
\end{eqnarray*}

This implies $t = \min_{v\neq v'\in \cV}\|\beta_v-\beta_{v'}\|\geq 1/2\alpha^{1-2/k}\min_{v\neq v'\in \cV}\|v-v'\|=\Omega(\alpha^{1-2/k})$.
We are left to verify that $\eta = y-\ip{\beta_v}{x}$ is also hypercontractive: 
\begin{eqnarray*}
\E[|\eta|^k] &=& \alpha\,\big|\alpha^{-1/k}-v^\top \E_{x\sim P_x^v}[xx^\top]^{-1} v  \alpha^{1-3/k} \,\big|^k + (1-\alpha)\, \E_{x\sim \cN(0, 2\mathbf{I}_{d \times d})}[|x|^k]=O(1)\;,
\end{eqnarray*}
where we used the fact that $k$-th moment of standard Gaussian is bounded by some constants $C_k>0$ and $k=O(1)$. 
It is easy to see that total variation distance $d_{\rm TV}(P_{x,y}^v, P_{x,y}^{v'})
	=\alpha$.

Next, we apply the similar reduction of estimation to testing with this packing $\cV$ as in the proof of Proposition~\ref{thm:lowerbound_mean_hypercontractive}. For $(\varepsilon, \delta)$-DP estimator $\hat\beta$, using Theorem~\ref{thm:packing}, we have
\begin{eqnarray*}
	&&\sup_{P\in \cP}\E_{ P^n}[\|\Sigma(P)^{1/2}(\hat{\beta}(S)-\beta(P))\|^2]\\
		&\geq & \frac{1}{|\cV|}\sum_{v\in \cV}\E_{ P_v^n}[\|\Sigma(P_v)^{1/2}(\hat{\beta}(S)-\beta(P_v))\|^2]\\
		&= &t^2\frac{1}{|\cV|}\sum_{v\in \cV}P_{v}\left(\|\Sigma(P_v)^{1/2}(\hat{\beta}(S)-\beta(P_v))\|\geq t\right)\\
		& \asymp &t^2\frac{1}{|\cV|}\sum_{v\in \cV}P_{v}\left(\|\hat{\beta}(S)-\beta(P_v)\|\geq t\right)\\
		&\gtrsim &t^2 \frac{e^{d/2} \cdot\left(\frac{1}{2} e^{-\varepsilon\lceil n \alpha\rceil}-\frac{\delta}{1-e^{-\varepsilon}}\right)}{1+e^{d/2} e^{-\varepsilon\lceil n \alpha\rceil}}\;,
\end{eqnarray*}
where $\beta(P)$ is the least squares solution of the distribution $P$, $\Sigma(P)$ is the covariance of $x$ from $P$, and  the last inequality follows from the fact that $d\geq 2$. 
The rest of the proof follows from  {\cite[Proposition 4]{barber2014privacy}}. We choose 
\begin{eqnarray*}
	\alpha =\frac{1}{n \varepsilon} \min \left\{\frac{d}{2}-\varepsilon, \log \left(\frac{1-e^{-\varepsilon}}{4 \delta e^{\varepsilon}}\right)\right\}
\end{eqnarray*}
and $t=\Omega(\alpha^{1-2/k})$ for  $\varepsilon\in(0,10)$, so that 
 \begin{eqnarray*}
 	\sup_{P\in \cP}\E_{ P^n}[\|\Sigma(P)(\hat{\beta}(S)-\beta(P))\|^2]\gtrsim  \alpha^{2-4/k} \;.
 \end{eqnarray*}
 This means that for all $k\geq 4$ there exist some  $\kappa,\gamma=O(1)$ such that 
 \begin{eqnarray*}
 	\inf_{\hat{\beta}\in \cM_{\varepsilon,\delta}}\sup_{\Sigma\succ 0, P\in \cP_{\kappa,k,\Sigma,\gamma^2}}\E_{ P^n}[\|\Sigma^{1/2}(\hat{\beta}(S)-\beta(P))\|^2]\gtrsim\min \left\{\left(\frac{d\wedge \log(1-e^{-\varepsilon}/\delta)}{n\varepsilon}\right)^{2-4/k}, 1\right\}\;,
 \end{eqnarray*}
 which completes the proof by noting that $\gamma=\Theta(1)$.

\end{proof}

\section{Covariance estimation}
\label{sec:cov}

In a standard covariance estimation, we are given i.i.d.~samples $S=\{x_i\in\reals^d\}_{i\in[n]}$ drawn from a distribution $P_{\Sigma,\Psi}$ with zero mean, an unknown covariance matrix $0\prec \Sigma\in\reals^{d\times d}$, and an unknown 
  positive semidefinite  matrix 
$\Psi :=  \E[(x_i\otimes x_i- \Sigma^\flat)(x_i\otimes x_i-\Sigma^\flat )^\top]\in\reals^{d^2\times d^2}$, where $\otimes$ denotes the Kronecker product. 
The fourth moment matrix $\Psi$ will be treated as a linear operator on a subspace  ${\cal S}_{\rm sym} \subset \reals^{d^2}$ defines as ${\cal S}_{\rm sym}:=\{M^\flat \in \reals^{d^2}: M \text{ is symmetric}\}$ following the definitions and notations from \cite{diakonikolas2018robustly}.

\begin{definition}
    \label{def:flat}
    For any matrix $M\in\reals^{d\times d}$, let $M^\flat\in\reals^{d^2}$ denote its canonical flattening into a vector in $\reals^{d^2}$, and for any vector $v\in\reals^{d^2}$, let $v^\sharp$ denote the unique matrix $M\in\reals^{d\times d}$ such that $M^\flat=v$. 
\end{definition}

This definition of $\Psi$ as an operator on ${\cal S}_{\rm sym}$ is without loss of generality, as in this section we only apply $\Psi$ to flattened symmetric matrices, and also significantly lightens the notations, for example for Gaussian distributions. All $d^2\times d^2$ matrices in this section will be considered as linear operators on ${\cal S}_{\rm sym}$, and we restrict our support of the exponential mechanism in {\sc Release} to be the set of positive definite matrices: $\{\hat\Sigma\in\reals^{d\times d}:\hat\Sigma\succ 0\}$.

\begin{lemma}[{\cite[Theorem~4.12]{diakonikolas2018robustly}}]
\label{lem:Isserlis}
    If $P_{\Sigma,\Psi} ={\cal N}(0,\Sigma)$ then 
    $\E[x_i\otimes x_i] = \Sigma^\flat$, and 
    as a matrix in $\reals^{d^2\times d^2}$, we have  $\Psi_{n(i-1)+j,n(k-1)+\ell}=\Sigma_{i,k}\Sigma_{j,\ell} + \Sigma_{i,\ell}\Sigma_{j,k}$ for all  $(i,j,k,\ell)\in[d]^4$, and as an operator on ${\cal S}_{\rm sym}$, we can equivalently write it as  $\Psi=2(\Sigma\otimes \Sigma)$. 
\end{lemma}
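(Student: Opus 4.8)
The plan is to reduce the whole statement to the classical Isserlis (Wick) formula for fourth moments of a zero-mean Gaussian. Throughout I write a generic sample as $x\in\reals^d$ with coordinates $x_1,\dots,x_d$, so that by the definition of the Kronecker product the $(d(a-1)+b)$-th coordinate of $x\otimes x$ is $x_ax_b$. First I would verify the first-moment claim: $\E[x\otimes x]$ has $(d(a-1)+b)$-th coordinate $\E[x_ax_b]=\Sigma_{a,b}$, which is precisely $\Sigma^\flat$ under the flattening convention of Definition~\ref{def:flat}.

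Next, for the entrywise formula for $\Psi$: expanding $\Psi=\E[(x\otimes x-\Sigma^\flat)(x\otimes x-\Sigma^\flat)^\top]$ and using the first-moment identity just established, the $(d(i-1)+j,\,d(k-1)+\ell)$ entry of $\Psi$ equals $\E[x_ix_jx_kx_\ell]-\Sigma_{i,j}\Sigma_{k,\ell}$. For $x\sim\cN(0,\Sigma)$, Isserlis' theorem gives $\E[x_ix_jx_kx_\ell]=\Sigma_{i,j}\Sigma_{k,\ell}+\Sigma_{i,k}\Sigma_{j,\ell}+\Sigma_{i,\ell}\Sigma_{j,k}$; I would either invoke this standard fact directly or derive it by differentiating the Gaussian moment generating function $\E[\exp(\langle t,x\rangle)]=\exp(\tfrac12 t^\top\Sigma t)$ four times at $t=0$. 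Subtracting $\Sigma_{i,j}\Sigma_{k,\ell}$ yields $\Psi_{d(i-1)+j,\,d(k-1)+\ell}=\Sigma_{i,k}\Sigma_{j,\ell}+\Sigma_{i,\ell}\Sigma_{j,k}$.

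Finally, to identify $\Psi$ with $2(\Sigma\otimes\Sigma)$ as an operator on ${\cal S}_{\rm sym}$, I would act on a flattened symmetric matrix $M^\flat$ and compute coordinatewise. Using the entrywise formula and the symmetry of $\Sigma$, the $(d(i-1)+j)$-th coordinate of $\Psi M^\flat$ is $\sum_{k,\ell}(\Sigma_{i,k}\Sigma_{j,\ell}+\Sigma_{i,\ell}\Sigma_{j,k})M_{k,\ell}=(\Sigma M\Sigma)_{i,j}+(\Sigma M^\top\Sigma)_{i,j}$. For symmetric $M$ this equals $2(\Sigma M\Sigma)_{i,j}$, while the row-major vectorization identity $\mathrm{vec}(AXB)=(A\otimes B^\top)\mathrm{vec}(X)$ gives $(\Sigma\otimes\Sigma)M^\flat=(\Sigma M\Sigma^\top)^\flat=(\Sigma M\Sigma)^\flat$; hence $\Psi M^\flat=2(\Sigma\otimes\Sigma)M^\flat$ for every $M^\flat\in{\cal S}_{\rm sym}$, which is the asserted operator identity.

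All three pieces are routine; the only point that deserves care is that $\Psi=2(\Sigma\otimes\Sigma)$ is an identity of operators \emph{restricted to} ${\cal S}_{\rm sym}$: the term $\Sigma_{i,\ell}\Sigma_{j,k}$ in $\Psi$ collapses to $(\Sigma M\Sigma)_{i,j}$ only after invoking $M=M^\top$, so on all of $\reals^{d^2}$ one would instead get $2(\Sigma\otimes\Sigma)$ precomposed with symmetrization. Keeping the row-major flattening convention consistent between Definition~\ref{def:flat}, the Kronecker indexing $(\Sigma\otimes\Sigma)_{d(i-1)+j,\,d(k-1)+\ell}=\Sigma_{i,k}\Sigma_{j,\ell}$, and the vectorization identity is the only bookkeeping subtlety.
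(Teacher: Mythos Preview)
Your proof is correct. The paper does not actually supply its own proof of this lemma; it is stated with a citation to \cite[Theorem~4.12]{diakonikolas2018robustly} and used without further argument. Your derivation via Isserlis' theorem and the explicit verification of the operator identity on ${\cal S}_{\rm sym}$ is exactly the standard route, and your remark that the identity $\Psi=2(\Sigma\otimes\Sigma)$ holds only after restricting to symmetric inputs (otherwise one picks up the symmetrization) is precisely the subtlety the paper alludes to when it says $\Psi$ is treated ``as an operator on ${\cal S}_{\rm sym}$.'' (Incidentally, the paper's index formula has a typo: the $n$ in $\Psi_{n(i-1)+j,n(k-1)+\ell}$ should be $d$, as you correctly write.)
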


Further, we can assume an invertible operator $\Psi$ and define the 
 Mahalanobis distance for $x_i\otimes x_i$, which is  
 $D_\Psi(\hat\Sigma,\Sigma) = \| \Psi^{-1/2}(\hat\Sigma^\flat- \Sigma^\flat )\|$.  
 For Gaussian distributions, for example, we have 
 $D_\Psi(\hat\Sigma,\Sigma)= (1/\sqrt{2}) \|\Sigma^{-1/2} \hat\Sigma \Sigma^{-1/2}-{\bf I}_{d\times d}\|_F $, where $\|\cdot\|_F$ denotes the Frobenius norm of a matrix. 
 This is a natural choice of a distance because the total variation distance between two Gaussian distributions is $d_{\rm TV}({\cal N}(0,\Sigma),{\cal N}(0,\Sigma')) = O(\|\Sigma^{-1/2}\hat\Sigma\Sigma^{-1/2}-{\bf I}_{d\times d}\|_F)$ (see for example \cite[Lemma 2.9]{KLSU19}). 
We want a DP estimate of the covariance $\Sigma$ with a small Mahalanobis distance $D_\Psi(\hat\Sigma,\Sigma)$. 
If the sample generating distribution is not zero-mean, we can either apply a robust mean estimation with a subset of samples to estimate the mean or estimate the covariance using zero mean samples of the form $\{x_i-x_{i+ \lceil n/2\rceil }\}_{i\in[n/2]}$.

\subsection{Step 1: Designing the surrogate \texorpdfstring{$\robdist_S(\hat\Sigma)$}{} for the Mahalanobis distance} 

To sample only positive definite matrices, we restrict the domain of out score function to be  $D_\Sigma:\{\hat\Sigma\in\reals^{d\times d}: \hat\Sigma \succ 0\} \to \reals_+$, and assume $D_\Sigma(\hat\Sigma)=\infty$ for non positive definite $\hat\Sigma$: 
\begin{eqnarray}
\robdist_S(\hat{\Sigma}) = \max_{V\in\reals^{d \times d}: V^\top=V, \|V\|_F=1}\frac{\langle{V},{\hat{\Sigma}\rangle}-\Sigma_V({\cal M}_{V,\alpha})}{\psi_V({\cal M}_{V,\alpha}) }\;, \label{eq:cov_score}
\end{eqnarray}
where we define the set ${\cal M}_{V, \alpha}$ similarly as in  Section~\ref{sec:mean1}.  We consider a projected dataset  $\{\langle{V},{x_ix_i^\top}\rangle\}_{i\in S}$ and partition $S$ into three sets ${\cal B}_{V, \alpha}$, ${\cal M}_{V, \alpha}$ and ${\cal T}_{V, \alpha}$, where 
${\cal B}_{V,\alpha}$ corresponds to the subset of  $(2/5.5)\alpha n$ data points with smallest values in $\{\langle{V},{x_ix_i^\top}\rangle\}_{i\in S}$, ${\cal T}_{V,\alpha}$ is the subset of top  $(2/5.5)\alpha n$ data points with largest  values, and ${\cal M}_{V,\alpha}$ is the subset of remaining  $1-(4/5.5)\alpha n$ data points.
For a fixed symmetric  matrix $V\in\reals^{d\times d}$ with $\|V\|_F=1$, we define 
$\Sigma_V({\cal M}_{V, \alpha})=\frac{1}{|{\cal M}_{V, \alpha}|}\sum_{x_i\in {\cal M}_{V, \alpha}} \ip{V}{x_ix_i^\top}$,  and $\psi_V({\cal M}_{V, \alpha})^2 = \frac{1}{|{\cal M}_{V, \alpha}|}\sum_{x_i\in {\cal M}_{V, \alpha}}\left(\ip{V}{x_ix_i^\top}-\Sigma_V({\cal M}_{V, \alpha})\right)^2$, which are robust estimates of the population projected covariance $\Sigma_V=\ip{V}{\Sigma}$ and projected fourth moment $\psi_V^2=(V^\flat)^\top\Psi V^\flat $. 
Next, we show that this score function $\robdist_S(\hat{\Sigma})$ recovers our target error metric  $D_\Psi(\hat\Sigma,\Sigma) = \| \Psi^{-1/2}( \hat\Sigma^\flat - \Sigma^\flat )\|$ when we substitute $\Sigma_V({\cal M}_{V,\alpha})$ and  $\psi_V({\cal M}_{V,\alpha})$ with population statistics $\Sigma_V$ and $\psi_V$, respectively. This justifies the choice of $\robdist_S(\hat\Sigma)$ as discussed in Section~\ref{sec:mean1}.

\begin{lemma}
\label{lem:true_covariance}
For any $0\prec \Sigma \in\reals^{d\times d}$, $0\prec \hat\Sigma$ and any invertible linear operator $  \Psi \in\reals^{d^2\times d^2}$ on ${\cal S}_{\rm sym}$,  we have
	\begin{eqnarray}
		\max_{V\in\reals^{d\times d}:V^\top=V,\|V\|_F=1}\frac{\langle{V},{\hat{\Sigma}\rangle}-\Sigma_V}{\psi_V} = \left\| \Psi^{-1/2}( \hat\Sigma^\flat - \Sigma^\flat )\right\|\;,
	\end{eqnarray}
	where $\Sigma_V = \langle V,\Sigma\rangle$ and $\psi_V^2 = (V^\flat)^\top \Psi V^\flat  $. 
\end{lemma}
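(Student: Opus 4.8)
The plan is to reduce the claim to the Cauchy--Schwarz argument underlying Lemma~\ref{lem:equidist}, carried out on the subspace ${\cal S}_{\rm sym}\subset\reals^{d^2}$ on which $\Psi$ acts. Set $w:=\hat\Sigma^\flat-\Sigma^\flat$; since $\hat\Sigma-\Sigma$ is symmetric we have $w\in{\cal S}_{\rm sym}$, and since $\Psi$ is a positive semidefinite fourth-moment operator that is assumed invertible on ${\cal S}_{\rm sym}$, it is positive definite there, so $\Psi^{1/2}$ and $\Psi^{-1/2}$ are well-defined, self-adjoint, and invertible as operators on ${\cal S}_{\rm sym}$; in particular $\Psi^{-1/2}w\in{\cal S}_{\rm sym}$.

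First I would rewrite the objective in vector form. For symmetric $V$ with flattening $V^\flat$ we have $\langle V,\hat\Sigma\rangle-\Sigma_V=\langle V,\hat\Sigma-\Sigma\rangle=\langle V^\flat,w\rangle$, where the last inner product is the Euclidean one on $\reals^{d^2}$, and $\psi_V=\bigl((V^\flat)^\top\Psi V^\flat\bigr)^{1/2}=\|\Psi^{1/2}V^\flat\|$. Because $\langle V^\flat,w\rangle/\|\Psi^{1/2}V^\flat\|$ is invariant under positive rescaling of $V$, the maximum over symmetric $V$ with $\|V\|_F=1$ equals the maximum of $\langle u,w\rangle/\|\Psi^{1/2}u\|$ over all nonzero $u\in{\cal S}_{\rm sym}$.

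Next I would substitute $z=\Psi^{1/2}u$, a bijection of ${\cal S}_{\rm sym}\setminus\{0\}$ onto itself, and use self-adjointness of $\Psi^{-1/2}$ to write $\langle u,w\rangle=\langle\Psi^{-1/2}z,w\rangle=\langle z,\Psi^{-1/2}w\rangle$. The objective then becomes $\max_{z\in{\cal S}_{\rm sym}\setminus\{0\}}\langle z,\Psi^{-1/2}w\rangle/\|z\|$, which by Cauchy--Schwarz is at most $\|\Psi^{-1/2}w\|=\|\Psi^{-1/2}(\hat\Sigma^\flat-\Sigma^\flat)\|$, with equality attained at $z\propto\Psi^{-1/2}w$; this choice is admissible exactly because $\Psi^{-1/2}w\in{\cal S}_{\rm sym}$, i.e.\ it is realized by the symmetric matrix $V=(\Psi^{-1}w)^\sharp$. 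Combining the two inequalities gives the stated identity. (An equivalent route is to diagonalize $\Psi$ restricted to ${\cal S}_{\rm sym}$ and run the coordinatewise Cauchy--Schwarz computation verbatim as in the proof of Lemma~\ref{lem:equidist}.)

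The routine parts --- expanding the matrix inner products and applying Cauchy--Schwarz --- are immediate. The one place I would be careful is the bookkeeping around the restriction to ${\cal S}_{\rm sym}$: that $\hat\Sigma^\flat-\Sigma^\flat$ lies in this subspace, that $\Psi,\Psi^{1/2},\Psi^{-1/2}$ are consistently read as invertible operators on it (as set up just before Lemma~\ref{lem:Isserlis}), and that the maximizing direction $\Psi^{-1}w$ is symmetric so that it corresponds to an admissible $V$ with $V^\top=V$. No nontrivial estimates enter.
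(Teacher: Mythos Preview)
Your proof is correct and takes essentially the same approach as the paper, which simply says the claim follows immediately from Lemma~\ref{lem:equidist}. You have spelled out the Cauchy--Schwarz argument of that lemma explicitly on the subspace ${\cal S}_{\rm sym}$, and your added care about the subspace bookkeeping (that $w\in{\cal S}_{\rm sym}$, that $\Psi$ is positive definite there so its square roots are well defined, and that the maximizer is symmetric) fills in details the paper glosses over.
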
 
This  follows immediately from Lemma~\ref{lem:equidist}.

%

\subsection{Step 2: Utility analysis under resilience}
The following resilience property of the dataset is critical in selecting $\sens$ and $\thresh$, and analyzing utility.  

\begin{definition}[Resilience]
    \label{def:resilience_covariance}
    For some $\alpha\in(0,1)$, $\rho_1\in\reals_+$, and $\rho_2\in \reals_+$, we say a set of $n$ data points $S_{\rm good}$ is  $(\alpha,\rho_1,\rho_2)$-resilient with respect to $(\Sigma,\Psi)$ if for any $T\subset S_{\rm good}$ of size $|T|\geq (1-\alpha)n$, the following holds 
    for all symmetric matrix $V\in\reals^{d\times d}$ with $\|V\|_F=1$:     
    \begin{eqnarray}
         \Big| \frac{1}{|T|}\sum_{x_i\in T} \ip{V}{x_ix_i^\top} -\ip{V}{\Sigma}\Big|  & \leq &  \rho_1 \, \psi_V    \;,\text{ and } \label{def:res_cov_1}\\
      \left| \frac{1}{|T|}\sum_{x_i\in T}  \big(\ip{V}{x_ix_i^\top}-\ip{V}{\Sigma} \big)^2  -   \psi_V^2\right| & \leq &  \rho_2 \, \psi_V\;. \label{def:res_cov_2}
    \end{eqnarray}
\end{definition}
Note that covariance estimation for $\{x_i\}$ is equivalent to  mean estimation for $\{x_i\otimes x_i\}$. We can immediately apply the mean estimation utility guarantee in  Theorem~\ref{thm:mean} to show that  $\|\Psi^{-1/2}  (\hat\Sigma^\flat-\Sigma^\flat)\|=O(\rho_1)$ can be achieved with $n=O(d^2/\varepsilon\alpha)$ samples.

\begin{coro}[Corollary of Theorem~\ref{thm:mean}]
    \label{coro:cov}  There exist positive constants $c$ and $C>0$ such that for any  $(\alpha,\rho_1,\rho_2)$-resilient dataset $S$ with respect to $(\Sigma,\Psi)$ satisfying $\alpha < c$, $\rho_1<c$ and $\rho_2<c$, and $\rho_1^2\leq c \alpha$, $\HPTR$ with the distance function in Eq.~\eqref{eq:cov_score},  $\sens=110 \rho_1 /(\alpha n)$, and $\thresh=42\rho_1 $  achieves 
    $\|\Psi^{-1/2} (\hat\Sigma^\flat - \Sigma^\flat ) \|\leq 32 \rho_1 $ with probability $1-\zeta$, if \begin{eqnarray}
         n \;\geq \; C \frac{d^2 + \log(1/(\delta\zeta))}{\varepsilon \alpha} \; .
    \end{eqnarray}
\end{coro}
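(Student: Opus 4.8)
The plan is to reduce covariance estimation for $\{x_i\}$ entirely to mean estimation for the lifted dataset $\{x_i \otimes x_i\}$, and then invoke Theorem~\ref{thm:mean} as a black box. First I would set up the correspondence explicitly: define the lifted samples $z_i := x_i \otimes x_i \in \reals^{d^2}$ (equivalently the flattened matrices $(x_i x_i^\top)^\flat$), so that the lifted dataset lives in the subspace $\cS_{\rm sym}$ of dimension $p = d(d+1)/2 = \Theta(d^2)$. Under this identification, the population mean of the $z_i$'s is $\Sigma^\flat$ and the population covariance of the $z_i$'s (restricted to $\cS_{\rm sym}$) is exactly $\Psi$. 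The target error metric $\|\Psi^{-1/2}(\hat\Sigma^\flat - \Sigma^\flat)\|$ is then literally the Mahalanobis distance $D_\Psi(\hat\Sigma^\flat, \Sigma^\flat)$ for the mean estimation problem on $\{z_i\}$, and the score function in Eq.~\eqref{eq:cov_score} is exactly the mean-estimation surrogate of Eq.~\eqref{def:mean_distance} applied to $\{z_i\}$, with the maximization over symmetric $V$ with $\|V\|_F = 1$ playing the role of the maximization over unit vectors $v$ in $\cS_{\rm sym}$ (this is precisely what Lemma~\ref{lem:true_covariance} and Lemma~\ref{lem:equidist} give us).

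Next I would check that the resilience hypotheses match up: Definition~\ref{def:resilience_covariance} for the dataset $S$ with respect to $(\Sigma, \Psi)$ is exactly Definition~\ref{def:resilience} (resilience for mean estimation) for the lifted dataset $\{z_i\}$ with respect to $(\Sigma^\flat, \Psi)$, once we restrict the direction vectors to $\cS_{\rm sym}$. One subtlety worth flagging: the mean-estimation resilience quantifies over \emph{all} unit vectors in the ambient space, whereas here we only quantify over symmetric $V$; but since the $z_i$ already lie in $\cS_{\rm sym}$ and both the exponential mechanism's support and the score function only ever see the $\cS_{\rm sym}$-components, the restriction is harmless — one runs HPTR on the $p = \Theta(d^2)$-dimensional subspace. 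So the conditions $\alpha < c$, $\rho_1 < c$, $\rho_2 < c$, $\rho_1^2 \le c\alpha$ transfer verbatim.

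With the dictionary in place, I would simply apply Theorem~\ref{thm:mean} to $\{z_i\}$ in dimension $p = \Theta(d^2)$: with $\sens = 110\rho_1/(\alpha n)$ and $\thresh = 42\rho_1$, HPTR achieves $\|\Psi^{-1/2}(\hat\Sigma^\flat - \Sigma^\flat)\| \le 32\rho_1$ with probability $1-\zeta$ provided $n \ge C(p + \log(1/(\delta\zeta)))/(\varepsilon\alpha) = C'(d^2 + \log(1/(\delta\zeta)))/(\varepsilon\alpha)$. Translating the constant $C'$ back to $C$ and noting $d^2 = \Theta(p)$ absorbs the change into the universal constant, this is exactly the claimed statement. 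The privacy guarantee is automatic from Theorem~\ref{thm:privacy}, which holds for any score function and any input.

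The main obstacle — really the only place requiring care rather than bookkeeping — is verifying that the reduction is \emph{faithful at the level of the algorithm}, not just the statistics: one must confirm that HPTR's {\sc Test} step and the neighborhood structure behave identically whether we think of the input as $S = \{x_i\}$ or as $\{z_i\}$. Since neighboring datasets $S \sim S'$ correspond exactly to neighboring lifted datasets (changing one $x_i$ changes one $z_i$ and vice versa), the Hamming metric, the {\sc Unsafe} set, and the safety margin are all preserved under the lift, so this is genuinely routine; the one genuinely new ingredient relative to a pure black-box citation is the observation that restricting the exponential mechanism to positive-definite $\hat\Sigma$ (equivalently, restricting $\hat\Sigma^\flat$ to the PSD cone inside $\cS_{\rm sym}$) only shrinks the support $B_{\thresh,S}$, which can only help the utility analysis and does not affect sensitivity — so Theorem~\ref{thm:mean}'s conclusion still applies. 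I would state these observations briefly and then cite Theorem~\ref{thm:mean}, keeping the proof to a short paragraph.
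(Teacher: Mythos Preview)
Your proposal is correct and takes essentially the same approach as the paper: the paper's own argument is the single sentence preceding the corollary, observing that covariance estimation for $\{x_i\}$ is equivalent to mean estimation for $\{x_i\otimes x_i\}$ and then immediately invoking Theorem~\ref{thm:mean} in ambient dimension $p=\Theta(d^2)$. You flesh out exactly this reduction with additional (and correct) bookkeeping about the subspace $\cS_{\rm sym}$, the preservation of the Hamming neighborhood under the lift, and the PD-cone restriction --- all points the paper leaves implicit.
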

Under Assumption~\ref{asmp:mean} on $\alpha_{\rm corrupt}$-corruption and Definition~\ref{def:corruptgoodset} on corrupt good sets extended to $\{x_i\otimes x_i\}_{i=1}^n$, it follows from Theorem~\ref{thm:robust_mean} that the same guarantee holds under an adversarial corruption.   

\begin{coro}[Corollary of Theorem~\ref{thm:robust_mean}]
    \label{coro:robust_cov}  There exist positive constants $c$ and $C>0$ such that for any  $((1/11)\alpha,\alpha,\rho_1,\rho_2)$-corrupt good set $S$ with respect to $(\Sigma,\Psi)$ satisfying $\alpha < c$, $\rho_1<c$ and $\rho_2<c$, and $\rho_1^2\leq c \alpha$, $\HPTR$ with the distance function in Eq.~\eqref{eq:cov_score},  $\sens=110 \rho_1 /(\alpha n)$, and $\thresh=42\rho_1 $  achieves 
    $\|\Psi^{-1/2} (\hat\Sigma^\flat - \Sigma^\flat ) \|\leq 32 \rho_1 $ with probability $1-\zeta$, if \begin{eqnarray}
         n \;\geq \; C \frac{d^2 + \log(1/(\delta\zeta))}{\varepsilon \alpha} \; .
    \end{eqnarray}
\end{coro}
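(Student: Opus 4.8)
The plan is to obtain Corollary~\ref{coro:robust_cov} as a direct instance of Theorem~\ref{thm:robust_mean}, by viewing covariance estimation for $\{x_i\}$ as mean estimation for the lifted dataset $\{x_i\otimes x_i\}_{i=1}^n$ --- equivalently, for the symmetric matrices $\{x_ix_i^\top\}$ regarded as vectors in the subspace ${\cal S}_{\rm sym}$. Under this lift I set up the dictionary $p=\dim({\cal S}_{\rm sym})=d(d+1)/2=\Theta(d^2)$, with $\hat\Sigma^\flat$ playing the role of $\hat\mu$, $\Sigma^\flat$ that of $\mu$, and $\Psi$ restricted to ${\cal S}_{\rm sym}$ (assumed invertible) that of the covariance of the lifted data. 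First I would observe that the score function in Eq.~\eqref{eq:cov_score} is literally the mean-estimation surrogate of Eq.~\eqref{def:mean_distance} evaluated on the lifted dataset: $\langle V,\hat\Sigma\rangle=\langle V^\flat,\hat\Sigma^\flat\rangle$, $\langle V,x_ix_i^\top\rangle=\langle V^\flat, x_i\otimes x_i\rangle$, and the partition $({\cal B}_{V,\alpha},{\cal M}_{V,\alpha},{\cal T}_{V,\alpha})$ is exactly the top/middle/bottom split of the one-dimensional projections onto the unit vector $V^\flat$. Lemma~\ref{lem:true_covariance} (itself a consequence of Lemma~\ref{lem:equidist}) confirms that replacing the robust one-dimensional statistics by their population counterparts recovers the target error $\|\Psi^{-1/2}(\hat\Sigma^\flat-\Sigma^\flat)\|$, so the Step~1 design-principle check is inherited wholesale.

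Next I would check that the remaining hypotheses translate verbatim. Definition~\ref{def:resilience_covariance} is precisely Definition~\ref{def:resilience} for the lifted dataset, with the direction set taken to be the unit sphere of ${\cal S}_{\rm sym}$; and an $((1/11)\alpha,\alpha,\rho_1,\rho_2)$-corrupt good set with respect to $(\Sigma,\Psi)$ is Definition~\ref{def:corruptgoodset} applied to $\{x_i\otimes x_i\}$. Since one corruption in $x_i$-space is one corruption in the lifted space, Hamming distances and corruption fractions are preserved, and $(1/11)\alpha\le(2/11)\alpha$ so the corruption level lies within the range handled by Theorem~\ref{thm:robust_mean}; the scalar conditions $\alpha<c$, $\rho_1<c$, $\rho_2<c$, $\rho_1^2\le c\alpha$ are imposed in the same form. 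Plugging $p=\Theta(d^2)$ into the sample-complexity requirement $n=\Omega((p+\log(1/(\delta\zeta)))/(\varepsilon\alpha))$ of Theorem~\ref{thm:robust_mean} gives the claimed $n\ge C(d^2+\log(1/(\delta\zeta)))/(\varepsilon\alpha)$, and its utility conclusion becomes $\|\Psi^{-1/2}(\hat\Sigma^\flat-\Sigma^\flat)\|\le 32\rho_1$.

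The one genuinely new element is that the {\sc Release} step here constrains the support of the exponential mechanism to the open cone $\{\hat\Sigma\succ 0\}$, whereas Theorem~\ref{thm:robust_mean} samples over all of $\reals^p$. I would argue this is harmless by revisiting the three places the support enters the proof of Theorem~\ref{thm:robust_mean}. (i) The local-sensitivity bound on $\robdist_S$ is a supremum over the support, so intersecting with the cone can only decrease it, and $\sens=110\rho_1/(\alpha n)$ still holds. (ii) In the accuracy analysis the numerator $\prob(\hat\Sigma\notin B_{\rm out})$ only decreases, while the inner ball $B_{\rm in}=\{\hat\Sigma:\|\Psi^{-1/2}(\hat\Sigma^\flat-\Sigma^\flat)\|\le c_1\rho_1\}$ and the lower balls in the volume ratios of the analogue of Lemma~\ref{lem:vol} are, for $\rho_1$ below a universal constant, entirely contained in $\{\hat\Sigma\succ 0\}$ --- for Gaussian data this is just that $(1/\sqrt2)\|\Sigma^{-1/2}\hat\Sigma\Sigma^{-1/2}-{\bf I}\|_F<1$ forces $\hat\Sigma\succ0$ --- so the denominators are unchanged. (iii) In the safety-test step the set-difference argument compares two exponential mechanisms with supports $B_{\tau,S'}\cap\{\hat\Sigma\succ0\}$ and $B_{\tau,S''}\cap\{\hat\Sigma\succ0\}$; intersecting both with the same convex cone preserves the large-overlap estimate $\tau\gg\sens k^*$ that bounds the mass on the symmetric difference by $\delta$. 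Hence every step carries over.

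I expect the bulk of the (routine) effort to lie in exactly this last bookkeeping --- re-certifying the volume, overlap, and safety-margin estimates on the PD-restricted domain --- while the rest is a transparent translation of the mean-estimation machinery; a minor consistency check is that restricting $\hat\Sigma$ to symmetric positive definite matrices is exactly matched to the score function reading only the symmetric part of its argument, which is what makes the effective ambient dimension $d(d+1)/2$ rather than $d^2$ without affecting any bound up to constants.
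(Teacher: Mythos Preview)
Your proposal is correct and follows exactly the approach the paper uses: the paper simply observes that covariance estimation for $\{x_i\}$ is mean estimation for $\{x_i\otimes x_i\}$ and invokes Theorem~\ref{thm:robust_mean} on the lifted data with ambient dimension $\Theta(d^2)$, without any further argument. In fact you go further than the paper by explicitly addressing how the restriction of the exponential mechanism's support to $\{\hat\Sigma\succ 0\}$ interacts with the sensitivity, volume, and safety-margin steps---a point the paper leaves implicit.
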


\subsection{Step 3: Near-optimal guarantees}

Covariance estimation  has been studied for  Gaussian distributions under differential privacy \cite{KV17,KLSU19,aden2020sample} and robust estimation under $\alpha$-corruption \cite{li2020robust, diakonikolas2019robust,chen2018robust, rousseeuw1985multivariate,zhu2019generalized}. 
Note that from Lemma~\ref{lem:Isserlis}, we know that $\Psi=2(\Sigma\otimes \Sigma)$ and the Mahalanobis distance simplifies to  $\robdist_\Psi(\hat\Sigma,\Sigma) = \|\Sigma^{1/2}\hat\Sigma\Sigma^{-1/2}-{\bf I}_{d\times d}\|_F $ for Gaussian distributions.

\subsubsection{Gaussian distributions} 

For Gaussian distributions, the second moment resilience in Eq.~\eqref{def:res_cov_1} is satisfied with $\rho_1=O(\alpha\log(1/\alpha))$ and the 4th moment resilience in Eq.~\eqref{def:res_cov_2} is satisfied with $\rho_2=O(\alpha\log^2(1/\alpha))$.

\begin{lemma}[Resilience for Gaussian]\label{lem:gaussian_cov_res}  Consider a dataset $S=\{x_i\in \reals^d\}_{i=1}^n$ of $n$ i.i.d. samples from $\cN(0,\Sigma)$. If $n=\Omega\left((d^2+\log(1/\zeta))/(\alpha^2\log(1/\alpha)) \right)$ with a large enough constant, then there exists a constant $C>0$ such that $S$ 
    is $(\alpha,C\alpha\log(1/\alpha),C\alpha\log^2(1/\alpha) )$-corrupt good  with respect to $(\Sigma, \Psi=2\Sigma\otimes \Sigma)$ with probability $1-\zeta$.\end{lemma}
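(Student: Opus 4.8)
The statement is a lifting of the mean-estimation resilience lemma (Lemma~\ref{lem:mean_subgaussian}) to the second-moment data, so the plan is to reduce it to that setting. First I would use Lemma~\ref{lem:Isserlis}: the lifted samples $y_i := x_ix_i^\top$, regarded as vectors $y_i^\flat$ in the $D := d(d+1)/2 = \Theta(d^2)$-dimensional space ${\cal S}_{\rm sym}$, have population mean $\Sigma^\flat$ and population covariance $\Psi = 2(\Sigma\otimes\Sigma)$, which is invertible on ${\cal S}_{\rm sym}$ since $\Sigma\succ 0$. Under this correspondence, the two inequalities \eqref{def:res_cov_1}--\eqref{def:res_cov_2} of Definition~\ref{def:resilience_covariance} for $S$ with respect to $(\Sigma,\Psi)$ are exactly the first- and second-order resilience conditions of Definition~\ref{def:resilience} for the dataset $\{y_i^\flat\}$ viewed as a mean-estimation instance, with direction $V$ ranging over the unit-Frobenius symmetric matrices (equivalently, unit vectors of ${\cal S}_{\rm sym}$) and projected standard deviation $\psi_V = \sqrt{(V^\flat)^\top\Psi V^\flat}$. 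Hence it suffices to establish the claimed corrupt-good property for $\{y_i^\flat\}$.

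Second, I would control the tails of the one-dimensional projections. Writing $x = \Sigma^{1/2}g$ with $g\sim\cN(0,\mathbf{I}_{d\times d})$ and $W_V := \Sigma^{1/2}V\Sigma^{1/2}$, we have $\langle V, x x^\top\rangle - \langle V,\Sigma\rangle = g^\top W_V g - \E[g^\top W_V g] = \sum_j \lambda_j(W_V)\,(\chi^2_j - 1)$, a weighted sum of i.i.d.\ centered $\chi^2_1$ variables. By the Hanson--Wright (Bernstein) inequality,
\[
\prob\big[\,|g^\top W_V g - \E[g^\top W_V g]| \ge t\,\big]\;\le\; 2\exp\!\Big(-c\,\min\Big\{\tfrac{t^2}{\|W_V\|_F^2},\;\tfrac{t}{\|W_V\|_{\rm op}}\Big\}\Big)\;,
\]
and since $\psi_V^2 = 2\|W_V\|_F^2$ and $\|W_V\|_{\rm op}\le\|W_V\|_F$, the projection in \emph{every} direction $V$ is sub-exponential with parameter $\Theta(\psi_V)$; in particular $\E\big[|\langle V,x x^\top - \Sigma\rangle|^k\big]\le (Ck)^k\psi_V^k$ for every integer $k\ge 1$ and a universal $C$. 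This is the key structural fact: the lifted Gaussian distribution is not sub-Gaussian but it is sub-exponential relative to its own covariance $\Psi$, which is exactly why the resulting resilience parameters pick up one extra logarithmic factor compared with the sub-Gaussian case of Lemma~\ref{lem:mean_subgaussian}.

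Third, I would invoke (or, since it is not quite on the shelf, prove in the same manner as Lemma~\ref{lem:mean_subgaussian}) the resilience bound for distributions whose projections are sub-exponential relative to the projected variance: with $n = \Omega\big((D+\log(1/\zeta))/(\alpha^2\log(1/\alpha))\big) = \Omega\big((d^2+\log(1/\zeta))/(\alpha^2\log(1/\alpha))\big)$ i.i.d.\ samples, with probability $1-\zeta$ the dataset $\{y_i^\flat\}$ is an $O(\alpha)$-corruption of an $(\alpha,\,C\alpha\log(1/\alpha),\,C\alpha\log^2(1/\alpha))$-resilient set, i.e.\ is corrupt good in the sense of Definition~\ref{def:corruptgoodset} extended to $\{x_i\otimes x_i\}$. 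Concretely this follows from: (i) truncating each projection at magnitude $O(\log(1/\alpha)\,\psi_V)$ discards only an $O(\alpha)$ fraction of the points (by the sub-exponential tail), supplying the corrupt-good slack; and (ii) on the truncated core, a matrix-Bernstein estimate for $\frac1n\sum_i y_i^\flat(y_i^\flat)^\top$ together with a union bound over an $\epsilon$-net of size $e^{O(D)}=e^{O(d^2)}$ on the sphere of ${\cal S}_{\rm sym}$ shows that for every subset $T$ with $|T|\ge(1-\alpha)n$ and every direction $V$, the empirical mean and second moment of $\langle V,\cdot\rangle$ deviate from $\Sigma_V$ and $\psi_V^2$ by at most $O(\alpha\log(1/\alpha))\,\psi_V$ and $O(\alpha\log^2(1/\alpha))\,\psi_V^2$ respectively, the worst-case $\alpha$-fraction deletion contributing a $\log(1/\alpha)$ factor for the first moment and a $\log^2(1/\alpha)$ factor for the second moment by integrating the sub-exponential tail over the deleted mass. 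Substituting $D = \Theta(d^2)$ gives the sample complexity in the statement.

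The main obstacle is Step~3: carrying out the uniform-over-directions concentration for the \emph{heavy-tailed} lifted data with the precise logarithmic exponents, and doing the truncation bookkeeping so that the conclusion is corrupt good (a small-total-variation perturbation of a resilient set) rather than resilient outright. The union bound over the $e^{O(d^2)}$-net interacts with sub-exponential --- not sub-Gaussian --- tails, so one must use a matrix-Bernstein inequality for the empirical fourth-moment operator or a chaining argument to avoid losing spurious factors; this is also where the exact constants $C\alpha\log(1/\alpha)$ and $C\alpha\log^2(1/\alpha)$ (rather than, say, extra $\sqrt{\log}$ factors) get pinned down. Everything else is a mechanical translation through Lemma~\ref{lem:Isserlis} and the Hanson--Wright inequality.
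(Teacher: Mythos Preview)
Your approach is correct in outline but takes a genuinely different and considerably harder route than the paper's. The paper does not attempt to re-derive the sub-exponential resilience of $\{x_ix_i^\top\}$ from Hanson--Wright plus a net plus matrix Bernstein. Instead it imports a ready-made \emph{isotropic} resilience result (Lemma~\ref{lem:resilience_cov_gaussian}, citing \cite{li2020robust,dong2019quantum}): for $g_i\sim\cN(0,\mathbf I_{d\times d})$, with $n=\Omega((d^2+\log(1/\zeta))/(\alpha^2\log(1/\alpha)))$, with probability $1-\zeta$ the operator norms $\big\|\tfrac1{|T|}\sum_{T} g_ig_i^\top - \mathbf I\big\|$ and $\big\|\tfrac1{|T|}\sum_{T}(g_i\otimes g_i-\mathbf I^\flat)(g_i\otimes g_i-\mathbf I^\flat)^\top - 2\mathbf I\otimes\mathbf I\big\|$ are uniformly (over all $T$ with $|T|\ge(1-\alpha)n$) at most $C_1\alpha\log(1/\alpha)$ and $C_2\alpha\log^2(1/\alpha)$. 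Then the general-$\Sigma$ statement follows by applying this lemma to the whitened samples $g_i=\Sigma^{-1/2}x_i$ and a one-line change of variables $x_i\otimes x_i=(\Sigma^{1/2}\!\otimes\!\Sigma^{1/2})(g_i\otimes g_i)$; the operator-norm bounds become exactly \eqref{def:res_cov_1}--\eqref{def:res_cov_2} because $\psi_V^2=2(V^\flat)^\top(\Sigma\!\otimes\!\Sigma)V^\flat$.

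The practical differences: (i) the paper's proof is essentially two lines of Kronecker-product algebra once the isotropic black box is in hand, whereas your Step~3 (uniform-over-$V$ sub-exponential concentration with the exact $\log$ exponents) is real work and, as you yourself flag, is where all the difficulty lies; (ii) the paper's cited lemma gives \emph{direct} resilience of the full sample---no truncation and hence no ``corrupt good'' bookkeeping is needed (resilient with $\alpha_{\rm corrupt}=0$ is trivially corrupt good), so your truncation step, while not wrong, is unnecessary here; (iii) your route would generalize to any distribution whose lifted data are sub-exponential relative to $\Psi$, while the paper's proof is tied to the Gaussian case through the cited isotropic lemma and the identity $\Psi=2\Sigma\otimes\Sigma$.
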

    
\begin{proof}
Since $x$ is Gaussian, by Lemma~\ref{lem:Isserlis}, we have $\Psi=\E[(x\otimes x - \Sigma^\flat)(x\otimes x-\Sigma^\flat )^\top]=2\Sigma\otimes \Sigma$. We can write $\psi_V^2=2\Tr(V^\top \Sigma V \Sigma)=2\ip{V}{\Sigma V\Sigma}$.

\begin{lemma}[{\cite[Lemma~B.1]{li2020robust}} and {\cite[Fact~4.2]{dong2019quantum}}]
\label{lem:resilience_cov_gaussian}
Let $\delta>0$ and $\alpha\in (0,0.5)$. A dataset $S=\{x_1, x_2, \cdots, x_n\}$ consists of $n$ i.i.d. samples from $\cN(0,\mathbf{I}_{d\times d})$. If $n=\Omega\left((d^2+\log(1/\zeta )) / (\alpha^2\log(1/\alpha)) \right)$ with a large enough constant, then there exists a universal constant $C_1>0$ and $C_2>0$ such that with probability $1-\zeta $, for any subset $T\subset S $ and $|T|\geq (1-\alpha)n$, we have
\begin{eqnarray*}
    \Big\| \frac{1}{|T|}\sum_{ x_i\in T} x_i\otimes x_i - \mathbf{I}_{d\times d}^\flat \Big\|  & \leq &  C_1\alpha\log(1/\alpha)\,    \;,\text{ and } \\
      \left\| \frac{1}{|T|}\sum_{ x_i\in T}  \big(x_i\otimes x_i- \mathbf{I}_{d\times d}^\flat \big)\big(x_i\otimes x_i-  \mathbf{I}_{d\times d}^\flat \big)^\top  -   2\mathbf{I}_{d\times d} \otimes\mathbf{I}_{d\times  d}\right\| & \leq &  C_2\alpha\log(1/\alpha)^2\;. 
\end{eqnarray*}
\end{lemma}
    By Lemma~\ref{lem:resilience_cov_gaussian}, we know with probability $1-\zeta $, for any subset $T\subset S $ and $|T|\geq (1-\alpha)n$, we have
    \begin{eqnarray*}
      \Big\| \frac{1}{|T|}\sum_{x_i\in T} (\Sigma^{-1/2}x_i)\otimes (\Sigma^{-1/2}x_i) - \mathbf{I}_{d\times d}^\flat \Big\|  & \leq &  C_1\alpha\log(1/\alpha)\;.
    \end{eqnarray*}
    This is equivalent to 
    \begin{eqnarray*}
    	 \Big|  (V^\flat)^\top\frac{1}{|T|}\sum_{ x_i\in T}(\Sigma^{-1/2}\otimes \Sigma^{-1/2}) (x_i\otimes x_i) - (V^\flat)^\top \mathbf{I}_{d\times d}^\flat \Big|  & \leq & C_1\alpha\log(1/\alpha)\;,
    \end{eqnarray*}
    for any $\|V\|_F=1$. 
    This implies
    \begin{eqnarray*}
    	\Big|  (V^\flat)^\top\frac{1}{|T|}\sum_{ x_i\in T} (x_i\otimes x_i)  - (V^\flat)^\top(\Sigma\otimes \Sigma)^{1/2} \mathbf{I}_{d\times d}^\flat \Big|  & \leq &  C_1\alpha\log(1/\alpha) \sqrt{ (V^\flat )^\top(\Sigma\otimes \Sigma) V^\flat} \;,
    \end{eqnarray*}
    which is also equivalent to, for some constant $C$
    \begin{eqnarray*}
    	\left|\ip{V}{\frac{1}{|T|}\sum_{ x_i\in T}x_ix_i^\top}-\ip{V}{\Sigma}\right|\leq C\alpha\log(1/\alpha)\sqrt{2\ip{V}{\Sigma V\Sigma}}\;,
    \end{eqnarray*}
    which proves the first resilience Eq.~\eqref{def:res_cov_1} in Definition~\ref{def:resilience_covariance}.
    
    Similarly, by Lemma~\ref{lem:resilience_cov_gaussian}, we have
    \begin{eqnarray*}
    	\left\| \frac{1}{|T|}\sum_{x_i\in T}  \big(\Sigma^{-1/2}x_i\otimes \Sigma^{-1/2}x_i- \mathbf{I}_{d\times d}^\flat \big)\big(\Sigma^{-1/2}x_i\otimes \Sigma^{-1/2}x_i- \mathbf{I}_{d\times d}^\flat \big)^\top  -   2\mathbf{I}_{d\times d}\otimes\mathbf{I}_{d\times d}\right\| & \leq &  C_2\alpha\log(1/\alpha)^2\;.
    \end{eqnarray*}

    This is equivalent to for any $\|V\|_F=1$,
        \begin{eqnarray*}
    	 \Big| \frac{1}{|T|}\sum_{ x_i\in T}\ip{ V^\flat}{\Sigma^{-1/2}x_i\otimes \Sigma^{-1/2}x_i - \mathbf{I}_{d \times d}^\flat }^2 -2\Big|  & \leq &C_2\alpha\log(1/\alpha)^2\;.
    \end{eqnarray*}
    This implies
    \begin{eqnarray*}
    	 \Big| \frac{1}{|T|}\sum_{ x_i\in T}\ip{V^\flat}{x_i\otimes x_i- \Sigma^\flat }^2 -2(V^\flat)^\top(\Sigma \otimes \Sigma ) V^\flat \Big|  & \leq &C_2\alpha\log(1/\alpha)^2\ip{V}{\Sigma V\Sigma}\;,
    \end{eqnarray*}
    which is also equivalent to, for some constant $C$
    \begin{eqnarray*}
    	\Big| \frac{1}{|T|}\sum_{ x_i\in T}  \big(\ip{V}{x_ix_i^\top}-\ip{V}{\Sigma} \big)^2  -   2\Tr(V^\top\Sigma V\Sigma)\Big| & \leq &  2C\alpha\log(1/\alpha)^2\ip{V}{\Sigma V\Sigma}\;,
    \end{eqnarray*}
   which proves the second resilience Eq.~\eqref{def:res_cov_2} in Definition~\ref{def:resilience_covariance}.
    
    \end{proof}

    The second and fourth moment resilience properties of Gaussian distributions in Lemma~\ref{lem:gaussian_cov_res}, together with the utility analysis of HPTR in Corollary.~\ref{coro:robust_cov}, implies the following utility guarantee.   
    
 \begin{coro} 
    \label{coro:cov_gaussian} 
    Under the hypotheses of Lemma~\ref{lem:gaussian_cov_res} there exists a constant $c>0$ such that  for any $\alpha\in(0,c)$, a dataset of size 
     $$ n = O\Big(\,\frac{d^2+\log(1/\zeta)}{\alpha^2\log(1/\alpha)} + \frac{d^2 + \log(1/(\delta\zeta))}{\alpha\varepsilon} \,\Big)\;, $$ 
     a sensitivity of $\sens=O(\log(1/\alpha)/n) $, and a threshold $\thresh=O(\alpha\log(1/\alpha)) $ with large enough constants 
     are sufficient for  \HPTR$(S)$ with a choice of distance function in Eq.~\eqref{eq:cov_score} to achieve 
     \begin{eqnarray} 
     \|\Sigma^{-1/2} \hat{\Sigma}\Sigma^{-1/2} -\mathbf{I}_{d \times d}\|_F \;=\; O(\alpha\log(1/\alpha))\;, \label{eq:cov_error_bound}
     \end{eqnarray}
     with  probability $1-\zeta$. 
     Further, the same guarantee holds even if $\alpha$-fraction of the samples are arbitrarily corrupted as in Assumption~\ref{asmp:mean}. 
\end{coro}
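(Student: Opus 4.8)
The plan is to derive Corollary~\ref{coro:cov_gaussian} as a plug‑in consequence of two already‑established facts: the resilience of the embedded samples $\{x_i\otimes x_i\}_{i=1}^n$ (Lemma~\ref{lem:gaussian_cov_res}) and the generic covariance utility guarantee (Corollary~\ref{coro:robust_cov}, itself obtained from the mean‑estimation Theorem~\ref{thm:robust_mean} applied to $\{x_i\otimes x_i\}$), followed by converting the $\Psi$‑Mahalanobis error into the Frobenius error in the statement. First I would invoke Lemma~\ref{lem:gaussian_cov_res}: as soon as $n=\Omega\big((d^2+\log(1/\zeta))/(\alpha^2\log(1/\alpha))\big)$ with a large enough constant, the dataset is, with probability $1-\zeta$, resilient (equivalently, corrupt good with corruption level $0$ in the sense of Definition~\ref{def:corruptgoodset}) with respect to $(\Sigma,\Psi=2\Sigma\otimes\Sigma)$, with parameters $\rho_1=C\alpha\log(1/\alpha)$ and $\rho_2=C\alpha\log^2(1/\alpha)$. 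I would then verify the side conditions needed by Corollary~\ref{coro:robust_cov}, namely $\alpha<c$, $\rho_1<c$, $\rho_2<c$, and $\rho_1^2\le c\alpha$. Each of these reduces to $\alpha\log^2(1/\alpha)$ being below an absolute constant, so all hold for $\alpha$ smaller than a universal constant; this is the only genuinely new computation and it is routine.

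With these in hand, Corollary~\ref{coro:robust_cov} applies directly: HPTR with the distance of Eq.~\eqref{eq:cov_score}, $\sens=110\rho_1/(\alpha n)=O(\log(1/\alpha)/n)$, and $\thresh=42\rho_1=O(\alpha\log(1/\alpha))$ achieves $\|\Psi^{-1/2}(\hat\Sigma^\flat-\Sigma^\flat)\|\le 32\rho_1=O(\alpha\log(1/\alpha))$ with probability $1-\zeta$, provided $n\ge C(d^2+\log(1/(\delta\zeta)))/(\varepsilon\alpha)$. Taking the larger of the two lower bounds on $n$ and absorbing $\log(1/\zeta)$ and $\log(1/(\delta\zeta))$ into the hidden factors yields the claimed $n=\tilde O\big(d^2/(\alpha^2\log(1/\alpha))+d^2/(\alpha\varepsilon)\big)$. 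To pass to the Frobenius norm I would use Lemma~\ref{lem:Isserlis}: for a Gaussian, $\Psi=2(\Sigma\otimes\Sigma)$ as an operator on ${\cal S}_{\rm sym}$, hence $\Psi^{-1/2}(\hat\Sigma^\flat-\Sigma^\flat)=\tfrac1{\sqrt2}\big(\Sigma^{-1/2}\hat\Sigma\Sigma^{-1/2}-\mathbf I_{d\times d}\big)^\flat$, and since the Euclidean norm of a flattened matrix equals its Frobenius norm, $\|\Sigma^{-1/2}\hat\Sigma\Sigma^{-1/2}-\mathbf I_{d\times d}\|_F=\sqrt2\,\|\Psi^{-1/2}(\hat\Sigma^\flat-\Sigma^\flat)\|=O(\alpha\log(1/\alpha))$, as required. (That the score of Eq.~\eqref{eq:cov_score}, maximized over symmetric $V$ with $\|V\|_F=1$, does reproduce this Mahalanobis distance was already recorded in Lemma~\ref{lem:true_covariance}.)

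For the robustness clause I would rerun the same argument with the corrupt‑good version: under Assumption~\ref{asmp:mean} applied to the embedded dataset $\{x_i\otimes x_i\}$, an $\alpha$‑fraction corruption of a Gaussian sample that is $(\Theta(\alpha),\rho_1,\rho_2)$‑resilient — resilience taken at a parameter a constant factor larger than the corruption level, exactly as in the mean‑estimation corollaries of Section~\ref{sec:mean3} — is $((1/11)\alpha,\alpha,\rho_1,\rho_2)$‑corrupt good, so Corollary~\ref{coro:robust_cov} again applies verbatim and delivers the same $O(\alpha\log(1/\alpha))$ error and the same sample complexity.

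The main obstacle is bookkeeping rather than mathematics: no new concentration estimate is needed, since the resilience lives in Lemma~\ref{lem:gaussian_cov_res} and the privacy/utility machinery lives in Theorem~\ref{thm:robust_mean}. The one place deserving a word of care is that the exponential mechanism for covariance samples only from $\{\hat\Sigma\succ 0\}$, whereas Theorem~\ref{thm:utility} and Theorem~\ref{thm:robust_mean} were stated for an unconstrained parameter in $\reals^p$ with $p=\dim({\cal S}_{\rm sym})=\Theta(d^2)$; one should check that the volume‑ratio hypotheses used inside Corollary~\ref{coro:robust_cov} (cf. Lemma~\ref{lem:vol}) are unaffected. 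This holds because $\Sigma$ lies in the interior of the PSD cone, so for $\alpha$ below an absolute constant all the relevant balls $B_{\rm in}$, $B_{\rm out}$, $B_{\thresh,S}$ centered near $\Sigma$ sit inside that cone and the truncation changes none of the volumes.
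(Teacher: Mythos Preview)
Your proposal is correct and follows essentially the same approach as the paper, which simply states that the result follows by combining the resilience of Gaussian samples (Lemma~\ref{lem:gaussian_cov_res}) with the generic HPTR covariance guarantee (Corollary~\ref{coro:robust_cov}). You supply considerably more detail---verifying the side conditions $\rho_1^2\le c\alpha$ and $\rho_2\le c$, spelling out the $\Psi$-to-Frobenius conversion via Lemma~\ref{lem:Isserlis}, and noting the PSD-cone issue---but the argument is identical in substance.
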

This Mahalanobis distance  guarantee (for the  Kronecker product, $\{x_i\otimes x_i\}$, of the samples) implies that the predicted Gaussian distribution is  close to the sample generating one in total variation distance
 (see for example \cite[Lemma 2.9]{KLSU19}): $d_{\rm TV}({\cal N}(0,\hat\Sigma),{\cal N}(0,\Sigma)) = O(\| \Sigma^{-1/2}\hat\Sigma\Sigma^{-1/2} - {\bf I}_{d\times d}\|_F) = O(\alpha\log(1/\alpha))$. 
 This relation also implies that the error bound is near-optimal under $\alpha$-corruption, matching a lower bound up to a factor of $O(\log(1/\alpha))$. Even if DP is not required and we are given infinite samples, an adversary can move $\alpha$ fraction of the probability mass to switch a Gaussian distribution into another one at Mahalanobis distance $\|\Sigma_1^{-1/2}\Sigma_2\Sigma_1^{-1/2}-{\bf I}_{d\times d}\|_F=\Omega(\alpha)$. Hence, we cannot tell which of the two distributions the (potentially infinite) samples came from.

The sample complexity is near-optimal, matching a lower bound up to a factor of $O(\log(1/\alpha))$ when   $\delta=e^{-\Theta(d^2)}$.  For a constant $\zeta$, HPTR requires $n=O(d^2/(\alpha^2\log(1/\alpha))+d^2/(\alpha\varepsilon) + \log(1/\delta)/(\alpha\varepsilon))$. This nearly matches a lower bound (that holds even if there is no corruption) on $n$ to achieve the guarantee of 
Eq.~\eqref{eq:cov_error_bound}: 
$n=\Omega(d^2/(\alpha\log(1/\alpha))^2 + \min\{d^2,\log(1/\delta)\}/(\varepsilon\alpha\log(1/\alpha)) + \log(1/\delta)/\varepsilon )$. The first term follows from  the classical estimation of the covariance without DP, and matches the first term in our upper bound up to a $O(\log(1/\alpha))$ factor. The second term follows from extending the lower bound in \cite{KLSU19} constructed for pure differential privacy with $\delta=0$  and matches the second term in our upper bound up to a $O(\log(1/\alpha))$ factor when $\delta=e^{-\Theta(d^2)}$. The last term is from \cite{KV17} and has a gap of $O(1/\alpha)$ factor compared to the third term in our upper bound, but this term is typically not dominating when  $\delta$ is large enough: $\delta=e^{-O(d^2)}$.
We note that a slightly tighter upper bound is achieved  by the state-of-the-art algorithm in \cite{aden2020sample} that only requires $O(d^2/(\alpha\log(1/\alpha))^2+d^2/(\varepsilon\alpha\log(1/\alpha))+\log(1/\delta)/\varepsilon)$.  The state-of-the-art polynomial time algorithm in  \cite{kamath2021private}  requires no assumptions on $\Sigma$ but the sample complexity is larger:  $n=\tilde O(d^2/(\alpha\log(1/\alpha))^2 + d^2{\rm polylog}(1/\delta)/(\varepsilon\alpha\log(1/\alpha)) + d^{5/2}{\rm polylog}(1/\delta)/\varepsilon)$. 

If privacy is not concerned (i.e., $\varepsilon = \infty$), HPTR achieves the error in Eq.~\eqref{eq:cov_error_bound} with $n=O(d^2 / \alpha^2\log(1/\alpha))$ samples. There are polynomial time estimators achieving the same guarantee \cite{li2020robust, diakonikolas2019robust}. The gap of $\log(1/\alpha)$ to the lower bound in the error can be tightened using  algorithms that are not computationally efficient as shown in \cite{chen2018robust, rousseeuw1985multivariate}.

\medskip\noindent
{\bf Remark.} When we only have a sample size of $n=O(d/\alpha^2)$, our analysis does not provide any guarantees. 
However, for robust covariance estimation under $\alpha$-corruption,
one can still guarantee a bound on  a weaker error metric in spectral norm:  $\|\Sigma^{-1/2}\hat\Sigma\Sigma^{-1/2}-{\bf I}_{d\times d}\| = O(\alpha\log(1/\alpha))$ \cite[Theorem 3.4]{zhu2019generalized}. There is no corresponding differentially private covariance estimator in that small sample regime. 
A promising direction is to  apply HPTR framework, but designing a score function for this spectral norm distance that only depends on one-dimensional robust statistics remains challenging.

\section{Principal component analysis}
\label{sec:pca}

In Principal Component Analysis (PCA), 
we are given i.i.d. samples $S=\{x_i\in\reals^d \}_{i=1}^n$ drawn from a zero mean distribution $P_\Sigma$ with an unknown covariance matrix  $\Sigma$. We want to find a top eigenvector of $\Sigma$, $u \in \arg\max_{\|v\|=1} v^\top \Sigma v$, privately. The performance of our estimate $\hat{u}$ is measured by  
how much of the covariance is captured in the direction $\hat u$ relative to that of $u$:  
$D_{\Sigma}(\hat{u})=1-(\hat{u}^\top\Sigma\hat{u}/u^\top \Sigma u)$, where $u$ is one of the top eigenvector of $\Sigma$. 
When the mean is not zero, this can be handled similarly as in covariance estimation in Section~\ref{sec:cov}. 

\subsection{Step 1: Designing the surrogate score function \texorpdfstring{$D_S(\hat{u})$}{}}
\label{sec:pca1}

It is straightforward to design a score function of $D_S : {\mathbb S}^{(d-1)}\to \reals_+$ where ${\mathbb S}^{(d-1)}$ is the unit sphere in $\reals^d$, 
\begin{eqnarray}
	D_S(\hat{u})\;=\;1-\frac{\hat{u}^\top \Sigma(\cM_{\hat{u}, \alpha}) \hat{u}}{\max_{v\in\reals^d:\|v\|=1} v^\top \Sigma(\cM_{v, \alpha}) v }\;,\label{score:pca}
\end{eqnarray} 
where $\cM_{\hat{u}, \alpha} \subset S$ is  the subset of data points corresponding to the smallest $(1-(2/3.5)\alpha) n$ values  in the projected set $S_{\hat{u}}=\{\ip{\hat{u}}{x_i}^2\}_{x_i\in S}$ and $\Sigma(\cM_{\hat{u}, \alpha})= (1/|\cM_{\hat{u}, \alpha}|)\sum_{x_i\in \cM_{\hat{u}, \alpha}} x_ix_i^\top $.
Note that when we replace $\Sigma(\cM_{\hat{u}, \alpha})$ with the population covariance matrix $\Sigma$, we recover the target error metric of $D_\Sigma(\hat u) = 1- (\hat u ^\top \Sigma \hat u / \max_{\|v\|=1}v^\top \Sigma v)$. For this choice of $\robdist_S(\hat u)$, the support  of the exponential mechanism is already compact, and we do not restrict it any further, say, to be in $B_{\tau,S}$. This simplifies the HPTR algorithm and also the analysis as follows. 
We define
\begin{eqnarray*}
    \unsafe_{\varepsilon}&=&
    \Big\{S'\subset\reals^{d\times n}\,|\, \text{$\exists S''\sim S'$ and $\exists E$ such that } \prob_{\hat{u}\sim r_{(\varepsilon,\sens,S'')}}(\hat{u}\in E) > e^\varepsilon \prob_{\hat{u}\sim r_{(\varepsilon,\sens,S')}} (\hat{u}\in E)  \nonumber\\
    && \;\; \text{ or } \prob_{\hat{u}\sim r_{(\varepsilon,\sens,S')}}(\hat{u}\in E) > e^\varepsilon \prob_{\hat{u}\sim r_{(\varepsilon,\sens,S'')}} (\hat{u}\in E)  \Big\}\;.
\end{eqnarray*}
 Note that sine the support is the same for all $S$, we can achieve a stronger pure DP with $\delta=0$ in the exponential mechanism. However, we still need $\delta>0$ in the {\sc Test} step. 
$\HPTR$ for PCA proceeds as follows: 
\begin{itemize} 
    \item[1.] {\sc Propose}: 
        Propose a target sensitivity bound $\sens=80\rho_2/(\alpha n)$.
    \item[2.] {\sc Test}: 
    \begin{itemize}
    \item[2.1.]     Compute the safety margin $m = \min_{S'} d_H(S,S')$ such that $S'\in \unsafe_{\varepsilon/2}$.
    \item[2.2.]      If $\hat{m} = m  + {\rm Lap}(2/\varepsilon)<(2/\varepsilon)\log(2/\delta)$ then output $\perp$, and otherwise continue.
    \end{itemize} 
    \item[3.] {\sc Release}: Output $\hat{u}$ sampled from a distribution with a pdf:
    \begin{eqnarray*}
    r_{(\varepsilon,\sens,S)}(\hat{u})\;\;=\;\; \frac{1}{Z}\,\exp\left(-\frac{\varepsilon}{4 \sens}\robdist_S(\hat{u}) \right) \;,
     \end{eqnarray*} 
     from ${\mathbb S}^{(d-1)} =\{\hat u\in\reals^d: \|\hat u\|=1\}$ 
where $Z=\int_{{\mathbb S}^{(d-1)}} \exp\{-(\varepsilon \robdist_S(\hat{u}))/(4\sens)\} \,d\hat{u}$.
\end{itemize} 

The choice of $\rho_2$ depends on your hypothesis on the tail of the sample generating distribution, and $\alpha$ depends on the target accuracy as guided by Theorem~\ref{thm:pca_utility} (or the fraction of adversarial corruption in the case of outlier robust PCA setting in Theorem~\ref{thm:robust_pca_utility}). The target privacy guarantee determines $(\varepsilon,\delta)$. 

\subsection{Step 2: Utility analysis under resilience}

The following resilience properties are critical in selecting the sensitivity $\sens$ and also in analyzing the utility. 
\begin{definition}[Resilience for PCA]
    \label{def:resilience_pca}
    For some $\rho_1\in \reals_+, \rho_2\in\reals_+ $ we say a set of $n$  data points $S_{\rm good}=\{x_i\in\reals^d\}_{i=1}^n$ is $(\alpha, \rho_1, \rho_2)$-resilient with respect to $\Sigma $ for some positive semidefinite $\Sigma\in\reals^{d\times d}$ if for any $T\subset S_{\rm good}$ of size $|T|\geq (1-\alpha)n$, the following holds for all $v\in \reals^d$ with $\|v\|=1$:     
    \begin{eqnarray}
    \Big| \frac{1}{|T|}\sum_{x_i\in T} \langle v, x_i\rangle \Big| & \leq &  \rho_1 \,\sigma_v \label{def:res_pca1} \text{ and }\\
	\Big| \frac{1}{|T|}\sum_{x_i\in T} \langle v, x_i\rangle^2  - \sigma_v^2 \Big| & \leq &  \rho_2\, \sigma_v^2 \label{def:res_pca2} \;.
	\end{eqnarray}
    where  $\sigma_v^2 = v^\top \Sigma v$.
\end{definition}
We refer to Section~\ref{sec:mean2} for the explanation of how resilience is fundamentally connected to sensitivity. 
For an example of a Gaussian distribution, the samples are $(\alpha,O(\alpha\sqrt{\log(1/\alpha)} ),O(\alpha\log(1/\alpha)))$-resilient (with a large enough $n$). We show next how resilience implies an error bound for HPTR, which is $O(\alpha\log(1/\alpha))$ for Gaussian distributions.

\begin{thm}
\label{thm:pca_utility}
	There exist positive constants $c$ and $C$ such that for any  $(\alpha,\rho_1,\rho_2)$-resilient set $S$ with respect to some $\Sigma$ and  satisfying $\alpha<\rho_2<c$, $\HPTR$ Section~\ref{sec:pca1} for PCA with the choices of the distance function in Eq.~\eqref{score:pca} and  $\sens= 80 \rho_2/(\alpha n)$  achieves 
    $1-(\hat{u}^\top\Sigma\hat{u}/\|\Sigma\| ) \leq 20 \rho_2$ with probability $1-\zeta$, if \begin{eqnarray}
         n \;\geq \; C \left(\frac{\log(1/(\delta\zeta))+d\log(1/\rho_2)}{\varepsilon\alpha }        \right)   \;.
    \end{eqnarray} 
\end{thm}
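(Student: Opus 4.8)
The plan is to follow the same three-ingredient template used for mean estimation (Theorem~\ref{thm:robust_mean}) and its proof in Sections~\ref{sec:mean_proof_strategy}--\ref{sec:mean_proof2}: verify the hypotheses of the universal utility theorem (Theorem~\ref{thm:utility}) for the PCA score function in Eq.~\eqref{score:pca}, the sensitivity bound $\sens=80\rho_2/(\alpha n)$, and the candidate space ${\mathbb S}^{(d-1)}$. Since here the support of the exponential mechanism is the full sphere and does not depend on the dataset, the argument is in fact simpler: there is no need to separately handle the set differences $B_{\tau,S'}\setminus B_{\tau,S''}$ in the safety analysis, and correspondingly no need for a threshold $\thresh$. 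The three things to establish are: (i) resilience implies $D_S(\hat u)$ is a good surrogate for $D_\Sigma(\hat u)=1-\hat u^\top\Sigma\hat u/\|\Sigma\|$ (assumption~\ref{asmp_resilience}); (ii) resilience — and more precisely corrupt-goodness of every dataset within Hamming distance $k^*+O(1)$ of $S$ — implies the claimed local sensitivity (assumption~\ref{asmp_local}); and (iii) the volume ratio on the sphere is $e^{O(d\log(1/\rho_2))}$ (assumption~\ref{asmp_vol}), which replaces the $e^{O(d)}$ Euclidean-ball bound and is the source of the extra $\log(1/\rho_2)$ factor in the sample complexity.

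\textbf{Step 1 (robustness of the surrogate).} First I would show that for any unit $v$, the robust truncated second moment $v^\top\Sigma({\cal M}_{v,\alpha})v$ is within a $(1\pm O(\rho_2))$ multiplicative factor of $\sigma_v^2=v^\top\Sigma v$, by exactly the argument of Lemma~\ref{lem:deviation}: the upper tail ${\cal T}_{v,\alpha}$ contributes at most $O(\rho_2/\alpha)\sigma_v^2$ per Lemma~\ref{lem:deviate}-type bound (with $\tilde\alpha$ a constant fraction of $\alpha$), so removing it cannot shrink the second moment by more than $O(\rho_2)\sigma_v^2$, and resilience Eq.~\eqref{def:res_pca2} on ${\cal M}_{v,\alpha}$ gives the two-sided bound. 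Applying this to both the numerator direction $\hat u$ and the maximizing direction in the denominator of Eq.~\eqref{score:pca}, and using that $\max_v v^\top\Sigma v=\|\Sigma\|$, yields $|D_S(\hat u)-D_\Sigma(\hat u)|\le O(\rho_2)$, so that $D_S(\hat u)\le 20\rho_2$ (up to constants) forces $1-\hat u^\top\Sigma\hat u/\|\Sigma\|\le 20\rho_2$. This handles assumption~\ref{asmp_resilience}, and the same computation extended to $((1/3.5)\alpha+\tilde\alpha,\dots)$-corrupt good sets (as in Lemma~\ref{lem:deviation2}) handles the perturbed regime needed for the safety test.

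\textbf{Step 2 (local sensitivity).} Next I would bound $|D_{S'}(\hat u)-D_{S''}(\hat u)|$ for neighboring $S',S''$ both within Hamming distance $k^*$ of $S$, which makes them $((1/3.5)\alpha+\tilde\alpha,\alpha,\rho_1,\rho_2)$-corrupt good with $\tilde\alpha=k^*/n\le O(\alpha)$. Following Lemma~\ref{lem:local_asmp}: resilience implies that near the truncation boundary of ${\cal M}_{\hat u,\alpha}$ the data is dense, so swapping one sample changes $\hat u^\top\Sigma({\cal M}_{\hat u,\alpha})\hat u$ by at most $O(\rho_2\sigma_{\hat u}^2/(\alpha n))$; the same holds for the denominator's maximizing direction; using $\sigma_{\hat u}^2,\,\max_v\sigma_v^2\le(1+O(\rho_2))\|\Sigma\|$ and the $(1\pm O(\rho_2))$ control on the robust second moments from Step~1, the ratio in Eq.~\eqref{score:pca} moves by at most $O(\rho_2/(\alpha n))$. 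Note the welcome difference from mean estimation: because $D_S(\hat u)$ is a \emph{normalized} ratio and $\hat u$ ranges over the compact sphere rather than $\reals^p$, the sensitivity has \emph{no} $\hat u$-dependent term and we do not need a condition like $\rho_1^2\le c\alpha$ — only $\alpha<\rho_2<c$. Choosing the constant so that this is $\le 80\rho_2/(\alpha n)=\sens$ gives assumption~\ref{asmp_local}, provided $n=\Omega(\log(1/(\delta\zeta))/(\varepsilon\alpha))$ so that $\tilde\alpha\le(1/11)\alpha$ or whatever slack the deviation lemmas require.

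\textbf{Step 3 (volume ratio) and assembling.} The remaining ingredient is the volume comparison on ${\mathbb S}^{(d-1)}$: the "inner good set" is a spherical cap $\{\hat u:1-\hat u^\top\Sigma\hat u/\|\Sigma\|\le c_1\rho_2\}$, and I would lower bound its surface measure by $(c'\sqrt{\rho_2})^{d-1}$ (a cap of angular radius $\asymp\sqrt{\rho_2}$ around $u$, using that $1-\hat u^\top\Sigma\hat u/\|\Sigma\|\asymp$ squared sine of the angle when $\Sigma$ has a spectral gap, or more carefully a one-sided bound that holds regardless), while the whole sphere has measure $O(1)^{d-1}$, so the ratio is $e^{O(d\log(1/\rho_2))}$. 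This is assumption~\ref{asmp_vol} with $c_2=O(\log(1/\rho_2))$, and is precisely why the final bound reads $n\gtrsim(\log(1/(\delta\zeta))+d\log(1/\rho_2))/(\varepsilon\alpha)$ rather than with a bare $d$. Plugging $\rho=\rho_2$, $c_2=O(\log(1/\rho_2))$, $\sens=80\rho_2/(\alpha n)$, $k^*=(2/\varepsilon)\log(4/(\delta\zeta))$ into Theorem~\ref{thm:utility} and checking assumption~\ref{asmp_sens} — which requires $\sens\lesssim\rho_2\varepsilon/(c_2 d+\log(1/(\delta\zeta)))$, i.e. $n\gtrsim(d\log(1/\rho_2)+\log(1/(\delta\zeta)))/(\varepsilon\alpha)$ — closes the proof, with Theorem~\ref{thm:robust_pca_utility} following by treating $\alpha$-corruption as corrupt-goodness exactly as in the mean case.

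\textbf{The main obstacle} I expect is Step~3 together with the denominator handling in Steps~1--2. The volume bound on the sphere must be made carefully: the cap radius scales as $\sqrt{\rho_2}$ not $\rho_2$ (because the error metric is quadratic in the angle), which is what produces $\log(1/\rho_2)$ and not $\log(1/\sqrt{\rho_2})$ — a constant factor, harmless, but one must not accidentally get a $\log(1/\alpha)$ or a $d^2$. The subtler point is that $D_S(\hat u)$ has a data-dependent \emph{denominator} $\max_v v^\top\Sigma({\cal M}_{v,\alpha})v$, so both the surrogate-approximation bound and the sensitivity bound require controlling how this maximum, over \emph{all} directions simultaneously, moves under truncation and under a one-point change; this is where uniform resilience over all unit $v$ (not just a fixed direction) is essential, and where one must be careful that the maximizing direction for $S'$ and for $S''$ may differ, handled by the standard $|\max_v a_v-\max_v b_v|\le\max_v|a_v-b_v|$ inequality as in Lemma~\ref{lem:local_asmp}.
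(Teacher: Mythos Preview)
Your proposal is correct and identifies exactly the three ingredients the paper uses: robustness of the surrogate (Lemma~\ref{lem:pca_robustness}), local sensitivity via resilience on the tails (Lemma~\ref{lem:local_asmp_pca}), and the spherical-cap volume bound yielding the $d\log(1/\rho_2)$ factor. You also correctly anticipate the simplification that the support is the full sphere independent of $S$, so safety reduces to pure sensitivity control with no set-difference argument.

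The one point of divergence is framing. You propose to route the argument through Theorem~\ref{thm:utility}, but that theorem as stated is built around a data-dependent support $B_{\tau,S}$ and its hypotheses~\ref{asmp_vol} and~\ref{asmp_local} are phrased in terms of $\tau$; with no threshold here, those hypotheses do not literally apply. The paper acknowledges this explicitly and bypasses Theorem~\ref{thm:utility} entirely, proving the result directly: it shows $m>k^*$ from Lemma~\ref{lem:safety_margin_pca} (so the safety test passes with probability $\ge 1-\zeta/2$), then bounds the ratio $\prob(D_\Sigma(\hat u)\le 4\rho_2)/\prob(D_\Sigma(\hat u)\ge 20\rho_2)$ by comparing the cap volume $e^{-c_2 d\log(1/\rho_2)}$ against the score gap $e^{-\varepsilon\alpha n/40}$ versus $e^{-\varepsilon\alpha n/20}$. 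This is exactly the computation your Step~3 would unfold into once you strip away the $B_{\tau,S}$ scaffolding, so the substantive content is the same --- but the direct route is cleaner and avoids having to reformulate the universal theorem for a fixed compact support.
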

We discuss the implications of this result in Section~\ref{sec:pca3} for specific instances of the problem. 
Under Assumption~\ref{asmp:mean} on $\alpha_{\rm corrupt}$-corruption of the data and Definition~\ref{def:corruptgoodset} on the corrupt good sets, we show that HPTR is also robust against corruption.
\begin{thm}
\label{thm:robust_pca_utility}
	There exist positive constants $c$ and $C$ such that for any  $((2/7)\alpha,\alpha,\rho_1,\rho_2)$-corrupt good set $S$ with respect to some $\Sigma$ satisfying $\alpha<rho_2<c$, $\HPTR$ in Section~\ref{sec:pca1} for PCA with the choices of the distance function in Eq.~\eqref{score:pca} and  $\sens=80 \rho_2/(\alpha n)$  achieves 
    $1-(\hat{u}^\top\Sigma\hat{u}/\|\Sigma\| ) \leq 20\rho_2$ with probability $1-\zeta$, if \begin{eqnarray}
         n \;\geq \; C \left(\frac{\log(1/(\delta\zeta))+d\log(1/\rho_2)}{\varepsilon\alpha }        \right)   \;.
    \end{eqnarray} 
\end{thm}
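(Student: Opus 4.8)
The plan is to follow the same three‑step template used for mean estimation in Section~\ref{sec:mean_proof_strategy}: verify the four hypotheses of the universal utility analysis (Theorem~\ref{thm:utility}) for the PCA score $\robdist_S(\hat u)$ of Eq.~\eqref{score:pca}, with $\rho:=\rho_2$, no truncation threshold (the support is already the compact sphere $\mathbb{S}^{(d-1)}$, so $B_{\thresh,S}$ is replaced by $\mathbb{S}^{(d-1)}$), $\sens = 80\rho_2/(\alpha n)$, $k^\ast = (2/\varepsilon)\log(4/(\delta\zeta))$, and target accuracy $D_\Sigma(\hat u)=1-\hat u^\top\Sigma\hat u/\|\Sigma\| = O(\rho_2)$. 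Since the corruption budget is $(2/7)\alpha$ while the one–sided trim discards $(4/7)\alpha n$ points, every dataset within Hamming distance $k^\ast$ of $S$ remains $(\alpha',\alpha,\rho_1,\rho_2)$‑corrupt good (Definition~\ref{def:corruptgoodset}) with $\alpha' = (2/7)\alpha + k^\ast/n \le (3/7)\alpha$ once $n\gtrsim \log(1/(\delta\zeta))/(\varepsilon\alpha)$; throughout we work with such sets, whose good core $S_{\rm good}\cap\cM_{v,\alpha}$ has size $\ge (1-\alpha)n$ uniformly over unit directions $v$, so resilience Eq.~\eqref{def:res_pca2} and its tail version (Lemma~\ref{lem:deviate}) apply to it.

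\textbf{Step 1 (robustness of the robust statistics; Assumption~\ref{asmp_resilience}).} I would first show that for every unit $v$ the trimmed second moment $v^\top\Sigma(\cM_{v,\alpha})v = |\cM_{v,\alpha}|^{-1}\sum_{x_i\in\cM_{v,\alpha}}\langle v,x_i\rangle^2$ lies in $\sigma_v^2(1\pm O(\rho_2))$: the good core contributes $\sigma_v^2(1\pm\rho_2)$ by resilience; the at most $(3/7)\alpha n$ bad points each contribute a nonnegative value no larger than the trim threshold, which resilience on the cut tail forces to be at most $\sigma_v^2(1+O(\rho_2/\alpha))$, so their total is $O(\alpha+\rho_2)n\,\sigma_v^2 = O(\rho_2)n\,\sigma_v^2$ because $\alpha<\rho_2$; discarded nonnegative good terms only help the lower bound. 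Taking $v=u$ in the denominator gives $\max_v v^\top\Sigma(\cM_{v,\alpha})v \in \|\Sigma\|(1\pm O(\rho_2))$, and applying the same bound to the numerator $\hat u^\top\Sigma(\cM_{\hat u,\alpha})\hat u = \sigma_{\hat u}^2(1\pm O(\rho_2))\pm O(\rho_2)\|\Sigma\|$ yields $|\robdist_S(\hat u)-D_\Sigma(\hat u)| = O(\rho_2)$ uniformly on the sphere (the analogue of Lemma~\ref{lem:outerset}, built on Lemma~\ref{lem:equidist}). In particular $\min_{\hat u}\robdist_S(\hat u)=O(\rho_2)$, attained near the top eigenvector.

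\textbf{Step 2 (bounded volume; Assumption~\ref{asmp_vol}) and Step 3 (bounded local sensitivity; Assumption~\ref{asmp_local}).} For volume, since the support is the whole sphere I only need the inner set $\{\hat u : \robdist_S(\hat u)\le c_1\rho_2\}$ to be not exponentially thin: by Step~1 it contains $\{D_\Sigma(\hat u)\le c_1\rho_2/2\}$, and writing $\hat u=\cos\theta\,u+\sin\theta\,w$ with $w\perp u$ gives $D_\Sigma(\hat u)=\sin^2\theta\,(1-w^\top\Sigma w/\|\Sigma\|)$, which contains the cap $\{\theta\lesssim\sqrt{\rho_2}\}$ regardless of the eigenvalue gap (a small gap only enlarges the set); a cap‑volume estimate gives volume fraction $\gtrsim \rho_2^{(d-1)/2} = e^{-O(d\log(1/\rho_2))}$, the source of the $d\log(1/\rho_2)$ term. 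For local sensitivity, the key (as in Lemma~\ref{lem:local_asmp}) is that resilience on the cut tail pins the trimmed values $\{\langle v,x_i\rangle^2:x_i\in\cM_{v,\alpha}\}$ to the interval $[0,\,O(\sigma_v^2\rho_2/\alpha)]$ for every $v$, so changing one point (with the attendant boundary swap) perturbs each trimmed second moment by $O(\sigma_v^2\rho_2/(\alpha n))$; combining with the denominator being $\Theta(\|\Sigma\|)$ and bounded away from $0$, and using $\max_v a_v - \max_v b_v\le\max_v|a_v-b_v|$ to absorb the moving argmax and the $\hat u$‑dependence of $\cM_{\hat u,\alpha}$, gives $|\robdist_{S'}(\hat u)-\robdist_{S''}(\hat u)|\le 80\rho_2/(\alpha n) = \sens$ for all $S'\sim S''$ within Hamming distance $k^\ast$ of $S$. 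Feeding $\sens$, Step~1, and Step~2 into Theorem~\ref{thm:utility} (Assumption~\ref{asmp_sens} reducing to $n\gtrsim (d\log(1/\rho_2)+\log(1/(\delta\zeta)))/(\varepsilon\alpha)$) shows the safety test passes with probability $\ge 1-\zeta$ — here the two neighbouring exponential mechanisms share the same support, so only the multiplicative $e^{\varepsilon/2}$ comparison of Lemma~\ref{lem:exp_mech} is needed, with no set‑difference loss — and the released $\hat u$ satisfies $\robdist_S(\hat u)=O(\rho_2)$, hence $1-\hat u^\top\Sigma\hat u/\|\Sigma\|\le 20\rho_2$ by Step~1; Theorem~\ref{thm:pca_utility} is the $\alpha_{\rm corrupt}=0$ case.

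\textbf{Main obstacle.} The delicate part is Step~3: establishing the uniform‑over‑$\mathbb{S}^{(d-1)}$ bound on the trimmed‑support interval and then controlling the sensitivity of a \emph{ratio} whose numerator trim set depends on the evaluation direction $\hat u$ and whose denominator is a maximum over all directions, so that the argmax itself moves under a data perturbation — and doing this while allowing the $k^\ast$ extra corruptions required for the safety margin, without the resilience‑based interval bound degrading past what $\sens = 80\rho_2/(\alpha n)$ can absorb. Steps~1 and~2 are comparatively routine: the first is a direct adaptation of the mean‑estimation deviation lemmas, and the second needs only the gap‑robust cap estimate.
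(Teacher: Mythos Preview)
Your proposal is correct, and the three substantive ingredients you identify---robustness of the trimmed second moments (your Step~1, the paper's Lemma~\ref{lem:pca_robustness}), the uniform local sensitivity bound via resilience on the removed tail (your Step~3, the paper's Lemma~\ref{lem:local_asmp_pca}), and the spherical-cap volume estimate $\mu(\{D_\Sigma(\hat u)\le c\rho_2\})\ge e^{-O(d\log(1/\rho_2))}$ (your Step~2)---are exactly the ones the paper uses, with the same arguments.

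The one packaging difference worth flagging: the paper explicitly \emph{does not} route through Theorem~\ref{thm:utility}, stating just before Section~\ref{sec:pca_proof1} that the PCA instance is simple enough to prove directly. Concretely, because the support $\mathbb{S}^{(d-1)}$ is the same for every dataset, the safety analysis collapses to the pure-DP sensitivity check (Lemma~\ref{lem:safety_margin_pca} follows immediately from Lemma~\ref{lem:local_asmp_pca}), and the utility step is a single two-set probability-ratio computation rather than the full machinery of Assumptions~\ref{asmp_vol}--\ref{asmp_resilience}. You already spotted this simplification (``only the multiplicative $e^{\varepsilon/2}$ comparison \ldots\ no set-difference loss''), so your adaptations of Theorem~\ref{thm:utility}---replacing $B_{\tau,S}$ by the sphere, replacing $c_2 p$ by $c_2 d\log(1/\rho_2)$---amount to re-deriving the direct argument. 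What you buy by citing Theorem~\ref{thm:utility} is uniformity with the rest of the paper; what the paper buys by going direct is avoiding the need to re-interpret hypotheses that were written for a data-dependent truncated support. Either way the proof goes through with the same constants.
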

We provide a proof of the robust and DP PCA in Section~\ref{sec:pca_proof3}, where Theorem~\ref{thm:pca_utility} follows immediately by selecting $\alpha$ as a free parameter. 
As the HPTR Section~\ref{sec:pca1} for  PCA is significantly simpler, we do not apply the general analysis in Theorem~\ref{thm:utility} and instead we prove The above theorem directly. 
To this end, we first show a bound on sensitivity  and next show that safety test succeeds with high probability in 
Section~\ref{sec:pca_proof1}.

\subsubsection{Resilience implies bounded local sensitivity}
\label{sec:pca_proof1}

Given the resilience properties of a corrupt good set $S$, we show that the sensitivity of $\robdist_S(\hat u)$ is bounded by $\sens$.  

\begin{lemma}
    \label{lem:local_asmp_pca}
    Suppose $\alpha\leq c$  for some small enough constant $c$. For $\sens = 80\rho_2/(\alpha n)$,
    and a $((2/7)\alpha, \alpha,\rho_1,\rho_2)$-corrupt good  $S$, if 
    \begin{eqnarray*}
        n\;=\;\Omega\Big(
        \frac{\log(1/(\delta\zeta))}{\alpha\varepsilon}
        \Big)\;,
    \end{eqnarray*} 
    with a large enough constant 
    then the for all $S'$ within Hamming distance $k^* = (2/\varepsilon) \log(4/(\zeta\delta))$ from $S$, we have
    \begin{eqnarray}
    	\max_{S''\sim S'}|D_{S''}(\hat{u})-D_{S'}(\hat{u})|\leq \Delta\;,
    \end{eqnarray}
    for all unit vector $\hat{u}$ and all neighboring dataset $S''$.
\end{lemma}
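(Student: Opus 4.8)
The plan is to follow the sensitivity analysis of Lemma~\ref{lem:local_asmp} almost verbatim, exploiting two simplifications specific to PCA: the score $D_S(\hat u)$ depends on the data only through the one-dimensional robust second moments $v^\top\Sigma(\cM_{v,\alpha})v=\frac{1}{|\cM_{v,\alpha}|}\sum_{x_i\in\cM_{v,\alpha}}\langle v,x_i\rangle^2$, and the support of the {\sc Release} step is the fixed sphere ${\mathbb S}^{(d-1)}$, so no set-difference bookkeeping is needed — the lemma is a pure bound on the sensitivity of the score. First I would reduce to corrupt good sets: since $S$ is $((2/7)\alpha,\alpha,\rho_1,\rho_2)$-corrupt good and both $S'$ (with $d_H(S,S')\le k^*$) and its neighbour $S''$ differ from $S$ in at most $k^*+1$ coordinates, each of $S',S''$ is $((2/7)\alpha+(k^*+1)/n,\alpha,\rho_1,\rho_2)$-corrupt good with respect to the same resilient $S_{\rm good}$. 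The sample-size hypothesis $n=\Omega(\log(1/(\delta\zeta))/(\alpha\varepsilon))$ with a large enough constant forces $(k^*+1)/n\le(1/7)\alpha$ (recall $k^*=(2/\varepsilon)\log(4/(\zeta\delta))$), so the corruption fraction is at most $(3/7)\alpha$; in particular, for every unit $v$ the set $\cM_{v,\alpha}\cap S_{\rm good}$ has size at least $(1-(6/7)\alpha)n\ge(1-\alpha)n$ and the trimmed top tail $\cT_{v,\alpha}$ contains $\Omega(\alpha n)$ good points.

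Second I would establish, uniformly over unit $v$ and for any corrupt good set $\tilde S\in\{S',S''\}$ in this regime, the two structural facts the argument rests on. (i) \emph{Range of $\cM_{v,\alpha}$.} Applying the resilience tail bound \eqref{def:res_pca2} (the analogue of Lemma~\ref{lem:deviate} for the scalars $\{\langle v,x_i\rangle^2\}$) to $\cT_{v,\alpha}\cap S_{\rm good}$, which has size $\Omega(\alpha n)$, shows some good point in $\cT_{v,\alpha}$ has $\langle v,x_i\rangle^2\le\sigma_v^2(1+O(\rho_2/\alpha))$; hence the trimming threshold defining $\cM_{v,\alpha}$, and therefore every projected square retained in $\cM_{v,\alpha}$, is at most $\sigma_v^2(1+O(\rho_2/\alpha))$. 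Combining the resilience bound \eqref{def:res_pca2} on the $\ge(1-\alpha)n$ good points in $\cM_{v,\alpha}$ with the fact that the at most $(3/7)\alpha n$ bad points in $\cM_{v,\alpha}$ each contribute at most $\sigma_v^2(1+O(\rho_2/\alpha))$ gives $v^\top\Sigma(\cM_{v,\alpha})v\le\sigma_v^2(1+O(\rho_2))\le\|\Sigma\|(1+O(\rho_2))$; here it is crucial that there are only $O(\alpha n)$ bad points, so their total contribution to the average is $O(\alpha)\cdot O(\rho_2/\alpha)\cdot\sigma_v^2=O(\rho_2\sigma_v^2)$ even though individual retained values may be as large as $\sigma_v^2\cdot O(\rho_2/\alpha)$. (ii) \emph{Lower bound on the denominator.} Taking $v=u$ a top eigenvector of $\Sigma$, dropping the nonnegative bad terms, and applying \eqref{def:res_pca2} to $\cM_{u,\alpha}\cap S_{\rm good}$ yields $u^\top\Sigma(\cM_{u,\alpha})u\ge(1-O(\alpha))(1-\rho_2)\sigma_u^2\ge\|\Sigma\|/2$, so $\max_{\|v\|=1}v^\top\Sigma(\cM_{v,\alpha})v\ge\|\Sigma\|/2$.

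Third I would bound the change under one coordinate swap $S'\to S''$. A single data-point change perturbs the multiset $\{\langle v,x_i\rangle^2\}$ in one entry, so $\cM_{v,\alpha}(S')$ and $\cM_{v,\alpha}(S'')$ differ by at most one ``boundary swap'': one element leaves and the smallest surviving top-tail point enters (or the changed point's value moves within $\cM_{v,\alpha}$). Both the element that leaves and the one that enters have projected square at most the threshold $\le\sigma_v^2(1+O(\rho_2/\alpha))=O(\sigma_v^2\rho_2/\alpha)$ (using $\alpha<\rho_2$), so $|v^\top\Sigma(\cM_{v,\alpha}(S'))v-v^\top\Sigma(\cM_{v,\alpha}(S''))v|\le O(\sigma_v^2\rho_2/(\alpha n))\le O(\|\Sigma\|\rho_2/(\alpha n))$ for every unit $v$. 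Writing $N'=\hat u^\top\Sigma(\cM_{\hat u,\alpha}(S'))\hat u$, $D'=\max_v v^\top\Sigma(\cM_{v,\alpha}(S'))v$, and similarly $N'',D''$, the inequality $|\max_v f(v)-\max_v g(v)|\le\max_v|f(v)-g(v)|$ gives $|N'-N''|,|D'-D''|\le O(\|\Sigma\|\rho_2/(\alpha n))$, while steps (i)--(ii) give $N',N''\le O(\|\Sigma\|)$ and $D',D''\ge\|\Sigma\|/2$. Then
\[
|D_{S'}(\hat u)-D_{S''}(\hat u)|=\left|\frac{N''}{D''}-\frac{N'}{D'}\right|\le\frac{|N'-N''|}{D'}+\frac{N''\,|D'-D''|}{D'D''}\le O\!\Big(\frac{\rho_2}{\alpha n}\Big),
\]
and tracking constants shows the right-hand side is at most $80\rho_2/(\alpha n)=\Delta$, for all unit $\hat u$ and all neighbours $S''\sim S'$.

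The hard part will be the uniform-in-$v$ control in the second and third steps: one must argue, simultaneously for every direction $v$, that (a) the adversarial points surviving the top-tail trimming cannot inflate the retained set's second moment beyond $O(\rho_2)\|\Sigma\|$ even though individual retained values can be as large as $\sigma_v^2\cdot O(\rho_2/\alpha)$ — this works only because there are $O(\alpha n)$ of them — and (b) a one-point change perturbs the trimmed set $\cM_{v,\alpha}$ by a bounded boundary swap whose magnitude is the trimming threshold, never an unbounded corrupted value. Everything else is the quotient-rule bookkeeping displayed above together with the reduction to a corrupt good set, which is identical in spirit to Lemma~\ref{lem:local_asmp}, only with squared projections in place of projections and no set-difference term.
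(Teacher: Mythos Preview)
Your proposal is correct and follows essentially the same route as the paper: reduce $S'$ (and $S''$) to a $((3/7)\alpha,\alpha,\rho_1,\rho_2)$-corrupt good set via the sample-size hypothesis, use the tail resilience (Lemma~\ref{lem:deviate}) to cap every retained value $\langle v,x_i\rangle^2$ in $\cM_{v,\alpha}$ by $(1+O(\rho_2/\alpha))\sigma_v^2$, deduce that a single swap changes $v^\top\Sigma(\cM_{v,\alpha})v$ by at most $O(\rho_2\sigma_v^2/(\alpha n))$, and finish with the quotient-rule decomposition together with two-sided bounds $0.9\,\sigma_v^2\le v^\top\Sigma(\cM_{v,\alpha})v\le1.1\,\sigma_v^2$ (which the paper pulls from Lemma~\ref{lem:pca_robustness} rather than deriving inline as you do). Your use of the hypothesis $\alpha<\rho_2$ to absorb the additive $1$ into $O(\rho_2/\alpha)$ is exactly what the paper needs in its final line (the printed ``$\rho_2\le\alpha$'' there is evidently a typo for the reverse inequality).
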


\begin{proof}
The proof is similar to the proof of Lemma~\ref{lem:local_asmp}. We first assume $(k^*+1)/n\leq \alpha/7$, which requires $n=\Omega(\log(1/\delta\zeta))/(\alpha\varepsilon)$ with a large enough constant. This implies that $S'$ is a $((3/7)\alpha,\alpha,\rho_1,\rho_2)$-corrupt good set. The rest of this proof is under this assumption. 
Let ${\cal T}_{\hat{u}, \alpha}(S') \subset S$ be the subset of data points corresponding to the largest $(2/3.5)\alpha n$ values  in the projected set $S'_{\hat{u}}=\{\ip{\hat{u}}{x_i}^2\}_{x_i\in S'}$. 
Recall that $S_{\rm good}$ is the original resilient dataset before corruption by an adversary. 
From Lemma~\ref{lem:deviate} and  the fact that $|S_{\rm good}\cap {\cal T}_{\hat u, \alpha}(S')|\geq (1/7)\alpha n$, it follows that $ (1/|S_{\rm good}\cap {\cal T}_{\hat u, \alpha}(S')|)\sum_{x_i\in S_{\rm good}\cap {\cal T}_{\hat u, \alpha}} \langle\hat u, x_i \rangle^2 \leq (1 + (2\rho_2 )/((1/7)\alpha))\sigma_{\hat u}^2$, where $\sigma_{\hat u}=\sqrt{\hat u^\top\Sigma\hat u}$. This implies 
\begin{eqnarray}
	\min_{x_i\in S_{\rm good}\cap {\cal T}_{\hat{u}, \alpha}}\ip{\hat{u}}{x_i}^2
	\;\leq \;\Big( 1 + \frac{2\rho_2}{(1/7)\alpha  } \Big) \sigma_{\hat{u}}^2\;.
\end{eqnarray}

Let ${\cal M}_{\hat{u}, \alpha}(S')$ be the remaining  subset of $S'$ with $(1-(2/3.5)\alpha)n$ smallest values in $\{ (\langle \hat u, x_i\rangle)^2\}_{i\in [n]} $. 
${\cal M}_{\hat{u}, \alpha}(S')$ and ${\cal M}_{\hat{u}, \alpha}(S'')$ can differ at most by one data point. 
Let $x'$ and $x''$ be the unique pair of data points that are in ${\cal M}_{\hat{u}, \alpha}(S')$ and ${\cal M}_{\hat{u}, \alpha}(S'')$, respectively. 
If there is no such pair, then the two filtered subsets are the same and the following claims are trivially true. 

If $\ip{\hat{u}}{x''}^2\leq  \max_{x_i\in  {\cal M}_{\hat{u}, \alpha}(S')}\ip{\hat{u}}{x_i}^2 \leq \min_{x_i\in S_{\rm good}\cap {\cal T}_{\hat{u}, \alpha}(S')}\ip{\hat{u}}{x_i}^2$, we have 
$|\ip{\hat{u}}{x'}^2-\ip{\hat{u}}{ x''}^2|\leq (1+14\rho_2/\alpha) \sigma_{\hat u}^2$, where $\sigma_{\hat u}^2=\hat u^\top \Sigma \hat u$.  If $\ip{\hat{u}}{x''}^2>  \max_{x_i\in  {\cal M}_{\hat{u}, \alpha}(S')}\ip{\hat{u}}{x_i}^2$, then $x''$ is at most $\langle \hat u , x'' \rangle ^2 \leq \min_{x_i\in S_{\rm good}\cap {\cal T}_{\hat{u}, \alpha}(S')}\langle\hat u, x_i\rangle^2$, where equality holds if the smallest point in the top subset enters  ${\cal M}_{\hat u, \alpha}(S'')$. 
This also implies $|\ip{\hat{u}}{x'}^2-\ip{\hat{u}}{x''}^2|\leq (1+14\rho_2/\alpha) \sigma_{\hat u}^2$. 
Let $\sigma'^2_v = v^\top \Sigma({\cal M}_{v,\alpha}(S')) v$ and 
$\sigma''^2_v = v^\top \Sigma({\cal M}_{v,\alpha}(S'')) v$, then  for any $\|v\|=1$, 
\begin{eqnarray*}
	\left|\sigma'^2_v -\sigma''^2_v \right|
	&= & \left| v^\top \left(\frac{1}{(1-(2/3.5)\alpha)n}\sum_{x_i\in \cM_{v, 2\alpha} (S') }x_ix_i^\top-\frac{1}{(1-(2/3.5) \alpha)n}\sum_{x_i\in \cM_{v, 2\alpha}(S'')}x_ix_i^\top\right) v \right|\\
	&\leq & \frac{2}{n}| \langle v,x'\rangle^2 - \langle v,x''\rangle^2|  
	\;\leq\; \frac{2}{n}\Big( 1 + \frac{14\rho_2}{\alpha }\Big) v^\top \Sigma v \;,
\end{eqnarray*}
for $\alpha\leq c$ small enough. 
	Then for the local sensitivity, we have
\begin{eqnarray*}
	\left|D_{S'}(\hat{u})-D_{S''}(\hat{u})\right|
	&\leq & \Big| \frac{\sigma'^2_{\hat u}-\sigma''^2_{\hat u}}{\max_{\|v\|=1} \sigma'^2_v} \Big| + \Big| \frac{\sigma''^2_{\hat u}}{\max_{\|v\|=1}\sigma'^2_v} - \frac{\sigma''^2_{\hat u}}{\max_{\|v\|=1}\sigma''^2_v} \Big| \\ 
	&\leq& \frac{2}{n} \Big(1+\frac{14\rho_2}{\alpha}\Big)\frac{\hat u^\top \Sigma \hat u}{0.9\|\Sigma\|} + \frac{1.1 \hat u^\top \Sigma \hat u}{0.9^2\|\Sigma\|^2} \frac{2}{n} \Big(1+\frac{14\rho_2}{\alpha}\Big)\|\Sigma\|\;, 
\end{eqnarray*}
where we used the resilience in  Eq.~\eqref{def:res_pca2} with small enough $\rho_2\leq c$ such that $0.9 v^\top \Sigma v \leq \sigma'^2_v\leq 1.1 v^\top \Sigma v$ and $0.9 v^\top \Sigma v \leq \sigma''^2_v\leq 1.1 v^\top \Sigma v$ (which follow from Lemma~\ref{lem:pca_robustness}).
When $\rho_2\leq \alpha$, this is bounded by $|D_{S'}\hat u)-D_{S''}(\hat u)|\leq 80\rho_2/(\alpha n)=\sens$.

\end{proof}

Since the support is the same for all exponential mechanisms regardless of the dataset, sensitivity bound immediately implies safety. The following lemma shows that we have sufficient safety margin to succeed with probability at least $1-\zeta$, since $k^*=(2/\varepsilon)\log(4/(\delta\zeta))$ and the threshold is $(2/\varepsilon)\log(2/\delta)$.

\begin{lemma}
\label{lem:safety_margin_pca}
	Under the hypothesis of Lemma~\ref{lem:local_asmp_pca}, for any $S'$ at Hamming distance at most $k^*$ from $S$, we have $S'\in \safe_{\varepsilon/2}$.
\end{lemma}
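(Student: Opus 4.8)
The plan is to reduce the claim directly to the local sensitivity bound established in Lemma~\ref{lem:local_asmp_pca}. The key structural feature of the PCA instance of HPTR is that the support of the exponential mechanism is the full unit sphere ${\mathbb S}^{(d-1)}$ for \emph{every} input dataset; unlike the mean estimation case, there is no data-dependent truncation set $B_{\thresh,S}$. Consequently, proving $S'\in\safe_{\varepsilon/2}$ amounts to showing that for every $S''\sim S'$ and every measurable $E\subseteq{\mathbb S}^{(d-1)}$ the two densities $r_{(\varepsilon,\sens,S')}$ and $r_{(\varepsilon,\sens,S'')}$ are within a multiplicative factor $e^{\varepsilon/2}$ of each other on $E$, i.e. the standard utility-to-privacy conversion for the exponential mechanism (cf. Lemma~\ref{lem:exp_mech}).

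First I would fix $S'$ at Hamming distance at most $k^*$ from $S$ and an arbitrary neighbor $S''\sim S'$. By Lemma~\ref{lem:local_asmp_pca}, under the stated sample-size condition $n=\Omega(\log(1/(\delta\zeta))/(\alpha\varepsilon))$ with a large enough constant, we have $|D_{S''}(\hat u)-D_{S'}(\hat u)|\le\sens$ for every unit vector $\hat u$. Second I would plug this pointwise bound into the definition of the densities: for any measurable $E\subseteq{\mathbb S}^{(d-1)}$,
\begin{eqnarray*}
\prob_{\hat u\sim r_{(\varepsilon,\sens,S')}}(\hat u\in E)
&=& \frac{\int_E \exp\{-\tfrac{\varepsilon}{4\sens}D_{S'}(\hat u)\}\,d\hat u}{\int_{{\mathbb S}^{(d-1)}} \exp\{-\tfrac{\varepsilon}{4\sens}D_{S'}(\hat u)\}\,d\hat u} \\
&\le& \frac{e^{\varepsilon/4}\int_E \exp\{-\tfrac{\varepsilon}{4\sens}D_{S''}(\hat u)\}\,d\hat u}{e^{-\varepsilon/4}\int_{{\mathbb S}^{(d-1)}} \exp\{-\tfrac{\varepsilon}{4\sens}D_{S''}(\hat u)\}\,d\hat u}
\;=\; e^{\varepsilon/2}\,\prob_{\hat u\sim r_{(\varepsilon,\sens,S'')}}(\hat u\in E)\;,
\end{eqnarray*}
where the numerator is bounded using $D_{S'}(\hat u)\ge D_{S''}(\hat u)-\sens$ and the denominator using $D_{S'}(\hat u)\le D_{S''}(\hat u)+\sens$. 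By symmetry the same inequality holds with the roles of $S'$ and $S''$ interchanged. Third, since neither of the two strict inequalities in the definition of $\unsafe_{\varepsilon/2}$ can then hold for $S'$, we conclude $S'\notin\unsafe_{\varepsilon/2}$, i.e. $S'\in\safe_{\varepsilon/2}$.

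I do not expect a genuine obstacle here, as the heavy lifting is already in Lemma~\ref{lem:local_asmp_pca}; the only points requiring care are bookkeeping ones. The temperature $\varepsilon/(4\sens)$ in the {\sc Release} density combined with a sensitivity of exactly $\sens$ produces the two factors of $e^{\varepsilon/4}$ (one in the numerator, one in the denominator), yielding the $e^{\varepsilon/2}$ gap needed to certify $(\varepsilon/2,0)$-DP of the release step on the $k^*$-neighborhood of $S$. The argument crucially uses that the normalizing constants $Z$ for $S'$ and $S''$ are integrals over the \emph{same} domain ${\mathbb S}^{(d-1)}$, so the ratio test reduces to a pointwise comparison of the unnormalized densities; because the domain is dataset-independent, no analogue of the set-difference analysis (bounding $\prob(\hat u\in B_{\thresh,S}\setminus B_{\thresh,S'})$ by $\delta$) that was needed in the mean-estimation safety proof is required, which is precisely why the PCA exponential mechanism attains pure DP in its {\sc Release} step.
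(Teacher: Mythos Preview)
Your proposal is correct and takes essentially the same approach as the paper. The paper's entire justification is the one-sentence remark preceding the lemma, ``Since the support is the same for all exponential mechanisms regardless of the dataset, sensitivity bound immediately implies safety,'' and your argument simply fills in the standard exponential-mechanism computation (Lemma~\ref{lem:exp_mech}) that this sentence summarizes.
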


%
%
\subsubsection{Proof of Theorem~\ref{thm:robust_pca_utility}}
\label{sec:pca_proof3}

This proof is similar as the proof of a universal utility analysis in  Theorem~\ref{thm:utility}. First, we show we pass the safety test with high probability. By Lemma~\ref{lem:safety_margin_pca}, we know $m>k^*=2/\varepsilon\log(4/(\zeta\delta))$. Then we have
	\begin{eqnarray*}
		\prob\left(\text{output}\perp\right) = \prob\left(m+{\rm Lap}(2/\varepsilon)<(2/\varepsilon) \log(2/\delta)\right) \;\; \leq \;\; \frac{\zeta}{2}\;.
	\end{eqnarray*}
Next, we assume the dataset passed the safety test and show that $\prob_{\hat{u}\sim r_{(\varepsilon, \Delta, S)}}(\hat{u}^\top\Sigma\hat{u}\geq (1-4\rho_2)\|\Sigma\|  )\geq 1-\zeta/2$. 
\begin{lemma}
\label{lem:pca_robustness}
	For an $((2/7)\alpha,\alpha,\rho_1, \rho_2)$-corrupt good set $S$ with respect to $\Sigma$,  then $|\hat{u}^\top \Sigma \hat{u}-\hat{u}^\top \Sigma(\cM_{\hat{u}, \alpha}) \hat{u}| \; \leq \; 4 \rho_2 \hat u^\top \Sigma \hat u $.
\end{lemma}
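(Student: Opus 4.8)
The plan is to mirror the robustness argument for the trimmed mean (Lemma~\ref{lem:deviation}), now applied to the trimmed second moment $\hat u^\top \Sigma(\cM_{\hat u,\alpha})\hat u = \frac{1}{|\cM_{\hat u,\alpha}|}\sum_{x_i\in\cM_{\hat u,\alpha}}\langle \hat u,x_i\rangle^2$. Write $S_{\rm good}$ for the original $(\alpha,\rho_1,\rho_2)$-resilient dataset, $S_{\rm bad}=S\setminus S_{\rm good}$ with $|S_{\rm bad}|\le(2/7)\alpha n$, and $\cT_{\hat u,\alpha}=S\setminus\cM_{\hat u,\alpha}$ the set of the top $(2/3.5)\alpha n=(4/7)\alpha n$ values of $\{\langle \hat u,x_i\rangle^2\}_{x_i\in S}$. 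Decompose $\cM_{\hat u,\alpha}=(\cM_{\hat u,\alpha}\cap S_{\rm good})\sqcup(\cM_{\hat u,\alpha}\cap S_{\rm bad})$ and bound the two contributions separately.

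First, for the good part, the inclusion–exclusion bound $|\cM_{\hat u,\alpha}\cap S_{\rm good}|\ge |\cM_{\hat u,\alpha}|+|S\cap S_{\rm good}|-n\ge (1-(6/7)\alpha)n\ge(1-\alpha)n$ lets us invoke the resilience inequality Eq.~\eqref{def:res_pca2} directly, giving $(1-\rho_2)\sigma_{\hat u}^2\le \frac{1}{|\cM_{\hat u,\alpha}\cap S_{\rm good}|}\sum_{x_i\in\cM_{\hat u,\alpha}\cap S_{\rm good}}\langle\hat u,x_i\rangle^2\le(1+\rho_2)\sigma_{\hat u}^2$, where $\sigma_{\hat u}^2=\hat u^\top\Sigma\hat u$. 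Second, for the bad part, the defining property of $\cM_{\hat u,\alpha}$ is that every $x_i\in\cM_{\hat u,\alpha}$ has $\langle\hat u,x_i\rangle^2\le\min_{x_j\in\cT_{\hat u,\alpha}}\langle\hat u,x_j\rangle^2$; since $|\cT_{\hat u,\alpha}\cap S_{\rm good}|\ge(4/7)\alpha n-(2/7)\alpha n=(2/7)\alpha n$, the tail form of resilience (Lemma~\ref{lem:deviate}, applied with $\tilde\alpha=(2/7)\alpha$ to Eq.~\eqref{def:res_pca2}) shows the average of $\langle\hat u,x_j\rangle^2$ over $\cT_{\hat u,\alpha}\cap S_{\rm good}$ is at most $(1+\tfrac{7\rho_2}{\alpha})\sigma_{\hat u}^2$, hence so is its minimum, hence so is $\min_{x_j\in\cT_{\hat u,\alpha}}\langle\hat u,x_j\rangle^2$. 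Thus each of the at most $(2/7)\alpha n$ bad points in $\cM_{\hat u,\alpha}$ contributes a value in $[0,(1+\tfrac{7\rho_2}{\alpha})\sigma_{\hat u}^2]$.

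Combining, with $|\cM_{\hat u,\alpha}|=(1-(4/7)\alpha)n$, the upper estimate is
\[
\hat u^\top\Sigma(\cM_{\hat u,\alpha})\hat u\;\le\;(1+\rho_2)\sigma_{\hat u}^2+\frac{(2/7)\alpha}{1-(4/7)\alpha}\Big(1+\frac{7\rho_2}{\alpha}\Big)\sigma_{\hat u}^2,
\]
and the lower estimate is $\hat u^\top\Sigma(\cM_{\hat u,\alpha})\hat u\ge\frac{|\cM_{\hat u,\alpha}\cap S_{\rm good}|}{|\cM_{\hat u,\alpha}|}(1-\rho_2)\sigma_{\hat u}^2\ge\big(1-\frac{(2/7)\alpha}{1-(4/7)\alpha}\big)(1-\rho_2)\sigma_{\hat u}^2$. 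Using the hypotheses $\alpha<\rho_2$ and $\rho_2<c$ with $c$ small (so $(2/7)\alpha+2\rho_2<(16/7)\rho_2$ and $1-(4/7)\alpha\ge 0.9$), both bounds collapse to $\hat u^\top\Sigma(\cM_{\hat u,\alpha})\hat u\in[(1-4\rho_2)\sigma_{\hat u}^2,(1+4\rho_2)\sigma_{\hat u}^2]$, which is the claim. The main obstacle — and the place needing the most care — is the bad-part bound: one must argue that no corrupted point can have "snuck in" with a large projected square, i.e. that the trimming genuinely forces $\langle\hat u,x_i\rangle^2$ for bad points in $\cM_{\hat u,\alpha}$ below a resilience-controlled threshold via a good point sitting near the boundary of $\cT_{\hat u,\alpha}$; everything else is bookkeeping with the fractions $(2/7)\alpha$ and $(4/7)\alpha$ and the inequality $\alpha<\rho_2$ that makes the final constant $4$ work out.
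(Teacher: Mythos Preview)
Your proposal is correct and follows essentially the same approach as the paper: decompose $\cM_{\hat u,\alpha}$ into its good and bad parts, apply resilience \eqref{def:res_pca2} directly on the good part, use the tail form (Lemma~\ref{lem:deviate} with $\tilde\alpha=(2/7)\alpha$) on $\cT_{\hat u,\alpha}\cap S_{\rm good}$ to control the bad points that survived trimming, and then combine using $\alpha<\rho_2$ and $\alpha<c$ to reach the constant $4$. The only cosmetic difference is that the paper bounds the single quantity $|\langle\hat u,x_i\rangle^2-\sigma_{\hat u}^2|$ for bad points by $\max\{\tfrac{7\rho_2}{\alpha},1\}\sigma_{\hat u}^2$ and then invokes $\rho_2\ge\alpha$ to resolve the max, whereas you handle the upper and lower bounds on $\hat u^\top\Sigma(\cM_{\hat u,\alpha})\hat u$ separately (using $\langle\hat u,x_i\rangle^2\ge 0$ for the lower side); the arithmetic and the ideas are the same.
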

\begin{proof} 

We have
\begin{eqnarray}
    |\hat{u}^\top \Sigma \hat{u}-\hat{u}^\top \Sigma(\cM_{\hat{u}, \alpha}) \hat{u}| &=& \frac{|\sum_{i\in \cM_{\hat{u}, \alpha}}(\ip{\hat{u}}{x_i}^2-\sigma_{\hat{u}}^2)|}{(1-(2/3.5)\alpha)n}\nonumber\\
    &&\hspace*{-3cm}\leq   \frac{|\sum_{i\in \cM_{\hat{u}, \alpha}\cap S_{\rm good}}(\ip{\hat{u}}{x_i}^2-\sigma_{\hat{u}}^2)|}{(1-(2/3.5)\alpha)n} +  \frac{|\sum_{i\in \cM_{\hat{u}, \alpha}\cap S_{\rm good}}(\ip{\hat{u}}{x_i}^2-\sigma_{\hat{u}}^2)|}{(1-(2/3.5)\alpha)n}\label{eq:pcares1}
\end{eqnarray}
For $i\in \cM_{\hat{u}, \alpha}\cap S_{\rm bad}$, by Lemma~\ref{lem:deviate}, we have
\begin{eqnarray}
    |\ip{\hat{u}}{x_i}^2-\sigma_{\hat{u}}^2| &\leq & \max\left\{\frac{\sum_{i\in \cT_{\hat{u}, \alpha}\cap S_{\rm good}}(\ip{\hat{u}}{x_i}^2-\sigma_{\hat{u}}^2)}{|\cT_{\hat{u}, \alpha}\cap S_{\rm good}|}, \sigma_{\hat{u}}^2\right\}\nonumber\\
   &\leq & \frac{2\rho_2\sigma_{\hat{u}}^2}{(1/3.5)\alpha}\;,\label{eq:pcares2}
\end{eqnarray}
where in the last inequality, we applied our assumption that $\rho_2\geq \alpha$.

By the resilience property Eq.~\eqref{def:res_pca2} on $\cM_{\hat{u}, \alpha}\cap S_{\rm good}$, we also have
\begin{eqnarray}
    \frac{|\sum_{i\in \cM_{\hat{u}, \alpha}\cap S_{\rm good}}(\ip{\hat{u}}{x_i}^2-\sigma_{\hat{u}}^2)|}{|\cM_{\hat{u}, \alpha}\cap S_{\rm good}|}\leq \rho_2\sigma_{\hat{u}}^2\;.\label{eq:pcares3}
\end{eqnarray}

Plugging Eq.~\eqref{eq:pcares2} and \eqref{eq:pcares3} into \eqref{eq:pcares1}, we have
\begin{eqnarray*}
    |\hat{u}^\top \Sigma \hat{u}-\hat{u}^\top \Sigma(\cM_{\hat{u}, \alpha}) \hat{u}| \leq \frac{2\rho_2\sigma_{\hat{u}}^2+(1-(2/3.5)\alpha)\rho_2\sigma_{\hat{u}}^2}{1-(2/3.5)\alpha} \leq 4\rho_2\sigma_{\hat{u}}^2\;,
\end{eqnarray*}
for $\alpha\leq c$ small enough.
\end{proof}

This implies $|D_\Sigma(\hat{u})-D_S(\hat{u})|\leq 4\rho_2$ for an $((2/7)\alpha, \alpha, \rho_1, \rho_2)$-corrupt good set $S$.

Let $\mu(\cdot)$ denote the uniform measure on the unit sphere. By the fact that 
for any $0<r<2$, a cap of radius $r$ on the $(d-1)$-dimensional unit sphere ${\mathbb S}^{(d-1)}$ has measure at least $(1/2)(r/2)^{d-1}$
from, for example 
\cite[Fact~3.1]{kapralov2013differentially},  we have for some constant $c_2>0$ and $\rho_2\leq 1/8$, 
\begin{eqnarray}
    \mu(\{v\in \reals^d: v^\top \Sigma v\geq (1-4\rho_2)\|\Sigma\|, \|v\|=1\}) \geq \left(\cos^{-1}(1-4\rho_2) / 2\right)^{d-1} \geq e^{-c_2 d \log (1 / \rho_2)}\;.
\end{eqnarray}

By Lemma~\ref{lem:pca_robustness}, the choice of $\sens=80\rho_2/(\alpha n)$, we have 
\begin{eqnarray*}
&&\prob_{\hat{u}\sim r_{(\varepsilon, \Delta, S)}}\left(\|\Sigma\| -\hat{u}^\top \Sigma \hat{u}\leq 4\rho_2 \|\Sigma\| \right) \\
=&& \int_{\{v\in \reals^d: v^\top \Sigma v\geq (1-4\rho_2)\|\Sigma\|, \|v\|=1\}}  \,r_{(\varepsilon, \Delta, S)}(\hat{u}) \, d \hat \mu\\
\geq && {\rm Vol}(\{v\in \reals^d: v^\top \Sigma v\geq (1-4\rho_2)\|\Sigma\|, \|v\|=1\}) \min_{\hat \mu \in \{v\in \reals^d: v^\top \Sigma v\geq (1-4\rho_2)\|\Sigma\|, \|v\|=1\}} \,r_{(\varepsilon, \Delta, S)}(\hat{u}) \\
\geq && 
{\rm Vol} 
({\mathbb S}^{(d-1)})  \,\mu(\{v\in \reals^d: v^\top \Sigma v\geq (1-4\rho_2) \|\Sigma\|, \|v\|=1\}) 
\min_{\hat{u}\in \{v\in \reals^d: v^\top \Sigma v\geq (1-4\rho_2)\|\Sigma\|, \|v\|=1\}} r_{(\varepsilon, \Delta, S)}(\hat{u})\\
\geq && {\rm Vol}({\mathbb S}^{(d-1)})\,e^{-c_2d\log(1/\rho_2)} \frac{1}{Z} \exp \Big\{-\frac{\varepsilon}{4\sens} \max_{\|\hat u\|=1,4\rho_2\geq 1-\frac{\hat u^\top \Sigma \hat u}{\|\Sigma\|}} 
1-\frac{\hat u^\top \Sigma({\cal M}_{\hat u,\alpha}) \hat u} {\|\Sigma\| } \Big\} 
\\
\geq && {\rm Vol}({\mathbb S}^{(d-1)})\, e^{-c_2d\log(1/\rho_2)} \frac{1}{Z}\exp\Big\{-\frac{\alpha \varepsilon n}{40}\Big\}\;,
	\end{eqnarray*}
	and similarly, 
	\begin{eqnarray*}
		\prob_{\hat{u}\sim r_{(\varepsilon, \Delta, S)}}\left(\|\Sigma\|-\hat{u}^\top \Sigma \hat{u}\geq 20\rho_2 \|\Sigma\| \right) 
		&\leq& {\rm Vol}({\mathbb S}^{(d-1)})\, \max_{\hat{u}\in \{v\in \reals^d: v^\top \Sigma v\leq (1- 20\rho_2)\|\Sigma\|, \|v\|=1\}} r_{(\varepsilon, \Delta, S)}(\hat{u})\\
		&\leq & {\rm Vol}({\mathbb S}^{(d-1)})\, \frac{1}{Z}e^{-\varepsilon \alpha n( 20\rho_2-4\rho_2) \|\Sigma\| /(320\rho_2\|\Sigma\|)}	\\
		&\leq &{\rm Vol}({\mathbb S}^{(d-1)})\, \frac{1}{Z}\exp\Big\{-\frac{\alpha \varepsilon n}{20}\Big\}
	\end{eqnarray*}
	
	This implies
	\begin{eqnarray*}
		\log\left(\frac{\prob_{\hat{u}\sim r_{(\varepsilon, \Delta, S)}}\left(\lambda_1-\hat{u}^\top \Sigma \hat{u}\leq 4\rho_2\|\Sigma\|\right)}{\prob_{\hat{u}\sim r_{(\varepsilon, \Delta, S)}}\left(\lambda_1-\hat{u}^\top \Sigma \hat{u}  \geq  20 \rho_2\|\Sigma\|\right)}\right) \; \geq \; \frac{\varepsilon\alpha n}{40}-c_2d\log(1/\rho_2)\;.
	\end{eqnarray*}
	
	If we set $n=\Omega\left(\frac{\log(1/\zeta)+d\log(1/\rho_2)}{\varepsilon\alpha }\right)$, we get
	\begin{eqnarray*}
		\frac{\prob_{\hat{u}\sim r_{(\varepsilon, \Delta, S)}}\left(\lambda_1-\hat{u}^\top \Sigma \hat{u}\leq 4\rho_2\lambda_1\right)}{\prob_{\hat{u}\sim r_{(\varepsilon, \Delta, S)}}\left(\lambda_1-\hat{u}^\top \Sigma \hat{u}\geq 20 \rho_2\lambda_1\right)}\geq \frac{2}{\zeta}\;,
	\end{eqnarray*}
	which completes the proof.

\subsection{Step 3: Achievability guarantees}
\label{sec:pca3}

We provide utility guarantees for private PCA for sub-Gaussian and hypercontractive distributions.

\subsubsection{Sub-Gaussian distributions}

Using the resilience of sub-Gaussian distributions with respect to $(\mu=0,\Sigma)$ in 
 Lemma~\ref{lem:mean_subgaussian}, which is the same as the resilience properties we need for PCA in Definition \ref{def:resilience_pca},  Theorem~\ref{thm:robust_pca_utility} implies  the following corollary.

\begin{coro} 
    \label{coro:pca} 
    Under the hypothesis of Lemma~\ref{lem:mean_subgaussian} with $\mu=0$ and any PSD matrix $\Sigma\in\reals^{d\times d}$, there exist universal constants $c$ and $C>0$ such that for any $\alpha\in(0,c)$, a dataset of size 
     $$n \;=\; O \left(\frac{d+\log(1/\zeta)}{(\alpha\log(1/\alpha))^2} + \frac{\log(1/(\delta\zeta))+d\log(1/(\alpha\log(1/\alpha)))}{\varepsilon \alpha}       \right)   \;,$$ 
     and sensitivity of $\sens= O( \log(1/\alpha)/n)$ with large enough constants are sufficient for  $\HPTR (S)$ in Section~\ref{sec:pca1} for PCA
      with the choices of the distance function in Eq.~\eqref{score:pca}    
     to achieve 
     \begin{eqnarray} 1-\frac{\hat{u}^\top \Sigma \hat{u}}{\|\Sigma\|} \;\leq\; C\alpha\log(1/\alpha)  \;,
     \label{eq:pca_subgauss}
     \end{eqnarray}
     with  probability $1-\zeta$. 
     Further, the same guarantee holds even if $ \alpha$-fraction of the samples are arbitrarily corrupted as in Assumption~\ref{asmp:mean}. 
\end{coro}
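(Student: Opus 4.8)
The plan is to obtain Corollary~\ref{coro:pca} as an immediate consequence of the generic PCA utility guarantee (Theorems~\ref{thm:pca_utility} and \ref{thm:robust_pca_utility}) together with the sub-Gaussian resilience bound of Lemma~\ref{lem:mean_subgaussian}. The first observation is purely definitional: when $\mu=0$ we have $\mu_v=\langle v,\mu\rangle=0$ for every $v$, so the two resilience conditions \eqref{def:res_pca1}--\eqref{def:res_pca2} required by Definition~\ref{def:resilience_pca} are exactly the two conditions \eqref{def:rho1}--\eqref{def:rho2} of the mean-estimation resilience in Definition~\ref{def:resilience}. Hence Lemma~\ref{lem:mean_subgaussian} (whose hypotheses we are assuming, including the sub-Gaussian proxy bound $\Gamma\preceq c_1\Sigma$; when $\Sigma$ is rank-deficient one works inside its range) applies verbatim with $\mu=0$: for $n\geq c_2((d+\log(1/\zeta))/(\alpha\log(1/\alpha))^2)$, with probability $1-\zeta$ the sample set $S$ is $(\alpha,\rho_1,\rho_2)$-resilient with respect to $\Sigma$ for $\rho_1=c_3\alpha\sqrt{\log(1/\alpha)}$ and $\rho_2=c_3\alpha\log(1/\alpha)$.

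Next I would check the hypotheses of Theorem~\ref{thm:pca_utility}, which only require $\alpha<\rho_2<c$. Since $\rho_2/\alpha=c_3\log(1/\alpha)>1$ and $\rho_2=c_3\alpha\log(1/\alpha)\to 0$ as $\alpha\to 0$, both inequalities hold once $\alpha$ is below a universal constant; note that, unlike the mean-estimation analysis, the PCA utility bound does not involve $\rho_1$ at all, so no condition of the form $\rho_1^2\leq c\alpha$ is needed here. Invoking Theorem~\ref{thm:pca_utility} with this $\rho_2$ then gives: with sensitivity $\sens=80\rho_2/(\alpha n)=O(\log(1/\alpha)/n)$ and sample size $n\geq C(\log(1/(\delta\zeta))+d\log(1/\rho_2))/(\varepsilon\alpha)$, HPTR achieves $1-\hat u^\top\Sigma\hat u/\|\Sigma\|\leq 20\rho_2=20c_3\alpha\log(1/\alpha)$ with probability $1-\zeta$. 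Substituting $\rho_2=c_3\alpha\log(1/\alpha)$ we have $\log(1/\rho_2)=O(\log(1/(\alpha\log(1/\alpha))))$, and taking the maximum of this sample-size requirement with the resilience requirement of Lemma~\ref{lem:mean_subgaussian} yields the stated bound on $n$; a union bound over the resilience event and the success event of the mechanism (rescaling $\zeta$ by a constant) gives the claimed $1-\zeta$ probability, and setting $C=20c_3$ yields \eqref{eq:pca_subgauss}.

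For the robustness statement I would repeat the argument using Theorem~\ref{thm:robust_pca_utility} in place of Theorem~\ref{thm:pca_utility}: by Lemma~\ref{lem:mean_subgaussian} the uncorrupted part $S_{\rm good}$ of the i.i.d.\ dataset is $(\alpha,\rho_1,\rho_2)$-resilient with respect to $\Sigma$ (with the resilience level taken a constant factor larger so as to absorb the $(2/7)$ factor in the corruption model), so the corrupted dataset $S$ is $((2/7)\alpha,\alpha,\rho_1,\rho_2)$-corrupt good and Theorem~\ref{thm:robust_pca_utility} delivers the same error bound and sample complexity. The argument contains no genuinely hard step --- all of the work is already carried out in Theorems~\ref{thm:pca_utility}--\ref{thm:robust_pca_utility} and Lemma~\ref{lem:mean_subgaussian}; the only points requiring care are the probability bookkeeping (combining the deterministic resilience event with the mechanism's internal randomness) and confirming that the sub-Gaussian resilience parameters satisfy the mild ordering $\alpha<\rho_2<c$ demanded by the utility theorem.
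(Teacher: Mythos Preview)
Your proposal is correct and follows essentially the same approach as the paper: the paper simply states that the PCA resilience in Definition~\ref{def:resilience_pca} coincides with the mean-estimation resilience of Definition~\ref{def:resilience} when $\mu=0$, invokes Lemma~\ref{lem:mean_subgaussian} to obtain $\rho_1=O(\alpha\sqrt{\log(1/\alpha)})$ and $\rho_2=O(\alpha\log(1/\alpha))$, and then applies Theorem~\ref{thm:robust_pca_utility}. Your write-up is more explicit about verifying the hypothesis $\alpha<\rho_2<c$ and about the probability bookkeeping, but the argument is the same.
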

 The error bound is near-optimal under $\alpha$-corruption, matching a lower bound up to a factor of $O(\log(1/\alpha))$. HPTR is the first estimator that guarantees $(\varepsilon,\delta)$-DP and also achieves the  robust error rate of $ 1- \hat u^\top \Sigma \hat u /\|\Sigma\| = O(\alpha\log(1/\alpha))$, nearly matching the information theoretic lower bound of $ 1- \hat u^\top \Sigma \hat u /\|\Sigma\|=\Omega(\alpha)$. This lower  bound, which can be easily constructed using ${\cal N}(0,{\bf I}+\alpha e_1e_1^\top)$ and ${\cal N}(0,{\bf I}+\alpha e_2e_2^\top)$, holds for any estimator that is not necessarily private and regardless of how many samples are available.  If privacy is not required,  near-optimal robust error rate can be achieved by outlier-robust PCA approaches in \cite{kong2020robust,jambulapati2020robust}. 

 The sample complexity is near-optimal, matching a lower bound up to a factor of $O(\log(1/\alpha))$ when  $\delta=e^{-\Theta(d)}$. Even for DP PCA without corrupted samples, HPTR is the first estimator for sub-Gaussian distributions to nearly match the information-theoretic lower bound of $n = \Omega(d/(\alpha\log(1/\alpha))^2 + \min\{d,\log((1-e^{-\varepsilon})/\delta)\}/(\varepsilon\alpha\log(1/\alpha)))$ to achieve the error in Eq.~\eqref{eq:pca_subgauss}. 
 The first term is unavoidable as even without DP and robustness, when the data comes from a Gaussian distribution, estimating the principal component up to error $\alpha\log(1/\alpha)$ requires $\Omega(d/(\alpha \log(1/\alpha))^2)$ samples (Proposition~\ref{prop:lowerbound_pca}). 
 The second term in the lower bound follows from Proposition~\ref{thm:lowerbound_pca_subgaussian}, which matches the second term  in the upper bound up to a factor of $O(\log(1/\alpha))$ when $\delta=e^{-\Theta(d)}$ and $\varepsilon>0$.  
 Existing DP PCA approaches from \cite{PPCA, kapralov2013differentially,dwork2014analyze} are designed for arbitrary samples not  necessarily drawn i.i.d., and hence require a larger samples size of $n=\tilde O(d/\alpha^2 + d^{1.5}\sqrt{\log(1/\delta)}/(\alpha\varepsilon))$ i.i.d.~samples from a Gaussian distribution to achieve the guarantee in Eq.~\eqref{eq:pca_subgauss}, where $\tilde O$ hides polylogarithmic terms in $1/\alpha$ and $1/\zeta$.
 
 \medskip 
 \noindent
 {\bf Remark.} 
Rank-$k$ PCA under $\alpha$-corruption from a Gaussian dataset is of great practical interest. 
An outlier-robust PCA algorithm in \cite[Appendix D]{kong2020robust} outputs an  orthonormal matrix $\hat U\in\reals^{d\times k}$ achieving 
\begin{eqnarray*}
    {\rm Tr}(U_k^\top\Sigma U_k) - {\rm Tr}(\hat U^\top \Sigma \hat U) \;=\; O\big(\,\alpha {\rm Tr}(U_k^\top \Sigma U_k) + \nu k^{1/2} \alpha\log(1/\alpha)\, \big) \;,
\end{eqnarray*}
where $U_k\in\arg\max_{U^\top U={\bf I}_{k\times k} } U^\top \Sigma U$ and $\nu^2= \max_{V\in\reals^{d\times d}, \|V\|_F=1, V=V^\top, {\rm rank}(V)\leq k} \langle V,\Sigma V\Sigma \rangle $.
It is a promising direction to design a DP rank-$k$ PCA algorithm by applying the HPTR framework that can achieve a similar error rate.  It is not immediate how to 
design an appropriate score function for general rank $k$, and 
a simple technique of peeling off rank-one components one-by-one (using the rank-one PCA with HPTR) will not achieve the target error bound.


\begin{propo}[Lower bound for private sub-Gaussian PCA]
\label{thm:lowerbound_pca_subgaussian}
	Let $\cP_{\Sigma}$ be the set of zero-mean sub-Gaussian distributions with covariance $\Sigma\in \reals^{d\times d}$.  Let  $\cM_{\varepsilon,\delta}$ be a class of $(\varepsilon, \delta)$-DP, $d$-dimensional estimators of the top principal component of $\Sigma$  using $n$ i.i.d.~samples from $P\in{\cal P}_{\Sigma}$.
	Then, for $\varepsilon\in(0,10)$,  there exists a universal constant $c>0$ such that 
	\begin{eqnarray*}
	\inf_{\hat{u}\in \cM_{\varepsilon,\delta}}
	\sup_{\Sigma\succ 0 ,P\in \cP_{ \Sigma}}\E_{S\sim  P^n}\left[1-\frac{\hat{u}(S)^\top\Sigma\hat{u}(S)}{\|\Sigma\|}\right]
	\; \geq\; c \cdot 
	\min\left\{\frac{d\wedge \log((1-e^{-\varepsilon})/\delta)}{n\varepsilon}, 1\right\}\;.
	\end{eqnarray*}
	
\end{propo}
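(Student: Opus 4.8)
The approach mirrors the private packing (Fano-type) argument used in the proof of Proposition~\ref{thm:lowerbound_mean_hypercontractive}, instantiated with a spiked-covariance family. I would keep a free parameter $\alpha\in(0,1/4)$ to be optimized at the end, and build a family $\{P_v\}_{v\in\cV}$ of zero-mean sub-Gaussian distributions indexed by a large packing $\cV$ of unit vectors such that: (i) $d_{\rm TV}(P_v,P_{v'})=\alpha$ for all distinct $v,v'$; (ii) the top eigenvector of the covariance $\Sigma_v$ of $P_v$ is $v$ and $\|\Sigma_v\|=1$; and (iii) any two packing directions are far apart in the \emph{projective} pseudometric $\rho(u,w):=\sqrt{1-\langle u,w\rangle^2}$, which is the metric actually governing the PCA loss (so that $\hat u$ and $-\hat u$ are treated as equal). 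The whole computation after the construction is identical in structure to the bookkeeping already carried out for Proposition~\ref{thm:lowerbound_mean_hypercontractive}, valid for $\varepsilon\in(0,10)$.

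For the packing I would take $\cV=\{\tfrac{1}{\sqrt2}(1,w):w\in\cW\}\subset\mathbb{S}^{d-1}$, where $\cW$ is a $\tfrac12$-separated packing of $\mathbb{S}^{d-2}$ of cardinality $|\cW|=2^{\Omega(d)}$ as furnished by \cite[Lemma~6]{acharya2021differentially}; for any $v=\tfrac{1}{\sqrt2}(1,w),\,v'=\tfrac{1}{\sqrt2}(1,w')$ one computes $\langle v,v'\rangle=1-\tfrac14\|w-w'\|^2\in[0,15/16]$, so $\rho(v,v')\ge 1/3$ and $\cV$ is a $2t$-packing with $t=1/6$ (and no two members are antipodal). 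For the distributions I would use the atomic mixture $P_v=(1-\alpha)\cN(0,\mathbf{I}_{d\times d})+\tfrac{\alpha}{2}\delta_{v}+\tfrac{\alpha}{2}\delta_{-v}$, which has mean $0$ and covariance $\Sigma_v=(1-\alpha)\mathbf{I}_{d\times d}+\alpha\,vv^\top$, hence top eigenvector $v$ and $\|\Sigma_v\|=1$. It is $1$-sub-Gaussian because for any unit $u$,
\begin{eqnarray*}
\E_{X\sim P_v}\!\big[e^{t\langle u,X\rangle}\big] \;=\; (1-\alpha)e^{t^2/2}+\alpha\cosh\!\big(t\langle u,v\rangle\big) \;\le\; (1-\alpha)e^{t^2/2}+\alpha\,e^{t^2/2} \;=\; e^{t^2/2}\;,
\end{eqnarray*}
so $P_v\in\cP_{\Sigma_v}$; and since the three atomic parts of $P_v$ and $P_{v'}$ differ only in two disjoint pairs of points, $d_{\rm TV}(P_v,P_{v'})=\alpha\cdot d_{\rm TV}\!\big(\tfrac12\delta_v+\tfrac12\delta_{-v},\,\tfrac12\delta_{v'}+\tfrac12\delta_{-v'}\big)=\alpha$ exactly.

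Next I would run the estimation-to-testing reduction exactly as in Proposition~\ref{thm:lowerbound_mean_hypercontractive}: for an arbitrary $(\varepsilon,\delta)$-DP estimator $\hat u$, apply the packing lemma (Lemma~\ref{thm:packing}, i.e.\ \cite[Theorem~3]{barber2014privacy}) with parameter map $\theta(P_v)=v$, pseudometric $\rho$, radius $t=1/6$, and $p=\alpha$, to get
\begin{eqnarray*}
\frac{1}{|\cV|}\sum_{v\in\cV}\prob_{S\sim P_v^n}\!\big(\rho(\hat u(S),v)\ge t\big) \;\ge\; \frac{(|\cV|-1)\big(\tfrac12 e^{-\varepsilon\lceil n\alpha\rceil}-\tfrac{\delta}{1-e^{-\varepsilon}}\big)}{1+(|\cV|-1)e^{-\varepsilon\lceil n\alpha\rceil}}\;.
\end{eqnarray*}
Because $1-\hat u^\top\Sigma_v\hat u/\|\Sigma_v\|=1-\big((1-\alpha)+\alpha\langle\hat u,v\rangle^2\big)=\alpha\,\rho(\hat u,v)^2$, on the event $\{\rho(\hat u,v)\ge t\}$ the PCA loss is at least $\alpha t^2$, hence
\begin{eqnarray*}
\sup_{\Sigma\succ0,\,P\in\cP_\Sigma}\E_{S\sim P^n}\!\Big[1-\frac{\hat u(S)^\top\Sigma\hat u(S)}{\|\Sigma\|}\Big] \;\ge\; \alpha t^2\cdot\frac{(|\cV|-1)\big(\tfrac12 e^{-\varepsilon\lceil n\alpha\rceil}-\tfrac{\delta}{1-e^{-\varepsilon}}\big)}{1+(|\cV|-1)e^{-\varepsilon\lceil n\alpha\rceil}}\;.
\end{eqnarray*}
Finally I would tune $\alpha$: writing $|\cV|-1\ge e^{c_0 d}$ for a universal $c_0>0$ and choosing, as in Proposition~\ref{thm:lowerbound_mean_hypercontractive},
\begin{eqnarray*}
\alpha \;=\; \min\!\Big\{\tfrac14,\ \tfrac{1}{n\varepsilon}\min\big\{\,c_0 d-\varepsilon,\ \log\!\big(\tfrac{1-e^{-\varepsilon}}{4\delta e^{\varepsilon}}\big)\big\}\Big\}\;,
\end{eqnarray*}
makes $(|\cV|-1)e^{-\varepsilon\lceil n\alpha\rceil}=\Theta(1)$ and forces the $\delta$-term to be at most half of the leading term, so the fraction above is $\Omega(1)$ and the bound becomes $\Omega(\alpha)=\Omega\big(\min\{1,\,(d\wedge\log((1-e^{-\varepsilon})/\delta))/(n\varepsilon)\}\big)$, which is the claim; the regime where the inner $\min$ already exceeds a constant is absorbed by the clip at $1/4$. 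The only non-mechanical points are (a) verifying that the loss is controlled by the sign-invariant $\rho$ and that the shifted packing $\cV$ is $\rho$-separated, and (b) the $\lceil n\alpha\rceil$-versus-$n\alpha$ and $1/(1-e^{-\varepsilon})$ constant accounting; neither is a genuine obstacle since both are identical to the computation in the proof of Proposition~\ref{thm:lowerbound_mean_hypercontractive}, and the one modeling check worth flagging is that the atomic mixture above genuinely lies in $\cP_{\Sigma_v}$, which the displayed MGF bound settles.
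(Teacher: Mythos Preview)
Your proof is correct and follows the same overall strategy as the paper: build a spiked-covariance packing family indexed by well-separated unit vectors and invoke the private Fano bound of Lemma~\ref{thm:packing}, then tune $\alpha$ exactly as in Proposition~\ref{thm:lowerbound_mean_hypercontractive}.

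There are two technical differences worth noting. First, the paper uses pure Gaussians $P_v=\cN(0,\mathbf{I}+\alpha vv^\top)$ and bounds $d_{\rm TV}(P_v,P_{v'})=O(\alpha)$ via \cite[Lemma~2.9]{KLSU19}, whereas you use the mixture $(1-\alpha)\cN(0,\mathbf{I})+\tfrac{\alpha}{2}\delta_v+\tfrac{\alpha}{2}\delta_{-v}$ to get $d_{\rm TV}=\alpha$ exactly; this matches the hypothesis of Lemma~\ref{thm:packing} on the nose and avoids the extra Gaussian-TV lemma. Second, you explicitly handle the sign ambiguity of PCA by working with the projective pseudometric $\rho(u,w)=\sqrt{1-\langle u,w\rangle^2}=\tfrac{1}{\sqrt2}\|uu^\top-ww^\top\|_F$ (which is a genuine metric, so the $2t$-packing property applies) and by lifting the packing to $\cV=\{\tfrac{1}{\sqrt2}(1,w)\}$ so that no two elements are antipodal and $\rho(v,v')\ge 1/3$. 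The paper's write-up is looser on this point: it uses the raw packing with $\|v-v'\|\ge 1/2$ and computes $D_{\Sigma_{v'}}(v)>\alpha/12$ directly, implicitly relying on the same projective separation. Your treatment is cleaner and makes the reduction to testing fully rigorous without changing the final rate.
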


\begin{proof}
    We adopt the same proof strategy as the proof of Proposition~\ref{thm:lowerbound_mean_hypercontractive} for mean estimation. By {\cite[Lemma~6]{acharya2021differentially}}, there exists a finite index set $\cV\subset \reals^d$ with cardinality $|\cV|=2^{\Omega(d)}$, $\|v\|=1$ for all $v\in \cV$ and $\|v-v'\|\geq 1/2$ for all $v\neq v'\in \cV$. 
    For each $v\in \cV$, we define $\Sigma_v :=\mathbf{I}_{d \times d}+\alpha vv^\top$ and $P_v:=\cN(0, \Sigma_v)$ for some $\alpha\in (0,1/2)$. It is easy to see that $\mathbf{I}_{d \times d} \preceq\Sigma_v\preceq 3\mathbf{I}_{d \times d}/2$ and the top eigenvector of $\Sigma_v$ is $v$. For $v\neq v'\in \cV$, we know $\|\Sigma_{v'}^{-1/2}\Sigma_v\Sigma_{v'}^{-1/2}-\mathbf{I}_{d \times d}\|_F=O(\alpha)$. By {\cite[Lemma~2.9]{KLSU19}}, this implies $d_{\rm TV}(\cN(0, \Sigma_v), \cN(0, \Sigma_v'))=O(\alpha)$.
    
    
    Since $\|v-v'\|\geq 1/2$, we have
    \begin{eqnarray*}
        D_{\Sigma_{v'}}(v) = 1-\frac{v^\top \Sigma_v'v}{\|\Sigma_{v'}\|} = 1-\frac{1+\alpha\ip{v}{v'}^2}{1+\alpha}\geq \frac{\alpha}{8(1+\alpha)}> \frac{\alpha}{12}\;.
    \end{eqnarray*}
    
    The principal component estimation problem can be reduced to a testing problem  with this packing $\cV$. For $(\varepsilon, \delta)$-DP estimator $\hat{u}$, using Lemma~\ref{thm:packing}, let $t=\frac{\alpha^{1-2/k}}{12}$, we have
\begin{eqnarray*}
	\sup_{P\in \cP_\Sigma}\E_{S \sim P^n}[D_{\Sigma}(\hat{u})]
		&\geq & \frac{1}{|\cV|}\sum_{v\in \cV}\E_{S \sim P_v^n}[D_{\Sigma_v}(\hat{u})]\\
		&= &\frac{1}{|\cV|}\sum_{v\in \cV}P_{v}\left(D_{\Sigma_v}(\hat{u})\geq t\right)\\
		&\gtrsim &t \frac{e^{d/2} \cdot\left(\frac{1}{2} e^{-\varepsilon\lceil n \alpha\rceil}-\frac{\delta}{1-e^{-\varepsilon}}\right)}{1+e^{d/2} e^{-\varepsilon\lceil n \alpha\rceil}}\;,
\end{eqnarray*}
where the last inequality follows from the fact that $d\geq 2$. 
The rest of the proof follows from  {\cite[Proposition 4]{barber2014privacy}}. We choose 
\begin{eqnarray*}
	\alpha =\frac{1}{n \varepsilon} \min \left\{\frac{d}{2}-\varepsilon, \log \left(\frac{1-e^{-\varepsilon}}{4 \delta e^{\varepsilon}}\right)\right\}
\end{eqnarray*}
 so that 
 \begin{eqnarray*}
 	\sup_{P\in \cP_\Sigma}\E_{S\sim P^n}[D_{\Sigma_v}(\hat{u})] \gtrsim  \alpha \;.
 \end{eqnarray*}
 This implies, for $t=\alpha/12$ and  $\varepsilon\in(0,10)$, 
 \begin{eqnarray*}
 	\inf_{\hat{u}\in \cM_{\varepsilon, \delta}}\sup_{\Sigma\succ 0 ,P\in \cP_\Sigma }\E_{S \sim P^n}[D_{\Sigma}(\hat{u})]
 	\;\gtrsim \; \min\left\{\frac{d\wedge \log((1-e^{-\varepsilon})/\delta)}{n\varepsilon}, 1\right\}\;,
 \end{eqnarray*}
which completes the proof.
\end{proof}
It is well known that even for Gaussian distribution, learning the principal component up to error $\alpha$ requires $\Omega(d/\alpha^2)$. We provides a lower bound proof here for completeness.
\begin{propo}[Sample Complexity Lower bound for PCA]
\label{prop:lowerbound_pca}
	Let $\cP_{\Sigma}$ be the set of zero-mean Gaussian distributions with covariance $\Sigma\in \reals^{d\times d}$.  Let $\cM_{d}$ be the class of estimators of the $d$-dimensional top principal component of $\Sigma$ using $n$ i.i.d.~samples from $P\in{\cal P}_{\Sigma}$. There exists a universal constant $c>0$ such that 
	\begin{eqnarray*}
	\inf_{\hat{u}\in \cM_{d}}
	\sup_{\Sigma\succ 0 ,P\in \cP_{ \Sigma}}\E_{S\sim  P^n}\left[1-\frac{\hat{u}(S)^\top\Sigma\hat{u}(S)}{\|\Sigma\|}\right]
	\; \geq\; c \cdot 
	\min\left\{\sqrt{\frac{d}{n}}, 1\right\}\;.
	\end{eqnarray*}
\end{propo}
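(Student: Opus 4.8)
The plan is to reduce the PCA estimation problem to a testing problem via a packing argument, exactly in the spirit of the standard (non-private) minimax lower bound for estimating a planted spike, and then invoke a Fano- or Le Cam-type inequality. First I would construct a well-separated family of covariance matrices. Using the packing lemma referenced earlier in the excerpt (\cite[Lemma 6]{acharya2021differentially}), fix a finite set $\mathcal{V}\subset\reals^d$ with $|\mathcal V|=2^{\Omega(d)}$, all unit vectors, and pairwise distances $\|v-v'\|\ge 1/2$. For a parameter $\gamma\in(0,1/2)$ to be chosen, set $\Sigma_v:=\mathbf I_{d\times d}+\gamma vv^\top$ and $P_v:=\cN(0,\Sigma_v)$. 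As in the proof of Proposition~\ref{thm:lowerbound_pca_subgaussian}, the top eigenvector of $\Sigma_v$ is $v$, and for $v\neq v'$ one has $D_{\Sigma_{v'}}(v)=1-\frac{1+\gamma\langle v,v'\rangle^2}{1+\gamma}\gtrsim\gamma$, so the family is $\Omega(\gamma)$-separated in the loss.

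Next I would control the statistical closeness of the $P_v$'s as product measures over $n$ samples. The key quantity is the KL divergence $D_{\rm KL}(P_v^n\,\|\,P_{v'}^n)=n\,D_{\rm KL}(\cN(0,\Sigma_v)\,\|\,\cN(0,\Sigma_{v'}))$; a direct computation for two rank-one spiked identities gives $D_{\rm KL}(\cN(0,\Sigma_v)\,\|\,\cN(0,\Sigma_{v'}))=O(\gamma^2)$ uniformly over $v,v'\in\mathcal V$ (the spectra coincide, so only the eigenvector misalignment contributes, and that enters quadratically in $\gamma$). Hence $D_{\rm KL}(P_v^n\,\|\,P_{v'}^n)=O(n\gamma^2)$. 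By Fano's inequality, any estimator $\hat u$ that, run on samples from $P_v$, outputs a direction closer to $v$ than to every other $v'$ with probability exceeding $2/3$ would yield a test distinguishing the $P_v$'s with small error, which forces $n\gamma^2=\Omega(\log|\mathcal V|)=\Omega(d)$. Therefore, if $\gamma=c\sqrt{d/n}$ for a small enough constant $c$, a constant fraction of the indices $v$ satisfy $P_v\big(D_{\Sigma_v}(\hat u)\ge c'\gamma\big)\ge \mathrm{const}$, and averaging over $\mathcal V$ gives
\begin{eqnarray*}
\inf_{\hat u\in\cM_d}\ \sup_{\Sigma\succ 0,\,P\in\cP_\Sigma}\ \E_{S\sim P^n}\!\left[1-\frac{\hat u(S)^\top\Sigma\hat u(S)}{\|\Sigma\|}\right]\ \gtrsim\ \gamma\ \asymp\ \sqrt{\frac{d}{n}}\;,
\end{eqnarray*}
and when $d/n\gtrsim 1$ the loss is trivially $\Omega(1)$ (take $\gamma$ a constant), which yields the claimed $\min\{\sqrt{d/n},1\}$ bound.

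The main obstacle is making sure all the reductions are tight simultaneously: the separation in the loss, the KL bound, and the packing cardinality must line up so that choosing $\gamma\asymp\sqrt{d/n}$ is exactly the critical scaling — in particular verifying $D_{\rm KL}(\cN(0,\Sigma_v)\,\|\,\cN(0,\Sigma_{v'}))=O(\gamma^2)$ rather than $O(\gamma)$ is what produces the $\sqrt{d/n}$ rate as opposed to a weaker $d/n$ rate, and it relies on the fact that $\Sigma_v$ and $\Sigma_{v'}$ are isospectral so the leading-order (linear in $\gamma$) terms cancel. I would carry out that Gaussian KL computation carefully (using $\Sigma_{v'}^{-1}=\mathbf I-\frac{\gamma}{1+\gamma}v'v'^\top$ and the identity $D_{\rm KL}=\frac12[\Tr(\Sigma_{v'}^{-1}\Sigma_v)-d+\log\frac{|\Sigma_{v'}|}{|\Sigma_v|}]$, noting $|\Sigma_v|=|\Sigma_{v'}|=1+\gamma$), and otherwise the argument is routine. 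Since the statement is only claimed up to a universal constant $c$, no constants need to be optimized.
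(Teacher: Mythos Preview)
Your proposal is correct and takes a genuinely different route from the paper's own proof. You work directly on the eigenvector: build a $2^{\Omega(d)}$-packing of spiked covariances $\Sigma_v=\mathbf I+\gamma vv^\top$, verify $D_{\rm KL}(\cN(0,\Sigma_v)\|\cN(0,\Sigma_{v'}))=\frac{\gamma^2(1-\langle v,v'\rangle^2)}{2(1+\gamma)}=O(\gamma^2)$ (the log-determinant term vanishes because the $\Sigma_v$ are isospectral, which is exactly the cancellation you flagged), and apply Fano to force $n\gamma^2\gtrsim d$. The paper instead argues via the top \emph{eigenvalue}: it invokes Proposition~7.1 of \cite{diakonikolas2017statistical}, which says that $\cN(0,\mathbf I)$ cannot be distinguished from $\cN(0,\mathbf I+\alpha vv^\top)$ for an unknown direction $v$ unless $n\gtrsim d/\chi^2(\cN(0,1+\alpha),\cN(0,1))\asymp d/\alpha^2$; since the two hypotheses have $\|\Sigma\|=1$ versus $1+\alpha$, this yields a lower bound on estimating $\|\Sigma\|$, and the paper then observes that any principal-component estimator $\hat u$ can be converted into an eigenvalue estimator by computing $\hat u^\top\widehat\Sigma\hat u$ on a fresh batch of samples. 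Your argument is more self-contained (no black-box citation, no auxiliary reduction, no second batch of samples) and mirrors the packing structure used in the paper's private lower bounds (Propositions~\ref{thm:lowerbound_pca_subgaussian} and~\ref{thm:lowerbound_pca_hyper}); the paper's argument is shorter once the cited proposition is granted, and has the mild conceptual advantage of showing that even estimating the scalar $\|\Sigma\|$ already requires $\Omega(d/\alpha^2)$ samples.
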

\begin{proof}

The following proposition will help us prove a minimax lower bound on estimating $\|\Sigma\|$. Let us first define some notations.
\begin{definition}[Definition 3.1 in~\cite{diakonikolas2017statistical}]
For a distribution A on the real line with probability density function $A(x)$ and a unit vector $v \in\reals^d$, consider the distribution over $\reals^n$ with probability density function
$P_v(x) = A(v^\top x) \exp(-\|x - (v^\top x) v\|_2^2/2)\cdot (2\pi)^{-(d-1)/2}$
\end{definition}

\begin{propo}[Proposition 7.1 in~\cite{diakonikolas2017statistical}]\label{prop:op-lowerbound} Let $A$ be a distribution on $\reals$ such that $A$ has mean $0$ and $\chi^2
(A, N(0, 1))$ is finite. Then, there is no algorithm that, for any $d$, given $n < d/(8\chi^2
(A, N(0, 1)))$ samples from a distribution $D$ over
$\reals^d$ which is either $N(0, I)$ or $P_v$, for some unit vector $v \in \reals^d$, correctly distinguishes between the two cases with probability at least $2/3$.
\end{propo}
To apply Proposition~\ref{prop:op-lowerbound}, let $A$ be Gaussian distribution $\cN(0, 1+\alpha)$. Through simple calculation, it can be shown that 
$\chi^2(\cN(0,1), \cN(0,1+\alpha))=\frac{1}{\sqrt{1-\alpha^2}}-1\le \alpha^2$ whenever $\alpha^2\le 1/2$. Then for the first case in Proposition~\ref{prop:op-lowerbound}, $\|\Sigma\|=\|I\|=1$, the second case has $\|\Sigma\| = 1+\alpha$, and Proposition~\ref{prop:op-lowerbound} implies there exists absolute constant $c$ such that
\begin{eqnarray*}
	\inf_{\hat{\lambda}}
	\sup_{\Sigma\succ 0 ,P\in \cP_{ \Sigma}}\E_{S\sim  P^n}\left[1-\frac{\hat\lambda(S)}{\|\Sigma\|}\right]
	\; \geq\; c \cdot 
	\min\left\{\sqrt{\frac{d}{n}}, 1\right\}\;.
\end{eqnarray*}
Since we can turn a principal component estimator $u(S)$ into an estimator of $\|\Sigma\|$ through $n$ additional fresh samples to estimate ${u}(S)^\top\Sigma{u}(S)$ up to a minor multiplicative error $O(1/\sqrt{n})$. This implies there exists a universal constant $c>0$ such that 
	\begin{eqnarray*}
	\inf_{\hat{u}\in \cM_{d}}
	\sup_{\Sigma\succ 0 ,P\in \cP_{ \Sigma}}\E_{S\sim  P^n}\left[1-\frac{\hat{u}(S)^\top\Sigma\hat{u}(S)}{\|\Sigma\|}\right]
	\; \geq\; c \cdot 
	\min\left\{\sqrt{\frac{d}{n}}, 1\right\}\;.
	\end{eqnarray*}
	\end{proof}
\subsubsection{Hypercontractive distributions}
In this section, we apply our results on hypercontractive distributions in Definition~\ref{def:hyper}. Using the resilience of hypercontractive distributions with respect to $(\mu=0,\Sigma)$ in 
 Lemma~\ref{lem:mean_kmoment}, which is the same as the resilience properties we need for PCA in Definition \ref{def:resilience_pca},  Theorem~\ref{thm:robust_pca_utility} implies  the following corollary.

\begin{coro} 
    \label{coro:pca_hyper} 
    Under the hypothesis of Lemma~\ref{lem:mean_kmoment} with $k\geq 3$, $\mu=0$ and any PSD matrix $\Sigma\in\reals^{d\times d}$, there exist universal constants $c$ and $C>0$ such that for any $\alpha\in(0,c)$, a dataset of size 
     $$n \;=\; O \left(\frac{d}{\zeta^{2(1-1/k)}\alpha^{2(1-1/k)}} + \frac{k^2\alpha^{2-2/k} d\log d}{\zeta^{2-4/k}\kappa^2} + \frac{\kappa^2 d\log d}{\alpha^{2/k}} + \frac{\log(1/(\delta\zeta))+d\log(1/\alpha^{1-2/k})}{\varepsilon \alpha}       \right)   \;,$$ 
     and sensitivity of $\sens= O( \alpha^{1-2/k}/n)$ with large enough constants are sufficient for  $\HPTR (S)$ in Section~\ref{sec:pca1} for PCA
      with the choices of the distance function in Eq.~\eqref{score:pca}    
     to achieve 
     \begin{eqnarray} 1-\frac{\hat{u}^\top \Sigma \hat{u}}{\|\Sigma\|} \;\leq\; C\alpha^{1-2/k} \;,
     \label{eq:pca_hyper}
     \end{eqnarray}
     with  probability $1-\zeta$. 
     Further, the same guarantee holds even if $ \alpha$-fraction of the samples are arbitrarily corrupted as in Assumption~\ref{asmp:mean}. 
\end{coro}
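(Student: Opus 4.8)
Step~1 (the score function of Eq.~\eqref{score:pca}) and Step~2 (the generic utility analysis under resilience of Theorem~\ref{thm:robust_pca_utility}) are already in place, so the plan is to carry out only Step~3: instantiate the resilience parameters for a zero-mean $(\kappa,k)$-hypercontractive distribution and feed them into Theorem~\ref{thm:robust_pca_utility}. The first observation is that the two inequalities required for PCA, Eqs.~\eqref{def:res_pca1}--\eqref{def:res_pca2} in Definition~\ref{def:resilience_pca}, are exactly the first- and second-order mean-resilience conditions of Definition~\ref{def:resilience} specialized to $\mu=0$. Hence Lemma~\ref{lem:mean_kmoment} applies verbatim: fixing the free constant there to $c_3 = 2/7$, it guarantees that with probability $1-\zeta$ the i.i.d.\ sample $S$ is $\big((2/7)\alpha,\alpha,\rho_1,\rho_2\big)$-corrupt good with respect to $(0,\Sigma)$ with $\rho_1 = \Theta\!\big(k\kappa\,\alpha^{1-1/k}\zeta^{-1/k}\big)$ and $\rho_2 = \Theta\!\big(k^2\kappa^2\,\alpha^{1-2/k}\zeta^{-2/k}\big)$, provided $n$ exceeds the three resilience terms displayed in the statement.

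Next I would verify the hypotheses of Theorem~\ref{thm:robust_pca_utility}, namely $\alpha < \rho_2 < c$. Since $k \ge 3$ gives $1 - 2/k \in [1/3,1)$, we have $\alpha^{1-2/k} \gg \alpha$, so $\rho_2 \ge \alpha$ holds once $\alpha$ is below a threshold depending on $k,\kappa,\zeta$; the same smallness makes $\rho_2 < c$. With these checked, Theorem~\ref{thm:robust_pca_utility} (with the HPTR variant of Section~\ref{sec:pca1}, whose exponential mechanism needs no support restriction) yields $1 - \hat u^\top\Sigma\hat u / \|\Sigma\| \le 20\rho_2 = O(\alpha^{1-2/k})$ with probability $1-\zeta$ and sensitivity $\sens = 80\rho_2/(\alpha n) = O(\alpha^{1-2/k}/n)$, provided $n = \Omega\big((\log(1/(\delta\zeta)) + d\log(1/\rho_2))/(\varepsilon\alpha)\big)$. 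Substituting $\log(1/\rho_2) = \Theta(\log(1/\alpha^{1-2/k}))$ and taking the maximum with the resilience sample-size requirement from Lemma~\ref{lem:mean_kmoment} produces the stated sample complexity. The robustness addendum follows identically: an adversarial $\alpha'$-corruption of a corrupt good set is again corrupt good with corruption fraction $c_3\alpha+\alpha'$, which stays within the $(2/7)\alpha$ budget of Theorem~\ref{thm:robust_pca_utility} when $\alpha'$ is a small enough fraction of $\alpha$ (a slack is needed because the filtering set $\cM_{\hat u,\alpha}$ already discards a $(4/7)\alpha$ fraction of the good points).

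The only delicate part is the constant bookkeeping: one must choose $\alpha$ small enough (depending on $k,\kappa,\zeta$) so that $\alpha < \rho_2$, $\rho_2 < c$, the corruption-fraction slack, and the condition $(k^*+1)/n \le \alpha/7$ used inside the local-sensitivity Lemma~\ref{lem:local_asmp_pca} on the Hamming neighborhood of the safety test all hold at once; the last is where the $\log(1/(\delta\zeta))/(\varepsilon\alpha)$ term in the sample complexity originates, since $k^* = (2/\varepsilon)\log(4/(\delta\zeta))$. This is entirely parallel to the analogous mean-estimation Corollary~\ref{coro:mean_kmoment}, and no new ideas beyond Lemma~\ref{lem:mean_kmoment} and Theorem~\ref{thm:robust_pca_utility} are needed; I do not anticipate a genuine obstacle, only routine verification.
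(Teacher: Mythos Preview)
Your proposal is correct and follows exactly the paper's approach: the paper itself proves this corollary in one sentence by noting that the PCA resilience conditions in Definition~\ref{def:resilience_pca} coincide with the mean-estimation resilience of Definition~\ref{def:resilience} at $\mu=0$, invoking Lemma~\ref{lem:mean_kmoment} (with the free constant $c_3$ set to match the $(2/7)\alpha$ corruption budget), and then applying Theorem~\ref{thm:robust_pca_utility}. Your added verification that $\alpha<\rho_2<c$ and your treatment of the robustness slack are exactly the routine checks the paper leaves implicit; one minor arithmetic note is that $\sens=80\rho_2/(\alpha n)$ with $\rho_2=\Theta(\alpha^{1-2/k})$ actually yields $\sens=\Theta(\alpha^{-2/k}/n)$ rather than $O(\alpha^{1-2/k}/n)$, but this matches the paper's own stated sensitivity and does not affect the argument.
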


The error bound is optimal under $\alpha$-corruption up to a constant factor. HPTR is the first estimator that guarantees $(\varepsilon,\delta)$-DP and also achieves the robust error rate of $1-\hat u^\top \Sigma \hat u/\|\Sigma\| = O(\alpha^{1-2/k})$, matching the information theoretic lower bound of $1-\hat u^\top \Sigma \hat u/\|\Sigma\|=\Omega(\alpha^{1-2/k})$. 
This lower bound can be easily constructed using the construction in  Eq.~\eqref{eq:construct_hypercontractive_x}, where two hypercontractive distributions are at total variation distance $O(\alpha)$ and the top principal component of one distribution achieves an error lower bounded by $1-\hat u^\top \Sigma \hat u/\|\Sigma\|=\Omega(\alpha^{1-2/k})$.
Even if privacy is not required, there is no outlier-robust PCA estimator matching this optimal error rate for general $k$.

The sample complexity is $n=\tilde O(d/\alpha^{2(1-1/k)} + (d+\log(1/\delta))/(\varepsilon\alpha) )$ for constant $\zeta,k$, and $\kappa$, where $\tilde O$ hides logarithmic factors in $1/\alpha$ and $d$. 
Even for DP PCA without corrupted samples, HPTR is the first estimator for hypercontractive distributions to 
guarantee differential privacy.  
 The information-theoretic lower bound is  $n=\Omega(d/\alpha^{2(1-2/k)} +  \min\{d,\log((1-e^{-\varepsilon})/\delta)\}/(\alpha \varepsilon) )$ to achieve the error in Eq.~\eqref{eq:pca_hyper}.  The first term is unavoidable  even without DP and robustness, when the data comes from a Gaussian distribution, because estimating the principal component up to error $\alpha^{1-2/k}$ requires $\Omega(d/\alpha^{2(1-2/k)})$ samples (Proposition~\ref{prop:lowerbound_pca}). 
 There is a gap of factor $O(\alpha^{-2/k})$ compared to the first term in our upper bound.  Since the sample complexity lower bound in Proposition~\ref{prop:lowerbound_pca} is constructed using  Gaussian distributions, 
 it might be possible to tighten it further using hypercontractive distributions. 
The second term in the lower bound follows from  Proposition~\ref{thm:lowerbound_pca_hyper}, which matches the last term  in the upper bound up to a factor of $O(\log(1/\alpha))$ when $\delta= e^{-\Theta(d)}$ and $\varepsilon>0$. 
 To the best of our knowledge, HPTR is the first algorithm for PCA that guarantees $(\varepsilon,\delta)$-DP under hypercontractive distributions. 
 
\begin{propo}[Lower bound for hypercontractive private PCA]
\label{thm:lowerbound_pca_hyper}
	Let $\cP_{\Sigma}$ be the set of zero-mean hypercontractive distributions with covariance $\Sigma\in \reals^{d\times d}$.  Let  $\cM_{\varepsilon,\delta}$ be a class of $(\varepsilon, \delta)$-DP estimators using $n$ i.i.d.~samples from $P\in{\cal P}_{\Sigma}$.
	Then, for $\varepsilon\in(0,10)$, there exists a constant $c$ such that 
	\begin{eqnarray}
	\inf_{\hat{u}\in  \cM_{\varepsilon,\delta}}
	\sup_{\Sigma\succ 0 ,P\in \cP_{ \Sigma}}\E_{S\sim  P^n}\left[1-\frac{\hat{u}^\top\Sigma\hat{u}}{\|\Sigma\|}\right]
	\; \geq\; c 
	\min\left\{\left(\frac{d\wedge \log((1-e^{-\varepsilon})/\delta)}{n\varepsilon}\right)^{1-2/k}, 1\right\}\;.
	\end{eqnarray}
	
\end{propo}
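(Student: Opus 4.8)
The plan is to follow the packing-plus-private-testing framework already used for Propositions~\ref{thm:lowerbound_mean_hypercontractive} and \ref{thm:lowerbound_pca_subgaussian}, swapping the Gaussian construction for the heavy-tailed one of Eq.~\eqref{eq:construct_hypercontractive_x}. First I would invoke \cite[Lemma~6]{acharya2021differentially} to get a finite set $\cV\subset\reals^d$ of unit vectors with $|\cV|=2^{\Omega(d)}$ and (taking a half-space restriction if needed) $\|v\pm v'\|\geq 1/2$, hence $\langle v,v'\rangle^2\leq 49/64$, for all distinct $v,v'\in\cV$. For a free parameter $\alpha\in(0,1/2)$ I would associate to each $v\in\cV$ the distribution $P_v$ putting mass $\alpha/2$ on each of $\pm\alpha^{-1/k}v$ and mass $1-\alpha$ on $\cN(0,\mathbf{I}_{d\times d})$, exactly as in Eq.~\eqref{eq:construct_hypercontractive_x}. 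A short calculation then gives $\E_{P_v}[x]=0$, covariance $\Sigma_v:=(1-\alpha)\mathbf{I}_{d\times d}+\alpha^{1-2/k}vv^\top$ with $\|\Sigma_v\|=1-\alpha+\alpha^{1-2/k}$ and top eigenvector $v$, and $\E_{P_v}[|\langle u,x\rangle|^k]=|\langle u,v\rangle|^k+(1-\alpha)c_k\leq 1+c_k$ while $(u^\top\Sigma_v u)^{k/2}\geq(1-\alpha)^{k/2}$, so $P_v$ is $(\kappa,k)$-hypercontractive with $\kappa=O(1)$ for fixed $k$; thus $\cP_\cV:=\{P_v:v\in\cV\}\subseteq\cP_\Sigma$. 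Finally $d_{\rm TV}(P_v,P_{v'})=\alpha$ for distinct $v,v'$, since the two distributions differ only on their point-mass components, whose supports are disjoint.

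Next I would recast the PCA loss as an additive semimetric so that Lemma~\ref{thm:packing} applies verbatim. Under the model $P_v$, a unit estimate $\hat u$ incurs
\[
D_{\Sigma_v}(\hat u)\;=\;1-\frac{\hat u^\top\Sigma_v\hat u}{\|\Sigma_v\|}\;=\;\frac{\alpha^{1-2/k}\,(1-\langle\hat u,v\rangle^2)}{1-\alpha+\alpha^{1-2/k}}\;\geq\;\tfrac12\,\alpha^{1-2/k}\,\rho(\hat u,v)^2,
\]
where $\rho(u_1,u_2):=\sqrt{1-\langle u_1,u_2\rangle^2}$ is a semimetric on the sphere (symmetric, vanishing on the diagonal; no triangle inequality is needed for Lemma~\ref{thm:packing}). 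Since $\rho(v,v')\geq\sqrt{1-49/64}=\sqrt{15}/8>1/3$ for distinct $v,v'\in\cV$, the family $\cP_\cV$ is a $(2t)$-packing in $\rho$ with $t=1/6$. Applying Lemma~\ref{thm:packing} with $p=\alpha$, loss $\ell(x)=x^2$, and this packing, together with $\E_{P_v}[\rho(\hat u,v)^2]\geq t^2\,\prob_{P_v}(\rho(\hat u,v)\geq t)$, gives
\[
\sup_{P\in\cP_\Sigma}\E_{S\sim P^n}\!\big[D_\Sigma(\hat u)\big]\;\geq\;\tfrac12\alpha^{1-2/k}\cdot\frac{1}{|\cV|}\sum_{v\in\cV}\E_{P_v}[\rho(\hat u,v)^2]\;\gtrsim\;\alpha^{1-2/k}\cdot\frac{(|\cV|-1)\big(\tfrac12 e^{-\varepsilon\lceil n\alpha\rceil}-\delta\tfrac{1-e^{-\varepsilon\lceil n\alpha\rceil}}{1-e^{-\varepsilon}}\big)}{1+(|\cV|-1)e^{-\varepsilon\lceil n\alpha\rceil}}.
\]

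To finish I would plug in $\alpha=\frac{1}{n\varepsilon}\min\{d/2-\varepsilon,\ \log(\tfrac{1-e^{-\varepsilon}}{4\delta e^\varepsilon})\}$, capped at $1/2$, exactly as in \cite[Proposition~4]{barber2014privacy} and in the proof of Proposition~\ref{thm:lowerbound_mean_hypercontractive}: with $|\cV|=2^{\Omega(d)}\asymp e^{d/2}$ this choice makes the trailing fraction a positive constant, so $\sup_P\E[D_\Sigma(\hat u)]\gtrsim\alpha^{1-2/k}$; substituting back the value of $\alpha$ and using $\varepsilon\in(0,10)$ to simplify $\log(\tfrac{1-e^{-\varepsilon}}{4\delta e^\varepsilon})\asymp\log(\tfrac{1-e^{-\varepsilon}}{\delta})$ and $d/2-\varepsilon\asymp d$ yields the claimed $c\min\{((d\wedge\log((1-e^{-\varepsilon})/\delta))/(n\varepsilon))^{1-2/k},1\}$, the trailing $1$ coming from the cap $\alpha\leq 1/2$. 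The main obstacle is not any single estimate but the bookkeeping of the second step: one must confirm that the PCA objective — intrinsically a squared sine between lines — slots cleanly into the packing lemma, i.e., choose the right semimetric and verify that the resilience-free construction of Eq.~\eqref{eq:construct_hypercontractive_x} is genuinely $(\kappa,k)$-hypercontractive with a dimension-independent $\kappa$ and genuinely $\rho$-separated. Everything downstream is the same $\alpha$-tuning computation used for the other lower bounds in the paper.
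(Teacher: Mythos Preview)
Your proposal is correct and follows essentially the same approach as the paper: the same hypercontractive construction from Eq.~\eqref{eq:construct_hypercontractive_x}, the same $2^{\Omega(d)}$ packing from \cite{acharya2021differentially}, the same application of Lemma~\ref{thm:packing}, and the same choice of $\alpha$ at the end. The paper applies Lemma~\ref{thm:packing} directly to the PCA loss $D_{\Sigma_v}(\hat u)$ with $t=\alpha^{1-2/k}/12$, whereas you first factor out $\tfrac12\alpha^{1-2/k}$ and work with the sine semimetric $\rho(\hat u,v)=\sqrt{1-\langle\hat u,v\rangle^2}$ at $t=1/6$; these are equivalent reparametrizations of the same reduction. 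Your explicit identification of $\rho$ and your remark about restricting the packing so that $|\langle v,v'\rangle|\le 7/8$ (not merely $\langle v,v'\rangle\le 7/8$) are in fact tidier than the paper's treatment, which uses only $\|v-v'\|\ge 1/2$ and implicitly assumes the antipodal case does not arise. One small caveat: your claim that ``no triangle inequality is needed for Lemma~\ref{thm:packing}'' is slightly misleading---the reduction does require that the $t$-balls around the packing points are disjoint, which for your $\rho$ follows from the triangle inequality for angles; but this is easy and does not affect the argument.
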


\begin{proof}
    We use the same construction as the distribution of $x$ in the proof of Proposition~\ref{thm:lowerbound_regression_dependent}.
    By {\cite[Lemma~6]{acharya2021differentially}}, there exists a finite index set $\cV\subset \reals^d$ with cardinality $|\cV|=2^{\Omega(d)}$, $\|v\|=1$ for all $v\in \cV$ and $\|v-v'\|\geq 1/2$ for all $v\neq v'\in \cV$.  For each $v\in \cV$ and $\alpha\in (0,1/2)$, we construct the density function of distribution $P_v$ as defined in Eq.~\eqref{eq:construct_hypercontractive_x}. Let $\Sigma_v$ denote the covariance matrix of $P_v$. The proof of Proposition~\ref{thm:lowerbound_regression_dependent} shows that $\Sigma_v= (1-\alpha)\mathbf{I}_{d \times d}+\alpha^{1-2/k} vv^\top$, $d_{\rm TV}(P_v, P_v')=\alpha$ and that $P_v$ is $(O(1), k)$-hypercontractive.

    Since $\|v-v'\|\geq 1/2$, we know $\ip{v}{v'}\leq 7/8$ and we have
    \begin{eqnarray*}
        D_{\Sigma_{v'}}(v) = 1-\frac{v^\top \Sigma_v'v}{\|\Sigma_{v'}\|} = 1-\frac{1-\alpha+\alpha^{1-2/k}\ip{v}{v'}^2}{1-\alpha+\alpha^{1-2/k}}\geq \frac{\alpha^{1-2/k}}{8(1-\alpha+\alpha^{1-2/k})}> \frac{\alpha^{1-2/k}}{12}\;,
    \end{eqnarray*}
    for $\alpha<c$ small enough.
    
    Next, we apply the reduction of estimation to testing with this packing $\cV$. For $(\varepsilon, \delta)$-DP estimator $\hat{u}$, using Lemma~\ref{thm:packing}, let $t=\frac{\alpha^{1-2/k}}{12}$, we have
\begin{eqnarray*}
	\sup_{P\in \cP_\Sigma}\E_{S \sim P^n}[D_{\Sigma}(\hat{u})]
		&\geq & \frac{1}{|\cV|}\sum_{v\in \cV}\E_{S \sim P_v^n}[D_{\Sigma_v}(\hat{u})]\\
		&= &\frac{1}{|\cV|}\sum_{v\in \cV}P_{v}\left(D_{\Sigma_v}(\hat{u})\geq t\right)\\
		&\gtrsim &t \frac{e^{d/2} \cdot\left(\frac{1}{2} e^{-\varepsilon\lceil n \alpha\rceil}-\frac{\delta}{1-e^{-\varepsilon}}\right)}{1+e^{d/2} e^{-\varepsilon\lceil n \alpha\rceil}}\;,
\end{eqnarray*}
where the last inequality follows from the fact that $d\geq 2$.

The rest of the proof follows from  {\cite[Proposition 4]{barber2014privacy}}. We choose 
\begin{eqnarray*}
	\alpha =\frac{1}{n \varepsilon} \min \left\{\frac{d}{2}-\varepsilon, \log \left(\frac{1-e^{-\varepsilon}}{4 \delta e^{\varepsilon}}\right)\right\}
\end{eqnarray*}
 so that 
 \begin{eqnarray*}
 	\sup_{P\in \cP}\E_{S\sim P^n}[D_{\Sigma_v}(\hat{u})] \gtrsim  \alpha^{1-2/k} \;.
 \end{eqnarray*}
 This means, for $t=(1/12)\alpha^{1-2/k}$ and $\varepsilon\in(0,10)$,
 \begin{eqnarray*}
 	\inf_{\hat{u}\in \cM_{\varepsilon, \delta}}\sup_{P\in \cP}\E_{S \sim P^n}[D_{\Sigma}(\hat{u})]\gtrsim \min\left\{\left(\frac{d\wedge \log((1-e^{-\varepsilon})/\delta)}{n\varepsilon}\right)^{1-2/k}, 1\right\}\;,
 \end{eqnarray*}
which completes the proof.
\end{proof}


\section{Conclusion}

We provide a universal framework for characterizing the statistical efficiency of statistical estimation problems with differential privacy guarantees. Our framework, which we call High-dimensional Propose-Test-Release (HPTR), is computationally inefficient and builds upon three key components: the exponential mechanism, robust statistics, and the Propose-Test-Release mechanism. 
The key insight is that if we design an exponential mechanism that accesses the data only via one-dimensional robust statistics, then the resulting local sensitivity can be dramatically reduced. Using resilience, which is a central concept in robust statistics, we can  provide tight local sensitivity bounds.
These tight bounds readily translate into near-optimal utility guarantees in several statistical estimation problems of interest: mean estimation, linear regression, covariance estimation, and principal component analysis. Although our framework is written as a conceptual algorithm without a specific implementation, it is possible to implement it with exponential computational complexity following the guidelines of \cite{brown2021covariance} where a similar exponential mechanism with PTR was proposed and an implementation was explicitly provided. 

To protect against membership inference attacks, significant progress was made in training differentially private models that are practical \cite{abadi2016deep,yu2021differentially,anil2021large}. To protect against data poisoning attacks, a recent work utilizes robust statistics with a great success \cite{hayase2021defense}. In practice, however, we need to protect against both types of attacks, to facilitate learning and analysis from shared data. Currently,  there is an algorithmic deficiency in this space. Efficient algorithms achieving both  differential privacy and robustness against adversarial corruption are known only for mean estimation \cite{liu2021robust}. It is an important direction to  design such algorithms for a broad class of problems, including covariance estimation, principal component analysis, and linear regression. 

Further, these computationally efficient algorithms typically require more samples. For sub-Gaussian mean estimation with known covariance $\Sigma$, an efficient approach of \cite{liu2021robust} requires $\tilde O(d/\alpha^2 + d^{3/2}/(\varepsilon\alpha))$ samples under $\alpha$-corruption and $(\varepsilon,\delta)$-DP to achieve an error of $\|\Sigma^{-1/2}(\hat\mu-\mu)\|=\tilde O(\alpha)$. HPTR only requires $O(d/\alpha^2 +  d/(\varepsilon\alpha))$ samples. It remains an important open question if this $d^{1/2}$ gap is fundamental and cannot be improved.

\section*{Acknowledgement}

This work is supported by 
Google faculty research award, NSF grants CNS-2002664, IIS-1929955, DMS-2134012 and CCF-2019844 as a part of Institute for Foundations of Machine Learning (IFML) and CNS-2112471 as a part of AI Institute for Future Edge Networks and Distributed Intelligence (AI-EDGE).

\bibliographystyle{alpha}
\bibliography{references} 

\appendix
\newpage
\section*{Appendix: Complete Proofs}


\section{General case: utility analysis of HPTR}
\label{sec:utility}

We prove the following theorem that provides a utility guarantee for HPTR output $\hat\theta$  measured in $\popdist(\hat\theta,\theta)$.

\begin{thm} 
\label{thm:utility} 
For a given dataset $S$, a target error function $\popdist:\reals^p\times \reals^p\to\reals_+$, probability $\zeta\in(0,1)$, and privacy $(\varepsilon,\delta)$,  
    {\HPTR} achieves $\popdist(\hat\theta,\theta) = c_0 \rho $ for some $\rho>0$ and any constant $c_0>3c_1$ with probability $1-\zeta$ 
if there exist constants $c_1,c_2 >0$ 
and $(\sens\in\reals^+,\rho\in\reals^+)$ 
such that with the choice of 
$k^*=(2/\varepsilon)\log(4/(\delta\zeta))$, 
$\thresh=(c_0+c_1)\rho$, 
the following assumptions are satisfied: 
    \begin{enumerate}[label=(\alph*)] 
        \item (Bounded volume) $(7/8)\thresh-(k^*+1)\sens>0$,  
        \begin{eqnarray*}
        \frac{{\rm Vol}(B_{ \thresh+(k^*+1)\sens+c_1\rho,S}) }
        {{\rm Vol}(B_{ (7/8)\thresh-(k^*+1)\sens-c_1\rho,S})}
        &\leq& e^{c_2 p}\;, \text{ and }\\
        \frac{{\rm Vol}(\{\hat\theta:\popdist (\hat \theta ,\theta)\leq (c_0+2c_1)\rho\})}
        {{\rm Vol}(\{\hat\theta: \popdist (\hat \theta ,\theta)\leq c_1\rho\})}
        &\leq &e^{c_2 p}\;,
        \end{eqnarray*}
        \label{asmp_vol}
        \item (Local sensitivity) For all $S'$ within Hamming distance $k^*$ from S,
    $\max_{S''\sim S'} \|\robdist_{S''}(\hat\mu) - \robdist_{S'}(\hat\mu)\|\leq   \sens $ for all 
    $\hat\mu\in B_{\tau+\backoff,S} $, \label{asmp_local}
        \item (Bounded sensitivity) $\sens\leq \frac{(c_0-3c_1)\rho   \varepsilon}{32(c_2 p + (\varepsilon/2) + \log(16/\delta\zeta))}$, and  \label{asmp_sens} 
        \item (Robustness) $|\popdist(\hat\theta,\theta) - \robdist_S(\hat\theta)| \leq  c_1 \rho$ for all $\hat\theta\in B_{\thresh,S}$.  \label{asmp_resilience}
    \end{enumerate} 
\end{thm}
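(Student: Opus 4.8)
\textbf{Proof plan for Theorem~\ref{thm:utility}.}

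The plan is to decompose the analysis into two independent parts, mirroring the two-stage structure (\textsc{Test} then \textsc{Release}) of HPTR. First I would bound the failure probability of the \textsc{Test} step, showing that a dataset $S$ satisfying assumptions~\ref{asmp_vol}--\ref{asmp_resilience} passes the safety test with probability at least $1-\zeta/2$. The key observation is that assumption~\ref{asmp_local} gives a uniform local-sensitivity bound $\sens$ not just on $S$ but on every $S'$ within Hamming distance $k^*$. I would use this to argue that no such $S'$ lies in $\unsafe_{(\varepsilon/2,\delta/2,\thresh)}$: on the overlap $B_{\thresh,S'}\cap B_{\thresh,S''}$ the two exponential-mechanism densities differ by a factor $e^{\varepsilon/2}$ (this needs $\sens \le$ the bound from~\ref{asmp_sens}, so that $(\varepsilon/(4\sens))\sens$-type terms are controlled), and on the symmetric differences $B_{\thresh,S'}\setminus B_{\thresh,S''}$ the mass is at most $\delta/2$, using that the supports overlap heavily because $\thresh \gg k^*\sens$ (guaranteed by~\ref{asmp_vol}, which forces $(7/8)\thresh > (k^*+1)\sens$, together with~\ref{asmp_sens}). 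Hence the safety margin $m_\thresh \ge k^*$, and since $k^* = (2/\varepsilon)\log(4/(\delta\zeta))$ is exactly the threshold plus the Laplace tail slack, $\prob(\hat m_\thresh < (2/\varepsilon)\log(2/\delta)) \le \zeta/2$.

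Second, conditioned on passing the test, I would run the standard exponential-mechanism utility argument. Define $B_{\rm out} = \{\hat\theta : \popdist(\hat\theta,\theta) \le c_0\rho\}$ and an inner ball $B_{\rm in} = \{\hat\theta : \popdist(\hat\theta,\theta)\le c_1\rho\}$. By~\ref{asmp_resilience}, points in $B_{\rm in}$ have $\robdist_S(\hat\theta) \le c_1\rho + c_1\rho = 2c_1\rho$ (and they lie in $B_{\thresh,S}$ since $2c_1\rho \le \thresh$), so they each carry density at least $(1/Z)e^{-(\varepsilon/4\sens)\cdot 2c_1\rho}$; points outside $B_{\rm out}$ that still lie in $B_{\thresh,S}$ have $\robdist_S(\hat\theta) \ge c_0\rho - c_1\rho$, so density at most $(1/Z)e^{-(\varepsilon/4\sens)(c_0-c_1)\rho}$. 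Then
\begin{eqnarray*}
\frac{\prob(\hat\theta \notin B_{\rm out})}{\prob(\hat\theta \in B_{\rm in})}
\;\le\; \frac{{\rm Vol}(B_{\thresh,S})}{{\rm Vol}(B_{\rm in})}\, e^{-\frac{\varepsilon}{4\sens}(c_0 - 3c_1)\rho}
\;\le\; e^{c_2 p}\, e^{-\frac{\varepsilon}{4\sens}(c_0-3c_1)\rho}\,,
\end{eqnarray*}
where the volume ratio is controlled by combining the two bounds in~\ref{asmp_vol} (the first compares $B_{\thresh,S}$ against a slightly shrunk $B_{\cdot,S}$, the second converts a $\popdist$-ball ratio into the $e^{c_2 p}$ bound; I would chain them through $B_{\rm in} \supseteq \{\hat\theta:\popdist\le c_1\rho\}$ and the robustness bound to sandwich $B_{\thresh,S}$ inside a $\popdist$-ball of radius $(c_0+2c_1)\rho$). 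Finally, assumption~\ref{asmp_sens} chosen precisely so that $(\varepsilon/(4\sens))(c_0-3c_1)\rho \ge 8(c_2 p + (\varepsilon/2) + \log(16/(\delta\zeta)))$ makes the right-hand side at most $\zeta/2$; a union bound over the two failure events completes the proof.

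The main obstacle I anticipate is the careful bookkeeping in the \textsc{Test}-step analysis: handling the set differences $B_{\thresh,S'}\setminus B_{\thresh,S''}$ across a chain of up to $k^*$ neighboring datasets, and propagating the support-mismatch bound so that the cumulative mass lost stays below $\delta/2$. This is where the $(k^*+1)\sens$ backoff in the volume assumption and the separation $\thresh - (k^*+1)\sens > 0$ really do work, and it is the step where~\cite{brown2021covariance}'s high-dimensional safety-test technique must be adapted; the locality of sensitivity (it only holds on $B_{\tau+\backoff,S}$, not globally) means one must simultaneously track that all relevant $\hat\theta$ stay inside this enlarged ball as the dataset moves. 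The exponential-mechanism part, by contrast, is routine once the volume ratio is pinned down. I would also need to verify the minor point that $B_{\rm in} \subseteq B_{\thresh,S}$ and that all the inequalities among $c_0, c_1, \thresh, \rho$ are consistent, which follows from $c_0 > 3c_1$ and $\thresh = (c_0+c_1)\rho$.
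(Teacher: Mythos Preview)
Your proposal is correct and takes essentially the same approach as the paper: the same \textsc{Test}/\textsc{Release} decomposition, the same exponential-mechanism utility argument yielding the $e^{c_2 p - (\varepsilon/4\sens)(c_0-3c_1)\rho}$ bound, and the same safety-margin strategy of showing every $S'$ within $k^*$ of $S$ is safe by splitting into the overlap (multiplicative control via assumption~\ref{asmp_local}) and the symmetric difference of supports (additive control via $\thresh \gg k^*\sens$). One small clarification: the two volume bounds in assumption~\ref{asmp_vol} are not chained together---the first is used solely in the safety-test step (to bound a weight ratio of the form $w_{S'}(B_{\thresh+\sens,S'})/w_{S'}(B_{\thresh-\sens(1+g),S'})$ that certifies $S'\in\safe$), while only the second is needed for the \textsc{Release}-step volume ratio ${\rm Vol}(B_{\thresh,S})/{\rm Vol}(B_{\rm in})$.
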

The parameter $\rho\in\reals_+$ represents the target error up to a constant factor and depends on the resilience of the underlying distribution $P_{\theta,\phi}$ that the samples are drawn from. We explicitly prescribe how to choose the parameter $\rho$ for each problem instance in Sections~\ref{sec:mean}, \ref{sec:lin}, \ref{sec:cov}, and \ref{sec:pca}. 
Following the standard analysis techniques for exponential mechanisms, we show that the output concentrates around an inner set $\{\hat\theta:\popdist(\hat\theta,\theta)\leq c_0\rho\}$, by comparing its probability mass with an outer set $\{\hat\theta:\popdist(\hat\theta,\theta)\geq c_1\rho\}$. This uses the ratio of the volumes in the assumption~\ref{asmp_vol} and the closeness of the error metric and $\robdist(\hat\theta)$ in the assumption~\ref{asmp_resilience}. 
When there is a strict gap between the two, which happens if $ \varepsilon\rho/\sens \gg  p + \log(1/\zeta)$ as in the  assumption~\ref{asmp_sens}, this implies $\popdist(\hat\theta,\theta)\leq c_0 \rho$ with probability $1-\zeta$. We provide a proof in Section~\ref{sec:proof_utility}.

A major challenge in analyzing HPTR is in showing that the safety test threshold $k^*=(2/\varepsilon)\log(4/(\delta\zeta))$ is not only large enough to ensure that datasets with safety violation is screened with probability $1-\delta/2$ but also small enough such that good datasets satisfying the assumptions~\ref{asmp_vol}, \ref{asmp_local}, and \ref{asmp_sens} pass the test with probability $1-\zeta/2$. 
We establish this first in Section~\ref{sec:proof_safetymargin}.


\subsection{Large safety margin}
\label{sec:proof_safetymargin}

In this section, we show in Lemma~\ref{lem:safetymargin} that under the assumptions of  Theorem~\ref{thm:utility}, we get a large enough margin for safety such that we pass the safety test with high probability.
We follow the proof strategy introduced in \cite{brown2021covariance}  adapted to our more general framework. 
A major challenge is the lack of a uniform bound on the sensitivity, which the analysis of \cite{brown2021covariance}  relies on. 
We generalize the analysis by showing that while the data does not satisfy uniform sensitivity bound, we can still exploit its 
{\em local} sensitivity bound in the assumption~\ref{asmp_local}. 

The following main technical lemma is a counter part of \cite[Lemma 3.7]{brown2021covariance}, where we have an extra challenge that the sensitivity bound is only local; there exists $\hat\theta$ far from $\theta$ where the sensitivity bound fails.  We rely on the assumption~\ref{asmp_local} to resolve it. 
Let $w_S(B) \triangleq \int_{B} \exp\{-(\varepsilon/4\sens) \robdist_S(\hat\mu)\}d\hat\mu$ be the weight of a subset $B\subset \reals^p$.
The following lemma will be used to show that the denominator of the exponential distribution in {\sc Release} step does not change too fast between two neighboring datasets.

\begin{lemma} 
\label{lem:safe_ratio} Under  the assumption~\ref{asmp_local} 
and $\delta\in (0,1/2)$, 
for a dataset $S'$ at Hamming distance at most $k^*$ from $S$, 
if  $w_{S'}(B_{\thresh -\sens,S'}) \geq (1-\delta) w_{S'}(B_{ \thresh +\sens,S'})$ then $S' \in\safe_{\varepsilon,4e^{2\varepsilon}\delta,\thresh}$. 
\end{lemma}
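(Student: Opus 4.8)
The statement asserts that if a dataset $S'$ (within Hamming distance $k^*$ of $S$) has the property that the exponential-mechanism weight on the slightly-shrunk ball $B_{\thresh-\sens,S'}$ is at least a $(1-\delta)$ fraction of the weight on the slightly-grown ball $B_{\thresh+\sens,S'}$, then $S'$ is safe in the sense of the $\unsafe$ definition, with a mild degradation in parameters ($\delta \to 4e^{2\varepsilon}\delta$). My plan is to take an arbitrary neighbor $S'' \sim S'$ and an arbitrary measurable event $E \subseteq \reals^p$, and bound $\prob_{r_{(\varepsilon,\sens,\thresh,S')}}(\hat\theta\in E)$ against $e^\varepsilon \prob_{r_{(\varepsilon,\sens,\thresh,S'')}}(\hat\theta \in E) + 4e^{2\varepsilon}\delta$ (and symmetrically). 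The core difficulty, as the paper flags, is that the two exponential mechanisms have \emph{different supports} $B_{\thresh,S'} \neq B_{\thresh,S''}$, and the pointwise sensitivity bound on $\robdist$ only holds locally --- on $B_{\thresh+\backoff,S}$, which contains $B_{\thresh+\sens,S'}$ and $B_{\thresh+\sens,S''}$ since $S',S''$ are within $k^*+1$ of $S$. So I would first record that, by assumption~\ref{asmp_local}, $|\robdist_{S'}(\hat\theta) - \robdist_{S''}(\hat\theta)| \le \sens$ for all $\hat\theta$ in a region containing both $B_{\thresh+\sens,S'}$ and $B_{\thresh+\sens,S''}$.

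**Key steps.** First I would compare the supports: since $S''\sim S'$, the scores differ by at most $\sens$ on the relevant region, which implies the nested inclusions $B_{\thresh-\sens,S'} \subseteq B_{\thresh,S''} \subseteq B_{\thresh+\sens,S'}$ and symmetrically. Consequently the symmetric difference $B_{\thresh,S'} \triangle B_{\thresh,S''}$ is sandwiched inside $B_{\thresh+\sens,S'} \setminus B_{\thresh-\sens,S'}$, whose $w_{S'}$-weight is at most $\delta\, w_{S'}(B_{\thresh+\sens,S'})$ by hypothesis. Second, I would decompose $\prob_{r_{(\varepsilon,\sens,\thresh,S')}}(\hat\theta\in E)$ over $E \cap B_{\thresh,S'} \cap B_{\thresh,S''}$ (the common support) and $E \cap (B_{\thresh,S'}\setminus B_{\thresh,S''})$ (the leftover). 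On the common support, the pointwise bound $|\robdist_{S'} - \robdist_{S''}| \le \sens$ gives the multiplicative ratio $e^{-\varepsilon\robdist_{S'}/4\sens} \le e^{\varepsilon/4} e^{-\varepsilon\robdist_{S''}/4\sens}$ on numerators; I also need to control the ratio of normalizers $Z_{S''}/Z_{S'}$. Third --- the normalizer comparison --- I would show $Z_{S'} = w_{S'}(B_{\thresh,S'})$ and $Z_{S''} = w_{S''}(B_{\thresh,S''})$ are within a factor $e^{\varepsilon/4}\cdot\frac{1}{1-\delta}$ or so of each other: the pointwise score bound handles the integrand ratio on the common support, the weight-concentration hypothesis handles the support mismatch (the $\delta$-fraction leftover), and the assumption $\delta<1/2$ keeps $1/(1-\delta)$ bounded. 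Combining, the common-support contribution is bounded by roughly $e^{\varepsilon/2}$ (or $e^\varepsilon$ after absorbing the $1/(1-\delta)$ into the exponent via $\delta<1/2$) times the corresponding $S''$-probability. Fourth, the leftover term: $\prob_{r_{(\varepsilon,\sens,\thresh,S')}}(\hat\theta \in B_{\thresh,S'}\setminus B_{\thresh,S''}) = w_{S'}(B_{\thresh,S'}\setminus B_{\thresh,S''})/Z_{S'} \le \delta\, w_{S'}(B_{\thresh+\sens,S'})/Z_{S'}$, and since $Z_{S'} = w_{S'}(B_{\thresh,S'}) \ge w_{S'}(B_{\thresh-\sens,S'}) \ge (1-\delta) w_{S'}(B_{\thresh+\sens,S'})$ by hypothesis, this leftover is at most $\delta/(1-\delta) \le 2\delta$. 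Tracking the constants carefully and using $\varepsilon\ge 0$, all the $e^{\varepsilon/4}$ and $1/(1-\delta)$ factors multiply out to at most $4e^{2\varepsilon}$ in front of $\delta$, and the multiplicative factor on the main term is at most $e^\varepsilon$.

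**The main obstacle.** The delicate part is bookkeeping the interplay between the multiplicative score-ratio factors ($e^{\pm\varepsilon/4}$ on numerators and on the normalizer) and the additive $\delta$-leakage from the support mismatch, so that everything collapses into exactly the claimed form $e^\varepsilon\prob(\cdots) + 4e^{2\varepsilon}\delta$. In particular I must be careful that the normalizer ratio bound I use ($Z_{S'}/Z_{S''}$ controlled both above and below) is applied in the correct direction in each half of the two-sided DP inequality, and that the region on which I invoke assumption~\ref{asmp_local} genuinely contains everything I integrate over --- which is why the definition of $\unsafe$ in the $\HPTR$ algorithm uses $(\varepsilon/2,\delta/2)$ and the safety margin absorbs the extra $+1$ in Hamming distance. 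I would model the argument closely on \cite[Lemma~3.7]{brown2021covariance}, substituting their uniform sensitivity-one bound with our local bound from assumption~\ref{asmp_local}, and I expect the only genuinely new ingredient is verifying the support-inclusion chain $B_{\thresh-\sens,S'}\subseteq B_{\thresh,S''}\subseteq B_{\thresh+\sens,S'}$ from the local score bound. Once that is in place, the constant $4e^{2\varepsilon}$ falls out of a routine (if fiddly) chain of inequalities.
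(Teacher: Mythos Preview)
Your proposal is correct and matches the paper's proof essentially step for step: the same support-inclusion chain $B_{\thresh-\sens,S'}\subseteq B_{\thresh,S''}\subseteq B_{\thresh+\sens,S'}$, the same decomposition of $\prob_{r_{(\varepsilon,\sens,\thresh,S')}}(\hat\theta\in E)$ over the common support $B_{\thresh,S'}\cap B_{\thresh,S''}$ and the leftover, the same normalizer comparison via the local sensitivity bound, and the same use of the weight-concentration hypothesis to cap the leftover at $O(\delta)$. The paper adds only a brief continuity argument on $\robdist_{S''}$ to justify the support inclusion (since assumption~\ref{asmp_local} guarantees the score-difference bound only on $B_{\thresh+\backoff,S}$, one must rule out a disconnected component of $B_{\thresh,S''}$ lying outside that region), which is precisely the step you already flag as the ``genuinely new ingredient.''
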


\begin{proof}
We follow the proof strategy of  \cite[Lemma 3.7]{brown2021covariance} but there are key differences due to the fact that we do not have a universal sensitivity bound, but only local bound. 
In particular, we first establish that under the local sensitivity assumption,  
$  B_{\thresh,S''}\subseteq B_{\thresh + \sens,S'}$ for all $S''\sim S'$, which will be used heavily throughout the proof.  
Since $\robdist_{S''}(\hat\theta) \leq \robdist_{S'}(\hat\theta)+\sens $ for all $\hat\theta\in B_{\thresh+\backoff,S}$, we have 
$B_{\thresh,S''}\cap B_{\thresh+\backoff,S} \subseteq   B_{\thresh+\sens,S'}$. 
We are left to show that 
$B_{\thresh,S''}\setminus B_{\thresh+\backoff,S}=\emptyset$, which follows from the fact that 
$(B_{\thresh,S''}\setminus B_{\thresh+(k^*+1.5)\sens,S})\cap B_{\thresh+\backoff,S})=\emptyset$ and $D_{S''}(\hat\theta)$ is a Lipschitz continuous function. Similarly, it follows that $  B_{\thresh - \sens,S'}\subseteq B_{\thresh,S''}$.
In particular, this implies that $B_{\thresh,S'} \subseteq B_{\thresh+\backoff,S}$ for any $S'$ with $d_H(S',S)\leq k^*$.

We first show that for any $E \subset B_{\thresh,S'}$ one side of the $(\varepsilon/2,4e^{\varepsilon/2}\delta)$-DP condition is met: 
$\prob_{\hat\theta\sim r_{(\varepsilon,\sens,\thresh,S')}} 
    ( \hat\theta \in E)\leq e^{\varepsilon /2}
    \prob_{\hat\theta\sim r_{(\varepsilon,\sens,\thresh,S'')}} (\hat\theta\in E) + 4e^{\varepsilon/2}\delta$ for all $S''\sim S'$ 
where $r_{(\varepsilon,\sens,\thresh,S')}$ and $r_{(\varepsilon,\sens,\thresh,S'')}$ are the distributions used in the exponential mechanism as defined in \eqref{def:exp} respectively.
For $B = B_{\thresh,S'} \cap B_{\thresh,S''} $, we have 
\begin{eqnarray*}
    \prob_{\hat\theta\sim r_{(\varepsilon,\sens,\thresh,S')}} 
    ( \hat\theta \in E) &=& \prob_{\hat\theta\sim r_{(\varepsilon,\sens,\thresh,S')}} 
    ( \hat\theta \in E\cap B) + \prob_{\hat\theta\sim r_{(\varepsilon,\sens,\thresh,S')}} 
    ( \hat\theta \in E \setminus B) \\
    &=& \frac{\prob_{\hat\theta\sim r_{(\varepsilon,\sens,\thresh,S')}} 
    ( \hat\theta \in E\cap B)}{\prob_{\hat\theta\sim r_{(\varepsilon,\sens,\thresh,S'')}} 
    ( \hat\theta \in E\cap B)}\prob_{\hat\theta\sim r_{(\varepsilon,\sens,\thresh,S'')}} 
    ( \hat\theta \in E\cap B) + \prob_{\hat\theta\sim r_{(\varepsilon,\sens,\thresh,S')}} 
    ( \hat\theta \in E \setminus B) \\
    &\leq& \frac{\prob_{\hat\theta\sim r_{(\varepsilon,\sens,\thresh,S')}} 
    ( \hat\theta \in E\cap B)}{\prob_{\hat\theta\sim r_{(\varepsilon,\sens,\thresh,S'')}} 
    ( \hat\theta \in E\cap B)}\prob_{\hat\theta\sim r_{(\varepsilon,\sens,\thresh,S'')}} 
    ( \hat\theta \in E) + \prob_{\hat\theta\sim r_{(\varepsilon,\sens,\thresh,S')}} 
    ( \hat\theta \not\in B_{\thresh,S''})\;.
\end{eqnarray*}
The ratio is bounded due to the local sensitivity bound at $S'$ as   
\begin{eqnarray*}
\frac{\prob_{\hat\theta\sim r_{(\varepsilon,\sens,\thresh,S')}} 
    ( \hat\theta \in E\cap B)}{\prob_{\hat\theta\sim r_{(\varepsilon,\sens,\thresh,S'')}} 
    ( \hat\theta \in E\cap B)} &\leq& e^{\varepsilon/4}\frac{w_{S''}(B_{\thresh,S''})}{w_{S'}(B_{\thresh,S'})} \\
    &\leq& e^{\varepsilon/2}\frac{w_{S'}(B_{\thresh,S''})}{w_{S'}(B_{\thresh,S'})}\\
    &\leq& 
    e^{\varepsilon/2}\frac{w_{S'}(B_{\thresh+\sens,S})}{w_{S'}(B_{\thresh,S'})}\;\leq\; e^{\varepsilon/2}(1+2\delta)
    \; ,
\end{eqnarray*}
where the second inequality follows from the fact that 
$w_{S''}(A)\leq e^{\varepsilon/6}w_{S'}(A)$ for any set $A\subset B_{\thresh,S'}\cup B_{\thresh,S''}\subseteq B_{\thresh+\backoff,S}$  and the third inequality follows from the fact that 
$B_{\thresh,S''} \subseteq B_{\thresh+\sens,S'} $.
From the  assumption on the weights,  it follows that ${w_{S'}(B_{\thresh+\sens,S'})}/{w_{S'}(B_{\thresh,S'})}\leq{w_{S'}(B_{\thresh+\sens,S'})}/{w_{S'}(B_{\thresh-\sens,S'})} \leq 1/(1-\delta)\leq 1+2\delta$ for $\delta<1/2$.
Similarly, 
\begin{eqnarray*}
    \prob_{\hat\theta\sim r_{(\varepsilon,\sens,\thresh,S')}} 
    ( \hat\theta \not\in B_{\thresh,S''}) &\leq & \prob_{\hat\theta\sim r_{(\varepsilon,\sens,\thresh,S')}} 
    ( \hat\theta \not\in B_{\thresh-\sens,S'})\\
    &\leq&1- \frac{w_{S'}(B_{\thresh-\sens,S'})}{w_{S'}(B_{\thresh,S'})}\leq 1- \frac{w_{S'}(B_{\thresh-\sens,S'})}{w_{S'}(B_{\thresh+\sens,S'})} \leq \delta\;.
\end{eqnarray*}
Putting these together, we get $\prob_{\hat\theta\sim r_{(\varepsilon,\sens,\thresh,S')}} 
    ( \hat\theta \in E)\leq e^{\varepsilon /2}
    \prob_{\hat\theta\sim r_{(\varepsilon,\sens,\thresh,S'')}} (\hat\theta\in E) + 4e^{\varepsilon/2}\delta$.

Next, we show the other side of the $(\varepsilon/2,4e^{\varepsilon/2}\delta)$-DP condition: 
$\prob_{\hat\theta\sim r_{(\varepsilon,\sens,\thresh,S')}} 
    ( \hat\theta \in E)\leq e^{\varepsilon /2}
    \prob_{\hat\theta\sim r_{(\varepsilon,\sens,\thresh,S)}} (\hat\theta\in E) + 4e^{2\varepsilon}\delta
$ for all $S'\sim 
S$. 
We need to show an upper bound on the ratio:
\begin{eqnarray*}
    \frac{\prob_{\hat\theta\sim r_{(\varepsilon,\sens,\thresh,S')}} 
    ( \hat\theta \in E\cap B)}{\prob_{\hat\theta\sim r_{(\varepsilon,\sens,\thresh,S)}} 
    ( \hat\theta \in E\cap B)} &\leq& e^{\varepsilon/4}\frac{w_{S}(B_{\thresh,S})}{w_{S'}(B_{\thresh,S'})} \\
    &\leq& e^{\varepsilon/2}\frac{w_{S}(B_{\thresh,S})}{w_{S}(B_{\thresh,S'})}\\
    &\leq& 
    e^{\varepsilon/2}\frac{w_{S}(B_{\thresh,S})}{w_{S}(B_{\thresh-\sens,S})}
    \;\leq\; (1+2\delta)e^{\varepsilon/2}\; ,
\end{eqnarray*}
For the probability outside $B_{\thresh,S'}$,
\begin{eqnarray*}
    \prob_{\hat\theta\sim r_{(\varepsilon,\sens,\thresh,S'')}} 
    ( \hat\theta \not\in B_{\thresh,S'}) &\leq & \prob_{\hat\theta\sim r_{(\varepsilon,\sens,\thresh,S'')}} 
    ( \hat\theta \in B_{\thresh+\sens,S'}\setminus B_{\thresh,S'} )\\
    &\leq& \frac{w_{S''}(B_{\thresh+\sens,S'}\setminus B
    _{\thresh,S'})}{w_{S''}(B_{\thresh,S''})}\\
    &\leq& e^{\varepsilon/2} \frac{w_{S'}(B_{\thresh+\sens,S'}\setminus B
    _{\thresh,S'})}{w_{S'}(B_{\thresh,S''})}\\
    & \leq & e^{\varepsilon/2} \frac{w_{S'}(B_{\thresh+\sens,S'})-w_{S'}(B
    _{\thresh,S'})}{w_{S'}(B_{\thresh-\sens,S'})}\\
    &\leq & e^{\varepsilon/2} (1+2\delta -1) \;=\; 2e^{\varepsilon/2}\delta
    \;.
\end{eqnarray*}
where the first inequality follows from $B_{\thresh,S''}\subseteq B_{\thresh+\sens,S'}$, the second inequality follows from $ (B_{\thresh+\sens,S'}\setminus B_{\thresh,S'})\cap B_{\thresh,S''}\subseteq B_{\thresh+\sens,S'}\setminus B_{\thresh,S'}$, the third inequality follows from the fact that $B_{\thresh,S''} \subseteq B_{\thresh+\sens,S'}$ and the local sensitivity assumption, 
and the last inequality follows from the weight assumption and $B_{\thresh-\sens,S'}\subseteq B_{\thresh,S'}$.

\end{proof}

The next lemma identifies the range of the threshold $k^*=O(\thresh/\sens)$ that ensures safety. 

\begin{lemma}
Under the assumption~\ref{asmp_local}, 
if there exists a $g>0$ such that 
$\thresh-\sens(k^*+g+1)> 0$ and  
\begin{eqnarray}
\frac{{\rm Vol}(B_{\thresh + \sens (k^*+1),S})}{{\rm Vol}(B_{\thresh - \sens(k^* +g+1),S})} e^{\frac{-\varepsilon g}{4}  }\;\;\leq \;\;\frac{1}{8}e^{-\varepsilon/2}\delta \;, 
\end{eqnarray} 
then $S'\in\safe_{(\varepsilon/2,\delta/2,\thresh)}$ for all $S'$ within Hamming distance $k^*$ from $S$.
\label{lem:safe_vol}
\end{lemma}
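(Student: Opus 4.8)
The plan is to reduce the statement to Lemma~\ref{lem:safe_ratio}, which already converts a weight-ratio bound $w_{S'}(B_{\thresh-\sens,S'})\geq(1-\delta')w_{S'}(B_{\thresh+\sens,S'})$ into membership in $\safe_{(\varepsilon,4e^{2\varepsilon}\delta',\thresh)}$. So I would first rescale the privacy parameters: it suffices to show, for every $S'$ at Hamming distance at most $k^*$ from $S$, that
\begin{eqnarray*}
w_{S'}(B_{\thresh-\sens,S'})\;\geq\;\Big(1-\tfrac{\delta}{8e^{2\cdot(\varepsilon/2)}}\Big)\,w_{S'}(B_{\thresh+\sens,S'})\;,
\end{eqnarray*}
since then Lemma~\ref{lem:safe_ratio} (applied with $\varepsilon$ there equal to $\varepsilon/2$ and $\delta$ there equal to $\delta/(8e^{\varepsilon})$) gives $S'\in\safe_{(\varepsilon/2,\,4e^{\varepsilon}\cdot\delta/(8e^{\varepsilon}),\,\thresh)}=\safe_{(\varepsilon/2,\delta/2,\thresh)}$. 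Equivalently I need to bound the weight of the thin shell: $w_{S'}(B_{\thresh+\sens,S'}\setminus B_{\thresh-\sens,S'})\leq (\delta/(8e^{\varepsilon}))\,w_{S'}(B_{\thresh+\sens,S'})$, and for a lower bound on the right-hand side it is enough to lower bound $w_{S'}(B_{\thresh-\sens(k^*+g+1),S'})$, the weight of a smaller, "safely interior'' ball.

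The second step is to control everything through the reference dataset $S$ rather than $S'$. As established at the top of the proof of Lemma~\ref{lem:safe_ratio}, the local sensitivity assumption~\ref{asmp_local} propagates along the path from $S$ to $S'$: for $\hat\theta\in B_{\thresh+\backoff,S}$ we have $|\robdist_{S'}(\hat\theta)-\robdist_S(\hat\theta)|\leq k^*\sens$, hence the nested inclusions $B_{t-k^*\sens,S}\subseteq B_{t,S'}\subseteq B_{t+k^*\sens,S}$ for all relevant $t$, and correspondingly $w_{S'}(A)$ and $w_S(A)$ agree up to a factor $e^{\varepsilon k^*\sens/(4\sens)}=e^{\varepsilon k^*/4}$ on subsets of $B_{\thresh+\backoff,S}$. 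Using these I bound the shell weight from above by $\mathrm{Vol}(B_{\thresh+\sens,S'})\cdot\sup_{\hat\theta\in B_{\thresh+\sens,S'}}e^{-(\varepsilon/4\sens)\robdist_{S'}(\hat\theta)}\leq \mathrm{Vol}(B_{\thresh+\sens(k^*+1),S})\cdot e^{-(\varepsilon/4\sens)(\thresh-\sens)}$ (the loss is at least $\thresh-\sens$ on the complement of $B_{\thresh-\sens,S'}$), while I bound $w_{S'}(B_{\thresh+\sens,S'})$ from below by the weight of the interior ball $B_{\thresh-\sens(k^*+g+1),S}\subseteq B_{\thresh-\sens(g+1),S'}\subseteq B_{\thresh+\sens,S'}$, on which the loss is at most $\thresh-\sens(k^*+g+1)$, giving a lower bound $\mathrm{Vol}(B_{\thresh-\sens(k^*+g+1),S})\cdot e^{-(\varepsilon/4\sens)(\thresh-\sens(k^*+g+1))}$. (The hypothesis $\thresh-\sens(k^*+g+1)>0$ ensures this interior ball and its loss bound make sense; note $\mathrm{Vol}$ and $w$ are monotone in the radius, and the exponent simplifications are where I would be a little careful but not exhaustive.)

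The third step is to take the ratio. Dividing the upper bound on the shell weight by the lower bound on $w_{S'}(B_{\thresh+\sens,S'})$, the exponential factors combine into $e^{-(\varepsilon/4\sens)(\thresh-\sens)+(\varepsilon/4\sens)(\thresh-\sens(k^*+g+1))}=e^{-(\varepsilon/4\sens)\cdot \sens g\cdot(\text{const})}$; tracking the constant carefully it is $e^{-\varepsilon g/4}$ up to the $\pm\sens$ slack absorbed into $g$, and the volume factors give exactly $\mathrm{Vol}(B_{\thresh+\sens(k^*+1),S})/\mathrm{Vol}(B_{\thresh-\sens(k^*+g+1),S})$. So the ratio is at most the left-hand side of the displayed hypothesis, which by assumption is $\leq \tfrac18 e^{-\varepsilon/2}\delta = \delta/(8e^{\varepsilon/2})$, and — after re-examining the precise constant needed for the $\safe_{(\varepsilon/2,\delta/2,\thresh)}$ conclusion — this is exactly the bound required to invoke Lemma~\ref{lem:safe_ratio}. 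I expect the main obstacle to be bookkeeping: getting the shifts by $k^*\sens$ versus $(k^*+1)\sens$ versus $(k^*+g+1)\sens$ to line up consistently between the volume comparisons and the exponential-weight comparisons, and making sure the factor $e^{\varepsilon k^*/4}$ coming from transferring weights between $S$ and $S'$ is already accounted for (it is, because the loss-gap argument gives the cleaner $e^{-\varepsilon g/4}$ directly in terms of $S$-balls, so one never pays the $e^{\varepsilon k^*/4}$ separately). No genuinely hard inequality is involved once the inclusions from assumption~\ref{asmp_local} are in hand.
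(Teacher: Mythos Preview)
Your proposal is correct and follows essentially the same route as the paper: reduce to Lemma~\ref{lem:safe_ratio} with $\delta'=(1/8)e^{-\varepsilon/2}\delta$, upper bound the shell weight via $B_{\thresh+\sens,S'}\subseteq B_{\thresh+(k^*+1)\sens,S}$ together with $\robdist_{S'}\geq\thresh-\sens$ on the shell, and lower bound the denominator via $B_{\thresh-(k^*+g+1)\sens,S}\subseteq B_{\thresh-(g+1)\sens,S'}$ together with $\robdist_{S'}\leq\thresh-(g+1)\sens$ there. One small slip: on the interior ball the relevant bound is $\robdist_{S'}\leq\thresh-\sens(g+1)$ (not $\thresh-\sens(k^*+g+1)$, since $w_{S'}$ uses $\robdist_{S'}$), and with that correction the exponential ratio comes out to exactly $e^{-\varepsilon g/4}$ as both you and the paper claim.
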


\begin{proof}
    Consider $S'$ at Hamming distance $k$ away from $S$. From Lemma~\ref{lem:safe_ratio} 
    it suffices to show that 
    $w_{S'}(B_{\thresh-\sens,S'})/w_{S'}(B_{\thresh+\sens,S'})\geq 1-\delta'$ for $\delta'=(1/8)e^{-\varepsilon/2}\delta$, which is equivalent to 
    $$ w_{S'}(B_{\thresh+\sens,S'}\setminus B_{\thresh - \sens,S'})/w_{S'}(B_{\thresh+\sens,S'}) \leq \delta'\;.$$ 
    The denominator is lower bounded by 
    \begin{eqnarray*}
    w_{S'}(B_{\tau+\sens,S'})\;\geq\; w_{S'}(B_{\tau-\sens(1+g),S'})&\geq& {\rm Vol}(B_{\tau-\sens(1+g),S'})e^{-\varepsilon(\tau-\sens(1+g))/(4\sens)} \\
    &\geq& {\rm Vol}(B_{\tau-\sens(1+g+k),S})e^{-\varepsilon(\tau-\sens(1+g))/(4\sens)}\;,
    \end{eqnarray*}
    where the last inequality uses the local sensitivity  (the assumption~\ref{asmp_local}). 
    The numerator is upper bounded by 
    \begin{eqnarray*}
    w_{S'}(B_{\thresh+\sens,S'}\setminus B_{\thresh - \sens,S'}) \;\leq\; 
    w_{S'}(B_{\thresh+(k+1)\sens,S}\setminus B_{\thresh - \sens,S'}) \;\leq\; 
    {\rm Vol}(B_{\thresh+(k+1)\sens,S})e^{-\varepsilon(\thresh-\sens)/(4\sens)}\;,
    \end{eqnarray*} 
    where the first inequality uses the local sensitivity. 
    Together, it follows that 
    $$\frac{w_{S'}(B_{\thresh+\sens,S'}\setminus B_{\thresh - \sens,S'})}{w_{S'}(B_{\thresh+\sens,S'})} \;\leq\; \frac{{\rm Vol}(B_{\thresh+(k+1)\sens,S})e^{-\varepsilon(\thresh-\sens)/(4\sens)}}{{\rm Vol}(B_{\tau-\sens(1+g+k),S})e^{-\varepsilon(\tau-\sens(1+g))/(4\sens)}}\;\leq\;\delta' \;=\; \frac{1}{8}e^{\varepsilon/2}\delta
    \;,$$ 
    as $e^{-\varepsilon(\thresh-\sens)/(4\sens)}/e^{-\varepsilon(\tau-\sens(1+g))/(4 \sens)}=e^{-\varepsilon g/4}$, which implies safety. 
    
\end{proof}

We next show  that $k^*=O((1/\varepsilon)\log(1/(\delta\zeta)))$ is sufficient to ensure a large enough safety margin of $m_\thresh-k^*=\Omega((1/\varepsilon)\log(1/\zeta))$.

\begin{lemma}
    \label{lem:safetymargin} 
Under the assumptions \ref{asmp_vol}, \ref{asmp_local}, and \ref{asmp_sens} of Theorem~\ref{thm:utility}, for $k^*=(2/\varepsilon)\log(4/(\delta\zeta))$, 
    if $d_H(S',S)\leq (2/\varepsilon)\log(4/(\zeta\delta))$ then $S'\in\safe_{(\varepsilon/2,\delta/2,\thresh )}$. 
\end{lemma}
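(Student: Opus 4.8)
The goal is to verify that with the specific choice $k^* = (2/\varepsilon)\log(4/(\delta\zeta))$, every dataset $S'$ within Hamming distance $k^*$ of $S$ is $(\varepsilon/2,\delta/2,\thresh)$-safe. The natural strategy is to invoke Lemma~\ref{lem:safe_vol} with a carefully chosen slack parameter $g$. Concretely, I would set $g$ so that the volume ratio $\mathrm{Vol}(B_{\thresh+(k^*+1)\sens,S})/\mathrm{Vol}(B_{\thresh-(k^*+g+1)\sens,S})$ is controlled by the bounded-volume assumption~\ref{asmp_vol}, and then check that the exponential factor $e^{-\varepsilon g/4}$ beats $(1/8)e^{-\varepsilon/2}\delta$. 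A clean choice is $g$ proportional to $c_2 p/\varepsilon + \log(1/\delta) + \log(1/\zeta)$, matching the exponent $c_2 p$ appearing in the volume bound.

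\textbf{Key steps.} First I would record that, under the bounded-sensitivity assumption~\ref{asmp_sens}, $\sens$ is so small that $(k^*+1)\sens$ and $(k^*+g+1)\sens$ are both tiny compared to $\thresh$; in particular $\thresh + (k^*+1)\sens \le \thresh + (k^*+1)\sens + c_1\rho$ and $\thresh - (k^*+g+1)\sens \ge (7/8)\thresh - (k^*+1)\sens - c_1\rho > 0$, the last inequality being exactly the positivity clause of assumption~\ref{asmp_vol} once $g\sens$ is absorbed (this requires $g\sens \le c_1\rho + (1/8)\thresh$, which I would verify is implied by assumption~\ref{asmp_sens} for the chosen $g$). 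Hence the volume ratio in Lemma~\ref{lem:safe_vol} is sandwiched between $1$ and $\mathrm{Vol}(B_{\thresh+(k^*+1)\sens+c_1\rho,S})/\mathrm{Vol}(B_{(7/8)\thresh-(k^*+1)\sens-c_1\rho,S}) \le e^{c_2 p}$ by assumption~\ref{asmp_vol}. Second, with this bound the hypothesis of Lemma~\ref{lem:safe_vol} reduces to $e^{c_2 p} e^{-\varepsilon g/4} \le (1/8)e^{-\varepsilon/2}\delta$, i.e. $g \ge (4/\varepsilon)\big(c_2 p + \varepsilon/2 + \log(8/\delta)\big)$. I would simply \emph{define} $g$ to be (a constant times) this quantity. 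Third, I must then re-examine the condition $g\sens \le c_1\rho + (1/8)\thresh$ and the positivity condition $\thresh - (k^*+g+1)\sens > 0$: plugging in $g = \Theta\big((c_2 p + \varepsilon + \log(1/\delta))/\varepsilon\big)$, $\sens \le (c_0-3c_1)\rho\varepsilon/\big(32(c_2 p + \varepsilon/2 + \log(16/(\delta\zeta)))\big)$, and $\thresh = (c_0+c_1)\rho$, one sees $g\sens = O\big((c_0-3c_1)\rho\big)$, which is a small constant multiple of $\rho$ and hence dominated by $c_1\rho + (1/8)\thresh$ provided the constants are chosen appropriately (the factor $32$ in assumption~\ref{asmp_sens} gives the necessary slack). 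Finally, since $k^* \le k^* + g$, Lemma~\ref{lem:safe_vol} applied with this $g$ yields safety for all $S'$ within Hamming distance $k^*$ of $S$, which is the claim.

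\textbf{Main obstacle.} The delicate point is the bookkeeping in the third step: making sure that a single choice of $g$ simultaneously (i) is large enough to win the $e^{-\varepsilon g/4}$ race against $e^{c_2 p}$, and (ii) is small enough — after multiplication by $\sens$ — that it does not destroy the positivity $\thresh - (k^*+g+1)\sens > 0$ nor push the radii outside the regime where assumption~\ref{asmp_vol} applies. This is precisely why assumption~\ref{asmp_sens} carries the somewhat mysterious constant $32$ and the additive $\log(16/(\delta\zeta))$ inside: it is calibrated so that $g\sens$ is a small fraction of $\rho$. I would make this quantitative by writing $g = (8/\varepsilon)(c_2 p + \varepsilon/2 + \log(16/(\delta\zeta)))$, so that $g\sens \le (c_0-3c_1)\rho/4 \le (1/4)(c_0+c_1)\rho = \thresh/4$, comfortably leaving room for the positivity and volume conditions, while $e^{c_2 p - \varepsilon g/4} = e^{c_2 p - 2(c_2 p + \varepsilon/2 + \log(16/(\delta\zeta)))} \le e^{-c_2 p}\,(\delta\zeta/16)^2 \le (1/8)e^{-\varepsilon/2}\delta$ holds once $\varepsilon \le 2\log(1/\zeta)$ and $p\ge 1$, $c_2\ge 1$ (the remaining edge cases being handled by enlarging constants). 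With the hypotheses of Lemma~\ref{lem:safe_vol} thus met, the conclusion follows, completing the proof.
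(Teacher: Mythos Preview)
Your approach is correct and essentially the same as the paper's: invoke Lemma~\ref{lem:safe_vol} with a suitably chosen $g$, bound the volume ratio via assumption~\ref{asmp_vol}, and use assumption~\ref{asmp_sens} to close. The paper's choice is $g=\thresh/(8\sens)$, which makes $\thresh-(k^*+g+1)\sens=(7/8)\thresh-(k^*+1)\sens$ land exactly on the lower radius in assumption~\ref{asmp_vol}; the remaining inequality $e^{c_2 p-\varepsilon\thresh/(32\sens)}\le(1/8)e^{-\varepsilon/2}\delta$ then rearranges directly to the bound on $\sens$ in assumption~\ref{asmp_sens}, eliminating the constant-chasing and edge-case handling in your third step (note that your specific choice $g=(8/\varepsilon)(\cdots)$ gives $g\sens\le(c_0-3c_1)\rho/4$, which only fits under $(1/8)\thresh+c_1\rho$ when $c_0\le 15c_1$, so you would indeed need to shrink that $8$).
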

\begin{proof}
    Applying Lemma~\ref{lem:safe_vol} with $k^*=(2/\varepsilon)\log(4/(\delta\zeta))$ and $g=(1/(8\sens))\thresh$, we require 
\begin{eqnarray*}
\frac{{\rm Vol}(B_{\thresh + \sens (k^*+1),S})}{{\rm Vol}(B_{(7/8)\thresh-\sens(k^*+1),S})} e^{\frac{-\varepsilon \thresh}{32\sens}  }\;\;\leq \;\;\frac{1}{8}e^{-\varepsilon/2}\delta \;. 
\end{eqnarray*} 
From the assumption \ref{asmp_vol}, 
it is sufficient to have 
\begin{eqnarray*}
 \exp\Big\{c_2 p - \frac{\thresh \varepsilon}{32\sens}\Big\}\;\;\leq \;\; \frac{1}{8}e^{-\varepsilon/2}\delta \;. 
\end{eqnarray*}
    For $\sens\leq (\thresh  \varepsilon)/(32(c_2 p + (\varepsilon/2) + \log(8/\delta)))$, which follows from the assumption~\ref{asmp_sens}, this is satisfied. 
\end{proof}

 \subsection{Proof of Theorem~\ref{thm:utility}} \label{sec:proof_utility}

We first show that we pass the safety test with high probability. 
Define the  error event $E$ as the event that we output $\perp$ in the {\sc Test} step.
    From Lemma~\ref{lem:safetymargin}, we have  $m_\thresh>(2/\varepsilon)\log(4/(\delta\zeta))$ under the assumptions \ref{asmp_vol}, \ref{asmp_local}, and \ref{asmp_sens}. This  implies that 
    \begin{eqnarray*}
        \prob(E ) &=& \prob\big(\, m_\thresh + {\rm Lap}(2/\varepsilon)< (2/\varepsilon)\log(2/\delta)\,\big)
        \;\leq\; \frac{\zeta}{2} \;.
    \end{eqnarray*}

    We next show that resilience implies good utility (once safety test has passed). 
    We want the exponential mechanism to output an accurate $\hat\theta$ near $\theta$ with high probability, i.e., $\prob_{\hat\theta\sim r_{(\varepsilon,\sens,\thresh,S)}}( \popdist(\hat\theta,\theta) \geq c_0 \rho )\leq \zeta/2$. 
    We omit the subscript in the probability for brevity, and it is assumed that randomness is in the sampling of the exponential mechanism. 
    We want to bound by ${\zeta}/{2}$ the failure probability: 
    \begin{eqnarray*}
         \prob\big(\, \popdist(\hat\theta,\theta) \geq c_0\rho \,\big) &\leq& \frac{\prob\big(\,\popdist(\hat\theta,\theta)\geq c_0\rho  \,\big)}{\prob\big(\,\popdist(\hat\theta,\theta)\leq  c_1\rho_1\,\big)} \\
        &\leq& \frac{{\rm Vol}(B_{\thresh,S})}{{\rm Vol}
        (\{\hat\theta:\popdist(\hat\theta,\theta)\leq  c_1\rho\}  ) } 
        \frac{\max_{\hat\theta:\popdist(\hat\theta,\theta)\geq c_0\rho } \prob(\hat\theta)}{\min_{\hat\theta:\popdist(\hat\theta,\theta)\leq  c_1\rho_1  }\prob(\hat\theta)}\;,
    \end{eqnarray*}
as long as $ \{\hat\theta: \popdist(\hat\theta,\theta) \leq c_0\rho \} \subseteq B_{\thresh,S}$ (otherwise we are under-estimating the volume), which follows from the assumption \ref{asmp_resilience}; $ \robdist_S(\hat\theta) \leq (\popdist(\hat\theta,\theta)+c_1\rho) \leq (c_0+c_1)\rho = \thresh$. 

Similarly, since $\hat\theta \in B_{\thresh,S}$ implies $\popdist(\hat\theta,\theta) \leq \thresh+c_1\rho = (c_0+2c_1)\rho$, the volume ratio is bounded by 
\begin{eqnarray*}
    \frac{{\rm Vol}(B_{\thresh,S})}
    {{\rm Vol}(\{\hat\theta:\popdist(\hat\theta,\theta) \leq c_1\rho )} &\leq& 
    \frac{{\rm Vol}(\{\hat\theta:\popdist(\hat\theta,\theta)\leq (c_0+2 c_1) \rho \})}{{\rm Vol}(\{\hat\theta: \popdist(\hat\theta,\theta) \leq c_1\rho\})} 
\;\leq \; e^{c_2p}\;,
\end{eqnarray*}
under the assumption \ref{asmp_vol}. 
The probability ratio can be bounded similarly. 
From the assumption~\ref{asmp_resilience}, 
we have 
\begin{eqnarray*}   
    \frac{\max_{\hat\theta: \popdist(\hat\theta,\theta) \geq c_0\rho} \prob(\hat\theta)}{\min_{\hat\theta:\popdist(\hat\theta,\theta)\leq c_1\rho }\prob(\hat\theta)} \;\leq \;
    \exp\Big\{ -\frac{\varepsilon}{4\sens}\big( (c_0-c_1)-(2c_1)\big)\rho  \Big\}\;\leq\; \exp\Big\{-\frac{\varepsilon(c_0-3c_1)\rho }{4\sens}\Big\}\;.
\end{eqnarray*}
When $ e^{c_2p-(\varepsilon(c_0-3c_1)\rho/(4\sens))})\leq \zeta/2$, we have the desired bound. 
This is guaranteed with our assumption~\ref{asmp_sens}.


\section{Auxiliary lemmas}
\label{sec:aux}

\begin{lemma} For any symmetric $\Sigma \succ 0$ and vector $u\in \reals^d$,
\begin{eqnarray}
		\max_{v:\|v\|=1} \frac{\ip{v}{u}}{v^\top \Sigma v} =\left\|\Sigma^{-1/2}u\right\|\;.
\end{eqnarray}
\label{lem:vector_norm}
\end{lemma}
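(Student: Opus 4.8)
The plan is to prove this as a restatement of Lemma~\ref{lem:equidist}, either by citing that lemma directly or by reproving it in two lines. One preliminary point to record: the denominator has to be read as $\sqrt{v^\top\Sigma v}=\sigma_v$ — this is the form in which the lemma is actually invoked (e.g.\ in the proof of Lemma~\ref{lem:true_dist_lr}), and with the literal quadratic denominator the identity cannot hold, since replacing $v$ by $cv$ with $c>0$ and $\|v\|=1$ rescales $\langle v,u\rangle/(v^\top\Sigma v)$ by $c^{-1}$ while leaving the right-hand side fixed. So the statement to prove is $\max_{v:\|v\|=1}\langle v,u\rangle/\sqrt{v^\top\Sigma v}=\|\Sigma^{-1/2}u\|$, and the case $u=0$ is trivial, so I would assume $u\neq0$.

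First I would note that $v\mapsto\langle v,u\rangle/\sqrt{v^\top\Sigma v}$ is invariant under $v\mapsto cv$ for every $c>0$, so the maximum over the unit sphere equals the supremum of the same ratio over all $v\neq0$. Since $\Sigma\succ0$, the map $w=\Sigma^{1/2}v$ is a bijection of $\reals^d\setminus\{0\}$, and under $v=\Sigma^{-1/2}w$ one has $v^\top\Sigma v=\|w\|^2$ and $\langle v,u\rangle=\langle w,\Sigma^{-1/2}u\rangle$, so the objective becomes $\langle w,\Sigma^{-1/2}u\rangle/\|w\|$. Cauchy--Schwarz then bounds this by $\|\Sigma^{-1/2}u\|$, with equality exactly when $w$ is a positive multiple of $\Sigma^{-1/2}u$, i.e.\ when $v$ is proportional to $\Sigma^{-1}u$ (rescaled to unit norm); hence the supremum is attained and equals $\|\Sigma^{-1/2}u\|$. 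Alternatively, one invokes Lemma~\ref{lem:equidist} with $\hat\mu-\mu$ replaced by $u$, since there $\langle v,\hat\mu\rangle-\mu_v=\langle v,u\rangle$ and the difference between $\|v\|\le1$ and $\|v\|=1$ is immaterial by the scale-invariance just noted.

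The hard part will only be a matter of care, not of substance: one must read the denominator with the square root, and — if a self-contained proof is preferred over citing Lemma~\ref{lem:equidist} — spell out the Cauchy--Schwarz equality case so that the claimed maximum is genuinely attained (at $v\propto\Sigma^{-1}u$) rather than merely a supremum.
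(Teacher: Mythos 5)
Your proof is correct and, at bottom, matches the paper's, whose entire proof of Lemma~\ref{lem:vector_norm} is the one-line remark that it ``follows analogously from the proof of Lemma~\ref{lem:equidist}'' --- i.e.\ Cauchy--Schwarz in $\Sigma$-adapted coordinates. Your self-contained version is slightly cleaner than what Lemma~\ref{lem:equidist} actually does: the paper expands both $u$ and $v$ in the eigenbasis of $\Sigma$ and massages the Cauchy--Schwarz inequality by hand, whereas you substitute $w=\Sigma^{1/2}v$ once and read the answer off $\langle w,\Sigma^{-1/2}u\rangle/\|w\|$, with the attaining direction $v\propto\Sigma^{-1}u$ falling out of the equality case. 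You are also right to flag the missing square root in the denominator; the statement as printed fails already for $\Sigma=c^2\,\mathbf{I}_{d\times d}$ with $c\neq1$ (LHS $=\|u\|/c^2$, RHS $=\|u\|/c$), and it is the $\sigma_v=\sqrt{v^\top\Sigma v}$ form that Lemma~\ref{lem:true_dist_lr} actually uses. One tiny quibble with the way you justify the typo: replacing $v$ by $cv$ takes you off the constraint set $\|v\|=1$, so that sentence doesn't quite parse as written; the rescaling argument is airtight if you instead rescale $\Sigma\mapsto c^2\Sigma$ (or equivalently check a scalar multiple of the identity), which shows the two sides scale differently in $\Sigma$.
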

\begin{proof}
 This follows analogously from the proof of Lemma~\ref{lem:equidist}. 
\end{proof}

\begin{lemma}\label{lem:bounded_norm}
	Let $\Sigma, A\in \reals^{d\times d}$ be a symmetric matrix. If $ -c \mathbf{I}_{d \times d} \preceq \Sigma^{-1/2}A\Sigma^{-1/2}-\mathbf{I}_{d \times d}\preceq c \mathbf{I}_{d \times d}$ for some $c>0$, then we have for any $u\in \reals^d$, 
	\begin{eqnarray}
		\|\Sigma^{-1/2}(A-\Sigma)u\|\leq c\|\Sigma^{1/2}u\|\;.
	\end{eqnarray}
\end{lemma}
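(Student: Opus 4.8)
The plan is to reduce the matrix statement to a spectral-norm bound and then apply it to the vector $u$. First I would observe that the hypothesis $-c\mathbf{I}_{d\times d}\preceq \Sigma^{-1/2}A\Sigma^{-1/2}-\mathbf{I}_{d\times d}\preceq c\mathbf{I}_{d\times d}$ is exactly the statement that the symmetric matrix $M:=\Sigma^{-1/2}A\Sigma^{-1/2}-\mathbf{I}_{d\times d}$ satisfies $\|M\|\le c$, since for a symmetric matrix the operator norm equals the largest absolute eigenvalue, and the two-sided Loewner bound says all eigenvalues of $M$ lie in $[-c,c]$.

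Next I would rewrite the quantity we want to bound in terms of $M$. We have $\Sigma^{-1/2}(A-\Sigma)u = \Sigma^{-1/2}(A-\Sigma)\Sigma^{-1/2}\Sigma^{1/2}u = M\Sigma^{1/2}u$, where I used $\Sigma^{-1/2}A\Sigma^{-1/2}-\Sigma^{-1/2}\Sigma\Sigma^{-1/2} = \Sigma^{-1/2}A\Sigma^{-1/2}-\mathbf{I}_{d\times d} = M$ (here $\Sigma^{1/2}$ and $\Sigma^{-1/2}$ are the symmetric square root of $\Sigma\succ0$ and its inverse, which commute appropriately). Then
\begin{eqnarray*}
\|\Sigma^{-1/2}(A-\Sigma)u\| \;=\; \|M\Sigma^{1/2}u\| \;\le\; \|M\|\,\|\Sigma^{1/2}u\| \;\le\; c\,\|\Sigma^{1/2}u\|\;,
\end{eqnarray*}
which is the desired conclusion. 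The only mild subtlety is the algebraic manipulation $\Sigma^{-1/2}(A-\Sigma)u = M\Sigma^{1/2}u$; I would spell it out carefully since it relies on $\Sigma^{-1/2}\Sigma\Sigma^{-1/2}=\mathbf{I}_{d\times d}$ and on inserting $\Sigma^{1/2}\Sigma^{-1/2}=\mathbf{I}_{d\times d}$ between $(A-\Sigma)$ and $u$.

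There is no real obstacle here — the statement is essentially a restatement of submultiplicativity of the operator norm together with the equivalence between Loewner order bounds and spectral norm bounds for symmetric matrices. The one point to be careful about is that $A$ is assumed symmetric but need not be positive definite, so $M=\Sigma^{-1/2}A\Sigma^{-1/2}-\mathbf{I}_{d\times d}$ is symmetric (being a difference of symmetric matrices) and hence its eigenvalues are real, which is what lets us pass from the two-sided Loewner bound to $\|M\|\le c$; I would note this explicitly. If one prefers to avoid invoking "operator norm = spectral radius for symmetric matrices", an alternative is to bound $\|M\Sigma^{1/2}u\|^2 = (\Sigma^{1/2}u)^\top M^2 (\Sigma^{1/2}u)$ and use $M^2\preceq c^2\mathbf{I}_{d\times d}$, which follows from $-c\mathbf{I}_{d\times d}\preceq M\preceq c\mathbf{I}_{d\times d}$ by diagonalizing $M$; this gives the same bound and is completely elementary.
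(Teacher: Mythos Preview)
Your proof is correct and essentially the same as the paper's. The paper takes your ``alternative'' route verbatim: it squares the Loewner bound to get $M^2\preceq c^2\mathbf{I}_{d\times d}$, rewrites this as $(A-\Sigma)\Sigma^{-1}(A-\Sigma)\preceq c^2\Sigma$, and then evaluates $\|\Sigma^{-1/2}(A-\Sigma)u\|^2=u^\top(A-\Sigma)\Sigma^{-1}(A-\Sigma)u\le c^2\|\Sigma^{1/2}u\|^2$; your primary argument via $\|M\|\le c$ and submultiplicativity is just the one-line version of the same computation.
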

\begin{proof}
	Using the fact that  $-\mathbf{I}_{d \times d}\preceq M\preceq \mathbf{I}_{d \times d} $ implies $-\mathbf{I}_{d \times d}\preceq M^2\preceq \mathbf{I}_{d \times d}$, for any symmetric matrix $M$, we know \begin{eqnarray}
		 -c^2 \mathbf{I}_{d \times d} \preceq \Sigma^{-1/2}(A-\Sigma)\Sigma^{-1}(A-\Sigma)\Sigma^{-1/2}\preceq c^2 \mathbf{I}_{d \times d}\;,
	\end{eqnarray}
	which implies that 
	\begin{eqnarray}
		 -c^2 \Sigma \preceq (A-\Sigma)\Sigma^{-1}(A-\Sigma)\preceq c^2 \Sigma\;.
	\end{eqnarray}
	
	Thus, we know 
	\begin{eqnarray}
	\|\Sigma^{-1/2}(A-\Sigma)u\|^2 = u^\top (A-\Sigma)\Sigma^{-1}(A-\Sigma) u\leq c^2 u^\top \Sigma u= c^2\|\Sigma^{1/2}u\|^2\;.
	\end{eqnarray}
\end{proof}

\section{Existing lower bounds}
\label{sec:lb}



\begin{theorem}[Lower bound for DP Gaussian mean estimation with known covariance {\cite[Lemma~6.7]{KLSU19}}]
Let $\hat{\mu}:\reals^{n\times d}\rightarrow [-R\sigma, R\sigma]^d$ be an $(\varepsilon, \delta)$-differentially private estimator (with $\delta\leq \sqrt{d}/(48\sqrt{2}Rn\sqrt{\log(48\sqrt{2}Rn/\sqrt{d})})$) such that for every Gaussian distribution $P=\cN(\mu, \sigma^2\mathbf{I}_{d \times d})$ (for $-R\sigma\leq \mu_j \leq R\sigma$ where $j\in [d]$) and  
\begin{eqnarray}
	\E_{S\sim P^n}\left[\|\hat{\mu}(S)-\mu\|^2\right]\leq \alpha^2\leq \frac{d\sigma^2R^2}{6}\;,
\end{eqnarray}
then $n\geq \frac{d\sigma}{24\alpha\varepsilon}$.
\end{theorem}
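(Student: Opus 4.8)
The statement is the classical $(\varepsilon,\delta)$-DP lower bound for Gaussian mean estimation with known covariance, and the natural route is a packing/testing reduction of exactly the kind already used in this paper for Proposition~\ref{thm:lowerbound_mean_hypercontractive} (via Lemma~\ref{thm:packing}). After rescaling $x\mapsto x/\sigma$ we may assume $\sigma=1$, so the estimator takes values in $[-R,R]^d$, the requirement becomes $\E_{S\sim P^n}\|\hat\mu(S)-\mu\|^2\le \alpha^2$ with $\alpha^2\le dR^2/6$, and it suffices to prove $n\ge d/(24\alpha\varepsilon)$. First I would reduce estimation to identification: by Markov's inequality, $\E_{S\sim P^n}\|\hat\mu(S)-\mu\|^2\le\alpha^2$ implies $\prob_{S\sim P^n}(\|\hat\mu(S)-\mu\|\ge 2\alpha)\le 1/4$, so if $\{\mu^v\}_{v\in\mathcal V}\subseteq[-R,R]^d$ are pairwise at Euclidean distance strictly more than $4\alpha$, the balls $B(\mu^v,2\alpha)$ are disjoint and $\hat\mu$ correctly locates $v$ under $P_v=\mathcal N(\mu^v,\mathbf I_{d\times d})$ with probability at least $3/4$.

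Next I would construct the packing. Take the $2^{\Omega(d)}$-point set $\mathcal V$ of nearly-orthogonal unit vectors with $\|v-v'\|\ge 1/2$ from \cite[Lemma 6]{acharya2021differentially} (the same device used in Proposition~\ref{thm:lowerbound_mean_hypercontractive}), spread each vector evenly over the $d$ coordinates (equivalently, use a Gilbert--Varshamov binary code), and set $\mu^v=c\alpha\,v$ with $c=\Theta(1)$ chosen so that the minimum pairwise distance $\tfrac12 c\alpha$ just exceeds $4\alpha$. Then all pairwise distances are $\Theta(\alpha)$, and crucially each coordinate of $\mu^v$ has magnitude $O(\alpha/\sqrt d)$, so the constraint $\alpha^2\le dR^2/6$ is exactly what guarantees $\mu^v\in[-R,R]^d$; this is where the numerical constant $1/6$ enters. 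Consequently $d_{\rm TV}(P_v,P_{v'})=2\Phi\big(\tfrac12\|\mu^v-\mu^{v'}\|\big)-1\le \tfrac12\|\mu^v-\mu^{v'}\| =: p=\Theta(\alpha)$ for all $v\ne v'$.

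Finally I would apply the group-privacy/packing bound (Lemma~\ref{thm:packing}, which is monotone in $p$, so using the common upper bound $p$ on every pairwise $d_{\rm TV}$ is legitimate) with separation parameter $t=2\alpha$: it forces the averaged mislocalization probability to be at least
\begin{equation*}
\frac{(|\mathcal V|-1)\big(\tfrac12 e^{-\varepsilon\lceil np\rceil}-\delta\,\tfrac{1-e^{-\varepsilon\lceil np\rceil}}{1-e^{-\varepsilon}}\big)}{1+(|\mathcal V|-1)e^{-\varepsilon\lceil np\rceil}}\,.
\end{equation*}
Combined with the $3/4$-identification bound this is contradictory unless the displayed quantity is $<1/4$. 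Since $|\mathcal V|=2^{\Omega(d)}$, the displayed quantity is $\ge \tfrac13$ as soon as $e^{-\varepsilon\lceil np\rceil}\ge|\mathcal V|^{-1/2}$ and the $\delta$-term is subdominant, and the hypothesized bound $\delta\le \sqrt d/(48\sqrt2\,Rn\sqrt{\log(48\sqrt2\,Rn/\sqrt d)})$ is precisely tuned so that this holds throughout the range $n<d/(24\alpha\varepsilon)$. Hence $\varepsilon\lceil np\rceil=\Omega(\log|\mathcal V|)=\Omega(d)$, i.e. $n=\Omega(d/(\varepsilon p))=\Omega(d/(\alpha\varepsilon))$; restoring $\sigma$ gives $n=\Omega(d\sigma/(\alpha\varepsilon))$, and tracking constants ($\tfrac12$ versus $\tfrac14$ in the Markov step, the code rate, the constant linking $p$ to $\alpha$, and the $1/6$ in the containment condition) yields $n\ge d\sigma/(24\alpha\varepsilon)$.

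The main obstacle is the delicate interplay of the three numerical requirements on the packing — it must simultaneously be large ($2^{\Omega(d)}$), $>4\alpha$-separated (for identifiability), and contained in $[-R\sigma,R\sigma]^d$ (which pushes the per-coordinate scale down to $O(\alpha/\sqrt d)$ and consumes the hypothesis $\alpha^2\le d\sigma^2R^2/6$) — while keeping $np$ safely below $\Omega(d)/\varepsilon$. The second, more technical, point is the approximate-DP accounting: with $\delta>0$, the group-privacy step over $k=\lceil np\rceil$ records incurs a $\delta$-dependent additive error, and one must verify that the exact threshold on $\delta$ in the hypothesis keeps this error negligible for every $n$ below the claimed bound; matching that threshold tightly — rather than settling for a cruder, possibly exponentially small, bound on $\delta$ — is the part that needs the most care, and it may be cleanest to carry it out through a tracing-attack refinement of the group-privacy step that replaces the crude $e^{\varepsilon k}\delta$ blow-up by a per-record correlation bound.
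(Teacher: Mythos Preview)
The paper does not prove this statement; it is quoted verbatim as an existing result from \cite{KLSU19}. So the comparison is really between your proposal and the original KLSU19 argument.

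Your packing route via Lemma~\ref{thm:packing} has a genuine gap, and it is exactly the one you flag in your last paragraph but then wave away. The packing bound gives a contradiction only when $\tfrac12 e^{-\varepsilon\lceil np\rceil}$ dominates the $\delta/(1-e^{-\varepsilon})$ term. With $p=\Theta(\alpha)$ and $n$ just below $d/(24\alpha\varepsilon)$, you have $\varepsilon\lceil np\rceil=\Theta(d)$, so $e^{-\varepsilon\lceil np\rceil}=e^{-\Theta(d)}$. Dominance therefore requires $\delta\le e^{-\Omega(d)}$. But the hypothesis of the theorem only assumes $\delta\lesssim \sqrt{d}/(Rn\sqrt{\log(Rn/\sqrt d)})$, which at $n\asymp d/(\alpha\varepsilon)$ is merely polynomially small in $d$. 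Your claim that ``the hypothesized bound on $\delta$ is precisely tuned so that this holds'' is false for the packing argument: no choice of constants in the packing construction rescues an exponential-vs-polynomial mismatch. The group-privacy blow-up $\delta\mapsto e^{\varepsilon k}\delta$ over $k=\Theta(n\alpha)$ records is intrinsic to the packing reduction, not an artifact of loose bookkeeping.

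The KLSU19 proof does not use packing at all; it uses a fingerprinting (tracing) attack in the style of Bun--Ullman--Vadhan and Steinke--Ullman. That technique bounds a per-record correlation statistic and is exactly what allows the polynomial-in-$n$ threshold on $\delta$ in the hypothesis---indeed, the specific form $\sqrt d/(Rn\sqrt{\log(\cdot)})$ is the signature of the fingerprinting analysis, not of packing. Your final sentence (``a tracing-attack refinement of the group-privacy step'') is pointing at the right tool, but it is not a refinement of your argument: it is a replacement of it. If you want to prove the theorem as stated, you must abandon Lemma~\ref{thm:packing} and run the fingerprinting argument directly.
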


\begin{theorem}[Lower bound for DP covariance bounded mean estimation {\cite[Theorem~6.1]{kamath2020private}}]
Suppose $\hat{\mu}$ is an $(\varepsilon, 0)$-DP estimator such that, for every product distribution $P\in \reals^d$ such that $\E[P]=\mu$, $\sup_{v:\|v\|=1}\E_{x\sim P}[\ip{v}{x-\mu}^2]\leq 1$ and 
\begin{eqnarray}
    \E_{S\sim P^n}\left[\|\hat{\mu}(S)-\mu\|^2\right]\leq \alpha^2\;.
\end{eqnarray}
Then $n=\Omega\left(d/(\varepsilon\alpha^2)\right)$
\end{theorem}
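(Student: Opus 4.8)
The plan is to follow the packing-based lower bound framework already used in this paper for Proposition~\ref{thm:lowerbound_mean_hypercontractive}, but with a hard instance that is simultaneously a \emph{product} distribution (as required by the statement) and \emph{spiky}, so that the "signal" carrying the mean is supported on events of probability only $\Theta(\alpha^2)$ rather than $\Theta(\alpha)$. Concretely, fix a binary code $\cC\subset\{0,1\}^d$ with $|\cC|=2^{\Omega(d)}$ all of whose pairwise Hamming distances lie in $[\gamma d,\,d]$ for a universal constant $\gamma>0$ (e.g.\ a greedy/Gilbert--Varshamov constant-weight code, the Boolean analogue of \cite[Lemma~6]{acharya2021differentially}). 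For each $v\in\cC$ define the product distribution $P_v=\bigotimes_{j=1}^d\mu_{v_j}$, where $\mu_0$ is the point mass at $0$ and $\mu_1=(1-q)\,\delta_0+q\,\delta_t$ with $q=\Theta(\alpha^2/d)$ and $t=1/\sqrt{2q}$. Then every coordinate has variance $q(1-q)t^2\le \tfrac12$, so $\sup_{\|u\|=1}\E[\langle u,x-\mu_v\rangle^2]\le \tfrac12\le 1$; the mean is $\mu_v=qt\,v$; and for $v\ne v'$ the means are well separated, $\|\mu_v-\mu_{v'}\|=qt\sqrt{d_H(v,v')}\ge qt\sqrt{\gamma d}=\Theta(\sqrt{qd})$, which we scale (by the choice of the constant in $q$) so that $\|\mu_v-\mu_{v'}\|\ge 4\alpha$. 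The crucial point is that, because the signal sits on the low-probability atom $x_j=t$, the single-sample total variation distance is tiny: $d_{\rm TV}(P_v,P_{v'})\le d_H(v,v')\,q\le dq=\Theta(\alpha^2)$.

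Next I would invoke the pure-DP packing bound Lemma~\ref{thm:packing} with $\delta=0$; I only need the version that assumes $d_{\rm TV}(P_v,P_{v'})\le p$, which is all the underlying group-privacy and coupling argument uses, so the mild nonuniformity of the code distances is harmless. Each $P_v$ is a product distribution with bounded covariance, so the hypothesized $(\varepsilon,0)$-DP estimator satisfies $\E_{S\sim P_v^n}[\|\hat\mu(S)-\mu_v\|^2]\le\alpha^2$, and the $\{P_v\}$ form a $2t_0$-packing with $t_0=2\alpha$. Markov's inequality then gives $\frac{1}{|\cC|}\sum_{v\in\cC}\prob_{S\sim P_v^n}\!\big(\|\hat\mu(S)-\mu_v\|\ge t_0\big)\le \tfrac14$, while Lemma~\ref{thm:packing} lower bounds the same average by $\frac{(|\cC|-1)\tfrac12 e^{-\varepsilon\lceil np\rceil}}{1+(|\cC|-1)e^{-\varepsilon\lceil np\rceil}}$ with $p=\Theta(\alpha^2)$. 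Comparing the two forces $(|\cC|-1)e^{-\varepsilon\lceil np\rceil}\le 1$, i.e.\ $\varepsilon\lceil np\rceil\ge \log(|\cC|-1)=\Omega(d)$; since $\lceil np\rceil\le Cn\alpha^2+1$ this yields $n=\Omega\!\big(d/(\varepsilon\alpha^2)\big)$ as soon as $d/\varepsilon$ exceeds a constant (which holds whenever the claimed bound is nontrivial).

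I expect the main obstacle — really the only conceptual point, the rest being bookkeeping — to be getting the construction's parameters to interact correctly. The bounded-covariance constraint caps $t\le 1/\sqrt{q}$, hence the per-coordinate mean at $qt\le\sqrt q$ and the total mean magnitude at $\sqrt{qd}$; to reach separation $\Theta(\alpha)$ one is therefore forced to take $q=\Theta(\alpha^2/d)$, and it is exactly this smallness of $q$ — signal carried by rare heavy values, permitted only because we assume bounded variance rather than sub-Gaussianity — that shrinks $d_{\rm TV}(P_v,P_{v'})$ to $\Theta(\alpha^2)$ and produces the extra $1/\alpha$ over the Bernoulli-type $\Omega(d/(\varepsilon\alpha))$ bound. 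The secondary technical items are verifying that the $d_{\rm TV}\le p$ form of Lemma~\ref{thm:packing} suffices (so that a merely near-equidistant code is enough) and checking existence of a $2^{\Omega(d)}$-size binary code with pairwise distances in $[\gamma d,d]$, both of which are standard.
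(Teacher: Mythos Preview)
The paper does not provide its own proof of this statement: it appears in Appendix~\ref{sec:lb} as a direct citation of \cite[Theorem~6.1]{kamath2020private}, listed among ``Existing lower bounds'' without argument. So there is no in-paper proof to compare against.

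Your proposal is correct, and it is essentially the construction used in the original source. The key idea---a product of sparse heavy-valued coordinate distributions with $q=\Theta(\alpha^2/d)$ and $t=\Theta(1/\sqrt{q})$, so that the covariance constraint is saturated, the means are $\Theta(\alpha)$-separated, yet the per-sample total variation is only $\Theta(\alpha^2)$---is exactly the mechanism that converts the $\Omega(d/(\varepsilon\alpha))$ sub-Gaussian bound into $\Omega(d/(\varepsilon\alpha^2))$ here. Your invocation of Lemma~\ref{thm:packing} with $\delta=0$ and the Markov step are both fine; you are also right that the lemma as stated asks for $d_{\rm TV}(P_v,P_{v'})=p$ but the underlying coupling-plus-group-privacy argument in \cite{barber2014privacy} only uses $d_{\rm TV}\le p$, so a near-equidistant binary code suffices. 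The only boundary caveat is that $q\le 1$ forces $\alpha^2\lesssim d$, but outside that range the claimed bound is already trivial.
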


\begin{theorem}[Lower bound on the error rate for hypercontractive linear regression with independent noise{\cite[Theorem~6.1]{bakshi2021robust}}]
Consider linear model $y=\ip{\beta}{x}+\eta$, where optimal hyperplane $\beta$ is used to generate data, and the noise $\eta$ is independent of the samples $x$. Then there exists two distribution $D_1$ and $D_2$ over $\reals^2\times \reals$ such that the marginal distribution over $\reals^2$ has covariance $\Sigma$ and is $(\kappa_k, k)$-hypercontractive yet $\|\Sigma^{1/2}(\beta_1-\beta_2)\|=\Omega(\sqrt{\kappa_k}\gamma\alpha^{1-1/k})$, where $\beta_1$ and $\beta_2$ are the optimal hyperplanes for $D_1$ and $D_2$ respectively, $\gamma<1/\alpha^{1/k}$ and the noise $\eta$ is uniform over $[-\gamma, \gamma]$.
\end{theorem}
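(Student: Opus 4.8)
This is an information-theoretic impossibility result for robust linear regression — it must hold for every estimator, with arbitrarily many samples and no computational restriction — so the plan is the classical two-point (Le~Cam) argument. First I would record the standard reduction from robust estimation to indistinguishability: if $d_{\rm TV}(D_1,D_2)\le\alpha$, then one can write $D_1=(1-\alpha)M+\alpha B_1$ and $D_2=(1-\alpha)M+\alpha B_2$ for a common mixture component $M$, so an adversary who observes i.i.d.\ draws from $D_1$ can resample the ($\alpha$-fraction, in expectation) points that came from $B_1$ and thereby produce a sample distributed exactly as i.i.d.\ draws from $D_2$, and symmetrically. Hence no estimator can tell the two generative models apart after $\alpha$-corruption, and by the triangle inequality its error in the root-excess-risk metric is at least $\tfrac{1}{2\gamma}\|\Sigma^{1/2}(\beta_1-\beta_2)\|$ on at least one of the two, where $\beta_i=\Sigma^{-1}\E_{D_i}[xy]$ is the population least-squares vector. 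The statement therefore reduces to \emph{exhibiting} two laws $D_1,D_2$ on $\reals^2\times\reals$ with: (i) a common covariate marginal, of covariance $\Sigma$, that is $(\kappa_k,k)$-hypercontractive; (ii) noise $\eta\sim\mathrm{Unif}[-\gamma,\gamma]$ independent of $x$; (iii) $d_{\rm TV}(D_1,D_2)=O(\alpha)$; and (iv) $\|\Sigma^{1/2}(\beta_1-\beta_2)\|=\Omega(\sqrt{\kappa_k}\,\gamma\,\alpha^{1-1/k})$.

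Second, I would set up the construction in dimension two on purpose: the $\alpha^{1-1/k}$ behaviour is dimension-free, and keeping $d=2$ makes this transparent. Fix the covariate law to be identical for both distributions, $x=(x_1,x_2)$, with $x_1$ benign (say $\mathrm{Unif}[-1,1]$, so that ``generic'' directions are trivially well-behaved) and $x_2$ heavy-tailed, realized as a mixture of a \emph{core} of width $\Theta(\alpha\gamma)$ (small enough that the induced shift in $y\mid x$ keeps the two laws close) and a \emph{rare large excursion} of probability $\Theta(\alpha)$ at the magnitude $\Theta(\gamma\,\alpha^{-1/k})$ (up to the rescaling dictated by the three requirements). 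Take the two regression vectors to differ only in the sign of the second coordinate, $\beta_1=(1,c)$ and $\beta_2=(1,-c)$, with $y=\langle\beta_i,x\rangle+\eta$; since $\E[\eta x]=0$ the $\beta_i$ really are the least-squares optima, and $\beta_1-\beta_2$ points along $e_2$.

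Third, I would discharge the three verifications, all by elementary moment bookkeeping. For hypercontractivity it suffices, by the product structure and the benignity of $x_1$, to bound $\E[|x_2|^k]/(\E[x_2^2])^{k/2}$: the excursion controls both $\E[x_2^2]$ (this is what produces the large projected variance) and $\E[|x_2|^k]$, and substituting the chosen magnitude and probability shows this ratio equals $\kappa_k^{\,k}$ for the announced $\kappa_k$. For the total-variation bound I condition on $x$: since the $x$-marginals agree, $d_{\rm TV}(D_1,D_2)=\E_x\bigl[d_{\rm TV}(D_1(\cdot\mid x),D_2(\cdot\mid x))\bigr]$, and given $x$ the two laws of $y$ are $\mathrm{Unif}[-\gamma,\gamma]$ translated by $\pm c\,x_2$, whose total variation is $\min\{1,\,c|x_2|/\gamma\}$; on the core this is $\lesssim c\alpha\gamma/\gamma=O(\alpha)$, and the excursion carries probability $O(\alpha)$, so the integral is $O(\alpha)$. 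For the parameter separation, $\|\Sigma^{1/2}(\beta_1-\beta_2)\|=2c\sqrt{e_2^{\top}\Sigma e_2}=2c\sqrt{\E[x_2^2]}$, and one checks that the calibrated $\E[x_2^2]$ is exactly of the order that yields (iv); the $1/\gamma$ normalization in the error metric is what renders the resulting bound $\Omega(\sqrt{\kappa_k}\,\alpha^{1-1/k})$ scale-free in $\gamma$.

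\textbf{Main obstacle.} The hard part is the calibration of the heavy-tailed coordinate, because the three requirements pull against each other: a large Mahalanobis separation wants $\E[x_2^2]$ as large as possible, which forces a large excursion magnitude and thereby inflates the $k$-th-moment-to-variance ratio, i.e.\ the hypercontractivity constant $\kappa_k$, while the core is simultaneously pinned at width $\Theta(\alpha\gamma)$ and mass $1-\Theta(\alpha)$ by the total-variation budget and cannot compensate. Identifying the exact regime — excursion of probability $\Theta(\alpha)$ at magnitude $\Theta(\gamma\alpha^{-1/k})$, for which $\kappa_k$, $\E[x_2^2]$, and $d_{\rm TV}=O(\alpha)$ are mutually consistent — and then tracking the constants through all three checks is the technical crux; this is precisely the construction of \cite[Theorem~6.1]{bakshi2021robust}, and a packing/privacy variant of the same balancing act is carried out in the proof of Proposition~\ref{thm:lowerbound_regression_dependent} for the dependent-noise case. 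Everything else is routine.
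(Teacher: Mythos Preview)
Your proposal is correct and reconstructs exactly the two-point construction of \cite[Theorem~6.1]{bakshi2021robust}; the present paper does not prove this statement itself but merely cites it in Appendix~\ref{sec:lb} (the same construction is also invoked, with the explicit densities written out, in the commented-out proof of Proposition~\ref{pro:lr_lb_indep} in the source). One small calibration fix: in the actual construction the rare excursion sits at magnitude $\alpha^{-1/k}$ (not $\gamma\,\alpha^{-1/k}$), the core at width $\alpha\gamma$, and $\beta_i=(1,\pm1)$, so that $\E[x_2^2]\asymp\alpha^{1-2/k}$ and $\|\Sigma^{1/2}(\beta_1-\beta_2)\|\asymp\alpha^{1/2-1/k}\ge\gamma\,\alpha^{1-1/k}$ under the hypothesis $\gamma<\alpha^{-1/k}$ --- your ``up to rescaling'' hedge covers this, and your TV computation via conditional shifts of the uniform noise and the moment check for hypercontractivity are exactly the right verifications.
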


\begin{theorem}[Lower bound on the error rate for hypercontractive linear regression with dependent noise{\cite[Theorem~6.2]{bakshi2021robust}}]
There exists two distributions $D_1$, $D_2$ over $\reals^2\times \reals$ such that the marginal distribution over $\reals^2$ has covariance $\Sigma$ and is $\kappa_k, k$-hypercontractive yet $\|\Sigma^{1/2}(\beta_1-\beta_2)\|=\Omega(\sqrt{\kappa_k}\gamma\alpha^{1-2/k})$, where $\beta_1$ and $\beta_2$ are least square solutions for $D_1$ and $D_2$, respectively, $\gamma<1/\alpha^{1/k}$ and the noise is a function of the marginal distribution of $\reals^2$,
\end{theorem}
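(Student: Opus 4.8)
Since this bound is imported for comparison, the plan is to reproduce the two‑point construction of \cite[Theorem~6.2]{bakshi2021robust}, which is a $\gamma,\kappa_k$‑scaled variant of the construction already used in Proposition~\ref{thm:lowerbound_regression_dependent}. I would work in $d=2$, writing $x=(x_1,x_2)$, and take $D_1,D_2$ to share a common $x$‑marginal $q$ and differ only in the conditional law of $y$ given $x$, and only on an $\alpha$‑fraction of the support of $q$; this immediately gives $d_{\rm TV}(D_1,D_2)=\alpha$. The marginal is a product $q(x_1,x_2)=q_1(x_1)q_2(x_2)$ where $q_1$ is a bounded symmetric law (e.g.\ uniform on $[-1,1]$, which pins down a well‑conditioned covariance) and $q_2$ places mass $\alpha$ on two symmetric atoms at $x_2=\pm\alpha^{-1/k}$ and mass $1-\alpha$ on a $\Theta(1)$‑variance symmetric bulk near the origin. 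The atom radius $\alpha^{-1/k}$ is exactly the value forced by the $k$‑th moment constraint: an $\alpha$‑mass there contributes $\Theta(1)$ to $\E|x_2|^k$, so $q$ is $(\kappa_k,k)$‑hypercontractive with $\kappa_k=\Theta_k(1)$, while $\Sigma=\mathrm{diag}(\Var(x_1),\Var(x_2))=\mathrm{diag}(\Theta(1),\Theta(1))$ since in both coordinates the bulk dominates the second moment (using $\alpha^{1-2/k}<1$ for $k\ge 4$, $\alpha<1/2$).

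For the conditional, I would use the freedom that ``the noise is a function of the $\reals^2$‑marginal'' (dependent noise): on the bulk of $x$ set $y$ uniform on $[-\gamma,\gamma]$ independently of $x$ (so the overall noise variance is $\Theta(\gamma^2)$), while at the atoms $x_2=\pm\alpha^{-1/k}$ assign $y$‑values that are correlated with $\mathrm{sign}(x_2)$ and opposite in $D_1$ versus $D_2$ — concretely $y=\pm c\,\mathrm{sign}(x_2)$ under $D_1$ and $y=\mp c\,\mathrm{sign}(x_2)$ under $D_2$ with $c\asymp\gamma\,\alpha^{-1/k}$. The correlation with $x$ is precisely what yields the stronger rate: it shifts $\E[x_2 y]$ by $\Theta(\alpha\cdot\alpha^{-1/k}\cdot\gamma\alpha^{-1/k})=\Theta(\gamma\,\alpha^{1-2/k})$ between $D_1$ and $D_2$, whereas forcing the corrupted $y$‑values to be $x$‑independent would only give $\Theta(\gamma\,\alpha^{1-1/k})$ (cf.\ the independent‑noise theorem above). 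The hypothesis $\gamma<\alpha^{-1/k}$ decouples the heavy‑tail scale $\alpha^{-1/k}$ from the noise scale $\gamma$, ensuring that the residual $\eta=y-\langle\beta,x\rangle$ at the atoms, which is $\Theta(\gamma\alpha^{-1/k})$ and carries mass $\alpha$, still has $\E|\eta|^k=\Theta(\gamma^k)$ and $\Var(\eta)=\Theta(\gamma^2)$, so the construction remains admissible (both $x$‑marginal and noise $(\Theta_k(1),k)$‑hypercontractive, $\E[x\eta]=0$ automatically from the least‑squares definition).

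Finally I would compute $\beta_i=\Sigma^{-1}\E_{D_i}[xy]$: by the symmetry of $q_1$ the first coordinates coincide, and $(\beta_1)_2-(\beta_2)_2=\bigl(\E_{D_1}[x_2 y]-\E_{D_2}[x_2 y]\bigr)/\Var(x_2)=\Theta(\gamma\,\alpha^{1-2/k})$; multiplying by $\Sigma^{1/2}$ and using $\Var(x_2)=\Theta(1)$ and $\kappa_k=\Theta_k(1)$ gives $\|\Sigma^{1/2}(\beta_1-\beta_2)\|=\Omega(\sqrt{\kappa_k}\,\gamma\,\alpha^{1-2/k})$. The step I expect to be the main obstacle is the simultaneous bookkeeping of all the constraints under a single choice of scaling parameters — TV distance exactly $\alpha$, $(\kappa_k,k)$‑hypercontractivity of the shared $x$‑marginal, noise variance $\Theta(\gamma^2)$, $(\Theta_k(1),k)$‑hypercontractivity of the (now $x$‑dependent) noise, and the orthogonality $\E[x\eta]=0$ — all of which must hold while still producing the claimed separation; the resolution is exactly the atom radius $\asymp\alpha^{-1/k}$ together with the regime $\gamma<\alpha^{-1/k}$, which is what makes these requirements mutually consistent.
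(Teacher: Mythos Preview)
The paper does not prove this theorem at all: it is quoted verbatim from \cite[Theorem~6.2]{bakshi2021robust} in the ``Existing lower bounds'' appendix, with no accompanying argument. Your proposal correctly recognizes this and reconstructs the two-point construction from that reference; the computations you outline (atom radius $\alpha^{-1/k}$, atom $y$-values $\Theta(\gamma\alpha^{-1/k})$ with opposite signs in $D_1,D_2$, yielding a $\Theta(\gamma\alpha^{1-2/k})$ shift in $\E[x_2y]$, with the noise and marginal hypercontractivity checks going through under $\gamma<\alpha^{-1/k}$) are all sound and produce the claimed bound. Your construction is in fact the $\gamma$-scaled, $d=2$ specialization of exactly the packing construction the paper \emph{does} carry out in the proof of Proposition~\ref{thm:lowerbound_regression_dependent}, so there is nothing further to compare.
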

\begin{theorem}[Lower bound for DP sub-Gaussian linear regression {\cite[Theorem~4.1]{cai2019cost}}]
	Given i.i.d. samples $S=\{(x_i, y_i)\}_{i=1}^n$ drawn from model $y_i=\ip{\beta}{x_i}+\eta_i$, where $\eta_i\sim \cN(0,\gamma^2)$, $\beta\in \Theta=\{\beta\in \reals^d:\|\beta\|\leq 1\}$, $\prob(\|x\|\leq 1)=1$, $\Sigma=\E[xx^\top]$ is diagonal and satisfies $0<1/L<d\lambda_{\min}(\Sigma)\leq d\lambda_{\max}(\Sigma)<L$ for some constant $L=O(1)$. Denote this class of distribution as $\cP_{\gamma,  \Theta, \Sigma}$. Denote $\cM_{\varepsilon, \delta}$ as a class of $(\varepsilon, \delta)$-DP algorithms. Then suppose $\varepsilon\in (0,1)$, $\delta\in (0, n^{-(1+w)})$ for some fixed $w>0$, then there exists a constant such that
	\begin{eqnarray}
		\inf_{\hat\beta\in \cM_{\varepsilon, \delta}}\sup_{\Sigma\succ0, P\in \cP_{\gamma,  \Theta, \Sigma}}\E_{ P^n}\left[\|\Sigma^{1/2}(\hat{\beta}(S)-\beta)\|^2\right]\geq c\gamma^2\left(\frac{d}{n}+\frac{d^2}{n^2\varepsilon^2}\right)\;.
	\end{eqnarray}
\end{theorem}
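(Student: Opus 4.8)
The plan is to lower bound the two summands of $c\gamma^2\big(d/n+d^2/(n^2\varepsilon^2)\big)$ separately and combine them via $\max\{a,b\}\ge\tfrac12(a+b)$; since an $(\varepsilon,\delta)$-DP estimator is in particular an estimator, the non-private term $\gamma^2 d/n$ does not use privacy and only the second term does. I would base everything on one clean hard family: let $x=e_J$ with $J$ uniform on $[d]$ and $\eta\sim\cN(0,\gamma^2)$ independent of $x$, so $\|x\|=1$ a.s., $\Sigma=\E[xx^\top]=\tfrac1d\mathbf I_{d\times d}$ is diagonal with all eigenvalues $1/d$ (meeting the constraint for any $L>1$), $\beta=\Sigma^{-1}\E[yx]$ is the true parameter, and $\|\beta\|\le1$ is imposed on the hypotheses. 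The key structural observation is that, conditioned on the index vector $(J_i)_{i\in[n]}$, the problem decouples across coordinates: coordinate $j$ is observed only through the sub-sample $S^{(j)}=\{(x_i,y_i):J_i=j\}$, which is $n_j:=|S^{(j)}|$ i.i.d.\ draws $y_i\sim\cN(\beta_j,\gamma^2)$, and $n_j$ concentrates so that on a high-probability event $n_j=\Theta(n/d)$ for all $j$, on which I would condition throughout.

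For the first term I would run an Assouad argument over $\beta\in\{-\tau,\tau\}^d$. For each coordinate $j$, distinguishing $\beta_j=\tau$ from $\beta_j=-\tau$ from the $n_j$ Gaussian observations costs a constant amount of error (Le~Cam) whenever $n_j\tau^2/\gamma^2\lesssim1$, i.e.\ $\tau\asymp\gamma/\sqrt{n_j}\asymp\gamma\sqrt{d/n}$; since $\|\Sigma^{1/2}(\hat\beta-\beta)\|^2=\sum_j\lambda_j(\hat\beta_j-\beta_j)^2=\tfrac1d\sum_j(\hat\beta_j-\beta_j)^2$, summing the per-coordinate bounds gives $\inf_{\hat\beta}\sup\E[\|\Sigma^{1/2}(\hat\beta-\beta)\|^2]\gtrsim\tfrac1d\cdot d\cdot\tau^2=\gamma^2 d/n$, valid as long as the stated rate stays below the trivial ceiling $\lambda_{\max}\|\beta\|^2=O(1/d)$, i.e.\ $n\gtrsim\gamma^2 d^2$.

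For the privacy term I would rerun the same decomposition but replace the Le~Cam step by its $(\varepsilon,\delta)$-DP counterpart. If $\hat\beta$ is $(\varepsilon,\delta)$-DP with respect to $S$, then $\hat\beta_j$ is $(\varepsilon,\delta)$-DP with respect to $S^{(j)}$ (the remaining data, independent of $\beta_j$ given $(J_i)$, only helps), so a one-dimensional DP lower bound applies per coordinate: estimating a mean $\mu\in[-\gamma,\gamma]$ of $\cN(\mu,\gamma^2)$ from $m$ samples under $(\varepsilon,\delta)$-DP with $\delta$ only polynomially small has MSE $\gtrsim(\gamma/(m\varepsilon))^2$ whenever $\gamma/(m\varepsilon)\lesssim1$ --- this is the secrecy-of-the-sample / group-privacy bound of Karwa--Vadhan and Barber--Duchi type, which for $d=1$ is exactly the excerpt's cited Lemma~6.7 of \cite{KLSU19}, and crucially the polynomial smallness $\delta<n^{-(1+w)}$ (hence $\delta<n_j^{-(1+w')}$) is exactly what it needs, so no $\log(1/\delta)$ factor appears. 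Plugging $m=n_j\asymp n/d$ gives per-coordinate MSE $\gtrsim(\gamma d/(n\varepsilon))^2$; weighting by $\lambda_j=1/d$ and summing over $d$ coordinates yields $\gtrsim\tfrac1d\cdot d\cdot(\gamma d/(n\varepsilon))^2=\gamma^2 d^2/(n^2\varepsilon^2)$, which, together with $\|\beta\|\le1$ (i.e.\ $\gamma d/(n\varepsilon)\le1$), is the claim.

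The hard part will be making the coordinate-wise decomposition fully rigorous under differential privacy: (i) dealing with the random, only approximately balanced counts $n_j$ (condition on the concentration event and absorb the loss into constants); (ii) checking that the one-dimensional DP lower bound is robust to the presence of the independent auxiliary data $S\setminus S^{(j)}$, true for the packing/tracing proofs but it must be verified since $\hat\beta_j$ is a function of all of $S$; and (iii) choosing $\tau$ so that $\|\beta\|\le1$ and the two-point separations land in the valid low-error regime for both the non-private and private bounds, which pins down the range of $(n,d,\varepsilon,\gamma)$ in which the stated bound is operative rather than the trivial $O(1/d)$. I note that the paper's own packing machinery (Lemma~\ref{thm:packing}) would only deliver $\min\{d,\log(1/\delta)\}^2$ in place of $d^2$; obtaining the clean $d^2$ genuinely requires this coordinate-wise / fingerprinting-style reduction to a one-dimensional DP bound that tolerates merely polynomially small $\delta$.
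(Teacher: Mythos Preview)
The paper does not supply a proof of this theorem; it appears in Appendix~\ref{sec:lb} (``Existing lower bounds'') purely as a citation of \cite{cai2019cost}, with no accompanying argument, so there is no in-paper proof to compare against.

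On its own merits your sketch is sound. The hard instance $x=e_J$ with $J$ uniform on $[d]$ meets all the hypotheses ($\|x\|=1$ a.s., $\Sigma=\tfrac1d\mathbf I_{d\times d}$ diagonal with $d\lambda_{\min}=d\lambda_{\max}=1$), and conditioning on $(J_i)$ cleanly decouples the problem into $d$ one-dimensional Gaussian location problems, each with $n_j\asymp n/d$ samples. The arithmetic is correct in both regimes: weighting by $\lambda_j=1/d$ and summing $d$ coordinates turns the per-coordinate non-private bound $\gamma^2/n_j$ into $\gamma^2 d/n$ and the per-coordinate DP bound $(\gamma/(n_j\varepsilon))^2$ into $\gamma^2 d^2/(n\varepsilon)^2$. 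Your caveats (i)--(iii) are the real work but are all routine; in particular for (ii), once $Z=S\setminus S^{(j)}$ is fixed the map $S^{(j)}\mapsto\hat\beta_j(S^{(j)},Z)$ is $(\varepsilon,\delta)$-DP (changing a point of $S^{(j)}$ is changing a point of $S$) and $Z$ is independent of $\beta_j$, so it acts as auxiliary randomness and cannot lower the minimax risk. For (iii), the joint hypercube $\{-\tau,\tau\}^d$ forces $\tau\sqrt d\le1$, which in the private case reads $n\gtrsim\gamma d^{3/2}/\varepsilon$; this is the regime in which the stated rate is below the trivial $O(1/d)$ ceiling and hence the only regime where the bound is informative.

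Your closing remark is also correct and worth emphasizing: the packing Lemma~\ref{thm:packing} that the paper uses for its other lower bounds would cap the privacy term at $(\min\{d,\log(1/\delta)\})^2/(n\varepsilon)^2$, strictly weaker than $d^2/(n\varepsilon)^2$ when $\delta$ is only polynomially small. Recovering the full $d^2$ genuinely requires a fingerprinting/tracing-style argument---either your coordinate-wise reduction to the $d=1$ case of \cite[Lemma~6.7]{KLSU19}, or the score-attack machinery that \cite{cai2019cost} develops directly---which is why the hypothesis $\delta<n^{-(1+w)}$ (rather than, say, $\delta<e^{-d}$) suffices here.
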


%
%
%
%
%
%


\begin{theorem}[Lower bound of linear regression {\cite[Theorem~1]{shamir2015sample}}]
A multiset of i.i.d. samples $S=\{(x_i, y_i)\}_{i=1}^n$ is drawn from distribution $P\in \reals^{d}\times \reals$ in a class $\cP_{B, Y}$, where $|y|\leq Y$, $\|x\|\leq 1$ and $\beta\in \Theta_B=\{\beta\in \reals^d:\|\beta\|\leq B\}$. Then there exists a constant $c$ such that
\begin{eqnarray}
	\inf_{\hat{\beta}\in\Theta_B}\sup_{P\in \cP_{B,Y}}\E_{ P^n}\left[\left(y-\ip{\hat{\beta}(S)}{x}\right)^2-\min_{\beta\in \Theta_B}\left(y-\ip{\beta}{x}\right)^2\right]\geq c\min\left\{Y^2,B^2, \frac{dY^2}{n}, \frac{BY}{\sqrt{n}}\right\}\;.
\end{eqnarray}
	
\end{theorem}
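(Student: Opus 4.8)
The plan is to prove the four-term lower bound with a single Assouad-type construction carrying a free integer parameter $k\in\{1,\dots,d\}$ that controls how widely the planted signal is spread; optimizing over $k$ will reproduce each of $Y^2$, $B^2$, $dY^2/n$, and $BY/\sqrt n$ as the binding quantity in its own regime. First I would set up the standard reduction from estimation to testing: fix $k$, let the marginal of $x$ be uniform on $\{e_1,\dots,e_k\}$ (so $\|x\|=1$ and $\E[xx^\top]=\tfrac1k I_k$ on the relevant subspace), and for each sign pattern $v\in\{-1,+1\}^k$ let $P_v$ be the law in which $y\mid x=e_j$ takes values in $\{-Y,Y\}$ with conditional mean $v_j r$, where $r:=\min\{B/\sqrt k,\,Y\}$ is the per-coordinate signal level. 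One checks immediately that $P_v\in\cP_{B,Y}$: the response is bounded by $Y$, the design has unit norm, and the least-squares optimum is $\beta_v^*=r\,v$ with $\|\beta_v^*\|=r\sqrt k\le B$ by the choice of $r$. The key structural fact I would record is that, since $\E[xx^\top]=\tfrac1k I_k$ and the population risk is quadratic, the excess risk of any predictor $\hat\beta$ equals $\tfrac1k\sum_{j\le k}(\hat\beta_j-\beta_{v,j}^*)^2$, so that a coordinate on which $\hat\beta$ disagrees in sign with $\beta_v^*$ contributes at least $r^2/k$; hence excess risk $\ge (r^2/k)\cdot\#\{j:\operatorname{sign}\hat\beta_j\ne v_j\}$.

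Next I would bound the number of sign errors from below. The number of samples landing on direction $e_j$ is $\mathrm{Binomial}(n,1/k)$, with mean $n/k$; and conditioned on the samples at $e_j$, recovering $v_j$ is exactly a scalar one-bit test between two distributions on $\{-Y,Y\}$ whose relative bias is $r/Y$, so the per-sample Kullback--Leibler divergence is $\Theta((r/Y)^2)$. By Le Cam (or Assouad's lemma applied coordinate-wise, using independence of the coordinates), whenever $(n/k)(r/Y)^2\lesssim 1$ the test is infeasible and $\E[\#\text{sign errors}]\gtrsim k$; in particular when $n<k/2$ a constant fraction of coordinates receive \emph{no} samples and are unrecoverable outright. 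Combining with the curvature bound gives $\inf_{\hat\beta}\sup_{v}\E[\text{excess risk}]\gtrsim r^2$ under the constraint $r^2\lesssim kY^2/n$. Since also $r^2=\min\{B^2/k,\,Y^2\}$ by construction, the lower bound delivered by this family is $\gtrsim\min\{B^2/k,\;Y^2,\;kY^2/n\}$. Finally I would optimize over $k\in\{1,\dots,d\}$: $B^2/k$ is decreasing and $kY^2/n$ is increasing in $k$, they cross at $k\asymp (B/Y)\sqrt n$ with value $BY/\sqrt n$; clipping $k$ to $\{1,\dots,d\}$ and to the regime where $kY^2/n\le Y^2$ (i.e.\ $k\le n$) shows that $\max_{k}\min\{B^2/k,Y^2,kY^2/n\}$ is, up to constants, exactly $\min\{Y^2,B^2,dY^2/n,BY/\sqrt n\}$ in every regime. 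This yields the claimed bound, matching \cite[Theorem~1]{shamir2015sample}.

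The main obstacle is the simultaneous calibration of the scales so that all three hard constraints of $\cP_{B,Y}$ --- $|y|\le Y$ a.s., $\|x\|\le1$ a.s., and $\|\beta^*\|\le B$ --- hold \emph{while} the per-sample divergence stays $O(1/n)$ (so that $n$ samples cannot test) and the induced separation $r^2$ is as large as the target rate. The choice $r=\min\{B/\sqrt k,Y\}$ is forced by the first and third constraints; the subtle point is its interaction with the number of observed coordinates, and in particular that the regime $n<k$ (coordinates with zero samples) is precisely what produces the $Y^2$ term that a "hard test" argument alone would miss --- there one must plant on $k\asymp\min\{d,B^2/Y^2\}$ coordinates, verify that $r\asymp Y$ is then admissible, and invoke information-theoretic impossibility for the un-sampled coordinates rather than mere hardness. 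Getting the boundaries between the four regimes to line up under a single $k$-parametrized family, and handling the degenerate (rank-$k$, anisotropic) design in the curvature step, is where essentially all of the work lies; none of it is conceptually deep.
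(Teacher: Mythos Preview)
The paper does not give its own proof of this statement: it appears in Appendix~\ref{sec:lb} purely as a citation of \cite[Theorem~1]{shamir2015sample}, with no argument supplied. So there is nothing in the paper to compare against; the relevant benchmark is Shamir's original proof.

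Your proposal is correct and is essentially Shamir's argument. He too places $x$ uniformly on a set of standard basis vectors, lets $y\mid x=e_j$ be a $\{\pm Y\}$-valued coin with bias $v_j r/Y$, and reduces excess risk to a per-coordinate sign-recovery problem whose hardness is controlled by the per-coordinate sample count and the bias. The free parameter (your $k$) and the calibration $r\asymp\min\{B/\sqrt k,\,Y,\,Y\sqrt{k/n}\}$ are exactly what drive the four regimes; your observation that the $Y^2$ term comes from coordinates that receive no samples when $k\gtrsim n$ is also how the original handles that case. The only mild imprecision in your write-up is that you first fix $r=\min\{B/\sqrt k,Y\}$ and then impose $r^2\lesssim kY^2/n$ as a side constraint rather than folding it into the definition of $r$; cleaner is to set $r$ to the minimum of all three scales from the outset, so that the Assouad KL condition holds automatically and the lower bound is directly $r^2\asymp\min\{B^2/k,\,Y^2,\,kY^2/n\}$ for every $k$, after which the $\max_k$ is routine.
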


\begin{theorem}[Lower bound of Gaussian DP covariance estimation {\cite[Lemma~6.11]{KLSU19}}]
Let $\widehat{\Sigma}:\reals^{n\times d}\rightarrow \Theta$ be an $(\varepsilon, 0)$-DP estimator (where $\Theta$ is the space of all $d\times d$ PSD matriaces), and for every $\cN(0, \Sigma)$ over $\reals^d$ such that $1/2\mathbf{I}_{d \times d}\preceq \Sigma\preceq 3/2\mathbf{I}_{d \times d}$, 
\begin{eqnarray}
	\E_{S\sim \cN(0, \Sigma)^n}\left[\|\widehat{\Sigma}(S)-\Sigma\|_F^2\right]\leq \frac{\alpha^2}{64}\;,
\end{eqnarray}
then $n\geq \Omega(d^2/(\varepsilon\alpha))$.
	
\end{theorem}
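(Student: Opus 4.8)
The plan is the standard pure-differential-privacy packing lower bound, exactly in the spirit of the estimation-to-testing reductions used elsewhere in this appendix (cf.\ Lemma~\ref{thm:packing}), specialized to $\delta=0$. First I would build a large packing of admissible covariance matrices. By a Varshamov--Gilbert / volumetric argument on the unit Frobenius sphere of the $\binom{d+1}{2}$-dimensional space of symmetric $d\times d$ matrices (the same packing device as \cite[Lemma~6]{acharya2021differentially}), there is a family $\{A_v\}_{v\in\cV}$ of symmetric matrices with $\|A_v\|_F\le 1$, $\|A_v-A_{v'}\|_F\ge 1/4$ for $v\ne v'$, and $\log|\cV|=\Omega(d^2)$. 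Set $\Sigma_v:=\mathbf{I}_{d\times d}+2\alpha A_v$ and $P_v:=\cN(0,\Sigma_v)$. For $\alpha$ below a universal constant, $\|2\alpha A_v\|\le 2\alpha\|A_v\|_F\le 1/2$, so $\tfrac12\mathbf{I}_{d\times d}\preceq\Sigma_v\preceq\tfrac32\mathbf{I}_{d\times d}$ and each $\Sigma_v$ is an admissible instance; moreover $\|\Sigma_v-\Sigma_{v'}\|_F=2\alpha\|A_v-A_{v'}\|_F\in[\alpha/2,\,4\alpha]$, and since $\|\Sigma_v^{-1}\|\le 2$ we get $\|\Sigma_v^{-1/2}\Sigma_{v'}\Sigma_v^{-1/2}-\mathbf{I}_{d\times d}\|_F\le\|\Sigma_v^{-1}\|\,\|\Sigma_{v'}-\Sigma_v\|_F=O(\alpha)$, so by the Gaussian total-variation bound (\cite[Lemma~2.9]{KLSU19}) $d_{\rm TV}(P_v,P_{v'})=O(\alpha)=:p$.

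Second, I would convert the accuracy hypothesis into a disjoint decoding statement and then transfer probabilities across the packing. Markov's inequality applied to $\E_{S\sim P_v^n}[\|\widehat\Sigma(S)-\Sigma_v\|_F^2]\le\alpha^2/64$ gives $\prob_{S\sim P_v^n}[\widehat\Sigma(S)\in B_v]\ge 3/4$, where $B_v:=\{M:\|M-\Sigma_v\|_F<\alpha/4\}$; since the pairwise separation of the $\Sigma_v$'s is at least $\alpha/2$, the sets $\{B_v\}_{v\in\cV}$ are pairwise disjoint. Now fix any reference index $v_0$. Couple $S\sim P_v^n$ and $S'\sim P_{v_0}^n$ by the product of per-coordinate maximal couplings, so the number of disagreeing coordinates is $\mathrm{Bin}(n,p)$; conditioning on the probability-$\ge 1/2$ event of at most $2pn$ disagreements and applying group privacy of the $(\varepsilon,0)$-DP estimator with group size $2pn$, one obtains $\prob_{S'\sim P_{v_0}^n}[\widehat\Sigma(S')\in B_v]\ge \tfrac14 e^{-2\varepsilon p n}$ for every $v$. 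Summing over $\cV$ and using disjointness under the single distribution $P_{v_0}^n$, $1\ge\sum_{v\in\cV}\prob_{S'\sim P_{v_0}^n}[\widehat\Sigma(S')\in B_v]\ge \tfrac14|\cV|\,e^{-2\varepsilon p n}$, hence $\Omega(d^2)=\log|\cV|\le 2\varepsilon p n+O(1)$, i.e.\ $n=\Omega\!\big(d^2/(\varepsilon\alpha)\big)$.

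The main obstacle is the transfer step in the second paragraph: one must extract a clean multiplicative factor $e^{-\Theta(\varepsilon p n)}$ while controlling which coordinates the coupling forces to differ, being careful that conditioning on ``few disagreements'' does not distort the marginal of $S'$. I would handle this either by the averaged form of group privacy (building the inequality in expectation over the coupling, exactly as in the proof of Lemma~\ref{thm:packing}), or simply by invoking Lemma~\ref{thm:packing} with $\delta=0$ as a black box after adjusting the construction (e.g.\ by a slight re-parametrization) so that all pairwise $d_{\rm TV}(P_v,P_{v'})$ equal $p$; in the latter route the theorem follows immediately. Everything else — the $\Omega(d^2)$ packing count, the eigenvalue feasibility of $\Sigma_v$, and the $O(\alpha)$ Gaussian TV estimate — is routine with the tools already cited in the excerpt.
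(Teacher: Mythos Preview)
The paper does not itself prove this statement; it is simply quoted in Appendix~\ref{sec:lb} from \cite[Lemma~6.11]{KLSU19}. Your sketch is correct and is precisely the pure-DP packing argument used in \cite{KLSU19}: an $\exp(\Omega(d^2))$-sized Frobenius packing of symmetric perturbations of the identity, Markov's inequality to turn the mean-squared-error hypothesis into disjoint decoding regions, and group privacy across a maximal coupling.

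The concern you flag about conditioning distorting the marginal of $S'$ is not a real obstacle once you write the group-privacy inequality pointwise in $(S,S')$ and only then take expectation over the coupling. For fixed $(S,S')$ at Hamming distance $K$, pure-DP group privacy gives
\[
\prob_{\rm mech}\big[\widehat\Sigma(S')\in B_v\big]\;\ge\; e^{-K\varepsilon}\,\prob_{\rm mech}\big[\widehat\Sigma(S)\in B_v\big]\;\ge\; e^{-2pn\varepsilon}\,\mathbbm{1}\{K\le 2pn\}\,\prob_{\rm mech}\big[\widehat\Sigma(S)\in B_v\big],
\]
and averaging over the coupling---whose marginals are exactly $P_v^n$ and $P_{v_0}^n$---yields $\prob_{P_{v_0}^n}[\widehat\Sigma\in B_v]\ge e^{-2pn\varepsilon}\big(\tfrac34-\tfrac12\big)$ directly, with no conditioning step at all. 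You also do not need the pairwise total variations to be exactly equal: a uniform upper bound $d_{\rm TV}(P_v,P_{v_0})\le p=O(\alpha)$ suffices for the per-coordinate maximal coupling to give $\E[K]\le pn$, so the proposed re-parametrization and the detour through Lemma~\ref{thm:packing} are unnecessary.
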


\end{document}